\documentclass[a4paper]{amsart}

\RequirePackage{amsmath} 
\RequirePackage{amssymb}
\usepackage{amscd,latexsym,amsthm,amsfonts,amssymb,amsmath,amsxtra}
\usepackage[colorlinks=true,urlcolor=blue,citecolor=blue]{hyperref}
\usepackage{color}
\usepackage[all]{xy}
\usepackage{bm}
\usepackage{mathtools}
\usepackage{ mathrsfs }
\usepackage{xcolor}
\usepackage{comment}
\usepackage{marginnote}
\usepackage{enumitem}
\usepackage{thmtools, thm-restate}

\let\Re\undefined
\let\Im\undefined

\DeclareMathOperator{\Re}{Re}
\DeclareMathOperator{\Im}{Im}

\DeclareMathOperator{\Tr}{Tr}

\DeclareMathOperator{\Spec}{Spec}

\newcommand{\floor}[1]{{\left\lfloor#1\right\rfloor}}

	\newcommand{\Res}{\operatorname{Res}}
	
	\newcommand{\Eis}{\operatorname{Eis}}

	\newcommand{\Ad}{\operatorname{Ad}}

	\newcommand{\fin}{\operatorname{fin}}

	\newcommand{\diag}{\operatorname{diag}}

	\newcommand{\Vol}{\operatorname{Vol}}

	\newcommand{\Ind}{\operatorname{Ind}}
	
	\newcommand{\RNum}[1]{\uppercase\expandafter{\romannumeral #1\relax}}

\begin{document}
\theoremstyle{plain}
\newtheorem{thm}{Theorem}[section]
	
\newtheorem{cor}[thm]{Corollary}

\newtheorem{thmx}{Theorem}
\renewcommand{\thethmx}{\Alph{thmx}} 

\newtheorem{hy}[thm]{Hypothesis}
\newtheorem*{thma}{Theorem A}
\newtheorem*{corb}{Corollary B}
\newtheorem*{thmc}{Theorem C}
\newtheorem{lemma}[thm]{Lemma}  
\newtheorem{prop}[thm]{Proposition}
\newtheorem{conj}[thm]{Conjecture}
\newtheorem{fact}[thm]{Fact}
\newtheorem{claim}[thm]{Claim}
	
\theoremstyle{definition}
\newtheorem{defn}[thm]{Definition}
\newtheorem{que}[thm]{Question}
\newtheorem{example}[thm]{Example}
\theoremstyle{remark}
	
\newtheorem{remark}[thm]{Remark}	
\numberwithin{equation}{section}
	
\title[]{Symmetric Spectral Reciprocity for $\mathrm{GL}(2)$ and Uniform Subconvexity }%
\author{Liyang Yang}
\address{253-37 Caltech, Pasadena\\
CA 91125, USA}
\email{lyyang@caltech.edu}

\begin{abstract}
We construct a new analytic regularization of the Petersson norm identity
for Eisenstein series on $\mathrm{GL}_2$ over a number field $F$, and derive from it
an explicit symmetric spectral reciprocity formula for twisted fourth moments of
$\mathrm{GL}_2$ $L$-functions, reflecting the intrinsic rank decomposition $4=2+2$.

Independently, we identify a square-level phenomenon arising from amplification,
whereby the dual spectral family acquires square-level conductors. This additional
arithmetic rigidity permits a refined analysis within the relative trace formula
and leads to refined hybrid subconvexity bounds for twisted $L$-functions.
As a consequence, we obtain new uniform subconvexity bounds for $\mathrm{GL}_2/F$;
in particular,
\begin{align*}
L(1/2,\pi)\ll C(\pi)^{\frac14-\frac{1}{120}+\varepsilon}
\end{align*}
for every unitary cuspidal representation $\pi$ of $\mathrm{GL}_2/F$. We also obtain refined bounds for certain Artin $L$-functions and applications
to class group arithmetic.
\end{abstract}

\date{\today}%
\maketitle
\tableofcontents

\section{Introduction}

The subconvexity problem for automorphic $L$-functions on $\mathrm{GL}_2$
asks for improvements over the convexity bound
\begin{align*}
L(1/2,\pi) \ll C(\pi)^{\frac14+\varepsilon}
\end{align*}
for unitary cuspidal representations $\pi$ of $\mathrm{GL}_2/F$.
Beyond its analytic significance,
subconvexity plays a decisive role in arithmetic applications,
including equidistribution problems and bounds for class groups.

A powerful strategy for subconvexity is to study higher moments
through spectral identities.
In this direction, the formal Petersson identity
\begin{equation}
\label{c1.1}
\langle E_1E_2, E_3E_4 \rangle
=
\langle E_1\overline{E_3}, \overline{E_2}E_4 \rangle
\end{equation}
for Eisenstein series encodes a natural rank decomposition $4=2+2$. 
A special case was exploited by Michel and Venkatesh~\cite{MV10}
within a representation-theoretic framework,
where it formed a key structural input in their resolution of subconvexity.
However, in its naive form \eqref{c1.1} is divergent,
and no fully explicit and quantitative spectral reciprocity
has previously been extracted from it.

Let $F$ be a number field with ring of adeles $\mathbb{A}_F$.
Fix complex parameters $s_j\in\mathbb{C}$ for $1\le j\le 4$,
and an automorphic datum $\mathfrak{X}=(\boldsymbol{\chi},\boldsymbol{\omega},\boldsymbol{\Phi})$ consisting of compatible unitary Hecke characters
$\chi_j,\omega_j$ of $F^\times\backslash\mathbb{A}_F^\times$
and Schwartz--Bruhat functions $\Phi_j$.
These data determine Eisenstein series
$E_j=E_j(\cdot,s_j)$ in the isobaric representations
$\chi_j\boxplus\chi_j^{-1}\omega_j$.

In this paper we construct a new analytic regularization  of
\eqref{c1.1}
and derive from it an explicit \emph{symmetric spectral reciprocity}
formula for twisted fourth moments of $\mathrm{GL}_2$ $L$-functions.
The symmetry inherent in the decomposition $4=2+2$
produces a nonnegative fourth moment on the spectral side,
providing a robust framework for quantitative amplification.

A further structural observation is that,
after amplification,
the dual spectral family necessarily acquires square-level conductors.
This rigidity enables a refined analysis within the relative trace formula developed in \cite{Yan23c} 
and leads to new hybrid subconvexity bounds for twisted $L$-functions.
As a consequence, we obtain the uniform estimate
\begin{align*}
L(1/2,\pi)\ll C(\pi)^{\frac14-\frac{1}{120}+\varepsilon}
\end{align*}
for all unitary cuspidal representations $\pi$ of $\mathrm{GL}_2/F$.

\subsection{A Symmetric Spectral Reciprocity}
Let $\mathbf{0}=(0,0,0,0)$.  
Expanding \eqref{c1.1} formally along the $\mathrm{GL}_2$-spectrum
$\mathcal{A}([G],\omega_1\omega_2)$---the automorphic representations of $G=\mathrm{GL}_2/F$
with central character $\omega_1\omega_2$---leads to a weighted fourth moment
\begin{multline*}
\mathcal{J}_{\mathrm{\Spec}}^{\heartsuit}(\mathbf{0},\mathfrak{X})=\int_{\pi\in \mathcal{A}([G],\omega_1\omega_2)} L(1/2,\widetilde{\pi}\times\chi_1\chi_2)L(1/2,\widetilde{\pi}\times\chi_1^{-1}\omega_1\chi_2)\\
L(1/2,\pi\times\chi_3^{-1}\chi_4^{-1})L(1/2,\pi\times\chi_3\omega_3^{-1}\chi_4^{-1})\alpha_{\mathfrak{X}}(\pi)d\mu_{\pi},
\end{multline*}
while the opposite side gives a dual fourth moment
\begin{multline*}
\mathcal{I}_{\mathrm{Spec}}^{\heartsuit}(\mathbf{0}, \mathfrak{X})=\int_{\sigma\in \mathcal{A}([G],\omega_1\omega_3^{-1})} L(1/2,\widetilde{\sigma}\times\chi_1\chi_3^{-1})L(1/2,\widetilde{\sigma}\times\chi_1^{-1}\omega_1\chi_3^{-1})\\
L(1/2,\sigma\times\chi_2\chi_4^{-1})
L(1/2,\sigma\times\chi_2^{-1}\omega_2\chi_4^{-1})\widehat{\alpha_{\mathfrak{X}}}(\pi)d\mu_{\pi}.
\end{multline*}

Here $\alpha_{\mathfrak{X}}$ and $\widehat{\alpha}_{\mathfrak{X}}$
are explicit spectral weights attached to the datum $\mathfrak{X}$.

The identity \eqref{c1.1} is only formal, since the individual
Petersson norms are divergent.
Our first main result provides a new analytic regularization,
yielding an explicit reciprocity relation between
$\mathcal{J}_{\Spec}^{\heartsuit}(\mathbf{0},\mathfrak{X})$
and
$\mathcal{I}_{\Spec}^{\heartsuit}(\mathbf{0},\mathfrak{X})$.
\begin{restatable}[]{thmx}{Main} \label{thmA}%
Let $\mathfrak{X}$ be an automorphic datum (see \textsection\ref{sec2.2}). Define the region  
\begin{multline*}
\mathcal{R}^{\heartsuit}:=\Big\{\mathbf{s}\in \mathbb{C}^4:\ |\Re(s_1)|+|\Re(s_2)|<1/2,\ |\Re(s_3)|+|\Re(s_4)|<1/2,\\
|\Re(s_1)|+|\Re(s_3)|<1/2,\ |\Re(s_2)|+|\Re(s_4)|<1/2\Big\}.
\end{multline*}
For any $\mathbf{s} \in \mathcal{R}^{\heartsuit}$, the following identity holds 
\begin{equation}\label{1.3}
\mathcal{J}_{\mathrm{\Spec}}^{\heartsuit}(\mathbf{s},\mathfrak{X})=\ \mathcal{I}_{\Spec}^{\heartsuit}(\mathbf{s},\mathfrak{X})+\sum_{i=1}^{8}\Big[\widetilde{\Psi}_{\mathrm{Geo}}^{(i)}(\mathbf{s},\mathfrak{X})+\widetilde{\Psi}_{\mathrm{RS}}^{(i)}(\mathbf{s},\mathfrak{X})+\widetilde{\Psi}_{\mathrm{Dual}}^{(i)}(\mathbf{s},\mathfrak{X})\Big],
\end{equation}
where the terms are defined as follows:
\begin{itemize}
\item $\mathcal{J}_{\mathrm{\Spec}}^{\heartsuit}(\mathbf{s}, \mathfrak{X})$ is defined in Definition~\ref{defn3.2} of \textsection\ref{sec3.3},
\item $\mathcal{I}_{\mathrm{Spec}}^{\heartsuit}(\mathbf{s}, \mathfrak{X})$ is defined in Definition~\ref{defn3.7} of \textsection\ref{sec4},
\item $\widetilde{\Psi}_{\mathrm{Geo}}^{(i)}(\mathbf{s}, \mathfrak{X})$ is defined in Definition~\ref{defn2.5} of \textsection\ref{sec2.2},
\item $\widetilde{\Psi}_{\mathrm{RS}}^{(i)}(\mathbf{s}, \mathfrak{X})$ is defined in Definition~\ref{defn3.5} of \textsection\ref{sec3.3}, and
\item $\widetilde{\Psi}_{\mathrm{Dual}}^{(i)}(\mathbf{s}, \mathfrak{X})$ is defined in Definition~\ref{defn3.9} of \textsection\ref{sec4}.
\end{itemize}
\end{restatable}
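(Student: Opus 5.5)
We will prove \eqref{1.3} by starting from one absolutely convergent regularized object and evaluating it in two ways. Concretely, let $E_j=E_j(\cdot,s_j)$ be the Eisenstein series attached to $(\chi_j,\omega_j,\Phi_j)$. We introduce a regularized version of the bilinear pairing $\langle E_1E_2,E_3E_4\rangle$: subtract from $E_1E_2$ and from $E_3E_4$ their constant-term growth along the cusp, using a truncation-free analytic device such as a Zagier/Rankin–Selberg type subtraction of the finitely many exponents $|y|^{\pm s_i\pm s_j+1/2}$. This produces a function $\mathcal{E}_{12}$ that is of rapid decay, and likewise $\mathcal{E}_{34}$. Define $\Psi(\mathbf{s},\mathfrak{X})=\int_{[G]}\mathcal{E}_{12}\overline{\mathcal{E}_{34}}$.

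The conditions $|\Re s_1|+|\Re s_2|<1/2$ and $|\Re s_3|+|\Re s_4|<1/2$ are exactly those ensuring the products lie in $L^2$ after removing the Eisenstein contributions, so $\Psi$ converges on $\mathcal{R}^{\heartsuit}$. The symmetric conditions with the pairs $(1,3)$ and $(2,4)$ play the same role for the dual regrouping.

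\textbf{First evaluation (spectral side $\mathcal{J}$).} We expand $\mathcal{E}_{12}$ and $\mathcal{E}_{34}$ spectrally in $L^2([G],\omega_1\omega_2)$ and apply Parseval. For cuspidal $\pi$ and for the continuous spectrum, the coefficient $\langle E_1E_2,\phi\rangle$ is computed by unfolding $E_1$. This yields a Rankin–Selberg integral equal to $L(1/2+\cdot,\widetilde\pi\times\chi_1\chi_2)L(1/2+\cdot,\widetilde\pi\times\chi_1^{-1}\omega_1\chi_2)$ times local zeta integrals. Together with the $E_3E_4$ coefficient, these give the weight $\alpha_{\mathfrak{X}}$ and hence $\mathcal{J}^{\heartsuit}_{\Spec}(\mathbf{s},\mathfrak{X})$. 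The subtracted constant terms, paired against the Eisenstein series and the residual spectrum, produce the terms $\widetilde{\Psi}^{(i)}_{\mathrm{RS}}$; these are regularized Rankin–Selberg integrals of Eisenstein series, evaluated by the regularized unfolding of Zagier.

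\textbf{Second evaluation (dual side $\mathcal{I}$).} We return to the defining integral, now using the rapid decay and the relation $E_1E_2\overline{E_3E_4}=(E_1\overline{E_3})(E_2\overline{E_4})$ pointwise. The resulting decomposition is
\[
\mathcal{E}_{12}\overline{\mathcal{E}_{34}}=\mathcal{E}_{1\bar3}\,\overline{\mathcal{E}_{\bar2 4}}+(\text{cross terms}).
\]
The cross terms involve the subtracted constant-term functions multiplied against a single Eisenstein product. Integrating each cross term is an explicit computation: after unfolding the remaining Eisenstein series, one obtains products of Hecke $L$-functions and local integrals. These give $\widetilde{\Psi}^{(i)}_{\mathrm{Geo}}$ (eight of them, corresponding to the exponent combinations). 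The main term $\langle \mathcal{E}_{1\bar3},\mathcal{E}_{\bar24}\rangle$ is expanded spectrally over $\mathcal{A}([G],\omega_1\omega_3^{-1})$ exactly as in the first evaluation. This gives $\mathcal{I}^{\heartsuit}_{\Spec}$ plus the dual degenerate terms $\widetilde{\Psi}^{(i)}_{\mathrm{Dual}}$. Equating the two evaluations yields \eqref{1.3}.

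\textbf{Main obstacle.} The hard step is the bookkeeping of the regularization: showing that the cross terms are individually convergent on $\mathcal{R}^{\heartsuit}$, or admit meromorphic continuation to it. Note that $\mathcal{R}^{\heartsuit}$ is the intersection of the two convergence regions. My first attempt would be to establish everything in a smaller region where all exponents are separated, and then continue meromorphically. I would also check that the apparent poles at coincidences of the exponents $\pm s_i\pm s_j$ cancel among the $\widetilde\Psi$ terms. Convergence of the continuous-spectrum integrals in $\mathcal{J}$ and $\mathcal{I}$ requires polynomial bounds on $L$-functions together with rapid decay of the local weights, which follows from smoothness of the $\Phi_j$.
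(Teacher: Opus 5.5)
Your regularization differs from the paper's. You subtract all four exponents $1/2\pm s_1\pm s_2$ from $E_1E_2$, and likewise from $E_3E_4$, $E_1\overline{E_3}$ and $\overline{E_2}E_4$, directly near the centre. The paper instead uses the asymmetric kernel $\Eis(\mathcal{G}_1h_2)=E_1E_2-\Eis(\mathcal{C}_1h_2)$, which removes only the two exponents carried by $h_1h_2$ and $h_1^{\Diamond}h_2$, and only in the far region $\mathcal{R}$. With your choice, the correction terms do not arise where you place them. For $\mathbf{s}\in\mathcal{R}_{\mathrm{RS}}$, every exponent left in $\mathcal{E}_{12}$ has negative real part, so $\langle\mathcal{E}_{12},E(\cdot,h,\lambda)\rangle$ converges absolutely on $i\mathbb{R}$. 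By linearity of the regularized integral, and since regularized pairings of two Eisenstein series (or of an Eisenstein series with $\mu\circ\det$) vanish, this pairing is exactly the continued period $\widetilde{\Psi}(\overline{W_{E(\cdot,h,-\overline{\lambda})}},W_1(\cdot,s_1),h_2(\cdot,s_2))$, and the residual spectrum contributes nothing. So Parseval gives precisely $\mathcal{J}^{\heartsuit}_{\mathrm{Spec}}(\mathbf{s},\mathfrak{X})$, and on the dual side precisely $\mathcal{I}^{\heartsuit}_{\mathrm{Spec}}(\mathbf{s},\mathfrak{X})$, with no extra terms. By Definitions~\ref{defn3.5} and~\ref{defn3.9}, the $\widetilde{\Psi}^{(i)}_{\mathrm{RS}}$ and $\widetilde{\Psi}^{(i)}_{\mathrm{Dual}}$ are residues in $\lambda$ of $\widetilde{\Psi}(\lambda,\mu;\mathbf{s},\mathfrak{X})$ and $\widetilde{\Psi}^*(\lambda,\mu;\mathbf{s},\mathfrak{X})$. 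Nothing in your spectral expansions produces them.

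All twenty-four corrections must therefore come from your cross terms, and that step is missing. Expanding $\mathcal{E}_{12}\overline{\mathcal{E}_{34}}-\mathcal{E}_{1\bar3}\overline{\mathcal{E}_{\bar24}}$ gives sixteen triple products, plus products of two subtracted Eisenstein series. Each triple product is two of the $E_j$ (or their conjugates) times one Eisenstein series attached to a product section. None of these is integrable over $[\overline{G}]$ for any $\mathbf{s}$, since the leading growth of three Eisenstein series along the cusp never becomes integrable. So ``unfolding'' must be replaced by a renormalized integral that is linear, kills Eisenstein--Eisenstein products, and satisfies a Zagier-type unfolding theorem over $F$.

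Even granting that, only eight triple products are the $\widetilde{\Psi}^{(i)}_{\mathrm{Geo}}$ of Definition~\ref{defn2.5}. These are the ones built from $h_1h_2,h_1^{\Diamond}h_2,h_3h_4,h_3^{\Diamond}h_4,h_1\overline{h_3},h_1^{\Diamond}\overline{h_3},\overline{h_2}h_4,\overline{h_2^{\Diamond}}h_4$. The other eight are built from $h_1h_2^{\Diamond},h_1^{\Diamond}h_2^{\Diamond},h_3h_4^{\Diamond},h_3^{\Diamond}h_4^{\Diamond},h_1\overline{h_3^{\Diamond}},h_1^{\Diamond}\overline{h_3^{\Diamond}},\overline{h_2}h_4^{\Diamond},\overline{h_2^{\Diamond}}h_4^{\Diamond}$. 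You would have to show they equal the sixteen residues of Definitions~\ref{defn3.5} and~\ref{defn3.9}, naturally paired at $\pm\lambda_0$. This residue-versus-period identification is what is actually needed to reach \eqref{1.3}, and your plan has no step for it.

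These are exactly the exponents whose real parts change sign between $\mathcal{R}$ and $\mathcal{R}^{\heartsuit}$, which is why the paper never meets the problem. In $\mathcal{R}$ those exponents are already square-integrable, and the kernels lie in $L^1\cap L^2$ (Lemma~\ref{lem2.2}). The difference $\mathcal{J}_{\mathrm{RS}}-\mathcal{I}_{\mathrm{Dual}}$ unfolds to the absolutely convergent integral \eqref{2.15}, which yields the eight geometric terms (Proposition~\ref{prop2.5}, Lemma~\ref{lem2.6}). The residues then appear automatically when poles of factors such as $\Lambda(1/2+s_2\pm s_1\mp\lambda,\cdot)$ cross $i\mathbb{R}$ during continuation to $\mathcal{R}_{\mathrm{RS}}$ and $\mathcal{R}_{\mathrm{Dual}}$ (Proposition~\ref{prop3.4}, Theorems~\ref{thm3.5} and~\ref{thm4.1}), and $\mathcal{R}^{\heartsuit}=\mathcal{R}_{\mathrm{RS}}\cap\mathcal{R}_{\mathrm{Dual}}$. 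Your fallback of working in a smaller region and continuing does not rescue the symmetric scheme. Its subtraction set itself changes exactly when these exponents cross real part zero.
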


The twenty-four correction terms in \eqref{1.3}
are explicit period integrals attached to Hecke $L$-functions
and their ratios.
A detailed analysis of Theorem~\ref{thmA}
is carried out in \textsection\ref{sec2}--\textsection\ref{sec3}.

\begin{remark}
The formula \eqref{1.3} may be regarded as a symmetric analogue of
\cite[Theorem~1]{BK19a}, but it arises from a fundamentally different 
rank decomposition and analytic mechanism.
In particular, the symmetric $4=2+2$ structure yields
a nonnegative fourth moment on the spectral side
and avoids the Kuznetsov--Voronoi transforms
that are central to the asymmetric approach.
\end{remark}

A fundamental application of Theorem \ref{thmA} is the following amplified fourth-moment reciprocity. 
Let $\omega$ be a unitary Hecke character of $\mathbb{A}_F^{\times}$.
Let $\mathcal{F}_0(\mathfrak{q},\omega)$ denote the set of unitary cuspidal
automorphic representations of $\mathrm{GL}_2/F$
of central character $\omega$ and level dividing $\mathfrak{q}$.
Let $\mathfrak{q}'\supseteq \mathfrak{q}$ be the arithmetic conductor of $\omega$.
For $v\mid\infty$, fix constants $\mathbf{C}_v>1$ and put
$\mathbf{C}_{\infty}=\prod_{v\mid\infty}\mathbf{C}_v$.

Take $\chi_1=\chi_2=\chi_3=\chi_4= \omega_1=\omega_3=\mathbf{1},$ and $\omega_2=\omega_4=\omega$, and choose
\begin{align*}
\alpha_{\mathfrak{X}}(\pi)\approx \lambda_{\pi}(\mathfrak{n})\mathbf{1}_{\pi\in \mathcal{F}_0(\mathfrak{q},\omega),\ \& \ C_v(\pi)\leq \mathbf{C}_v,\ v\mid\infty}.
\end{align*}
Here $C_v(\pi)$ denotes the $v$-component of the analytic conductor.
In subsequent sections we show that
\begin{align*}
\widehat{\alpha_{\mathfrak{X}}}(\sigma)\approx \mathbf{C}_{\infty}^{\frac{1}{2}}N_F(\mathfrak{q})^{\frac{1}{2}}N_F(\mathfrak{n})^{-\frac{1}{2}}\lambda_{\sigma}(\mathfrak{q}\mathfrak{q}'^{-1})\mathbf{1}_{\sigma\in \mathcal{F}_0(\mathfrak{n},\mathbf{1}),\ \& \ C_v(\sigma)\leq \mathbf{C}_v^{\varepsilon},\ v\mid\infty}.
\end{align*} 

Consequently, Theorem \ref{thmA} yields, modulo controllable error terms,
the following spectral reciprocity formula
\begin{multline}\label{f1.1}
\sum_{\substack{\pi\in \mathcal{F}_0(\mathfrak{q},\omega)\\
C_v(\pi)\leq \mathbf{C}_v,\ v\mid\infty}}\lambda_{\pi}(\mathfrak{n})|L(1/2,\pi)|^4\rightsquigarrow \mathbf{C}_{\infty}N_F(\mathfrak{q})N_F(\mathfrak{n})^{-\frac{1}{2}}\\
+\mathbf{C}_{\infty}^{\frac{1}{2}}N_F(\mathfrak{q})^{\frac{1}{2}}N_F(\mathfrak{n})^{-\frac{1}{2}}\sum_{\substack{\sigma\in \mathcal{F}_0(\mathfrak{n},\mathbf{1})\\
C_v(\sigma)\leq \mathbf{C}_v^{\varepsilon},\ v\mid\infty}}\lambda_{\sigma}(\mathfrak{q}\mathfrak{q}'^{-1})L(1/2,\sigma)^3L(1/2,\sigma\times\overline{\omega}).
\end{multline}

When $F=\mathbb{Q}$ and $\mathfrak{q}$ is prime,
\eqref{f1.1} recovers the non-symmetric reciprocity formulas of
\cite{BK19,BK19a}.
In that setting $\lambda_{\sigma}(\mathfrak{q}\mathfrak{q}'^{-1})=1$, since $\mathfrak{q}$ is prime and $\omega$ is primitive.

\begin{remark}
The symmetric spectral reciprocity \eqref{f1.1} admits refinements
beyond the applications considered in the present paper.
In forthcoming work we show that a more refined use of the
$4 = 2 + 2$ structure leads to Weyl-type bounds
in certain level aspects with odd conductor exponent,
without assuming trivial central character.
From this perspective, the cubic moment method alone
seems to encounter a structural limitation,
while the additional rigidity of the symmetric reciprocity identity
provides the extra input needed to circumvent it.
\end{remark}

\subsection{Refined Hybrid Subconvexity for Twisted $L$-functions}
To control the right-hand side of \eqref{f1.1}, one typically requires a hybrid subconvexity bound for $L(1/2,\sigma\times\overline{\omega})$.  
An explicit and uniform estimate was established in \cite[Theorem 1.6]{Yan23c}, which in particular yields a Burgess-type bound in the $\omega$-aspect.  
To the best of our knowledge, this remains the sharpest general result currently available in such a setting.

A key additional observation in the present work is that, after applying amplification, the level $\mathfrak{n}$ arising in the off-diagonal contribution to \eqref{f1.1} is always a \emph{squarefree square}.  
In other words, amplification naturally produces square-level conductors, and this hidden algebraic rigidity unlocks new test-function flexibility in the relative trace formula (RTF).  
This additional structure allows one to replace the Gross--Prasad test vector employed in the RTF (see \emph{loc.\ cit.}) by certain special test vectors carrying richer algebraic features.

Exploiting this structural refinement--together with a theorem of Katz and the quasi-orthogonality of trace functions--we establish Theorems \ref{thm10.7} and \ref{thm10.8} in \textsection\ref{sec10.6}.  
Roughly speaking, they take the following form:
\begin{thm}[See Theorem \ref{thm10.7} for a precise formulation]
Let $\varepsilon>0$ and assume 
$L\gg 
C_{\infty}(\sigma)^{\frac{1}{2}+\varepsilon}
C_{\fin}(\sigma)^{1+\varepsilon}
C_{\fin}(\overline{\omega})^{\varepsilon}$. Then for the relevant $\sigma$ and $\omega$ we have
\begin{multline*}
|L(1/2,\sigma\times\overline{\omega})|^2
\ll  
L^{-1+\varepsilon}\,
C_{\fin}(\sigma)^{\frac{1}{2}+\varepsilon}
C_{\fin}(\overline{\omega})^{1+\varepsilon}
C_{\infty}(\sigma\times\overline{\omega})^{\frac{1}{2}+\varepsilon}
\\
\qquad
+\, 
L^{1+\varepsilon}\,
C_{\fin}(\sigma)^{\frac{1}{4}+\varepsilon}
C_{\fin}(\overline{\omega})^{\frac{1}{2}+\varepsilon}
C_{\infty}(\sigma\times\overline{\omega})^{\frac{1}{4}+\varepsilon}.
\end{multline*}
\end{thm}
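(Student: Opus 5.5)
We follow the amplified relative trace formula approach of \cite{Yan23c}, modified to exploit the square-level structure. Fix $\sigma$ of level $\mathfrak{n}=\mathfrak{m}^2$ with $\mathfrak{m}$ squarefree (the case produced by amplification in \eqref{f1.1}), and a unitary character $\omega$ of conductor $\mathfrak{q}'$. We consider the family of twists $\pi\times\overline{\omega}$ with $\pi$ running over the spectrum of level $\mathfrak{n}$, weighted by $|L(1/2,\pi\times\overline{\omega})|^2$ and amplified at primes of norm about $L$. Let $\mathcal{P}$ be a set of such primes, coprime to $\mathfrak{n}\mathfrak{q}'$, and set $\alpha_{\mathfrak{p}}=\overline{\lambda_\sigma(\mathfrak{p})}$. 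Using $\lambda(\mathfrak{p})^2-\lambda(\mathfrak{p}^2)=1$, the amplifier
\begin{align*}
A(\pi)=\Big|\sum_{\mathfrak{p}\in\mathcal{P}}\alpha_{\mathfrak{p}}\lambda_\pi(\mathfrak{p})\Big|^2+\Big|\sum_{\mathfrak{p}\in\mathcal{P}}\alpha_{\mathfrak{p}}\lambda_\pi(\mathfrak{p}^2)\Big|^2
\end{align*}
satisfies $A(\sigma)\gg L^{2-\varepsilon}$. Positivity of every spectral term then gives
\begin{align*}
|L(1/2,\sigma\times\overline{\omega})|^2\ll L^{-2+\varepsilon}\times(\text{amplified second moment}).
\end{align*}
The RTF of \cite{Yan23c} evaluates this second moment through the period $\int \phi(x)\overline{\omega}(\det x)\,dx$ over the diagonal torus. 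The geometric side has three pieces: a main term, which is a diagonal sum over $\mathfrak{p}$; small-cell orbital integrals; and regular orbital integrals indexed by $\mathfrak{p}_1\mathfrak{p}_2$ or $\mathfrak{p}_1^2\mathfrak{p}_2^2$.

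At places $v\mid\mathfrak{m}$ we do not use the Gross--Prasad newvector. Instead we choose a test function adapted to the square conductor. The natural choice is the matrix coefficient of a vector in $\sigma_v$ that transforms by a character under a larger torus-type subgroup, such as $K_0(\mathfrak{p}_v)$-conjugates of a minimal vector. Its local weight for the relevant $\pi_v$ must still be $\gg N(\mathfrak{p}_v)^{-1/2}$ up to $\varepsilon$-powers, while its support on the big cell must be smaller. The diagonal term then gives $L\cdot C_{\fin}(\sigma)^{1/2}C_{\fin}(\overline{\omega})\,C_\infty^{1/2}$, and multiplying by $L^{-2}$ produces the first term of the bound. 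The archimedean places are handled by Sobolev-type test functions exactly as in \cite{Yan23c}. These account for the factor $C_\infty(\sigma\times\overline{\omega})$ and for the hypothesis on $L$, which ensures that the spectral side is dominated by the relevant window.

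The off-diagonal orbital integrals become character sums over $\mathbb{F}_{\mathfrak{p}}$ for $\mathfrak{p}\mid\mathfrak{m}\mathfrak{q}'$, twisted by $\omega$ and by the local data of the special test vector. For square-level vectors these sums are products of Kloosterman-type trace functions. We would apply Katz's theorem on the monodromy of the relevant sheaves to obtain square-root cancellation uniformly. Quasi-orthogonality of trace functions then lets us sum over $\mathfrak{p}_1\mathfrak{p}_2$ and over the torus variable, saving $C_{\fin}(\sigma)^{1/4}C_{\fin}(\overline{\omega})^{1/2}$ relative to the trivial bound. This yields an off-diagonal total of $L^{3+\varepsilon}C_{\fin}(\sigma)^{1/4}C_{\fin}(\overline{\omega})^{1/2}C_\infty^{1/4}$, and after the factor $L^{-2}$ this is the second term.

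The main obstacle is this step. Beyond computing the orbital integrals of the new test vectors, we must show that the resulting sums are trace functions of geometrically irreducible sheaves that are not geometrically isomorphic after twisting. This is what makes Katz's theorem and quasi-orthogonality apply rather than degenerate, and it must hold uniformly over $\omega$, which may be highly ramified. We would first attempt this in the prime-level-squared case by direct computation via Deligne--Katz, and then extend multiplicatively across the primes dividing $\mathfrak{m}$.
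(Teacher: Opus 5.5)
You have the right framework: the amplified relative trace formula of \cite{Yan23c}, with the Gross--Prasad vector replaced at the places dividing the level of $\sigma$. However, the step that actually proves the statement is left as the ``main obstacle'', and the local construction you sketch would not work as described.

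\emph{The test function cannot be chosen uniformly.} With trivial central character, conductor $\mathfrak{p}_u^2$ and $u\nmid 2$, the component $\sigma_u$ is one of three types: a depth-zero supercuspidal, a twist $\mathrm{St}\otimes\xi_u$, or $\xi_u\boxplus\overline{\xi}_u$, where $\xi_u$ has conductor exponent $1$. In the last two cases, any vector on which $K_{0,u}[1]$ acts by a character has $\mathrm{diag}(\mathcal{O}_u^{\times},1)$ acting through the ramified $\xi_u^{\pm 1}$. Its torus period against the unramified $\omega_u$ therefore vanishes identically. This is why the paper uses a unipotent translate there, as in \cite[\textsection 2.2.6]{Yan23c}. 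In the supercuspidal case the paper takes $f_u=(q_u-1)\chi_{\tilde\rho_u}$, extended to $Z(F_u)K_u$, where $\chi_{\tilde\rho_u}$ is the character of the cuspidal representation of $\mathrm{GL}_2(\mathbb{F}_{q_u})$ attached to a regular $\eta$.

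\emph{The gain in the main term comes from the singular orbital integrals, not from smaller big-cell support.} Reading the restriction of $f_u$ to the Borel off the character table gives $J_{\mathrm{sm},u},J_{\mathrm{du},u}\ll q_u^{10\varepsilon}$. On the spectral side, the Kirillov vector $\mathbf{1}_{\varpi_u^{-1}\alpha_u(1+\mathfrak{p}_u)}$ gives $|J_u|^2\gg q_u^{-1}$.

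\emph{The Katz input is not what you describe.} It is not cancellation in the sum over amplifier indices or over $t$. It involves no Kloosterman sums and nothing at the places dividing $\mathfrak{q}'$; those places and the global summation are taken unchanged from \cite[\textsection\textsection 6--7]{Yan23c}. It is a pointwise local bound, used only at supercuspidal places.

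\begin{enumerate}
\item The regular orbital integrals are indexed by $t\in F-\{0,1\}$, and the support of $f_u$ forces $e_u(t)\geq e_u(1-t)$.
\item For $e_u(1-t)=0$, write the eigenvalues of the big-cell element as $\alpha\pm\beta\sqrt{\varepsilon}$, with $\varepsilon$ a non-square in $k_{\mathbf{F}}$. Then insert the values $-\eta(\gamma_u)-\eta(\gamma_u^{\iota})$ of $\chi_{\tilde\rho_u}$ on elliptic classes.
\item After completing the square, $\mathcal{E}_u(t)$ reduces to $q_u^{-1}\sum_{\alpha\in k_{\mathbf{F}}^{\times}}\eta(\alpha+\sqrt{\varepsilon})\eta'(\varepsilon-\alpha^2t)$, with $\eta'$ quadratic.
\item Katz's theorem and quasi-orthogonality of trace functions then give $\mathcal{E}_u(t)\ll q_u^{-1/2}$ uniformly in $t$. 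This saves $q_u^{1/2}$ over the trivial bound.
\end{enumerate}

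At Steinberg and principal-series places, the unipotent translate gives an explicit bound on $\mathcal{E}_u(t)$ with no character sum at all.

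Note also the direction of the trade-off. Relative to the Gross--Prasad vector, the special test functions shrink the singular orbital integrals but enlarge the regular ones; Katz only limits that loss. You have not identified the sums that actually arise, and the mechanism you propose does not match them. So the decisive estimate is still missing from your argument.
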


These refined hybrid bounds sharpen the previously known general estimates and are of independent interest.  
They are expected to be applicable in a broad range of amplification problems where square-level phenomena arise naturally, including higher moments and $\mathrm{GL}_2\times\mathrm{GL}_2$ convolution problems over number fields.

By combining these hybrid bounds over complementary regions of the parameter space, we obtain improved subconvexity estimates in the subsequent sections.


\subsection{Applications to the Subconvexity Problem}\label{sec1.2}

Using amplification together with the spectral reciprocity formula \eqref{f1.1}, 
the hybrid subconvexity estimates for twisted $L$-functions in \cite[Theorem 1.6]{Yan23c}, 
and their square-level refinements (Theorems \ref{thm10.7} and \ref{thm10.8}), 
we obtain the following explicit subconvexity bound for $\mathrm{GL}_2/F$.

\begin{thmx}\label{thmB}
Let $F$ be a number field, and let $\pi$ be a unitary cuspidal automorphic representation of $\mathrm{GL}_2/F$ with central character $\omega$. Let $C(\pi)$ and $C(\omega)$ denote the analytic conductors of $\pi$ and $\omega$, respectively, and let $\vartheta$ be any exponent towards the Ramanujan conjecture. Let $0<\varepsilon<10^{-2}$. We have 
\begin{multline}\label{1.1}
L(1/2,\pi)\ll \min\Big\{C(\pi)^{\frac{13}{60}+\frac{\vartheta}{15}+\varepsilon}C(\omega)^{\frac{1}{40}-\frac{\vartheta}{15}}\mathbf{1}_{C(\pi)\leq C(\omega)^{1+\frac{4}{7(2-\vartheta)}-\frac{\varepsilon}{2}}}\\
+C(\pi)^{\frac{1}{4}-\frac{1-2\vartheta}{31}+\varepsilon}C(\omega)^{\frac{3}{124}-\frac{2\vartheta}{31}}\mathbf{1}_{C(\omega)^{1+\frac{4}{7(2-\vartheta)}-\frac{\varepsilon}{2}}<C(\pi)\leq C(\omega)^{\frac{9-14\vartheta}{7(1-2\vartheta)}-\frac{\varepsilon}{2}}}\\
+C(\pi)^{\frac{5}{22}+\frac{\vartheta}{22}+\varepsilon}C(\omega)^{\frac{1}{88}-\frac{\vartheta}{22}}\mathbf{1}_{C(\pi)> C(\omega)^{\frac{9-14\vartheta}{7(1-2\vartheta)}-\frac{\varepsilon}{2}}},\ 
C(\pi)^{\frac{5}{24}+\frac{\vartheta}{12}+\varepsilon}C(\omega)^{\frac{1}{24}-\frac{\vartheta}{12}+\varepsilon}\Big\}.
\end{multline}	
Here the implied constants depend only on $F$ and $\varepsilon$. 
\end{thmx}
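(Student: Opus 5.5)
The plan is to prove the first term of the minimum in \eqref{1.1} by amplifying the fourth moment in the symmetric reciprocity formula \eqref{f1.1}. The second term, $C(\pi)^{\frac{5}{24}+\frac{\vartheta}{12}+\varepsilon}C(\omega)^{\frac{1}{24}-\frac{\vartheta}{12}+\varepsilon}$, should come from the cubic-moment/relative trace formula bound of \cite{Yan23c} essentially verbatim, so I take it as given.

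\textbf{Setting up the amplified moment.} Fix $\pi$ with central character $\omega$. Let $\mathfrak{q}$ be the finite conductor of $\pi$, so that $\mathfrak{q}'\mid\mathfrak{q}$ is the conductor of $\omega$, and let $\mathbf{C}_v\asymp C_v(\pi)$ at each $v\mid\infty$. I choose a standard amplifier supported on prime ideals $\mathfrak{l}$ with $N_F(\mathfrak{l})\in[L,2L]$ and $\mathfrak{l}\nmid\mathfrak{q}$. It uses the weights $\overline{\lambda_\pi(\mathfrak{l})}$ and $\lambda_\pi(\mathfrak{l}^2)$, together with the Hecke relation $\lambda(\mathfrak{l})^2=\lambda(\mathfrak{l}^2)+\omega(\mathfrak{l})$, so that the amplified quantity is at least $L^{2-\varepsilon}$. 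Expanding the square of the amplifier gives $\sum_{\mathfrak{n}}a_{\mathfrak{n}}\lambda(\mathfrak{n})$, where $\mathfrak{n}$ runs over $1$, $\mathfrak{l}_1\mathfrak{l}_2$, $\mathfrak{l}_1^2\mathfrak{l}_2^2$, and so on. The central-character twist $\omega(\mathfrak{l})$ in the Hecke relation forces the dual weight $\lambda_\sigma(\mathfrak{q}\mathfrak{q}'^{-1})$ and the conductor $\mathfrak{n}$ of the dual family to arrange themselves so that the off-diagonal dual levels are squarefree squares $(\mathfrak{l}_1\mathfrak{l}_2)^2$. This is the square-level phenomenon announced in the introduction, and I would verify it first from the explicit form of $\widehat{\alpha_{\mathfrak{X}}}$.

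\textbf{Bounding the two sides.} I drop all $\pi'\ne\pi$ using the positivity of $|L(1/2,\pi')|^4$, which is exactly where the $4=2+2$ symmetry is used. Then
\[
L^{2}|L(1/2,\pi)|^4\ll \sum_{\mathfrak{n}}|a_{\mathfrak{n}}|\,\big|\mathrm{RHS}_{\mathfrak{n}}\big|+\text{(degenerate terms of Theorem \ref{thmA})}.
\]
\begin{itemize}
\item The main term $\mathbf{C}_\infty N(\mathfrak{q})N(\mathfrak{n})^{-1/2}$ summed against $a_{\mathfrak{n}}$ contributes $C(\pi)L^{1+\varepsilon}$.
\item For the dual sum over $\sigma$ of level $\mathfrak{n}\asymp L^{2}$ or $L^4$, I apply Hölder. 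The cubic part is controlled through the known cubic moment $\sum_\sigma L(1/2,\sigma)^3\ll (N\mathfrak{n})^{1+\varepsilon}$ (Conrey--Iwaniec type, available over $F$ via \cite{Yan23c}). The twisted factor $L(1/2,\sigma\times\overline{\omega})$ is bounded pointwise.
\item The $\lambda_\sigma(\mathfrak{q}\mathfrak{q}'^{-1})$ factor is bounded by $N(\mathfrak{q}\mathfrak{q}'^{-1})^{\vartheta}$. This is where $\vartheta$ enters, and it explains why the exponents of $C(\pi)$ and $C(\omega)$ move with $\vartheta$ in opposite directions.
\end{itemize}

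For the twisted factor there are two sources, giving two regimes.
\begin{itemize}
\item Theorem \ref{thm10.7}/\ref{thm10.8} (square level) applies when $C_{\mathrm{fin}}(\sigma)$ is a square of size $L^4$ relative to the internal parameter. Its amplifier length must be optimized separately. This handles the middle and large ranges of $C(\pi)/C(\omega)$.
\item \cite[Theorem 1.6]{Yan23c} is the Burgess-type bound in $C(\omega)$. It is better when $C(\omega)$ is relatively large, which is the first range.
\end{itemize}
The degenerate correction terms $\widetilde\Psi$ are explicit Hecke $L$-function ratios. I would bound them by convexity together with the choice of $\mathbf{s}$ near $\mathbf 0$ (using holomorphy in $\mathcal{R}^\heartsuit$), and expect them to be dominated by the main term.

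\textbf{Optimization and main obstacle.} Write $C(\pi)=C(\omega)^{\beta}$. In each of the three ranges I balance $C(\pi)L^{-1}$ against the dual contribution, which grows like a power of $L$ times powers of $C(\pi)^{1/2}$ and $C(\omega)$, and solve for $L$. The range endpoints $1+\frac{4}{7(2-\vartheta)}$ and $\frac{9-14\vartheta}{7(1-2\vartheta)}$ should be precisely where the optimal $L$ leaves the admissibility range $L\gg C_\infty^{1/2}C_{\mathrm{fin}}^{1+\varepsilon}$ of Theorem \ref{thm10.7}, or where two bounds cross. Taking fourth roots then gives the exponents $\frac{13}{60},\ \frac14-\frac{1-2\vartheta}{31},\ \frac{5}{22}$. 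The hardest step will be the middle regime. There the square-level hybrid bound must be inserted inside the spectral sum over $\sigma$ of level $(\mathfrak{l}_1\mathfrak{l}_2)^2$, uniformly in $\mathfrak{l}_1,\mathfrak{l}_2$ and the archimedean truncation $C_v(\sigma)\le\mathbf{C}_v^\varepsilon$. My approach would be Cauchy--Schwarz: a spectral large sieve for the fourth power of $L(1/2,\sigma)$, with the pointwise square-level bound applied to $|L(1/2,\sigma\times\overline\omega)|^2$ with its own auxiliary parameter, checking that the hypotheses of Theorem \ref{thm10.7} hold throughout that range.
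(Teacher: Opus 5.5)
Your overall architecture matches the paper's: amplify Theorem~\ref{thmD}, discard $\pi'\neq\pi$ by positivity, bound the dual side by a moment of $L(1/2,\sigma)$ times bounds for $L(1/2,\sigma\times\overline{\omega})$, and optimize $L$ in three ranges. However, the two inputs that fix the exponents in \eqref{1.1} are misattributed or assigned to the wrong ranges. As written, the argument does not reach the statement.

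\textbf{The second entry of the minimum.} The bound $C(\pi)^{\frac{5}{24}+\frac{\vartheta}{12}}C(\omega)^{\frac{1}{24}-\frac{\vartheta}{12}}$ is not available from \cite{Yan23c}. Its bound \eqref{10.21} gives no saving in the conductor of $\sigma$, so it cannot produce anything of this shape. In the paper this term is part of Theorem~\ref{thmB.}, and it comes out of the same amplified reciprocity. The steps are:
\begin{enumerate}
\item Apply H\"older with exponents $(\frac43,4)$ to $\mathcal{I}_1$.
\item Bound $\sum_\sigma|L(1/2,\sigma)|^4\ll N_F(\mathfrak{n})^{1+\varepsilon}$ by Theorem~\ref{thmE}.
\item Bound the \emph{twisted fourth moment} $\sum_\sigma|L(1/2,\sigma\times\overline{\omega})|^4$ again by Theorem~\ref{thmE}, viewing $\sigma\otimes\overline{\omega}$ as a form of level $\mathfrak{n}\mathfrak{q}'^2$ (see \eqref{10.6}).
\end{enumerate}
This gives $\mathcal{I}_1\ll N_F(\mathfrak{n})^{1+\varepsilon}C(\omega)^{\frac12+\varepsilon}$ up to the $\vartheta$-factor, which is the branch $L^2C(\omega)^{1/2}$ in \eqref{10.14}. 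Choosing $L^3=(C(\pi)/C(\omega))^{\frac12-\vartheta}$ then yields the term.

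Because you bound $L(1/2,\sigma\times\overline{\omega})$ only pointwise, you lose the $N_F(\mathfrak{n})^{1/4}$ saving over convexity that this averaging provides. With the pointwise inputs available in the paper (\eqref{10.21}, \eqref{10.23}, convexity), you reach at best $C(\pi)^{\frac{9}{40}+\frac{\vartheta}{20}}$ (the second case of Theorem~\ref{thmB.}). That is weaker when $C(\omega)$ is small compared to $C(\pi)$. Relatedly, the moment of $L(1/2,\sigma)$ used in the paper is the self-proved fourth moment Theorem~\ref{thmE}, not a cubic moment from \cite{Yan23c}.

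\textbf{Assignment of the twisted inputs to ranges.} You have this reversed. In the proof of Theorem~\ref{thmB}, \cite[Theorem 1.6]{Yan23c} enters only through the diagonal levels $\mathfrak{p}^{2\ell_{\mathfrak{p}}-2j}$, via Theorem~\ref{thm10.3}.

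The off-diagonal square levels, with $C_{\mathrm{fin}}(\sigma)\asymp L^4$, are treated in the \emph{first and third} ranges by Theorem~\ref{thm10.7}, with special test functions at both primes. This works through the two branches of \eqref{10.23}: $C_{\mathrm{fin}}(\sigma)^{3/16}C(\omega)^{3/8}$ when $L\le C(\omega)^{1/14}$, and $C_{\mathrm{fin}}(\sigma)^{5/8}C(\omega)^{1/4}$ otherwise. The switch is indeed the admissibility threshold of the relative-trace-formula amplifier, as you guessed. These branches give the off-diagonal terms $L^{2}\cdot L^{11/4}C(\omega)^{3/8}$ and $L^2\cdot L^{9/2}C(\omega)^{1/4}$ in \eqref{10.35}. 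The choices $L=C(\pi)^{\frac{2-4\vartheta}{15}}C(\omega)^{-\frac{1}{10}+\frac{4\vartheta}{15}}$ and $L=C(\pi)^{\frac{1-2\vartheta}{11}}C(\omega)^{-\frac{1-4\vartheta}{22}}$ then give the exponents $\frac{13}{60}+\frac{\vartheta}{15}$ and $\frac{5}{22}+\frac{\vartheta}{22}$.

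The middle range uses Theorem~\ref{thm10.8}: a Gross--Prasad vector at one prime and a minimal or unipotent-translate vector at the other. Through \eqref{f10.23} this gives $L^{23/8}C(\omega)^{3/8}$ for $L\le C(\omega)^{1/11}$, with $L=C(\pi)^{\frac{4(1-2\vartheta)}{31}}C(\omega)^{-\frac{3-8\vartheta}{31}}$.

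If instead you insert \cite[Theorem 1.6]{Yan23c} at the square levels in the first range, the off-diagonal term after dividing by $L^2$ is $L^{3}C(\omega)^{3/8}$ rather than $L^{11/4}C(\omega)^{3/8}$. You then recover only $C(\pi)^{\frac{7}{32}+\frac{\vartheta}{16}}C(\omega)^{\frac{3}{128}-\frac{\vartheta}{16}}$. At $C(\omega)\asymp C(\pi)$ this is exactly the difference between $\frac14-\frac{1}{128}$ (Corollary~\ref{cor10.5}) and $\frac14-\frac{1}{120}$.

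\textbf{The square levels.} These are not forced by the twist $\omega(\mathfrak{l})$. They come from the $\lambda(\mathfrak{p}^2)$ part of the amplifier, which produces dual levels $\mathfrak{p}_1^{\ell_1}\mathfrak{p}_2^{\ell_2}$ with $\ell_i\le 2$. The dual family is a $\mathrm{PGL}_2$ family because $\omega_1\overline{\omega_3}=\mathbf{1}$, and its worst members have conductor $\mathfrak{p}_1^2\mathfrak{p}_2^2$.
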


Theorem \ref{thmB} sharpens the subconvex bounds for automorphic representations over number fields established by Wu \cite{Wu22}. In particular, replacing $\pi$ with $\pi \otimes |\cdot|^{it}$ in \eqref{1.1} yields subconvexity for $L(1/2+it,\pi)$ uniformly in all parameters. 

Moreover, by incorporating the hybrid bounds of 
Theorems \ref{thm10.7} and \ref{thm10.8} into the reciprocity formula of \cite{HY26}, 
and together with the input from \cite[Theorem 1.6]{Yan23c} used therein, 
we obtain the following refined subconvexity bound for Rankin--Selberg 
$L$-functions.
\begin{thmx}\label{thmC} 
Let $0<\varepsilon<10^{-2}$, and let $\pi$ and $\pi'$ be unitary cuspidal 
automorphic representations of $\mathrm{GL}_2/F$. For $j\in \mathbb{Z}_{\geq 0}$, we have 
\begin{equation}\label{equ1.6}
L^{(j)}(1/2,\pi\times\pi')\ll C(\pi')^{3+\varepsilon}C(\pi)^{\frac{1}{2}-\frac{1}{60}+\varepsilon},
\end{equation}
where the implied constant depends only on $j$, $F$ and $\varepsilon$. Here, $L^{(j)}(s,\pi\times\pi')$ refers to the $j$-th derivative of $L(s,\pi\times\pi')$.
\end{thmx}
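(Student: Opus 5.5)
We will obtain Theorem \ref{thmC} by plugging the square-level hybrid bounds into the Rankin--Selberg reciprocity of \cite{HY26}, leaving the rest of that argument unchanged. Recall the structure of \cite{HY26}. One bounds $L(1/2,\pi\times\pi')$ by an amplified moment of the shape
\begin{align*}
\sum_{\pi}\Big|\sum_{\ell}x_\ell\lambda_\pi(\ell)\Big|^2|L(1/2,\pi\times\pi')|^2 ,
\end{align*}
where the sum runs over a family of conductor about $C(\pi)$. A reciprocity formula then converts this moment into a main term plus a dual moment. The dual moment involves $\mathrm{GL}_2$ forms $\sigma$ of level $\mathfrak{n}$, which is built from products of amplifier primes, together with twisted factors $L(1/2,\sigma\times\overline{\omega})$. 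In \cite{HY26} these factors were controlled by \cite[Theorem 1.6]{Yan23c}. Derivatives $L^{(j)}$ are reduced to the $j=0$ case by Cauchy's integral formula on a circle of radius $\asymp 1/\log C(\pi)$, using the uniform bounds for $L(1/2+it,\pi\times\pi')$ that come out of the same argument.

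\textbf{Step 1.} Re-examine the off-diagonal terms of \cite{HY26}. The aim is to show that, exactly as in the $\mathrm{GL}_2$ setting of \eqref{f1.1}, the amplifier $|\sum x_\ell\lambda_\pi(\ell)|^2$ expands via Hecke relations into $\lambda_\pi(\ell_1\ell_2)$ and $\lambda_\pi(\ell^2)$ terms. For prime amplifiers supported on $\ell\sim L$, the dual level $\mathfrak{n}$ should then be a squarefree square, so that the hypotheses of Theorems \ref{thm10.7} and \ref{thm10.8} hold. The $\lambda_\pi(\ell^2)$ terms may instead give a level $\ell^2$ with one prime only. We will check that these terms either contribute diagonally or are again square-level.

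\textbf{Step 2.} Substitute Theorems \ref{thm10.7} and \ref{thm10.8}, with their free parameter chosen optimally as a power of $C_{\fin}(\sigma)$ and $C(\omega)$, in place of \cite[Theorem 1.6]{Yan23c}. We keep the old bound in the ranges of $\mathfrak{n}$ where it is better. This gives a bound for the dual moment as a power of $N(\mathfrak{n})\asymp L^{2}$ and $C(\pi)$, with polynomial dependence on $C(\pi')$. Here we accept a crude exponent of $C(\pi')$, such as $3+\varepsilon$, by bounding all $\pi'$-dependence in the weights and in the archimedean conductors trivially.

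\textbf{Step 3.} Balance three contributions: the diagonal, of size about $C(\pi)^{1+\varepsilon}/L$; the main terms; and the dual bound from Step 2. Choosing $L=C(\pi)^{\delta}$ to equalize them should produce $|L(1/2,\pi\times\pi')|^2\ll C(\pi')^{6+\varepsilon}C(\pi)^{1-\frac{1}{30}+\varepsilon}$. Taking square roots gives \eqref{equ1.6}.

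The main obstacle is Step 1. We must verify that the square-level phenomenon persists in the Rankin--Selberg reciprocity, including the dependence on the conductor of $\pi'$, and that every term meeting a non-square level is either negligible or is still handled adequately by the old estimate. The exponent $1/60$ then follows from the numerology in Step 3, which we expect to mirror the computation behind the $1/120$ of Theorem \ref{thmB} after doubling.
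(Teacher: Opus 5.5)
Your overall route is the paper's: rerun the reciprocity of \cite{HY26}, and on the dual side replace \cite[Theorem 1.6]{Yan23c} by Theorems \ref{thm10.7} and \ref{thm10.8} wherever they are stronger, with $L$ optimized range by range as in the proof of Theorem \ref{thmB}. The paper gives no more detail than this. However, Step 1 rests on a wrong mechanism for producing square levels.

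\textbf{Square levels.} With a pure prime amplifier $\sum_{\ell\sim L}x_\ell\lambda_\pi(\ell)$, the off-diagonal terms $\lambda_\pi(\ell_1\ell_2)$ have level $\ell_1\ell_2$. This is squarefree but not a square. The ramified $\sigma_v$ there are unramified twists of Steinberg of conductor $\mathfrak p_v$, so neither Theorem \ref{thm10.7} nor Theorem \ref{thm10.8} applies. Such an amplifier is also not known to satisfy the lower bound $|\sum_\ell x_\ell\lambda_\pi(\ell)|^2\gg L^{2-\varepsilon}$, since $\lambda_\pi(\ell)$ may be small for many $\ell$.

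The paper's amplifier (proof of Theorem \ref{thm10.4}) is different. It uses $\lambda_\pi^*(\mathfrak p^{\ell_{\mathfrak p}})$ with $\ell_{\mathfrak p}\in\{1,2\}$, chosen via $|\lambda_{\pi}^*(\mathfrak{p})|^2=\omega_v^{-1}(\varpi_v)\lambda_{\pi}^*(\mathfrak{p}^2)+q_v^{-1}+1$. The square levels are the worst-case cross terms $\lambda_\pi^*(\mathfrak p_1^2\mathfrak p_2^2)$, whose level has norm $\asymp L^4$. It is to the second sum in \eqref{10.10} that the paper applies \eqref{10.23} and \eqref{f10.23}. The single-prime terms, i.e.\ your $\lambda_\pi(\ell^2)$ terms, are bounded by the old estimate \eqref{10.8}.

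\textbf{Derivatives.} Cauchy's formula on $|s-1/2|=r$ needs bounds at points off the critical line, so bounds for $L(1/2+it,\pi\times\pi')$ alone do not suffice. You also cannot simply apply the $j=0$ case to $\pi\otimes|\cdot|^{\lambda}$ with $\Re\lambda\neq0$, because that representation is not unitary.

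The paper handles this by running the whole argument at $\mathbf s=(\lambda,\overline{\lambda})$ in \cite[Theorem 4.4]{HY26}. This gives $L(1/2+\lambda,\pi\times\pi')\ll C(\pi')^{3+\varepsilon}C(\pi)^{\frac12-\frac1{60}+\varepsilon}$ directly for $|\lambda|=\varepsilon_1=10^{-5}\varepsilon$. Cauchy's formula is then applied on that fixed circle.

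With your radius $\asymp 1/\log C(\pi)$, you could instead pass from the critical line to the circle. Phragm\'en--Lindel\"of handles points to the right, and the functional equation handles points to the left; at that distance the latter costs only a factor absorbed into $C(\pi')^{\varepsilon}$. That step has to be supplied explicitly.
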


We note that although \eqref{equ1.6} improves the subconvexity bound 
of \cite[Theorem A]{HY26}, sharpening the saving from $1/64$ to $1/60$, 
this refinement arises \textit{only} from employing our new hybrid bounds in certain 
ranges of the analysis, whereas \cite{HY26} relied exclusively on 
\cite[Theorem 1.6]{Yan23c} throughout the entire range.  
The core structure and the principal analytic input nevertheless remain those 
developed in \cite{HY26}.  
Accordingly, in the present paper we concentrate on Theorem \ref{thmB}.


\subsubsection{Uniform Subconvexity for $\mathrm{GL}_2$} 
Noting that $C(\omega)\leq C(\pi)$,  we deduce the following uniform subconvex bound for $\mathrm{GL}_2$ over a number field from \eqref{1.1}.
\begin{cor}[Uniform Subconvexity]\label{cor1.1}
Let $F$ be a number field, and let $\pi$ be a unitary cuspidal automorphic representation of $\mathrm{GL}_2/F$. For $j\in \mathbb{Z}_{\geq 0}$, let $L^{(j)}(s,\pi)$ denote the $j$-th derivative of $L(s,\pi)$. We have
\begin{equation}\label{1.5}
L^{(j)}(1/2,\pi)\ll C(\pi)^{\frac{1}{4}-\frac{1}{120}+\varepsilon}.	
\end{equation}
Here the implied constants depend only on $j$, $F$, and $\varepsilon$.
\end{cor}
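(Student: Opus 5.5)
The plan is to specialise the first expression in the minimum \eqref{1.1} of Theorem~\ref{thmB} using only $C(\omega)\le C(\pi)$. We check that in each of its three ranges the total exponent of $C(\pi)$ is at most $\frac{29}{120}=\frac14-\frac1{120}$. We then pass from $L(1/2,\pi)$ to derivatives by a Cauchy integral. We use $\vartheta=7/64$, and could use any admissible $\vartheta<3/8$. The second expression in the minimum, $C(\pi)^{\frac{5}{24}+\frac{\vartheta}{12}}C(\omega)^{\frac{1}{24}-\frac{\vartheta}{12}}$, is useless here: with $C(\omega)\le C(\pi)$ it only gives the convexity exponent $\frac14$. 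So we work with the first expression throughout.

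\emph{First range}, $C(\pi)\le C(\omega)^{1+\frac{4}{7(2-\vartheta)}-\frac{\varepsilon}{2}}$. The exponent of $C(\omega)$ is $\frac1{40}-\frac{\vartheta}{15}>0$. Bounding $C(\omega)\le C(\pi)$ gives the exponent
\[
\tfrac{13}{60}+\tfrac{\vartheta}{15}+\tfrac1{40}-\tfrac{\vartheta}{15}=\tfrac{29}{120}.
\]
This is exactly $\frac14-\frac1{120}$, independently of $\vartheta$, so this range is the one that determines the final exponent.

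\emph{Second and third ranges.} Here $C(\omega)$ is small relative to $C(\pi)$. Bounding $C(\omega)\le C(\pi)$ crudely would lose: in the second range it gives only a saving of $\frac1{124}$. Instead we use the range condition itself. In the second range, $C(\omega)<C(\pi)^{a}$ with $a=\frac{7(2-\vartheta)}{18-7\vartheta}$ up to $O(\varepsilon)$. Since $\frac{3-8\vartheta}{124}>0$, the total exponent is
\[
\tfrac14-\tfrac{4-8\vartheta}{124}+a\,\tfrac{3-8\vartheta}{124}.
\]
In the third range, $C(\omega)<C(\pi)^{b}$ with $b=\frac{7(1-2\vartheta)}{9-14\vartheta}$. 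Since $\frac1{88}-\frac{\vartheta}{22}>0$, the total exponent is
\[
\tfrac5{22}+\tfrac{\vartheta}{22}+b\bigl(\tfrac1{88}-\tfrac{\vartheta}{22}\bigr).
\]
At $\vartheta=0$ these are $\frac14-\frac{5}{372}$ and about $0.2361$, both below $\frac{29}{120}\approx 0.2417$. The step needing care is checking that both inequalities persist at $\vartheta=7/64$. This is an elementary rational-function computation that we carry out explicitly, absorbing the $\varepsilon$-perturbations of the range endpoints into $C(\pi)^{\varepsilon}$.

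\emph{Derivatives.} For $j\ge1$ we apply the $j=0$ bound to $\pi\otimes|\cdot|^{it}$ with $|t|\le r:=1/\log C(\pi)$; this twist is again unitary cuspidal. Its analytic conductor is $\asymp C(\pi)$ uniformly for such $t$. More generally, for $s$ on the circle $|s-\frac12|=r$, the archimedean shift changes $C$ by a factor $\exp(O(r\log C(\pi)))=O(1)$. To stay within unitary twists, we note that the proof of Theorem~\ref{thmB}, via the regularised identity of Theorem~\ref{thmA} valid on all of $\mathcal{R}^{\heartsuit}$, gives the same bound for $L(s,\pi)$ with $|\Re s-\frac12|\le r$. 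Cauchy's formula on this circle then gives
\[
L^{(j)}(1/2,\pi)\ll j!\,r^{-j}\,C(\pi)^{\frac{29}{120}+\varepsilon}\ll_j C(\pi)^{\frac14-\frac1{120}+2\varepsilon}.
\]
Renaming $\varepsilon$ yields \eqref{1.5}. The main obstacle is justifying the uniform bound slightly off the critical line. If that fails, we fall back on an approximate functional equation for $L^{(j)}$, which expresses it through smoothed sums controlled by the same moment method with only logarithmic losses.
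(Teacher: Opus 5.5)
Your proposal is correct and follows the paper's own route. In the first range of \eqref{1.1}, bounding $C(\omega)\le C(\pi)$ gives exactly $\frac{29}{120}$; the other two ranges are handled through their endpoint conditions; and derivatives come from Cauchy's formula after extending the bound to $\pi\otimes|\cdot|^{\lambda}$ for small complex $\lambda$ by rerunning the argument, as in Remark~\ref{rmk10.9}. The paper uses the fixed radius $\varepsilon_1$ where you use $1/\log C(\pi)$, an inessential difference.

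Your third-range exponent simplifies to $\frac14-\frac{1-2\vartheta}{8(9-14\vartheta)}$, which is correct. The paper's display \eqref{fc10.48} instead claims the larger saving $\frac{5-13\vartheta}{8(9-14\vartheta)}$, but this is harmless. The binding range is the second one, with saving $\frac{30-39\vartheta}{124(18-7\vartheta)}\approx 0.0120>\frac1{120}$ at $\vartheta=\frac{7}{64}$. That saving does fall below $\frac1{120}$ once $\vartheta$ exceeds about $0.359$, so your aside that any $\vartheta<\frac38$ would do is too generous.
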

\begin{remark}
The saving $1/120$ is somewhat striking: it coincides with the exponent obtained in \cite[Theorem 3]{DFI94} for holomorphic cusp forms of \textit{trivial} nebentypus, under the well-known hypothesis (15) of loc. cit. Thus the same numerical threshold arises here from a rather different analytic mechanism.
\end{remark}

For general $\mathrm{GL}_2$-representations with arbitrary central character, Corollary \ref{cor1.1} appears to give the strongest currently known uniform saving, even over $F=\mathbb Q$. The previous best result, due to Blomer and Khan \cite[Theorem 1]{BK19a}, applies to Hecke--Maass cusp forms $f$ of prime level $q$, primitive central character, and Laplace eigenvalue $1/4+t_f^2$. They proved
\begin{equation}\label{eq1.6}
L(1/2+i\tau,f)\ll_{\tau,\varepsilon} (qT)^{\varepsilon}\big[q^{\frac{1}{4}-\frac{1}{128}}T^{\frac{1}{2}-\frac{1-2\vartheta}{20}}+q^{\frac{1}{8}}T^{\frac{1}{2}}\big],
\end{equation}
where $\tau\in \mathbb{R}$ and $T=1+|t_f|$, so that the analytic conductor is $qT^2$. 

However, \eqref{eq1.6} may not be strong for Maass forms with arbitrary Archimedean spectral parameters, such as, $g=f\otimes |\cdot|^{i\tau}$ when $|\tau|$ is large. In general, under the setting of \cite{BK19a}, our bound \eqref{1.5} yields the sharper estimate
\begin{equation}\label{1.7}
L(1/2+i\tau,f)=L(1/2,f\otimes|\cdot|^{i\tau})\ll (q(1+|\tau+t_f|)(1+|\tau-t_f|))^{\frac{1}{4}-\frac{1}{120}+\varepsilon}.
\end{equation}

For the extreme conductor-dropping case $g=f\otimes |\cdot|^{it_{f}}$, whose analytic conductor is $qT$ rather than $qT^2$, the estimate \eqref{1.7} simplifies to 
\begin{equation}\label{1.10}
L(1/2+it_f,f)=L(1/2,g)\ll (q(1+|t_f|))^{\frac{1}{4}-\frac{1}{120}+\varepsilon}.
\end{equation} 

The explicit bound \eqref{1.10} at these special values seems to have been largely unexplored. Previously, such bounds were only available implicitly from \cite{MV10} or explicitly with the weaker exponent $1/4-5/2048$ in \cite{Wu22}.

\subsubsection{Subconvexity of Artin $L$-functions}
Taking $\pi$ to be an automorphic induction  in Corollary \ref{cor1.1} and Theorem \ref{thmB}, we obtain:
\begin{cor}\label{cor1.3}
Let $F$ be a number field of degree $d=[F:\mathbb{Q}]$. Let $K/F$ be a quadratic extension. Let $\mathcal{O}$ be an order in $K$ of discriminant $D$. Let $\chi$ be a character of the Picard group $\mathrm{Pic}(\mathcal{O})$. For $t\in \mathbb{R}$, we have 
\begin{equation}\label{f1.11}
L(1/2+it,\chi)\ll ((1+|t|)^{2d}D)^{\frac{1}{4}-\frac{1}{120}+\varepsilon},	
\end{equation}
where the implied constant depends only on $F$ and $\varepsilon$.
\end{cor}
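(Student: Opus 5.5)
The plan is to realize $L(s,\chi)$ as the standard $L$-function of an automorphic induction and then apply Corollary~\ref{cor1.1}. Since $\chi$ is a character of $\mathrm{Pic}(\mathcal{O})$, class field theory identifies it with a finite-order Hecke character of $\mathbb{A}_K^\times$, which we again call $\chi$. It is unramified outside the primes dividing the conductor $\mathfrak{f}$ of $\mathcal{O}$, and it is trivial at the archimedean places. Let $\pi=\pi_\chi=\mathrm{AI}_{K/F}(\chi)$ be its automorphic induction to $\mathrm{GL}_2(\mathbb{A}_F)$. This is an isobaric automorphic representation, unitary because $\chi$ is, and it satisfies
\begin{equation*}
L(s,\pi_\chi)=L(s,\chi).
\end{equation*}
The twist by $|\cdot|^{it}$ is harmless: we have $L(1/2+it,\chi)=L(1/2,\pi_\chi\otimes|\cdot|_{\mathbb{A}_F}^{it})$, and $\pi_\chi\otimes|\cdot|^{it}=\mathrm{AI}_{K/F}(\chi|\cdot|_{\mathbb{A}_K}^{it})$. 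So it is enough to bound $L(1/2,\pi)$ for $\pi=\pi_\chi\otimes|\cdot|^{it}$.

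There are two cases. Suppose first that $\chi\neq\chi^{\sigma}$, where $\sigma$ is the Galois conjugation of $K/F$. Then $\pi$ is cuspidal, and Corollary~\ref{cor1.1} (with $j=0$) gives $L(1/2,\pi)\ll C(\pi)^{1/4-1/120+\varepsilon}$. Suppose instead that $\chi=\chi^\sigma$. Then $\chi$ factors through the norm, $\chi=\eta\circ N_{K/F}$, and $\pi=\eta\boxplus\eta\eta_{K/F}$, where $\eta_{K/F}$ is the quadratic character of $K/F$. In this case $L(1/2+it,\chi)=L(1/2+it,\eta)L(1/2+it,\eta\eta_{K/F})$. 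Each factor is a $\mathrm{GL}_1$ $L$-function, and Burgess/Weyl-type subconvexity for Hecke characters over $F$ (for example the bound of Wu) gives a saving of at least $1/16>1/120$ for each. The conductor of the product is the product of the conductors, so the same bound holds in this case too.

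It remains to bound the analytic conductor. The conductor of the induction is multiplicative: $C(\pi)=D_{K/F}\cdot N_{K/F}\mathfrak{c}(\chi)\cdot C_\infty$, up to bounded powers of the absolute discriminant $d_F$, which is absorbed into the implied constant. Here $D_{K/F}=N_F(\mathfrak{d}_{K/F})$. At the finite places, the conductor of $\chi$ divides $\mathfrak{f}\mathcal{O}_K$, and the discriminant of $\mathcal{O}$ relative to $\mathcal{O}_F$ is $\mathfrak{d}_{K/F}\mathfrak{f}^2$. Hence the finite part of $C(\pi)$ is at most $N(\mathfrak{d}_{K/F})N(\mathfrak{f})^2\asymp D$. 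Since the archimedean components of $\chi$ are trivial, at each archimedean place $w$ of $K$ the twisted character is $|\cdot|^{it}$ or $|\cdot|_{\mathbb{C}}^{it}$, with conductor $\asymp(1+|t|)^{[K_w:\mathbb{R}]}$. Multiplying over all places gives $C_\infty(\pi)\ll(1+|t|)^{[K:\mathbb{Q}]}=(1+|t|)^{2d}$. Substituting $C(\pi)\ll(1+|t|)^{2d}D$ into the previous step yields \eqref{f1.11}.

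The main obstacle is the bookkeeping for non-maximal orders. We need to check that the conductor of $\chi$ is controlled by the conductor $\mathfrak{f}$ of $\mathcal{O}$, and that the normalization of $D$ (absolute versus relative discriminant) matches $C(\pi)$ up to constants depending only on $F$. The cleanest route is to write $\mathrm{Pic}(\mathcal{O})$ as $K^\times\backslash\mathbb{A}_K^\times/(K_\infty^\times\prod_v\widehat{\mathcal{O}}_v^\times)$. From this description, $\chi$ is trivial on $\widehat{\mathcal{O}}_v^\times\supseteq 1+\mathfrak{f}\mathcal{O}_{K,v}$, which gives the claimed conductor bound.
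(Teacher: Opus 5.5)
Your proposal is correct and follows the paper's route. The paper obtains Corollary~\ref{cor1.3} in one line by applying Corollary~\ref{cor1.1} (and Theorem~\ref{thmB}) to the automorphic induction of $\chi$ twisted by $|\cdot|^{it}$, leaving the conductor bound $C(\pi)\ll(1+|t|)^{2d}D$ implicit, while your separate treatment of the non-cuspidal case $\chi=\chi^{\sigma}$ through $\mathrm{GL}_1$ subconvexity fills in a point the paper passes over (over a general $F$ the known Burgess-type saving for Hecke characters carries a factor $1-2\vartheta$ rather than being a clean $1/16$, but any saving above $1/120$ suffices).
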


Corollary \ref{cor1.3} improves the exponent of \cite[Corollary 1]{BHM07} from $1/4-1/1889$ to $1/4-1/120$. Such subconvexity bounds were a key ingredient in the cubic analogue of Duke's equidistribution theorem, established by Einsiedler, Lindenstrauss, Michel and Venkatesh \cite[Hypotheses A.1 \& A.2]{ELMV11}. Other applications of Corollary \ref{cor1.3} to arithmetic of Picard groups are provided in \textsection\ref{sec1.3}. 

\subsubsection{Fixed Central Characters}
When the central character is fixed, Theorem \ref{thmB} readily implies the following:
\begin{cor}\label{cor1.6}
 Let $d=[F:\mathbb{Q}]$ and $t\in \mathbb{R}$. Let $\pi$ be a unitary cuspidal automorphic representation of $\mathrm{GL}_2/F$ with a \textit{fixed} central character.  Then 
\begin{equation}\label{1.9}
L(1/2+it,\pi)\ll C(\pi)^{\frac{1}{4}-\frac{1-2\vartheta}{24}+\varepsilon}(1+|t|)^{\frac{d}{2}-\frac{(1-2\vartheta)d}{12}+\varepsilon},	
\end{equation}
where the implied constant depends only on $F$ and $\varepsilon$.
\end{cor}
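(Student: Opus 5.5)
The plan is to read off \eqref{1.9} from the second entry of the minimum in Theorem~\ref{thmB}. That entry is
\begin{align*}
C(\pi)^{\frac{5}{24}+\frac{\vartheta}{12}+\varepsilon}C(\omega)^{\frac{1}{24}-\frac{\vartheta}{12}+\varepsilon},
\end{align*}
and since $\frac{5}{24}+\frac{\vartheta}{12}=\frac14-\frac{1-2\vartheta}{24}$, it already has the shape of \eqref{1.9}. It therefore remains to pass from the central value to $s=1/2+it$ and to account for the size of the central character.

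First, write $L(1/2+it,\pi)=L(1/2,\pi_t)$ with $\pi_t:=\pi\otimes|\cdot|^{it}$, which is again unitary cuspidal. At each archimedean place the spectral parameters of $\pi_t$ are those of $\pi_v$ shifted by $t$. Hence
\begin{align*}
C(\pi_t)\ll C(\pi)(1+|t|)^{2d},
\end{align*}
where one factor $(1+|t|)^{2}$ comes from each real place, with the usual normalization at complex places. Substituting this into $C(\pi_t)^{\frac14-\frac{1-2\vartheta}{24}+\varepsilon}$ gives $C(\pi)^{\frac14-\frac{1-2\vartheta}{24}+\varepsilon}$ times
\begin{align*}
(1+|t|)^{2d(\frac14-\frac{1-2\vartheta}{24})}=(1+|t|)^{\frac d2-\frac{(1-2\vartheta)d}{12}},
\end{align*}
which are exactly the two factors in \eqref{1.9}.

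The main obstacle is the factor $C(\omega)^{\frac{1-2\vartheta}{24}}$. The central character of $\pi_t$ is $\omega|\cdot|^{2it}$, whose analytic conductor is of size $(1+|t|)^{d}$ and not $O(1)$. Applying Theorem~\ref{thmB} verbatim to $\pi_t$ would lose an extra $(1+|t|)^{\frac{(1-2\vartheta)d}{24}}$. I would avoid this by not twisting the central character. The amplified reciprocity \eqref{f1.1} and the hybrid bounds are built from data $\chi_j,\omega_j$, and the archimedean shift by $|\cdot|^{it}$ can be carried entirely by the unitary parameters $\chi_j$ (equivalently, by the complex variables $\mathbf{s}$ of Theorem~\ref{thmA}). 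The central-character data $\omega_2=\omega_4=\omega$ then stay equal to the fixed $\omega$.

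So I would rerun the deduction of the second bound in Theorem~\ref{thmB}, taking the spectral family to be $\pi$ with the analytic conductor $C(\pi,t)\asymp C(\pi_t)$ of $L(s,\pi)$ at $s=1/2+it$, and with $C(\omega)\ll_F 1$ throughout. The step to check is that every occurrence of $C(\omega)$ in that argument enters only through the finite conductor of $\omega$ and the fixed archimedean type of $\omega$, and not through the $t$-shift. Granting this, the factor $C(\omega)^{\frac{1-2\vartheta}{24}}$ is $O_F(1)$ and \eqref{1.9} follows. If that uniformity failed, the fallback would be to keep the extra $(1+|t|)^{O(d)}$ loss, or to balance it against the other two entries of the minimum in Theorem~\ref{thmB} in the range $C(\pi)\ll(1+|t|)^{O(d)}$.
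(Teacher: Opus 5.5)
\paragraph{What matches the paper.} Your first two steps are exactly the paper's argument. The paper justifies Corollary~\ref{cor1.6} only by remarking that Theorem~\ref{thmB} ``readily implies'' it. That is, one takes the second entry of the minimum with $C(\omega)\ll_F 1$, replaces $\pi$ by $\pi\otimes|\cdot|^{it}$, and uses $C(\pi\otimes|\cdot|^{it})\ll C(\pi)(1+|t|)^{2d}$.

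Your worry about the central character is justified, and the paper does not address it. The central character of $\pi\otimes|\cdot|^{it}$ is $\omega|\cdot|^{2it}$, with $C(\omega|\cdot|^{2it})\asymp(1+|t|)^{d}$ for fixed $\omega$. So the second entry honestly gives the $t$-exponent $2d(\frac{5}{24}+\frac{\vartheta}{12})+d(\frac{1}{24}-\frac{\vartheta}{12})=d(\frac{11}{24}+\frac{\vartheta}{12})$. This exceeds the claimed $d(\frac{5}{12}+\frac{\vartheta}{6})$ by $\frac{(1-2\vartheta)d}{24}$.

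Your fallback of balancing against the first entry of the minimum does not help either. For fixed $\pi$ and $|t|\to\infty$ one has $C(\pi\otimes|\cdot|^{it})\asymp C(\omega|\cdot|^{2it})^{2}$. This lies in the third regime of the first entry, which gives $d(\frac{41}{88}+\frac{\vartheta}{22})$, again too large.

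\paragraph{The gap.} The step you propose to grant is false. Moving the shift out of the central character and into the $\chi_j$, or into $\mathbf{s}$, does not change what the dual side sees.

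Take $\chi_1=\chi_3=\mathbf{1}$, $\chi_2=\chi_4=|\cdot|^{-it}$, $\omega_1=\omega_3=\mathbf{1}$, $\omega_2=\omega_4=\omega$. The spectral side of Theorem~\ref{thmA} is then indeed $|L(1/2+it,\pi)|^4$ over $\mathcal{A}([G],\omega)$. But the last dual factor is $L(1/2,\sigma\times\chi_2^{-1}\omega_2\chi_4^{-1})=L(1/2,\sigma\times\omega|\cdot|^{2it})$. Equivalently, $\mathbf{s}=(0,-it,0,it)$ produces $L(1/2+2it,\sigma\times\omega)$.

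More generally, any data giving $|L(1/2+it,\pi)|^4$ with $\omega_1\omega_2=\omega$ forces
\[
\chi_1^2=\omega_1,\quad \chi_3^2=\omega_3,\quad \chi_2=\chi_1^{-1}|\cdot|^{-it},\quad \chi_4=\chi_3^{-1}|\cdot|^{-it}.
\]
Put $\sigma'=\sigma\otimes\chi_1^{-1}\chi_3$, which has trivial central character. The dual side is then always $L(1/2,\sigma')^3L(1/2,\sigma'\times\omega|\cdot|^{2it})$.

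So the conductor $\asymp(1+|t|)^{d}$ of $\omega|\cdot|^{2it}$ enters exactly where $C(\omega)$ enters the second entry of Theorem~\ref{thmB}. That place is the bound $\mathcal{I}_1\ll N_F(\mathfrak{q}\mathfrak{q}'^{-1})^{\vartheta}N_F(\mathfrak{n})^{1+\varepsilon}C(\omega)^{\frac12+\varepsilon}$ of Lemma~\ref{lem10.1}. It comes from the convexity-strength fourth moment \eqref{10.6}. There the weight $e^{-\frac{\pi}{2}\sum_{v\mid\infty}|\nu_{\sigma_v}|}$ keeps $\sigma$ at bounded spectral parameter, so the twisted dual $L$-function has archimedean conductor $\asymp(1+|t|)^{2d}$.

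\paragraph{What is actually proved.} Your argument, like the paper's one-sentence deduction, proves \eqref{1.9} only when $1+|t|\le C(\pi)^{\varepsilon}$. In general it gives \eqref{1.9} only with an extra factor $(1+|t|)^{\frac{(1-2\vartheta)d}{24}}$.

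To reach the stated $t$-exponent by this method, redo the balancing \eqref{10.19} with $\omega$ replaced by $\omega|\cdot|^{2it}$. The dual factor $C(\omega)^{1/2}$ there would have to be improved to $C(\omega|\cdot|^{2it})^{\vartheta}$. That amounts to an essentially Lindel\"of-strength bound for the twisted dual moment in the $t$-aspect, and neither your proposal nor the paper supplies such an input.
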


The exponent $1/4-(1-2\vartheta)/24$ in \eqref{1.9} represents the current best result for the subconvexity problem for $\mathrm{GL}_2/F$ with a \textit{fixed} central character $\omega$. This bound was first established by Blomer and Khan \cite[Theorem 4]{BK19} in the square-free level aspect with \textit{trivial} central character. The special case $t=0$ in \eqref{1.9} offers several improvements over \cite[Theorem 4]{BK19}:
\begin{itemize}
\item It extends the results from $\mathbb{Q}$ to a general number field, with uniformity in Archimedean parameters (spectral parameters or weights).
\item It generalizes from trivial central characters and \textit{square-free} levels $q$ with $(q,6)=1$ to arbitrary levels and fixed central characters.
\end{itemize}

Notably, the technical constraints that $q$ be \textit{square-free} and satisfy $(q,6)=1$, which were previously imposed to simplify the root number and to ensure the non-negativity of certain local integrals involving old forms, are no longer necessary. This is because the spectral side of our symmetric spectral reciprocity is intrinsically \textit{non-negative} (see Corollary \ref{cor5.2} in \textsection\ref{sec5}).

\subsubsection{Quadratic Central Characters}
When the central character is quadratic, the subconvexity bound in Theorem \ref{thmB} can be further improved as follows.
 
\begin{thm}\label{cor1.7}
Let $F$ be a number field, and let $\pi$ be a unitary cuspidal automorphic representation of $\mathrm{GL}_2/F$ with central character $\omega$. Suppose $\omega^2=\mathbf{1}$. Then 
\begin{equation}\label{1.8}
 L(1/2,\pi)\ll
C(\pi)^{\frac{1}{4}-\frac{9(1 - 2\vartheta)}{4(59 + 2\vartheta)}+\varepsilon
}
C(\omega)^{
\frac{6-18\vartheta}{4(59+2\vartheta)}+\varepsilon
}.
\end{equation}	
In particular, we have
\begin{align*}
L(1/2,\pi)\ll
C(\pi)^{\frac{1}{4}-\frac{1}{80}+\varepsilon
}.
\end{align*}
\end{thm}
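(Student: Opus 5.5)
We run the same amplified fourth-moment argument that gives Theorem~\ref{thmB}, but exploit $\omega^2=\mathbf{1}$ on the dual side. Fix $\pi$ with conductor $\mathfrak{q}$ and archimedean conductors $C_v(\pi)$. We apply \eqref{f1.1} with $\omega_2=\omega_4=\omega$, spectral window $\mathbf{C}_v\asymp C_v(\pi)$, and a standard amplifier built from $\lambda_\pi(\mathfrak{p})$ and $\lambda_\pi(\mathfrak{p}^2)$ over primes $\mathfrak{p}$ with $N_F(\mathfrak{p})\sim L$. By the nonnegativity of the spectral side (Corollary~\ref{cor5.2}), this gives
\[
L^{2-\varepsilon}|L(1/2,\pi)|^4\ll C(\pi)^{\varepsilon}\Big(C(\pi)\,L+\sum_{\mathfrak{n}}\frac{|c_{\mathfrak{n}}|\,C(\pi)^{1/2}}{N_F(\mathfrak{n})^{1/2}}\Big|\sum_{\sigma\in\mathcal{F}_0(\mathfrak{n},\mathbf{1})}\lambda_\sigma(\mathfrak{q}\mathfrak{q}'^{-1})L(1/2,\sigma)^3L(1/2,\sigma\times\overline{\omega})\Big|\Big).
\]
Here $\mathfrak{n}$ ranges over products of amplifier primes, so $N_F(\mathfrak{n})\ll L^4$, and $\mathfrak{n}$ is a squarefree square in the off-diagonal range.

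The new input is that, since $\omega$ is quadratic, $\sigma\times\overline{\omega}=\sigma\otimes\omega$ has trivial central character. We would not bound $L(1/2,\sigma\otimes\omega)$ individually by the hybrid bounds (Theorems~\ref{thm10.7}, \ref{thm10.8}, or \cite[Theorem~1.6]{Yan23c}) followed by a cubic moment. Instead we apply H\"older to the dual sum:
\[
\sum_\sigma |L(1/2,\sigma)|^3|L(1/2,\sigma\otimes\omega)|\le\Big(\sum_\sigma|L(1/2,\sigma)|^4\Big)^{3/4}\Big(\sum_\sigma|L(1/2,\sigma\otimes\omega)|^4\Big)^{1/4}.
\]
Both factors are fourth moments over a family with trivial central character.
\begin{itemize}
\item The first fourth moment is $\ll(N_F(\mathfrak{n})\mathbf{C}_\infty^{\varepsilon})^{1+\varepsilon}$ by the classical level-aspect fourth moment, which is the diagonal of \eqref{f1.1} itself with trivial amplifier.
\item For the twisted fourth moment we again use Theorem~\ref{thmA}, applied to the family $\sigma\otimes\omega$ of level $\mathfrak{n}\mathfrak{q}'^2$. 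This gives a bound of roughly $(N_F(\mathfrak{n})C(\omega))^{1+\varepsilon}$, plus a dual term that we estimate with the hybrid bounds of Theorem~\ref{thm10.7}, using the square level of $\mathfrak{n}$.
\end{itemize}
We expect the interplay of these bounds with the $\vartheta$-dependence of the amplifier, through $\lambda_\sigma(\mathfrak{q}\mathfrak{q}'^{-1})\ll N_F(\mathfrak{q}\mathfrak{q}'^{-1})^{\vartheta}$, to produce exponents with denominators $59+2\vartheta$.

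Finally we optimize $L$. Collecting terms, we expect a bound of the shape
\[
|L(1/2,\pi)|^4\ll C(\pi)^{\varepsilon}\Big(C(\pi)L^{-1}+C(\pi)^{1/2}C(\omega)^{a}L^{b}\Big)
\]
with explicit $a,b$ depending on $\vartheta$. Balancing the two terms gives \eqref{1.8}. The special case $\ll C(\pi)^{1/4-1/80+\varepsilon}$ then follows by taking $\vartheta=7/64$ and $C(\omega)\le C(\pi)^{1/2}$, which holds because $\omega^2=\mathbf{1}$ forces the conductor of $\omega$ to be at most the square root of that of $\pi$, up to the archimedean part. If the worst case instead occurs at $\vartheta=0$, we would check the numerics there. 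The main obstacle is the second step: bounding the twisted fourth moment uniformly in both $\mathfrak{n}$ and $C(\omega)$ without losing the saving. We would first try the hybrid bound of Theorem~\ref{thm10.7} directly on $L(1/2,\sigma\otimes\omega)^2$ inside the H\"older factor, since square level there gives the $C_{\fin}(\sigma)^{1/4}$ saving.
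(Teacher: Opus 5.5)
Your plan never uses $\omega^2=\mathbf{1}$ in an essential way, and the one step where that hypothesis would have to enter is unsupported.

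\textbf{Why your route gives nothing new.} The split $\bigl(\sum_\sigma|L(1/2,\sigma)|^4\bigr)^{3/4}\bigl(\sum_\sigma|L(1/2,\sigma\otimes\omega)|^4\bigr)^{1/4}$ is exactly the general-$\omega$ argument of Lemma~\ref{lem10.1}. The twisted fourth-moment bound $(N_F(\mathfrak{n})C(\omega))^{1+\varepsilon}$ that you attribute to Theorem~\ref{thmA} is not what it gives. The forms $\sigma\otimes\omega$ are a thin subfamily of $\mathcal{F}_0(\mathfrak{n}\mathfrak{q}'^2,\mathbf{1})$. Theorem~\ref{thmE} plus positivity yields only $C_\infty(\omega)^{2+\varepsilon}N_F(\mathfrak{n}\mathfrak{q}'^2)^{1+\varepsilon}$, i.e.\ $N_F(\mathfrak{n})^{1+\varepsilon}C(\omega)^{2+\varepsilon}$, which is \eqref{10.6}. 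Inserting this gives $\mathcal{I}_1\ll N_F(\mathfrak{n})^{1+\varepsilon}C(\omega)^{1/2+\varepsilon}$. The optimization then yields only \eqref{10.20}, namely $C(\pi)^{5/24+\vartheta/12}C(\omega)^{1/24-\vartheta/12}$. That is the convexity bound when $C(\omega)\asymp C(\pi)$, a range that genuinely occurs for quadratic $\omega$. Your fallback, applying Theorem~\ref{thm10.7} inside the H\"older factor, also never uses $\omega^2=\mathbf{1}$; it only reproduces Theorem~\ref{thmB}, whose saving $1/120$ is weaker than $1/80$. Likewise, the denominator $59+2\vartheta$ does not come from $\lambda_\sigma(\mathfrak{q}\mathfrak{q}'^{-1})$.

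\textbf{The missing input: the cubic moment of quadratic twists.} Because $\omega$ is quadratic, the Weyl-strength cubic moment \eqref{10.47} applies. This is due to Conrey--Iwaniec \cite{CI00}, extended to number fields by Nelson \cite{Nel19}, and gives
$\sum_{\sigma\in\mathcal{F}_0(\mathfrak{n},\mathbf{1})}e^{-\frac{\pi}{100}\sum_{v\mid\infty}|\nu_{\sigma_v}|}|L(1/2,\sigma\otimes\omega)|^3\ll N_F(\mathfrak{n})^{1+\varepsilon}C(\omega)^{1+\varepsilon}$.
The right H\"older split is therefore $(3/2,3)$:
\[
\mathcal{I}_1\ll N_F(\mathfrak{q}\mathfrak{q}'^{-1})^{\vartheta}\Big(\sum_{\sigma}|L(1/2,\sigma)|^{9/2}\Big)^{2/3}\Big(\sum_{\sigma}|L(1/2,\sigma\otimes\omega)|^{3}\Big)^{1/3}.
\]
The $9/2$-moment is bounded by the fourth moment \eqref{10.5} times the pointwise bound of Corollary~\ref{cor1.6}, giving $N_F(\mathfrak{n})^{9/8-(1-2\vartheta)/48+\varepsilon}$. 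Hence $\mathcal{I}_1$, and likewise $\mathcal{I}_2$, is $\ll N_F(\mathfrak{q}\mathfrak{q}'^{-1})^{\vartheta}N_F(\mathfrak{n})^{13/12-(1-2\vartheta)/72+\varepsilon}C(\omega)^{1/3+\varepsilon}$. With $N_F(\mathfrak{n})\ll L^4$ this gives \eqref{e10.51}. You then balance $C(\pi)L^{-1}$ against $C(\pi)^{1/2+\vartheta}C(\omega)^{1/3-\vartheta}L^{7/3-(1-2\vartheta)/18}$. This involves $L^{(59+2\vartheta)/18}$, so the denominator $59+2\vartheta$ comes from the saving $(1-2\vartheta)/24$ of Corollary~\ref{cor1.6}.

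\textbf{The final deduction.} Your claim $C(\omega)\le C(\pi)^{1/2}$ is false. For example, prime level $p$ with quadratic nebentypus mod $p$ and fixed archimedean type has $C(\pi)\asymp C(\omega)\asymp p$. You do not need it: using only $C(\omega)\le C(\pi)$, \eqref{1.8} gives $C(\pi)^{1/4-3/(4(59+2\vartheta))+\varepsilon}$, and $3/(4(59+2\vartheta))\ge 1/80$ whenever $\vartheta\le 1/2$.
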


\subsection{Applications to Picard Group Arithmetic}\label{sec1.3}
As in \cite[Theorems 2.7 $\&$ 2.8]{DFI02} subconvex bounds for class group $L$-functions yield applications to class groups. We pose the following general question. 
\begin{que}
Let $K/F$ be a quadratic extension and $\mathcal{O}$ be an order in $K$ of discriminant $D$. Let $\chi$ be a non-trivial character of the Picard group $\mathrm{Pic}(\mathcal{O})$ and $H$ be a cyclic subgroup of $\mathrm{Pic}(\mathcal{O})$. Define   
\begin{align*}
&N(\mathcal{O};\chi):=\min_{\mathfrak{a}\in \mathcal{O}}\big\{N_{K/F}(\mathfrak{a}):\ \chi(\mathfrak{a})\neq 1\big\},\\
&N(\mathcal{O},H):=\min_{\mathfrak{a}\in \mathcal{O}:\ H=\langle \mathfrak{a}\rangle}\big\{N_{K/F}(\mathfrak{a}) \big\}.
\end{align*}
How large can $N(\mathcal{O};\chi)$ and $N(\mathcal{O},H)$ be in terms of $D$ and $[\mathrm{Pic}(\mathcal{O}): H]$?
\end{que}

By Minkowski's theorem, each
ideal class of $\mathrm{Pic}(\mathcal{O})$ is represented by an integral ideal of norm $\leq 2\pi^{-1}\sqrt{D}$. Consequently, 
\begin{equation}\label{1.13}
N(\mathcal{O};\chi)\leq 2\pi^{-1}\sqrt{D}\  \ \text{and}\ \  N(\mathcal{O},H)\leq 2\pi^{-1}[\mathrm{Pic}(\mathcal{O}): H]^2\sqrt{D}.
\end{equation}

In the special case $F=\mathbb{Q}$ and $\mathcal{O}=\mathcal{O}_K$ is the maximal order, Duke--Friedlander--Iwaniec  \cite{DFI02} and Michel \cite[Theorem 5.1]{Mic07} sharpened \eqref{1.13} to 
\begin{equation}\label{1.14}
N(\mathcal{O};\chi)\ll D^{\frac{1}{2}-\frac{1}{12001}}\  \ \text{and}\ \  N(\mathcal{O},H)\ll [\mathrm{Pic}(\mathcal{O}): H]^2D^{\frac{1}{2}-\frac{1}{11521}},
\end{equation}
where the constant implied is absolute but
ineffective.

Utilizing Corollary \ref{cor1.3}, we obtain the following refinement of \eqref{1.14}: 
\begin{thmx}\label{thmeC}
Let $0<\varepsilon<10^{-2}$. Then 	
\begin{align*}
N(\mathcal{O};\chi)\ll D^{\frac{1}{2}-\frac{1}{60}+\varepsilon}\  \ \text{and}\ \  N(\mathcal{O},H)\ll [\mathrm{Pic}(\mathcal{O}): H]^2D^{\frac{1}{2}-\frac{1}{120}+\varepsilon},
\end{align*}
where the constant implied depends only on $F$ and $\varepsilon$ but
ineffective.
\end{thmx}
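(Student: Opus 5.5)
The plan is to follow the Duke--Friedlander--Iwaniec \cite{DFI02} and Michel \cite{Mic07} argument, with Corollary \ref{cor1.3} replacing their weaker subconvexity input.

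\textbf{Bound for $N(\mathcal{O};\chi)$.} Put $y=N(\mathcal{O};\chi)$. By definition $\chi(\mathfrak{a})=1$ for every integral invertible ideal $\mathfrak{a}$ with $N_{K/F}(\mathfrak{a})<y$. Fix a nonnegative smooth weight $W$ supported in $[1/2,1]$ and consider
\begin{align*}
S=\sum_{\mathfrak{a}}\chi(\mathfrak{a})W\big(N_{K/F}(\mathfrak{a})/Y\big),\qquad Y<y.
\end{align*}
Since every $\mathfrak{a}$ in the sum has norm at most $Y<y$, the character $\chi$ is identically $1$ on the support. Hence $S$ equals the unweighted count $\sum_{\mathfrak{a}}W(N\mathfrak{a}/Y)$. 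This count is $\gg Y\,L(1,\chi_{K/F})$ times an arithmetic factor for the order $\mathcal{O}$. By Siegel's theorem, or its extension to $F$ by Brauer--Siegel, it is $\gg_\varepsilon Y D^{-\varepsilon}$; this step is the source of ineffectivity.

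On the other hand, Mellin inversion gives
\begin{align*}
S=\frac{1}{2\pi i}\int_{(2)}L(s,\chi)\widetilde W(s)Y^s\,ds,
\end{align*}
which I shift to $\Re s=1/2$. Because $\chi$ is nontrivial, $L(s,\chi)=L(s,\pi_\chi)$ is entire and there is no pole. Corollary \ref{cor1.3} together with the rapid decay of $\widetilde W$ then gives $S\ll Y^{1/2}D^{1/4-1/120+\varepsilon}$. Comparing the two estimates gives $Y^{1/2}\ll D^{1/4-1/120+2\varepsilon}$, so $Y\ll D^{1/2-1/60+\varepsilon}$. Since $Y$ was any value below $y$, this bounds $y$. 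Factors of $\mathcal{O}$ versus $\mathcal{O}_K$ at primes dividing the conductor of the order only cost $D^\varepsilon$.

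\textbf{Bound for $N(\mathcal{O},H)$.} Here I follow Michel's orthogonality argument. Let $h=[\mathrm{Pic}(\mathcal{O}):H]$ and consider the characters of $\mathrm{Pic}(\mathcal{O})$ trivial on the proper subgroups of $H$. Detecting "$\mathfrak{a}$ generates $H$" means detecting both that the class lies in $H$ and that it generates $H$. For the first condition I use $\frac{1}{h}\sum_{\chi|_H=1}\chi$. For the second I use Möbius inversion over the subgroups of the cyclic group $H$. The resulting smoothed count of ideals of norm about $Y$ generating $H$ splits into a main term and an error:
\begin{itemize}
\item The main term is $\gg Y D^{-\varepsilon}\varphi(|H|)/(h|H|)$, which after the divisor bound on the Möbius factors is at least $Y D^{-\varepsilon}h^{-1}$.
\item The error is at most (number of characters) $\times\, Y^{1/2}D^{1/4-1/120+\varepsilon}$, where I bound the character sum with the same contour shift as above.
\end{itemize}
To choose $Y$ I need the number of characters appearing. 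If this count, weighted by the Möbius factors, is $\ll h D^{\varepsilon}$, then positivity holds once $Y\gg h^2D^{1/2-1/60+\varepsilon}$. That gives a bound even better than claimed. The stated exponent $1/120$ is what results if one loses a factor $D^{1/120}$ elsewhere, for instance in Michel's treatment, which uses a Cauchy--Schwarz or divisor-type counting over generators. I will mirror Michel's proof of \cite[Theorem 5.1]{Mic07}, inserting exponent $1/4-\delta$ with $\delta=1/120$, and verify that it produces $[\mathrm{Pic}(\mathcal{O}):H]^2D^{1/2-\delta}$.

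\textbf{Main obstacle.} The hardest step is the combinatorics for $H$: tracking exactly how many characters enter the detection of generators, so that the loss is $h^2$ and no extra power of $D$. A second point to check is the lower bound for the ideal count of a non-maximal order over a general $F$. It should reduce via the class number formula for orders and Siegel's theorem over $F$, with all losses $D^{\varepsilon}$.
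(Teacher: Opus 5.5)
\paragraph{Part 1: the bound for $N(\mathcal{O};\chi)$.} Here you follow the paper's route: a smoothed count of ideals of norm below $N(\mathcal{O};\chi)$, Mellin inversion, a shift to $\Re s=1/2$, Corollary \ref{cor1.3}, and Siegel. But the lower bound for the unweighted count is not justified as you state it.

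Siegel's theorem only controls the residue, i.e.\ the main term $Y\widetilde W(1)\Res_{s=1}L(s,\mathbf 1)$. The count itself is this main term plus $\frac{1}{2\pi i}\int_{(1/2)}L(s,\mathbf 1)\widetilde W(s)Y^s\,ds$. Here $L(s,\mathbf 1)$ is, up to Euler factors at the conductor of $\mathcal{O}$, equal to $\zeta_F(s)L(s,\chi_{K/F})$.

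With only the convexity bound, this integral is $\ll Y^{1/2}D^{1/4+\varepsilon}$. That bound exceeds the main term for every $Y\le D^{1/2}$, so no lower bound for the count follows, and the comparison yields only the trivial $Y\ll D^{1/2+\varepsilon}$. You therefore also need $L(1/2+it,\mathbf 1)\ll D^{1/4-1/120+\varepsilon}$, with polynomial dependence on $|t|$, for the trivial character.

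The paper builds this in by subtracting the two Mellin integrals,
$N(\mathcal{O};\chi)\widehat V(1)\Res_{s=1}L(s,\mathbf 1)=\frac{1}{2\pi i}\int_{(1/2)}[L(s,\chi)-L(s,\mathbf 1)]\widehat V(s)X^s\,ds$,
and bounding both $L$-functions by Corollary \ref{cor1.3}. A Burgess-type bound for $L(s,\chi_{K/F})$ over $F$ would serve equally well. With this added, your first part is the paper's proof.

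\paragraph{Part 2: the bound for $N(\mathcal{O},H)$.} The same trivial-character term occurs in each M\"obius piece of your argument and needs the same input.

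Your bookkeeping there is also off. A main term normalized as $\gg YD^{-\varepsilon}h^{-1}$, set against an error of size $hD^{\varepsilon}\cdot Y^{1/2}D^{1/4-1/120+\varepsilon}$, gives positivity only for $Y\gg h^4D^{1/2-1/60+\varepsilon}$, not $h^2$.

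To fix this, detect the index-$d$ subgroup $H^d\subseteq H$ by $\frac{1}{hd}\sum_{\chi|_{H^d}=1}\chi$. Then:
\begin{itemize}
\item the $hd$ characters cancel the prefactor;
\item each of the $2^{\omega(|H|)}\ll D^{\varepsilon}$ pieces contributes $O(Y^{1/2}D^{1/4-1/120+\varepsilon})$;
\item the main term is $\frac{Y}{h}\frac{\varphi(|H|)}{|H|}\widetilde W(1)\Res_{s=1}L(s,\mathbf 1)\gg Yh^{-1}D^{-\varepsilon}$.
\end{itemize}
Comparing these gives $N(\mathcal{O},H)\ll h^2D^{1/2-1/60+\varepsilon}$. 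This is stronger than the stated bound and implies it. So there is no loss of $D^{1/120}$ to reproduce; for this part the paper itself gives only a pointer to \cite[\textsection 5.2]{Mic07}.
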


\subsection{Applications to the Moment Estimates}  
In the proof of Theorem \ref{thmB}, we require the following moment estimates, which follow directly from Theorem \ref{thmA}.
\begin{thmx}\label{thmE}
Let $\mathfrak{q}\subseteq \mathcal{O}_F$ be an integral ideal, and $\omega$ be a unitary character of $F^{\times}\backslash\mathbb{A}_F^{\times}$. Let $\mathcal{F}_0(\mathfrak{q},\omega)$ be the set of unitary cuspidal automorphic representations of $\mathrm{GL}_2/F$ of central character $\omega$ and level $\mathfrak{q}$, i.e., the arithmetic conductor divides $\mathfrak{q}$. For $v\mid\infty$, let $\mathbf{C}_v>1$ be a constant, and $\mathbf{C}_{\infty}:=\prod_{v\mid\infty}\mathbf{C}_v$. Then 
\begin{equation}\label{10.1}
\sum_{\substack{\pi\in \mathcal{F}_0(\mathfrak{q},\omega)\\
C_v(\pi)\leq \mathbf{C}_v,\ v\mid\infty}}|L(1/2,\pi)|^4\ll \mathbf{C}_{\infty}^{1+\varepsilon}N_F(\mathfrak{q})^{1+\varepsilon},
\end{equation}
where the implied constant depends only on $\varepsilon$ and $F$. 
\end{thmx}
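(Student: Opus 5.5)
We will deduce \eqref{10.1} from Theorem \ref{thmA}, specialized to the data in the discussion around \eqref{f1.1} but with the trivial amplifier $\mathfrak{n}=\mathcal{O}_F$. That is, we take $\chi_1=\chi_2=\chi_3=\chi_4=\omega_1=\omega_3=\mathbf{1}$ and $\omega_2=\omega_4=\omega$. We choose the Schwartz--Bruhat data $\boldsymbol{\Phi}$ at the finite places to be the standard level-$\mathfrak{q}$ sections. At the archimedean places we choose local data whose induced weight $\alpha_{\mathfrak{X}}(\pi)$ is nonnegative for every $\pi$ and satisfies $\alpha_{\mathfrak{X}}(\pi)\gg \mathbf{C}_\infty^{-\varepsilon}$ for $\pi\in\mathcal{F}_0(\mathfrak{q},\omega)$ with $C_v(\pi)\le \mathbf{C}_v$.

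The nonnegativity is the key structural input. Because of the $4=2+2$ symmetry, at $\mathbf{s}=\mathbf{0}$ the spectral integrand of $\mathcal{J}^{\heartsuit}_{\Spec}$ is a local-period square times $|L(1/2,\pi)|^4$ (this is Corollary \ref{cor5.2}). Positivity then lets us drop all $\pi$ outside the target family, including old forms and the continuous spectrum. It also lets us bound the left side of \eqref{10.1} by $\mathbf{C}_\infty^{\varepsilon}\,\mathcal{J}^{\heartsuit}_{\Spec}(\mathbf{0},\mathfrak{X})$. We evaluate at $\mathbf{s}=\mathbf{0}$, or at small $\mathbf{s}$ and let $\mathbf{s}\to\mathbf{0}$, which is admissible since $\mathbf{0}\in\mathcal{R}^\heartsuit$.

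Next we bound the right side of \eqref{1.3} term by term.

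\textbf{Geometric terms.} The terms $\widetilde{\Psi}^{(i)}_{\mathrm{Geo}}$ are products of Hecke $L$-values, such as $\zeta_F$ and $L(\cdot,\omega)$ near the edge of the critical strip, together with local integrals. Computing these local integrals yields the main term $\mathbf{C}_\infty N_F(\mathfrak{q})$, up to $(\mathbf{C}_\infty N_F(\mathfrak{q}))^{\varepsilon}$. Any polar behaviour as $\mathbf{s}\to\mathbf{0}$ cancels among the eight terms, or is handled by a contour shift that costs only $\varepsilon$-powers.

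\textbf{Rankin--Selberg and dual terms.} The terms $\widetilde{\Psi}_{\mathrm{RS}}$ and $\widetilde{\Psi}_{\mathrm{Dual}}$ are similar degenerate periods built from Hecke $L$-functions. Using only convexity for Hecke $L$-functions of $\omega$, we expect to bound them by $(\mathbf{C}_\infty N_F(\mathfrak{q}))^{1+\varepsilon}$ or better.

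\textbf{Dual spectral side.} Here $\mathfrak{n}=\mathcal{O}_F$, so the dual family consists of level-one forms $\sigma$ with $C_v(\sigma)\le \mathbf{C}_v^{\varepsilon}$, and the weight is $\widehat{\alpha_{\mathfrak{X}}}(\sigma)\ll \mathbf{C}_\infty^{1/2}N_F(\mathfrak{q})^{1/2}$ up to $\varepsilon$-powers. This family has only $\mathbf{C}_\infty^{O(\varepsilon)}$ members. For each member, $L(1/2,\sigma)^3\ll \mathbf{C}_\infty^{O(\varepsilon)}$ trivially. For the remaining factor we use convexity:
$$L(1/2,\sigma\times\overline\omega)\ll C(\omega)^{1/2+\varepsilon}\ll N_F(\mathfrak{q})^{1/2}\,\mathbf{C}_\infty^{1/2+\varepsilon}.$$
Note that $C(\omega)\le N_F(\mathfrak{q})\mathbf{C}_\infty$ whenever the family is nonempty. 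We also have $|\lambda_\sigma(\mathfrak{q}\mathfrak{q}'^{-1})|\ll N_F(\mathfrak{q})^{\vartheta+\varepsilon}$.

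The dual side is therefore at most $\mathbf{C}_\infty^{1+\varepsilon} N_F(\mathfrak{q})^{1+\vartheta+\varepsilon}$. To remove the $\vartheta$ we can use Cauchy--Schwarz in $\sigma$ with the Hecke relation on a set of bounded size, or simply note that $\lambda_\sigma(\mathfrak{q}\mathfrak{q}'^{-1})$ is absorbed into the exact local weight, whose archimedean–nonarchimedean computation should give $N_F(\mathfrak{q}')^{1/2}N_F(\mathfrak{q})^{1/2}$ instead of $N_F(\mathfrak{q})$. In either case the dual side is $O\big((\mathbf{C}_\infty N_F(\mathfrak{q}))^{1+\varepsilon}\big)$.

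\textbf{Main obstacle.} The hardest step will be the precise local computations. At ramified finite places and at the archimedean places we must show two things. First, $\alpha_{\mathfrak{X}}$ is indeed $\gg$ a constant on the whole family, including $\pi$ of conductor strictly dividing $\mathfrak{q}$ (where old-form vectors enter). Second, $\widehat{\alpha_{\mathfrak{X}}}$ and the degenerate terms have the stated size uniformly in $\mathbf{C}_v$. We would first treat unramified and archimedean places via explicit Whittaker and Tate integrals, then handle ramified places by choosing $\Phi_v$ as characteristic functions of $\mathfrak{q}$-congruence lattices so that all local zeta integrals become finite explicit sums.
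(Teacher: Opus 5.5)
Your overall route is the paper's: specialize Theorem~\ref{thmA} (via Theorem~\ref{thmD}) to $\mathfrak{n}=\mathcal{O}_F$, then use the positivity of Corollary~\ref{cor5.2} and a single test vector as in Lemma~\ref{lemma5.3}, and bound everything else. The single test vector also disposes of your worry about old forms: the sum over the orthonormal basis dominates the contribution of the normalized newvector.

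The step that fails as written is the removal of $\vartheta$ on the dual cuspidal side. You bound $C(\omega)$ by $N_F(\mathfrak{q})\mathbf{C}_\infty$ inside the convexity estimate and reach $\mathbf{C}_\infty^{1+\varepsilon}N_F(\mathfrak{q})^{1+\vartheta+\varepsilon}$. Neither of your two repairs works:
\begin{itemize}
\item Cauchy--Schwarz in $\sigma$ gains nothing. The dual family is level one, and the weight $e^{-\frac{\pi}{2}\sum_v|\nu_{\sigma_v}|}$ makes it effectively of bounded size. Since $\mathfrak{q}\mathfrak{q}'^{-1}$ is a fixed ideal, there is no average on which Ramanujan could hold.
\item $\lambda_\sigma(\mathfrak{q}\mathfrak{q}'^{-1})$ is not absorbed by the local weight. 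By Lemma~\ref{lem8.6}, the factor $|\lambda_{\sigma}(\mathfrak{p}_v^{e_v(\mathfrak{q})-e_v(\mathfrak{q}')})|$ genuinely occurs at $v\mid\mathfrak{q}$.
\end{itemize}
You also dropped a second $\vartheta$-loss. In your normalization the true archimedean dual weight is $\mathbf{C}_\infty^{1/2}[\mathbf{C}_\infty C_\infty(\omega)^{-1}]^{\vartheta}$, not $\mathbf{C}_\infty^{1/2}$ (Lemma~\ref{lem8.7}). The reason is that $W_{2,v}$ is concentrated on $|y_v|_v\ll C_v(\omega)\mathbf{C}_v^{-1}$. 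There the spherical Whittaker function of a possibly non-tempered $\sigma_v$ is of size roughly $|y_v|_v^{1/2-\vartheta}$ rather than $|y_v|_v^{1/2}$.

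The fix is simply not to discard the information about $\omega$ before combining the factors. The Hecke eigenvalue is taken at $\mathfrak{q}\mathfrak{q}'^{-1}$, which is complementary to the conductor $\mathfrak{q}'$ of $\omega$. That conductor is what enters convexity: $L(1/2,\sigma\times\overline{\omega})\ll C(\omega)^{1/2+\varepsilon}$ with $C_{\fin}(\omega)=N_F(\mathfrak{q}')$. Since $\vartheta<1/2$,
\[
N_F(\mathfrak{q}\mathfrak{q}'^{-1})^{\vartheta}N_F(\mathfrak{q}')^{\frac12}=N_F(\mathfrak{q})^{\vartheta}N_F(\mathfrak{q}')^{\frac12-\vartheta}\le N_F(\mathfrak{q})^{\frac12},\qquad \Big[\frac{\mathbf{C}_\infty}{C_\infty(\omega)}\Big]^{\vartheta}C_\infty(\omega)^{\frac12}\le \mathbf{C}_\infty^{\frac12}.
\]
This is the paper's \eqref{10.4}: in its normalization the dual side is $\ll N_F(\mathfrak{q})^{1/2+\vartheta+\varepsilon}C(\omega)^{1/2-\vartheta+\varepsilon}\mathbf{C}_\infty^{-1/2+\vartheta+\varepsilon}$. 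Then $C(\omega)\ll\mathbf{C}_\infty N_F(\mathfrak{q})$, valid whenever the family is nonempty, yields \eqref{10.1}.

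Two smaller omissions:
\begin{itemize}
\item The continuous part $\mathcal{I}^{\heartsuit}_{\mathrm{Eis}}(\mathbf{0},\mathfrak{X}_{\mathfrak{n}})$ of the dual side must also be bounded. This is Proposition~\ref{prop8.9}, by the same mechanism.
\item The degenerate terms individually have polar hyperplanes through $\mathbf{0}$, e.g.\ $s_2+s_4\pm s_1\pm s_3=0$ and $s_1=0$. Rather than relying on cancellation, the paper applies Cauchy's formula on the polytorus $\mathbf{S}^4_\varepsilon$ with radii $\varepsilon,2\varepsilon,5\varepsilon,10\varepsilon$. These radii keep those hyperplanes off the contour, so each term can be bounded separately.
\end{itemize}
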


Theorem \ref{thmE} sharpens \cite[Theorem 6.6]{Wu14} by improving the dependence on the spectral parameters and extends \cite[Theorem 6.3]{Wu22} from the unramified central character case to arbitrary central characters. Moreover, the Eisenstein series analogue of \eqref{10.1} stated below appears to be new over general number fields.

\begin{thmx}\label{thmF}
 Let $\mathfrak{q}\subseteq \mathcal{O}_F$ be an integral ideal, and $\omega$ be a unitary character of $F^{\times}\backslash\mathbb{A}_F^{\times}$. Let $\mathfrak{X}(\mathfrak{q},\omega)$ be the set of unitary character of $F^{\times}\backslash\mathbb{A}_F^{(1)}$ such that the representation $\mu\boxplus \overline{\mu}\omega$ has level dividing $\mathfrak{q}$. For $v\mid\infty$, let $\mathbf{C}_v>1$ be a constant, and $\mathbf{C}_{\infty}:=\prod_{v\mid\infty}\mathbf{C}_v$. Then 
\begin{equation}\label{e10.5}
\sum_{\substack{\mu\in \mathfrak{X}(\mathfrak{q},\omega)}}\int_{\mathbb{R}}\frac{|L(1/2+it,\mu)L(1/2-it,\overline{\mu}\omega)|^4}{|L(1+2it,\mu^2\overline{\omega})|^2}\beta(t,\mu,\omega)dt\ll \mathbf{C}_{\infty}^{1+\varepsilon}N_F(\mathfrak{q})^{1+\varepsilon},
\end{equation}
where the implied constant depends only on $\varepsilon$ and $F$, and 
\begin{align*}
\beta(t,\mu,\omega):=\mathbf{1}_{C_v(\mu |\cdot|^{it})C_v(\mu\overline{\omega}|\cdot|^{it})\leq \mathbf{C}_v,\ v\mid\infty}.
\end{align*}
\end{thmx}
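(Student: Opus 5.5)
Theorem~\ref{thmF} will be derived in parallel with Theorem~\ref{thmE}, directly from the reciprocity identity of Theorem~\ref{thmA} at $\mathbf{s}=\mathbf{0}$ (or at $\mathbf{s}$ near $\mathbf{0}$ in $\mathcal{R}^{\heartsuit}$ followed by continuity). Take the datum $\chi_j=\omega_1=\omega_3=\mathbf{1}$, $\omega_2=\omega_4=\omega$, and no amplifier, so that $\mathfrak{n}=\mathcal{O}_F$. Choose the local Schwartz--Bruhat data $\Phi_j$ so that the spectral weight $\alpha_{\mathfrak{X}}(\pi)$ has three properties. It is nonnegative for every $\pi$ in $\mathcal{A}([G],\omega)$, cuspidal and continuous alike; this is the positivity recorded in Corollary~\ref{cor5.2}, coming from the symmetric $4=2+2$ structure, since $\alpha_{\mathfrak{X}}$ is a sum of squared local Rankin--Selberg periods. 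It is $\gg N_F(\mathfrak{q})^{-\varepsilon}\mathbf{C}_\infty^{-\varepsilon}$ on every $\pi$ of level dividing $\mathfrak{q}$ with $C_v(\pi)\le\mathbf{C}_v$. Finally, it is normalized so that $\mathcal{J}^{\heartsuit}_{\Spec}(\mathbf{0},\mathfrak{X})$ carries the measure $d\mu_\pi$ in its natural form.

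The key point is that $\mathcal{J}^{\heartsuit}_{\Spec}$ contains both the cuspidal sum and the continuous part. For $\pi=\mu|\cdot|^{it}\boxplus\overline{\mu}\omega|\cdot|^{-it}$, the four $L$-factors become $|L(1/2+it,\mu)L(1/2-it,\overline{\mu}\omega)|^4$, up to conjugation symmetries valid since $\pi\simeq\widetilde{\pi}\otimes\omega$ on the unitary axis. The Plancherel measure $d\mu_\pi$, which includes the normalization of Eisenstein series, contributes exactly the factor $|L(1+2it,\mu^2\overline{\omega})|^{-2}$. Positivity therefore gives
\[
\text{LHS of }\eqref{e10.5}\ll (N_F(\mathfrak{q})\mathbf{C}_\infty)^{\varepsilon}\,\mathcal{J}^{\heartsuit}_{\Spec}(\mathbf{0},\mathfrak{X}),
\]
after discarding the nonnegative cuspidal contribution. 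It then remains to bound the right side of \eqref{1.3} by $\mathbf{C}_\infty^{1+\varepsilon}N_F(\mathfrak{q})^{1+\varepsilon}$. This is exactly the bound already needed for Theorem~\ref{thmE}, so I would reuse it verbatim.

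The right side is handled term by term. First, the dual moment $\mathcal{I}^{\heartsuit}_{\Spec}$ has $\widehat{\alpha_{\mathfrak{X}}}(\sigma)\approx\mathbf{C}_\infty^{1/2}N_F(\mathfrak{q})^{1/2}$ supported on level $\mathcal{O}_F$ and $C_v(\sigma)\le\mathbf{C}_v^{\varepsilon}$. It is therefore a sum over $O(\mathbf{C}_\infty^{\varepsilon})$ forms of $L(1/2,\sigma)^3L(1/2,\sigma\times\overline\omega)$. Bounding $L(1/2,\sigma)\ll\mathbf{C}_\infty^{\varepsilon}$ and $L(1/2,\sigma\times\overline\omega)\ll C(\omega)^{1/2+\varepsilon}\le N_F(\mathfrak{q})^{1/2+\varepsilon}\mathbf{C}_\infty^{1/2}$ by convexity gives $\ll\mathbf{C}_\infty^{1+\varepsilon}N_F(\mathfrak{q})^{1+\varepsilon}$. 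Second, the degenerate terms $\widetilde{\Psi}^{(i)}_{\mathrm{Geo}},\widetilde{\Psi}^{(i)}_{\mathrm{RS}},\widetilde{\Psi}^{(i)}_{\mathrm{Dual}}$ are products and ratios of Hecke $L$-functions times local integrals of size $\mathbf{C}_\infty N_F(\mathfrak{q})$. These are bounded by convexity and standard lower bounds for $L(1+it,\cdot)$.

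The main obstacle is the local analysis. One must construct test data at ramified places dividing $\mathfrak{q}$ whose weight is uniformly bounded below on all of level $\mathfrak{q}$, including principal series with ramified $\mu$ and $\overline{\mu}\omega$ of varying conductor exponents, while the dual weight stays of size $(\mathbf{C}_\infty N_F(\mathfrak{q}))^{1/2}$. The same holds archimedeanly for the truncation $\beta$. Since these local computations are shared with Theorem~\ref{thmE}, the only new input is checking that the lower bound for $\alpha_{\mathfrak{X}}$ extends to the Eisenstein part of the spectrum with the stated normalization.
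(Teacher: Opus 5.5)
Your proposal follows the paper's proof. You take $\mathfrak{n}=\mathcal{O}_F$ in Theorem~\ref{thmD}, use that both the cuspidal and Eisenstein parts of $\mathcal{J}_{\mathrm{Spec}}^{\heartsuit}(\mathbf{0},\mathfrak{X}_{\mathfrak{n}})$ are sums of squares (Corollary~\ref{cor5.2}), lower-bound the Eisenstein weights by Lemma~\ref{lemma5.4} in place of Lemma~\ref{lemma5.3}, and reuse the upper bounds for the right-hand side from the proof of Theorem~\ref{thmE}.

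Two precisions. First, the passage to $\mathbf{s}=\mathbf{0}$ is not a limit from nearby $\mathbf{s}$ but the Cauchy average over $\mathbf{S}_{\varepsilon}^4$ built into Theorem~\ref{thmD}, because individual degenerate terms such as $\widetilde{\Psi}^{(5)}_{\mathrm{Geo}}$ (which contains $\Lambda(1+2s_1,\mathbf{1})\Lambda(1+2s_3,\mathbf{1})$) have poles at $\mathbf{0}$. Second, the actual dual weight carries an extra factor $[\mathbf{C}_{\infty}C_{\infty}(\omega)^{-1}]^{\vartheta}|\lambda_{\sigma}(\mathfrak{q}\mathfrak{q}'^{-1})|$ (Proposition~\ref{prop8.2}); this is harmless only because, combined with the convexity bound, it yields $C(\omega)^{1/2-\vartheta}$ as in \eqref{10.4}, so you should not replace $C(\omega)$ by $N_F(\mathfrak{q})\mathbf{C}_{\infty}$ before accounting for it.
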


\begin{cor}
Let notation be as before. Then 
\begin{align*}
\sum_{\substack{\mu\in \widehat{F^{\times}\backslash\mathbb{A}_F^{(1)}}\\ C_v(\mu)\leq \mathbf{C}_v,\ v\mid\infty\\ \text{$\mu$ of level $\mathfrak{q}$}}}|L(1/2,\mu)|^8\ll \mathbf{C}_{\infty}^{2+\varepsilon}N_F(\mathfrak{q})^{2+\varepsilon},
\end{align*}
where the implied constant depends only on $\varepsilon$ and $F$.	
\end{cor}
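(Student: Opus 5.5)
We deduce the corollary from Theorem~\ref{thmF} with $\omega=\mathbf{1}$, using a positivity/short-interval argument. Take $\omega=\mathbf{1}$ and let $\mathfrak{q}'$ be the level. The representation $\mu\boxplus\overline{\mu}$ then has conductor $\mathfrak{q}^2$ whenever $\mu$ has conductor $\mathfrak{q}$. So we apply Theorem~\ref{thmF} with level $\mathfrak{q}^2$, which contributes $N_F(\mathfrak{q})^{2+\varepsilon}$. At $t=0$ the numerator is
\[
|L(1/2,\mu)L(1/2,\overline{\mu})|^4=|L(1/2,\mu)|^8 ,
\]
since $L(1/2,\overline{\mu})=\overline{L(1/2,\mu)}$ for unitary $\mu$. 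For the archimedean condition, $C_v(\mu|\cdot|^{it})C_v(\overline{\mu}|\cdot|^{it})\asymp C_v(\mu)^2$ for $|t|\ll 1$. We therefore choose the constants in Theorem~\ref{thmF} to be $\mathbf{C}_v^2$ (up to a harmless constant factor), which gives the bound $\mathbf{C}_\infty^{2+\varepsilon}$.

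The remaining task is to pass from the $t$-integral to the value at $t=0$. There are two issues.

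\emph{The denominator.} We have $|L(1+2it,\mu^2)|^{-2}\gg (C(\mu^2)(1+|t|))^{-\varepsilon}$ by the standard lower bound (Siegel-type bounds are not needed in this direction; the standard zero-free region gives $L(1+it,\chi)\ll \log$ upwards, and the numerator is what we must bound from below). Concretely, $|L(1+2it,\mu^2)|\ll (C(\mu)(1+|t|))^{\varepsilon}$ by convexity. Hence the weight $|L(1+2it,\mu^2)|^{-2}$ is $\gg (\mathbf{C}_\infty N_F(\mathfrak{q}))^{-\varepsilon}$, which is exactly the direction we need.

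\emph{Dropping the integral.} We bound $|L(1/2,\mu)|^8$ by a local average of $|L(1/2+it,\mu)|^8$ over $|t|\le (\log C)^{-1}$, or more robustly over $|t|\le 1$ with a smooth weight. This is the main technical step. The plan is to use the approximate functional equation: write $L(1/2,\mu)$ as a contour integral $\frac{1}{2\pi i}\int_{(\varepsilon)} L(1/2+w,\mu)\, G(w)\,dw/w$-type representation. Equivalently, use the standard sub-mean-value bound
\[
|L(1/2,\mu)|^{8}\ll (\log C)^{O(1)}\int_{|t|\le \log^2 C}|L(1/2+it,\mu)|^8\,dt+ C^{-100},
\]
obtained from Cauchy's formula on a small disc plus convexity to handle the shift off the critical line (as in Heath-Brown's argument). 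The integral range $\log^2 C$ only changes the archimedean conductor by a factor $C^{\varepsilon}$. So we enlarge $\mathbf{C}_v$ by $C^{\varepsilon}$, which is absorbed into the $\varepsilon$ in the exponent.

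Note also that the characters $\mu$ of $F^\times\backslash\mathbb{A}_F^{(1)}$ in Theorem~\ref{thmF} correspond bijectively, up to the twist $|\cdot|^{it}$ already absorbed into the $t$-integral, with the characters in the corollary's sum. Since all terms are nonnegative, summing over $\mu$ of level $\mathfrak{q}$ and applying Theorem~\ref{thmF} with $\mathfrak{q}^2$ and constants $\mathbf{C}_v^{2+\varepsilon}$ yields the stated bound.

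The main obstacle is the sub-mean-value step. It must be stated so that the shifted points $1/2+it$ still satisfy the archimedean indicator $\beta(t,\mu,\mathbf{1})$. This is handled by the enlargement of $\mathbf{C}_v$ described above.
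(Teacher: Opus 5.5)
Your reduction is the intended one. The paper states this corollary without proof, as the case $\omega=\mathbf{1}$ of Theorem~\ref{thmF}: at level $\mathfrak{q}^2$ (the conductor of $\mu\boxplus\overline{\mu}$ is the square of that of $\mu$), and with $\mathbf{C}_v$ replaced by roughly $\mathbf{C}_v^2$ (for $\omega=\mathbf{1}$ the indicator $\beta$ is $C_v(\mu|\cdot|^{it})^2\le \mathbf{C}_v$, not the product with $C_v(\overline{\mu}|\cdot|^{it})$ you wrote, though this is harmless). The paper leaves implicit how to pass from the $t$-integral to the point $t=0$. Your sub-mean-value step supplies that passage correctly, provided the ``convexity'' you invoke is the right kind. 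It must be the log-convexity in $\sigma$ of $\int_{\mathbb{R}}|L(\sigma+it,\mu)e^{(\sigma+it-1/2)^2}|^8\,dt$ (Gabriel, Hardy--Ingham--P\'olya), combined with the functional equation for $\sigma<1/2$ on a disc of radius $(\log C)^{-1}$. A pointwise convexity bound for $L$ would lose a power of $C$.

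One step, however, fails as written: the denominator. You assert $|L(1+2it,\mu^2)|\ll (C(\mu)(1+|t|))^{\varepsilon}$ for every $\mu$. This is false when $\mu^2=\mathbf{1}$, i.e.\ $\mu$ trivial or quadratic, and such $\mu$ do occur among characters of conductor $\mathfrak{q}$. For these, $L(1+2it,\mu^2)=\zeta_F(1+2it)$ has a pole at $t=0$, so the weight $|L(1+2it,\mu^2)|^{-2}$ is $\asymp |t|^2$ for $|t|\le 1$. It vanishes exactly at the point you want to extract. Consequently your unweighted bound of $|L(1/2,\mu)|^8$ by $\int_{|t|\le \log^2 C}|L(1/2+it,\mu)|^8\,dt$ cannot be inserted into Theorem~\ref{thmF} for these $\mu$. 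For $\mu=\mathbf{1}$ there is a second problem: the pole of $\zeta_F$ at $s=1$ lies inside the strip where you run the convexity argument.

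The fix is to remove these characters before applying Theorem~\ref{thmF}:
\begin{itemize}
\item The $\mu$ with $\mu^2=\mathbf{1}$ and conductor $\mathfrak{q}$ factor through the $2$-torsion of a ray class group mod $\mathfrak{q}$, so there are $\ll_F 2^{\#\{\mathfrak{p}\mid\mathfrak{q}\}}\ll N_F(\mathfrak{q})^{\varepsilon}$ of them.
\item Each has $C_\infty(\mu)\ll_F 1$ and, by the convexity bound, $|L(1/2,\mu)|^8\ll N_F(\mathfrak{q})^{2+\varepsilon}$.
\item So together they contribute $\ll N_F(\mathfrak{q})^{2+2\varepsilon}$, within the claimed bound.
\end{itemize}
For $\mu^2\neq\mathbf{1}$ the function $L(s,\mu^2)$ is entire, and Phragm\'en--Lindel\"of gives the uniform bound $|L(1+2it,\mu^2)|\ll_{\varepsilon}(C(\mu)(1+|t|))^{\varepsilon}$. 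Your argument then goes through unchanged for the remaining characters.
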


When $F=\mathbb{Q}$, Theorems \ref{thmE} and \ref{thmF} follow from the large sieve 
\cite[Theorem 7.34]{IK04}. Over general number fields the situation is more 
subtle, and such estimates are used repeatedly--for example, in the fifth 
moment of $\mathrm{GL}_2$ $L$-functions \cite[Theorem~1.3]{Yan25} and in 
subconvexity for $\mathrm{GL}_2\times\mathrm{GL}_2$ over number fields 
\cite{HY26}. This motivates recording them explicitly as theorems.


\subsection{Symmetry vs.\ Asymmetry}

We briefly compare our symmetric spectral reciprocity with the asymmetric
reciprocity developed by Blomer and Khan \cite{BK19, BK19a}, both of which
lead to \eqref{f1.1}. In this perspective, we regard
\[
L(1/2,\pi)^4 = L(1/2,\pi\times\Pi)
\]
as a Rankin--Selberg $L$-function on $\mathrm{GL}_2\times\mathrm{GL}_4$,
where $\Pi$ is a suitable representation of $\mathrm{GL}_4/F$.
The distinction lies in the decomposition of the $\mathrm{GL}_4$ datum
and in the mechanism by which the resulting fourth moment is transformed.

\begin{itemize}

\item
Blomer and Khan use the asymmetric decomposition $4=3+1$, taking
$\Pi=\mathbf{E}_3\boxplus\mathbf{1}$, where
$\mathbf{E}_3=\mathbf{1}\boxplus\mathbf{1}\boxplus\mathbf{1}$ is the Eisenstein
series on $\mathrm{GL}_3$.
Their reciprocity is obtained through a delicate 
``Kuznetsov--Voronoi--reciprocity--Voronoi--Kuznetsov''
procedure and leads to a genuine exchange of the level and the twisting
Hecke eigenvalue.

Although this reciprocity has recently been extended to automorphic
representations over number fields
(see \cite[Theorem B]{Yan25}),
its use in proving Theorem~\ref{thmB} is far from automatic.
In the asymmetric setting, one naturally encounters
expressions of the form
\(
L(1/2,\pi)^3\overline{L(1/2,\pi)},
\)
so that the root number enters explicitly into the spectral identity.
This is delicate in general, particularly when $\pi$ has supercuspidal
components.
In the setting of \cite[Theorem 1]{BK19a},
where the forms have \textit{prime} level and \textit{primitive} central character,
the root number admits a simple explicit description.
Beyond this setting, the asymmetric method presents substantial
additional difficulties.

\item Our approach uses the symmetric decomposition $4=2+2$, taking
$\Pi=\mathbf{E}_2\boxplus\mathbf{E}_2$,
where $\mathbf{E}_2=\mathbf{1}\boxplus\mathbf{1}$ denotes the
Eisenstein series on $\mathrm{GL}_2$.
The term \emph{symmetric} refers to this
$\mathbf{E}_2\boxplus\mathbf{E}_2$ structure.
The resulting original spectral side is a weighted average of
$|L(1/2,\pi)|^4$, and is therefore intrinsically nonnegative.

The decomposition $4=2+2$ is classical.
Earlier quantitative applications in the level and spectral parameter
aspects proceed through the binary additive divisor problem; see
Duke--Friedlander--Iwaniec \cite{DFI94},
Kowalski--Michel--VanderKam \cite{KMV00}, and Jutila \cite{Jut01}.
These approaches do not yield a spectral reciprocity identity.
The Kuznetsov--Motohashi formula \cite{Mot03} gives a spectral
realization of the same decomposition by dualizing both
$\mathrm{GL}_2$ components.
As observed in \cite{BK19}, this realization leads to an essentially
self-dual deadlock, resolved by taking a suitable difference of the
holomorphic and Maass spectra; in particular, everywhere-positive test
functions are generally unavailable.
This prevents its direct use in amplification problems at general
levels.

Theorem \ref{thmA} gives a different, representation-theoretic
realization of the symmetric structure.
Through an analytic regularization of the Petersson inner product
structure of Rankin--Selberg convolutions on
$\mathrm{GL}_2\times\mathrm{GL}_2$, it yields an explicit spectral
reciprocity formula while preserving a nonnegative original spectral
side compatible with twists by Hecke eigenvalues.
With a suitable choice of the automorphic datum $\mathfrak{X}$, it
recovers the Kuznetsov--Motohashi formula \cite{Mot03}, but in a
regularized framework that retains the positivity required for
quantitative applications.

Our method bypasses the binary additive divisor problem, the
Kuznetsov and Voronoi formulas, Kloosterman-sum analysis, and the
associated analysis of special-function transforms arising from
spherical vectors.
This is not merely a technical simplification: the
representation-theoretic framework preserves the intrinsic symmetry
and positivity of the $4=2+2$ decomposition and, modulo the
Ramanujan conjecture, naturally yields square-root cancellation in the
corresponding twisted fourth-moment estimate, uniformly in all aspects
for general automorphic representations of $\mathrm{GL}_2$ over an
arbitrary number field.
This stands in contrast to the relatively small power savings obtained
in the earlier treatments of \cite{DFI94, KMV00}.
While the underlying structural idea traces back to Michel and
Venkatesh \cite{MV10}, our regularization is different
(see \textsection\ref{sec1.7} below) and is sufficiently explicit for
the subconvexity applications developed in this paper.

\end{itemize}


\subsection{A regularization viewpoint via the Rankin--Selberg kernel}\label{sec1.7}
Products of Eisenstein series arise naturally in the analytic theory of automorphic $L$-functions. At central parameters, however, they typically lie outside the range of absolute convergence and are therefore not directly amenable to spectral expansion or period integrals. Existing regularization methods address this difficulty in different ways. In the work of Zagier \cite{Zag81}, and in its adelic developments by Michel--Venkatesh \cite{MV10} and Nelson \cite{Nel19}, one regularizes the relevant integral or spectral pairing for the non-$L^2$ object itself. Wu \cite{Wu22a}, by contrast, works with a distributional identity whose spectral side is governed by Godement--Jacquet zeta integrals. Templier \cite{Tem11} uses an explicit automorphic subtraction adapted to the Heegner-point setting. For our present purposes, what is missing in the existing literature is a regularization of \eqref{c1.1} that is both sufficiently explicit and sufficiently quantitative.

Our viewpoint is different. Rather than starting from a divergent object at central parameters, we begin in a region of absolute convergence and isolate there a canonical Rankin--Selberg kernel. We refer to \textsection\ref{sec2.1.6} for the precise construction, and only describe the basic idea here. Let $E_i(g,s_i)$, $i=1,2$, be Eisenstein series, and assume $\Re(s_i)\gg 1$. We consider
\begin{align*}
\Eis(\mathcal{G}_1 h_2)(g,s_1,s_2)
:=
\sum_{\delta\in B(F)\backslash G(F)}
\mathcal{G}_1(\delta g,s_1)h_2(\delta g,s_2),
\end{align*}
which is characterized by the identity
\begin{align*}
\bigl\langle \Eis(\mathcal{G}_1 h_2)(\cdot,s_1,s_2),\phi_0\bigr\rangle
=
\Psi(\overline{W_0},W_1,h_2)
\end{align*}
for every automorphic form $\phi_0$. Thus $\Eis(\mathcal{G}_1 h_2)$ is the automorphic kernel realizing the relevant Rankin--Selberg trilinear form. Here $h_i$ and $W_i$ denote the inducing sections and their associated Whittaker functions, while $\mathcal{G}_i$ denotes the generic part of $E_i(\cdot,s_i)$.

The key point is that, in the domain of absolute convergence, one has the exact identity
\begin{align*}
\Eis(\mathcal{G}_1 h_2)(g,s_1,s_2)
=
E_1(g,s_1)E_2(g,s_2)-\Eis(\mathcal{C}_1 h_2)(g,s_1,s_2),
\end{align*}
where $\mathcal{C}_1$ denotes the constant term of $E_1(\cdot,s_1)$. So $\Eis(\mathcal{C}_1 h_2)$ is a sum of two explicit Eisenstein series. In this way, the product $E_1E_2$ is decomposed into a Rankin--Selberg kernel and explicit Eisenstein correction terms. 

In \textsection\ref{sec3.2} we establish a spectral expansion for $\Eis(\mathcal{G}_1 h_2)(g,s_1,s_2)$ in the region of absolute convergence. The regularization of the expected symmetric spectral reciprocity formula is then obtained by comparing the inner products
\begin{align*}
\bigl\langle \Eis(\mathcal{G}_1h_2)(\cdot,s_1,s_2),\Eis(\mathcal{G}_3h_4)(\cdot,\overline{s_3},\overline{s_4})\bigr\rangle
\end{align*}
and
\begin{align*}
\bigl\langle \Eis(\mathcal{G}_1\overline{h_3})(\cdot,s_1,\overline{s_3}),\Eis(\overline{\mathcal{G}_2}h_4)(\cdot,s_2,\overline{s_4})\bigr\rangle,
\end{align*}
first in the domain of absolute convergence via their geometric expansions, and then by meromorphically continuing their spectral decompositions. See \textsection\ref{sec2.2} and \textsection\ref{sec3.3}--\textsection\ref{sec3.6}.

This leads to a rather different regularization principle. The basic object is first defined as an honest automorphic kernel in a region of absolute convergence, and only afterwards continued meromorphically. Thus the main analytic problem is not to assign a direct regularized meaning to a divergent central object, but to continue a convergent kernel identity and to control the Eisenstein contributions produced in the process. For relative trace formulas, such as \cite{Yan19a}, and for asymmetric spectral reciprocity formulas, such as \cite{Yan25}, this viewpoint is especially natural, since the generic part is characterized directly by the Whittaker period and therefore fits naturally with the spectral expansion. Another advantage is that the main terms remain transparent: they arise from the degenerate terms on the geometric side and from the residual terms on the spectral side. This may also shed light on the origin of main terms in higher moment formulas for $L$-functions.  

To the best of our knowledge, although related ideas have appeared in several forms in the literature, regularization through such a Rankin--Selberg kernel decomposition has not been formulated in this way.

\subsection{Refined RTF and Square-Level Phenomena}
As observed from the spectral reciprocity \eqref{f1.1}, the subconvexity problem for $L(1/2,\pi)$ essentially reduces to establishing hybrid bounds for $L(1/2,\sigma \times \overline{\omega})$, where both $\sigma$ and $\omega$ vary. By applying \cite[Theorem 1.6 or Corollary 1.9]{Yan23c}, one obtains hybrid estimates for $L(1/2,\sigma \times \overline{\omega})$, which in turn yield
\[
L(1/2,\pi) \ll C(\pi)^{\tfrac{1}{4}-\tfrac{1}{128}+\varepsilon},
\]
a bound slightly weaker than \eqref{1.5}.

A principal new ingredient of the present work is the exploitation of the special structure of the representations $\sigma$ appearing on the dual side of the amplified spectral reciprocity. In the off-diagonal contribution, these $\sigma$ necessarily have arithmetic conductor of the form $\mathfrak{p}_1^2\mathfrak{p}_2^2$, with $\mathfrak{p}_1$ and $\mathfrak{p}_2$ distinct primes. This additional structure enables us to construct refined test functions within the amplified relative trace formula developed in \cite{Yan23c}, ranging from Gross--Prasad test vectors to special functions such as matrix coefficients of depth-zero supercuspidal representations.

By effectively shrinking the underlying spectral family, these refined test functions reduce the size of the main contributions---most notably the singular orbital integrals---thereby allowing us to improve upon the exponent $1/128$.

However, employing special test functions at both $\mathfrak{p}_1$ and $\mathfrak{p}_2$ improves the exponent from $1/128$ to $1/120$ only in the ranges
\begin{equation}\label{1.17}
C(\pi)\leq C(\omega)^{1+\frac{4}{7(2-\vartheta)}-\frac{\varepsilon}{2}}
\quad \text{and} \quad
C(\pi)> C(\omega)^{\frac{9-14\vartheta}{7(1-2\vartheta)}-\frac{\varepsilon}{2}},
\end{equation}
which become contiguous only when $\vartheta=0$, i.e.\ under the Ramanujan conjecture. 

The gap in \eqref{1.17} reflects a structural trade-off: while the special test functions reduce the singular orbital integrals, they simultaneously enlarge the error terms, namely the regular orbital integrals, thereby limiting the effective range of amplification. 

Interestingly, the intermediate interval
\begin{align*}
C(\omega)^{1+\frac{4}{7(2-\vartheta)}-\frac{\varepsilon}{2}}
< C(\pi)
\leq C(\omega)^{\frac{9-14\vartheta}{7(1-2\vartheta)}-\frac{\varepsilon}{2}}
\end{align*}
can still be treated by applying a special test function at $\mathfrak{p}_1$ together with the Gross--Prasad test function at $\mathfrak{p}_2$; see \textsection\ref{sec10.6}--\textsection\ref{sec10.7} for details.

\subsection{Outline of the Paper}
The structure of this paper is as follows.
\subsubsection{Part 1: The Symmetric Spectral Reciprocity}
In this part we 
establish a general form of the spectral reciprocity formula.
\begin{itemize}
\item In \textsection\ref{sec2}, we set up the basic automorphic data and establish a
coarse reciprocity formula in Proposition \ref{prop2.5}. This result may be
viewed as a switching property of \textit{Rankin--Selberg kernels}, and
corresponds to Theorem~\ref{thmA} for parameters
$\mathbf{s}=(s_1,s_2,s_3,s_4)\in\mathbb{C}^4$ satisfying
$\Re(s_1)\geq 10$, $\Re(s_2-s_1)\geq 10$, $\Re(s_3-s_2)\geq 100$, and
$\Re(s_4-s_3)\geq 1000$.

\item In \textsection\ref{sec3}, we develop a meromorphic continuation from the above range to $\mathcal{R}^{\heartsuit}$. 
\end{itemize}

\subsubsection{Part 2: Applications to the Subconvexity Problem}
In this part we specify the automorphic data in Theorem \ref{thmA}, targeting the subconvexity problem. 
\begin{itemize}
\item In \textsection\ref{sec7} we construct the relevant automorphic data into an integral form of  Theorem \ref{thmA} (see Theorem \ref{thmD}) to avoid singularities. 
\item In \textsection\ref{sec5}--\textsection\ref{sec9} we establish lower or upper bounds for each term in Theorem \ref{thmD}. 
\item In \textsection\ref{sec10} we assemble these bounds to derive Theorems \ref{thmE} and \ref{thmF}. Together with the hybrid subconvexity result for twisted $L$-functions established in \cite[Theorem 1.6]{Yan23c} and an amplification argument, this yields Theorem \ref{thmB.}, a weaker form of Theorem \ref{thmB}. We then refine certain arguments from \cite{Yan23c} to obtain two sharper hybrid bounds for twisted $L$-functions associated with cuspidal representations having depth-zero supercuspidal components (see \textsection\ref{sec10.6}). Finally, in \textsection\ref{sec10.7} we combine these ingredients to establish Theorem \ref{thmB}. 
\end{itemize}

\subsection{Notation}
\subsubsection{Number Fields and Measures}\label{1.1.1}
Let $F$ be a number field with ring of integers $\mathcal{O}_F.$ Let $N_F$ be the absolute norm. Let $\mathfrak{O}_F$ be the different of $F.$ Let $\mathbb{A}_F$ be the adele group of $F.$ Let $\Sigma_F$ be the set of places of $F.$ Denote by $\Sigma_{F,\fin}$ (resp. $\Sigma_{F,\infty}$) the set of non-Archimedean (resp. Archimedean) places. For $v\in \Sigma_F,$ we denote by $F_v$ the corresponding local field. For a non-Archimedean place $v,$ let $\mathcal{O}_v$ be the ring of integers of $F_v$, and $\mathfrak{p}_v$ be the maximal prime ideal in $\mathcal{O}_v$. Given an integral ideal $\mathcal{I},$ we say $v\mid \mathcal{I}$ if $\mathcal{I}\subseteq \mathfrak{p}_v.$ Fix a uniformizer $\varpi_{v}\in\mathfrak{p}_v.$ Denote by $e_v(\cdot)$ the evaluation relative to $\varpi_v$ normalized as $e_v(\varpi_v)=1.$ Let $d_v=e_v(\mathfrak{D}_{F})$ be the ramification index and $q_v$ be the cardinality of $\mathcal{O}_v/\mathfrak{p}_v.$ We use $v\mid\infty$ to indicate an Archimedean place $v$ and write $v<\infty$ if $v$ is non-Archimedean. Let $|\cdot|_v$ be the norm in $F_v.$ Put $|\cdot|_{\infty}=\prod_{v\mid\infty}|\cdot|_v$ and $|\cdot|_{\fin}=\prod_{v<\infty}|\cdot|_v.$ Let $|\cdot|_{\mathbb{A}_F}=|\cdot|_{\infty}\otimes|\cdot|_{\fin}$. We will simply write $|\cdot|$ for $|\cdot|_{\mathbb{A}_F}$ in calculation over $\mathbb{A}_F^{\times}$ or its quotient by $F^{\times}$.   

Let $\psi_{\mathbb{Q}}$ be the additive character on $\mathbb{Q}\backslash \mathbb{A}_{\mathbb{Q}}$ such that $\psi_{\mathbb{Q}}(t_{\infty})=\exp(2\pi it_{\infty}),$ for $t_{\infty}\in \mathbb{R}\hookrightarrow\mathbb{A}_{\mathbb{Q}}.$ Let $\psi_F=\psi_{\mathbb{Q}}\circ \Tr_F,$ where $\Tr_F$ is the trace map. Then $\psi_F(t)=\prod_{v\in\Sigma_F}\psi_v(t_v)$ for $t=(t_v)_v\in\mathbb{A}_F.$ For $v\in \Sigma_F,$ let $dt_v$ be the additive Haar measure on $F_v,$ self-dual relative to $\psi_v.$ Then $dt=\prod_{v\in\Sigma_F}dt_v$ is the standard Tamagawa measure on $\mathbb{A}_F$. Let $d^{\times}t_v=\zeta_{F_v}(1)dt_v/|t_v|_v,$ where $\zeta_{F_v}(\cdot)$ is the local Dedekind zeta factor. In particular, $\Vol(\mathcal{O}_v^{\times},d^{\times}t_v)=\Vol(\mathcal{O}_v,dt_v)=q_v^{-d_v/2}$ for all finite place $v.$ Moreover, $\Vol(F\backslash\mathbb{A}_F; dt_v)=1$ and $\Vol(F\backslash\mathbb{A}_F^{(1)},d^{\times}t)=\underset{s=1}{\Res}\ \zeta_F(s),$ where $\mathbb{A}_F^{(1)}$ is the subgroup of ideles $\mathbb{A}_F^{\times}$ with norm $1,$ and $\zeta_F(s)=\prod_{v<\infty}\zeta_{F_v}(s)$ is the finite Dedekind zeta function. Denote by $\widehat{F^{\times}\backslash\mathbb{A}_F^{(1)}}$  the Pontryagin dual of $F^{\times}\backslash\mathbb{A}_F^{(1)}.$

\subsubsection{Reductive Groups}\label{sec1.3.2}
For an algebraic group $H$ over $F$, we will denote by $[H]:=H(F)\backslash H(\mathbb{A}_F).$ We equip measures on $H(\mathbb{A}_F)$ as follows: for each unipotent group $U$ of $H,$ we equip $U(\mathbb{A}_F)$ with the Haar measure such that, $U(F)$ being equipped with the counting measure. The measure of $[U]$ is $1.$ We equip the maximal compact subgroup $K$ of $H(\mathbb{A}_F)$ with the Haar measure such that $K$ has total mass $1.$ When $H$ is split, we also equip the maximal split torus of $H$ with Tamagawa measure induced from that of $\mathbb{A}_F^{\times}.$

In this paper we set  $A=\diag(\mathrm{GL}(1),1),$ and $G=\mathrm{GL}_2.$ Let $B$ be the group of upper triangular matrices in $G$.  
Let $\overline{G}=Z\backslash G$ and $B_0=Z\backslash B,$ where $Z$ is the center of $G.$ Let $T_B$ be the diagonal subgroup of $B$. Then $A\simeq Z\backslash T_B.$ Let $N$ be the unipotent radical of $B$. Let $w=\begin{pmatrix}
	& 1\\
1	& 
\end{pmatrix}$ be the Weyl element. 

Let $K=\otimes_vK_v$ be a maximal compact subgroup of $G(\mathbb{A}_F),$ where $K_v=\mathrm{U}_2(\mathbb{C})$ if $v$ is complex, $K_v=\mathrm{O}_2(\mathbb{R})$ if $v$ is real, and $K_v=G(\mathcal{O}_v)$ if $v<\infty.$ For $v\in \Sigma_{F,\fin},$ $m\in\mathbb{Z}_{\geq 0},$ define 
\begin{equation}\label{2.1}
K_{0,v}[m]:=\Big\{\begin{pmatrix}
a&b\\
c&d
\end{pmatrix}\in G(\mathcal{O}_v):\ c\in \mathfrak{p}_v^{m}\Big\}.
\end{equation}
For an integral ideal $\mathfrak{q}\subseteq \mathcal{O}_F$, we define $K_0[\mathfrak{q}]:=\otimes_{v<\infty}K_{0,v}[e_v(\mathfrak{q})]$. 
\subsubsection{Whittaker functions}\label{sec1.3.3}
Let $\theta$ be the generic character induced by $\psi_F$:
\begin{equation}\label{eq1.12}
\theta\left(\begin{pmatrix}
1& u\\
&1
\end{pmatrix}\right)=\psi_F(u),\  \ u=(u_v)_v\in \mathbb{A}_F.
\end{equation}
Then $\theta=\otimes_v\theta_v$, where $\theta_v\left(\begin{pmatrix}
1& u_v\\
&1
\end{pmatrix}\right):=\psi_v(u_v)$, $v\in \Sigma_F$. 

Let $\pi$ be a generic automorphic representation of $G/F$. Let $\phi\in \pi$ be an automorphic form. Define the Whittaker function of $\phi$ (relative to $\theta$) by 
\begin{equation}\label{eq1.3}
W_{\phi}(g):=\int_{[N]}\phi(ug)\overline{\theta}(u)du.
\end{equation}
Let $W_{\phi}^*(g):=W_{\phi}(\diag(-1,1)
g)$ be the Whittaker function of $\phi$ relative to $\overline{\theta}$.

\subsubsection{Other conventions}\label{sec1.5.4}
Let $\Phi=\otimes_v\Phi_v\in \mathcal{S}(\mathbb{A}_F^2)$. Let $\widehat{\Phi}$ be the Fourier transform of $\Phi$, namely,
\begin{equation}\label{1.6}
\widehat{\Phi}(t_1,t_2):=\int_{\mathbb{A}_F}\int_{\mathbb{A}_F}\Phi(b_1,b_2)\overline{\psi}_F(b_1t_1+b_2t_2)db_1db_2.
\end{equation}

For a function $h$ on $G(\mathbb{A}_F),$ we define $h^*$ by assigning $h^*(g)=\overline{h({g}^{-1})},$ $g\in G(\mathbb{A}_F).$ Let $F_1(s), F_2(s)$ be two meromorphic functions. Write $F_1(s)\propto F_2(s)$ if there exists an \textit{entire} function $E(s)$ such that $F_1(s)=E(s)F_2(s).$ Denote by $\alpha\asymp \beta$ for $\alpha, \beta \in\mathbb{R}$ if there are absolute constants $c$ and $C$ such that $c\beta\leq \alpha\leq C\beta.$

Throughout this paper, we follow the $\varepsilon$-convention, where $\varepsilon$ represents a positive number that can be chosen arbitrarily small, although its value may vary between instances. In Part 2, we present various estimates involving exponents such as $20\varepsilon$, $50\varepsilon$,$100\varepsilon$,  $1000\varepsilon$, and so on, chosen for convenience. While these are not the sharpest possible estimates, they do not impact our results, as $\varepsilon$ can be taken sufficiently small.

\textbf{Acknowledgements}
I am deeply grateful to Peter Sarnak for his consistent encouragement. I would also like to express my sincere gratitude to Gergely Harcos, Peter Humphries, Rizwanur Khan, Xiannan Li, Wenzhi Luo, Riad Masri, Philippe Michel, Han Wu and Matthew Young for their precise comments and valuable suggestions. 

\part{Symmetric Spectral Reciprocity}
\section{A Coarse Symmetric Spectral Reciprocity}\label{sec2}
\subsection{Eisenstein Series}\label{sec2.1}
Let $j\in\{1,2,3,4\}$, and $\chi_j, \omega_j\in \widehat{F^{\times}\backslash \mathbb{A}_F^{\times}}$, which is the set of unitary Hecke characters. Suppose $\omega_1\omega_2=\omega_3\omega_4$. Let $\Phi_j=\otimes_v\Phi_{j,v}$ be a Schwartz--Bruhat function on $\mathbb{A}_F\times\mathbb{A}_F$. 
\subsubsection{Godement Sections}\label{sec2.1.1}
For $\Re(s)\gg 1$, we define 
\begin{equation}\label{2.1}
h_j(g,\Phi_j,\chi_j,\omega_j,s):=\chi_j(\det g)|\det g|^{s+\frac{1}{2}}\int_{\mathbb{A}_F^{\times}}\Phi_j((0,t)g)\chi_j^{2}\omega_j^{-1}(t)|t|^{2s+1}d^{\times}t,
\end{equation} 
which is a Tate integral representing $\Lambda(2s+1,\chi_j^{2}\omega_j^{-1})$. It satisfies, for $a, d\in \mathbb{A}_F^{\times}$ and $b\in \mathbb{A}_F$, that 
\begin{align*}
h_j\left(\begin{pmatrix}
a& b\\
& d
\end{pmatrix}
g,\Phi_j,\chi_j,\omega_j,s\right)=\chi_j(a)\chi_j^{-1}\omega_j(d)\Big|\frac{a}{d}\Big|^{s+\frac{1}{2}}h_j(g,\Phi_j,\chi_j,\omega_j,s).
\end{align*}
Hence, the function $h_j(g,\Phi_j,\chi_j,\omega_j,s)$ is a Godement section in the induced representation $\chi_j|\cdot|^{s}\boxplus \chi_j^{-1}\omega_j|\cdot|^{-s}$. 

Define the Eisenstein series associated with $h_j(\cdot,\Phi_j,\chi_j,\omega_j,s)$ as 
\begin{equation}\label{2.2}
E_j(g,\Phi_j,\chi_j,\omega_j,s):=\sum_{\delta\in B(F)\backslash G(F)}h_j(\delta g,\Phi_j,\chi_j,\omega_j,s). 
\end{equation}

By Poisson summation, $E_j(g,\Phi_j,\chi_j,\omega_j,s)$ has a meromorphic continuation to all $s$, satisfies the functional equation
\begin{align*}
E_j(g,\Phi_j,\chi_j,\omega_j,s)=E_j(g^{\iota},\widehat{\Phi_j},\chi_j^{-1},\omega_j^{-1},1-s),
\end{align*}
where $g^{\iota}$ is the transpose inverse of $g$, and $\widehat{\Phi_j}$ is the Fourier transform of $\Phi_j$ defined by \eqref{1.6}. 

\subsubsection{Whittaker Expansions}\label{sec2.1.4.}
Let $j\in\{1,2,3,4\}$ and $\Re(s_j)\gg 1$. For simplicity we denote by
\begin{equation}\label{2.4}
h_j(g,s_j)=h_j(g,\Phi_j,\chi_j,\omega_j,s_j),\ \ \ \ E_j(g,s_j)=E_j(g,\Phi_j,\chi_j,\omega_j,s_j),
\end{equation}
which are defined by \eqref{2.1} and \eqref{2.2}, respectively. 

Let $W_j(\cdot,s_j):=W_{E_j(\cdot,s_j)}\left(\cdot\right)$ be the Whittaker function defined by \eqref{eq1.3}. Explicitly, for $g\in G(\mathbb{A}_F)$, the function $W_j(g,s_j)$ is explicitly represented by 
\begin{equation}\label{f2.3}
\chi_j(\det g)|\det g|^{s_j+\frac{1}{2}}\int_{\mathbb{A}_F^{\times}}\int_{\mathbb{A}_F}\Phi_j((t,bt)g)\overline{\psi}(b)db\chi_j^{2}\omega_j^{-1}(t)|t|^{2s_j+1}d^{\times}t,
\end{equation}
which converges absolutely for all $s\in \mathbb{C}$. Hence, $W_j(g,s_j)$ is an entire function. We have the Eulerian decomposition $W_j(g,s_j)=\prod_vW_{j,v}(g_v,s_j)$, where  
\begin{multline}\label{c2.5}
W_{j,v}(g_v,s_j)=\chi_{j,v}(\det g_v)|\det g_v|_v^{s_j+\frac{1}{2}}\int_{F_v^{\times}}\int_{F_v}\Phi_{j,v}((t_v,b_vt_v)g_v)\\\overline{\psi}_v(b_v)db_v\chi_{j,v}^{2}\omega_{j,v}^{-1}(t_v)|t_v|_v^{2s_j+1}d^{\times}t_v.
\end{multline}

\subsubsection{Fourier Expansions}\label{sec2.1.3.} 
We have the Fourier expansion 
\begin{equation}\label{eq2.6}
E_j(g,s_j)=\mathcal{C}_j(g,s_j)+\mathcal{G}_j(g,s_j),
\end{equation}
where 
\begin{align*}
\mathcal{C}_j(g,s_j)=&h_j(g,s_j)+h_j^{\Diamond}(g,s_j),\ \ \ \ h_j^{\Diamond}(g,s_j):=\int_{N(\mathbb{A}_F)}h_j(wug,s_j)du,\\
\mathcal{G}_j(g,s_j)=&\sum_{\alpha\in F^{\times}}W_{E_j(\cdot,s_j)}\left(\begin{pmatrix}
\alpha\\
& 1
\end{pmatrix}g\right)=\sum_{\alpha\in F^{\times}}W_{j}\left(\begin{pmatrix}
\alpha\\
& 1
\end{pmatrix}g,s_j\right).
\end{align*}

\subsubsection{Auxiliary Whittaker Functions}\label{sec2.1.3}\label{sect2.1.4}
By definition, $E_j(g,\widehat{\Phi_j},\chi_j^{-1},\omega_j^{-1},s)$ has the Godement section $\widehat{h}_j(g,s)=h_j(g,\widehat{\Phi_j},\chi_j^{-1},\omega_j^{-1},s)$. Let 
\begin{align*}
\widehat{W}_j(g,s):=\int_{[N]}E_j(ug,\widehat{\Phi_j},\chi_j^{-1},\omega_j^{-1},s)\overline{\theta(u)}du=\int_{N(\mathbb{A}_F)}\widehat{h}_j(wug,s)\overline{\theta(u)}du
\end{align*}
be the associated Whittaker function. We have $\widehat{W}_j(g,s)=\prod_v\widehat{W}_{j,v}(g_v,s)$, where
\begin{equation}\label{equ2.3}
\widehat{W}_{j,v}(g_v,s):=\int_{N(F_v)}\widehat{h}_{j,v}(wu_vg_v,s)\overline{\theta_v(u_v)}du_v.
\end{equation}
Here $\widehat{h}_{j,v}(g_v,s)$ is the local component of $\widehat{h}_{j}(g,s)$:
\begin{align*}
\widehat{h}_{j,v}(g_v,s):=\chi_{j,v}^{-1}(\det g_v)|\det g_v|_v^{s+\frac{1}{2}}\int_{F_v^{\times}}\widehat{\Phi}_{j,v}((0,t_v)g_v)\chi_{j,v}^{-2}\omega_{j,v}^{-1}(t_v)|t_v|_v^{2s+1}d^{\times}t_v.
\end{align*}

Let notation be as before.  We define
\begin{align*}
&W_{h_i,h_j}(g,s_i,s_j):=\int_{N(\mathbb{A}_F)}h_i(wug,s_i)h_j(wug,s_j)\overline{\theta}(u)du,\\
&W_{h_i^{\Diamond},h_j}(g,s_i,s_j):=\int_{N(\mathbb{A}_F)}h_i^{\Diamond}(wug,s_i)h_j(wug,s_j)\overline{\theta}(u)du.
\end{align*}

Notice that $h_i(\cdot,s_i)h_j(\cdot,s_j)$ is a section in $\chi_i\chi_j|\cdot|^{1/2+s_i+s_j}\boxplus \overline{\chi_i\chi_j}\omega_i\omega_j|\cdot|^{-1/2-s_i-s_j}$ and $h_i^{\Diamond}(\cdot,s_i)h_j(\cdot,s_j)$ is a section in $\chi_i^{-1}\omega_i\chi_j|\cdot|^{1/2-s_i+s_j}\boxplus \chi_i\chi_j^{-1}\omega_j|\cdot|^{-1/2+s_i-s_j}$. 

\subsubsection{Rankin--Selberg Periods}\label{sec2.1.4}
Let $\pi_1$ and $\pi_2$ be two generic automorphic representations of $[G]$. Let $W_1=\otimes_vW_{1,v}$ and $W_2=\otimes_vW_{2,v}$ be vectors in the Whittaker model of $\pi_1$ and $\pi_2$, respectively. Let $\pi_0=\chi|\cdot|^{s}\boxplus \chi'|\cdot|^{-s}$ be an induced representation of $[G]$, where $\chi$ and $\chi'$ are unitary. Let $h_0=\otimes_vh_{0,v}$ be a section in $\pi_0$. Suppose the products of the central characters of $\pi_1$, $\pi_2$ and $\pi_0$ is trivial. 

For sufficiently large $\Re(s_2)$, we define 
\begin{equation}\label{2.5}
\Psi(W_0,W_1,h_2):=\int W_0(x)W_1(x)h_2(x)dx=\prod_{v\leq\infty}\Psi_v(W_{0,v},W_{1,v},h_{2,v}),
\end{equation}
where $x$ ranges through $N(\mathbb{A}_F)\backslash\overline{G}(\mathbb{A}_F)$ and 
\begin{equation}\label{c2.6}
\Psi_v(W_{0,v},W_{1,v},h_{2,v}):=\int_{N(F_v)\backslash\overline{G}(F_v)}W_{0,v}(x_v)W_{1,v}(x_v)h_{2,v}(x_v)dx_v.
\end{equation}

Notice that $h_0$ is of the form \eqref{2.1}. By Rankin--Selberg theory, $\Psi(W_0,W_1,h_2)$ is a holomorphic multiple of $\Lambda(1/2+s_2,\pi_0\times\pi_1\times\chi_2)$ when $\Re(s_2)$ is large enough (depending on $\pi_0$ and $\pi_1$, which might be non-unitary). Therefore, $\Psi(W_0,W_1,h_2)$ admits a meromorphic continuation $\widetilde{\Psi}(W_0,W_1,h_2)$ to $s\in \mathbb{C}$.

\subsection{Rankin--Selberg Kernel}\label{sec2.1.6}
 Let $i, j\in\{1,2,3,4\}$, and $\Re(s_j), \Re(s_j)\gg 1$ with $\Re(s_j)-|\Re(s_i)|\geq 10$. Define 
\begin{align*}
\Eis(\mathcal{G}_ih_j)(g,s_i,s_j):=\sum_{\delta\in B(F)\backslash G(F)}\mathcal{G}_i(\delta g, s_i)h_j(\delta g, s_j).
\end{align*}

Notice that $\Eis(\mathcal{G}_i h_j)(g,s_i,s_j)$ is not an Eisenstein series in the usual sense. Nevertheless, it serves as the kernel for the Rankin--Selberg integral in the following precise sense: for any automorphic form $\phi_0$ with central character $\omega_1\omega_2$, one has
\begin{equation}\label{2.10}
\bigl\langle \Eis(\mathcal{G}_1 h_2)(\cdot,s_1,s_2),\,\phi_0\bigr\rangle
=
\Psi(\overline{W_0},W_1,h_2),
\end{equation}
where $W_0$ denotes the Whittaker function attached to $\phi_0$. In particular, the left-hand side of \eqref{2.10} vanishes whenever $\phi_0$ belongs to the residual spectrum.

This extends the construction of \cite[\textsection 2D]{DO13} by allowing $\phi_0$ in \eqref{2.10} to be an \emph{arbitrary generic} automorphic form, rather than restricting to the cuspidal case treated in loc.\ cit. For later reference, and for use in subsequent work, we shall refer to $\Eis(\mathcal{G}_i h_j)(g,s_i,s_j)$ as the \textit{Rankin--Selberg kernel}.

Define the function 
\begin{align*}
\Eis(\mathcal{C}_i h_j)(g,s_i,s_j)
:=
\sum_{\delta\in B(F)\backslash G(F)} \mathcal{C}_i(\delta g,s_i)h_j(\delta g,s_j).
\end{align*}
Since $\mathcal{C}_i(g,s_i)=h_i(g,s_i)+h_i^{\Diamond}(g,s_i)$, it follows from the discussion in \textsection\ref{sect2.1.4} that $\Eis(\mathcal{C}_i h_j)(g,s_i,s_j)$ is a sum of two Eisenstein series. Moreover, the Fourier expansion \eqref{eq2.6} yields
\begin{equation}\label{2.6}
\Eis(\mathcal{G}_i h_j)(\cdot,s_i,s_j)
=
E_i(\cdot,s_i)E_j(\cdot,s_j)-\Eis(\mathcal{C}_i h_j)(\cdot,s_i,s_j),
\end{equation}
which may be viewed as the \textit{geometric expansion} of the Rankin--Selberg kernel $\Eis(\mathcal{G}_i h_j)(\cdot,s_i,s_j)$.

\begin{lemma}
Let notation be as before. Let $\Re(s_i), \Re(s_j)\gg 1$ with $\Re(s_j)-|\Re(s_i)|\geq 10$. Then $\Eis(\mathcal{C}_ih_j)(\cdot,s_i,s_j)$ and $\Eis(\mathcal{G}_ih_j)(\cdot,s_i,s_j)$ converge absolutely. 
\end{lemma}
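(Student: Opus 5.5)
Since $\mathcal{G}_i=E_i-\mathcal{C}_i$ by \eqref{eq2.6}, the sum defining $\Eis(\mathcal{G}_ih_j)$ is, term by term, the difference of the sums defining $E_iE_j$ and $\Eis(\mathcal{C}_ih_j)$. So it suffices to prove absolute convergence of
\[
\sum_{\delta\in B(F)\backslash G(F)}|\mathcal{C}_i(\delta g,s_i)h_j(\delta g,s_j)|
\quad\text{and}\quad
\sum_{\delta\in B(F)\backslash G(F)}|\mathcal{G}_i(\delta g,s_i)h_j(\delta g,s_j)|,
\]
uniformly for $g$ in compact sets. The plan is to reduce both to the convergence of a standard Eisenstein series of large real parameter.

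For the constant term part, I will use $\mathcal{C}_i=h_i+h_i^{\Diamond}$. Here $h_i^{\Diamond}$ is, away from the finitely many poles (we may assume $\Re(s_i)\gg1$ avoids them), a section in $\chi_i^{-1}\omega_i|\cdot|^{-s_i}\boxplus\chi_i|\cdot|^{s_i}$. Hence $|h_ih_j|$ and $|h_i^{\Diamond}h_j|$ are majorized by constant multiples of $|\cdot|$-sections $f_\sigma(g)$ with exponent $\sigma=1/2+\Re(s_i)+\Re(s_j)$ and $\sigma=1/2-\Re(s_i)+\Re(s_j)$, respectively. This uses the Iwasawa decomposition $g=bk$, with the $K$-dependence bounded since $\Phi_j$ is Schwartz and the Tate integrals converge. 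Because $\Re(s_j)-|\Re(s_i)|\geq 10$, both exponents exceed $1$. The classical Godement criterion, that $\sum_{\delta}f_\sigma(\delta g)$ converges absolutely for $\Re(\sigma)>1/2$ in the normalization where $f_\sigma(\mathrm{diag}(a,d)g)=|a/d|^{\sigma}f_\sigma(g)$, then gives locally uniform absolute convergence.

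For the generic part, the key step is a uniform bound on the cusp-free part. The plan is to show $|\mathcal{G}_i(g,s_i)|\ll_N$ (height of $g$)$^{-N}$ on a Siegel domain, and more usefully that
\[
\sum_{\alpha\in F^\times}\Bigl|W_i\Bigl(\begin{pmatrix}\alpha&\\&1\end{pmatrix}g,s_i\Bigr)\Bigr|
\]
is bounded by $C\,\mathrm{ht}(g)^{A}$ for some $A$ depending only on $\Re(s_i)$ and holding for all $g$. To get this, write $g=n\,\mathrm{diag}(y,1)zk$. From \eqref{c2.5}, $W_{i,v}$ is a Schwartz-type function of $y_v$ times $|y_v|^{1/2\pm\Re(s_i)}$ near $0$. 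Summing over $\alpha\in F^\times$ then gives rapid decay as $|y|\to\infty$ and at most polynomial growth $|y|^{-1/2-|\Re(s_i)|-\varepsilon}$ as $|y|\to0$ (lattice point count). Consequently
\[
|\mathcal{G}_i(\delta g,s_i)h_j(\delta g,s_j)|\ll f_{\sigma'}(\delta g)
\]
with $\sigma'=1/2+\Re(s_j)-|\Re(s_i)|-1/2-\varepsilon$. By the hypothesis this is still $>1/2$, so the same Godement criterion applies.

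I expect the main obstacle to be this uniform bound on the generic part. It must hold for all $g$, not just on a Siegel domain, since $\delta g$ leaves any fixed Siegel set, and it requires controlling the small-$|y|$ behavior via the lattice sum. The margin of $10$ in the hypothesis comfortably absorbs the losses from this step, the number field's archimedean places, and $\varepsilon$.
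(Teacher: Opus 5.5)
Your proof is correct. For the generic part, though, you take a longer route than the paper.

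\textbf{Constant-term part.} The paper's treatment matches yours. It bounds $\Eis(\mathcal{C}_ih_j)$ by $\sum_\delta|h_ih_j(\delta g)|$ and $\sum_\delta|h_i^{\Diamond}h_j(\delta g)|$. Both converge because $h_ih_j$ and $h_i^{\Diamond}h_j$ are sections of induced representations whose parameters $1/2\pm\Re(s_i)+\Re(s_j)$ are large.

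\textbf{Generic part.} Here the paper just invokes \eqref{2.6}, which is the observation in your first sentence that you then set aside. Termwise, $|\mathcal{G}_ih_j|\le|E_i||h_j|+|\mathcal{C}_ih_j|$. Since $E_i(\delta g,s_i)=E_i(g,s_i)$, one has $\sum_{\delta}|E_i(\delta g,s_i)h_j(\delta g,s_j)|=|E_i(g,s_i)|\sum_\delta|h_j(\delta g,s_j)|$, which is finite because $E_j$ converges absolutely for $\Re(s_j)\gg1$. So the step you call the main obstacle is not needed.

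\textbf{What your route buys and costs.} It gives a pointwise majorant $|\mathcal{G}_ih_j(g')|\ll \mathrm{ht}(g')^{\Re(s_j)-|\Re(s_i)|-\varepsilon}$. That dominates $\Eis(\mathcal{G}_ih_j)$ by an honest Eisenstein series and carries growth information in the spirit of Lemma \ref{lem2.2}. The price is the bound $\sum_{\alpha\in F^\times}|W_i(\diag(\alpha,1)g,s_i)|\ll \mathrm{ht}(g)^{-1/2-|\Re(s_i)|-\varepsilon}$, uniformly on all of $G(\mathbb{A}_F)$. Over a general $F$ this needs the product formula and the compactness of $F^\times\backslash\mathbb{A}_F^{(1)}$. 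The product formula is what stops $\alpha$ with very unbalanced archimedean absolute values from blowing up the sum, since $1/2-|\Re(s_i)|$ is very negative. You only sketch this step.

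\textbf{A small normalization slip.} In the normalization $f_\sigma(\diag(a,d)g)=|a/d|^{\sigma}f_\sigma(g)$, the convergence threshold is $\Re(\sigma)>1$, not $1/2$. The threshold $1/2$ belongs to the paper's $|a/d|^{\lambda+1/2}$ normalization, which is the one your exponents are actually written in. All exponents here are about $10$ or more, so this is harmless.
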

\begin{proof}
By definition, $|\Eis(\mathcal{C}_ih_j)(g,s_i,s_j)|\leq \mathcal{I}_1+\mathcal{I}_2$, where
\begin{align*}
\mathcal{I}_1:=&\sum_{\delta\in B(F)\backslash G(F)}|h_i(\delta g,s_i)h_j(\delta g,s_j)|,\\
\mathcal{I}_2:=&\sum_{\delta\in B(F)\backslash G(F)}\bigg|\int_{N(\mathbb{A}_F)}h_j(wu\delta g,s_j)duh_j(\delta g,s_j)\bigg|.
\end{align*}

Notice that $h_i(\cdot,s_i)h_j(\cdot,s_j)$ and $h_i^{\Diamond}(\cdot,s_i)h_j(\cdot,s_j)$ are sections in the relevant induced representations (see \textsection\ref{sec2.1.3}). 
Therefore, $\mathcal{I}_1$ and $\mathcal{I}_2$ converge when $\Re(s_j)-|\Re(s_i)|\geq 10$. As a consequence, $\Eis(\mathcal{C}_ih_j)(\cdot,s_i,s_j)$ converges absolutely in this range. Consequently, by \eqref{2.6}, $\Eis(\mathcal{G}_ih_j)(\cdot,s_i,s_j)$ 
also converges absolutely in this range.  
\end{proof}

\begin{lemma}\label{lem2.2}
 Let $\Re(s_i)\gg 1$ and $\Re(s_j)-|\Re(s_i)|\geq 10$. Then 
\begin{equation}\label{e2.12}
\Eis(\mathcal{G}_ih_j)(\cdot,s_i,s_j)\in L^1([\overline{G}],\omega_i\omega_j)\cap L^2([\overline{G}],\omega_i\omega_j). 
\end{equation}
\end{lemma}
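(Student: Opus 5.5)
The plan is to use the unfolding structure of the kernel. We unfold against the absolute value and reduce both integrability statements to growth estimates on a Siegel domain. By definition and the preceding lemma,
\begin{align*}
|\Eis(\mathcal{G}_ih_j)(g,s_i,s_j)|\leq \sum_{\delta\in B(F)\backslash G(F)}|\mathcal{G}_i(\delta g,s_i)h_j(\delta g,s_j)|.
\end{align*}
For the $L^1$ claim we unfold:
\begin{align*}
\int_{[\overline{G}]}|\Eis(\mathcal{G}_ih_j)|\,dg\leq \int_{B_0(F)N(\mathbb{A}_F)\backslash\overline{G}(\mathbb{A}_F)}\int_{[N]}|\mathcal{G}_i(ug,s_i)|\,du\,|h_j(g,s_j)|\,dg.
\end{align*}
We then use the Iwasawa decomposition $g=\begin{pmatrix} y&\\ &1\end{pmatrix}k$, with $y\in F^\times\backslash\mathbb{A}_F^\times$. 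The section $h_j$ contributes $|y|^{\Re(s_j)+1/2}$, times a bounded function of $k$. The measure contributes $|y|^{-1}d^\times y$.

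The key input is a bound for the generic part,
\begin{align*}
\mathcal{G}_i(g,s_i)=\sum_{\alpha\in F^\times}W_i\left(\begin{pmatrix}\alpha y&\\&1\end{pmatrix}k,s_i\right).
\end{align*}
Since $W_i$ is built from a Schwartz--Bruhat function via \eqref{f2.3}, it decays rapidly as $|\alpha y|\to\infty$. As $|\alpha y|\to 0$ it is bounded by $|\alpha y|^{1/2-|\Re(s_i)|-\varepsilon}$, possibly with a logarithm. Summing over $\alpha$ and folding the $F^\times$-sum into $y\in\mathbb{A}_F^\times$ gives
\begin{align*}
\int_{F^\times\backslash\mathbb{A}_F^\times}\int_{[N]}|\mathcal{G}_i|\,|h_j|\,|y|^{-1}d^\times y\ll\int_{\mathbb{A}_F^\times}\left|W_i\left(\begin{pmatrix}y&\\&1\end{pmatrix}k,s_i\right)\right||y|^{\Re(s_j)-1/2}\,d^\times y.
\end{align*}
The right-hand side is a local Tate-type integral. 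It converges because $\Re(s_j)-1/2+1/2-|\Re(s_i)|>0$ near $0$, and by rapid decay at $\infty$. The finite-place factors are harmless: almost everywhere $W_{i,v}$ is the spherical function, supported on $|y|_v\leq 1$, and its Euler product converges for $\Re(s_j)-|\Re(s_i)|\geq 10$. This proves the $L^1$ statement.

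For $L^2$, we show that $\Eis(\mathcal{G}_ih_j)$ is bounded on a Siegel domain $\mathfrak{S}$, and hence on $[\overline{G}]$. Since $[\overline{G}]$ has finite volume, $L^\infty\subset L^2$. For $g=\begin{pmatrix}y&\\&1\end{pmatrix}k\in\mathfrak{S}$ with $|y|\geq c$, split the sum over $\delta\in B(F)\backslash G(F)$ into two parts.

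\emph{The identity coset.} This term equals $\mathcal{G}_i(g)h_j(g)$. Here $\mathcal{G}_i(g)$ is $O_N(|y|^{-N})$ by rapid decay of the Whittaker function, and $h_j(g)\ll|y|^{\Re(s_j)+1/2}$. The product is therefore bounded.

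\emph{The nontrivial cosets.} For $\delta\neq 1$ and $g$ in a Siegel domain, the height of $\delta g$ is $\ll|y|^{-1}$. We use the polynomial bound $|\mathcal{G}_i(x,s_i)|\ll \mathrm{ht}(x)^{1/2+|\Re(s_i)|+\varepsilon}+\mathrm{ht}(x)^{-A}$, which comes from $\mathcal{G}_i=E_i-\mathcal{C}_i$ and the standard growth of $E_i$. The remaining sum is then dominated by a convergent Eisenstein series of the shape $\sum_\delta \mathrm{ht}(\delta g)^{\sigma}$, with $\sigma=\Re(s_j)+1/2-(1/2+|\Re(s_i)|)\geq 10$. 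Minus its identity term, this is bounded on $\mathfrak{S}$ by the standard constant term estimate (of size $|y|^{1-\sigma}$). Hence the whole kernel is bounded.

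The main obstacle is this uniform control of $\mathcal{G}_i(\delta g)$ over all cosets, in particular showing that the bound for the generic part is polynomial in the height, with exponent $\leq 1/2+|\Re(s_i)|$. I would first try to derive it from $\mathcal{G}_i=E_i-h_i-h_i^{\Diamond}$, using the known moderate growth of $E_i$ for large $\Re(s_i)$. The alternative is to sum the Whittaker bounds directly over $\alpha$. Uniformity in $k$ and at the finitely many ramified places is routine, since $\Phi_j$ is fixed.
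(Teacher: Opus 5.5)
Your proof is correct, but it is organized differently from the paper's.

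\textbf{The paper's route.} The paper never estimates individual cosets. It uses the Bruhat decomposition $B(F)\backslash G(F)=\{1\}\sqcup\{wn(\beta):\beta\in F\}$ and the automorphy relation $\mathcal{G}_i(wn(\beta)g)=\mathcal{G}_i(g)+\mathcal{C}_i(g)-\mathcal{C}_i(wn(\beta)g)$. This rewrites the kernel exactly as $\mathcal{G}_i(g)E_j(g)+\mathcal{C}_i(g)\sum_{\beta}h_j(wn(\beta)g)-\sum_{\beta}\mathcal{C}_i(wn(\beta)g)h_j(wn(\beta)g)$. The three terms are then bounded on a Siegel set: the first by rapid decay of $\mathcal{G}_i$, the other two by Poisson summation (constant-term estimates). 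This gives $\ll |y|^{1+\Re(s_i)-\Re(s_j)}$, hence $L^1$ and $L^2$ at once.

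\textbf{Your route.} You majorize coset by coset. The identity coset is handled by rapid decay. The nontrivial cosets are bounded by $\mathrm{ht}(\delta g)^{\Re(s_j)-\Re(s_i)}$ and summed using a convergent spherical Eisenstein series minus its identity term.

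\textbf{Comparison.} The price of your route is the growth bound for $\mathcal{G}_i$ off the Siegel set. The paper's exact rewriting avoids this, since it only evaluates at points of $\mathfrak{S}$. Your sketch of that bound is sound. If $\mathrm{ht}(x)$ is small, write $x=\gamma^{-1}g'$ with $g'\in\mathfrak{S}$. Then $\gamma\notin B(F)$, and the height inequality you quote gives $\mathrm{ht}(g')\le \mathrm{ht}(x)^{-1}$. Hence $|E_i(x)|\ll \mathrm{ht}(x)^{-1/2-\Re(s_i)}$, while $|h_i(x)|$ and $|h_i^{\Diamond}(x)|\ll \mathrm{ht}(x)^{1/2-\Re(s_i)}$ are smaller.

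What you gain is a genuine absolute majorant. It proves absolute convergence of the coset sum at the same time. Through the constant term $|y|^{1-\sigma}$ of the majorant, with $\sigma=\Re(s_j)-\Re(s_i)$, it also recovers the paper's decay rate.

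\textbf{Two small points.}
\begin{enumerate}
\item Your displayed growth bound has its exponents reversed. What you need, and what you actually use when computing $\sigma$, is $|\mathcal{G}_i(x,s_i)|\ll \mathrm{ht}(x)^{-(1/2+\Re(s_i))}$ for $\mathrm{ht}(x)$ bounded, with rapid decay for large height.
\item The separate $L^1$ unfolding is correct but redundant. Boundedness on $\mathfrak{S}$ together with $\Vol([\overline{G}])<\infty$ already gives $L^1\cap L^2$.
\end{enumerate}
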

\begin{proof}
By definition and the Bruhat decomposition, we have
\begin{equation}\label{equ2.5}
\Eis(\mathcal{G}_ih_j)(\cdot,s_i,s_j)=\mathcal{G}_i(g, s_i)h_j(g, s_j)+\sum_{\beta\in F}\mathcal{G}_i(wn(\beta)g, s_i)h_j(wn(\beta)g, s_j),
\end{equation}
where $n(\beta)=\begin{pmatrix}
1& \beta\\
& 1
\end{pmatrix}$. As a consequence of the automorphy of the Eisenstein series $E_i(\cdot,s_i)=\mathcal{C}_i(g,s_i)+\mathcal{G}_i(g,s_i)$, we have
\begin{equation}\label{e2.5}
\mathcal{G}_i(wn(\beta)g,s_i)=\mathcal{G}_i(g,s_i)+\mathcal{C}_i(g,s_i)-\mathcal{C}_i(wn(\beta)g,s_i),
\end{equation} 

Substituting \eqref{e2.5} into \eqref{equ2.5}, we can express $\Eis(\mathcal{G}_ih_j)(\cdot,s_i,s_j)$ as 
\begin{align*}
\mathcal{G}_i(g, s_i)E_j(g, s_j)+\mathcal{C}_i(g, s_i)\sum_{\beta\in F}h_j(wn(\beta)g, s_j)-\sum_{\beta\in F}\mathcal{C}_i(wn(\beta)g, s_i)h_j(wn(\beta)g, s_j).
\end{align*}

Let $g=\begin{pmatrix}
y\\
& 1
\end{pmatrix}g'$ be in a Siegel set of $[\overline{G}]$, where $|y|\gg 1$ and $g'$ lies in a compact set of $G(\mathbb{A}_F)$. By the standard bounds for Whittakers function (e.g., see \cite[Proposition 3.2.3]{MV10}), we have $\mathcal{G}_i(g, s_i)\ll_m |y|^{-m}$, where the implied constant depends only on $F$ and $h_i(\cdot,s_i)$. Hence, we have
\begin{equation}\label{eq2.5}
\mathcal{G}_i(g, s_i)E_j(g, s_j)\ll |y|^{-100}
\end{equation}
by taking $m=\Re(s_j)+110$. 

Since $\Phi_j$ is a Schwartz--Bruhat function, we have by Poisson summation, that 
\begin{equation}\label{eq2.8}
\mathcal{C}_i(g, s_i)\sum_{\beta\in F}h_j(wn(\beta)g, s_j)\ll |y|^{1+\Re(s_i)-\Re(s_j)},
\end{equation}
and 
\begin{equation}\label{eq2.9}
\sum_{\beta\in F}\mathcal{C}_i(wn(\beta)g, s_i)h_j(wn(\beta)g, s_j)\ll |y|^{1+\Re(s_i)-\Re(s_j)},
\end{equation}
where the implied constant depends on $\Phi_i$, $\Phi_j$, $s_i$ and $s_j$. 

It then follows from \eqref{eq2.5}, \eqref{eq2.8}, and \eqref{eq2.9} that 
\begin{align*}
\Eis(\mathcal{G}_ih_j)(\cdot,s_i,s_j)\ll |y|^{1+\Re(s_i)-\Re(s_j)}.
\end{align*}
In conjunction with $\Re(s_j)-\Re(s_i)\geq 10$, we conclude that $\Eis(\mathcal{G}_ih_j)(\cdot,s_i,s_j)$ is both $L^1$ and $L^2$-integrable over a Siegel set. Therefore, \eqref{e2.12} holds.
\end{proof}

\subsection{A Coarse Symmetric Spectral Reciprocity}\label{sec2.2}
Let $\langle\cdot,\cdot \rangle$ be the Petersson inner product. Let $\mathcal{R}$ be the set of points $\mathbf{s}=(s_1,s_2,s_3,s_4)\in \mathbb{C}^4$ satisfying $\Re(s_1)\geq 10$, $\Re(s_2-s_1)\geq 10$, $\Re(s_3-s_2)\geq 100$ and $\Re(s_4-s_3)\geq 1000$. 

Let $j\in\{1,2,3,4\}$, $\chi_j, \omega_j\in \widehat{F^{\times}\backslash\mathbb{A}_F^{\times}}$, and $\Phi_j=\otimes_v\Phi_{j,v}$ be defined as in \textsection\ref{sec2.1}. Recall that $\omega_1\omega_2=\omega_3\omega_4$. Write $\boldsymbol{\chi}:=\{\chi_1,\chi_2,\chi_3,\chi_4\}$, $\boldsymbol{\omega}:=\{\omega_1,\omega_2,\omega_3, \omega_4\}$, and $\boldsymbol{\Phi}=\{\Phi_1,\Phi_2,\Phi_3,\Phi_4\}$. We call $\mathfrak{X}:=(\boldsymbol{\chi},\boldsymbol{\omega},\boldsymbol{\Phi})$ an automorphic datum.  

\begin{defn}[The Spectral Side]
Let $\mathbf{s}\in \mathcal{R}$. Define  
\begin{equation}\label{eq2.12}
\mathcal{J}_{\mathrm{RS}}(\mathbf{s},\mathfrak{X}):=\langle \Eis(\mathcal{G}_1h_2)(\cdot,s_1,s_2),\Eis(\mathcal{G}_3h_4)(\cdot,\overline{s_3},\overline{s_4})\rangle.
\end{equation}
\end{defn}
The subscript $``\mathrm{RS}"$ in \eqref{eq2.12} indicates Rankin--Selberg convolutions.

\begin{defn}[The Dual Side]
Let $\mathbf{s}\in \mathcal{R}$. Define 
\begin{equation}\label{e2.13}
\mathcal{I}_{\mathrm{Dual}}(\mathbf{s},\mathfrak{X}):=\langle \Eis(\mathcal{G}_1\overline{h_3})(\cdot,s_1,\overline{s_3}),\Eis(\overline{\mathcal{G}_2}h_4)(\cdot,s_2,\overline{s_4})\rangle.
\end{equation}
\end{defn}

\begin{defn}\label{defn2.5}
 Let $\mathbf{s}\in \mathbb{C}^4$.  Define 
\begin{align*}
&\widetilde{\Psi}_{\mathrm{Geo}}^{(1)}(\mathbf{s},\mathfrak{X}):=\widetilde{\Psi}(W_1(\cdot,s_1),\overline{W_3(\cdot,\overline{s_3})},h_2(\cdot,s_2)\overline{h_4(\cdot,\overline{s_4})}),\\
&\widetilde{\Psi}_{\mathrm{Geo}}^{(2)}(\mathbf{s},\mathfrak{X}):=\widetilde{\Psi}(W_1(\cdot,s_1),\overline{W_3(\cdot,\overline{s_3})},h_2^{\Diamond}(\cdot,s_2)\overline{h_4(\cdot,\overline{s_4})}),\\
&\widetilde{\Psi}_{\mathrm{Geo}}^{(3)}(\mathbf{s},\mathfrak{X}):=-\widetilde{\Psi}(W_1^*(\cdot,s_1),W_2(\cdot,s_2),\overline{h_3(\cdot,\overline{s_3})}\overline{h_4(\cdot,\overline{s_4})}),\\
&\widetilde{\Psi}_{\mathrm{Geo}}^{(4)}(\mathbf{s},\mathfrak{X}):=-\widetilde{\Psi}(W_1^*(\cdot,s_1),W_2(\cdot,s_2),\overline{h_3^{\Diamond}(\cdot,\overline{s_3})}\overline{h_4(\cdot,\overline{s_4})}),\\
&\widetilde{\Psi}_{\mathrm{Geo}}^{(5)}(\mathbf{s},\mathfrak{X}):=\widetilde{\Psi}(W_{h_1,\overline{h_3}}^*(\cdot,s_1,\overline{s_3}),W_2(\cdot,s_2),\overline{h_4(\cdot,\overline{s_4})}),\\
&\widetilde{\Psi}_{\mathrm{Geo}}^{(6)}(\mathbf{s},\mathfrak{X}):=\widetilde{\Psi}(W_{h_1^{\Diamond},\overline{h_3}}^*(\cdot,s_1,\overline{s_3}),W_2(\cdot,s_2),\overline{h_4(\cdot,\overline{s_4})}),\\
&\widetilde{\Psi}_{\mathrm{Geo}}^{(7)}(\mathbf{s},\mathfrak{X}):=-\widetilde{\Psi}(W_{h_1,h_2}(\cdot,s_1,s_2),\overline{W_3(\cdot,\overline{s_3})},\overline{h_4(\cdot,\overline{s_4})}),\\
&\widetilde{\Psi}_{\mathrm{Geo}}^{(8)}(\mathbf{s},\mathfrak{X}):=-\widetilde{\Psi}(W_{h_1^{\Diamond},h_2}(\cdot,s_1,s_2),\overline{W_3(\cdot,\overline{s_3})},\overline{h_4(\cdot,\overline{s_4})}).
\end{align*} 
Here $\widetilde{\Psi}(\cdots)$ denotes the meromorphic continuation of the Rankin--Selberg period (see \eqref{2.5}) as defined in the last paragraph of  \textsection\ref{sec2.1.4}.  The variables represented by the $``\cdots"$ in the parentheses are specified in \textsection\ref{sec2.1.4.}--\textsection\ref{sec2.1.3}. 
\end{defn}

By Lemma \ref{lem2.2} and Cauchy inequality, $\mathcal{J}_{\mathrm{RS}}(\mathbf{s},\mathfrak{X})$ and $\mathcal{I}_{\mathrm{Dual}}(\mathbf{s},\mathfrak{X})$ are well defined for $\mathbf{s}\in \mathcal{R}$. They are closely related by the following switching property of Rankin--Selberg kernels:
\begin{prop}\label{prop2.5}
 Let $\mathbf{s}\in \mathcal{R}$. Suppose $\omega_1\omega_2=\omega_3\omega_4$. 
\begin{itemize}
\item $\mathcal{J}_{\mathrm{RS}}(\mathbf{s},\mathfrak{X})$ and $\mathcal{I}_{\mathrm{Dual}}(\mathbf{s},\mathfrak{X})$ converges in $\mathbf{s}\in \mathcal{R}$.
\item The function $\mathcal{J}_{\mathrm{RS}}(\mathbf{s},\mathfrak{X})-\mathcal{I}_{\mathrm{Dual}}(\mathbf{s},\mathfrak{X})$ admits a meromorphic continuation to $\mathbf{s}\in \mathbb{C}^4$, given explicitly by
\begin{equation}\label{eq2.10}
\mathcal{J}_{\mathrm{RS}}(\mathbf{s},\mathfrak{X})-\mathcal{I}_{\mathrm{Dual}}(\mathbf{s},\mathfrak{X})=\sum_{i=1}^{8}\widetilde{\Psi}_{\mathrm{Geo}}^{(i)}(\mathbf{s},\mathfrak{X}).
\end{equation}
\end{itemize}
\end{prop}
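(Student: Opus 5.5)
Convergence on $\mathcal{R}$ is already contained in Lemma \ref{lem2.2}: on $\mathcal{R}$ each of the four kernels $\Eis(\mathcal{G}_1h_2)$, $\Eis(\mathcal{G}_3h_4)$, $\Eis(\mathcal{G}_1\overline{h_3})$, $\Eis(\overline{\mathcal{G}_2}h_4)$ satisfies the hypothesis "second index has real part exceeding the first by at least $10$". Hence each lies in $L^2([\overline{G}])$, and Cauchy--Schwarz gives absolute convergence of both pairings. The substance is the identity \eqref{eq2.10}. I would prove it on $\mathcal{R}$ by unfolding both sides into the same "core" quantity plus Rankin--Selberg periods. The right-hand side of \eqref{eq2.10} is then manifestly meromorphic, since each $\widetilde{\Psi}$ is a holomorphic multiple of a completed $L$-function. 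This supplies the continuation of the difference.

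\textbf{Step 1 (unfold $\mathcal{J}_{\mathrm{RS}}$).} Write $\Eis(\mathcal{G}_3h_4)=E_3E_4-\Eis(\mathcal{C}_3h_4)$ via \eqref{2.6}, conjugated appropriately. Pair with $\Eis(\mathcal{G}_1h_2)$ and unfold the $B(F)\backslash G(F)$-sum on the first factor, which is legitimate by the absolute convergence on $\mathcal{R}$. The pairing against $E_3E_4$ becomes
\begin{equation*}
\int_{N(\mathbb{A}_F)\backslash\overline{G}(\mathbb{A}_F)}\int_{[N]}\mathcal{G}_1(ug)\overline{E_3(ug)E_4(ug)}\,du\,h_2(g)\,dg .
\end{equation*}
I insert the Fourier expansions $E_3=\mathcal{C}_3+\mathcal{G}_3$ and $E_4=\mathcal{C}_4+\mathcal{G}_4$. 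Since $\mathcal{G}_1$ has no constant term, only Fourier coefficients of $\overline{E_3E_4}$ at nonzero frequency survive. Terms with $\overline{\mathcal{G}_3}$ times the constant term $\overline{h_4}+\overline{h_4^{\Diamond}}$ produce Whittaker pairings $\Psi(W_1,\overline{W_3},h_2\overline{h_4})$ together with their $\Diamond$-variants. This is where I expect $\widetilde\Psi^{(1)},\widetilde\Psi^{(2)}$ to arise, after sorting out which of $h_2,h_4$ carries the $\Diamond$ by symmetry of roles. The $\overline{\mathcal{G}_3\mathcal{G}_4}$ part is the "core" term $K:=\int\mathcal{G}_1\overline{\mathcal{G}_3\mathcal{G}_4}h_2$. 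The subtracted piece $\langle\Eis(\mathcal{G}_1h_2),\Eis(\mathcal{C}_3h_4)\rangle$ is a pairing of a Rankin--Selberg kernel with a sum of two genuine Eisenstein series. By \eqref{2.10}, suitably extended to Eisenstein $\phi_0$ using the sections $h_3h_4$ and $h_3^{\Diamond}h_4$, it unfolds to periods involving $W_{h_3,h_4}$. Tracking the parallel computation, these should be the terms labelled $(7),(8)$ after relabelling $1\leftrightarrow3$, $2\leftrightarrow4$, or else they cancel against their counterpart from Step 2.

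\textbf{Step 2 (unfold $\mathcal{I}_{\mathrm{Dual}}$ the same way).} Unfolding $\Eis(\mathcal{G}_1\overline{h_3})$ against $\overline{\Eis(\overline{\mathcal{G}_2}h_4)}=\overline{E_2}\,\overline{E_4}$ minus its correction yields the same core term $K$, now with $h_2,\overline{h_3}$ exchanged. The point is that both cores reduce, by a second unfolding in $\beta\in F$ via \eqref{equ2.5} and \eqref{e2.5}, to
\begin{equation*}
\int \mathcal{G}_1\,\mathcal{G}_2\,\overline{\mathcal{G}_3}\,\overline{\mathcal{G}_4}
\end{equation*}
over $[\overline{G}]$ plus boundary terms. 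The boundary terms come from the failure of $\mathcal{G}_i$ to be automorphic, measured by $\mathcal{C}_i(g)-\mathcal{C}_i(wn(\beta)g)$. Those boundary terms produce $\Psi(W_1^*,W_2,\overline{h_3h_4})$, $\Psi(W_{h_1,\overline{h_3}}^*,W_2,\overline{h_4})$, and their $\Diamond$-versions, with the signs in Definition \ref{defn2.5}. The $W^*$ appears because $W_1$ pairs against $\overline{\theta}$ after conjugation. Subtracting, the symmetric core cancels and exactly the eight listed periods remain.

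\textbf{Main obstacle.} The hard part is justifying each interchange and unfolding on $\mathcal{R}$, and identifying the intermediate periods with the listed $\Psi$'s, including correct signs and the assignment of $\Diamond$'s. The individual pieces such as $\int\mathcal{G}_1\overline{\mathcal{C}_3}\,\overline{h_4}h_2$ converge only because of the steep ordering $\Re s_4\gg\Re s_3\gg\Re s_2\gg\Re s_1$. I would check growth in Siegel sets using \eqref{eq2.8}, \eqref{eq2.9} and the rapid decay of $\mathcal{G}_i$, before cancelling. Once \eqref{eq2.10} holds on the open set $\mathcal{R}$, continuation to $\mathbb{C}^4$ follows by uniqueness from the meromorphy of the $\widetilde{\Psi}$'s.
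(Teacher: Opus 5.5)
Your argument for the first bullet (Lemma~\ref{lem2.2} plus Cauchy--Schwarz) is fine and matches the paper. The proof of the identity, however, has a genuine gap: in Step~1 you apply the geometric expansion $\Eis(\mathcal{G}_ih_j)=E_iE_j-\Eis(\mathcal{C}_ih_j)$ to the wrong kernel. On $\mathcal{R}$ the parameter $\Re(s_4)$ dominates. On a Siegel set, $E_3E_4$ has constant term of size $|y|^{1+\Re(s_3)+\Re(s_4)}$, while $\Eis(\mathcal{G}_1h_2)$ only decays like $|y|^{1+\Re(s_1)-\Re(s_2)}$, because its constant term contains $h_1h_2^{\Diamond}$. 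With the measure $|y|^{-1}d^{\times}y$, the pairing $\langle\Eis(\mathcal{G}_1h_2),E_3E_4\rangle$ therefore has exponent $1+\Re(s_1-s_2+s_3+s_4)>0$ and is not absolutely convergent. The same holds for $\langle\Eis(\mathcal{G}_1h_2),\Eis(\mathcal{C}_3h_4)\rangle$. So splitting $\mathcal{J}_{\mathrm{RS}}$ into these two pieces and unfolding each is not legitimate, and Step~2 fails the same way because of $E_4$.

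Shrinking the region does not help. The kernels $\Eis(\mathcal{G}_1\overline{h_3})$ and $\Eis(\overline{\mathcal{G}_2}h_4)$ in $\mathcal{I}_{\mathrm{Dual}}$ require $\Re(s_3)>|\Re(s_1)|$ and $\Re(s_4)>|\Re(s_2)|$. Your pieces would need $\Re(s_2)>1+\Re(s_1+s_3+s_4)$, which is incompatible with these. The problem already shows in your bookkeeping: expanding $\overline{E_4}$ produces $h_2\overline{h_4^{\Diamond}}$, not the $h_2^{\Diamond}\overline{h_4}$ of $\widetilde{\Psi}^{(2)}_{\mathrm{Geo}}$, and a Rankin--Selberg period against $h_2\overline{h_4^{\Diamond}}$ does not converge on $\mathcal{R}$. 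Finally, the ``symmetric core'' $\int_{[\overline{G}]}\mathcal{G}_1\mathcal{G}_2\overline{\mathcal{G}_3\mathcal{G}_4}$ is not defined, since the $\mathcal{G}_i$ are only left $B(F)$-invariant. The matching of the eight terms (``or else they cancel'') is also left open.

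What works is to reverse the roles. Expand the two kernels containing $\mathcal{G}_1$:
\[
\Eis(\mathcal{G}_1h_2)=E_1E_2-\Eis(\mathcal{C}_1h_2),\qquad \Eis(\mathcal{G}_1\overline{h_3})=E_1\overline{E_3}-\Eis(\mathcal{C}_1\overline{h_3}).
\]
Then unfold the two kernels sharing $h_4$, namely $\Eis(\mathcal{G}_3h_4)$ and $\Eis(\overline{\mathcal{G}_2}h_4)$, so that $s_4$ enters only through the decaying factor $\overline{h_4}$. The difference of the main pairings becomes $\int_{N(F)\backslash\overline{G}(\mathbb{A}_F)}[E_2\overline{W_3}-W_2\overline{E_3}]E_1\overline{h_4}$. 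Its $\mathcal{G}_2\overline{W_3}-W_2\overline{\mathcal{G}_3}$ part vanishes, because both terms refold to $\int_{B(F)\backslash\overline{G}(\mathbb{A}_F)}\mathcal{G}_2\overline{\mathcal{G}_3}E_1\overline{h_4}$. This is the correct, well-defined core, since $E_1\overline{h_4}$ is invariant under $\mathrm{diag}(\delta,1)$ for $\delta\in F^{\times}$.

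The constant-term part, after integrating over $[N]$ and using $\int_{[N]}E_1(ug)\theta(u)\,du=W_1^*(g)$, gives $\Psi^{(1)}_{\mathrm{Geo}},\dots,\Psi^{(4)}_{\mathrm{Geo}}$. The two correction pairings are handled by unfolding the $h_4$-kernel and computing the Whittaker coefficient of $\Eis(\mathcal{C}_1h_2)$ via the Bruhat decomposition as $W_{h_1,h_2}+W_{h_1^{\Diamond},h_2}$, and similarly for $\Eis(\mathcal{C}_1\overline{h_3})$. This gives $\Psi^{(5)}_{\mathrm{Geo}},\dots,\Psi^{(8)}_{\mathrm{Geo}}$.
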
 
\begin{proof}
By \eqref{2.6} and Lemma \ref{lem2.2}, $\langle E_1(\cdot,s_1)E_2(\cdot,s_2),\Eis(\mathcal{G}_3h_4)(\cdot,\overline{s_3},\overline{s_4})\rangle$ can be expressed as
\begin{equation}\label{eq2.13}
\mathcal{J}_{\mathrm{RS}}(\mathbf{s},\mathfrak{X})+\langle \Eis(\mathcal{C}_1h_2)(\cdot,s_1,s_2),\Eis(\mathcal{G}_3h_4)(\cdot,\overline{s_3},\overline{s_4})\rangle,
\end{equation}
which converges absolutely in $\mathbf{s}\in \mathcal{R}$. Likewise, we can decompose the inner product $\langle E_1(\cdot,s_1)\overline{E_3(\cdot,\overline{s_3})},\Eis(\overline{\mathcal{G}_2}h_4)(\cdot,s_2,\overline{s_4})\rangle$ as 
\begin{equation}\label{eq2.14}
\mathcal{I}_{\mathrm{Dual}}(\mathbf{s},\mathfrak{X})+\langle \Eis(\mathcal{C}_1\overline{h_3})(\cdot,s_1,\overline{s_3}),\Eis(\overline{\mathcal{G}_2}h_4)(\cdot,s_2,\overline{s_4})\rangle.
\end{equation}

On the other hand, by unfolding $\Eis(\mathcal{G}_3h_4)(\cdot,\overline{s_3},\overline{s_4})$ and $\Eis(\overline{\mathcal{G}_2}h_4)(\cdot,s_2,\overline{s_4})$,  
\begin{align*}
\langle E_1(\cdot,s_1)E_2(\cdot,s_2),\Eis(\mathcal{G}_3h_4)(\cdot,\overline{s_3},\overline{s_4})\rangle-\langle E_1(\cdot,s_1)\overline{E_3(\cdot,\overline{s_3})},\Eis(\overline{\mathcal{G}_2}h_4)(\cdot,s_2,\overline{s_4})\rangle
\end{align*}
is equal to 
\begin{equation}\label{2.15}
\int_{N(F)\backslash \overline{G}(\mathbb{A}_F)}\Big[E_2(g,s_2)\overline{W_3(g,\overline{s_3})}-W_2(g,s_2)\overline{E_3(g,\overline{s_3})}\Big]E_1(g,s_1)\overline{h_4(g,\overline{s_4})}dg.
\end{equation}

Since $\mathbf{s}\in \mathcal{R}$ (ensuring that $\Re(s_4)$ is sufficient large), the integral in \eqref{2.15} converges absolutely, owing to the growth behavior of Whittaker functions (see \cite[Proposition 3.2.3]{MV10}) and the moderate growth of Eisenstein series. Similarly,  
\begin{equation}\label{eq2.22}
\int_{N(F)\backslash \overline{G}(\mathbb{A}_F)}\Big[\mathcal{G}_2(g,s_2)\overline{W_3(g,\overline{s_3})}-W_2(g,s_2)\overline{\mathcal{G}_3(g,\overline{s_3})}\Big]E_1(g,s_1)\overline{h_4(g,\overline{s_4})}dg
\end{equation}
converges absolutely. By Iwasawa decomposition and a change of variable, noting that $E_1(g,s_1)\overline{h_4(g,\overline{s_4})}=E_1(\diag(\delta,1)g,s_1)\overline{h_4(\diag(\delta,1)g,\overline{s_4})}$   for all $\delta\in F^{\times}$, we duduce that \eqref{eq2.22} vanishes.

By the Fourier expansion \eqref{eq2.6} of Eisenstein series, we have 
\begin{multline*}
E_2(g,s_2)\overline{W_3(g,\overline{s_3})}-W_2(g,s_2)\overline{E_3(g,\overline{s_3})}=\mathcal{C}_2(g,s_2)\overline{W_3(g,\overline{s_3})}\\ -W_2(g,s_2)\overline{\mathcal{C}_3(g,\overline{s_3})}
+\mathcal{G}_2(g,s_2)\overline{W_3(g,\overline{s_3})}-W_2(g,s_2)\overline{\mathcal{G}_3(g,\overline{s_3})}.
\end{multline*}

Together with the vanishing of \eqref{eq2.22}, and the fact that $\mathcal{C}_2(\cdot,s_2)$ and $\overline{\mathcal{C}_3(\cdot,\overline{s_3})}$ are left invariant by $N(\mathbb{A}_F)$, the integral \eqref{2.15} becomes 
\begin{multline}\label{c2.18}
\int_{N(\mathbb{A}_F)\backslash \overline{G}(\mathbb{A}_F)}\mathcal{C}_2(g,s_2)\int_{[N]}\overline{W_3(ug,\overline{s_3})}E_1(ug,s_1)du\overline{h_4(g,\overline{s_4})}dg\\
-\int_{N(\mathbb{A}_F)\backslash \overline{G}(\mathbb{A}_F)}\overline{\mathcal{C}_3(g,\overline{s_3})}\int_{[N]}W_2(ug,s_2)E_1(ug,s_1)du\overline{h_4(g,\overline{s_4})}dg.
\end{multline}

By the definition of $\mathcal{C}_2(\cdot,s_2)$ and $\overline{\mathcal{C}_3(\cdot,\overline{s_3})}$ we can further decompose \eqref{c2.18} as
\begin{equation}\label{2.16}
\Psi_{\mathrm{Geo}}^{(1)}(\mathbf{s},\mathfrak{X})+\Psi_{\mathrm{Geo}}^{(2)}(\mathbf{s},\mathfrak{X})+\Psi_{\mathrm{Geo}}^{(3)}(\mathbf{s},\mathfrak{X})+\Psi_{\mathrm{Geo}}^{(4)}(\mathbf{s},\mathfrak{X}),
\end{equation}
where 
\begin{align*}
\Psi_{\mathrm{Geo}}^{(1)}(\mathbf{s},\mathfrak{X}):=&\int_{N(\mathbb{A}_F)\backslash \overline{G}(\mathbb{A}_F)}W_1(g,s_1)\overline{W_3(g,\overline{s_3})}h_2(g,s_2)\overline{h_4(g,\overline{s_4})}dg,\\
\Psi_{\mathrm{Geo}}^{(2)}(\mathbf{s},\mathfrak{X}):=&\int_{N(\mathbb{A}_F)\backslash \overline{G}(\mathbb{A}_F)}W_1(g,s_1)\overline{W_3(g,\overline{s_3})}h_2^{\Diamond}(g,s_2)\overline{h_4(g,\overline{s_4})}dg,\\
\Psi_{\mathrm{Geo}}^{(3)}(\mathbf{s},\mathfrak{X}):=&-\int_{N(\mathbb{A}_F)\backslash \overline{G}(\mathbb{A}_F)}W_1^*(g,s_1)W_2(g,s_2)\overline{h_3(g,\overline{s_3})h_4(g,\overline{s_4})}dg,\\
\Psi_{\mathrm{Geo}}^{(4)}(\mathbf{s},\mathfrak{X}):=&-\int_{N(\mathbb{A}_F)\backslash \overline{G}(\mathbb{A}_F)}W_1^*(g,s_1)W_2(g,s_2)\overline{h_3^{\Diamond}(g,\overline{s_3})h_4(g,\overline{s_4})}dg.
\end{align*}

Combining \eqref{eq2.13}, \eqref{eq2.14}, and \eqref{2.16}, we obtain 
\begin{multline}\label{2.26}
\mathcal{J}_{\mathrm{RS}}(\mathbf{s},\mathfrak{X})=\mathcal{I}_{\mathrm{Dual}}(\mathbf{s},\mathfrak{X})+\langle \Eis(\mathcal{C}_1\overline{h_3})(\cdot,s_1,\overline{s_3}),\Eis(\overline{\mathcal{G}_2}h_4)(\cdot,s_2,\overline{s_4})\rangle\\
-\langle \Eis(\mathcal{C}_1h_2)(\cdot,s_1,s_2),\Eis(\mathcal{G}_3h_4)(\cdot,\overline{s_3},\overline{s_4})\rangle+\sum_{i=1}^{4}\Psi_{\mathrm{Geo}}^{(i)}(\mathbf{s},\mathfrak{X}).
\end{multline}

Therefore, the formula \eqref{eq2.10} follows from the above decomposition and Lemma \ref{lem2.6} below, noticing that each $\Psi_{\mathrm{Geo}}^{(i)}(\mathbf{s},\mathfrak{X})$ admits the meromorphic continuation $\widetilde{\Psi}_{\mathrm{Geo}}^{(i)}(\mathbf{s},\mathfrak{X})$.
\end{proof}

\begin{lemma}\label{lem2.6}
 Let $\mathbf{s}\in \mathcal{R}$. Suppose $\omega_1\omega_2=\omega_3\omega_4$. Then 
\begin{align*}
&\langle \Eis(\mathcal{C}_1h_2)(\cdot,s_1,s_2),\Eis(\mathcal{G}_3h_4)(\cdot,\overline{s_3},\overline{s_4})\rangle=-\Psi_{\mathrm{Geo}}^{(7)}(\mathbf{s},\mathfrak{X})-\Psi_{\mathrm{Geo}}^{(8)}(\mathbf{s},\mathfrak{X}),\\
&\langle \Eis(\mathcal{C}_1\overline{h_3})(\cdot,s_1,\overline{s_3}),\Eis(\overline{\mathcal{G}_2}h_4)(\cdot,s_2,\overline{s_4})\rangle=\Psi_{\mathrm{Geo}}^{(5)}(\mathbf{s},\mathfrak{X})+\Psi_{\mathrm{Geo}}^{(6)}(\mathbf{s},\mathfrak{X}),
\end{align*}
which converge absolutely. Here $\Psi_{\mathrm{Geo}}^{(i)}(\mathbf{s},\mathfrak{X})$, $5\leq i\leq 8$, is defined by 
\begin{align*}
\Psi_{\mathrm{Geo}}^{(5)}(\mathbf{s},\mathfrak{X}):=&\int_{N(\mathbb{A}_F)\backslash \overline{G}(\mathbb{A}_F)}W_{h_1,\overline{h_3}}^*(g,s_1,\overline{s_3})W_2(g,s_2)\overline{h_4(g,\overline{s_4})}dg,\\
\Psi_{\mathrm{Geo}}^{(6)}(\mathbf{s},\mathfrak{X}):=&\int_{N(\mathbb{A}_F)\backslash \overline{G}(\mathbb{A}_F)}W_{h_1^{\Diamond},\overline{h_3}}^*(g,s_1,\overline{s_3})W_2(g,s_2)\overline{h_4(g,\overline{s_4})}dg,\\
\Psi_{\mathrm{Geo}}^{(7)}(\mathbf{s},\mathfrak{X}):=&-\int_{N(\mathbb{A}_F)\backslash \overline{G}(\mathbb{A}_F)}W_{h_1,h_2}(g,s_1,s_2)\overline{W_3(g,\overline{s_3})}\overline{h_4(g,\overline{s_4})}dg,\\
\Psi_{\mathrm{Geo}}^{(8)}(\mathbf{s},\mathfrak{X}):=&-\int_{N(\mathbb{A}_F)\backslash \overline{G}(\mathbb{A}_F)}W_{h_1^{\Diamond},h_2}(g,s_1,s_2)\overline{W_3(g,\overline{s_3})}\overline{h_4(g,\overline{s_4})}dg.
\end{align*} 
\end{lemma}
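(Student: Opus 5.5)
The plan is to unfold the \emph{Eisenstein} factor, not the Rankin--Selberg kernel. Take the first identity. By the remark in \textsection\ref{sec2.1.6}, $\Eis(\mathcal{C}_1h_2)(\cdot,s_1,s_2)$ is the sum of the two genuine Eisenstein series attached to the sections $h_1(\cdot,s_1)h_2(\cdot,s_2)$ and $h_1^{\Diamond}(\cdot,s_1)h_2(\cdot,s_2)$. For $\mathbf{s}\in\mathcal{R}$ these converge absolutely, since $\Re(s_2)-|\Re(s_1)|\geq 10$, and $\Eis(\mathcal{G}_3h_4)(\cdot,\overline{s_3},\overline{s_4})$ is in $L^1\cap L^2$ by Lemma \ref{lem2.2}. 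So I unfold the sum over $B(F)\backslash G(F)$ in the first argument, which gives
\begin{align*}
\int_{B(F)\backslash \overline{G}(\mathbb{A}_F)}\big(h_1h_2+h_1^{\Diamond}h_2\big)(g)\,\overline{\Eis(\mathcal{G}_3h_4)(g,\overline{s_3},\overline{s_4})}\,dg .
\end{align*}
Next I integrate over $[N]$. This replaces $\Eis(\mathcal{G}_3h_4)$ by its constant term $\int_{[N]}\Eis(\mathcal{G}_3h_4)(ug)\,du$. To compute that constant term I use the Bruhat expansion \eqref{equ2.5}. The identity coset contributes $\int_{[N]}\mathcal{G}_3(ug)\,du\cdot h_4(g)=0$, because $\mathcal{G}_3$ has no constant Fourier coefficient. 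The big cell $\sum_{\beta}\mathcal{G}_3(wn(\beta)g)h_4(wn(\beta)g)$ folds into $\int_{N(\mathbb{A}_F)}\mathcal{G}_3(wug)h_4(wug)\,du$.

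So the constant term of the Rankin--Selberg kernel is, up to conjugation, $\int_{N(\mathbb{A}_F)}\mathcal{G}_3(wug,\overline{s_3})h_4(wug,\overline{s_4})\,du$. I then insert this into the $B(F)\backslash\overline{G}$ integral and reverse the roles. Writing $\mathcal{G}_3=\sum_{\alpha\in F^\times}W_3(\diag(\alpha,1)\cdot)$, I collapse the $\alpha$-sum against $A(F)$, so the domain becomes $N(F)\backslash\overline{G}(\mathbb{A}_F)$. Then I swap $g\mapsto wug$, i.e. substitute $g'=wug$, using the left $N(\mathbb{A}_F)$-equivariance of $h_1h_2$ and $h_1^\Diamond h_2$ (both are sections, so they transform by $\theta$-trivial characters). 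The inner $u$-integral of $h_1h_2(w u g')$ against $\overline{\theta}$ is exactly $W_{h_1,h_2}$, respectively $W_{h_1^\Diamond,h_2}$. The outer integral becomes $\int_{N(\mathbb{A}_F)\backslash\overline{G}}W_{h_1,h_2}\,\overline{W_3}\,\overline{h_4}$, which is $-\Psi^{(7)}_{\mathrm{Geo}}$, and similarly for the $\Diamond$ term, giving $-\Psi^{(8)}_{\mathrm{Geo}}$.

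A cleaner route to the same end is the observation \eqref{2.10}. The pairing of an Eisenstein series $E(\cdot,h_1h_2)$ against the kernel equals $\overline{\Psi(\overline{W_{E}},W_3,h_4)}$, where $W_E$ is the Whittaker function of that Eisenstein series. For an Eisenstein series induced from a section $f$, this Whittaker function is the Jacquet integral $\int_{N}f(wug)\overline{\theta}(u)\,du$, which here is $W_{h_1,h_2}$. I would present the argument this way, and use the unfolding above only to justify \eqref{2.10} for non-cuspidal $\phi_0$ in this convergent range. The second identity is identical, with $(h_2,\mathcal{G}_3,h_4)$ replaced by $(\overline{h_3},\overline{\mathcal{G}_2},h_4)$. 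Conjugating $\mathcal{G}_2$ turns $\theta$ into $\overline{\theta}$, which is why the starred Whittaker functions $W^*_{h_1,\overline{h_3}}$ and $W^*_{h_1^\Diamond,\overline{h_3}}$ appear. The sign is $+$, matching \eqref{2.26}.

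The main obstacle is justifying all interchanges of sums and integrals, i.e. absolute convergence of the unfolded integrals, in $\mathcal{R}$. I would bound the integrand on a Siegel domain. I use Lemma \ref{lem2.2}'s estimate $|y|^{1+\Re(s_3)-\Re(s_4)}$ for the kernel, the rapid decay of $W_3$ from \cite[Proposition 3.2.3]{MV10}, and the polynomial size $|y|^{1/2\pm\ldots}$ of the sections. The gaps $\Re(s_4-s_3)\geq1000$ and $\Re(s_3-s_2)\geq 100$ give ample room, and the Jacquet integrals defining $W_{h_1,h_2}$ converge since $\Re(s_1+s_2)$ is large.
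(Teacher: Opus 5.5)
Your second route, via \eqref{2.10}, is precisely the paper's proof. The paper unfolds $\Eis(\mathcal{G}_3h_4)$, collapses $\mathcal{G}_3=\sum_{\alpha\in F^\times}W_3(\diag(\alpha,1)\,\cdot)$ against $A(F)$, and integrates over $[N]$. It then identifies the $\overline{\theta}$-Whittaker coefficient of $\Eis(\mathcal{C}_1h_2)$, via the Bruhat decomposition, as the Jacquet integral $W_{h_1,h_2}+W_{h_1^{\Diamond},h_2}$; the identity coset drops out because $\int_{[N]}\overline{\theta}=0$.

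Your primary route, unfolding the Eisenstein factor instead, is also valid. You replace the kernel by its constant term $\int_{N(\mathbb{A}_F)}\mathcal{G}_3(wug)h_4(wug)\,du$. The $\alpha$-collapse then works once you do three things:
\begin{itemize}
\item write $\diag(\alpha,1)w=w\diag(1,\alpha)$;
\item conjugate $u$ by $\diag(1,\alpha)$, which preserves measure since $|\alpha|=1$;
\item use the left $T(F)$-invariance of $h_4$ and of $h_1h_2$, $h_1^{\Diamond}h_2$.
\end{itemize}
This gives $\int_{\overline{G}(\mathbb{A}_F)}(h_1h_2)(wg)\overline{W_3(g)h_4(g)}\,dg$, and refolding with $W_3(ug)=\theta(u)W_3(g)$ produces the same answer. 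However, this is an unfold-then-refold detour: it reaches exactly the paper's integral with more bookkeeping. It is also not needed to justify \eqref{2.10} for non-cuspidal $\phi_0$, since the kernel unfolding uses only absolute convergence, never cuspidality.

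One correction to your convergence paragraph, which applies equally to the paper's statement: the inequalities defining $\mathcal{R}$ do not give ``ample room''.
\begin{itemize}
\item On a Siegel set, $\Eis(\mathcal{C}_1h_2)$ contains the term $h_1h_2\asymp|y|^{1+\Re(s_1+s_2)}$.
\item The constant term of the kernel contains $h_3h_4^{\Diamond}\asymp|y|^{1+\Re(s_3-s_4)}$, which is the exponent in Lemma~\ref{lem2.2}.
\item Hence the left-hand side converges absolutely only if $\Re(s_4)>1+\Re(s_1+s_2+s_3)$.
\item Similarly, the local components of $W_{h_1,h_2}(\diag(y,1))$ have a $|y|^{-\Re(s_1+s_2)}$ term as $|y|\to 0$, which forces the same kind of condition on $\Psi_{\mathrm{Geo}}^{(7)}$.
\end{itemize}
Taking all real parts near $10^4$ with the minimal gaps gives points of $\mathcal{R}$ where this fails. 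So the identity should be proved on the subregion of $\mathcal{R}$ where $\Re(s_4)$ dominates $\Re(s_1+s_2+s_3)$. A nonempty open set suffices for the subsequent meromorphic continuation, and on it all your interchanges are justified.
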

\begin{proof}
By definition, $\langle \Eis(\mathcal{C}_1h_2)(\cdot,s_1,s_2),\Eis(\mathcal{G}_3h_4)(\cdot,\overline{s_3},\overline{s_4})\rangle$ is equal to  
\begin{align*}
\int_{[\overline{G}]} \Eis(\mathcal{C}_1h_2)(g,s_1,s_2)\overline{\Eis(\mathcal{G}_3h_4)(g,\overline{s_3},\overline{s_4})}dg,
\end{align*}
which, by unfolding the Eisenstein series $\Eis(\mathcal{G}_3h_4)(g,\overline{s_3},\overline{s_4})$, boils down to 
\begin{equation}\label{2.12}
\int_{N(\mathbb{A}_F)\backslash \overline{G}(\mathbb{A}_F)}\int_{[N]}\Eis(\mathcal{C}_1h_2)(g,s_1,s_2)\overline{\theta}(u)du\overline{W_3(g,\overline{s_3})h_4(g,\overline{s_4})}dg.
\end{equation}

Taking advantage of Bruhat decomposition, we derive 
\begin{align*}
\int_{[N]}\Eis(\mathcal{C}_1h_2)(g,s_1,s_2)\overline{\theta}(u)du=\int_{N(\mathbb{A}_F)}\mathcal{C}_1(wug,s_1)h_2(wug,s_2)\overline{\theta}(u)du.
\end{align*}

By the definition of the Whittaker functions in \textsection\ref{sec2.1.3}, we obtain 
\begin{equation}\label{2.13}
\int_{[N]}\Eis(\mathcal{C}_1h_2)(g,s_1,s_2)\overline{\theta}(u)du=W_{h_1,h_2}(g,s_1,s_2)+W_{h_1^{\Diamond},h_2}(g,s_1,s_2).
\end{equation}

Therefore, it follows from \eqref{2.12} and \eqref{2.13} that
\begin{align*}
\langle \Eis(\mathcal{C}_1h_2)(\cdot,s_1,s_2),\Eis(\mathcal{G}_3h_4)(\cdot,\overline{s_3},\overline{s_4})\rangle=-\Psi_{\mathrm{Geo}}^{(7)}(\mathbf{s},\mathfrak{X})-\Psi_{\mathrm{Geo}}^{(8)}(\mathbf{s},\mathfrak{X}).
\end{align*}
Similarly, we obtain the second decomposition in Lemma \ref{lem2.6}. 
\end{proof}

\begin{remark}
In the spectral reciprocity formula, we may interchange the roles of the sections. For instance, in parallel with $\mathcal{I}_{\mathrm{Dual}}(\mathbf{s},\mathfrak{X})$, define
\begin{align*}
\mathcal{I}_{\mathrm{Dual}}^*(\mathbf{s},\mathfrak{X})
:=\big\langle 
\Eis(h_1\overline{\mathcal{G}_3})(\cdot,s_1,\overline{s_3}),
\Eis(\overline{\mathcal{G}_2}h_4)(\cdot,s_2,\overline{s_4})
\big\rangle.
\end{align*}
Following the arguments in the proof of Proposition~\ref{prop2.5}, we obtain
\begin{align*}
\mathcal{I}_{\mathrm{Dual}}(\mathbf{s},\mathfrak{X})
=
\mathcal{I}_{\mathrm{Dual}}^*(\mathbf{s},\mathfrak{X})
+
\big\langle 
\Eis(h_1\overline{h_3^{\Diamond}}-h_1^{\Diamond}\overline{h_3})(\cdot,s_1,\overline{s_3}),
\Eis(\overline{\mathcal{G}_2}h_4)(\cdot,s_2,\overline{s_4})
\big\rangle,
\end{align*}
from which we may replace $\mathcal{I}_{\mathrm{Dual}}(\mathbf{s},\mathfrak{X})$ by $\mathcal{I}_{\mathrm{Dual}}^*(\mathbf{s},\mathfrak{X})$ on the right-hand side of \eqref{eq2.10}.
\end{remark}

\section{Meromorphic Continuation}\label{sec3}
Let $\mathcal{J}_{\mathrm{RS}}(\mathbf{s},\mathfrak{X})$ and $\mathcal{I}_{\mathrm{Dual}}(\mathbf{s},\mathfrak{X})$ be defined by \eqref{eq2.12} and \eqref{e2.13}, respectively. In this section, we aim to derive their spectral expansions via the $\mathrm{GL}_2$-spectrum, and establish meromorphic continuations of them to a small neighborhood of $\mathbf{s}=\mathbf{0}$, which will be relevant for central $L$-values.  
 
\subsection{The $\mathrm{GL}_2$-spectrum}
For $\mu_1, \mu_2\in \widehat{F^{\times}\backslash \mathbb{A}_F^{(1)}}$, let $H(\mu_1,\mu_2)$ be the space defined by 
\begin{align*}
\Big\{h\in L^2(K):\ h\left(\begin{pmatrix}
a& b\\
&d
\end{pmatrix}k\right)=\mu_1(a)\mu_2(d)h(k),\ \forall\ k\in K,\ \begin{pmatrix}
a& b\\
&d
\end{pmatrix}\in B(\mathbb{A}_F)\cap K
\Big\}.
\end{align*}

Given $h\in H(\mu_1,\mu_2)$ and $\lambda\in \mathbb{C}$, we define 
\begin{equation}\label{eq2.2}
S(h)\left(\begin{pmatrix}
a& b\\
&d
\end{pmatrix}k,\lambda\right):=\mu_1(a)\mu_2(d)\Big|\frac{a}{d}\Big|^{\lambda+1/2}h(k),
\end{equation}
where $a,d\in \mathbb{A}_F^{\times}$, $b\in \mathbb{A}_F$, and $k\in K$. Moreover, if $h$ is $K$-finite, then $S(h)$ can be written as a finite linear combination of the Godement sections defined by \eqref{2.1}, divided by $\Lambda(2\lambda+1,\mu_1\mu_2^{-1})$ (see \cite[page 6]{JZ87}). Let 
\begin{align*}
H(\mu_1,\mu_2,\lambda)=\Big\{S(h)(\cdot,\lambda):\ h\in H(\mu_1,\mu_2)\Big\},
\end{align*}
and $\pi_{\mu_1,\mu_2,\lambda}$ be the representation of  $G(\mathbb{A}_F)$ given by  the right translation on  $H(\mu_1,\mu_2,\lambda)$. Through the transformation $S$ we may regard $\pi_{\mu_1,\mu_2,\lambda}$ as a representation of $G(\mathbb{A}_F)$ on $H(\mu_1,\mu_2)$, and thus 
\begin{equation}\label{e2.2}
\pi_{\mu_1,\mu_2,\lambda}(g)h(k)=S(h)(kg,s),\ \ h\in H(\mu_1,\mu_2),\ k\in K,\ g\in G(\mathbb{A}_F). 
\end{equation} 

Define the associated Eisenstein series by
\begin{align*}
E(g,h,\lambda):=\sum_{\gamma\in B(F)\backslash G(F)}S(h)(\gamma g,\lambda),\ \ \Re(\lambda)>1/2.
\end{align*}

It is known that $E(g,h,\lambda)$ converges absolutely in $\Re(\lambda)>1/2$, and admits a meromorphic continuation to $\mathbb{C}$, with only one possible simple pole at $\lambda=1/2$ in $\Re(\lambda)\geq 0$. Henceforth, we regard $E(g,h,\lambda)$ as a meromorphic function. 

For later purpose, we fix once and for all an orthonormal basis of $K$-finite functions $\mathfrak{B}(\mu_1,\mu_2)$ of the Hilbert space $H(\mu_1,\mu_2)$. For $\pi\in \mathcal{A}_0([G],\omega)$, we denote by $\mathfrak{B}(\pi)$ an orthonormal basis of $\pi$. 
By \cite{GJ79}, we have, for an $L^2$-function $\varphi$ on $[G]$ with central character $\omega$,  the spectral expansion   
\begin{equation}\label{spec}
\varphi(g)=\varphi_{\Res}(g)+\varphi_{\mathrm{Cusp}}(g)+\varphi_{\mathrm{Eis}}(g), 
\end{equation}
where 
\begin{align*}
&\varphi_{\Res}(g)=\frac{1}{\Vol([\overline{G}])}\sum_{\substack{\mu\in \widehat{F^{\times}\backslash\mathbb{A}_F^{(1)}},\ \mu^2=\omega}}\langle\varphi,\mu\circ\det\rangle\mu(\det g),
\\
&\varphi_{\mathrm{Cusp}}(g)=\sum_{\pi\in \mathcal{A}_0([G],\omega)}\sum_{\phi\in\mathfrak{B}(\pi)}\langle\varphi,\phi\rangle\phi(g),\\
&\varphi_{\mathrm{Eis}}(g)=\sum_{\substack{\mu\in \widehat{F^{\times}\backslash\mathbb{A}_F^{(1)}}}} \frac{1}{4\pi i}\int_{i\mathbb{R}}\sum_{h\in \mathfrak{B}(\mu,\overline{\mu}\omega)}\langle\varphi,E(\cdot,h,\lambda)\rangle E(g,h,\lambda)d\lambda.
\end{align*}

\subsection{Spectral Expansion of the Rankin--Selberg Kernel}\label{sec3.2}

Combining \eqref{2.10}, Lemma \ref{lem2.2}, and the spectral expansion \eqref{spec}, we obtain the following result.

\begin{lemma}\label{lem3.1}
Let $\Re(s_1)\geq 10$, $\Re(s_2)-|\Re(s_1)|\geq 10$ and $\omega=\omega_1\omega_2$. For $g\in G(\mathbb{A}_F)$, we have 
\begin{multline}\label{eq3.4}
\Eis(\mathcal{G}_1h_2)(g,s_1,s_2)
=\sum_{\substack{\pi\in \mathcal{A}_0([G],\omega)\\ \phi\in\mathfrak{B}(\pi)}}
\Psi(\overline{W_{\phi}},W_1(\cdot,s_1),h_2(\cdot,s_2))\phi(g)\\
+\sum_{\mu\in \widehat{F^{\times}\backslash\mathbb{A}_F^{(1)}}}
\frac{1}{4\pi i}\int_{i\mathbb{R}}
\sum_{h\in \mathfrak{B}(\mu,\overline{\mu}\omega)}
\Psi(\overline{W_{E(\cdot,h,-\overline{\lambda})}},W_1(\cdot,s_1),h_2(\cdot,s_2))E(g,h,\lambda)\,d\lambda.
\end{multline}
\end{lemma}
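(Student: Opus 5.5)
The plan is to apply the $L^2$ spectral decomposition \eqref{spec} directly to $\varphi:=\Eis(\mathcal{G}_1h_2)(\cdot,s_1,s_2)$. By Lemma~\ref{lem2.2}, in the stated range $\varphi\in L^1([\overline{G}],\omega)\cap L^2([\overline{G}],\omega)$ with $\omega=\omega_1\omega_2$. Hence \eqref{spec} holds for $\varphi$ in $L^2$. Each spectral coefficient is computed by the unfolding identity \eqref{2.10}:
\begin{itemize}
\item for the cuspidal part, $\langle\varphi,\phi\rangle=\Psi(\overline{W_\phi},W_1(\cdot,s_1),h_2(\cdot,s_2))$;
\item for the residual part, $\langle\varphi,\mu\circ\det\rangle=0$, since $\mu\circ\det$ has vanishing Whittaker function.
\end{itemize}
This is why the residual sum is absent from \eqref{eq3.4}.

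For the continuous part I need $\langle\varphi,E(\cdot,h,\lambda)\rangle$ with $\lambda\in i\mathbb{R}$. Here $E(\cdot,h,\lambda)$ is not $L^2$, but $\varphi\in L^1$ and $E$ on the unitary axis has moderate growth. More precisely, the decay $\varphi\ll|y|^{1+\Re(s_1)-\Re(s_2)}\ll |y|^{-9}$ from the proof of Lemma~\ref{lem2.2} beats the growth $|y|^{1/2}$ of $E$ on a Siegel set. So the pairing converges absolutely, and unfolding $\varphi=\sum_{\delta}\mathcal{G}_1h_2(\delta\,\cdot)$ is legitimate by Fubini. The unfolded integral is
\[
\int_{B(F)\backslash \overline{G}(\mathbb{A}_F)}\mathcal{G}_1(g,s_1)h_2(g,s_2)\overline{E(g,h,\lambda)}\,dg .
\]
Integrating over $[N]$ picks out the $\alpha$-th Fourier coefficient, and the sum over $\alpha\in F^\times$ collapses the integral to $N(\mathbb{A}_F)\backslash\overline{G}(\mathbb{A}_F)$. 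The constant term of $\overline{E}$ drops out, because $\mathcal{G}_1$ has no constant term. The result is $\Psi(\overline{W_{E(\cdot,h,\lambda)}},W_1,h_2)$. Using $\overline{E(g,h,\lambda)}=E(g,\bar h,-\bar\lambda)$-type conjugation conventions on the unitary axis, this is written as $\Psi(\overline{W_{E(\cdot,h,-\overline{\lambda})}},\dots)$ in the normalization of \eqref{eq3.4}. I would check this normalization carefully against \eqref{e2.2}.

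The remaining issue, which is the main obstacle, is upgrading the $L^2$ identity to a pointwise identity for every $g$. I would argue as follows. The regularity of $\varphi$ under right $K_\infty$-type and Casimir differentiation gives $\Delta^m\varphi\in L^1\cap L^2$ for all $m$: apply the same Siegel-set bounds to derivatives of $h_2$ and $W_1$, which are sections of the same type. Using self-adjointness, the coefficients then decay rapidly in the spectral parameter. Weyl's law, together with the polynomial bounds on $\phi(g)$ and $E(g,h,\lambda)$, then gives absolute and locally uniform convergence of the right-hand side. Since $\varphi$ is continuous, the $L^2$ equality becomes equality for every $g$.
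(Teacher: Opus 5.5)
Your proposal is correct and is the paper's argument: apply \eqref{spec} to $\Eis(\mathcal{G}_1h_2)(\cdot,s_1,s_2)$, which lies in $L^2$ by Lemma~\ref{lem2.2}, and evaluate every coefficient by the unfolding identity \eqref{2.10}, so the residual coefficients vanish; beyond the paper's one-line proof, you also supply the absolute-convergence and pointwise-convergence justifications that it leaves implicit. The only thing to tidy is the normalization step: no conjugation identity for $E$ is needed (and the one you wrote should read $\overline{E(g,h,\lambda)}=E(g,\overline{h},\overline{\lambda})$), because $-\overline{\lambda}=\lambda$ for $\lambda\in i\mathbb{R}$, so your $\Psi(\overline{W_{E(\cdot,h,\lambda)}},W_1,h_2)$ is literally the paper's $\Psi(\overline{W_{E(\cdot,h,-\overline{\lambda})}},W_1,h_2)$, and the latter form is used only because it depends holomorphically on $\lambda$, which the later continuation needs.
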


\begin{remark}
Taken together, the geometric expansion \eqref{2.6} and the spectral expansion \eqref{eq3.4} yield a geometric--spectral identity for the Rankin--Selberg kernel, parallel to the pre-trace formula for the automorphic kernel. Unlike the latter, however, the residual spectrum makes no contribution to the spectral side. 
\end{remark}
 
\subsection{Zero-free Regions of Hecke $L$-functions}\label{sec3.3}
Let $\xi$ be a unitary Hecke character over $F$. There is an effectively computable absolute constant $c>0$ and a constant $c_{\xi}>0$ depending on $\xi$ (and $F$), such that Hecke $L$-function $\Lambda(s,\xi)$ has no zero in the region 
$$\Big\{s=\sigma+it:\ \sigma> 1-\frac{c}{c_{\xi}+\log (t^2+1)}\Big\}.$$ 
Then $\Lambda(1+2s,\xi)\neq 0$ in $s\in \mathcal{D}_{\xi}$, which is defined by 
\begin{equation}\label{eq3.12}
\mathcal{D}_{\xi}:=\Big\{s=\sigma+it:\ -\frac{c}{10[c_{\xi}+\log (t^2+1)]}< \sigma< \frac{c}{10[c_{\xi}+\log (t^2+1)]}\Big\}.
\end{equation}    
Upon replacing $c$ with a smaller constant, we may assume that if $s\in\mathcal{D}_{\xi}$, then $|\Re(s)|<10^{-1}$.  

Let $\mathcal{D}_{\xi}^+:=\mathcal{D}_{\xi}\cap \{s\in \mathbb{C}:\ \Re(s)> 0\}$, and $\mathcal{D}_{\xi}^-:=\mathcal{D}_{\xi}\cap \{s\in \mathbb{C}:\ \Re(s)< 0\}$. We denote the boundaries of $\mathcal{D}_{\xi}$ by 
\begin{align*}
&\mathcal{C}_{\xi}^+:=\Big\{s=\sigma+it:\ \sigma=\frac{c}{4[c_{\xi}+\log (t^2+1)]}\Big\},\\
&\mathcal{C}_{\xi}^-:=\Big\{s=\sigma+it:\ \sigma=-\frac{c}{4[c_{\xi}+\log (t^2+1)]}\Big\}.
\end{align*}
 
\subsection{Meromorphic Continuation of \texorpdfstring{$\mathcal{J}_{\mathrm{RS}}(\mathbf{s},\mathfrak{X})$}{}}\label{sec3.3.}
Let $j\in\{1,2,3,4\}$, $\chi_j$, $\omega_j\in \widehat{F^{\times}\backslash\mathbb{A}_F^{\times}}$, and $\Phi_j=\otimes_v\Phi_{j,v}$ be defined as in \textsection\ref{sec2.1}. Write $\boldsymbol{\chi}:=\{\chi_1,\chi_2,\chi_3,\chi_4\}$, $\boldsymbol{\omega}:=\{\omega_1,\omega_2,\omega_3, \omega_4\}$, and $\boldsymbol{\Phi}=\{\Phi_1,\Phi_2,\Phi_3,\Phi_4\}$. Suppose $\omega_1\omega_2=\omega_3\omega_4$. Let $\omega=\omega_1\omega_2$.

\subsubsection{Periods Integrals}\label{sec3.3.1}
Let $\pi\in \mathcal{A}_0([G],\omega)$. We define 
\begin{align*}
\Psi(\pi;\mathbf{s},\mathfrak{X}):=\sum_{\phi\in\mathfrak{B}(\pi)}\Psi(\overline{W_{\phi}},W_1(\cdot,s_1),h_2(\cdot,s_2))\Psi(W_{\phi},\overline{W_3(\cdot,\overline{s_3})},\overline{h_4(\cdot,\overline{s_4})}).
\end{align*}

Let $\mu\in \widehat{F^{\times}\backslash\mathbb{A}_F^{(1)}}$, and $\lambda\in i\mathbb{R}$. Define $\Psi(\lambda,\mu;\mathbf{s},\mathfrak{X})$ by 
\begin{equation}\label{c3.5}
\frac{1}{2}\sum_{h}\Psi(\overline{W_{E(\cdot,h,-\overline{\lambda})}},W_1(\cdot,s_1),h_2(\cdot,s_2))\Psi(W_{E(\cdot,h,\lambda)},\overline{W_3(\cdot,\overline{s_3})},\overline{h_4(\cdot,\overline{s_4})}),
\end{equation}
where $h$ ranges over $\mathfrak{B}(\mu,\overline{\mu}\omega)$. 

\begin{lemma}
 Let $\mathbf{s}\in \mathcal{R}$. Then 
\begin{equation}\label{eq3.5}
\mathcal{J}_{\mathrm{RS}}(\mathbf{s},\mathfrak{X}):=\mathcal{J}_{\mathrm{Cusp}}(\mathbf{s},\mathfrak{X})+\mathcal{J}_{\mathrm{Eis}}(\mathbf{s},\mathfrak{X}),
\end{equation}
where 
\begin{align*}
&\mathcal{J}_{\mathrm{Cusp}}(\mathbf{s},\mathfrak{X}):=\sum_{\pi\in \mathcal{A}_0([G],\omega)}\Psi(\pi;\mathbf{s},\mathfrak{X}),\\
&\mathcal{J}_{\mathrm{Eis}}(\mathbf{s},\mathfrak{X}):=\sum_{\substack{\mu\in \widehat{F^{\times}\backslash\mathbb{A}_F^{(1)}}}}\frac{1}{2\pi i}\int_{i\mathbb{R}}\Psi(\lambda,\mu;\mathbf{s},\mathfrak{X})d\lambda.
\end{align*}
\end{lemma}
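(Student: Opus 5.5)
The plan is to obtain \eqref{eq3.5} from Parseval's identity for the spectral decomposition \eqref{spec}, applied to the two $L^2$ functions $\varphi_1:=\Eis(\mathcal{G}_1h_2)(\cdot,s_1,s_2)$ and $\varphi_2:=\Eis(\mathcal{G}_3h_4)(\cdot,\overline{s_3},\overline{s_4})$. For $\mathbf{s}\in\mathcal{R}$ we have $\Re(s_1)\geq 10$ and $\Re(s_2)-|\Re(s_1)|\geq 10$. Likewise $\Re(s_3)\geq 10$ and $\Re(s_4)-|\Re(s_3)|\geq 10$. So Lemma \ref{lem2.2} applies to both kernels: they lie in $L^2([\overline{G}],\omega)$ with the common central character $\omega=\omega_1\omega_2=\omega_3\omega_4$. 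Hence $\mathcal{J}_{\mathrm{RS}}(\mathbf{s},\mathfrak{X})=\langle\varphi_1,\varphi_2\rangle$ equals the sum of the residual, cuspidal and continuous contributions $\sum\langle\varphi_1,\phi\rangle\overline{\langle\varphi_2,\phi\rangle}$. This is Plancherel for \eqref{spec}, with absolute convergence guaranteed by Bessel's inequality in each piece.

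First, the residual terms vanish. The pairing \eqref{2.10} against $\mu\circ\det$ is zero: unfold the Rankin--Selberg kernel to $N(F)\backslash\overline{G}(\mathbb{A}_F)$ and integrate $\mathcal{G}_1$ over $[N]$. The generic part has no constant term, so the integral is zero. This is the remark after \eqref{2.10}.

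Second, for a cusp form $\phi\in\mathfrak{B}(\pi)$, \eqref{2.10} gives $\langle\varphi_1,\phi\rangle=\Psi(\overline{W_\phi},W_1(\cdot,s_1),h_2(\cdot,s_2))$, exactly as in Lemma \ref{lem3.1}. Conjugating the analogous identity for $\varphi_2$ gives
\[
\overline{\langle\varphi_2,\phi\rangle}=\overline{\Psi(\overline{W_\phi},W_3(\cdot,\overline{s_3}),h_4(\cdot,\overline{s_4}))}=\Psi(W_\phi,\overline{W_3(\cdot,\overline{s_3})},\overline{h_4(\cdot,\overline{s_4})}).
\]
Summing over $\phi$ yields $\Psi(\pi;\mathbf{s},\mathfrak{X})$, and summing over $\pi$ gives $\mathcal{J}_{\mathrm{Cusp}}$.

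Third, the Eisenstein contribution. For $\lambda\in i\mathbb{R}$ one has $-\overline{\lambda}=\lambda$, so the unfolded pairing \eqref{2.10} gives $\langle\varphi_1,E(\cdot,h,\lambda)\rangle=\Psi(\overline{W_{E(\cdot,h,-\overline{\lambda})}},W_1,h_2)$. By the same conjugation, $\overline{\langle\varphi_2,E(\cdot,h,\lambda)\rangle}=\Psi(W_{E(\cdot,h,\lambda)},\overline{W_3},\overline{h_4})$. The factor $\frac{1}{4\pi i}=\frac{1}{2}\cdot\frac{1}{2\pi i}$ is then absorbed into the $\frac12$ in \eqref{c3.5}, producing $\mathcal{J}_{\mathrm{Eis}}$.

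The main obstacle is justifying \eqref{2.10} for Eisenstein series $\phi_0=E(\cdot,h,\lambda)$, which are not in $L^2$. The unfolding requires the absolute convergence of
\[
\int_{N(F)\backslash\overline{G}(\mathbb{A}_F)}|\mathcal{G}_1\,h_2\,E(\cdot,h,\lambda)|.
\]
This holds because $E$ has moderate growth $\ll|y|^{1/2}$ on the imaginary axis, while $\mathcal{G}_1$ decays rapidly in $|y|$ and $h_2$ has large exponent $\Re(s_2)$. After that, one integrates $\mathcal{G}_1\overline{E}$ over $[N]$ and picks up only the generic Fourier coefficients, exactly as in the proof of Proposition \ref{prop2.5}. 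The second point is that the Plancherel identity holds in the $L^2$ sense, with the continuous part understood through the wave-packet isometry of \cite{GJ79}. Pairing $L^2$ functions with the Eisenstein coefficients is legitimate here because $\varphi_i\in L^1\cap L^2$ by Lemma \ref{lem2.2}, so the coefficients are given by absolutely convergent integrals.
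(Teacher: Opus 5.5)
Your proposal is correct and is essentially the paper's argument: Parseval for \eqref{spec} applied to the two Rankin--Selberg kernels (as packaged in Lemma \ref{lem3.1}), unfolding via \eqref{2.10} to identify each spectral coefficient with a Rankin--Selberg period, and using $\lambda=-\overline{\lambda}$ on $i\mathbb{R}$ together with $\frac{1}{4\pi i}=\frac{1}{2}\cdot\frac{1}{2\pi i}$ to match \eqref{c3.5}. Two small slips in your convergence remark: the integrand $\mathcal{G}_1h_2E(\cdot,h,\lambda)$ is left $A(F)$-invariant, so the absolute convergence to check is over $B(F)\backslash\overline{G}(\mathbb{A}_F)$, or over $N(F)\backslash\overline{G}(\mathbb{A}_F)$ with $W_1$ in place of $\mathcal{G}_1$; and what makes the pairings with $E(\cdot,h,\lambda)$ absolutely convergent is the explicit decay $\ll|y|^{1+\Re(s_1)-\Re(s_2)}$ from the proof of Lemma \ref{lem2.2}, not mere membership in $L^1\cap L^2$.
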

\begin{proof}
By Lemma \ref{lem3.1}, we can write $\mathcal{J}_{\mathrm{RS}}(\mathbf{s},\mathfrak{X})$ as the sum of 
\begin{align*}
\sum_{\pi\in \mathcal{A}_0([G],\omega)}\sum_{\phi\in\mathfrak{B}(\pi)}\langle\Eis(\mathcal{G}_1h_2)(\cdot,s_1,s_2),\phi\rangle\langle \phi,\Eis(\mathcal{G}_3h_4)(\cdot,\overline{s_3},\overline{s_4})\rangle
\end{align*}
and
\begin{align*}
\sum_{\substack{\mu}} \frac{1}{4\pi i}\int_{i\mathbb{R}}\sum_{h}\langle\Eis(\mathcal{G}_1h_2)(\cdot,s_1,s_2),E(\cdot,h,\lambda)\rangle \langle E(g,h,\lambda),\Eis(\mathcal{G}_3h_4)(\cdot,\overline{s_3},\overline{s_4})\rangle d\lambda,
\end{align*}
where $\mu\in \widehat{F^{\times}\backslash\mathbb{A}_F^{(1)}}$ and $h\in \mathfrak{B}(\mu,\overline{\mu}\omega)$. By unfolding $\Eis(\mathcal{G}_1h_2)(\cdot,s_1,s_2)$ and $\Eis(\mathcal{G}_3h_4)(\cdot,\overline{s_3},\overline{s_4})$, noting that $\lambda=-\overline{\lambda}$ for $\lambda\in i\mathbb{R}$,  the above sums become $\mathcal{J}_{\mathrm{Cusp}}(\mathbf{s},\mathfrak{X})$ and $\mathcal{J}_{\mathrm{Eis}}(\mathbf{s},\mathfrak{X})$, respectively. Therefore, \eqref{eq3.5} holds. 
\end{proof}

\subsubsection{Meromorphic Continuation}\label{sec3.3.2}
\begin{lemma}\label{lem3.3}
 Let $\mathcal{D}_{\overline{\mu}^2\omega}$ be defined by \eqref{eq3.12} in \textsection\ref{sec3.3}. 
\begin{itemize}
\item $\Psi(\lambda,\mu;\mathbf{s},\mathfrak{X})$ converges absolutely in the region 
\begin{equation}\label{equ3.7}
\begin{cases}
\Re(s_2)-|\Re(s_1)|-|\Re(\lambda)|>1/2,\\
\Re(s_4)-|\Re(s_3)|-|\Re(\lambda)|>1/2,\\
2\lambda\in \mathcal{D}_{\overline{\mu}^2\omega}.
\end{cases}	
\end{equation} 
\item $\Psi(\lambda,\mu;\mathbf{s},\mathfrak{X})$ admits a meromorphic continuation $\widetilde{\Psi}(\lambda,\mu;\mathbf{s},\mathfrak{X})$ to $(\lambda,\mathbf{s})\in \mathbb{C}^5$, satisfying   
\begin{multline}\label{f3.7}
\widetilde{\Psi}(\lambda,\mu;\mathbf{s},\mathfrak{X})\propto\ \Lambda(1+2\lambda,\mu^2\overline{\omega})^{-1}\Lambda(1-2\lambda,\overline{\mu}^2\omega)^{-1}\\
 \Lambda(1/2+s_2+s_1-\lambda,\overline{\mu}\chi_1\chi_2)\Lambda(1/2+s_2+s_1+\lambda,\mu\overline{\omega}\chi_1\chi_2)\\
\quad \qquad \Lambda(1/2+s_2-s_1-\lambda,\overline{\mu}\chi_1^{-1}\omega_1\chi_2)\Lambda(1/2+s_2-s_1+\lambda,\mu\overline{\omega}\chi_1^{-1}\omega_1\chi_2)\\
\quad \quad \Lambda(1/2+s_4+s_3+\lambda,\mu\chi_3^{-1}\chi_4^{-1})\Lambda(1/2+s_4+s_3-\lambda,\overline{\mu}\omega\chi_3^{-1}\chi_4^{-1})\\
\quad \qquad \qquad \Lambda(1/2+s_4-s_3+\lambda,\mu\chi_3\omega_3^{-1}\chi_4^{-1})\Lambda(1/2+s_4-s_3-\lambda,\overline{\mu}\omega\chi_3\omega_3^{-1}\chi_4^{-1}).
\end{multline}
\end{itemize}
\end{lemma}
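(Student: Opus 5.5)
The approach is to reduce $\Psi(\lambda,\mu;\mathbf{s},\mathfrak{X})$ to a finite sum of Eulerian products of local Rankin--Selberg integrals. Each product is then identified with an explicit ratio of global completed $L$-functions times an entire (in fact, at almost all places trivial) local correction.

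First I will fix $h\in\mathfrak{B}(\mu,\overline{\mu}\omega)$. Since $h$ is $K$-finite, only finitely many $h$ contribute nonzero terms to \eqref{c3.5}, because the $K$-types of $W_1,h_2,W_3,h_4$ are finite. By the remark after \eqref{eq2.2}, $S(h)(\cdot,\lambda)$ is a finite combination of Godement sections divided by $\Lambda(1+2\lambda,\mu^2\overline{\omega})$. Hence the Whittaker function of $E(\cdot,h,\lambda)$ equals an entire Whittaker function (given by a formula like \eqref{f2.3}) times $\Lambda(1+2\lambda,\mu^2\overline\omega)^{-1}$. The conjugated factor $\overline{W_{E(\cdot,h,-\overline\lambda)}}$ contributes $\Lambda(1-2\lambda,\overline{\mu}^2\omega)^{-1}$ in the same way. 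This accounts for the two denominators in \eqref{f3.7}.

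Next I will unfold each period $\Psi(\overline{W_{E}},W_1,h_2)$ via \eqref{2.5} into $\prod_v\Psi_v$. At unramified places, where all data are spherical and standard, the local integral is the classical Rankin--Selberg unramified computation for $\pi_{\mu,\overline\mu\omega,\lambda}\times(\chi_1|\cdot|^{s_1}\boxplus\chi_1^{-1}\omega_1|\cdot|^{-s_1})$ twisted by $\chi_2|\cdot|^{s_2}$. It equals the product of the four local $L$-factors $L_v(1/2+s_2\pm s_1\mp\lambda,\cdots)$ appearing in the second and third lines of \eqref{f3.7}, divided by normalizing zeta factors that cancel against the Tate-integral normalizations. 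The second period yields the fourth and fifth lines in the same way. At the finitely many ramified places and at the archimedean places, the local integrals are ratios of local zeta integrals by the local Rankin--Selberg (Jacquet--PS--Shalika) theory, so dividing by local $L$-factors gives entire functions of $(\lambda,\mathbf{s})$; this is exactly the relation $\propto$.

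For the convergence in \eqref{equ3.7}, I will bound each local integral using the Iwasawa decomposition $N\backslash\overline{G}=A K$, the Whittaker bounds of \cite[Proposition 3.2.3]{MV10}, and $|h_2(\diag(y,1)k)|=|y|^{\Re s_2+1/2}$. The resulting integral over $y$ converges when $\Re(s_2)-|\Re s_1|-|\Re\lambda|>1/2$. The condition $2\lambda\in\mathcal{D}_{\overline\mu^2\omega}$ ensures that the normalizing $\Lambda(1\pm2\lambda,\cdot)^{-1}$ are finite.

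The main obstacle is the archimedean and ramified local analysis: showing that the normalized local integrals extend to entire functions in all five variables jointly, not merely meromorphically. For this I will use the standard result that local Rankin--Selberg integrals lie in the fractional ideal generated by the $L$-factor. The uniformity in $\lambda$ requires care because the induced section $S(h)$ varies holomorphically with $\lambda$ only after multiplying by $\Lambda(1+2\lambda,\cdot)$. I will handle this by working directly with the Godement sections, which are holomorphic in $\lambda$.
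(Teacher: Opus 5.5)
Your route differs from the paper's, though your convergence discussion and your bookkeeping of the two $\Lambda(1\pm2\lambda,\cdot)^{-1}$ factors match it. The paper never factors the period into local Rankin--Selberg integrals. Instead it substitutes the Jacquet-integral expression of $W_1(\cdot,s_1)$ through the Godement section $h_1$, together with the Tate-integral expression of $h_2$, and unfolds to an integral over $G(\mathbb{A}_F)$. After $g\mapsto\diag(t,1)g$ it expands $W_{E(\cdot,h,-\overline\lambda)}(\diag(t,1)g)$ in matrix coefficients $\langle\pi_{\mu,\overline\mu\omega,-\overline\lambda}(g)h,h'\rangle$. The period then splits into two pieces.
\begin{itemize}
\item A Godement--Jacquet integral in $g$ against $\Phi_1(\mathbf{e}_1g)\Phi_2(\mathbf{e}_2g)$, which produces the pair $\Lambda(1/2+s_2+s_1\mp\lambda,\cdot)$.
\item A Hecke integral in $t$ of $W_{E(\cdot,h',-\overline\lambda)}$, which produces the pair $\Lambda(1/2+s_2-s_1\mp\lambda,\cdot)$ and $\Lambda(1-2\lambda,\overline\mu^2\omega)^{-1}$.
\end{itemize}
Both pieces involve principal series built from Godement sections, so they are Tate-type integrals, and the dependence on all of $(\lambda,\mathbf{s})$ is explicit. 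Your approach is more modular: it would apply to any generic $W_1$. The price is that you lose exactly this explicitness.

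That is where your proposal has a genuine gap. The fractional-ideal theorem of Jacquet--Piatetski-Shapiro--Shalika, and Jacquet's archimedean analogue, concern \emph{fixed} representations. They say $\Psi_v/L_v$ is entire in the single variable $s_2$. Here $\lambda$ and $s_1$ parametrize the representations themselves, namely $\pi_{\mu,\overline\mu\omega,-\overline\lambda}$ and $\chi_1|\cdot|^{s_1}\boxplus\chi_1^{-1}\omega_1|\cdot|^{-s_1}$. So the theorem says nothing about joint holomorphy in $(\lambda,s_1,s_2)$, which is what the relation $\propto$ in five variables requires. Your remark that Godement sections are holomorphic in $\lambda$ only gives holomorphy of the integrand inside the region of convergence, not of the continued, normalized integral outside it. You need an actual argument at each ramified and archimedean place. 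One option is to perform the paper's unfolding locally: this reduces $\Psi_v$ to local Godement--Jacquet and Hecke integrals of principal series, hence to $\mathrm{GL}_1$ Tate integrals, where entireness of the normalized integrals in all variables is clear.

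Separately, your claim that only finitely many $h\in\mathfrak{B}(\mu,\overline\mu\omega)$ contribute is false in the generality of the lemma. The archimedean $\Phi_{j,v}$ are arbitrary Schwartz functions, so $W_1h_2$ need not be right $K_\infty$-finite. The paper's own choice \eqref{eq6.28} is not $K_\infty$-finite, and accordingly Lemma~\ref{lem7.6} sums over all $K$-types via Parseval. The paper also handles each $h$ separately and does not spell out the summation. Still, finiteness is not a valid reason to skip it. What is needed is locally uniform convergence in $(\lambda,\mathbf{s})$ of the sum over $h$ of the continued terms, for example from decay in the $K$-type.
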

\begin{proof}
Recall that $\Psi(\lambda,\mu;\mathbf{s},\mathfrak{X})$ is defined by 
\begin{align*}
\frac{1}{2}\sum_{h\in \mathfrak{B}(\mu,\overline{\mu}\omega)}\Psi(\overline{W_{E(\cdot,h,-\overline{\lambda})}},W_1(\cdot,s_1),h_2(\cdot,s_2))\Psi(W_{E(\cdot,h,\lambda)},\overline{W_3(\cdot,\overline{s_3})},\overline{h_4(\cdot,\overline{s_4})}).
\end{align*}

Utilizing the definition of $W_1(\cdot,s_1)$ (see \eqref{eq1.3}), $\Psi(\overline{W_{E(\cdot,h,\lambda)}},W_1(\cdot,s_1),h_2(\cdot,s_2))$ boils down to
\begin{equation}\label{e3.9}
\int_{G(\mathbb{A}_F)}\overline{W_{E(\cdot,h,-\overline{\lambda})}(g)}h_1(wg,s_1)\Phi_2(\mathbf{e}_2g)\chi_2(\det g)|\det g|^{\frac{1}{2}+s_2}dg.
\end{equation}

Substituting the definition of $h_1(\cdot,s_1)$ into \eqref{e3.9}, along with the change of variables $t\mapsto t^{-1}$ and $g\mapsto \diag(t,1)g$, we obtain 
\begin{multline}\label{e3.10}
\Psi(\overline{W_{E(\cdot,h,-\overline{\lambda})}},W_1(\cdot,s_1),h_2(\cdot,s_2))=\int_{G(\mathbb{A}_F)}\chi_1\chi_2(\det g)|\det g|^{1+s_1+s_2}\\
\chi_1(-1)\int_{\mathbb{A}_F^{\times}}\overline{W_{E(\cdot,h,-\overline{\lambda})}\left(\begin{pmatrix}
t\\
& 1
\end{pmatrix}g\right)}\chi_1^{-1}\chi_2\omega_1(t)|t|^{s_2-s_1}d^{\times}t\Phi_1(\mathbf{e}_1g)\Phi_2(\mathbf{e}_2g)dg.
\end{multline}

By Bruhat decomposition and the definition of Whittaker functions,  
\begin{align*}
W_{E(\cdot,h,-\overline{\lambda})}\left(\begin{pmatrix}
t\\
& 1
\end{pmatrix}g\right)=&\int_{N(\mathbb{A}_F)}S(h)\left(wu\begin{pmatrix}
t\\
& 1
\end{pmatrix}
g,-\overline{\lambda}\right)\overline{\theta}(u)du.
\end{align*}

Taking advantage of the relation \eqref{e2.2} and the spectral decomposition, we obtain 
\begin{align*}
W_{E(\cdot,h,-\overline{\lambda})}\left(\begin{pmatrix}
t\\
& 1
\end{pmatrix}g\right)=\sum_{h'\in \mathfrak{B}(\mu,\overline{\mu}\omega)}\langle \pi_{\mu,\overline{\mu}\omega,-\overline{\lambda}}(g)h,h'\rangle W_{E(\cdot,h',-\overline{\lambda})}\left(\begin{pmatrix}
t\\
& 1
\end{pmatrix}\right).
\end{align*}

Plugging this into \eqref{e3.10}, we can express $\Psi(\overline{W_{E(\cdot,h,-\overline{\lambda})}},W_1(\cdot,s_1),h_2(\cdot,s_2))$ as 
\begin{multline}\label{e3.11}
\chi_1(-1)\sum_{h'\in \mathfrak{B}(\mu,\overline{\mu}\omega)}\int_{G(\mathbb{A}_F)}\overline{\langle \pi_{\mu,\overline{\mu}\omega,-\overline{\lambda}}(g)h,h'\rangle}\Phi_1(\mathbf{e}_1g)\Phi_2(\mathbf{e}_2g)\chi_1\chi_2(\det g)\\
|\det g|^{1+s_1+s_2}dg\int_{\mathbb{A}_F^{\times}}\overline{W_{E(\cdot,h',-\overline{\lambda})}\left(\begin{pmatrix}
t\\
& 1
\end{pmatrix}\right)}\chi_1^{-1}\chi_2\omega_1(t)|t|^{s_2-s_1}d^{\times}t.
\end{multline}

The above integral defines two types of $L$-functions:
\begin{itemize}
\item The integral over $g$ is a Godement-Jacquet integral, representing the $L$-function $\Lambda(1/2+s_2+s_1-\lambda,\overline{\mu}\chi_1\chi_2)\Lambda(1/2+s_2+s_1+\lambda,\mu\overline{\omega}\chi_1\chi_2)$.
\item Since $S(h')$ can be written as a finite linear combination of the Godement sections defined by \eqref{2.1}, divided by $\Lambda(1-2\lambda,\overline{\mu}^2\omega)$, the integral over $t$ is a Hecke integral representing the $L$-function 
\begin{align*}
\frac{\Lambda(1/2+s_2-s_1-\lambda,\overline{\mu}\chi_1^{-1}\omega_1\chi_2)\Lambda(1/2+s_2-s_1+\lambda,\mu\overline{\omega}\chi_1^{-1}\omega_1\chi_2)}{\Lambda(1-2\lambda,\overline{\mu}^2\omega)}.
\end{align*}
\end{itemize}

Therefore, the function $\Psi(\overline{W_{E(\cdot,h,\lambda)}},W_1(\cdot,s_1),h_2(\cdot,s_2))$, expressed as \eqref{e3.11}, converges in the region \eqref{equ3.7}, and admits a meromorphic continuation to $\mathbb{C}^5$ via the above $\propto$ relations. Similarly we obtain the meromorphic continuation of the function $\Psi(W_{E(\cdot,h,\lambda)},\overline{W_3(\cdot,\overline{s_3})},\overline{h_4(\cdot,\overline{s_4})})$. Consequently, \eqref{f3.7} holds.
\end{proof}

Let $\widetilde{\Psi}(\pi;\mathbf{s},\mathfrak{X})$ be the meromorphic continuation of $\Psi(\pi;\mathbf{s},\mathfrak{X})$, and let $\widetilde{\Psi}(\lambda,\mu;\mathbf{s},\mathfrak{X})$ is the meromorphic continuation of $\Psi(\lambda,\mu;\mathbf{s},\mathfrak{X})$ to $(\lambda,\mathbf{s})\in \mathbb{C}^5$.  
\begin{defn}\label{defn3.2}
 Let $\mathbf{s}\in \mathbb{C}^4$. We define 
\begin{equation}\label{reg1}
\mathcal{J}_{\mathrm{Spec}}^{\heartsuit}(\mathbf{s},\mathfrak{X})=\mathcal{J}_{\mathrm{Cusp}}^{\heartsuit}(\mathbf{s},\mathfrak{X})+\mathcal{J}_{\mathrm{Eis}}^{\heartsuit}(\mathbf{s},\mathfrak{X}),
\end{equation}
where 
\begin{align*}
&\mathcal{J}_{\mathrm{Cusp}}^{\heartsuit}(\mathbf{s},\mathfrak{X}):=\sum_{\pi\in \mathcal{A}_0([G],\omega)}\widetilde{\Psi}(\pi;\mathbf{s},\mathfrak{X}),\\
&\mathcal{J}_{\mathrm{Eis}}^{\heartsuit}(\mathbf{s},\mathfrak{X}):=\sum_{\substack{\mu\in \widehat{F^{\times}\backslash\mathbb{A}_F^{(1)}}}}\frac{1}{2\pi i}\int_{i\mathbb{R}}\widetilde{\Psi}(\lambda,\mu;\mathbf{s},\mathfrak{X})d\lambda,
\end{align*}
\end{defn} 

Due to the rapid decay of cusp forms, $\mathcal{J}_{\mathrm{Cusp}}^{\heartsuit}(\mathbf{s},\mathfrak{X})$ converges everywhere and gives a meromorphic continuation of $\mathcal{J}_{\mathrm{Cusp}}(\mathbf{s},\mathfrak{X})$ to $\mathbf{s}\in \mathbb{C}^4$.

\begin{defn}\label{defn3.5}
 Let $\mathbf{s}\in \mathbb{C}^4$. We define 
\begin{align*}
&\Psi_{\mathrm{RS}}^{(1)}(\mathbf{s},\mathfrak{X}):=\mathbf{1}_{\mu=\chi_1\chi_2}\underset{\lambda=s_2+s_1-\frac{1}{2}}{\Res}\ \widetilde{\Psi}(\lambda,\mu;\mathbf{s},\mathfrak{X}),\\
&\Psi_{\mathrm{RS}}^{(2)}(\mathbf{s},\mathfrak{X}):=-\mathbf{1}_{\mu=\omega\overline{\chi_1\chi_2}}\underset{\lambda=\frac{1}{2}-s_2-s_1}{\Res}\ \widetilde{\Psi}(\lambda,\mu;\mathbf{s},\mathfrak{X}),\\
&\Psi_{\mathrm{RS}}^{(3)}(\mathbf{s},\mathfrak{X}):=\mathbf{1}_{\mu=\overline{\chi_1}\omega_1\chi_2}\underset{\lambda=s_2-s_1-\frac{1}{2}}{\Res}\ \widetilde{\Psi}(\lambda,\mu;\mathbf{s},\mathfrak{X}),\\
&\Psi_{\mathrm{RS}}^{(4)}(\mathbf{s},\mathfrak{X}):=-\mathbf{1}_{\mu=\omega\chi_1\overline{\omega_1\chi_2}}\underset{\lambda=\frac{1}{2}+s_1-s_2}{\Res}\ \widetilde{\Psi}(\lambda,\mu;\mathbf{s},\mathfrak{X}),\\
&\Psi_{\mathrm{RS}}^{(5)}(\mathbf{s},\mathfrak{X}):=\mathbf{1}_{\mu=\omega\chi_3\omega_3^{-1}\chi_4^{-1}}\underset{\lambda=s_4-s_3-\frac{1}{2}}{\Res}\ \widetilde{\Psi}(\lambda,\mu;\mathbf{s},\mathfrak{X}),\\
&\Psi_{\mathrm{RS}}^{(6)}(\mathbf{s},\mathfrak{X}):=-\mathbf{1}_{\mu=\overline{\chi_3}\omega_3\chi_4}\underset{\lambda=\frac{1}{2}+s_3-s_4}{\Res}\ \widetilde{\Psi}(\lambda,\mu;\mathbf{s},\mathfrak{X}),\\
&\Psi_{\mathrm{RS}}^{(7)}(\mathbf{s},\mathfrak{X}):=\mathbf{1}_{\mu=\omega\chi_3^{-1}\chi_4^{-1}}\underset{\lambda=s_4+s_3-\frac{1}{2}}{\Res}\ \widetilde{\Psi}(\lambda,\mu;\mathbf{s},\mathfrak{X}),\\
&\Psi_{\mathrm{RS}}^{(8)}(\mathbf{s},\mathfrak{X}):=-\mathbf{1}_{\mu=\chi_3\chi_4}\underset{\lambda=\frac{1}{2}-s_3-s_4}{\Res}\ \widetilde{\Psi}(\lambda,\mu;\mathbf{s},\mathfrak{X}).
\end{align*}	
\end{defn}

Let $\mu\in \widehat{F^{\times}\backslash\mathbb{A}_F^{(1)}}$. We define 
\begin{align*}
\mathcal{J}_{\mu}(\mathbf{s},\mathfrak{X}):=\frac{1}{2\pi i}\int_{i\mathbb{R}}\Psi(\lambda,\mu;\mathbf{s},\mathfrak{X})d\lambda,
\end{align*}
where $\Re(s_2)-|\Re(s_1)|>1/2,$ and 
$\Re(s_4)-|\Re(s_3)|>1/2$. Define 
\begin{align*}
\mathcal{J}_{\mu}^{\heartsuit}(\mathbf{s},\mathfrak{X}):=\frac{1}{2\pi i}\int_{i\mathbb{R}}\widetilde{\Psi}(\lambda,\mu;\mathbf{s},\mathfrak{X})d\lambda.
\end{align*}

\begin{prop}\label{prop3.4}
 Let $\mathbf{s}=(s_1,s_2,s_3,s_4)\in \mathbb{C}^4$ satisfy $\Re(s_2)-|\Re(s_1)|>1/2,$ and 
$\Re(s_4)-|\Re(s_3)|>1/2$. 
\begin{itemize}
\item $\mathcal{J}_{\mu}(\mathbf{s},\mathfrak{X})$ admits a meromorphic continuation $\widetilde{\mathcal{J}}_{\mu}(\mathbf{s},\mathfrak{X})$ to 
\begin{equation}\label{equ3.12}
\big\{\mathbf{s}\in \mathbb{C}^4:\ \Re(s_2)-|\Re(s_1)|>-1/2,\ \Re(s_4)-|\Re(s_3)|>-1/2\big\}.
\end{equation}
\item Let $\mathbf{s}\in \mathcal{R}_{\mathrm{RS}}:=\Big\{\mathbf{s}\in \mathbb{C}^4:\ |\Re(s_1)|+|\Re(s_2)|<1/2,\ |\Re(s_3)|+|\Re(s_4)|<1/2\Big\}$. We have
\begin{align*}
\widetilde{\mathcal{J}}_{\mu}(\mathbf{s},\mathfrak{X})=&\mathcal{J}_{\mu}^{\heartsuit}(\mathbf{s},\mathfrak{X})-\sum_{i=1}^8\Psi_{\mathrm{RS}}^{(i)}(\mathbf{s},\mathfrak{X})
\end{align*}
as an identity of meromorphic functions. Here $\Psi_{\mathrm{RS}}^{(i)}(\mathbf{s},\mathfrak{X})$ is defined by Definition \ref{defn3.5}.
\end{itemize}

\end{prop}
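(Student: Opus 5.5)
The plan is the standard contour-shifting argument for the continuous spectrum, run one $\mu$ at a time. First I would fix $\mu$ and the region of absolute convergence: $\Re(s_2)-|\Re(s_1)|>1/2$ and $\Re(s_4)-|\Re(s_3)|>1/2$. Lemma \ref{lem3.3} then says that $\Psi(\lambda,\mu;\mathbf{s},\mathfrak{X})$ is holomorphic in $\lambda$ for $\lambda$ in the thin strip $2\lambda\in\mathcal{D}_{\overline{\mu}^2\omega}$ around $i\mathbb{R}$. By \eqref{f3.7}, its meromorphic continuation $\widetilde{\Psi}(\lambda,\mu;\mathbf{s},\mathfrak{X})$ has possible poles only from the eight Hecke $L$-factors in the numerator and from zeros of $\Lambda(1\pm2\lambda,\cdot)$ in the denominator. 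The entire multiplier in $\propto$ contributes no poles. On $\mathcal{D}_{\overline{\mu}^2\omega}$ the denominator does not vanish, by \textsection\ref{sec3.3}.

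The poles in the numerator come from the completed Hecke $L$-functions $\Lambda(z,\xi)$, which have poles only at $z=0,1$ and only when $\xi$ is trivial. Take the factor $\Lambda(1/2+s_2+s_1-\lambda,\overline{\mu}\chi_1\chi_2)$ as an example. It has poles when $\mu=\chi_1\chi_2$ (up to the unramified twist absorbed into $\lambda$), at $\lambda=s_1+s_2-1/2$ and $\lambda=s_1+s_2+1/2$. Listing all eight factors gives the eight families of poles appearing in Definition \ref{defn3.5}.

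In the initial region every pole of the form $\lambda=\pm(s_2\pm s_1)\mp1/2$ sits at a fixed side of $i\mathbb{R}$. For instance, $\lambda=s_2+s_1-1/2$ has real part $>0$ and lies to the right, while $\lambda=1/2-s_2-s_1$ lies to the left. The poles at distance $\ge 1/2+\ldots$ stay away from the line throughout the region \eqref{equ3.12}.

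To continue $\mathcal{J}_{\mu}(\mathbf{s},\mathfrak{X})$, I would move $\mathbf{s}$ and follow these poles. As long as no pole crosses $i\mathbb{R}$ (more precisely, the curves $\mathcal{C}^{\pm}_{\overline{\mu}^2\omega}$), the integral $\int_{i\mathbb{R}}\widetilde{\Psi}\,d\lambda$ is holomorphic in $\mathbf{s}$. Once a pole is about to cross, I deform the contour into $\mathcal{C}^{\pm}$, or into a small indentation within $\mathcal{D}$, so that it avoids the pole. This yields a meromorphic function on \eqref{equ3.12}. In this region each $\Re(s_2)\pm\Re(s_1)$ and $\Re(s_4)\pm\Re(s_3)$ exceeds $-1/2$, so the poles of type $\lambda=\pm(1/2+s_2\pm s_1)$ never reach the line. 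This defines $\widetilde{\mathcal{J}}_\mu$.

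Next, for $\mathbf{s}\in\mathcal{R}_{\mathrm{RS}}$, I compare the deformed contour with $i\mathbb{R}$. The quantities $\Re(s_2\pm s_1)$ and $\Re(s_4\pm s_3)$ now lie in $(-1/2,1/2)$. Hence each of the eight poles has crossed $i\mathbb{R}$ exactly once: the ones originally on the right are now on the left, and vice versa. Straightening the contour back to $i\mathbb{R}$ therefore picks up $\mp$ the residues, which gives
\begin{align*}
\widetilde{\mathcal{J}}_{\mu}=\mathcal{J}^{\heartsuit}_{\mu}-\sum_{i=1}^8\Psi^{(i)}_{\mathrm{RS}}.
\end{align*}
The indicator functions $\mathbf{1}_{\mu=\cdots}$ record which $\mu$ actually produce the pole. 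The signs in Definition \ref{defn3.5} match the direction of crossing: the $+$ sign for poles moving right-to-left and the $-$ sign for those moving left-to-right. One must also absorb the factor $\tfrac{1}{2\pi i}$ and the orientation.

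The main obstacle is justifying the contour shifts analytically. This requires sufficient decay of $\widetilde{\Psi}(\lambda,\mu;\mathbf{s},\mathfrak{X})$ as $|\Im\lambda|\to\infty$ inside $\mathcal{D}$, uniformly in $\mathbf{s}$ on compacta. For this I would use three inputs. First, the archimedean local integrals in \eqref{e3.11} are Godement--Jacquet and Hecke integrals of Schwartz data, so they decay rapidly in $\Im\lambda$ by repeated integration by parts, equivalently by applying the Casimir to $K$-finite vectors. Second, the finite $L$-factors grow at most polynomially in vertical strips, by Phragm\'en--Lindel\"of. Third, the reciprocals $\Lambda(1\pm2\lambda)^{-1}$ are bounded polynomially in $\log|t|$ on the zero-free region, so only a polynomial loss occurs. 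A further point is that the sum over $h\in\mathfrak{B}(\mu,\overline{\mu}\omega)$ is finite, since the data are $K$-finite, so there is no issue with convergence over $h$.
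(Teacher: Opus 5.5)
Your proposal is correct and is essentially the paper's argument. You shift the contour inside the zero-free region, where exactly the eight $z=1$ poles cross $i\mathbb{R}$ once each while the $z=0$ poles stay to the right throughout \eqref{equ3.12}; this produces the residues of Definition~\ref{defn3.5} with the signs you describe (the paper merely does this pair by pair: shift to $\mathcal{C}^{+}$, move $\mathbf{s}$, shift back to $i\mathbb{R}$).

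Two corrections to your analytic remarks. First, the archimedean $\Phi_{j,v}$ are only Schwartz, not $K_\infty$-finite (compare \eqref{eq6.28} and the $K$-type expansion in Lemma~\ref{lem7.6}), so the $h$-sum is infinite and must be controlled by rapid decay in the $K_\infty$-types rather than by finiteness. Second, the archimedean gamma factors of $\Lambda(1\pm2\lambda,\cdot)^{-1}$ grow exponentially in $|\Im\lambda|$ and are compensated only by those of the eight numerator $L$-functions; the polylogarithmic bound holds just for the finite parts $L(1\pm2\lambda,\cdot)^{-1}$.
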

\begin{proof}
The function $\Psi(\lambda,\mu;\mathbf{s},\mathfrak{X})$ converges absolutely in the region \eqref{equ3.7}. Suppose $\Re(s_4)-|\Re(s_3)|>1/2+1/10$ and $\Re(s_1)\geq 0$. Let $\mathcal{D}_{\overline{\mu\chi_1}\omega_1\chi_2}$ and $\mathcal{D}_{\overline{\mu\chi_1}\omega_1\chi_2}^+$ be defined as in \eqref{eq3.12} in \textsection\ref{sec3.3}. Let $\Re(s_2)-\Re(s_1)>1/2$ with $s_2-s_1\in 1/2+\mathcal{D}_{\overline{\mu\chi_1}\omega_1\chi_2}^+$.  

Shifting contour from $\lambda\in i\mathbb{R}$ to $\mathcal{C}_{\overline{\mu\chi_1}\omega_1\chi_2}^+$, we derive by Cauchy formula, that 
\begin{equation}\label{3.7}
\mathcal{J}_{\mu}(\mathbf{s},\mathfrak{X})=\frac{1}{2\pi i}\int_{\mathcal{C}_{\overline{\mu\chi_1}\omega_1\chi_2}^+}\widetilde{\Psi}(\lambda,\mu;\boldsymbol{\cdot})d\lambda-\mathbf{1}_{\mu=\overline{\chi_1}\omega_1\chi_2}\underset{\lambda=s_2-s_1-\frac{1}{2}}{\Res}\ \widetilde{\Psi}(\lambda,\mu;\boldsymbol{\cdot}),
\end{equation}
where $\widetilde{\Psi}(\lambda,\mu;\boldsymbol{\cdot}):=\widetilde{\Psi}(\lambda,\mu;\mathbf{s},\mathfrak{X})$.

By \eqref{f3.7}, the function $\underset{\lambda=s_2-s_1-\frac{1}{2}}{\Res}\ \widetilde{\Psi}(\lambda,\mu;\boldsymbol{\cdot})$ is meromorphic in $\mathbf{s}\in \mathbb{C}^4$. Notice that for $\lambda\in \mathcal{C}_{\overline{\mu\chi_1}\omega_1\chi_2}^+$, the function $\widetilde{\Psi}(\lambda,\mu;\boldsymbol{\cdot})$ is holomorphic in $s_2-s_1\in 1/2+\mathcal{D}_{\overline{\mu\chi_1}\omega_1\chi_2}$. Since the Rankin--Selberg periods decay rapidly in vertical strips, the function  
\begin{align*}
\mathcal{J}_1:=\frac{1}{2\pi i}\int_{\mathcal{C}_{\overline{\mu\chi_1}\omega_1\chi_2}^+}\widetilde{\Psi}(\lambda,\mu;\mathbf{s},\mathfrak{X})d\lambda
\end{align*}
converges absolutely in the region 
\begin{align*}
\mathcal{R}_1:=\Big\{\mathbf{s}\in \mathbb{C}^4:\ \Re(s_1)\geq 0,\ s_2-s_1\in \frac{1}{2}+\mathcal{D}_{\overline{\mu\chi_1}\omega_1\chi_2},\ \Re(s_4)-|\Re(s_3)|>\frac{1}{2}+\frac{1}{10}\Big\}.
\end{align*}

Therefore, \eqref{3.7} yields a meromorphic continuation of $\mathcal{J}_{\mu}(\mathbf{s},\mathfrak{X})$ in $\mathcal{R}_1$.

Let $\mathbf{s}\in \mathbb{C}^4$ be such that $ \Re(s_1)\geq 0,$ $s_2-s_1\in \frac{1}{2}+\mathcal{D}_{\overline{\mu\chi_1}\omega_1\chi_2}^-,$ and $\Re(s_4)-|\Re(s_3)|>\frac{1}{2}+\frac{1}{10}$. We may shift the contour from $\lambda\in \mathcal{C}_{\overline{\mu\chi_1}\omega_1\chi_2}^+$ back to $i\mathbb{R}$, obtaining
\begin{equation}\label{eq3.9}
\mathcal{J}_1=\frac{1}{2\pi i}\int_{i\mathbb{R}}\widetilde{\Psi}(\lambda,\mu;\boldsymbol{\cdot})d\lambda+\mathbf{1}_{\mu=\omega\chi_1\overline{\omega_1\chi_2}}\underset{\lambda=\frac{1}{2}+s_1-s_2}{\Res}\ \widetilde{\Psi}(\lambda,\mu;\boldsymbol{\cdot}).
\end{equation}

By \eqref{f3.7}, the function $\frac{1}{2\pi i}\int_{i\mathbb{R}}\widetilde{\Psi}(\lambda,\mu;\boldsymbol{\cdot})d\lambda$ in \eqref{eq3.9} is holomorphic in 
\begin{align*}
\mathcal{R}_2:=\Big\{\mathbf{s}\in \mathbb{C}^4:\ \Re(s_1)\geq 0,\ \Re(s_2-s_1)>-1/2,\ \Re(s_4)-|\Re(s_3)|>\frac{1}{2}+\frac{1}{10}\Big\}.
\end{align*}

Consequently, \eqref{3.7} and \eqref{eq3.9} provides a meromorphic continuation $\widetilde{\mathcal{J}}_{\mu}(\mathbf{s},\mathfrak{X})$ of $\mathcal{J}_{\mu}(\mathbf{s},\mathfrak{X})$ to the region $\mathcal{R}_2$. Explicitly, we have the following.
\begin{itemize}
\item In the region $\Re(s_1)\geq 0$, $\Re(s_2-s_1)>1/2$, and $\Re(s_4)-|\Re(s_3)|>\frac{1}{2}+\frac{1}{10}$,
\begin{align*}
\widetilde{\mathcal{J}}_{\mu}(\mathbf{s},\mathfrak{X})=\mathcal{J}_{\mu}^{\heartsuit}(\mathbf{s},\mathfrak{X}).
\end{align*}

\item In the region $\Re(s_1)\geq 0$, $-1/2<\Re(s_2-s_1)\leq 1/2$, $\Re(s_2+s_1)>1/2$, and $\Re(s_4)-|\Re(s_3)|>\frac{1}{2}+\frac{1}{10}$, we can express $\widetilde{\mathcal{J}}_{\mu}(\mathbf{s},\mathfrak{X})$ as
\begin{align*}
\mathcal{J}_{\mu}^{\heartsuit}(\mathbf{s},\mathfrak{X})-\mathbf{1}_{\mu=\overline{\chi_1}\omega_1\chi_2}\underset{\lambda=s_2-s_1-\frac{1}{2}}{\Res}\ \widetilde{\Psi}(\lambda,\mu;\boldsymbol{\cdot})+\mathbf{1}_{\mu=\omega\chi_1\overline{\omega_1\chi_2}}\underset{\lambda=\frac{1}{2}+s_1-s_2}{\Res}\ \widetilde{\Psi}(\lambda,\mu;\boldsymbol{\cdot}).
\end{align*}

\item In the region $\Re(s_1)\geq 0$, $-1/2<\Re(s_2-s_1)\leq 1/2$, $s_2+s_1\in \frac{1}{2}+\mathcal{D}_{\overline{\mu}\chi_1\chi_2}^+$, and $\Re(s_4)-|\Re(s_3)|>\frac{1}{2}+\frac{1}{10}$, we may shift contour from $\lambda\in i\mathbb{R}$ to $\lambda\in \mathcal{C}_{\overline{\mu}\chi_1\chi_2}^+$ to express $\widetilde{\mathcal{J}}_{\mu}(\mathbf{s},\mathfrak{X})$ as  
\begin{align*}
&\frac{1}{2\pi i}\int_{\mathcal{C}_{\overline{\mu}\chi_1\chi_2}^+}\widetilde{\Psi}(\lambda,\mu;\boldsymbol{\cdot})d\lambda-\mathbf{1}_{\mu=\chi_1\chi_2}\underset{\lambda=s_2+s_1-\frac{1}{2}}{\Res}\ \widetilde{\Psi}(\lambda,\mu;\boldsymbol{\cdot})\\
&-\mathbf{1}_{\mu=\overline{\chi_1}\omega_1\chi_2}\underset{\lambda=s_2-s_1-\frac{1}{2}}{\Res}\ \widetilde{\Psi}(\lambda,\mu;\boldsymbol{\cdot})+\mathbf{1}_{\mu=\omega\chi_1\overline{\omega_1\chi_2}}\underset{\lambda=\frac{1}{2}+s_1-s_2}{\Res}\ \widetilde{\Psi}(\lambda,\mu;\boldsymbol{\cdot}),
\end{align*}
which yields a meromorphic continuation to $s_2+s_1\in \frac{1}{2}+\mathcal{D}_{\overline{\mu}\chi_1\chi_2}$. 

For $s_2+s_1\in \frac{1}{2}+\mathcal{D}_{\overline{\mu}\chi_1\chi_2}^-$, by shifting the contour back to $i\mathbb{R}$, we deduce that  $\widetilde{\mathcal{J}}_{\mu}(\mathbf{s},\mathfrak{X})-\mathcal{J}_{\mu}^{\heartsuit}(\mathbf{s},\mathfrak{X})$ boils down to 
\begin{multline}\label{c3.16}
-\mathbf{1}_{\mu=\overline{\chi_1}\omega_1\chi_2}\underset{\lambda=s_2-s_1-\frac{1}{2}}{\Res}\ \widetilde{\Psi}(\lambda,\mu;\boldsymbol{\cdot})+\mathbf{1}_{\mu=\omega\chi_1\overline{\omega_1\chi_2}}\underset{\lambda=\frac{1}{2}+s_1-s_2}{\Res}\ \widetilde{\Psi}(\lambda,\mu;\boldsymbol{\cdot})\\
-\mathbf{1}_{\mu=\chi_1\chi_2}\underset{\lambda=s_2+s_1-\frac{1}{2}}{\Res}\ \widetilde{\Psi}(\lambda,\mu;\boldsymbol{\cdot})+\mathbf{1}_{\mu=\omega\overline{\chi_1\chi_2}}\underset{\lambda=\frac{1}{2}-s_2-s_1}{\Res}\ \widetilde{\Psi}(\lambda,\mu;\boldsymbol{\cdot}).
\end{multline}
In particular, this leads to a meromorphic continuation to  the region 
\begin{align*}
\Big\{\mathbf{s}\in \mathbb{C}^4:\ \Re(s_1)\geq 0,\ |\Re(s_2)-|\Re(s_1)||< 1/2,\ \Re(s_4)-|\Re(s_3)|>\frac{1}{2}+\frac{1}{10}\Big\},
\end{align*}
as the function $\mathcal{J}_{\mu}^{\heartsuit}(\mathbf{s},\mathfrak{X})$ is holomorphic therein. 

Likewise, upon shifting the contour from $i\mathbb{R}$ to $\mathcal{C}_{\overline{\mu}\chi_1\chi_2}^+$ first, 
we obtain a meromorphic continuation of $\widetilde{\mathcal{J}}_{\mu}(\mathbf{s},\mathfrak{X})-\mathcal{J}_{\mu}^{\heartsuit}(\mathbf{s},\mathfrak{X})$ to the region 
\begin{align*}
\Big\{\mathbf{s}\in \mathbb{C}^4:\ \Re(s_1)< 0,\ \Re(s_2)-|\Re(s_1)|>-\frac{1}{2},\ \Re(s_4)-|\Re(s_3)|>\frac{1}{2}+\frac{1}{10}\Big\}. 
\end{align*}

Moreover, in the region 
\begin{align*}
\Big\{\mathbf{s}\in \mathbb{C}^4:\ \Re(s_1)< 0,\ |\Re(s_2)-|\Re(s_1)||<\frac{1}{2},\ \Re(s_4)-|\Re(s_3)|>\frac{1}{2}+\frac{1}{10}\Big\},
\end{align*}
the meromorphic continuation $\widetilde{\mathcal{J}}_{\mu}(\mathbf{s},\mathfrak{X})-\mathcal{J}_{\mu}^{\heartsuit}(\mathbf{s},\mathfrak{X})$ is given by 
\begin{align*}&-\mathbf{1}_{\mu=\chi_1\chi_2}\underset{\lambda=s_2+s_1-\frac{1}{2}}{\Res}\ \widetilde{\Psi}(\lambda,\mu;\boldsymbol{\cdot})+\mathbf{1}_{\mu=\omega\overline{\chi_1\chi_2}}\underset{\lambda=\frac{1}{2}-s_2-s_1}{\Res}\ \widetilde{\Psi}(\lambda,\mu;\boldsymbol{\cdot}),\\
&-\mathbf{1}_{\mu=\overline{\chi_1}\omega_1\chi_2}\underset{\lambda=s_2-s_1-\frac{1}{2}}{\Res}\ \widetilde{\Psi}(\lambda,\mu;\boldsymbol{\cdot})+\mathbf{1}_{\mu=\omega\chi_1\overline{\omega_1\chi_2}}\underset{\lambda=\frac{1}{2}+s_1-s_2}{\Res}\ \widetilde{\Psi}(\lambda,\mu;\boldsymbol{\cdot}),
\end{align*} 
which is the same as \eqref{c3.16}. 

Therefore, $\widetilde{\mathcal{J}}_{\mu}(\mathbf{s},\mathfrak{X})-\mathcal{J}_{\mu}^{\heartsuit}(\mathbf{s},\mathfrak{X})$ admits a meromorphic continuation to 
\begin{align*}
\Big\{\mathbf{s}\in \mathbb{C}^4:\ \Re(s_2)-|\Re(s_1)|>-1/2,\ \Re(s_4)-|\Re(s_3)|>\frac{1}{2}+\frac{1}{10}\Big\},
\end{align*}
which is explicitly given by \eqref{c3.16} in the subregion 
\begin{align*}
\Big\{\mathbf{s}\in \mathbb{C}^4:\ |\Re(s_2)-|\Re(s_1)||<1/2,\ \Re(s_4)-|\Re(s_3)|>1/2+1/10\Big\}.
\end{align*}
\end{itemize}

Similarly, for $\Re(s_2)-|\Re(s_1)|>-1/2$, a similar contour shift for the term $\mathcal{J}_{\mu}^{\heartsuit}(\mathbf{s},\mathfrak{X})$ yields the meromorphic continuation of $\widetilde{\mathcal{J}}_{\mu}(\mathbf{s},\mathfrak{X})$ to the region \eqref{equ3.12}: 
\begin{align*}
\Big\{\mathbf{s}\in \mathbb{C}^4:\ \Re(s_2)-|\Re(s_1)|>-1/2,\ \Re(s_4)-|\Re(s_3)|>-1/2\Big\}.
\end{align*}

Moreover, by the above construction of meromorphic continuation, we deduce, for $\mathbf{s}\in \mathcal{R}_{\mathrm{RS}}$, that $\widetilde{\mathcal{J}}_{\mu}(\mathbf{s},\mathfrak{X})-\mathcal{J}_{\mu}^{\heartsuit}(\mathbf{s},\mathfrak{X})$ is equal to the linear combinations in the second part of Proposition \ref{prop3.4}. 
\end{proof}

\subsubsection{Meromorphic Continuation of \texorpdfstring{$\mathcal{J}_{\mathrm{RS}}(\mathbf{s},\mathfrak{X})$}{}}
As a result of \eqref{eq3.5},  \eqref{reg1}, and Proposition \ref{prop3.4}, we have the following theorem.
\begin{thm}\label{thm3.5}
 The function $\mathcal{J}_{\mathrm{RS}}(\mathbf{s},\mathfrak{X})$ admits a meromorphic continuation $\widetilde{\mathcal{J}}_{\mathrm{RS}}(\mathbf{s},\mathfrak{X})$ to the region \eqref{equ3.12}. In particular, for 
\begin{align*}
\mathbf{s}\in \mathcal{R}_{\mathrm{RS}}:=\Big\{\mathbf{s}\in \mathbb{C}^4:\ |\Re(s_1)|+|\Re(s_2)|<1/2,\ |\Re(s_3)|+|\Re(s_4)|<1/2\Big\},
\end{align*} 
the function $\widetilde{\mathcal{J}}_{\mathrm{RS}}(\mathbf{s},\mathfrak{X})$ is given explicitly by 
\begin{equation}\label{3.17}
\widetilde{\mathcal{J}}_{\mathrm{RS}}(\mathbf{s},\mathfrak{X})=\mathcal{J}_{\mathrm{Spec}}^{\heartsuit}(\mathbf{s},\mathfrak{X})-\sum_{i=1}^{8}\widetilde{\Psi}_{\mathrm{RS}}^{(i)}(\mathbf{s},\mathfrak{X}),
\end{equation}
where $\mathcal{J}_{\mathrm{Spec}}^{\heartsuit}(\mathbf{s},\mathfrak{X})$ is defined by \eqref{reg1} (see Definition \ref{defn3.2}), and $\widetilde{\Psi}_{\mathrm{RS}}^{(i)}(\mathbf{s},\mathfrak{X})$ is defined by Definition \ref{defn3.5}.
\end{thm}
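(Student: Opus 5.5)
The plan is to assemble Theorem~\ref{thm3.5} from the decomposition \eqref{eq3.5} together with Proposition~\ref{prop3.4}, treating the cuspidal and continuous parts separately.

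\emph{Cuspidal part.} For $\mathbf{s}\in\mathcal{R}$, \eqref{eq3.5} gives $\mathcal{J}_{\mathrm{RS}}=\mathcal{J}_{\mathrm{Cusp}}+\mathcal{J}_{\mathrm{Eis}}$. Each term $\Psi(\pi;\mathbf{s},\mathfrak{X})$ is a product of two Rankin--Selberg integrals. These are holomorphic multiples of
\[
\Lambda(1/2+s_2\pm s_1,\widetilde{\pi}\times\cdots)\quad\text{and}\quad \Lambda(1/2+s_4\pm s_3,\pi\times\cdots),
\]
which are entire since $\pi$ is cuspidal. Hence $\widetilde{\Psi}(\pi;\mathbf{s},\mathfrak{X})$ is entire in $\mathbf{s}$. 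I would then invoke the remark after Definition~\ref{defn3.2}: rapid decay of cusp forms, together with polynomial control of the local factors in vertical strips, makes $\sum_\pi\widetilde{\Psi}(\pi;\mathbf{s},\mathfrak{X})$ converge locally uniformly. The result is $\mathcal{J}^{\heartsuit}_{\mathrm{Cusp}}$, holomorphic on the whole region \eqref{equ3.12}, and it agrees with $\mathcal{J}_{\mathrm{Cusp}}$ on $\mathcal{R}$.

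\emph{Continuous part.} For each fixed $\mu$, Proposition~\ref{prop3.4} continues $\mathcal{J}_\mu$ to \eqref{equ3.12}. On $\mathcal{R}_{\mathrm{RS}}$ it gives
\[
\widetilde{\mathcal{J}}_\mu=\mathcal{J}^{\heartsuit}_\mu-\sum_{i=1}^{8}\Psi^{(i)}_{\mathrm{RS}},
\]
where the residue terms are nonzero only for the finitely many $\mu$ singled out by the indicator functions in Definition~\ref{defn3.5}. Summing over $\mu$ then gives $\mathcal{J}_{\mathrm{Eis}}$ continued as $\mathcal{J}^{\heartsuit}_{\mathrm{Eis}}-\sum_i\widetilde{\Psi}^{(i)}_{\mathrm{RS}}$, and combining with the cuspidal part yields \eqref{3.17}. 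By uniqueness of meromorphic continuation, the explicit formula on $\mathcal{R}_{\mathrm{RS}}$ matches the continuation obtained by the chain of contour shifts.

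\emph{Main obstacle: uniformity in $\mu$.} The hard step is justifying the interchange of $\sum_\mu$ with the continuation, i.e.\ showing that $\sum_\mu\widetilde{\mathcal{J}}_\mu$ converges locally uniformly on \eqref{equ3.12}, away from the polar divisors. For all but finitely many $\mu$, no pole of the numerator $L$-factors in \eqref{f3.7} is crossed. For those $\mu$, $\widetilde{\mathcal{J}}_\mu=\mathcal{J}_\mu^{\heartsuit}$ is an integral over $i\mathbb{R}$. The factor $\Lambda(1\pm2\lambda,\overline{\mu}^2\omega)^{-1}$ there is controlled on the unitary line by standard lower bounds $\gg(C(\mu)(1+|\lambda|))^{-\varepsilon}$. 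The numerator $L$-factors, within the strip $|\Re|<1/2$ of their arguments, are bounded polynomially in $C(\mu)(1+|\lambda|)$ by convexity.

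The decay needed to sum over $\mu$ and integrate in $\lambda$ should come from the archimedean and ramified local integrals. Because the $\Phi_j$ are fixed and $K$-finite, only finitely many $K$-types and levels contribute. Repeated integration by parts (equivalently, Casimir action) in the local Godement--Jacquet and Hecke integrals gives rapid decay in $C_\infty(\mu|\cdot|^\lambda)$, and this beats the polynomial growth. Making this decay uniform as $\mathbf{s}$ ranges over compact subsets of \eqref{equ3.12} is the delicate point. I would handle it by bounding the local integrals directly from the explicit formula \eqref{e3.11}, with $\mathbf{s}$ restricted to compacta.
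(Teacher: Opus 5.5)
Your proposal is correct and follows the paper's own route: the theorem is just \eqref{eq3.5} combined with Definition~\ref{defn3.2} (the cuspidal sum continuing by rapid decay) and Proposition~\ref{prop3.4} applied to each $\mu$, with the residue terms surviving only for the finitely many $\mu$ singled out by the indicators in Definition~\ref{defn3.5}. Your discussion of uniformity in $\mu$ makes explicit an interchange the paper passes over. One small correction there: the $\Phi_j$ are only assumed Schwartz--Bruhat, and the archimedean ones in \textsection\ref{sec4.4} are not $K$-finite, so control over $K$-types must come from smoothness (rapid decay in the $K$-type) rather than from only finitely many $K$-types contributing.
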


\subsection{Meromorphic Continuation of \texorpdfstring{$\mathcal{I}_{\mathrm{Dual}}(\mathbf{s},\mathfrak{X})$}{}}\label{sec4}
Let $\omega'=\omega_1\overline{\omega_3}$. For $\sigma\in \mathcal{A}_0([G],\omega')$, we define 
\begin{align*}
\Psi^*(\sigma;\mathbf{s},\mathfrak{X}):=\sum_{\phi\in\mathfrak{B}(\sigma)}\Psi(\overline{W_{\phi}},W_1(\cdot,s_1),\overline{h_3(\cdot,\overline{s_3})})\Psi(W_{\phi}^*,W_2(\cdot,s_2)),\overline{h_4(\cdot,\overline{s_4})}).
\end{align*}
Denote by $\widetilde{\Psi}^*(\sigma;\mathbf{s},\mathfrak{X})$ the meromorphic continuation of $\Psi^*(\sigma;\mathbf{s},\mathfrak{X})$.

Let $\mu\in \widehat{F^{\times}\backslash\mathbb{A}_F^{(1)}}$, and $\lambda\in i\mathbb{R}$. Define $\Psi^*(\lambda,\mu;\mathbf{s},\mathfrak{X})$ by 
\begin{equation}\label{eq3.17}
\frac{1}{2}\sum_{h}\Psi(\overline{W_{E(\cdot,h,-\overline{\lambda})}},W_1(\cdot,s_1),\overline{h_3(\cdot,\overline{s_3})})\Psi(W_{E(\cdot,h,\lambda)}^*,W_2(\cdot,s_2)),\overline{h_4(\cdot,\overline{s_4})}),
\end{equation}
where $h$ ranges over $\mathfrak{B}(\mu,\overline{\mu}\omega')$. 

Parallel to Lemma \ref{lem3.3}, we have the following meromorphic continuation. 
\begin{lemma}\label{lem4.1}
 Let $\mathcal{D}_{\overline{\mu}^2\omega'}$ be defined by \eqref{eq3.12} in \textsection\ref{sec3.3}. 
\begin{itemize}
\item $\Psi^*(\lambda,\mu;\mathbf{s},\mathfrak{X})$ converges absolutely in the region 
\begin{align*}
\begin{cases}
\Re(s_3)-|\Re(s_1)|-|\Re(\lambda)|>1/2,\\
\Re(s_4)-|\Re(s_2)|-|\Re(\lambda)|>1/2,\\
2\lambda\in \mathcal{D}_{\overline{\mu}^2\omega'}.
\end{cases}	
\end{align*}

\item $\Psi^*(\lambda,\mu;\mathbf{s},\mathfrak{X})$ admits a meromorphic continuation $\widetilde{\Psi}^*(\lambda,\mu;\mathbf{s},\mathfrak{X})$ to $(\lambda,\mathbf{s})\in \mathbb{C}^5$, satisfying 
\begin{multline*}
\widetilde{\Psi}^*(\lambda,\mu;\mathbf{s},\mathfrak{X})\propto \ \Lambda(1+2\lambda,\mu^2\overline{\omega}')^{-1}\Lambda(1-2\lambda,\overline{\mu}^{2}\omega')^{-1}\\
\quad \Lambda(1/2+s_3+s_1-\lambda,\overline{\mu}\chi_1\chi_3^{-1})\Lambda(1/2+s_3+s_1+\lambda,\mu\omega'^{-1}\chi_1\chi_3^{-1})\\
\qquad \Lambda(1/2+s_3-s_1-\lambda,\overline{\mu}\chi_1^{-1}\omega_1\chi_3^{-1})\Lambda(1/2+s_3-s_1+\lambda,\mu\omega'^{-1}\chi_1^{-1}\omega_1\chi_3^{-1})\\
\qquad \Lambda(1/2+s_4+s_2+\lambda,\mu\chi_2\chi_4^{-1})\Lambda(1/2+s_4+s_2-\lambda,\overline{\mu}\omega'\chi_2\chi_4^{-1})\\
\qquad \Lambda(1/2+s_4-s_2+\lambda,\mu\chi_2\omega_2^{-1}\chi_4^{-1})\Lambda(1/2+s_4-s_2-\lambda,\overline{\mu}\omega'\chi_2\omega_2^{-1}\chi_4^{-1}).
\end{multline*}
\end{itemize}
\end{lemma}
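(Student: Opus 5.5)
The plan is to run the proof of Lemma~\ref{lem3.3} again with the roles of the sections permuted. We treat the two Rankin--Selberg factors in \eqref{eq3.17} separately.

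\textbf{First factor.} The factor $\Psi(\overline{W_{E(\cdot,h,-\overline{\lambda})}},W_1(\cdot,s_1),\overline{h_3(\cdot,\overline{s_3})})$ is exactly the first factor of \eqref{c3.5} with $(h_2,s_2,\chi_2,\omega_2)$ replaced by $(\overline{h_3(\cdot,\overline{s_3})},s_3,\chi_3^{-1},\omega_3^{-1})$. Here we use that $\overline{h_3(\cdot,\overline{s_3})}$ is the Godement section attached to $(\overline{\Phi_3},\overline{\chi_3},\overline{\omega_3},s_3)$. We carry out the following steps.
\begin{enumerate}
\item Unfold $W_1$ as in \eqref{e3.9}.
\item Make the change of variables $t\mapsto t^{-1}$, $g\mapsto \diag(t,1)g$, as in \eqref{e3.10}.
\item Expand $W_{E(\cdot,h,-\overline{\lambda})}(\diag(t,1)g)$ via matrix coefficients $\langle \pi_{\mu,\overline{\mu}\omega',-\overline{\lambda}}(g)h,h'\rangle$, exactly as in \eqref{e3.11}.
\end{enumerate}
This splits the factor into two pieces. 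The $g$-integral is a Godement--Jacquet zeta integral for $\Phi_1\otimes\overline{\Phi_3}$, giving $\Lambda(1/2+s_3+s_1\mp\lambda,\cdot)$. The $t$-integral is a Hecke integral of a Whittaker function of $S(h')$; since $S(h')$ is a combination of Godement sections divided by $\Lambda(1-2\lambda,\overline{\mu}^2\omega')$, it gives $\Lambda(1/2+s_3-s_1\mp\lambda,\cdot)/\Lambda(1-2\lambda,\overline{\mu}^2\omega')$. Substituting $\chi_2\to\chi_3^{-1}$, $\omega_2\to\omega_3^{-1}$ and $\omega\to\omega'$ into the first two lines of \eqref{f3.7} produces the characters in the statement. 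For instance, $\mu\overline{\omega}\chi_1\chi_2$ becomes $\mu\omega'^{-1}\chi_1\chi_3^{-1}$.

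\textbf{Second factor.} The factor $\Psi(W^*_{E(\cdot,h,\lambda)},W_2(\cdot,s_2),\overline{h_4(\cdot,\overline{s_4})})$ is handled the same way, mirroring the treatment of the second factor in Lemma~\ref{lem3.3}. The only new feature is the twist $W^*(g)=W(\diag(-1,1)g)$. This is absorbed by the change of variable $g\mapsto\diag(-1,1)g$, at the cost of a harmless sign $\chi(-1)$ and replacing $\theta$ by $\overline{\theta}$. That replacement does not affect the $L$-factors. We obtain $\Lambda(1/2+s_4\pm s_2\pm\lambda,\cdot)$ with the characters listed, together with the factor $\Lambda(1+2\lambda,\mu^2\overline{\omega}')^{-1}$ coming from the normalization of $E(\cdot,h,\lambda)$.

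\textbf{Convergence and continuation.} Absolute convergence in the stated region follows as in Lemma~\ref{lem3.3}. The Godement--Jacquet and Tate integrals converge when $\Re(s_3)-|\Re(s_1)|-|\Re\lambda|>1/2$, and likewise for $(s_4,s_2)$. The condition $2\lambda\in\mathcal{D}_{\overline{\mu}^2\omega'}$ keeps the denominators nonvanishing. Because the sum over $h,h'$ is finite for $K$-finite data, it suffices to combine finitely many such products. The meromorphic continuation to $\mathbb{C}^5$ then follows from the continuation of each zeta integral as an entire multiple of the corresponding completed $L$-function. This yields the $\propto$ relation.

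\textbf{Main obstacle.} The delicate point is bookkeeping rather than analysis. We must track the complex conjugations $\overline{h_3(\cdot,\overline{s_3})}$ and $\overline{h_4(\cdot,\overline{s_4})}$ and the starred Whittaker function, so that the characters in the eight $L$-factors come out correctly. In particular, we must confirm that the central characters multiply to the trivial character. This uses $\omega_1\omega_2=\omega_3\omega_4$, that is, $\omega'=\omega_1\overline{\omega_3}=\overline{\omega_2}\omega_4$. We would verify this character by character against \eqref{f3.7}.
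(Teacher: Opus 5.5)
Your strategy is the paper's own. The paper gives no separate proof and simply declares Lemma~\ref{lem4.1} parallel to Lemma~\ref{lem3.3}. Your treatment of the first factor is correct: the substitution $(\chi_2,\omega_2,\omega)\mapsto(\chi_3^{-1},\omega_3^{-1},\omega')$ in the first two lines of \eqref{f3.7} does produce the stated characters. Your convergence region is also right.

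\textbf{The second factor.} Your claim that this factor yields ``the characters listed'' does not survive the bookkeeping you defer.
\begin{itemize}
\item In this factor $W_2(\cdot,s_2)$ takes the place of $\overline{W_3(\cdot,\overline{s_3})}$. So the correct substitution in the last two lines of \eqref{f3.7} is $\chi_3\mapsto\chi_2^{-1}$, $\omega_3\mapsto\omega_2^{-1}$, $\omega\mapsto\omega'$.
\item Concretely, $W_2$ lies in $\chi_2|\cdot|^{s_2}\boxplus\chi_2^{-1}\omega_2|\cdot|^{-s_2}$ and $W^*_{E(\cdot,h,\lambda)}$ lies in $\mu|\cdot|^{\lambda}\boxplus\overline{\mu}\omega'|\cdot|^{-\lambda}$. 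The section $\overline{h_4(\cdot,\overline{s_4})}$ contributes the twist $\chi_4^{-1}$, namely its first inducing character.
\item Hence the two factors with $-s_2$ are $\Lambda(1/2+s_4-s_2+\lambda,\mu\chi_2^{-1}\omega_2\chi_4^{-1})$ and $\Lambda(1/2+s_4-s_2-\lambda,\overline{\mu}\omega'\chi_2^{-1}\omega_2\chi_4^{-1})$. The printed versions have $\chi_2\omega_2^{-1}$ instead.
\item A quick check confirms this. The four characters of a $\mathrm{GL}_2\times\mathrm{GL}_2$ factor must multiply to $(\omega'\omega_2)^2\chi_4^{-4}$. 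The printed ones multiply to $\omega'^2\chi_2^4\omega_2^{-2}\chi_4^{-4}$.
\end{itemize}

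\textbf{Consequence.} The printed statement appears to be a misprint: the introduction's dual side has $L(1/2,\sigma\times\chi_2^{-1}\omega_2\chi_4^{-1})$, which matches the corrected form. Since $\propto$ only permits entire multiples, your argument proves the corrected version, not the printed one. The correction also moves the indicator conditions of $\Psi^{(5)}_{\mathrm{Dual}}$ and $\Psi^{(6)}_{\mathrm{Dual}}$ in Definition~\ref{defn3.9} to $\mu=\omega'\chi_2^{-1}\omega_2\chi_4^{-1}$ and $\mu=\chi_2\omega_2^{-1}\chi_4$. This is harmless for the applications in Part~2.

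\textbf{A smaller point.} You justify termwise continuation by saying the sums over $h,h'$ are finite for $K$-finite data. However, the automorphic datum allows arbitrary Schwartz--Bruhat $\Phi_j$, and the archimedean $\Phi_v$ of \textsection\ref{sec4.4} are not $K$-finite. In general the $h$-sum is infinite. To get the $\propto$ relation for the sum, you need rapid decay of the $K$-type components of the smooth data, so that the sum of the entire multipliers converges locally uniformly. The paper is equally silent on this; it is what the Parseval step in Lemma~\ref{lem7.6} implicitly relies on.
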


\begin{defn}\label{defn3.7}
Let notation be as before. We define 
\begin{equation}\label{reg2}
\mathcal{I}_{\mathrm{Spec}}^{\heartsuit}(\mathbf{s},\mathfrak{X})=\mathcal{I}_{\mathrm{Cusp}}^{\heartsuit}(\mathbf{s},\mathfrak{X})+\mathcal{I}_{\mathrm{Eis}}^{\heartsuit}(\mathbf{s},\mathfrak{X}),
\end{equation}
where
\begin{align*}
&\mathcal{I}_{\mathrm{Cusp}}^{\heartsuit}(\mathbf{s},\mathfrak{X}):=\sum_{\sigma\in \mathcal{A}_0([G],\omega')}\widetilde{\Psi}^*(\sigma;\mathbf{s},\mathfrak{X}),\\
&\mathcal{I}_{\mathrm{Eis}}^{\heartsuit}(\mathbf{s},\mathfrak{X}):=\sum_{\substack{\mu\in \widehat{F^{\times}\backslash\mathbb{A}_F^{(1)}}}}\frac{1}{2\pi i}\int_{i\mathbb{R}}\widetilde{\Psi}^*(\lambda,\mu;\mathbf{s},\mathfrak{X})d\lambda.
\end{align*}
\end{defn} 

\begin{defn}\label{defn3.9}
 Let $\mathbf{s}\in \mathbb{C}^4$. We define 
\begin{align*}
&\Psi_{\mathrm{Dual}}^{(1)}(\mathbf{s},\mathfrak{X}):=-\mathbf{1}_{\mu=\chi_1\chi_3^{-1}}\underset{\lambda=s_3+s_1-1/2}{\Res}\ \widetilde{\Psi}^*(\lambda,\mu;\mathbf{s},\mathfrak{X}),\\
&\Psi_{\mathrm{Dual}}^{(2)}(\mathbf{s},\mathfrak{X}):=\mathbf{1}_{\mu=\omega'\overline{\chi_1}\chi_3}\underset{\lambda=1/2-s_3-s_1}{\Res}\ \widetilde{\Psi}^*(\lambda,\mu;\mathbf{s},\mathfrak{X}),\\
&\Psi_{\mathrm{Dual}}^{(3)}(\mathbf{s},\mathfrak{X}):=-\mathbf{1}_{\mu=\overline{\chi_1}\omega_1\overline{\chi_3}}\underset{\lambda=s_3-s_1-1/2}{\Res}\ \widetilde{\Psi}^*(\lambda,\mu;\mathbf{s},\mathfrak{X}),\\
&\Psi_{\mathrm{Dual}}^{(4)}(\mathbf{s},\mathfrak{X}):=\mathbf{1}_{\mu=\omega'\chi_1\overline{\omega_1}\chi_3}\underset{\lambda=1/2+s_1-s_3}{\Res}\ \widetilde{\Psi}^*(\lambda,\mu;\mathbf{s},\mathfrak{X}),\\
&\Psi_{\mathrm{Dual}}^{(5)}(\mathbf{s},\mathfrak{X}):=-\mathbf{1}_{\mu=\omega'\chi_2\overline{\omega_2\chi_4}}\underset{\lambda=s_4-s_2-1/2}{\Res}\ \widetilde{\Psi}^*(\lambda,\mu;\mathbf{s},\mathfrak{X}),\\
&\Psi_{\mathrm{Dual}}^{(6)}(\mathbf{s},\mathfrak{X}):=\mathbf{1}_{\mu=\overline{\chi_2}\omega_2\chi_4}\underset{\lambda=1/2+s_2-s_4}{\Res}\ \widetilde{\Psi}^*(\lambda,\mu;\mathbf{s},\mathfrak{X}),\\
&\Psi_{\mathrm{Dual}}^{(7)}(\mathbf{s},\mathfrak{X}):=-\mathbf{1}_{\mu=\omega'\chi_2\overline{\chi_4}}\underset{\lambda=s_4+s_2-1/2}{\Res}\ \widetilde{\Psi}^*(\lambda,\mu;\mathbf{s},\mathfrak{X}),\\
&\Psi_{\mathrm{Dual}}^{(8)}(\mathbf{s},\mathfrak{X}):=\mathbf{1}_{\mu=\overline{\chi_2}\chi_4}\underset{\lambda=1/2-s_2-s_4}{\Res}\ \widetilde{\Psi}^*(\lambda,\mu;\mathbf{s},\mathfrak{X}),
\end{align*}
where $\widetilde{\Psi}^*(\lambda,\mu;\mathbf{s},\mathfrak{X})$ is the meromorphic continuation of $\Psi^*(\lambda,\mu;\mathbf{s},\mathfrak{X})$ (see \eqref{eq3.17}) defined by Lemma \ref{lem4.1}. 
\end{defn}

Analogous to Theorem \ref{thm3.5}, we have the following result.
\begin{thm}\label{thm4.1}
 The function $\mathcal{I}_{\mathrm{Dual}}(\mathbf{s},\mathfrak{X})$ admits a meromorphic continuation $\widetilde{\mathcal{I}}_{\mathrm{Dual}}(\mathbf{s},\mathfrak{X})$ to the region 
\begin{align*}
\begin{cases}
\Re(s_3)-|\Re(s_1)|>1/2,\\
\Re(s_4)-|\Re(s_2)|>1/2.
\end{cases}	
\end{align*}
In particular, for 
\begin{align*}
\mathbf{s}\in \mathcal{R}_{\mathrm{Dual}}:=\Big\{\mathbf{s}\in \mathbb{C}^4:|\Re(s_1)|+|\Re(s_3)|<1/2,\ |\Re(s_2)|+|\Re(s_4)|<1/2\Big\},
\end{align*} 
the function $\widetilde{\mathcal{I}}_{\mathrm{Dual}}(\mathbf{s},\mathfrak{X})$ is given explicitly by 
\begin{equation}\label{eq4.1}
\widetilde{\mathcal{I}}_{\mathrm{Dual}}(\mathbf{s},\mathfrak{X})=\mathcal{I}_{\mathrm{Spec}}^{\heartsuit}(\mathbf{s},\mathfrak{X})+\sum_{i=1}^{8}\widetilde{\Psi}_{\mathrm{Dual}}^{(i)}(\mathbf{s},\mathfrak{X}),
\end{equation}
where $\mathcal{I}_{\mathrm{Spec}}^{\heartsuit}(\mathbf{s},\mathfrak{X})$ is defined by \eqref{reg2} (see Definition \ref{defn3.7}), and $\widetilde{\Psi}_{\mathrm{Dual}}^{(i)}(\mathbf{s},\mathfrak{X})$ is defined in Definition \ref{defn3.9}.
\end{thm}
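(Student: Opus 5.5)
We mirror the proof of Theorem~\ref{thm3.5}, replacing the pair $(h_2,h_4)$ by $(\overline{h_3},h_4)$ and the central character $\omega$ by $\omega'=\omega_1\overline{\omega_3}$. The argument has three steps: a spectral expansion of $\mathcal{I}_{\mathrm{Dual}}$, a contour-shifting continuation of each Eisenstein piece, and a bookkeeping of the residues crossed.

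\textbf{Step 1: spectral expansion.} Take $\mathbf{s}\in\mathcal{R}$, so that both kernels $\Eis(\mathcal{G}_1\overline{h_3})(\cdot,s_1,\overline{s_3})$ and $\Eis(\overline{\mathcal{G}_2}h_4)(\cdot,s_2,\overline{s_4})$ lie in $L^2([\overline G],\omega')$ by Lemma~\ref{lem2.2}. Apply Lemma~\ref{lem3.1} to the first kernel and insert the spectral decomposition \eqref{spec}. Then unfold both kernels via \eqref{2.10}. For the second kernel, $\overline{\mathcal{G}_2}$ is built from $\overline{W_2}$, which is a Whittaker function relative to $\overline\theta$; this is why $W_\phi^*$ appears. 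The outcome is
\begin{align*}
\mathcal{I}_{\mathrm{Dual}}(\mathbf{s},\mathfrak{X})=\sum_{\sigma\in\mathcal{A}_0([G],\omega')}\Psi^*(\sigma;\mathbf{s},\mathfrak{X})+\sum_{\mu}\mathcal{I}_\mu(\mathbf{s},\mathfrak{X}),\qquad \mathcal{I}_\mu:=\frac{1}{2\pi i}\int_{i\mathbb{R}}\Psi^*(\lambda,\mu;\mathbf{s},\mathfrak{X})\,d\lambda .
\end{align*}
The cuspidal part continues entire in $\mathbf{s}$, because cusp forms decay rapidly and each $\Psi^*(\sigma;\cdot)$ is a product of Rankin--Selberg $L$-functions times an entire function. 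The residual spectrum contributes nothing, by the remark after \eqref{2.10}.

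\textbf{Step 2: continuation of each $\mathcal{I}_\mu$.} Lemma~\ref{lem4.1} gives absolute convergence of $\Psi^*(\lambda,\mu;\cdot)$ when $\Re(s_3)-|\Re(s_1)|>1/2$ and $\Re(s_4)-|\Re(s_2)|>1/2$. It also gives the $\propto$-factorization, whose $\lambda$-poles near $i\mathbb{R}$ come from eight $\Lambda$-factors. These poles occur at $\lambda=\pm(s_3\pm s_1-1/2)$ and $\lambda=\pm(s_4\pm s_2-1/2)$, and only when $\mu$ equals the corresponding character (for example $\mu=\chi_1\chi_3^{-1}$ or $\mu=\overline{\chi_2}\chi_4$).

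We repeat the proof of Proposition~\ref{prop3.4} verbatim. Fix $s_4$ with $\Re(s_4)-|\Re(s_2)|>1/2+1/10$ and move $s_3\pm s_1$ into $1/2+\mathcal{D}^{\pm}$. Deform $i\mathbb{R}$ to $\mathcal{C}^{+}$, using the zero-free regions of \textsection\ref{sec3.3} so that the denominators $\Lambda(1\pm2\lambda,\cdot)^{-1}$ create no poles. Then shift back, picking up one residue each time a pole crosses $i\mathbb{R}$. Afterwards do the same in $(s_2,s_4)$. On $\mathcal{R}_{\mathrm{Dual}}$ the result is $\mathcal{I}^{\heartsuit}_\mu$ plus exactly the eight residues.

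The signs in Definition~\ref{defn3.9} are opposite to those in Definition~\ref{defn3.5}. This is consistent with the dual orientation: in $\Psi^*$ the roles of $\pm\lambda$ are swapped relative to $\Psi$ (compare the $L$-factors of Lemma~\ref{lem4.1} with \eqref{f3.7}), so each residue is crossed from the other side. Checking this carefully gives the $+\sum\widetilde\Psi^{(i)}_{\mathrm{Dual}}$ in \eqref{eq4.1}, as opposed to the minus sign in \eqref{3.17}.

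\textbf{Step 3: summing over $\mu$, and the main difficulty.} Only finitely many $\mu$ contribute residues, since the indicator constraints in Definition~\ref{defn3.9} single out specific characters. The remaining sum of $\mathcal{I}^{\heartsuit}_\mu$ over $\mu$, together with the cuspidal sum, gives $\mathcal{I}^{\heartsuit}_{\mathrm{Spec}}$, which yields \eqref{eq4.1}.

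The main obstacle is justifying the interchange of the $\mu$-sum and the $\sigma$-sum with the continuation. This requires uniform polynomial bounds for $\Psi^*(\lambda,\mu;\cdot)$ and $\Psi^*(\sigma;\cdot)$ in the conductor of $\mu$ (respectively $\sigma$) and in $\Im\lambda$, uniformly for $\mathbf{s}$ in compacta. I would obtain these from rapid decay of the local Rankin--Selberg integrals in the $K$-types and archimedean parameters, together with polynomial bounds for $\Lambda$ in vertical strips and for $\Lambda(1+2\lambda,\cdot)^{-1}$ on $\mathcal{C}^{\pm}$ (which holds because $\mathcal{C}^{\pm}$ lies inside the zero-free region). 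These bounds give locally uniform convergence, so the limit is meromorphic.
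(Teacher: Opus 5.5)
Your route is the paper's own. The paper proves Theorem~\ref{thm4.1} only by saying it is ``analogous to Theorem~\ref{thm3.5}'', and your Steps~1--3 spell out exactly that analogy: the spectral expansion in which $W_\phi^*$ comes from the $\overline{\theta}$-Whittaker function, the contour shifts of Proposition~\ref{prop3.4}, and finitely many $\mu$ carrying residues. Your remarks on uniformity in $\mu$ and $\sigma$ go beyond what the paper addresses.

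A side remark on the statement: the continuation region should read $\Re(s_3)-|\Re(s_1)|>-1/2$, $\Re(s_4)-|\Re(s_2)|>-1/2$, as in \eqref{equ3.12}. With $>1/2$ the region is disjoint from $\mathcal{R}_{\mathrm{Dual}}$; your contour argument is what actually reaches the $-1/2$ region.

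There is, however, a real error in your sign paragraph, and if you followed it literally it would give the wrong formula. The roles of $\pm\lambda$ are \emph{not} swapped between Lemma~\ref{lem4.1} and \eqref{f3.7}.

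\begin{itemize}
\item Both $\Psi$ and $\Psi^*$ put $\overline{W_{E(\cdot,h,-\overline{\lambda})}}$ against the first kernel and $W_{E(\cdot,h,\lambda)}$ (resp.\ $W^*_{E(\cdot,h,\lambda)}$) against the second.
\item So in Lemma~\ref{lem4.1} the first-kernel factors are $\Lambda(1/2+s_3\pm s_1-\lambda,\overline{\mu}\cdots)$ and $\Lambda(1/2+s_3\pm s_1+\lambda,\mu\omega'^{-1}\cdots)$.
\item The second-kernel factors are $\Lambda(1/2+s_4\pm s_2+\lambda,\mu\cdots)$ and $\Lambda(1/2+s_4\pm s_2-\lambda,\overline{\mu}\omega'\cdots)$.
\end{itemize}
This is exactly the pattern of \eqref{f3.7}. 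Hence, as $\Re(s_3\pm s_1)$ and $\Re(s_4\pm s_2)$ drop below $1/2$, the crossings go the same way as in \eqref{c3.16}:
\begin{itemize}
\item the poles at $\lambda=s_3\pm s_1-1/2$ and $s_4\pm s_2-1/2$ cross $i\mathbb{R}$ from right to left and contribute $-\Res$;
\item the poles at $\lambda=1/2-(s_3\pm s_1)$ and $1/2-(s_4\pm s_2)$ cross from left to right and contribute $+\Res$.
\end{itemize}

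The sign difference between \eqref{3.17} and \eqref{eq4.1} is purely notational. Definition~\ref{defn3.9} attaches $-$ to the first type of residue and $+$ to the second, the reverse of Definition~\ref{defn3.5}. Therefore the crossed residues sum to $+\sum_i\widetilde{\Psi}^{(i)}_{\mathrm{Dual}}$.

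Your explanation invokes two flips: reversed definitions \emph{and} a reversed crossing direction. These would cancel and give $\widetilde{\mathcal{I}}_{\mathrm{Dual}}=\mathcal{I}^{\heartsuit}_{\mathrm{Spec}}-\sum_i\widetilde{\Psi}^{(i)}_{\mathrm{Dual}}$, contradicting \eqref{eq4.1}. Replace that paragraph with the direct bookkeeping above; the rest of the proposal stands.
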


\subsection{The Symmetric Spectral Reciprocity}\label{sec3.6}

\Main*

\begin{proof}
Notice that $\mathcal{R}^{\heartsuit}=\mathcal{R}_{\mathrm{RS}}\cap \mathcal{R}_{\mathrm{Dual}}$. Hence, Theorem \ref{thmA} follows directly form Proposition \ref{prop2.5}, Theorems \ref{thm3.5} and \ref{thm4.1}.
\end{proof}

\part{The Subconvexity for $\mathrm{GL}_2$}
In this part we specify the automorphic datum $\mathfrak{X}$ in Theorem \ref{thmA} to establish the explicit subconvexity Theorem \ref{thmB}.  

\section{Choice of the Automorphic Data}\label{sec7}
\subsection{Automorphic Data}\label{sec4.1.}
Let $\mathfrak{n}, \mathfrak{q}\subseteq\mathcal{O}_F$ be integral ideals. Suppose $(\mathfrak{n},\mathfrak{q}\mathfrak{O}_F)=1$, i.e., $\mathfrak{n}+\mathfrak{q}\mathfrak{O}_F=\mathcal{O}_F$. Let $\mathfrak{n}=\prod_{v}\mathfrak{p}_v^{l_v}$ be its primary decomposition. Let $\mathbf{d}_{\mathfrak{n}}=\otimes_{v}\ \mathbf{d}_v\in G(\mathbb{A}_F)$, where $\mathbf{d}_v=I_2$ if $v\mid\infty$ or $v\nmid\mathfrak{n}$, and $\mathbf{d}_v:=\begin{pmatrix}
1&\\
&\varpi_v^{l_v}
\end{pmatrix}$ if $v\mid\mathfrak{n}$. 

\subsubsection{Archimedean Conductor Parameter}\label{sec4.1.1}
Let $\varepsilon>0$, $v\mid\infty$ and $\mathbf{C}_v\geq 1$. Set $\mathbf{C}_{\infty}:=\prod_{v\mid\infty}\mathbf{C}_v$.  

Given a character $\chi$ of $F^{\times}\backslash\mathbb{A}_F^{\times}$ or a cuspidal automorphic representation $\pi$, we denote by $C_v(\chi)$ and $C_v(\pi)$ the $v$-th component of the analytic conductor, respectively. Then $C(\chi)=\prod_vC_v(\chi)$ and $C(\pi)=\prod_vC_v(\pi)$ are the corresponding analytic conductors.  

\subsubsection{Hecke Operators}\label{ses4.1.1}
Let $\pi=\otimes_{v}\pi_v$ be an automorphic representation of $G(\mathbb{A}_F)$, with central character $\omega=\otimes_v\omega_v$. Let $\mathfrak{n}$ be as above. The $\mathfrak{n}$-th Hecke operator is defined by 
\begin{equation}\label{hecke..}
T_{\mathfrak{n}}(\varphi):=\prod_{v\mid\mathfrak{n}}q_v^{-\frac{l_v}{2}}\sum_{\substack{i+j=l_v\\
i\geq j\geq 0}}\int_{K_v\diag(\varpi_v^{i},\varpi_v^j)K_v} \pi_v(y_v)\varphi dy_v,\ \ \varphi\in \pi.
\end{equation}

When $\varphi$ is right invariant under $\prod_{v\mid\mathfrak{n}}K_v$, it becomes an eigenform for the operator $T_{\mathfrak{n}}=\prod_{v\mid\mathfrak{n}}T_{\mathfrak{p}^{l_v}}$, where $\mathfrak{p}$ is the prime representing the place $v$. We denote the eigenvalue of $T_{\mathfrak{p}^{l_v}}$ as $\lambda_{\pi}(\mathfrak{p}^{l_v})$. 

For convenience, we also define the modified Hecke operator
\begin{equation}\label{hecke}
T_{\mathfrak{p}^{l_v}}^*(\varphi):=q_v^{-\frac{l_v}{2}}\int_{K_v\diag(\varpi_v^{l_v},1)K_v} \pi_v(y_v)\varphi dy_v,\ \ \varphi\in \pi.
\end{equation} 

When $\varphi$ is right invariant under $K_v$, it becomes an eigenform for the operator $T_{\mathfrak{p}^{l_v}}^*$. We denote the eigenvalue of $T_{\mathfrak{p}^{l_v}}^*$ as $\lambda_{\pi}^*(\mathfrak{p}^{l_v})$. 

\subsubsection{Hecke relations}
Let $v<\infty$. We have 
\begin{equation}\label{eq2.3}
\overline{\lambda_{\pi}(\mathfrak{p}^{l_v})}=\omega_v^{-1}(\varpi_v^{l_v})\cdot \lambda_{\pi}(\mathfrak{p}^{l_v}),
\end{equation}
and the Hecke relation
\begin{equation}\label{hecke.}
\lambda_{\pi}(\mathfrak{p}^{l_v})^2=\sum_{j=0}^{l_v}\omega_v(\varpi_v^{j})\lambda_{\pi}(\mathfrak{p}^{2l_v-2j}).
\end{equation}

Utilizing the Hecke relation \eqref{hecke.}, we can derive properties of $\lambda_{\pi}^*(\mathfrak{p}^j)$'s as follows. 
\begin{lemma}\label{lemma4.1}
Let notation be as before. Then 
\begin{equation}\label{c4.5}
\begin{cases}
\overline{\lambda_{\pi}^*(\mathfrak{p})}=\omega_v^{-1}(\varpi_v)\lambda_{\pi}^*(\mathfrak{p}),\ \ \ \overline{\lambda_{\pi}^*(\mathfrak{p}^2)}=\omega_v^{-2}(\varpi_v)\lambda_{\pi}^*(\mathfrak{p}^2),\\
|\lambda_{\pi}^*(\mathfrak{p})|^2=\omega_v^{-1}(\varpi_v)\lambda_{\pi}^*(\mathfrak{p}^2)+q_v^{-1}+1,\\
|\lambda_{\pi}^*(\mathfrak{p}^2)|^2=\omega_v^{-2}(\varpi_v)\lambda_{\pi}^*(\mathfrak{p}^4)+\omega_v^{-1}(\varpi_v)(1-q_v^{-1})\lambda_{\pi}^*(\mathfrak{p}^2)+q_v^{-1}+1.
\end{cases}
\end{equation}	
\end{lemma}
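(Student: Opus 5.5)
The plan is to reduce everything to the classical Hecke eigenvalues $\lambda_{\pi}(\mathfrak{p}^{l})$, for which \eqref{eq2.3} and \eqref{hecke.} are available, by writing the operators $T_{\mathfrak{p}^l}$ in terms of the operators $T^*_{\mathfrak{p}^m}$. Throughout, $\varphi$ is taken $K_v$-invariant, so both families act by scalars. For $i\geq j$, the Cartan decomposition gives
\begin{align*}
K_v\diag(\varpi_v^i,\varpi_v^j)K_v=\diag(\varpi_v^j,\varpi_v^j)\,K_v\diag(\varpi_v^{i-j},1)K_v .
\end{align*}
The central element acts by $\omega_v(\varpi_v^j)$. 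Comparing the normalizations $q_v^{-l/2}$ in \eqref{hecke..} and $q_v^{-(l-2j)/2}$ in \eqref{hecke}, this gives
\begin{align*}
\lambda_{\pi}(\mathfrak{p}^{l})=\sum_{0\leq j\leq l/2}\omega_v(\varpi_v^j)\,q_v^{-j}\,\lambda^*_{\pi}(\mathfrak{p}^{l-2j}),\qquad \lambda^*_{\pi}(\mathfrak{p}^0)=1 .
\end{align*}
The last normalization holds because $\Vol(K_v)=1$. Writing $\omega$ for $\omega_v(\varpi_v)$ and $q$ for $q_v$, the cases needed are:
\begin{align*}
\lambda_\pi(\mathfrak{p})&=\lambda^*_\pi(\mathfrak{p}),\\
\lambda_\pi(\mathfrak{p}^2)&=\lambda^*_\pi(\mathfrak{p}^2)+\omega q^{-1},\\
\lambda_\pi(\mathfrak{p}^4)&=\lambda^*_\pi(\mathfrak{p}^4)+\omega q^{-1}\lambda^*_\pi(\mathfrak{p}^2)+\omega^2q^{-2}.
\end{align*}

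For the conjugation identities, the first follows immediately from \eqref{eq2.3} with $l_v=1$. For the second, take the complex conjugate of $\lambda^*_\pi(\mathfrak{p}^2)=\lambda_\pi(\mathfrak{p}^2)-\omega q^{-1}$. Using \eqref{eq2.3} and $|\omega|=1$, this gives $\omega^{-2}\lambda_\pi(\mathfrak{p}^2)-\omega^{-1}q^{-1}=\omega^{-2}\lambda^*_\pi(\mathfrak{p}^2)$.

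For the norm identities, combine the conjugation rule with \eqref{hecke.}. First,
\begin{align*}
|\lambda^*_\pi(\mathfrak{p})|^2=\omega^{-1}\lambda_\pi(\mathfrak{p})^2=\omega^{-1}\bigl(\lambda_\pi(\mathfrak{p}^2)+\omega\bigr).
\end{align*}
Substituting $\lambda_\pi(\mathfrak{p}^2)=\lambda^*_\pi(\mathfrak{p}^2)+\omega q^{-1}$ yields the second line of \eqref{c4.5}. Next,
\begin{align*}
|\lambda^*_\pi(\mathfrak{p}^2)|^2=\omega^{-2}\bigl(\lambda_\pi(\mathfrak{p}^2)-\omega q^{-1}\bigr)^2 .
\end{align*}
Expand the square and apply \eqref{hecke.} with $l_v=2$, which reads $\lambda_\pi(\mathfrak{p}^2)^2=\lambda_\pi(\mathfrak{p}^4)+\omega\lambda_\pi(\mathfrak{p}^2)+\omega^2$. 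The result is
\begin{align*}
|\lambda^*_\pi(\mathfrak{p}^2)|^2=\omega^{-2}\lambda_\pi(\mathfrak{p}^4)+\omega^{-1}(1-2q^{-1})\lambda_\pi(\mathfrak{p}^2)+1+q^{-2}.
\end{align*}
Now substitute the expressions for $\lambda_\pi(\mathfrak{p}^4)$ and $\lambda_\pi(\mathfrak{p}^2)$ in terms of $\lambda^*$. The coefficient of $\lambda^*_\pi(\mathfrak{p}^2)$ becomes $\omega^{-1}\bigl(q^{-1}+1-2q^{-1}\bigr)=\omega^{-1}(1-q^{-1})$. The constant becomes $q^{-2}+q^{-1}-2q^{-2}+1+q^{-2}=1+q^{-1}$. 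This gives the third line of \eqref{c4.5}.

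The only step I expect to need care is the first one. There one must check the measure normalization of the double-coset integrals, so that the factors $q^{-j}$ and $\lambda^*_\pi(\mathfrak{p}^0)=1$ come out exactly. One must also check that the hypotheses under which \eqref{eq2.3} and \eqref{hecke.} hold, namely $v\nmid\mathfrak{q}$ with $K_v$-invariance, are the ones in force. Everything after that is bookkeeping.
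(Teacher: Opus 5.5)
Your proposal is correct and is essentially the paper's own argument. The paper likewise writes $\lambda_{\pi}(\mathfrak{p}^{l_v})=\sum_{0\leq j\leq l_v/2}\omega_v(\varpi_v^j)q_v^{-j}\lambda_{\pi}^*(\mathfrak{p}^{l_v-2j})$ from \eqref{hecke..}, records the cases $l_v=1,2,4$, and then combines the conjugation rule \eqref{eq2.3} with the Hecke relation \eqref{hecke.}, expanding $|\lambda_{\pi}(\mathfrak{p}^2)-\omega_v(\varpi_v)q_v^{-1}|^2$ exactly as you do. Your explicit check of the two conjugation identities fills in a step the paper leaves implicit, and your coefficients $\omega_v^{-1}(\varpi_v)(1-q_v^{-1})$ and $1+q_v^{-1}$ are correct.
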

\begin{proof}
It follows from the definition \eqref{hecke..} that  
\begin{align*}
\lambda_{\pi}(\mathfrak{p}^{l_v})=\sum_{0\leq j\leq l_v/2}\omega_v(\varpi_v^j)\lambda_{\pi}^*(\mathfrak{p}^{l_v-2j})q_v^{-j}.
\end{align*}
By a straightforward calculation, we have the relations: 
\begin{equation}\label{f4.5}
\begin{cases}
\lambda_{\pi}(\mathfrak{p})=\lambda_{\pi}^*(\mathfrak{p}),\\
\lambda_{\pi}(\mathfrak{p}^2)=\lambda_{\pi}^*(\mathfrak{p}^2)+\omega_v(\varpi_v)q_v^{-1},\\
\lambda_{\pi}(\mathfrak{p}^3)=\lambda_{\pi}^*(\mathfrak{p}^3)+\omega_v(\varpi_v)\lambda_{\pi}(\mathfrak{p})q_v^{-1},\\
\lambda_{\pi}(\mathfrak{p}^4)=\lambda_{\pi}^*(\mathfrak{p}^4)+\omega_v(\varpi_v)\lambda_{\pi}^*(\mathfrak{p}^2)q_v^{-1}+\omega_v(\varpi_v^2)q_v^{-2}.
\end{cases}
\end{equation}

As a result, we obtain the third relation in \eqref{c4.5}: 
\begin{align*}
|\lambda_{\pi}^*(\mathfrak{p})|^2=|\lambda_{\pi}(\mathfrak{p})|^2=\omega_v^{-1}(\varpi_v)\lambda_{\pi}(\mathfrak{p}^2)+1=\omega_v^{-1}(\varpi_v)\lambda_{\pi}^*(\mathfrak{p}^2)+q_v^{-1}+1.
\end{align*}

Moreover, utilizing $|\lambda_{\pi}^*(\mathfrak{p}^2)|^2=|\lambda_{\pi}(\mathfrak{p}^2)-\omega_v(\varpi_v)q_v^{-1}|^2$ and $\overline{\lambda_{\pi}(\mathfrak{p}^2)}=\omega_v^{-2}(\mathfrak{p})\lambda_{\pi}(\mathfrak{p}^2)$, we deduce 
\begin{equation}\label{eq4.5}
|\lambda_{\pi}^*(\mathfrak{p}^2)|^2
=\omega_v^{-1}(\varpi_v^2)\lambda_{\pi}(\mathfrak{p}^2)^2-2\omega_v^{-1}(\varpi_v)q_v^{-1}\lambda_{\pi}(\mathfrak{p}^2)+q_v^{-2}.
\end{equation}

Substituting the expansion of $\lambda_{\pi}(\mathfrak{p}^2)^2$ (see \eqref{hecke.}) into \eqref{eq4.5}, along with the relations in \eqref{f4.5}, we thus obtain the last relation in \eqref{c4.5}.  
\end{proof}

\subsubsection{The Ramanujan bounds}\label{sec2.2.2}
Let $\vartheta$ be a parameter towards the Ramanujan conjecture for unitary cuspidal automorphic representations of $\mathrm{GL}_2/F$, namely, for any $v\leq \infty$ and any unitary cuspidal automorphic representation $\pi=\otimes_v\pi_v$, the local $L$-function $L_v(s,\pi_v)$ is holomorphic in $\Re(s)>\vartheta$. By \cite{KS03} and \cite{BB11}, we have $\vartheta\leq 7/64$.

\subsection{Construction of Eisenstein Series}\label{sec4.4}
Let $\mathfrak{n}$ and $\mathfrak{q}$ be the ideals as in \textsection\ref{sec4.1.}. Let $\omega=\otimes_v\omega_v$ be a unitary Hecke character of $\mathbb{A}_F^{\times}$ with non-Archimedean modulus $\mathfrak{q}'\supseteq\mathfrak{q}$. Hence, the arithmetic conductor of $\omega$ is $N_F(\mathfrak{q}')$. 
\begin{defn}[Construction of $\boldsymbol{\chi}$ and $\boldsymbol{\omega}$]
Let notation be as above. Take 
\begin{equation}\label{4.5}
\chi_1=\chi_2=\chi_3=\chi_4=\mathbf{1},\ \ \omega_1=\omega_3=\mathbf{1},\ \ \omega_2=\omega_4=\omega.
\end{equation} 
\end{defn}

For $v\mid\infty$, let $\alpha_v$ be a fixed nonnegative smooth  function on $\mathbb{R}$, supported in the interval $|t|\leq 10^{-1}$, satisfying $\alpha_v(t_v)\equiv 1$ when $|t|\leq 20^{-1}$. 
\begin{itemize}
\item Let $v\mid\infty$, and $\mathbf{C}_v$ be defined as in \textsection \ref{sec4.1.1}. We define
\begin{equation}\label{f4.6}
\Phi_v^*(t_{1,v},t_{2,v}):=
\begin{cases}
e^{-\pi(t_1^2+t_2^2)},\ \ &\text{if $F_v\simeq\mathbb{R}$}\\
e^{-2\pi(t_1\overline{t_1}+t_2\overline{t_2})},\ \ &\text{if $F_v\simeq\mathbb{C}$},
\end{cases}
\end{equation}
where $t_{1,v},t_{2,v}\in F_v$, and 
\begin{equation}\label{eq6.28}
\Phi_v(t_{1,v},t_{2,v}):=\textbf{C}_v^{1/2}\omega_v(t_{2,v})\alpha_v(\textbf{C}_v|t_{1,v}|_v)\alpha_v(|t_{2,v}|_v-1),\ \ v\mid\infty.
\end{equation}

\item Let $v\mid\mathfrak{q}$, namely, $m_v:=e_v(\mathfrak{q})\geq 1$ (see \textsection\ref{1.1.1}). We set
\begin{equation}\label{equa4.5}
\Phi_v^*(t_{1,v},t_{2,v}):=\mathbf{1}_{\mathcal{O}_v}(t_{1,v})\mathbf{1}_{\mathcal{O}_v}(t_{2,v}),
\end{equation}
where $t_{1,v},t_{2,v}\in F_v$, and 
\begin{equation}\label{equa4.5}
\Phi_v(t_{1,v},t_{2,v}):=\Vol(K_{0,v}[m_v])^{-1}\mathbf{1}_{\mathfrak{p}_v^{m_v}}(t_{1,v})\mathbf{1}_{\mathcal{O}_v^{\times}}(t_{2,v})\omega_v(t_{2,v}).
\end{equation}
\item For the remaining $v$'s, i.e., $v<\infty$, and $v\nmid\mathfrak{q}$, we set
\begin{equation}\label{equ4.9}
\Phi_v(t_{1,v},t_{2,v}):=\mathbf{1}_{\mathcal{O}_v}(t_{1,v})\mathbf{1}_{\mathcal{O}_v}(t_{2,v}).
\end{equation}
\end{itemize}
Let $\Phi(t_1,t_2):=\otimes_v\Phi_v(t_{1,v},t_{2,v})\in \mathcal{S}(\mathbb{A}_F^2)$, and $\Phi^*(t_1,t_2):=\otimes_{v\mid\mathfrak{q}\infty}\Phi_v^*(t_{1,v},t_{2,v})\otimes \otimes_{v\nmid\mathfrak{q}\infty}\Phi_v(t_{1,v},t_{2,v})\in \mathcal{S}(\mathbb{A}_F^2)$, $t_i=\otimes_vt_{i,v}\in \mathbb{A}_F$, $1\leq i\leq 2$. 

\begin{defn}[Construction of $\boldsymbol{\Phi}$]\label{defn4.2}
Let notation be as above. Define 
\begin{equation}\label{e4.10}
\Phi_1(t_1,t_2):=\alpha_{\mathfrak{n}}|\det \mathbf{d}_{\mathfrak{n}}|^{1/2}\cdot\Phi^*((t_1,t_2)\mathbf{d}_{\mathfrak{n}}),\ \ \Phi_3(t_1,t_2):=\Phi^*((t_1,t_2)),
\end{equation}
where $\alpha_{\mathfrak{n}}:=\prod_{v\mid\mathfrak{n}}\overline{\omega}_v(\varpi_v^{l_v})q_v^{l_v/2}(1+q_v^{-1})$,  $t_i=\otimes_vt_{i,v}\in \mathbb{A}_F$, $1\leq i\leq 2$, and 
\begin{equation}\label{e4.11}
\Phi_2(t_1,t_2):=|\det \mathbf{d}_{\mathfrak{n}}|^{1/2}\cdot \Phi((t_1,t_2)\mathbf{d}_{\mathfrak{n}}),\ \ \Phi_4(t_1,t_2):=\Phi((t_1,t_2)).
\end{equation}
\end{defn}

As a consequence, for $s\in \mathbb{C}$, we have 
\begin{align*}
h_1(g,s)=\alpha_{\mathfrak{n}}h_3(g\mathbf{d}_{\mathfrak{n}},s),\quad h_2(g,s)=h_4(g\mathbf{d}_{\mathfrak{n}},s),
\end{align*}
regarded as meromorphic functions.

\subsection{Regularization via Counter Integrals}
Let $0<\varepsilon<10^{-3}$. Define 
\begin{equation}\label{eq4.12}
\mathbf{B}_{\varepsilon}^4:=\big\{(s_1,s_2,s_3,s_4)\in \mathbb{C}^4:\ |s_1|\leq \varepsilon,\ |s_2|\leq 2\varepsilon,\ |s_3|\leq 5\varepsilon,\ |s_4|\leq 10\varepsilon\big\}.
\end{equation}
Let $\mathbf{S}_{\varepsilon}^4=\partial \mathbf{B}_{\varepsilon}^4$ be the boundary of $\mathbf{B}_{\varepsilon}^4$. Define
\begin{align*}
\widetilde{\Psi}_{*}^{(i)}(\mathbf{0}\mid\mathfrak{X}_{\mathfrak{n}}):=\frac{1}{16\pi^4}\iiiint_{\mathbf{S}_{\varepsilon}^4}\frac{\widetilde{\Psi}_{*}^{(i)}(\mathbf{s},\mathfrak{X}_{\mathfrak{n}})}{s_1s_2s_3s_4}ds_1ds_2ds_3ds_4,
\end{align*} 
where the subscripts $*\in \{\mathrm{Geo}, \mathrm{RS}, \mathrm{Dual}\}$.

Substituting the above data into Theorem \ref{thmA} we derive the following.  
\begin{thmx}\label{thmD}
 Let $\mathfrak{X}_{\mathfrak{n}}=(\boldsymbol{\chi},\boldsymbol{\omega},\boldsymbol{\Phi})$ be the automorphic data given in \textsection \ref{sec4.4}, depending on $\mathfrak{q}$, $\mathfrak{n}$, $\omega$, $\mathbf{C}_v$, $v\mid\infty$, and $\Phi_i$, $1\leq i\leq 4$. Then 
\begin{align*}
\mathcal{J}_{\mathrm{\Spec}}^{\heartsuit}(\mathbf{0},\mathfrak{X}_{\mathfrak{n}})=\ \mathcal{I}_{\Spec}^{\heartsuit}(\mathbf{0},\mathfrak{X}_{\mathfrak{n}})+\sum_{i=1}^{8}\Big[\widetilde{\Psi}_{\mathrm{Geo}}^{(i)}(\mathbf{0}\mid\mathfrak{X}_{\mathfrak{n}})+\widetilde{\Psi}_{\mathrm{RS}}^{(i)}(\mathbf{0}\mid\mathfrak{X}_{\mathfrak{n}})+\widetilde{\Psi}_{\mathrm{Dual}}^{(i)}(\mathbf{0}\mid\mathfrak{X}_{\mathfrak{n}})\Big].
\end{align*}
\end{thmx}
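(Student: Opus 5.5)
The plan is to apply Theorem~\ref{thmA} to the datum $\mathfrak{X}_{\mathfrak{n}}$ at every point $\mathbf{s}$ of the polydisc boundary $\mathbf{S}_{\varepsilon}^4$, and then take the iterated Cauchy integral
\begin{align*}
\frac{1}{16\pi^4}\iiiint_{\mathbf{S}_{\varepsilon}^4}\frac{(\cdot)}{s_1s_2s_3s_4}\,ds_1ds_2ds_3ds_4
\end{align*}
of both sides of \eqref{1.3}. Since $|s_1|+|s_2|\le 3\varepsilon$, $|s_3|+|s_4|\le 15\varepsilon$, $|s_1|+|s_3|\le 6\varepsilon$ and $|s_2|+|s_4|\le 12\varepsilon$ on $\mathbf{B}_{\varepsilon}^4$, and $\varepsilon<10^{-3}$, we have $\mathbf{B}_{\varepsilon}^4\subset\mathcal{R}^{\heartsuit}$. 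Hence identity \eqref{1.3} holds pointwise on $\mathbf{S}_{\varepsilon}^4$ as an identity of meromorphic functions, away from possible polar divisors. Integrating term by term then yields Theorem~\ref{thmD}. For the correction terms this is literally the definition of $\widetilde{\Psi}_{*}^{(i)}(\mathbf{0}\mid\mathfrak{X}_{\mathfrak{n}})$.

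The first step is to check that every term in \eqref{1.3} is finite and continuous on the torus $\mathbf{S}_{\varepsilon}^4$. The radii $\varepsilon,2\varepsilon,5\varepsilon,10\varepsilon$ were chosen so that this works. The singularities of the $\widetilde{\Psi}^{(i)}_{*}$ are described by the explicit $L$-factors in \eqref{f3.7}, Lemma~\ref{lem4.1} and the Rankin--Selberg periods of Definition~\ref{defn2.5}. They lie along hyperplanes such as $s_i\pm s_j\in\{0,\pm1/2,\dots\}$, coming from poles of $\Lambda(\cdot,\mathbf{1})$ and similar factors, or along $s_i=0$ itself. On the torus, $|s_1|=\varepsilon$, $|s_2|=2\varepsilon$, $|s_3|=5\varepsilon$ and $|s_4|=10\varepsilon$. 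These moduli are pairwise distinct and no combination $\pm s_i\pm s_j$ can vanish there: for example, $|s_2\pm s_1|\ge\varepsilon$ and $|s_4\pm s_3|\ge5\varepsilon$. So the torus avoids every polar divisor through $\mathbf{0}$, and the integrand $\widetilde\Psi/(s_1s_2s_3s_4)$ is continuous there. The factors $\Lambda(1+2\lambda,\cdot)^{-1}$ do not contribute poles in $\mathbf{s}$, since they depend only on $\lambda$.

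The second step, which I expect to be the main obstacle, is to show that the left side integrates to $\mathcal{J}^{\heartsuit}_{\Spec}(\mathbf{0},\mathfrak{X}_{\mathfrak{n}})$ and the right side to $\mathcal{I}^{\heartsuit}_{\Spec}(\mathbf{0},\mathfrak{X}_{\mathfrak{n}})$. This amounts to showing that these two functions are holomorphic on a neighbourhood of $\mathbf{B}_{\varepsilon}^4$, so that Cauchy's formula returns their values at $\mathbf{0}$. I would argue with Definitions~\ref{defn3.2} and~\ref{defn3.7}.

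For the cuspidal parts, each $\widetilde\Psi(\pi;\mathbf{s},\mathfrak{X})$ is a product of two entire Rankin--Selberg periods, because $\pi$ is cuspidal. The sum over $\pi$ converges locally uniformly in $\mathbf{s}$, by the rapid decay of the local weights in the archimedean conductor (the $\Phi_v$ are fixed Schwartz functions) and Weyl-law counting. So the cuspidal part is holomorphic.

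For the Eisenstein parts, the integrand at $\lambda\in i\mathbb{R}$ is holomorphic in $\mathbf{s}$ whenever $|\Re(s_i)|+|\Re(s_j)|<1/2$. This holds because the completed $L$-factors at $1/2\pm s_i\pm s_j\pm\lambda$ then lie strictly inside the critical strip, away from $0$ and $1$, so no $\Lambda(\cdot,\mathbf{1})$ pole is met. Moreover, $\Lambda(1\pm2\lambda,\cdot)$ does not vanish on $i\mathbb{R}$. Uniform convergence of the $\mu$-sum and the $\lambda$-integral follows from the rapid decay of the archimedean local integrals in vertical strips, together with polynomial bounds for $L$-functions and $\Lambda(1+2\lambda,\cdot)^{-1}\ll \log$ bounds on the line.

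With that holomorphy in hand, the iterated Cauchy formula gives $\mathcal{J}^{\heartsuit}_{\Spec}(\mathbf{0},\mathfrak{X}_{\mathfrak{n}})$ and $\mathcal{I}^{\heartsuit}_{\Spec}(\mathbf{0},\mathfrak{X}_{\mathfrak{n}})$, and Theorem~\ref{thmD} follows. The correction terms may individually have poles at $\mathbf{0}$, but the contour averages $\widetilde\Psi^{(i)}_{*}(\mathbf{0}\mid\mathfrak{X}_{\mathfrak{n}})$ are well defined, and linearity of integration keeps the identity intact.
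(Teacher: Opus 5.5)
Your proposal is correct and follows the paper's argument, which it states only as ``substituting the data into Theorem~\ref{thmA}''. You apply Theorem~\ref{thmA} on the torus $\mathbf{S}_{\varepsilon}^4\subset\mathbf{B}_{\varepsilon}^4\subset\mathcal{R}^{\heartsuit}$ and integrate against $(2\pi i)^{-4}(s_1s_2s_3s_4)^{-1}$. Because the spectral sides are holomorphic near $\mathbf{B}_{\varepsilon}^4$, Cauchy's formula returns their values at $\mathbf{0}$, and the correction terms become their contour averages by definition.

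Two small imprecisions, neither affecting the conclusion. First, in the residue terms $\lambda$ is specialized to $\lambda_0(\mathbf{s})$, so factors such as $\Lambda(1+2\lambda,\overline{\omega})^{-1}$ do depend on $\mathbf{s}$. They are harmless because they cancel against a numerator factor, for example $\Lambda(1/2+s_1+s_2+\lambda,\overline{\omega})$ at $\lambda=s_1+s_2-1/2$. Second, the polar divisors through $\mathbf{0}$ also include sums of three or four terms $\pm s_i$, such as $s_1+s_2+s_3+s_4=0$, not only pairs. The torus avoids these as well, because each radius exceeds the sum of the smaller ones.
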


Recall that the automorphic data $(\boldsymbol{\chi}, \boldsymbol{\omega}, \boldsymbol{\Phi})$ depends on the ideal $\mathfrak{n}$. To emphasize this dependence as we vary $\mathfrak{n}$ (see \textsection\ref{sec10}), we will denote $(\boldsymbol{\chi}, \boldsymbol{\omega}, \boldsymbol{\Phi})$ by $\mathfrak{X}_{\mathfrak{n}}$ instead of $\mathfrak{X}$ for convenience.



\section{Discussion of  \texorpdfstring{$\mathcal{J}_{\mathrm{\Spec}}^{\heartsuit}(\mathbf{0},\mathfrak{X}_{\mathfrak{n}})$}{}}\label{sec5}

Recall the definition of in Definition \ref{defn3.2} in \textsection\ref{sec3.3.2}:
\begin{align*}
\mathcal{J}_{\mathrm{Spec}}^{\heartsuit}(\mathbf{s},\mathfrak{X}_{\mathfrak{n}})=\mathcal{J}_{\mathrm{Cusp}}^{\heartsuit}(\mathbf{s},\mathfrak{X}_{\mathfrak{n}})+\mathcal{J}_{\mathrm{Eis}}^{\heartsuit}(\mathbf{s},\mathfrak{X}_{\mathfrak{n}}),\tag{\ref{reg1}}
\end{align*}
where 
\begin{align*}
&\mathcal{J}_{\mathrm{Cusp}}^{\heartsuit}(\mathbf{s},\mathfrak{X}_{\mathfrak{n}}):=\sum_{\pi\in \mathcal{A}_0([G],\omega)}\widetilde{\Psi}(\pi;\mathbf{s},\mathfrak{X}_{\mathfrak{n}}),\\
&\mathcal{J}_{\mathrm{Eis}}^{\heartsuit}(\mathbf{s},\mathfrak{X}_{\mathfrak{n}}):=\sum_{\substack{\mu\in \widehat{F^{\times}\backslash\mathbb{A}_F^{(1)}}}}\frac{1}{2\pi i}\int_{i\mathbb{R}}\widetilde{\Psi}(\lambda,\mu;\mathbf{s},\mathfrak{X}_{\mathfrak{n}})d\lambda,
\end{align*}
where $\widetilde{\Psi}(\pi;\mathbf{s},\mathfrak{X}_{\mathfrak{n}})$ and $\widetilde{\Psi}(\lambda,\mu;\mathbf{s},\mathfrak{X}_{\mathfrak{n}})$ are the meromorphic functions defined as in  \textsection\ref{sec3.3.1}. 

\begin{lemma}\label{lem5.1}
 Let $\pi=\otimes_v\pi_v$ be a generic automorphic representation of $G(\mathbb{A}_F)$ such that $\pi_v$ is unramified at $v\mid\mathfrak{n}$. Let $\phi\in \pi$ be a right-$\otimes_{v\mid\mathfrak{n}}K_v$-invariant vector with Whittaker function $W_{\phi}$. Then 
\begin{equation}\label{4.12}
\widetilde{\Psi}(\overline{W_{\phi}},W_1(\cdot,0),h_2(\cdot,0))=\overline{\lambda_{\pi}^*(\mathfrak{n})}\cdot \widetilde{\Psi}(\overline{W_{\phi}},W_3(\cdot,0),h_4(\cdot,0)).
\end{equation} 
\end{lemma}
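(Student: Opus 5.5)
The plan is to reduce \eqref{4.12} to a purely local identity at the places $v\mid\mathfrak{n}$. There I will use the relations $h_1(g,s)=\alpha_{\mathfrak{n}}h_3(g\mathbf{d}_{\mathfrak{n}},s)$ and $h_2(g,s)=h_4(g\mathbf{d}_{\mathfrak{n}},s)$ recorded after Definition~\ref{defn4.2}. I first work with $s_1,s_2$ in the region of absolute convergence of the Rankin--Selberg period \eqref{2.5}, with $\Re(s_2)$ large, and establish the identity with $W_1(\cdot,s_1),h_2(\cdot,s_2)$ and $W_3(\cdot,s_1),h_4(\cdot,s_2)$ there. The statement at $\mathbf{s}=\mathbf{0}$ then follows by meromorphic continuation $\widetilde{\Psi}$, since both sides are meromorphic in $(s_1,s_2)$ and agree on an open set.

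\textbf{Step 1 (transfer of translation).} Since $\mathbf{d}_{\mathfrak{n}}$ acts by right translation and the Whittaker functional \eqref{eq1.3} is left-$N$-equivariant, the section relation gives $W_1(g,s_1)=\alpha_{\mathfrak{n}}W_3(g\mathbf{d}_{\mathfrak{n}},s_1)$. Hence
\[
\Psi(\overline{W_\phi},W_1,h_2)=\alpha_{\mathfrak{n}}\int_{N(\mathbb{A}_F)\backslash\overline{G}(\mathbb{A}_F)}\overline{W_\phi(x)}\,W_3(x\mathbf{d}_{\mathfrak{n}},s_1)\,h_4(x\mathbf{d}_{\mathfrak{n}},s_2)\,dx .
\]
The quotient measure on $N\backslash\overline{G}$ is right-$G$-invariant, since both groups are unimodular. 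Substituting $x\mapsto x\mathbf{d}_{\mathfrak{n}}^{-1}$ therefore yields
\[
\Psi(\overline{W_\phi},W_1,h_2)=\alpha_{\mathfrak{n}}\int \overline{W_\phi(x\mathbf{d}_{\mathfrak{n}}^{-1})}\,W_3(x,s_1)\,h_4(x,s_2)\,dx .
\]
This factors over places, and only the places $v\mid\mathfrak{n}$ are altered.

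\textbf{Step 2 (averaging over $K_v$).} Because $(\mathfrak{n},\mathfrak{q}\mathfrak{O}_F)=1$, for $v\mid\mathfrak{n}$ the local data $\Phi_{3,v},\Phi_{4,v}$ are $\mathbf{1}_{\mathcal{O}_v}\otimes\mathbf{1}_{\mathcal{O}_v}$. Thus $W_{3,v}(\cdot,s_1)h_{4,v}(\cdot,s_2)$ is right $K_v$-invariant. I will replace $\overline{W_{\phi,v}(x\mathbf{d}_v^{-1})}$ by its average $\int_{K_v}\overline{W_{\phi,v}(xk\mathbf{d}_v^{-1})}\,dk$. Using that $W_\phi$ is right $K_v$-invariant, this becomes an integral of $\overline{\pi_v(y)W_\phi}$ over the double coset $K_v\mathbf{d}_v^{-1}K_v$, normalized by its volume.

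\textbf{Step 3 (identifying the Hecke eigenvalue).} We have $\mathbf{d}_v^{-1}=\diag(1,\varpi_v^{-l_v})=\varpi_v^{-l_v}\diag(\varpi_v^{l_v},1)$. The central factor contributes $\omega_v(\varpi_v^{-l_v})$, which becomes $\omega_v(\varpi_v^{l_v})$ after conjugation. The double coset $K_v\diag(\varpi_v^{l_v},1)K_v$ has volume $q_v^{l_v}(1+q_v^{-1})$ for $l_v\ge1$. Comparing with \eqref{hecke}, the averaged function equals
\[
\omega_v(\varpi_v^{l_v})\,q_v^{-l_v/2}(1+q_v^{-1})^{-1}\,\overline{\lambda^*_{\pi}(\mathfrak{p}^{l_v})}\;\overline{W_{\phi,v}(x)}.
\]
Multiplying by the local factor $\overline{\omega}_v(\varpi_v^{l_v})q_v^{l_v/2}(1+q_v^{-1})$ of $\alpha_{\mathfrak{n}}$ cancels everything except $\overline{\lambda^*_{\pi}(\mathfrak{p}^{l_v})}$. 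Taking the product over $v\mid\mathfrak{n}$ gives $\overline{\lambda_\pi^*(\mathfrak{n})}$, which proves \eqref{4.12} in the convergent range and hence at $\mathbf{0}$.

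\textbf{Main obstacle.} The delicate step is the bookkeeping of normalizations in Step 3. This includes the $|\det\mathbf{d}_{\mathfrak{n}}|^{1/2}$ factors built into $\Phi_1,\Phi_2$, which should exactly compensate the modulus in $h_j(g\mathbf{d}_{\mathfrak{n}})$, together with the double-coset volume and the central-character sign. I would check these against the unramified computation with $l_v=1$ first, before treating general $l_v$.
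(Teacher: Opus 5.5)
Your proposal is correct and is essentially the paper's proof. You transfer the translation by $\mathbf{d}_v$ onto $W_{\phi}$, average over $K_v$ using the right-$K_v$-invariance of $W_{3,v}h_{4,v}$, and recognize $T^*_{\mathfrak{p}^{l_v}}$, whose normalization $q_v^{l_v/2}(1+q_v^{-1})$ and central-character factor cancel against the local part of $\alpha_{\mathfrak{n}}$; the only difference is that you argue with the global integral in the convergent range and then continue, whereas the paper compares the absolutely convergent local factors at $s=0$ in the Euler product.

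One bookkeeping correction: the $|\det\mathbf{d}_{\mathfrak{n}}|^{1/2}$ in $\Phi_1,\Phi_2$ compensates the modulus exactly only at $s=0$. In fact $h_1(g,s)=\alpha_{\mathfrak{n}}N_F(\mathfrak{n})^{s}h_3(g\mathbf{d}_{\mathfrak{n}},s)$ and $h_2(g,s)=N_F(\mathfrak{n})^{s}h_4(g\mathbf{d}_{\mathfrak{n}},s)$, so your identity away from $\mathbf{0}$ carries an extra factor $N_F(\mathfrak{n})^{s_1+s_2}$. This is harmless, since it equals $1$ at $\mathbf{0}$.
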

\begin{proof}
By definition and the Rankin--Selberg theory, we have
\begin{align*}
\widetilde{\Psi}(\overline{W_{\phi}},W_i(\cdot,0),h_j(\cdot,0))=\Lambda(1/2,\pi)^2\prod_{v\leq\infty}\frac{\Psi_v(\overline{W_{\phi,v}},W_{i,v}(\cdot,0),h_{j,v}(\cdot,0))}{L_v(1/2,\pi_v)^2},
\end{align*}
where $i, j\in\{1,2,3,4\}$, and 
\begin{align*}
\Psi_v(\overline{W_{\phi,v}},W_{i,v}(\cdot,0),h_{j,v}(\cdot,0))=\int_{N(F_v)\backslash \overline{G}(F_v)}\overline{W_{\phi,v}(g_v)}W_{i,v}(g_v,0)h_{j,v}(g_v,0)dg_v,
\end{align*}
which converges absolutely. By \eqref{e4.10} and \eqref{e4.11}, and a change of variable, 
\begin{equation}\label{eq5.2}
\Psi_v(\overline{W_{\phi,v}},W_{1,v}(\cdot,0),h_{2,v}(\cdot,0))=	\Psi_v(\overline{W_{\phi,v}},W_{3,v}(\cdot,0),h_{4,v}(\cdot,0))	
\end{equation}
when $v\nmid\mathfrak{n}$, and for $v\mid\mathfrak{n}$, we have
\begin{equation}\label{e5.2}
\Psi_v(\cdots)=\overline{\omega}_v(\varpi_v^{l_v})q_v^{l_v/2}(1+q_v^{-1})\int\ \overline{W_{\phi,v}(g_v\mathbf{d}_{v}^{-1})}W_{3,v}(g_v,0)h_{4,v}(g_v,0)dg_v,
\end{equation}
where $\Psi_v(\cdots)=\Psi_v(\overline{W_{\phi,v}},W_{1,v}(\cdot,0),h_{2,v}(\cdot,0))$, and $g_v$ ranges over $N(F_v)\backslash \overline{G}(F_v)$. 

Let $w=\begin{pmatrix}
&1\\
1
\end{pmatrix}$. Let $v\mid\mathfrak{n}$. By Cramer's rule, 
\begin{align*}
K_v=\bigsqcup_{\alpha\in \mathcal{O}_v/\mathfrak{p}_v^{l_v}}\begin{pmatrix}
1&\alpha\\
&1
\end{pmatrix}wK_{0,v}[{l_v}]w\bigsqcup\bigsqcup_{\beta\in \mathfrak{p}_v/\mathfrak{p}_v^{l_v}}\begin{pmatrix}
&1\\
1 & \beta
\end{pmatrix}wK_{0,v}[{l_v}]w.
\end{align*}

Since $wK_{0,v}[{l_v}]w\begin{pmatrix}
\varpi_v^{l_v}\\
&1	
\end{pmatrix}=\begin{pmatrix}
\varpi_v^{l_v}\\
&1	
\end{pmatrix}K_{0,v}[{l_v}]$, we have 
\begin{multline}\label{5.1}
K_v\begin{pmatrix}
\varpi_v^{l_v}\\
&1	
\end{pmatrix}=\bigsqcup_{\alpha\in \mathcal{O}_v/\mathfrak{p}_v^{l_v}}\begin{pmatrix}
\varpi_v^{l_v}&\alpha\\
&1	
\end{pmatrix}K_{0,v}[{l_v}]\bigsqcup \begin{pmatrix}
1&\\
& \varpi_v^{l_v}
\end{pmatrix}wK_{0,v}[{l_v}]\\
\bigsqcup \bigsqcup_{j=1}^{l_v-1}\bigsqcup_{\beta\in \mathcal{O}_v^{\times}/(1+\mathfrak{p}_v^{l_v-j})} \begin{pmatrix}
\varpi_v^{{l_v}-j} &\beta\\
& \varpi_v^j
\end{pmatrix}\begin{pmatrix}
-\beta &\\
\varpi_v^{l_v-j}& \beta^{-1}
\end{pmatrix}K_{0,v}[{l_v}].
\end{multline}

Multiplying $K_v$ in \eqref{5.1} from the right-hand side, we obtain the decomposition of $K_v\diag(\varpi_v^{l_v},1)K_v$, with the same representatives (modulo $K_v$) as those in \eqref{5.1}. Noting that $W_{\phi}$ is right-$K_v$-invariant, we thus derive
\begin{equation}\label{5.2}
\lambda_{\pi}^*(\mathfrak{p}^{l_v})W_{\phi}(g)=T_{\mathfrak{p}^{l_v}}^*(W_{\phi})(g)=\beta_{l_v}\int_{K_v}W_{\phi}\left(gk_v\begin{pmatrix}
\varpi_v^{l_v}\\
&1
\end{pmatrix}\right)dk_v,
\end{equation}
where $\beta_{l_v}=q_v^{-l_v/2}[K_v:K_{0,v}[l_v]]=q_v^{l_v/2}(1+q_v^{-1})$. 

Since $W_{3,v}(\cdot,0)$ and $h_{4,v}(\cdot,0)$ right-$K_v$-invariant at $v\mid\mathfrak{n}$, the formula \eqref{e5.2} can be transformed to  
\begin{equation}\label{e5.5}
\beta_{l_v}\int_{N(F_v)\backslash \overline{G}(F_v)}\int_{K_v}\overline{W_{\phi,v}(g_vk_v\diag(\varpi_v^{l_v},1))}dk_vW_{3,v}(g_v,0)h_{4,v}(g_v,0)dg_v.
\end{equation}

Combining \eqref{5.2} with \eqref{e5.5} we derive 
\begin{equation}\label{e5.6}
\Psi_v(\overline{W_{\phi,v}},W_{1,v}(\cdot,0),h_{2,v}(\cdot,0))=	\overline{\lambda_{\pi}^*(\mathfrak{p}^{l_v})}\Psi_v(\overline{W_{\phi,v}},W_{3,v}(\cdot,0),h_{4,v}(\cdot,0)).
\end{equation}

Therefore, \eqref{4.12} follows from \eqref{eq5.2} and \eqref{e5.6}.
\end{proof}

\begin{cor}\label{cor5.2}
 Suppose $(\mathfrak{n},\mathfrak{q})=1$. Let $\mathbf{0}=(0,0,0,0)$. Then 
\begin{align*}
\mathcal{J}_{\mathrm{Cusp}}^{\heartsuit}(\mathbf{0},\mathfrak{X}_{\mathfrak{n}})=\sum_{\pi\in \mathcal{A}_0([G],\omega)}\overline{\lambda_{\pi}^*(\mathfrak{n})}\cdot \sum_{\phi\in\mathfrak{B}(\pi)^{K_0[\mathfrak{q}]}}\big|\widetilde{\Psi}(W_{\phi},\overline{W_3(\cdot,0)},\overline{h_4(\cdot,0)})\big|^2,
\end{align*}
and $\mathcal{J}_{\mathrm{Eis}}^{\heartsuit}(\mathbf{0},\mathfrak{X}_{\mathfrak{n}})$ is equal to 
\begin{align*}
\sum_{\substack{\mu\in \widehat{F^{\times}\backslash\mathbb{A}_F^{(1)}}}}\frac{1}{4\pi i}\int_{i\mathbb{R}}\overline{\lambda_{\pi_{\mu,\overline{\mu}\omega,\lambda}}^*(\mathfrak{n})}\sum_{h\in \mathfrak{B}(\mu,\overline{\mu}\omega)^{K_0[\mathfrak{q}]}}\big|\widetilde{\Psi}(W_{E(\cdot,h,\lambda)},\overline{W_3(\cdot,0)},\overline{h_4(\cdot,0)})\big|^2d\lambda.
\end{align*}
\end{cor}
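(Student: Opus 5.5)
We specialize the definitions of $\widetilde{\Psi}(\pi;\mathbf{s},\mathfrak{X}_{\mathfrak{n}})$ and $\widetilde{\Psi}(\lambda,\mu;\mathbf{s},\mathfrak{X}_{\mathfrak{n}})$ from \textsection\ref{sec3.3.1} at $\mathbf{s}=\mathbf{0}$ and apply Lemma~\ref{lem5.1} to the first Rankin--Selberg factor. By Definition~\ref{defn3.2},
\begin{align*}
\widetilde{\Psi}(\pi;\mathbf{0},\mathfrak{X}_{\mathfrak{n}})=\sum_{\phi\in\mathfrak{B}(\pi)}\widetilde{\Psi}(\overline{W_{\phi}},W_1(\cdot,0),h_2(\cdot,0))\,\widetilde{\Psi}(W_{\phi},\overline{W_3(\cdot,0)},\overline{h_4(\cdot,0)}).
\end{align*}

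The first step is to reduce the basis to $K_0[\mathfrak{q}]$-invariant vectors. By the construction in \textsection\ref{sec4.4}, the sections $h_3,h_4$ (and hence $W_3$) are right invariant under $K_0[\mathfrak{q}]$ at the finite places, since $\Phi_v$ for $v\mid\mathfrak{q}$ is $K_{0,v}[m_v]$-equivariant with the $\omega_v$-twist matching the central character. They are also right $K_v$-invariant at $v\nmid\mathfrak{q}$, including $v\mid\mathfrak{n}$, because $(\mathfrak{n},\mathfrak{q})=1$. The functional $\phi\mapsto\widetilde{\Psi}(W_\phi,\overline{W_3},\overline{h_4})$ is therefore $K_0[\mathfrak{q}]$-invariant. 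Choosing $\mathfrak{B}(\pi)$ compatible with the decomposition into $K_0[\mathfrak{q}]$-isotypic pieces, only $\phi\in\mathfrak{B}(\pi)^{K_0[\mathfrak{q}]}$ contribute. The translated sections $h_1,h_2$ (by $\mathbf{d}_{\mathfrak{n}}$) are invariant under a conjugate of $K_0[\mathfrak{q}\mathfrak{n}]$, so their factor is nonzero only on the same or a smaller space. I would argue this by averaging over the compact group, and it is the point needing care.

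The second step is to observe that such $\phi$ are right $K_v$-invariant at $v\mid\mathfrak{n}$. For cusp forms of level dividing $\mathfrak{q}$, the local component $\pi_v$ is unramified there. Lemma~\ref{lem5.1} then gives
\begin{align*}
\widetilde{\Psi}(\overline{W_{\phi}},W_1(\cdot,0),h_2(\cdot,0))=\overline{\lambda_\pi^*(\mathfrak{n})}\,\widetilde{\Psi}(\overline{W_\phi},W_3(\cdot,0),h_4(\cdot,0)).
\end{align*}
The last factor is the complex conjugate of $\widetilde{\Psi}(W_\phi,\overline{W_3(\cdot,0)},\overline{h_4(\cdot,0)})$. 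This holds because conjugation commutes with meromorphic continuation, since the continuation is real-analytic in the parameters and $\overline{s}=s$ at $s=0$. The product is then $|\cdot|^2$, which gives the cuspidal formula.

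The Eisenstein part goes the same way, using \eqref{c3.5} with the factor $\tfrac12$ and the normalization $\frac{1}{2\pi i}$, which together produce $\frac{1}{4\pi i}$. For $\lambda\in i\mathbb{R}$ we have $-\overline{\lambda}=\lambda$, so both factors involve $E(\cdot,h,\lambda)$. The basis is restricted to $K_0[\mathfrak{q}]$-invariant $h$, and Lemma~\ref{lem5.1} is applied to the unramified-at-$\mathfrak{n}$ generic representation $\pi_{\mu,\overline{\mu}\omega,\lambda}$, whose $T^*$-eigenvalue is $\lambda^*_{\pi_{\mu,\overline{\mu}\omega,\lambda}}(\mathfrak{n})$.

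The main obstacle is justifying the conjugation and positivity step. The identity between $\widetilde{\Psi}(\overline{W},W_3,h_4)$ and the conjugate of $\widetilde{\Psi}(W,\overline{W_3},\overline{h_4})$ at $\mathbf{s}=0$ must hold after continuation. It also requires $W_3(\cdot,0)$ and $h_4(\cdot,0)$ to be the values of honest sections, i.e. regular at $0$. I would handle both by writing each period as $\Lambda(1/2,\pi)^2$ times a finite product of local integrals, as in the proof of Lemma~\ref{lem5.1}, where conjugation is evident locally.
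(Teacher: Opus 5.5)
Your proposal is correct and is the paper's one-line argument made explicit. You restrict to $\mathfrak{B}(\pi)^{K_0[\mathfrak{q}]}$ using the $K_0[\mathfrak{q}]$-equivariance (through the nebentypus character of $\omega$) of $W_3(\cdot,0)h_4(\cdot,0)$, apply Lemma~\ref{lem5.1} at $v\mid\mathfrak{n}$, where $K_0[\mathfrak{q}]$ contains $K_v$ because $(\mathfrak{n},\mathfrak{q})=1$, and use the reflection principle (the two continued periods are complex conjugate at real parameters, in particular at $\mathbf{s}=\mathbf{0}$) to get $|\cdot|^2$; the Eisenstein case is identical since $-\overline{\lambda}=\lambda$ on $i\mathbb{R}$.

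The one superfluous step is your remark on $h_1,h_2$. The sum over an orthonormal basis is basis-independent, and the second factor alone already annihilates every basis vector orthogonal to the isotypic subspace, so nothing about the first factor is needed. Moreover, the claim that the first factor is supported on a ``smaller'' space is not accurate, since the $\mathbf{d}_{\mathfrak{n}}$-translated spherical vectors at $v\mid\mathfrak{n}$ are not $K_v$-fixed.
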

\begin{proof}
The Corollary \ref{cor5.2} follows directly from \eqref{reg1}, Lemma \ref{lem5.1}, and the construction of $W_3(\cdot,s)$ and $h_4(\cdot,0)$ (which are right-$K_0[\mathfrak{q}]$-invariant modulo the center).
\end{proof}

\begin{lemma}\label{lemma5.3}
Let $\mathbf{C}_{\infty}:=\prod_{v\mid\infty}\mathbf{C}_v$ be  defined as in \textsection\ref{sec4.1.1}. Let $\varepsilon>0$. Suppose $C_v(\pi)\leq \mathbf{C}_v^{1-10\varepsilon}$, $v\mid\infty$ and $\mathfrak{B}(\pi)^{K_0[\mathfrak{q}]}$ is nonempty. Then 
\begin{equation}\label{5.8}
\sum_{\phi\in\mathfrak{B}(\pi)^{K_0[\mathfrak{q}]}}\big|\widetilde{\Psi}(\overline{W_{\phi}},W_3(\cdot,0),h_4(\cdot,0))\big|^2\gg \textbf{C}_{\infty}^{\ -1-\varepsilon}N_F(\mathfrak{q})^{-\varepsilon}|L(1/2,\pi)|^4,
\end{equation}
where the implied constant depends only on $F$ and $\varepsilon$.
\end{lemma}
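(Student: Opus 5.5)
The plan is to reduce \eqref{5.8} to local lower bounds by choosing a single good test vector. The sum over the orthonormal basis $\mathfrak{B}(\pi)^{K_0[\mathfrak{q}]}$ does not depend on the choice of basis. Hence it is at least $|\widetilde{\Psi}(\overline{W_{\phi_0}},W_3(\cdot,0),h_4(\cdot,0))|^2$ for any unit vector $\phi_0\in\pi^{K_0[\mathfrak{q}]}$. I would take $\phi_0$ to be the $L^2$-normalized newform, translated at places $v\mid\mathfrak{q}$ if needed so that it is $K_{0}[\mathfrak{q}]$-invariant. By Rankin--Selberg unfolding, as in the proof of Lemma \ref{lem5.1},
\begin{align*}
\widetilde{\Psi}(\overline{W_{\phi_0}},W_3(\cdot,0),h_4(\cdot,0))=\Lambda(1/2,\overline{\pi})\Lambda(1/2,\overline{\pi}\otimes\omega)\prod_{v}\frac{\Psi_v(\overline{W_{\phi_0,v}},W_{3,v},h_{4,v})}{L_v(1/2,\overline{\pi}_v)L_v(1/2,\overline{\pi}_v\otimes\omega_v)}.
\end{align*}
Since $\pi\simeq\overline{\pi}\otimes\omega$ (because $\widetilde{\pi}\simeq\pi\otimes\omega^{-1}$), both global $L$-factors have absolute value $|L(1/2,\pi)|$ up to archimedean factors. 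The normalization $\|\phi_0\|=1$ gives $\|W_{\phi_0}\|^2\asymp L(1,\pi,\mathrm{Ad})$ times local factors, and $L(1,\pi,\mathrm{Ad})\ll C(\pi)^{\varepsilon}$ by Hoffstein--Lockhart/Iwaniec bounds. It therefore suffices to bound each normalized local integral from below.

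At unramified places $v\nmid\mathfrak{q}\infty$, the data $\Phi_v=\mathbf{1}_{\mathcal{O}_v}^{\otimes2}$ are spherical. The unramified computation then makes the normalized local factor equal to $1$, up to the standard $\zeta_{F_v}$ volume factors, which contribute only $O_F(1)$ overall.

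At $v\mid\mathfrak{q}$, $\Phi_v$ is essentially the indicator of $\mathfrak{p}_v^{m_v}\times\mathcal{O}_v^{\times}$ twisted by $\omega_v$, normalized by $\Vol(K_{0,v}[m_v])^{-1}$. This makes $h_{4,v}$ a function supported on $B(F_v)K_{0,v}[m_v]$, and $W_{3,v}$ is spherical. The local integral collapses to an integral of $\overline{W_{\phi_0,v}}W_{3,v}$ over the torus. Using the known values of the newform Whittaker function on $\diag(y,1)$ (it is supported on $|y|\le1$ with $W(1)=1$), it equals $L_v(1/2,\overline\pi_v)L_v(1/2,\overline\pi_v\otimes\omega_v)$ times a constant of size $\gg N(\mathfrak{p}_v^{m_v})^{-\varepsilon}$, up to the $L^2$-normalization of $W_{\phi_0,v}$, which is $\asymp1$ for newforms. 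Multiplying over $v\mid\mathfrak{q}$ gives the loss $N_F(\mathfrak{q})^{-\varepsilon}$. The $\varepsilon$ absorbs $d(\mathfrak{q})$-type constants and the $L_v$ factors, which are bounded below by $(1+q_v^{-1/2+\vartheta})^{-2}\gg1$ uniformly. Lower bounds for the archimedean Gamma factors and the local $L$-factors at finite places are also harmless.

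The main obstacle is the archimedean place. Here $\Phi_v(t_1,t_2)=\mathbf{C}_v^{1/2}\omega_v(t_2)\alpha_v(\mathbf{C}_v|t_1|)\alpha_v(|t_2|-1)$ is a bump localizing $g$ near the set where $\mathbf e_2 g$ lies near $(0,\text{unit})$, on a scale $\mathbf{C}_v^{-1}$ in the first coordinate. I would follow the standard argument of Michel--Venkatesh and Wu. Writing the local integral via \eqref{e3.10}-type manipulations, it becomes an integral of $\overline{W_{\phi_0,v}}$ against $W_{3,v}$ over a small neighborhood, of size $\mathbf{C}_v^{-1}$, of the identity in $\overline{N}(F_v)$ times the torus. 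Since $C_v(\pi)\le\mathbf{C}_v^{1-10\varepsilon}$, the vector $\pi_v(\bar n(x))W_{\phi_0,v}$ is essentially constant for $|x|\le\mathbf{C}_v^{-1}$; this follows from Sobolev bounds on the derivative of the Kirillov model. I would choose the archimedean component of $\phi_0$ as a unit vector whose Kirillov function is a fixed bump at $y\approx1$, which is legitimate since the sum is over all of $\pi_v$ at archimedean places, not only the newvector. The local integral is then $\asymp\mathbf{C}_v^{-1/2}$, and its square gives the factor $\mathbf{C}_\infty^{-1-\varepsilon}$. Dividing by the archimedean $L$-factors is harmless because they are $\gg C_v(\pi)^{-\varepsilon}$ up to the completed $\Lambda$ versus $L$ normalization. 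Combining the three local estimates yields \eqref{5.8}.
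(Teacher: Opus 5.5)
Your proposal follows essentially the same route as the paper. You pick one factorizable unit vector in $\pi^{K_0[\mathfrak{q}]}$ (local newforms at finite places, a fixed bump Kirillov vector at $v\mid\infty$) and factor the period via Rankin--Selberg. The finite places are absorbed into $L(1,\pi,\mathrm{Ad})^{-1}\gg C(\pi)^{-\varepsilon}$, and at $v\mid\infty$ you use that $h_{4,v}$ localizes $k$ to within $\mathbf{C}_v^{-1}$ of $B\cap K$, where $C_v(\pi)\le \mathbf{C}_v^{1-10\varepsilon}$ makes $W_v$ nearly invariant, giving $|\Psi_v|\asymp \mathbf{C}_v^{-1/2}$; finally $C(\pi)\le \mathbf{C}_\infty N_F(\mathfrak{q})$ converts $C(\pi)^{-\varepsilon}$.

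Two details need adjusting. First, the archimedean local integral enters unnormalized, so no lower bound on archimedean $L$-factors is needed; your claim that they are $\gg C_v(\pi)^{-\varepsilon}$ is false, since they decay exponentially in the spectral parameter. Second, the paper makes the near-invariance rigorous by first discarding $|y|\le \mathbf{C}_v^{-N_1}$ with a pointwise Whittaker bound, and only then applying Cauchy--Schwarz with the $L^2$-small difference $W_v(a(y)k)-W_v(a(y))$. This truncation is needed because $\int K_0(2\pi|y|)^2\,d^\times y$ diverges at $y=0$.
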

\begin{proof}
We have the Eulerian decomposition for $\widetilde{\Psi}(\overline{W_{\phi}},W_3(\cdot,0),h_4(\cdot,0))/L(1/2,\pi)^2$: 
\begin{equation}\label{e5.9}
\prod_{v\mid\infty}\Psi_v(\overline{W_{\phi,v}},W_{3,v}(\cdot,0),h_{4,v}(\cdot,0))\prod_{v<\infty}\frac{\Psi_v(\overline{W_{\phi,v}},W_{3,v}(\cdot,0),h_{4,v}(\cdot,0))}{L_v(1/2,\pi_v)^2}
\end{equation}

For each $v<\infty$, let $W_v$ be the unit local new vector in the Kirillov model of $\pi_v$. By a standard calculation we have 
\begin{equation}\label{e5.10}
\prod_{v<\infty}\bigg|\frac{\Psi_v(\overline{W_{v}},W_{3,v}(\cdot,0),h_{4,v}(\cdot,0))}{L_v(1/2,\pi_v)^2}\bigg|\gg \frac{1}{L(1,\pi,\Ad)}\gg C(\pi)^{-\varepsilon},
\end{equation}
where the implied constant depends only on $\varepsilon$ and $F$. 

Now we proceed to bound $\Psi_v(\overline{W_{v}},W_{3,v}(\cdot,0),h_{4,v}(\cdot,0))$ at $v\mid\infty$. By the construction of $\Phi_{3,v}(\cdot,0)$ in \eqref{e4.10}, we have 
\begin{equation}\label{5.11}
W_{3,v}(\diag(a,1)k_v,0)=|a|_v^{1/2}K_{0}(2\pi |a|)\cdot W_{3,v}(I_2,0),
\end{equation}
where $a\in F_v^{\times}$, $k_v\in K_v$, and $K_0(\cdot)$ is the modified Bessel function of the second kind. It is known that 
\begin{equation}\label{5.12}
|K_0(a)+\log(a/2)|\ll 1, \ \ 0<a\leq 10^{-1}, 
\end{equation}
where the implied constants are absolute, and  
\begin{equation}\label{5.13}
K_0(a)\sim \sqrt{\frac{\pi}{2a}} e^{-a}	\quad \text{as }\ a \to +\infty.
\end{equation}

Let $\varphi_v\in C_c^{\infty}(F_v^{\times})$ be such that $\varphi_v(a)\equiv 0$ unless $|a|_v\geq 10^{-1}$, and  
\begin{align*}
\int_{F_v^{\times}}\varphi_v(a)K_0(2\pi |a|)d^{\times}a=1.
\end{align*}

Let $W_v$ be the Kirillov vector of $\pi_v$ such that $W_v(\diag(a,1))=\varphi_v(a)$, $a\in F_v^{\times}$. 
Then there exists a constant $N_0>0$ (depending only on $\varphi_v$) such that 
\begin{equation}\label{e5.14}
W_v\left(\begin{pmatrix}
a\\
&1
\end{pmatrix}k_v\right)\ll C_v(\pi)^{N_0}|a|_v^{1/2-\vartheta},\ \ \forall\ k_v\in K_v,
\end{equation}
where the implied constant depends only on $\varphi_v$ and $F_v$.

Let $N_1=10N_0+100$. By Iwasawa decomposition, we obtain
\begin{equation}\label{5.14}
\Psi_v(\overline{W_{v}},W_{3,v}(\cdot,0),h_{4,v}(\cdot,0))=\Psi_v^{(1)}+\Psi_v^{(2)},	
\end{equation}
where  
\begin{align*}
\Psi_v^{(1)}:=&\int_{k_v}\int_{|a|_v\leq \mathbf{C}_v^{-N_1}}\overline{W_{v}\left(\begin{pmatrix}
a\\
&1
\end{pmatrix}k_v\right)}W_{3,v}\left(\begin{pmatrix}
a\\
&1
\end{pmatrix},0\right)h_{4,v}(k_v,0)|a|_v^{-\frac{1}{2}}d^{\times}adk_v,\\
\Psi_v^{(2)}:=&\int_{k_v}\int_{|a|_v>\mathbf{C}_v^{-N_1}}\overline{W_{v}\left(\begin{pmatrix}
a\\
&1
\end{pmatrix}k_v\right)}W_{3,v}\left(\begin{pmatrix}
a\\
&1
\end{pmatrix},0\right)h_{4,v}(k_v,0)|a|_v^{-\frac{1}{2}}d^{\times}adk_v.
\end{align*}

Taking advantage of \eqref{5.11} and \eqref{e5.14}, we obtain 
\begin{align*}
\Psi_v^{(1)}\ll C_v(\pi)^{N_0}\int_{k_v}\int_{|a|_v\leq \mathbf{C}_v^{-N_1}}|K_0(a)h_{4,v}(k_v,0)|\cdot |a|_v^{\frac{1}{2}-\vartheta}d^{\times}adk_v,
\end{align*}
which, along with \eqref{5.12}, implies 
\begin{equation}\label{5.16}
\Psi_v^{(1)}\ll -C_v(\pi)^{N_0}\int_{k_v}\int_{|a|_v\leq \mathbf{C}_v^{-N_1}}|a|_v^{\frac{1}{2}-\vartheta}\log |a|_vd^{\times}adk_v\ll C_v(\pi)^{-50}.
\end{equation}

Let notation be as above. We define 
\begin{align*}
\Psi_v^{(3)}:=&\int_{k_v}\int_{|a|_v>\mathbf{C}_v^{-N_1}}\overline{W_{v}\left(\begin{pmatrix}
a\\
&1
\end{pmatrix}\right)}W_{3,v}\left(\begin{pmatrix}
a\\
&1
\end{pmatrix},0\right)h_{4,v}(k_v,0)|a|_v^{-\frac{1}{2}}d^{\times}adk_v.
\end{align*}

Since $\mathbf{C}_v^{-N_1}<10^{-1}$, which is the boundary of the support of $\varphi_v$, we derive 
\begin{equation}\label{5.17}
|\Psi_v^{(3)}|=\bigg|W_{3,v}(I_2,0)\int_{F_v}\varphi_v(a)K_0(2\pi |a|)d^{\times}a\int_{k_v}h_{4,v}(k_v,0)dk_v\bigg|\gg \mathbf{C}_v^{-\frac{1}{2}-\varepsilon}.
\end{equation}

On the other hand, by Cauchy inequality (see \cite[p. 227]{MV10}) and \eqref{5.13},  
\begin{equation}\label{5.18}
|\Psi_v^{(2)}-\Psi_v^{(3)}|\ll  \mathbf{C}_v^{-\frac{1}{2}-2\varepsilon},
\end{equation}
where the implied constant depends only on $F$ and $\Phi_v$ (see \eqref{eq6.28}). 

Substituting \eqref{5.16}, \eqref{5.17}, and \eqref{5.18} into \eqref{5.14} leads to 
\begin{equation}\label{5.19}
|\Psi_v(\overline{W_{v}},W_{3,v}(\cdot,0),h_{4,v}(\cdot,0))|\gg \mathbf{C}_v^{-\frac{1}{2}-\varepsilon},
\end{equation}
where the implied constant depends on $\varphi_v$, $\Phi_v$ (which are fixed) and $\varepsilon$. 
 
Let $\phi\in \mathfrak{B}(\pi)^{K_0[\mathfrak{q}]}$ be the vector corresponding to the Whittaker function $W_{\phi}=\otimes_vW_v$. Combining \eqref{e5.10} with \eqref{5.19} into \eqref{e5.9} yields  
\begin{equation}\label{5.20}
|\widetilde{\Psi}(\overline{W_{\phi}},W_3(\cdot,0),h_4(\cdot,0))|\gg \mathbf{C}_{\infty}^{-\frac{1}{2}-\varepsilon}C(\pi)^{-\varepsilon}|L(1/2,\pi)|^2.
\end{equation}

Therefore, \eqref{5.8} follows from \eqref{5.20} and the fact that $C(\pi)\ll \mathbf{C}_{\infty}N_F(\mathfrak{q})$ (since we assume that $\mathfrak{B}(\pi)^{K_0[\mathfrak{q}]}$ is nonempty).
\end{proof}

\begin{lemma}\label{lemma5.4}
Let $\mathbf{C}_{\infty}:=\prod_{v\mid\infty}\mathbf{C}_v$ be  defined as in \textsection\ref{sec4.1.1}. Let $\varepsilon>0$. Suppose $C_v(\pi)\leq \mathbf{C}_v^{1-10\varepsilon}$, $v\mid\infty$ and $\mathfrak{B}(\mu,\overline{\mu}\omega)^{K_0[\mathfrak{q}]}$ is nonempty. Then 
\begin{align*}
\sum_{h\in \mathfrak{B}(\mu,\overline{\mu}\omega)^{K_0[\mathfrak{q}]}}\big|\widetilde{\Psi}(W_{E(\cdot,h,\lambda)},\overline{W_3(\cdot,0)},\overline{h_4(\cdot,0)})\big|^2\gg \frac{|L(1/2+\lambda,\mu)|^4|L(1/2-\lambda,\overline{\mu}\omega)|^4}{\textbf{C}_{\infty}^{\ 1+\varepsilon}N_F(\mathfrak{q})^{\varepsilon}|L(1+2it,\mu^2\overline{\omega})|^2},
\end{align*}
where the implied constant depends only on $F$ and $\varepsilon$.
\end{lemma}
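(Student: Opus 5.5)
The plan is to follow the proof of Lemma \ref{lemma5.3} verbatim, replacing the cuspidal $\pi$ by the induced representation $\pi_{\mu,\overline{\mu}\omega,\lambda}$ with $\lambda=it\in i\mathbb{R}$. Since the sum over $h\in\mathfrak{B}(\mu,\overline{\mu}\omega)^{K_0[\mathfrak{q}]}$ consists of nonnegative terms, it suffices to bound from below a single term. We take $h$ to be the (normalized) section whose Whittaker function $W_{E(\cdot,h,\lambda)}$ is the local newvector at every finite place and, at each $v\mid\infty$, the Kirillov vector with $W_v(\diag(a,1))=\varphi_v(a)$, where $\varphi_v$ is as in Lemma \ref{lemma5.3}. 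This vector is then right-$K_0[\mathfrak{q}]$-invariant, and the normalization of $h$ will be handled below.

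First we write the Eulerian factorization, as in \eqref{e5.9}: by Rankin--Selberg theory,
\[
\widetilde{\Psi}(W_{E(\cdot,h,\lambda)},\overline{W_3(\cdot,0)},\overline{h_4(\cdot,0)})
\]
equals $L(1/2+\lambda,\mu)^2L(1/2-\lambda,\overline{\mu}\omega)^2$, up to complex conjugation, times a product of local integrals normalized by the local $L$-factors. The one new feature, compared with the cusp form case, is the global normalization of $W_{E(\cdot,h,\lambda)}$. Writing $S(h)$ as a Godement section divided by $\Lambda(1+2\lambda,\mu^2\overline{\omega})$ (cf. the proof of Lemma \ref{lem3.3}), the Whittaker function of the $L^2$-normalized $h$ carries the factor $\Lambda(1+2\lambda,\mu^2\overline{\omega})^{-1}$. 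Taking absolute values squared produces the denominator $|L(1+2it,\mu^2\overline{\omega})|^2$; the archimedean gamma factors of this quantity are absorbed into the local archimedean computation. This plays the role that $L(1,\pi,\mathrm{Ad})^{-1}$ plays in \eqref{e5.10}.

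Next we treat the non-archimedean places. The normalized unramified and newvector local integrals are computed exactly as in \eqref{e5.10}. Because the data $\Phi_3,\Phi_4$ at $v\mid\mathfrak{q}$ are the $K_0$-type functions from \textsection\ref{sec4.4}, the ramified local factors are bounded below by $N_F(\mathfrak{q})^{-\varepsilon}$ times normalizing local adjoint factors. For the principal series these adjoint factors are exactly the local pieces of $L(1+2\lambda,\mu^2\overline{\omega})L(1-2\lambda,\overline{\mu}^2\omega)$, and they combine with the global normalization above.

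At $v\mid\infty$ we repeat \eqref{5.14}--\eqref{5.19}, splitting the integral into $|a|_v\le \mathbf{C}_v^{-N_1}$ and its complement. The only inputs are the bound \eqref{e5.14} with $\vartheta=0$ (the representation is tempered since $\lambda\in i\mathbb{R}$), polynomial control $C_v(\pi_{\mu,\overline{\mu}\omega,\lambda})^{N_0}$, and the support of $\varphi_v$. These give $|\Psi_v|\gg\mathbf{C}_v^{-1/2-\varepsilon}$ under the hypothesis $C_v\le\mathbf{C}_v^{1-10\varepsilon}$. Squaring and multiplying over all places gives the stated bound with $\mathbf{C}_\infty^{-1-\varepsilon}N_F(\mathfrak{q})^{-\varepsilon}$. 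Finally, $C(\pi_{\mu,\overline{\mu}\omega,\lambda})^{-\varepsilon}\gg(\mathbf{C}_\infty N_F(\mathfrak{q}))^{-\varepsilon}$ holds because the $K_0[\mathfrak{q}]$-invariants are nonempty.

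The main obstacle will be the normalization step: we must check precisely that the $L^2$-orthonormality of $h\in H(\mu,\overline{\mu}\omega)$ translates, via the intertwining between $S(h)$ and Godement sections and the local Whittaker functionals, into a single factor $|L(1+2it,\mu^2\overline{\omega})|^{-2}$. The local archimedean and ramified Whittaker normalizations need to contribute only $\mathbf{C}_v^{\pm\varepsilon}$ and $N_F(\mathfrak{q})^{\pm\varepsilon}$. To verify this, I would first compute the local Whittaker integrals $\int_{N(F_v)}S(h_v)(wu)\overline{\theta_v}(u)\,du$ for spherical and newform $h_v$ via the Jacquet integral.
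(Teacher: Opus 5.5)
Your proposal is correct and follows the paper's route. The paper's proof just says that the archimedean places are handled as in Lemma~\ref{lemma5.3} and the finite places by the corresponding manipulation. Your key observation is the right one: $W_{E(\cdot,h,\lambda)}$ is the entire Whittaker function of a Godement section divided by $\Lambda(1+2\lambda,\mu^2\overline{\omega})$, so this normalization replaces $L(1,\pi,\mathrm{Ad})^{-1}$ by $|L(1+2it,\mu^2\overline{\omega})|^{-2}$.

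Two small clarifications settle the points you were unsure about. The normalization step you flag is harmless: for $\lambda\in i\mathbb{R}$ the local Jacquet integral is an isometry up to an absolute constant. Also, your archimedean test vector is not one of the fixed $K$-finite basis elements. So ``bound a single term'' should be read as Bessel's inequality $\sum_h|\ell(h)|^2\ge|\ell(h_0)|^2$, for a unit vector $h_0$ in the $K_0[\mathfrak{q}]$-fixed subspace.
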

\begin{proof}
The Archimedean components are handled in the same manner as in the proof of Lemma \ref{lemma5.3}, and a corresponding manipulation of the non-Archimedean components leads to Lemma \ref{lemma5.4}.
\end{proof}

\section{Majorization of the Residues $\widetilde{\Psi}_{\mathrm{RS}}^{(i)}(\mathbf{0}\mid\mathfrak{X}_{\mathfrak{n}})$}\label{sec6}
In this section we aim to establish an upper bound for each $\widetilde{\Psi}_{\mathrm{RS}}^{(i)}(\mathbf{0}\mid\mathfrak{X}_{\mathfrak{n}})$, $1\leq i\leq 8$. The main result is stated as follows: 
\begin{prop}\label{prop6.1}
 Let $1\leq i\leq 8$. Then 
\begin{align*}
\widetilde{\Psi}_{\mathrm{RS}}^{(i)}(\mathbf{0}\mid\mathfrak{X}_{\mathfrak{n}})\ll C_{\infty}(\omega)^{-1/2}\mathbf{C}_{\infty}^{-1+\varepsilon}N_F(\mathfrak{q})^{\varepsilon}N_F(\mathfrak{q}')^{1/2}N_F(\mathfrak{n})^{1/2+\varepsilon},
\end{align*}
where the implied constant depends only on $F$, $\varepsilon$, and the smooth functions $\alpha_v$, $v\mid\infty$ (see \textsection\ref{sec4.4}). 
\end{prop}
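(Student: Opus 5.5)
We will bound the eight residues one at a time; they share the same structure, so the plan is described for a representative one. The other seven are handled symmetrically, exchanging the roles of $(s_1,s_2)$ and $(s_3,s_4)$ and of $\mu$ and $\overline{\mu}\omega$.

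\textbf{Step 1: the $\mu$-constraint and the contour integral.} With the datum \eqref{4.5} we have $\chi_j=\mathbf{1}$, $\omega_1=\omega_3=\mathbf{1}$ and $\omega_2=\omega_4=\omega$. Each indicator in Definition \ref{defn3.5} therefore forces $\mu\in\{\mathbf{1},\omega\}$. For instance $\widetilde{\Psi}^{(1)}_{\mathrm{RS}}$ has $\mu=\mathbf{1}$ and a residue at $\lambda=s_1+s_2-1/2$. We write $\widetilde{\Psi}_{\mathrm{RS}}^{(i)}(\mathbf{0}\mid\mathfrak{X}_{\mathfrak{n}})$ as the contour integral over $\mathbf{S}_\varepsilon^4$. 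On $\mathbf{S}^4_\varepsilon$ the residue point $\lambda_0$ satisfies $|\Re\lambda_0|\approx 1/2$, so it stays away from the zeros of $\Lambda(1\pm2\lambda,\cdot)$. It therefore suffices to bound the residue uniformly for $\mathbf{s}\in\mathbf{S}^4_\varepsilon$; the measure and the factor $(s_1s_2s_3s_4)^{-1}$ contribute only $O_\varepsilon(1)$.

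\textbf{Step 2: explicit form of the residue.} Using \eqref{f3.7} and the unfolding in the proof of Lemma \ref{lem3.3}, the residue equals a product of two Rankin--Selberg integrals. One factor is $\Psi(\overline{W_{E(\cdot,h,-\overline{\lambda_0})}},W_1,h_2)$, taken as the residue of the Hecke/Godement--Jacquet zeta integral, so it becomes a degenerate local functional. The other is $\Psi(W_{E(\cdot,h,\lambda_0)},\overline{W_3},\overline{h_4})$, evaluated at $\lambda_0\approx -1/2$. Since $\mu$ is fixed, the pole comes from $\Lambda(1/2+s_2+s_1-\lambda,\overline{\mu})=\Lambda(s,\mathbf{1})$ at $s=1$, i.e.\ the residue of $\zeta_F$. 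What remains is a finite sum over $h\in\mathfrak{B}(\mathbf{1},\omega)^{K_0[\mathfrak{q}]}$ of products of local zeta integrals. These are normalised by $\Lambda(1\pm2\lambda_0,\cdot)^{-1}$ at $1\pm 2\lambda_0\approx 0$ or $2$, i.e.\ at a point where $L(\cdot,\overline{\omega})$ is evaluated near $0$ or $2$. By the functional equation, the value near $0$ produces the root-number factor $C(\omega)^{1/2}$ with conductor $N_F(\mathfrak{q}')$ and the archimedean part $C_\infty(\omega)$.

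\textbf{Step 3: local estimates.} These are routine computations with the explicit test functions of \textsection\ref{sec4.4}. At $v\nmid\mathfrak{n}\mathfrak{q}\infty$ everything is unramified and gives $L$-values at $\approx 1$ and $2$, which are $\ll N_F(\mathfrak{q})^\varepsilon$. At $v\mid\mathfrak{q}$ the normalisation $\Vol(K_{0,v}[m_v])^{-1}$ cancels the volume of the support, leaving $O(1)$. At $v\mid\mathfrak{n}$, the translate by $\mathbf{d}_v$ together with the factor $\alpha_{\mathfrak{n}}$ gives a Hecke eigenvalue of the Eisenstein representation $\mathbf{1}\boxplus\omega$ at $s=\pm1/2$. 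This is essentially $\lambda^*_{\pi}(\mathfrak{n})\sim q_v^{l_v/2}$, which yields the factor $N_F(\mathfrak{n})^{1/2+\varepsilon}$; we cannot use cancellation here because the residue is off the tempered axis. At $v\mid\infty$, $\Phi_v$ localises $t_1$ to $|t_1|\lesssim\mathbf{C}_v^{-1}$ and carries the prefactor $\mathbf{C}_v^{1/2}$. Squaring across the $\Phi_1\Phi_2$ and $\Phi_3\Phi_4$ sides gives $\mathbf{C}_v\cdot\mathbf{C}_v^{-2}=\mathbf{C}_v^{-1}$, up to $\mathbf{C}_v^\varepsilon$ losses from logarithmic Bessel singularities. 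The $\omega_v(t_{2,v})$ twist against the gamma factors then supplies $C_v(\omega)^{-1/2}$.

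\textbf{Main obstacle.} The hardest step is the archimedean analysis. We need the two degenerate local integrals at $\lambda_0$ (off the unitary axis) to produce exactly $\mathbf{C}_v^{-1}$ and $C_v(\omega)^{-1/2}$. We would try to compute them directly in Iwasawa coordinates. The support conditions on $\alpha_v(\mathbf{C}_v|t_1|)\alpha_v(|t_2|-1)$ reduce each to an integral over a box of size $\mathbf{C}_v^{-1}$, and integrating by parts in the phase $\omega_v(t_2)$ should extract the $C_v(\omega)^{-1/2}$ decay. We would control the gamma-factor ratios with Stirling's formula.
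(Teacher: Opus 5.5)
Your global skeleton is the paper's. Write $\widetilde{\Psi}(\lambda,\mu;\mathbf{s},\mathfrak{X}_{\mathfrak{n}})=\boldsymbol{e}(\lambda,\mu;\mathbf{s})\,\mathbf{L}(\lambda,\mu;\mathbf{s})$ with $\boldsymbol{e}=\prod_v\boldsymbol{e}_v$ entire; then each residue equals $\boldsymbol{e}(\lambda_0,\mu;\mathbf{s})\Res_{\lambda=\lambda_0}\mathbf{L}$. Your treatment of $v\mid\mathfrak{n}$ (a Hecke eigenvalue of $\mu|\cdot|^{\lambda_0}\boxplus\overline{\mu}\omega|\cdot|^{-\lambda_0}$, of size $N_F(\mathfrak{n})^{1/2+\varepsilon}$) and of $v\mid\mathfrak{q}$ is essentially what the paper does.

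Two points in your Step 2 are off.
\begin{itemize}
\item The factor $N_F(\mathfrak{q}')^{1/2}$ does not come from the normalising $\Lambda(1+2\lambda_0,\cdot)^{-1}$. Taken alone, an inverse $\Lambda$ near $0$ would contribute $C(\omega)^{-1/2}$, not $C(\omega)^{1/2}$. In fact, for $i=1$, $\Lambda(1+2\lambda_0,\overline{\omega})^{-1}=\Lambda(2s_1+2s_2,\overline{\omega})^{-1}$ cancels exactly against the numerator factor $\Lambda(\tfrac12+s_2+s_1+\lambda_0,\overline{\omega})$. What is bounded by convexity is the surviving factor $\Lambda(\tfrac12+s_2-s_1+\lambda_0,\overline{\omega})=\Lambda(2s_2,\overline{\omega})$.
\item The $h$-sum is not finite. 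At $v\mid\infty$ the sections $h_{2,v},h_{4,v}$ built from $\mathbf{C}_v^{1/2}\omega_v(t_2)\alpha_v(\mathbf{C}_v|t_1|)\alpha_v(|t_2|-1)$ are not $K_v$-finite, so infinitely many $K_v$-types contribute.
\end{itemize}

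\textbf{The genuine gap is the archimedean step.} You propose to evaluate ``the two degenerate local integrals at $\lambda_0$'' directly in Iwasawa coordinates. But there is no degenerate local functional: the residue is taken only in the global $\mathbf{L}$, and $\boldsymbol{e}_v(\lambda_0,\mu;\mathbf{s})$ is in general only a meromorphic continuation, not a convergent integral.
\begin{itemize}
\item The criterion for absolute convergence of $\Psi_v(\overline{W_{h_v,-\overline{\lambda}}},W_{1,v}(\cdot,s_1),h_{2,v}(\cdot,s_2))$ (proof of Lemma \ref{lem6.1}) is $\Re(s_2)>|\Re(s_1)|+|\Re(\lambda)|-\tfrac12$.
\item At $\lambda_0=s_1+s_2-\tfrac12$ with $\Re(s_1)\ge 0$, this forces $\Re(s_2)>0$, which fails on half of the circle $|s_2|=2\varepsilon$. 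The $(s_3,s_4)$-integral behaves similarly.
\item The paper states that these integrals do not converge on parts of $\mathbf{B}_\varepsilon^4$, whereas your Step 1 needs bounds on all of $\mathbf{S}_\varepsilon^4$.
\end{itemize}

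The missing ingredient is Lemmas \ref{lem6.1}--\ref{lem6.3}. Using the Bessel-function identity \eqref{eq5.7}, the twisted $\gamma$-factor formula \eqref{twist} and the local functional equation, one rewrites $\widetilde{\Psi}_v(\lambda,h_v;\mathbf{s})$ as $\mathbf{L}_v^{\dag}(\lambda,\mu_v;\mathbf{s})$ times $\Psi_v(W_{1,v},\widehat{W}_{2,v}^*,\overline{S(h_v)(\cdot,\overline{\lambda})})\,\Psi_v(\overline{W_{3,v}},\overline{W_{4,v}^*},S(h_v)(\cdot,\lambda))$. This converges absolutely for $|\lambda-\tfrac12|\le 20\varepsilon$; part (B), with $\mathbf{L}_v^{\ddagger}$, covers $|\lambda+\tfrac12|\le 20\varepsilon$.

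Only after this rewriting can you:
\begin{itemize}
\item unfold the integrals;
\item collapse the infinite $K_v$-type sum into a single $K_v$-integral by Parseval;
\item use the truncations \eqref{eq7.18}--\eqref{eq7.19} of $W_{4,v}^*$ and $\widehat{W}_{2,v}^*$ to extract $\mathbf{C}_v^{-1}$.
\end{itemize}
Correspondingly, in the paper the factor $C_v(\omega)^{-1/2}$ comes from the Stirling bound \eqref{6.17} on the $\gamma$-ratio $\mathbf{L}_v^{\dag}$ (resp.\ $\mathbf{L}_v^{\ddagger}$) at $\Re(\lambda)\approx\pm\tfrac12$. It does not come from integrating by parts in the phase $\omega_v(t_2)$.
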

 
Following preparations in \textsection\ref{sec7.1}--\textsection\ref{sec7.3}, we will prove Proposition \ref{prop6.1} in \textsection\ref{sec7.4}. 

\subsection{Eulerian Structures of $\widetilde{\Psi}_{\mathrm{RS}}^{(i)}(\mathbf{s},\mathfrak{X}_{\mathfrak{n}})$}\label{sec7.1}
By Definition \ref{defn3.5} in \textsection\ref{sec3.3.}, the functions $\widetilde{\Psi}_{\mathrm{RS}}^{(i)}(\mathbf{s},\mathfrak{X}_{\mathfrak{n}})$, $1\leq i\leq 8$, are defined as certain residues of $\widetilde{\Psi}(\lambda,\mu;\mathbf{s},\mathfrak{X}_{\mathfrak{n}})$, which is itself given by (see Lemma \ref{lem3.3}) 
\begin{align*}
\frac{1}{2}\sum_{h\in \mathfrak{B}(\mu,\overline{\mu}\omega)}\widetilde{\Psi}(\overline{W_{E(\cdot,h,-\overline{\lambda})}},W_1(\cdot,s_1),h_2(\cdot,s_2))\widetilde{\Psi}(W_{E(\cdot,h,\lambda)},\overline{W_3(\cdot,\overline{s_3})},\overline{h_4(\cdot,\overline{s_4})}).
\end{align*}

\subsubsection{Interpretation via $L$-functions}\label{sec7.1.1}
Let $\boldsymbol{e}(\lambda,\mu;\mathbf{s}):=\widetilde{\Psi}(\lambda,\mu;\mathbf{s},\mathfrak{X}_{\mathfrak{n}})/\mathbf{L}(\lambda,\mu;\mathbf{s})$, where $\mathbf{L}(\lambda,\mu;\mathbf{s})$ is the product of the complete $L$-functions 
\begin{multline*}
 \Lambda(1+2\lambda,\mu^2\overline{\omega})^{-1}\Lambda(1-2\lambda,\overline{\mu}^2\omega)^{-1}
 \Lambda(1/2+s_2+s_1-\lambda,\overline{\mu})\Lambda(1/2+s_2+s_1+\lambda,\mu\overline{\omega})\\
\Lambda(1/2+s_2-s_1-\lambda,\overline{\mu})\Lambda(1/2+s_2-s_1+\lambda,\mu\overline{\omega})
\Lambda(1/2+s_4+s_3+\lambda,\mu)\\\Lambda(1/2+s_4+s_3-\lambda,\overline{\mu}\omega)\Lambda(1/2+s_4-s_3+\lambda,\mu)\Lambda(1/2+s_4-s_3-\lambda,\overline{\mu}\omega).
\end{multline*}

By Lemma \ref{lem3.3} the function $\boldsymbol{e}(\lambda,\mu;\mathbf{s})$ is holomorphic. Therefore, 
\begin{multline}\label{c7.1}
\quad \Psi_{\mathrm{RS}}^{(1)}(\mathbf{s},\mathfrak{X}_{\mathfrak{n}})=\mathbf{1}_{\mu=\mathbf{1}}\cdot \boldsymbol{e}(s_2+s_1-1/2,\mu;\mathbf{s})\underset{\lambda=s_2+s_1-\frac{1}{2}}{\Res}\ \mathbf{L}(\lambda,\mu;\mathbf{s}),\\
\quad \quad\quad\quad\Psi_{\mathrm{RS}}^{(2)}(\mathbf{s},\mathfrak{X}_{\mathfrak{n}})=-\mathbf{1}_{\mu=\omega}\cdot \boldsymbol{e}(1/2-s_2-s_1,\mu;\mathbf{s})\underset{\lambda=\frac{1}{2}-s_2-s_1}{\Res}\ \mathbf{L}(\lambda,\mu;\mathbf{s}).\qquad \qquad 
\end{multline} 
Similarly, we may express other $\Psi_{\mathrm{RS}}^{(i)}(\mathbf{s},\mathfrak{X}_{\mathfrak{n}})$'s into the above forms.

Recall that $W_{j,v}(\cdot,s_j)$ and $h_{j,v}(\cdot,s_j)$, $v\leq\infty$, are the local Whittaker functions and the local Godement sections, respectively (see \textsection\ref{sec2.1.1}--\textsection\ref{sec2.1.4.}). Let 
\begin{equation}\label{eq7.2}
W_{h_v,\lambda}(g_v):=\int_{N(F_v)}S(h_v)(wu_vg_v,\lambda)\overline{\theta_v(u_v)}du_v,
\end{equation}
where $S(h_v)(\cdot,\lambda)$ is the $v$-th component of $S(h)(\cdot,\lambda)$ defined by \eqref{eq2.2}. 
\begin{defn}\label{defn7.2}
 	Define the function $\widetilde{\Psi}_v(\lambda,h_v;\mathbf{s})$ by 
\begin{align*}
\widetilde{\Psi}_v(\overline{W_{h_v,-\overline{\lambda}}},W_{1,v}(\cdot,s_1),h_{2,v}(\cdot,s_2))\widetilde{\Psi}_v(W_{h_v,\lambda},\overline{W_{3,v}(\cdot,\overline{s_3})},\overline{h_{4,v}(\cdot,\overline{s_4})}),
\end{align*}
where the right-hand sides are the meromorphic continuation of the local period integrals defined as in \eqref{c2.6}.
\end{defn}

By definition and the Rankin--Selberg theory, $\boldsymbol{e}(\lambda,\mu;\mathbf{s})=\prod_v\boldsymbol{e}_v(\lambda,\mu;\mathbf{s})$, where 
\begin{equation}\label{eq6.1}
\boldsymbol{e}_v(\lambda,\mu;\mathbf{s})=\sum_{h_v\in \mathfrak{B}(\mu_v,\overline{\mu}_v\omega_v)}\frac{\widetilde{\Psi}_v(\lambda,h_v;\mathbf{s})}{\mathbf{L}_v(\lambda,\mu_v;\mathbf{s})}.
\end{equation}
Here $\mathfrak{B}(\mu_v,\overline{\mu}_v\omega_v)$ is an orthonormal basis of the induced representation $\Ind (\mu_v\otimes\overline{\mu}_v\omega_v)$, and $\mathbf{L}_v(\lambda,\mu_v;\mathbf{s})$ is the $v$-th local factor of $\mathbf{L}(\lambda,\mu;\mathbf{s})$. 

As a consequence of standard calculation of the summands in \eqref{eq6.1} at unramified places, we have $\boldsymbol{e}_v(\lambda,\mu;\mathbf{s})\equiv 1$ at all but finitely many places. Hence, $\boldsymbol{e}(\lambda,\mu;\mathbf{s})=\prod_v\boldsymbol{e}_v(\lambda,\mu;\mathbf{s})$ is a finite sum of entire functions.

\subsubsection{Singularities at $\mathbf{s}\in \mathbf{B}_{\varepsilon}^4$}
Dispate that $\boldsymbol{e}_v(\lambda,\mu;\mathbf{s})$ is entire, each function $\widetilde{\Psi}_v(\lambda,h_v;\mathbf{s})$ may not be simply identified with the period integrals  
\begin{align*}
\Psi_v(\lambda,h_v;\mathbf{s})=\Psi_v(\overline{W_{h_v,-\overline{\lambda}}},W_{1,v}(\cdot,s_1),h_{2,v}(\cdot,s_2))\Psi_v(W_{h_v,\lambda},\overline{W_{3,v}(\cdot,\overline{s_3})},\overline{h_{4,v}(\cdot,\overline{s_4})}),
\end{align*}
as it does not converge in certain subregions of $\mathbf{B}_{\varepsilon}^4$. 

We will develop techniques in the following subsections to address the divergence issue mentioned above. These techniques will enable us to utilize the integral representation effectively and establish a sharp upper bound for $\boldsymbol{e}_v(\lambda,\mu;\mathbf{s})$.

\subsection{The Archimedean Integrals}
\subsubsection{Bessel functions}\label{sec5.2.1}
Let $v\leq \infty$, and $\pi_v$ be a generic representation of $G(F_v)$ with central character $\omega_{\pi_v}$. Let $j_{\pi_v}$ be the Bessel function associated with $\pi_v$, e.g., see \cite{Cog14}. Let $W_{\pi_v}$ be a vector in the Whittaker model of $\pi_v$. Then for all $g_v\in G(F_v)$, we have 
\begin{equation}\label{eq5.7}
W_{\pi_v}\left(\begin{pmatrix}
a_v\\
&1	
\end{pmatrix}wg_v\right)=\omega_{\pi_v}(a_v)\int_{F_v^{\times}}j_{\pi_v}(a_vy_v)W_{\pi_v}\left(\begin{pmatrix}
y_v\\
&1
\end{pmatrix}g_v\right)d^{\times}y_v.
\end{equation}

Let $\chi_v$ be a general multiplicative character of $F_v^{\times}$. Upon replacing $W_{\pi_v}(\cdot)$ with $\chi_v(\det(\cdot))W_{\pi_v}(\cdot)$ in \eqref{eq5.7} we obtain 
\begin{equation}\label{5.8}
j_{\pi_v\otimes\chi_v}(y_v)=\chi_v^{-1}(-y_v)j_{\pi_v}(y_v).
\end{equation}

By functional equation, for $\Re(s)\ll 0$ we have 
\begin{equation}\label{f5.8}
\gamma_v(s,\pi_v,\psi_v)
=\int_{F_v^{\times}}j_{\pi_v}(y_v)|y_v|_v^{1/2-s}d^{\times}y_v,
\end{equation}
where $\gamma_v(s,\pi_v,\psi_v)$ is the $\gamma$-factor associated with $\pi_v$ relative to the unramified character $\psi_v$. Combining \eqref{5.8} with \eqref{f5.8} yields 
\begin{equation}\label{twist}
\gamma_v(1/2-s,\pi_v\otimes\chi_v^{-1},\psi_v)=\chi_v(-1)\int_{F_v^{\times}}\chi_v(y_v)j_{\pi_v}(y_v)|y_v|_v^{s}d^{\times}y_v,
\end{equation}
where the integral converges in $-1/2+\vartheta\leq \Re(s)\leq -\vartheta$ if $\chi_v$ is unitary.

\subsubsection{Switching sections in the Rankin--Selberg periods}\label{sec6.2.2}
\begin{lemma}\label{lem6.1}
Let notation be as before. Then 
\begin{multline}\label{c6.6}
\widetilde{\Psi}_v(\overline{W_{h_v,-\overline{\lambda}}},W_{1,v}(\cdot,s_1),h_{2,v}(\cdot,s_2))=\omega_v(-1)\gamma(1/2-\lambda-s_2+s_1,\overline{\mu}_v\omega_v,\psi_v)\\
\gamma(1/2-\lambda-s_2-s_1,\overline{\mu}_v\omega_v,\psi_v)
\widetilde{\Psi}_v(W_{1,v}(\cdot,s_1),W_{2,v}^*(\cdot,s_2),\overline{S(h_v)(\cdot,-\overline{\lambda})}),
\end{multline}
where $W_{2,v}^*(g_v,s_2):=W_{2,v}^*(\diag(-1,1)g_v,s_2)$. 
\end{lemma}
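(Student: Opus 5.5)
Both sides of \eqref{c6.6} are meromorphic in $(\lambda,\mathbf{s})$, so it suffices to prove the identity on an open set where every integral converges absolutely. The plan is to start from the local period $\Psi_v(\overline{W_{h_v,-\overline{\lambda}}},W_{1,v}(\cdot,s_1),h_{2,v}(\cdot,s_2))$, rewritten as in \eqref{e3.9}--\eqref{e3.10} through the Godement--Jacquet integral. Concretely, unfold $h_{2,v}$ via \eqref{2.1} to trade the quotient $N(F_v)\backslash\overline{G}(F_v)$ for an integral over $G(F_v)$ against $\Phi_{2,v}(\mathbf{e}_2g)\chi_{2,v}(\det g)|\det g|^{1/2+s_2}$. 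Then insert the explicit form \eqref{c2.5} of $W_{1,v}$, which is itself a Fourier transform of $\Phi_{1,v}$ in the second coordinate. After the change of variables $g\mapsto \diag(t,1)g$ used in \eqref{e3.10}, the integrand becomes a product of $\Phi_{1,v}(\mathbf{e}_1g)\Phi_{2,v}(\mathbf{e}_2g)$ and a Mellin transform in $t$ of $\overline{W_{h_v,-\overline{\lambda}}}(\diag(t,1)g)$. The right-hand side $\widetilde{\Psi}_v(W_{1,v},W_{2,v}^*,\overline{S(h_v)})$ will be treated the same way, this time unfolding $\overline{S(h_v)(\cdot,-\overline{\lambda})}$ against the product $W_{1,v}W_{2,v}^*$ of Whittaker functions of opposite characters.

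The key step is to move the Weyl element from one factor to the other. Write $W_{h_v,-\overline{\lambda}}(g)=\int_{N(F_v)}S(h_v)(wug,-\overline{\lambda})\overline{\theta_v(u)}du$ as in \eqref{eq7.2}. Then use the local functional equation in Bessel form, \eqref{eq5.7} together with the twisted identity \eqref{twist}, applied to the representation $\overline{\mu}_v\omega_v|\cdot|^{\lambda}\boxplus\mu_v|\cdot|^{-\lambda}$, or rather to the relevant $\mathrm{GL}_1$ pieces. The Mellin integral in $t$ of the Whittaker function of an induced representation is a product of two Tate integrals, and by Tate's local functional equation each of them turns into its dual at the cost of a $\gamma$-factor. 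These two $\gamma$-factors should be exactly $\gamma(1/2-\lambda-s_2\pm s_1,\overline{\mu}_v\omega_v,\psi_v)$, with the characters arising as in \eqref{twist}. The sign $\omega_v(-1)$ comes from the $\chi_v(-1)$ in \eqref{twist} and from passing from $\theta_v$ to $\overline{\theta_v}$ via $\diag(-1,1)$, which is the origin of $W_{2,v}^*$.

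After the Tate dualities, I will undo the manipulation on the other side. The dual Tate integral in the coordinate of $\Phi_{2,v}$ reassembles, by reversing \eqref{f2.3}--\eqref{c2.5}, into the Whittaker function $W_{2,v}^*(\cdot,s_2)$. The remaining $\mathbf{e}_1$-integral together with the induced-representation part reassembles into $\overline{S(h_v)(\cdot,-\overline{\lambda})}$ integrated against $W_{1,v}W_{2,v}^*$. Both sides are then equal on an open region of absolute convergence, for instance $\Re(s_2)\gg|\Re(s_1)|+|\Re\lambda|$ on the left and the corresponding half-plane for the Tate integrals on the right. I will check that these regions overlap after shrinking, and the identity then extends everywhere by uniqueness of meromorphic continuation.

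The main obstacle is the bookkeeping in this reassembly. I expect it to be hardest to verify that the characters and the shifts $\pm s_1$ come out exactly so that both $\gamma$-factors carry $\overline{\mu}_v\omega_v$, rather than one carrying $\overline{\mu}_v$, and that the measure normalisations and the sign $\omega_v(-1)$ are consistent. The first thing I will try is to do the computation for a pure tensor $\Phi_{j,v}=\phi'\otimes\phi''$ and $h_v$ in the $K_v$-finite basis, and then extend by linearity and continuity.
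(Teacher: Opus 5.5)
Your proposal has a genuine gap at the step meant to produce the two $\gamma$-factors. The difficulty you flag at the end is not bookkeeping: as stated, the plan cannot give \eqref{c6.6}.

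After the manipulation of \eqref{e3.10}, the $t$-integral of $\overline{W_{h_v,-\overline{\lambda}}}(\diag(t,1)g)$ against $|t|_v^{s_2-s_1}$ is the Hecke integral of $\overline{\mu}_v|\cdot|^{-\lambda}\boxplus\mu_v\overline{\omega}_v|\cdot|^{\lambda}$. As in the proof of Lemma~\ref{lem3.3}, its two Tate factors represent $L_v(1/2+s_2-s_1-\lambda,\overline{\mu}_v)$ and $L_v(1/2+s_2-s_1+\lambda,\mu_v\overline{\omega}_v)$. All dependence on $s_2+s_1$ sits in the Godement--Jacquet integral over $g$.

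\begin{itemize}
\item Dualizing both Tate factors, as you propose, produces a $\gamma$-factor for $\overline{\mu}_v$, which does not occur in \eqref{c6.6}, and only one of the two required factors.
\item The factor $\gamma(1/2-\lambda-s_2-s_1,\overline{\mu}_v\omega_v,\psi_v)$ is dual to the $\mu_v\overline{\omega}_v$-piece of the $g$-integral, so it cannot come from the $t$-integral.
\item Applying \eqref{eq5.7} and \eqref{twist} to the representation of $W_{h_v}$ fails for the same reason: it gives shifts $\pm\lambda$, not $\pm s_1$.
\item The reassembly step is unexplained. On the left, $h_{2,v}$ is the inducing section and $W_{h_v}$ a Whittaker function; on the right the roles are exchanged. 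Reversing \eqref{f2.3}--\eqref{c2.5} does not account for that exchange.
\end{itemize}

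The missing observation is the identity
$\gamma(1/2-\lambda-s_2+s_1,\overline{\mu}_v\omega_v,\psi_v)\,\gamma(1/2-\lambda-s_2-s_1,\overline{\mu}_v\omega_v,\psi_v)=\gamma(1/2-\lambda-s_2,\pi_{1,v}\otimes\overline{\mu}_v\omega_v,\psi_v)$,
where $\pi_{1,v}=|\cdot|^{s_1}\boxplus|\cdot|^{-s_1}$ is the representation carrying $W_{1,v}$. So the Weyl element has to be moved across $W_{1,v}$, not across $W_{h_v}$. The argument runs as follows.

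\begin{enumerate}
\item Unfold the Jacquet integral \eqref{eq7.2}, which was your first move. The left side becomes $\int_{\overline{G}(F_v)}\overline{S(h_v)(g,-\overline{\lambda})}\,W_{1,v}(wg,s_1)\,h_{2,v}(wg,s_2)\,dg$.
\item Insert $W_{1,v}(wg,s_1)=\int_{F_v^{\times}}j_{\pi_{1,v}}(y)\,W_{1,v}(\diag(y,1)g,s_1)\,d^{\times}y$ from \eqref{eq5.7}.
\item Substitute $g\mapsto\diag(y^{-1},1)g$. The equivariance of $S(h_v)$ and $h_{2,v}$ pulls out exactly $\mu_v\overline{\omega}_v(y)|y|_v^{\lambda+s_2}$.
\item Evaluate $\int j_{\pi_{1,v}}(y)\,\mu_v\overline{\omega}_v(y)\,|y|_v^{\lambda+s_2}\,d^{\times}y$ by \eqref{twist}; this gives the $\gamma$-factor above.
\item Refold the remaining $\int_{\overline{G}(F_v)}\overline{S(h_v)(g)}\,W_{1,v}(g)\,h_{2,v}(wg)\,dg$ over $N(F_v)$. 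Here $\int_{N(F_v)}h_{2,v}(wug,s_2)\,\theta_v(u)\,du$ is the Whittaker function of $h_{2,v}$ relative to $\overline{\theta}_v$, i.e.\ $W_{2,v}^*$, which yields the right-hand period.
\end{enumerate}

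Carry this out on a small open set of $(\lambda,s_1,s_2)$ inside $|\Re(s_1)|+|\Re(s_2)|+|\Re(\lambda)|<1/2$ on which the Bessel integral converges, then conclude by meromorphic continuation. No Godement--Jacquet or Tate decomposition of $\Phi_{1,v},\Phi_{2,v}$ is needed.
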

\begin{proof}
Suppose $\Re(s_2)>|\Re(s_1)|+|\Re(\lambda)|-1/2$. Then 
\begin{equation}\label{eq6.6}
\widetilde{\Psi}_v(\overline{W_{h_v,-\overline{\lambda}}},W_{1,v}(\cdot,s_1),h_{2,v}(\cdot,s_2))=\Psi_v(\overline{W_{h_v,-\overline{\lambda}}},W_{1,v}(\cdot,s_1),h_{2,v}(\cdot,s_2))
\end{equation}
converges absolutely. Substituting the definition 
\begin{align*}
W_{h_v,-\overline{\lambda}}(g_v)=\int_{N(F_v)}S(h_v)(wu_vg_v,-\overline{\lambda})\overline{\theta_v(u_v)}du_v
\end{align*} 
into the right-hand side of \eqref{eq6.6}, $\Psi_v(\overline{W_{h_v,-\overline{\lambda}}},W_{1,v}(\cdot,s_1),h_{2,v}(\cdot,s_2))$ boils down to  
\begin{equation}\label{eq6.7}
\int_{\overline{G}(F_v)}\overline{S(h_v)(g_v,-\overline{\lambda})}W_{1,v}(wg_v,s_1)h_{2,v}(wg_v,s_2)dg_v.
\end{equation}

By \eqref{eq5.7}, we have
\begin{equation}\label{eq6.8}
W_{1,v}(wg_v,s_1)=\int_{F_v^{\times}}j_{|\cdot|^{s_1}\boxplus |\cdot|^{-s_1}}(y_v)W_{1,v}\left(\begin{pmatrix}
y_v\\
&1
\end{pmatrix}g_v,s_1\right)d^{\times}y_v,
\end{equation}
where $j_{|\cdot|^{s_1}\boxplus |\cdot|^{-s_1}}$ is the Bessel function. 

Suppose $|\Re(s_2)|+|\Re(s_1)|+|\Re(\lambda)|<1/2$. Plugging \eqref{eq6.8} into \eqref{eq6.7}, in conjunction with the change of variable $g_v\mapsto \diag(y_v^{-1},1)g_v$, and a swapping of integrals, we can write $\Psi_v(\overline{W_{h_v,-\overline{\lambda}}},W_{1,v}(\cdot,s_1),h_{2,v}(\cdot,s_2))$ as 
\begin{multline}\label{eq6.9}
\int_{F_v^{\times}}j_{|\cdot|^{s_1}\boxplus |\cdot|^{-s_1}}(y_v)\int_{\overline{G}(F_v)}\overline{S(h_v)(\diag(y_v^{-1},1)g_v,-\overline{\lambda})}\\
h_{2,v}(w\diag(y_v^{-1},1)g_v,s_2)W_{1,v}\left(g_v,s_1\right)dg_vd^{\times}y_v.
\end{multline}

By the definition of $S(h_v)(\cdot,-\overline{\lambda})$ (see \eqref{eq2.2}) and the property of $h_{2,v}(\cdot,s_2)$, we can transform \eqref{eq6.9} into 
\begin{equation}\label{eq6.10}
\int_{F_v^{\times}}j(y_v)|y_v|_v^{\lambda+s_2}\mu_v\overline{\omega}_v(y_v)d^{\times}y_v\Psi_v(W_{1,v}(\cdot,s_1),W_{2,v}^*(\cdot,s_2),\overline{S(h_v)(\cdot,-\overline{\lambda})}),
\end{equation}
where $j(\cdot)=j_{|\cdot|^{s_1}\boxplus |\cdot|^{-s_1}}(\cdot)$, and $|\Re(s_2)|+|\Re(s_1)|+|\Re(\lambda)|<1/2$. 

Therefore, in the region $|\Re(s_2)|+|\Re(s_1)|+|\Re(\lambda)|<1/2$, the formula \eqref{c6.6} follows directly from \eqref{twist} and \eqref{eq6.10}. Subsequently, Lemma \ref{lem6.1} is established through meromorphic continuation.
\end{proof}

\begin{lemma}\label{lem6.2}
Let notation be as before. Then 
\begin{multline}\label{c6.12}
\widetilde{\Psi}_v(W_{h_v,\lambda},\overline{W_{3,v}(\cdot,\overline{s_3})},\overline{h_{4,v}(\cdot,\overline{s_4})})=\omega_v(-1)\gamma(1/2+\lambda-s_4+s_3,\mu_v\overline{\omega}_v,\psi_v)\\
\gamma(1/2+\lambda-s_4-s_3,\mu_v\overline{\omega}_v,\psi_v)
\widetilde{\Psi}_v(\overline{W_{3,v}(\cdot,\overline{s_3})},\overline{W_{4,v}^*(\cdot,\overline{s_4})},S(h_v)(\cdot,\lambda)),
\end{multline}
where $W_{4,v}^*(g_v,\overline{s_4}):=W_{4,v}^*(\diag(-1,1)g_v,\overline{s_4})$. 
\end{lemma}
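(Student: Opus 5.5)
The plan is to run the argument of Lemma \ref{lem6.1} on the conjugate side, with $(h_v,\lambda)$ in place of $(\overline{h_v},-\overline{\lambda})$ and $(W_{3,v},h_{4,v})$ in place of $(W_{1,v},h_{2,v})$, everything complex conjugated. We first work in a region of absolute convergence, $\Re(s_4)>|\Re(s_3)|+|\Re(\lambda)|-1/2$, where $\widetilde{\Psi}_v$ equals the local integral \eqref{c2.6}. Substituting
\begin{align*}
W_{h_v,\lambda}(g_v)=\int_{N(F_v)}S(h_v)(wu_vg_v,\lambda)\overline{\theta_v(u_v)}du_v
\end{align*}
and collapsing the $N(F_v)$-integral against the $N(F_v)\backslash\overline{G}(F_v)$-integral, using that $\overline{h_{4,v}}$ is $N$-invariant and that $\overline{W_{3,v}}$ transforms by $\theta_v$, we rewrite the left side of \eqref{c6.12} as
\begin{align*}
\int_{\overline{G}(F_v)}S(h_v)(g_v,\lambda)\overline{W_{3,v}(wg_v,\overline{s_3})}\,\overline{h_{4,v}(wg_v,\overline{s_4})}dg_v.
\end{align*}

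Next, expand $\overline{W_{3,v}(wg_v,\overline{s_3})}$ with the Bessel identity \eqref{eq5.7} for $|\cdot|^{s_3}\boxplus|\cdot|^{-s_3}$, conjugated; the trivial central character gives no prefactor. This produces an integral over $y_v\in F_v^{\times}$ of $\overline{j(y_v)}$ times $\overline{W_{3,v}(\diag(y_v,1)g_v,\overline{s_3})}$. Now restrict to the narrower strip $|\Re(s_4)|+|\Re(s_3)|+|\Re(\lambda)|<1/2$, where the interchange is justified exactly as in the proof of Lemma \ref{lem6.1}. Substitute $g_v\mapsto\diag(y_v^{-1},1)g_v$ and swap the integrals. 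Then use the equivariance of $S(h_v)(\cdot,\lambda)$, namely the character $\mu_v(a)\overline{\mu}_v\omega_v(d)|a/d|^{\lambda+1/2}$, together with that of $\overline{h_{4,v}}(\cdot,\overline{s_4})$ in $\mathbf{1}\boxplus\overline{\omega}_v$ at parameter $s_4$, noting $w\diag(y^{-1},1)=\diag(1,y^{-1})w$.

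These steps extract a scalar factor
\begin{align*}
\int_{F_v^{\times}}\overline{j(y_v)}\,\mu_v\overline{\omega}_v(y_v)|y_v|_v^{-\lambda+s_4}d^{\times}y_v,
\end{align*}
up to signs. What remains is $\overline{h_{4,v}(wg_v,\overline{s_4})}$ paired with $\overline{W_{3,v}}$ and $S(h_v)$. Re-folding along $N(F_v)$ in the opposite direction turns $\overline{h_{4,v}}(w\cdot)$ into the Whittaker function $\overline{W_{4,v}^*(\cdot,\overline{s_4})}$. The character flips from $\theta_v$ to $\overline{\theta}_v$, which is why the $\diag(-1,1)$ twist appears, and the integral becomes $\Psi_v(\overline{W_{3,v}},\overline{W_{4,v}^*},S(h_v)(\cdot,\lambda))$.

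Evaluating the scalar uses \eqref{twist} with $\chi_v=\mu_v\overline{\omega}_v$. The Bessel function of $|\cdot|^{s_3}\boxplus|\cdot|^{-s_3}$ factors its gamma factor into the two $\mathrm{GL}_1$ gamma factors at $\pm s_3$, and conjugation of $j$ corresponds to $\overline{s_3}$, which is the source of the two factors $\gamma(1/2+\lambda-s_4\pm s_3,\mu_v\overline{\omega}_v,\psi_v)$. The sign $\chi_v(-1)=\mu_v(-1)\overline{\omega}_v(-1)$, combined with the sign from the $\diag(-1,1)$ change of variables and the central character, should collapse to $\omega_v(-1)$. The identity is then proved on the open strip, and meromorphic continuation in $(\lambda,\mathbf{s})$ extends \eqref{c6.12} everywhere.

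The main obstacle is bookkeeping rather than analysis. One must check that the conjugation of the Bessel kernel and the exchange of $\theta_v$ for $\overline{\theta}_v$ yield exactly the arguments $1/2+\lambda-s_4\pm s_3$ with character $\mu_v\overline{\omega}_v$, not their contragredients, and that the accumulated signs reduce to $\omega_v(-1)$. I would verify this against Lemma \ref{lem6.1} by formally applying $\lambda\mapsto-\overline{\lambda}$ and conjugation, and would check the convergence window for \eqref{twist}, namely $-1/2<\Re(s_4-\lambda)\pm\Re(s_3)<0$, before continuing.
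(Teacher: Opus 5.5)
Your route is the one the paper takes. Its proof of Lemma \ref{lem6.2} just reruns the argument of Lemma \ref{lem6.1} in the strip $|\Re(s_4)|+|\Re(s_3)|+|\Re(\lambda)|<1/2$ and then continues meromorphically. Your extra check that the Bessel--Tate integral converges on the nonempty subregion $-1/2+|\Re(s_3)|<\Re(s_4-\lambda)<-|\Re(s_3)|$ is a point the paper passes over.

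However, the bookkeeping you flagged as the main obstacle is written the wrong way round in two places, and as it stands it would produce the contragredient factors.

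\emph{The Bessel kernel.} $W_{3,v}(\cdot,\overline{s_3})$ lies in $|\cdot|^{\overline{s_3}}\boxplus|\cdot|^{-\overline{s_3}}$, not in $|\cdot|^{s_3}\boxplus|\cdot|^{-s_3}$. Complex conjugation swaps $\psi_v$ and $\overline{\psi}_v$, and this does not change $j$ because the central character is trivial. Hence $\overline{j_{|\cdot|^{\overline{s_3}}\boxplus|\cdot|^{-\overline{s_3}}}}=j_{|\cdot|^{s_3}\boxplus|\cdot|^{-s_3}}$, which is why $\pm s_3$ rather than $\pm\overline{s_3}$ appears.

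\emph{The character in the scalar.} Under $g_v\mapsto\diag(y_v^{-1},1)g_v$, the section $S(h_v)(\cdot,\lambda)$ contributes $\mu_v(y_v^{-1})|y_v|_v^{-\lambda-1/2}$. The factor $\overline{h_{4,v}(\cdot,\overline{s_4})}$ contributes $\overline{\omega_v(y_v^{-1})}\,|y_v|_v^{s_4+1/2}$. So the scalar is
\begin{align*}
\int_{F_v^{\times}}j_{|\cdot|^{s_3}\boxplus|\cdot|^{-s_3}}(y_v)\,\overline{\mu}_v\omega_v(y_v)|y_v|_v^{s_4-\lambda}d^{\times}y_v,
\end{align*}
with $\overline{\mu}_v\omega_v$, not $\mu_v\overline{\omega}_v$. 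Apply \eqref{twist} with $\chi_v=\overline{\mu}_v\omega_v$. This gives $\chi_v(-1)\gamma(1/2+\lambda-s_4,(|\cdot|^{s_3}\boxplus|\cdot|^{-s_3})\otimes\mu_v\overline{\omega}_v,\psi_v)$, which factors into exactly the two gamma factors in \eqref{c6.12}. With your choice $\chi_v=\mu_v\overline{\omega}_v$, \eqref{twist} would return gamma factors of $\overline{\mu}_v\omega_v$ instead. A smaller slip: $\overline{W_{3,v}}$ transforms under $N(F_v)$ by $\overline{\theta}_v$, not $\theta_v$, and that is what absorbs the $\overline{\theta_v(u_v)}$ from $W_{h_v,\lambda}$ when you unfold.

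\emph{The sign.} Do not force it to $\omega_v(-1)$. With \eqref{eq5.7} and \eqref{twist} as normalized in the paper, the twist contributes $\chi_v(-1)=\mu_v(-1)\omega_v(-1)$. The refolding contributes another $\omega_v(-1)$, because
\begin{align*}
\int_{N(F_v)}h_{4,v}(wu_vg_v,\overline{s_4})\theta_v(u_v)du_v=\omega_v(-1)W_{4,v}(\diag(-1,1)g_v,\overline{s_4}).
\end{align*}
So the constant you will actually obtain is $\mu_v(-1)$, and the same happens in Lemma \ref{lem6.1}. This is harmless for the paper. Lemma \ref{lem6.3} uses only the product of Lemmas \ref{lem6.1} and \ref{lem6.2}, where the sign squares to $1$, and elsewhere these identities enter only through upper bounds.
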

\begin{proof}
Analogous to the proof of Lemma \ref{lem6.1}, we obtain \eqref{c6.12} in the region $|\Re(s_4)|+|\Re(s_3)|+|\Re(\lambda)|<1/2$. Hence, \eqref{c6.12} follows from meromorphic continuation.
\end{proof}

\subsubsection{Executing the functional equation}
Let $v\mid\infty$ and $(\lambda,\mathbf{s})\in \mathbb{C}^5$. We define the auxiliary functions
\begin{multline}\label{f6.13}
\mathbf{L}_v^{\dag}(\lambda,\mu_v;\mathbf{s}):=\frac{L_v(1/2-\lambda-s_1+s_2,\overline{\mu}_v)L_v(1/2-\lambda+s_1+s_2,\mu_v)}{L_v(1/2+\lambda+s_1-s_2,\overline{\mu}_v)L_v(1/2+\lambda-s_1-s_2,\mu_v)}\\
\cdot\varepsilon(\mu_v\overline{\omega}_v,\psi_v)^2\cdot \frac{L_v(1/2-\lambda+s_4-s_3,\overline{\mu}_v\omega_v)}{L_v(1/2+\lambda-s_4+s_3,\mu_v\overline{\omega}_v)}\cdot \frac{L_v(1/2-\lambda+s_4+s_3,\overline{\mu}_v\omega_v)}{L_v(1/2+\lambda-s_4-s_3,\mu_v\overline{\omega}_v)},
\end{multline}
and 
\begin{multline}\label{c6.14}
\mathbf{L}_v^{\ddagger}(\lambda,\mu_v;\mathbf{s}):=\frac{L_v(1/2+\lambda+s_2-s_1,\mu_v\overline{\omega}_v)}{L_v(1/2-\lambda-s_2+s_1,\overline{\mu}_v\omega_v)}\cdot \frac{L_v(1/2+\lambda+s_2+s_1,\mu_v\overline{\omega}_v)}{L_v(1/2-\lambda-s_2-s_1,\overline{\mu}_v\omega_v)}\\
\cdot\varepsilon(\overline{\mu}_v\omega_v,\psi_v)^2\cdot \frac{L_v(1/2+\lambda+s_3+s_4,\mu_v)L_v(1/2+\lambda-s_3+s_4,\mu_v)}{L_v(1/2-\lambda+s_3-s_4,\overline{\mu}_v)L_v(1/2-\lambda-s_3-s_4,\overline{\mu}_v)},
\end{multline}
where $\varepsilon(\mu_v\overline{\omega}_v,\psi_v)$ and $\varepsilon(\overline{\mu}_v\omega_v,\psi_v)$ are the local $\varepsilon$-factors. 

By Stirling's formula, we have, for $\mathbf{s}\in \mathbf{B}_{\varepsilon}^4$, that 
\begin{equation}\label{6.17}
\begin{cases}
\mathbf{L}_v^{\dag}(\lambda,\mu_v;\mathbf{s})\ll C_v(\mu_v)^{-\Re{\lambda}+100\varepsilon}C_v(\mu_v\overline{\omega}_v)^{-\Re{\lambda}+100\varepsilon}\\
\mathbf{L}_v^{\ddag}(\lambda,\mu_v;\mathbf{s})\ll C_v(\mu_v)^{\Re{\lambda}+100\varepsilon}C_v(\mu_v\overline{\omega}_v)^{\Re{\lambda}+100\varepsilon}.
\end{cases}
\end{equation}

\begin{lemma}\label{lem6.3}
 Let $v\mid\infty$. Let $\widetilde{\Psi}_v(\lambda,h_v;\mathbf{s})$ 
be the meromorphic function defined by Definition \ref{defn7.2}.  
\begin{enumerate}
\item[(A).] The function $\widetilde{\Psi}_v(\lambda,h_v;\mathbf{s})$ is equal to the product of $\mathbf{L}_v^{\dag}(\lambda,\mu_v;\mathbf{s})$ and
\begin{align*}
\widetilde{\Psi}_v(W_{1,v}(\cdot,s_1),\widehat{W}_{2,v}^*(\cdot,s_2),\overline{S(h_v)(\cdot,\overline{\lambda})})\widetilde{\Psi}_v(\overline{W_{3,v}(\cdot,\overline{s_3})},\overline{W_{4,v}^*(\cdot,\overline{s_4})},S(h_v)(\cdot,\lambda)).
\end{align*}
\item[(B).] The function $\widetilde{\Psi}_v(\lambda,h_v;\mathbf{s})$ is equal to the product of $\mathbf{L}_v^{\ddagger}(\lambda,\mu_v;\mathbf{s})$ and
\begin{align*}
\widetilde{\Psi}_v(W_{1,v}(\cdot,s_1),W_{2,v}^*(\cdot,s_2),\overline{S(h_v)(\cdot,-\overline{\lambda})})\widetilde{\Psi}_v(\overline{W_{3,v}(\cdot,\overline{s_3})},\overline{\widehat{W}_{4,v}^*(\cdot,\overline{s_4})},\overline{S(h_v)(\cdot,-\overline{\lambda})}),
\end{align*}
\end{enumerate}
Here $\widehat{W}_{2,v}^*(\cdot,s_2)$ and $\widehat{W}_{4,v}^*(\cdot,\overline{s_4})$ are the local Whittaker functions defined via \eqref{equ2.3} in \textsection \ref{sec2.1.3}.
\end{lemma}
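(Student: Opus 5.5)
The plan is to combine the two section-switching identities, Lemma \ref{lem6.1} and Lemma \ref{lem6.2}, with the local functional equation of the Godement section. By Definition \ref{defn7.2}, $\widetilde{\Psi}_v(\lambda,h_v;\mathbf{s})$ is a product of two local Rankin--Selberg periods. I would transform each factor separately and then collect the resulting $\gamma$-factors. For part (A), Lemma \ref{lem6.2} applies directly to the second factor. It replaces $\widetilde{\Psi}_v(W_{h_v,\lambda},\overline{W_{3,v}},\overline{h_{4,v}})$ by
\begin{align*}
\omega_v(-1)\gamma(1/2+\lambda-s_4+s_3,\mu_v\overline{\omega}_v,\psi_v)\gamma(1/2+\lambda-s_4-s_3,\mu_v\overline{\omega}_v,\psi_v)\,\widetilde{\Psi}_v(\overline{W_{3,v}},\overline{W_{4,v}^*},S(h_v)(\cdot,\lambda)).
\end{align*}
For the first factor I would apply Lemma \ref{lem6.1}, which moves the section slot onto $\overline{S(h_v)(\cdot,-\overline{\lambda})}$ and puts $W_{2,v}^*(\cdot,s_2)$ into a Whittaker slot.

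Part (A) requires $\widehat{W}_{2,v}^*$ and $\overline{S(h_v)(\cdot,\overline{\lambda})}$ instead of these. The next step is therefore to apply the Godement--Jacquet/Tate functional equation to $h_{2,v}$, replacing $\Phi_{2,v}$ by $\widehat{\Phi_{2,v}}$ and $s_2$ by $1-s_2$. This step needs care. Since $W_{2,v}$ is a Whittaker function of the induced representation $|\cdot|^{s_2}\boxplus\omega_v|\cdot|^{-s_2}$, the Whittaker function attached to $\widehat{h}_{2,v}$ (see \eqref{equ2.3}) equals $W_{2,v}$ up to the factor given by the ratio of Tate $\gamma$-factors, $\gamma(2s_2,\overline{\omega}_v,\psi_v)$ or its inverse. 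I would also use the intertwining operator $S(h_v)(\cdot,-\overline{\lambda})\mapsto S(h_v)(\cdot,\overline{\lambda})$ on $\Ind(\mu_v\otimes\overline{\mu}_v\omega_v)$. With the unitary normalization fixed by $S$, this map is given up to a normalized factor $\varepsilon(\mu_v\overline{\omega}_v,\psi_v)$ times $L$-ratios. Inserting these two transformations produces $\varepsilon(\mu_v\overline{\omega}_v,\psi_v)^2$ and the remaining $L$-ratios.

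Expanding every $\gamma(s,\xi,\psi_v)=\varepsilon(s,\xi,\psi_v)L_v(1-s,\xi^{-1})/L_v(s,\xi)$ gives the four $L$-ratios in \eqref{f6.13}. The $s$-dependence of the archimedean $\varepsilon$-factors is trivial because $\psi_v$ is standard. The central-character signs $\omega_v(-1)^2=1$ cancel. The residual $\mu_v,\omega_v$ characters in the $\varepsilon$'s combine to $\varepsilon(\mu_v\overline{\omega}_v,\psi_v)^2$, using $\varepsilon(\xi)\varepsilon(\xi^{-1})=\xi(-1)$.

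Part (B) is symmetric. I would apply Lemma \ref{lem6.1} to the first factor unchanged, apply Lemma \ref{lem6.2} to the second, and then apply the functional equation to $h_{4,v}$ to obtain $\widehat{W}_{4,v}^*$. The intertwining step moves $S(h_v)(\cdot,\lambda)$ to $\overline{S(h_v)(\cdot,-\overline{\lambda})}$, and the bookkeeping gives \eqref{c6.14}. All identities would first be verified in the region $|\Re(s_i)|+|\Re(s_j)|+|\Re\lambda|<1/2$, where the integrals converge absolutely, and then extended by meromorphic continuation.

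The main obstacle is the exact matching of factors: checking that the intertwining normalization and the $\widehat{h}$-versus-$h$ Whittaker relation together produce precisely the stated numerators and denominators, rather than extra Tate factors $L_v(2s_2,\cdot)$ or $L_v(1\pm2\lambda,\cdot)$. I would first check this on spherical data at a real place, where everything is an explicit Gamma ratio, and then transport the identity to general $h_v$ using the $K_v$-equivariance of all the maps involved.
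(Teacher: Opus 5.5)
The first step of your plan matches the paper. Applying Lemma~\ref{lem6.1} to the first period and Lemma~\ref{lem6.2} to the second gives the product of the two switched periods times $\mathbf{L}_v^*(\lambda,\mu_v;\mathbf{s})$, the product of four $\gamma$-factors. The gap is in the next step, where you try to reach $\widehat{W}_{2,v}^*$ and the new section by two independent one-slot substitutions. Neither substitution is legitimate.

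\emph{The Whittaker-function substitution.} $\widehat{W}_{2,v}$ is the Whittaker function of $\widehat{h}_{2,v}$, which lies in $|\cdot|^{s}\boxplus\overline{\omega}_v|\cdot|^{-s}$ and so has central character $\overline{\omega}_v$. It is therefore not a scalar multiple of $W_{2,v}$, whose central character is $\omega_v$. The two are related by the contragredient map $g\mapsto w\,{}^tg^{-1}$, which is a change of model, not multiplication by a Tate $\gamma$-factor. If you replace $W_{2,v}^*$ by $\widehat{W}_{2,v}^*$ but keep $W_{1,v}$ and $\overline{S(h_v)(\cdot,-\overline{\lambda})}$, the integrand has central character $\overline{\omega}_v^{2}$. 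It then does not descend to $N(F_v)\backslash\overline{G}(F_v)$ unless $\omega_v^2=\mathbf{1}$.

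\emph{The section substitution.} The map $S(h_v)(\cdot,-\overline{\lambda})\mapsto S(h_v)(\cdot,\overline{\lambda})$ is not $G(F_v)$-equivariant. The standard intertwining operator sends $\Ind(\mu_v|\cdot|^{-\overline{\lambda}}\otimes\overline{\mu}_v\omega_v|\cdot|^{\overline{\lambda}})$ to the induced representation with the two characters swapped. It also acts on different $K_v$-types by different scalars. So there is no single factor $\varepsilon(\mu_v\overline{\omega}_v,\psi_v)\times(L\text{-ratios})$ valid for all $h_v$. Checking spherical data and transporting by $K_v$-equivariance cannot fix this.

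\emph{Why the bookkeeping cannot close.} The $L$-factors from Lemma~\ref{lem6.1} involve the linear forms $\lambda+s_2\pm s_1$, while the first line of \eqref{f6.13} involves $\lambda-s_2\pm s_1$. The correction you must produce is therefore $\mathbf{L}_v^{\dagger}/\mathbf{L}_v^{*}$. The paper's proof identifies this as $\gamma(1/2+\lambda,(|\cdot|^{s_1}\boxplus|\cdot|^{-s_1})\otimes(\mu_v|\cdot|^{-s_2}\boxplus\mu_v\overline{\omega}_v|\cdot|^{s_2}),\psi_v)$. That is a product of Tate $\gamma$-factors at $1/2+\lambda\pm s_1\pm s_2$, with polar hyperplanes mixing $\lambda,s_1,s_2$. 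It cannot equal a factor depending only on $s_2$ (your Tate factor) times one depending only on $\lambda$ (your intertwining factor). So the obstacle you flagged is structural, not a normalization issue.

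\emph{The missing idea.} You need the $\mathrm{GL}_2\times\mathrm{GL}_2$ Rankin--Selberg local functional equation, applied to the whole period $\widetilde{\Psi}_v(W_{1,v},W_{2,v}^*,\overline{S(h_v)(\cdot,-\overline{\lambda})})$ at once. It moves all three slots simultaneously:
\begin{itemize}
\item the Whittaker functions go to their contragredients $\widehat{W}_{1,v}$ and $\widehat{W}_{2,v}^*$;
\item the section goes to its dual, written $S(h_v)(\cdot,\lambda)$ in the paper's proof, which is the choice compatible with central characters;
\item it produces exactly the $\gamma$-factor above, so $\mathbf{L}_v^{\dagger}=\mathbf{L}_v^{*}\cdot\gamma(\cdots)$.
\end{itemize}
One then uses $\widehat{W}_{1,v}=W_{1,v}$, which holds because $\chi_{1,v}=\omega_{1,v}=\mathbf{1}$ and the Gaussian $\Phi_{1,v}$ is self-dual.

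Part (B) has the same defect and the same fix. Apply the functional equation to the second period instead, which gives the factor $\gamma(1/2-\lambda,(|\cdot|^{s_3}\boxplus|\cdot|^{-s_3})\otimes(\overline{\mu}_v|\cdot|^{-s_4}\boxplus\overline{\mu}_v\omega_v|\cdot|^{s_4}),\psi_v)$, and use $\widehat{W}_{3,v}=W_{3,v}$.
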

\begin{proof}
By Lemma \ref{lem6.1} and Lemma \ref{lem6.2}, we can write the meromorphic function $\widetilde{\Psi}_v(\lambda,h_v;\mathbf{s})$ as the product of 
\begin{align*}
\widetilde{\Psi}_v(W_{1,v}(\cdot,s_1),W_{2,v}^*(\cdot,s_2),\overline{S(h_v)(\cdot,-\overline{\lambda})})\widetilde{\Psi}_v(\overline{W_{3,v}(\cdot,\overline{s_3})},\overline{W_{4,v}^*(\cdot,\overline{s_4})},S(h_v)(\cdot,\lambda))
\end{align*} 
and the meromorphic function 
\begin{align*}
\mathbf{L}_v^*(\lambda,\mu_v;\mathbf{s}):=&\ \gamma(1/2-\lambda-s_2+s_1,\overline{\mu}_v\omega_v,\psi_v)
\gamma(1/2-\lambda-s_2-s_1,\overline{\mu}_v\omega_v,\psi_v)\\
&\ \gamma(1/2+\lambda-s_4+s_3,\mu_v\overline{\omega}_v,\psi_v)
\gamma(1/2+\lambda-s_4-s_3,\mu_v\overline{\omega}_v,\psi_v).
\end{align*}

Utilizing the definition of gamma functions, we have 
\begin{multline*}
\mathbf{L}_v^*(\lambda,\mu_v;\mathbf{s})=\frac{L_v(1/2+\lambda+s_2-s_1,\mu_v\overline{\omega}_v)}{L_v(1/2-\lambda-s_2+s_1,\overline{\mu}_v\omega_v)}\cdot \frac{L_v(1/2+\lambda+s_2+s_1,\mu_v\overline{\omega}_v)}{L_v(1/2-\lambda-s_2-s_1,\overline{\mu}_v\omega_v)}\\
\cdot \frac{L_v(1/2-\lambda+s_4-s_3,\overline{\mu}_v\omega_v)}{L_v(1/2+\lambda-s_4+s_3,\mu_v\overline{\omega}_v)}\cdot \frac{L_v(1/2-\lambda+s_4+s_3,\overline{\mu}_v\omega_v)}{L_v(1/2+\lambda-s_4-s_3,\mu_v\overline{\omega}_v)}.
\end{multline*}

Making use of  the local functional equation, we derive 
\begin{multline*}
\widetilde{\Psi}_v(W_{1,v}(\cdot,s_1),W_{2,v}^*(\cdot,s_2),\overline{S(h_v)(\cdot,-\overline{\lambda})})=\widetilde{\Psi}_v(\widehat{W}_{1,v}(\cdot,s_1),\widehat{W}_{2,v}^*(\cdot,s_2),S(h_v)(\cdot,\lambda))\\
\gamma(1/2+\lambda,(|\cdot|^{s_1}\boxplus |\cdot|^{-s_1})\otimes (\mu_v|\cdot|_v^{-s_2}\boxplus \mu_v\overline{\omega}_v|\cdot|^{s_2}),\psi_v),
\end{multline*}
where $\widehat{W}_{1,v}(g_v,s_1)$ is the local Whittaker function defined via \eqref{equ2.3}. Since $\chi_{1,v}=\omega_{1,v}=\mathbf{1}$ and $\widehat{\Phi}_{1,v}=\Phi_{1,v}$, then $\widehat{W}_{1,v}(g_v,s_1)=W_{1,v}(g_v,s_1)$, $\forall \ g_v\in G(F_v)$.

From a straightforward calculation, $\mathbf{L}_v^{\dag}(\lambda,\mu_v;\mathbf{s})$ is equal to  
\begin{align*}
\mathbf{L}_v^*(\lambda,\mu_v;\mathbf{s})\cdot \gamma(1/2+\lambda,(|\cdot|^{s_1}\boxplus |\cdot|^{-s_1})\otimes (\mu_v|\cdot|_v^{-s_2}\boxplus \mu_v\overline{\omega}_v|\cdot|^{s_2}),\psi_v).
\end{align*}

Therefore, the formula in part (A) follows directly from  the above relations. 

Likewise, we have the local functional equation 
\begin{multline*}
\widetilde{\Psi}_v(\overline{W_{3,v}(\cdot,\overline{s_3})},\overline{W_{4,v}^*(\cdot,\overline{s_4})},S(h_v)(\cdot,\lambda))=\widetilde{\Psi}_v(\overline{\widehat{W}_{3,v}(\cdot,\overline{s_3})},\overline{\widehat{W}_{4,v}^*(\cdot,\overline{s_4})},\overline{S(h_v)(\cdot,-\overline{\lambda})})\\
\gamma(1/2-\lambda,(|\cdot|^{s_3}\boxplus |\cdot|^{-s_3})\otimes (\overline{\mu}_v|\cdot|^{-s_4}\boxplus \overline{\mu}_v\omega_v|\cdot|_v^{s_4}),\psi_v),
\end{multline*}
and the fact that $\mathbf{L}_v^{\ddagger}(\lambda,\mu_v;\mathbf{s})$ is equal to  
\begin{align*}
\mathbf{L}_v^*(\lambda,\mu_v;\mathbf{s})\gamma(1/2-\lambda,(|\cdot|^{s_3}\boxplus |\cdot|^{-s_3})\otimes (\overline{\mu}_v|\cdot|^{-s_4}\boxplus \overline{\mu}_v\omega_v|\cdot|_v^{s_4}),\psi_v).
\end{align*}

Consequently, the formula in part (B) follows directly from these relations,  
along with the fact that $\widehat{W}_{3,v}(\cdot,\overline{s_3})=W_{3,v}(\cdot,\overline{s_3})$.
\end{proof}

\subsubsection{An upper bound of $\boldsymbol{e}_v(\lambda,\mu;\mathbf{s})$}\label{sec7.2.4}
\begin{lemma}\label{lem7.6}
 Let $v\mid\infty$. Let $\mu_v\in \{\mathbf{1}, \omega_v\}$ and $\mathbf{s}\in \mathbf{B}_{\varepsilon}^4$, which is defined by \eqref{eq4.12}. Let $\lambda\in \{s_2+s_1-1/2,1/2-s_2-s_1,s_2-s_1-1/2,1/2+s_1-s_2,s_4-s_3-1/2,1/2+s_3-s_4,s_4+s_3-1/2,1/2-s_3-s_4\}$. Then 
\begin{equation}\label{f7.15}
\boldsymbol{e}_v(\lambda,\mu;\mathbf{s})\ll |\mathbf{L}_v(\lambda,\mu_v;\mathbf{s})|^{-1}C_v(\omega_v)^{-1/2}\mathbf{C}_v^{-1+1000\varepsilon},
\end{equation}	
where the implied constant depends only on $F_v$, $\varepsilon$, and the smooth function $\alpha_v$ (see \textsection\ref{sec4.4}). 
\end{lemma}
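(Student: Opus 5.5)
The plan is to bound $\boldsymbol{e}_v(\lambda,\mu;\mathbf{s})$ directly from \eqref{eq6.1}. At the residue points $\lambda$, the original local periods in Definition~\ref{defn7.2} lie outside their range of absolute convergence. So the first step is to replace them, via Lemma~\ref{lem6.3}, by switched periods that do converge. Since $\mathbf{s}\in\mathbf{B}_{\varepsilon}^4$, each admissible $\lambda$ has $\Re(\lambda)=\pm\frac12+O(\varepsilon)$.
\begin{itemize}
\item When $\Re(\lambda)\approx \frac12$ (i.e.\ $\lambda\in\{1/2-s_2-s_1,\,1/2+s_1-s_2,\,1/2+s_3-s_4,\,1/2-s_3-s_4\}$), I use part~(A). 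The Stirling bound \eqref{6.17} then gives $\mathbf{L}_v^{\dag}\ll C_v(\mu_v)^{-1/2+O(\varepsilon)}C_v(\mu_v\overline{\omega}_v)^{-1/2+O(\varepsilon)}$.
\item When $\Re(\lambda)\approx-\frac12$, I use part~(B). Here \eqref{6.17} gives the same bound for $\mathbf{L}_v^{\ddagger}$.
\end{itemize}
Since $\mu_v\in\{\mathbf{1},\omega_v\}$, one of $C_v(\mu_v)$, $C_v(\mu_v\overline{\omega}_v)$ equals $C_v(\omega_v)$ and the other is $\asymp 1$. This produces the factor $C_v(\omega_v)^{-1/2}$ in \eqref{f7.15}, up to $C_v(\omega_v)^{O(\varepsilon)}$. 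I expect to absorb that loss into $\mathbf{C}_v^{1000\varepsilon}$ only on the range $C_v(\omega_v)\ll\mathbf{C}_v^{O(1)}$, presumably the only one used later. The factor $|\mathbf{L}_v|^{-1}$ appears in \eqref{f7.15} simply because $\boldsymbol{e}_v$ is $\widetilde{\Psi}_v/\mathbf{L}_v$, so it remains to bound the switched periods.

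Next I show that the two switched local periods appearing in (A) (resp.\ (B)) are given by absolutely convergent integrals throughout a neighbourhood of the relevant $\lambda$ and $\mathbf{s}\in\mathbf{B}_\varepsilon^4$. The point of the switch is that the induced section $S(h_v)(\cdot,\pm\overline{\lambda})$ now carries the parameter with the favourable sign. The Whittaker functions $W_{1,v},W_{3,v}$ come from the Gaussian $\Phi_v^*$ and decay exponentially as in \eqref{5.11}--\eqref{5.13}. The functions $\widehat W_{2,v}^*$ and $W_{4,v}^*$ come from the localized $\Phi_v$ in \eqref{eq6.28}, and I bound them through the explicit formula \eqref{c2.5}. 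In Iwasawa coordinates $\diag(a,1)k$, each period becomes an integral over $a$ of a product of two Whittaker functions times $|a|^{\pm\lambda-1/2}$. Near $a=0$ the Whittaker functions behave like $|a|^{1/2-O(\varepsilon)}$, possibly with a logarithm, so the integral converges.

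For the size estimate, $\Phi_v$ has amplitude $\mathbf{C}_v^{1/2}$ and its $t_1$-support has measure $\asymp\mathbf{C}_v^{-1}$. Hence $W_{4,v}^*$ (and $\widehat W_{2,v}^*$, after Fourier transform) is bounded by $\mathbf{C}_v^{-1/2}$ on the effective range $|a|\ll\mathbf{C}_v^{O(\varepsilon)}$ and decays rapidly beyond it. This should give $\mathbf{C}_v^{-1/2+O(\varepsilon)}$ for each of the two periods, uniformly in the $K_v$-type of $h_v$. For large $K_v$-types, integration by parts in $k_v$ against the smooth $\alpha_v$ cutoffs gives rapid decay. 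That makes the sum over $h_v\in\mathfrak B(\mu_v,\overline\mu_v\omega_v)$ in \eqref{eq6.1} cost only $\mathbf{C}_v^{O(\varepsilon)}$. Multiplying the two periods then gives the required $\mathbf{C}_v^{-1+1000\varepsilon}$.

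I expect the main obstacle to be the uniformity of the second step. The bound for $\widehat W_{2,v}^*$ must hold uniformly in the twist by $\omega_v$, which enters through $\omega_v(t_{2,v})$ in $\Phi_v$. I also need that the $\alpha_v$-cutoff does not destroy the $\mathbf{C}_v^{-1/2}$ saving after the Fourier transform. My first attempt is a direct stationary-phase/integration-by-parts analysis of \eqref{c2.5} in the variable $b_v$. If that is too lossy, I fall back on the Cauchy--Schwarz bound used for \eqref{5.18}.
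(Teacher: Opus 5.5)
The reduction you set up is the paper's. You switch sections by Lemma~\ref{lem6.3}, using part~(A) when $\Re(\lambda)\approx\frac12$ and part~(B) when $\Re(\lambda)\approx-\frac12$. You take $C_v(\omega_v)^{-1/2}$ from \eqref{6.17} and bound the switched Whittaker functions through \eqref{c2.5}. Your caveat about absorbing $C_v(\omega_v)^{O(\varepsilon)}$ is fine; the paper does this tacitly.

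\textbf{The gap.} The sum over $h_v\in\mathfrak{B}(\mu_v,\overline{\mu}_v\omega_v)$ in \eqref{eq6.1} cannot be cut down to $\mathbf{C}_v^{O(\varepsilon)}$ $K_v$-types by integration by parts in $k_v$.

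At a real place, the cutoff $\alpha_v(\mathbf{C}_v|t_{1,v}|_v)$ in \eqref{eq6.28} gives $h_{4,v}(k,s_4)\approx\mathbf{C}_v^{1/2}\omega_v(k_{22})\alpha_v(\mathbf{C}_v|k_{21}/k_{22}|_v)$. This is an $L^2$-normalized bump of width $\mathbf{C}_v^{-1}$ on $K_v$. Its $K_v$-Fourier coefficients are therefore of size $\asymp\mathbf{C}_v^{-1/2}$ for all $|\beta|\le c\,\mathbf{C}_v$. Each $k$-derivative of the cutoff costs a factor $\mathbf{C}_v$, so integrating by parts gives nothing until $|\beta|\gg\mathbf{C}_v^{1+\varepsilon}$. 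Complex places behave analogously.

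About $\mathbf{C}_v$ terms of \eqref{eq6.1} therefore remain. Your uniform bound $\mathbf{C}_v^{-1+O(\varepsilon)}$ per term, summed with the triangle inequality, gives only $\mathbf{C}_v^{O(\varepsilon)}$. The whole saving in \eqref{f7.15} is lost.

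The same discarded information appears in your pointwise claim $W^*_{4,v}\ll\mathbf{C}_v^{-1/2}$. The bound \eqref{eq7.18} carries a factor $\min\{|k_{21}|_v^{-1},|k_{22}|_v^{-1}\mathbf{C}_v\}$, which reaches $\mathbf{C}_v$ near $B(F_v)\cap K_v$. The size $\mathbf{C}_v^{-1/2}$ holds only on average over $K_v$.

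A termwise approach could be saved only by per-type bounds that decay in $\beta$. That would require a genuine analysis of the local trilinear forms at the non-tempered points $\Re(\lambda)\approx\pm\frac12$.

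\textbf{What the paper does instead.} It never estimates $K_v$-types one at a time. It takes $h_{\beta,v}(k)=\chi_\beta(k)$ and uses that $W_{1,v}$ and $W_{3,v}$ are spherical. Both switched periods in (A) then unfold to the $\beta$-th $K_v$-Fourier coefficients of functions of $k$. Parseval then carries out the sum over $\beta$ exactly, giving \eqref{c7.18}:
\[
\boldsymbol{e}_v(\lambda,\mu;\mathbf{s})=\frac{\mathbf{L}_v^{\dag}(\lambda,\mu_v;\mathbf{s})}{\mathbf{L}_v(\lambda,\mu_v;\mathbf{s})}\int_{K_v}A(k)B(k)\,dk .
\]
Here
\[
A(k)=\int_{F_v^{\times}}W_{1,v}(a(y),s_1)\widehat{W}_{2,v}^*(a(y)k,s_2)\overline{\mu}_v(y)|y|_v^{\lambda-\frac12}d^{\times}y,
\]
\[
B(k)=\int_{F_v^{\times}}\overline{W_{3,v}(a(y),\overline{s_3})W_{4,v}^*(a(y)k,\overline{s_4})}\mu_v(y)|y|_v^{\lambda-\frac12}d^{\times}y.
\]
Only then does the paper insert \eqref{eq7.18} and \eqref{eq7.19}, which keep track of the entries of $k$, and integrate over $K_v$.

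Whether this integral is $\ll\mathbf{C}_v^{-1+\varepsilon}$ depends on where on $K_v$ the two profiles are large. If both were of size $\mathbf{C}_v^{1/2}$ on a common $\mathbf{C}_v^{-1}$-neighbourhood, it would be $\asymp 1$. To repair your argument, replace the integration-by-parts step with this Parseval identity. Then follow the $k$-dependence of $\widehat{W}^*_{2,v}$ and $W^*_{4,v}$ rather than bounding them uniformly.
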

\begin{proof}
Suppose $|\lambda-1/2|\leq 20\varepsilon$. Let $\{\chi_{\beta}\}_{\beta}$ be the set of characters of irreducible representations of $K_v$. Here the index $\beta\in \mathbb{Z}$ if $F_v\simeq\mathbb{R}$, and $\beta\in \mathbb{Z}^3$ (depending on $\omega_v$) if $F_v\simeq\mathbb{C}$. Correspondingly we can take $\mathfrak{B}(\mu_v,\overline{\mu}_v\omega_v)=\{h_{\beta,v}\}_{\beta}$, where $h_{\beta,v}\in \mathfrak{B}(\mu_v,\overline{\mu}_v\omega_v)$ is defined via $h_{\beta,v}(k_v)=\chi_{\beta}(k_v)$, for all $k_v\in K_v$. An explicit construction of such a basis can be found in \cite[p. 6-7]{JZ87}. With such a basis, \eqref{eq6.1} becomes 
\begin{equation}\label{f7.16}
\boldsymbol{e}_v(\lambda,\mu;\mathbf{s})=\sum_{\beta}\frac{\widetilde{\Psi}_v(\lambda,h_{\beta,v};\mathbf{s})}{\mathbf{L}_v(\lambda,\mu_v;\mathbf{s})}.	
\end{equation}

By the part (A) of Lemma \ref{lem6.3}, we have 
\begin{multline}\label{c7.16}
\widetilde{\Psi}_v(\lambda,h_{n,v};\mathbf{s})\mathbf{L}_v^{\dag}(\lambda,\mu_v;\mathbf{s})^{-1}=\Psi_v(W_{1,v}(\cdot,s_1),\widehat{W}_{2,v}^*(\cdot,s_2),\overline{S(h_{n,v})(\cdot,\overline{\lambda})})\\
\Psi_v(\overline{W_{3,v}(\cdot,\overline{s_3})},\overline{W_{4,v}^*(\cdot,\overline{s_4})},S(h_{n,v})(\cdot,\lambda)),
\end{multline}
where we have replaced the meromorphic continuation $\widetilde{\Psi}_v(\cdots)$ with the ordinary period integral $\Psi_v(\cdots)$ based on the assumption $|\lambda-1/2|\leq 20\varepsilon$. 

Write $a(y_v)=\diag(y_1,1)$, $y_v\in F_v^{\times}$.  Unfolding the integrals $\Psi_v(\cdots)$, the right hans side of \eqref{c7.16} boils down to  
\begin{multline}\label{fc7.18}
\int_{F_v^{\times}}W_{1,v}\left(a(y_v),s_1\right)
\int_{K_v}\widehat{W}_{2,v}^*\left(a(y_v)k_v,s_2\right)\chi_{\beta}(k_v)dk_v\overline{\mu}_v(y_v)|y_v|_v^{\lambda-\frac{1}{2}}d^{\times}y_v\\
\int_{F_v^{\times}}\overline{W_{3,v}\left(a(y_v'),\overline{s_3}\right)}
\int_{K_v}\overline{W_{4,v}^*\left(a(y_v')k_v',\overline{s_4}\right)\chi_{\beta}(k_v')}dk_v'\mu_v(y_v')|y_v'|_v^{\lambda-1/2}d^{\times}y_v'.
\end{multline}
Here we have used the fact that $W_{1,v}\left(\cdot,s_1\right)$ and $\overline{W_{3,v}\left(\cdot,\overline{s_3}\right)}$ are spherical. 

Notice that the integral relative to $k_v$ and $k_v'$ in \eqref{fc7.18} are the $\beta$-th Fourier transforms. Substituting \eqref{fc7.18} into \eqref{f7.16}, in conjunction with Parseval's identity, we derive that   
\begin{multline}\label{c7.18}
\boldsymbol{e}_v(\lambda,\mu;\mathbf{s})=\frac{\mathbf{L}_v^{\dag}(\lambda,\mu_v;\mathbf{s})}{\mathbf{L}_v(\lambda,\mu_v;\mathbf{s})}\int_{K_v}\int_{F_v^{\times}}W_{1,v}\left(a(y_v),s_1\right)
\widehat{W}_{2,v}^*\left(a(y_v)k_v,s_2\right)\overline{\mu}_v(y_v)\\ |y_v|_v^{\lambda-\frac{1}{2}}d^{\times}y_v
\int_{F_v^{\times}}\overline{W_{3,v}\left(a(y_v'),\overline{s_3}\right)}
\overline{W_{4,v}^*\left(a(y_v')k_v,\overline{s_4}\right)}\mu_v(y_v')|y_v'|_v^{\lambda-1/2}d^{\times}y_v'dk_v.
\end{multline}

According to the definition in \eqref{c2.5}, $\overline{W_{4,v}^*(a(y_v')k_v,\overline{s_4})}$ is equal to 
\begin{equation}\label{c7.19}
|\det g_v|_v^{s_4+\frac{1}{2}}\int_{F_v^{\times}}\int_{F_v}\overline{\Phi_{4,v}((t_v,b_vt_v)a(y_v')k_v)\psi_v(b_v)}db_v\omega_{v}(t_v)|t_v|_v^{2s_4+1}d^{\times}t_v.
\end{equation}

Write $k_v=\begin{pmatrix}
k_{11}& k_{12}\\
k_{21}& k_{22}
\end{pmatrix}$. We may assume $k_{21}k_{22}\neq 0$ (the set of $k_v$'s with $k_{21}k_{22}=0$ is of measure $0$). Plugging the definition \eqref{eq6.28} into \eqref{c7.19} leads to 
\begin{multline*}
\overline{W_{4,v}^*(a(y_v')k_v,\overline{s_4})}=\textbf{C}_v^{-1/2-2s_4}|y_v'|_v^{s_4+\frac{1}{2}}\int_{F_v^{\times}}\int_{F_v}\overline{\alpha_v(|k_{11}y_v't_v+k_{21}b_v|_v)}\\\overline{\alpha_v(\textbf{C}_v^{-1}|k_{12}y_v't_v+k_{22}b_v|_v-1)}\overline{\psi_v(b_vt_v^{-1})}\overline{\omega}_v(k_{12}y_v't_v+k_{22}b_v)db_v\omega_v(t_v)
|t_v|_v^{2s_4}d^{\times}t_v.
\end{multline*}

Integrating by parts in the $b_v$-integral gives a truncation of $\overline{W_{4,v}^*(a(y_v')k_v,\overline{s_4})}$:  
\begin{multline*}
\frac{|y_v'|_v^{s_4+1/2}}{\textbf{C}_v^{1/2+2s_4}}\int_{|t_v|_v\geq \textbf{C}_v^{-\varepsilon}C_v(\omega)^{-1-\varepsilon}}\int_{F_v}\overline{\alpha_v(|k_{11}y_v't_v+k_{21}b_v|_v)}\overline{\omega}_v(k_{12}y_v't_v+k_{22}b_v)\\\overline{\alpha_v(\textbf{C}_v^{-1}|k_{12}y_v't_v+k_{22}b_v|_v-1)}\overline{\psi_v(b_vt_v^{-1})}db_v\omega_v(t_v)
|t_v|_v^{2s_4}d^{\times}t_v+O(\mathbf{C}_v^{-100}),
\end{multline*}
where the implied constant depends only on $\varepsilon$, $F_v$ and the test function $\alpha_v$. 

We note that the vector $(y_v't_v,b_v)k_v$ lies in the box $\{(t_1,t_2)\in F_v^2:\ |t_1|_v\ll 1,\ |t_2|_v\ll \mathbf{C}_v\}$, which is  determined by the support of $\alpha_v$. Since $(y_v't_v,b_v)$ and $(y_v't_v,b_v)k_v$ has the same length (as $k_v\in \mathrm{O}(2)$ or $\mathrm{U}(2)$), then $|y_v't_v|_v\ll \mathbf{C}_v$, i.e., $|t_v|_v\ll |y_v'|_v^{-1}\mathbf{C}_v$. As a consequence, we can write $\overline{W_{4,v}^*(a(y_v')k_v,\overline{s_4})}$ as 
\begin{multline*}
\frac{|y_v'|_v^{s_4+1/2}}{\textbf{C}_v^{1/2+2s_4}}\int_{\frac{\textbf{C}_v^{-\varepsilon}}{C_v(\omega)^{1+\varepsilon}}\leq |t_v|_v\ll \frac{\mathbf{C}_v}{|y_v'|_v}}\int_{F_v}\overline{\alpha_v(|k_{11}y_v't_v+k_{21}b_v|_v)}\overline{\omega}_v(k_{12}y_v't_v+k_{22}b_v)\\\overline{\alpha_v(\textbf{C}_v^{-1}|k_{12}y_v't_v+k_{22}b_v|_v-1)}\overline{\psi_v(b_vt_v^{-1})}db_v\omega_v(t_v)
|t_v|_v^{2s_4}d^{\times}t_v+O(\mathbf{C}_v^{-100}).
\end{multline*}

Moreover, for $\textbf{C}_v^{-\varepsilon}C_v(\omega)^{-1-\varepsilon}\leq |t_v|_v\ll |y_v'|_v^{-1}\mathbf{C}_v$, we have
\begin{align*}
\boldsymbol{\alpha}_v(\boldsymbol{\cdot}):=\alpha_v(|k_{11}y_v't_v+k_{21}b_v|_v)\alpha_v(\textbf{C}_v^{-1}|k_{12}y_v't_v+k_{22}b_v|_v-1)=0
\end{align*}
unless $|k_{11}k_{21}^{-1}y_v't_v+b_v|_v\ll |k_{21}|_v^{-1}$ and $|k_{12}k_{22}^{-1}y_v't_v+b_v|_v\ll |k_{22}|_v^{-1}\mathbf{C}_v$. Hence, 
\begin{align*}
\int_{F_v}\overline{\boldsymbol{\alpha}_v(\boldsymbol{\cdot})\psi_v(b_vt_v^{-1})\omega_v(k_{12}y_v't_v+k_{22}b_v)}db_v\ll \min\{|k_{21}|_v^{-1}, |k_{22}|_v^{-1}\mathbf{C}_v\},
\end{align*}
where the implied constant depends only on $F_v$ and $\alpha_v$. Therefore,
\begin{equation}\label{eq7.18}
\overline{W_{4,v}^*(a(y_v')k_v,\overline{s_4})}\ll \frac{|y_v'|_v^{1/2-\Re(s_4)}}{\textbf{C}_v^{1/2}}\cdot \min\{|k_{21}|_v^{-1}, |k_{22}|_v^{-1}\mathbf{C}_v\}+O(\mathbf{C}_v^{-100}).	
\end{equation}

Analogously, we have 
\begin{equation}\label{eq7.19}
\widehat{W}_{2,v}^*\left(a(y_v)k_v,s_2\right)\ll \frac{|y_v'|_v^{1/2-\Re(s_4)}}{\textbf{C}_v^{1/2}}\min\{|k_{11}|_v^{-1}\mathbf{C}_v, |k_{12}|_v^{-1}\}+O(\mathbf{C}_v^{-100}).	
\end{equation}
 
Substituting \eqref{eq7.18} and \eqref{eq7.19} into \eqref{c7.18}, in conjunction with the bound \eqref{6.17} for $\mathbf{L}_v^{\dag}(\lambda,\mu_v;\mathbf{s})$, and the approximations
\begin{align*}
W_{1,v}\left(a(y_v),s_1\right)\asymp |y_v|_v^{\frac{1}{2}-s_1}K_{s_1}(2\pi |y_v|_v),\ \ W_{3,v}\left(a(y_v),\overline{s_3}\right)\asymp |y_v|_v^{\frac{1}{2}-\overline{s_3}}K_{\overline{s_3}}(2\pi |y_v|_v),
\end{align*}
we deduce the estimate \eqref{f7.15} for the case $|\lambda-1/2|\leq 20\varepsilon$. 

For $|\lambda+1/2|\leq 20\varepsilon$, following similar arguments, but replacing part (A) of Lemma \ref{lem6.3} with part (B), we obtain \eqref{f7.15} once again. 
\end{proof}

\subsection{The non-Archimedean Integrals}\label{sec7.3}
Let $v<\infty$. Recall
\begin{align*}
\widetilde{\Psi}_v(\lambda,h_v;\mathbf{s})=\widetilde{\Psi}_v(\overline{W_{h_v,-\overline{\lambda}}},W_{1,v}(\cdot,s_1),h_{2,v}(\cdot,s_2))\widetilde{\Psi}_v(W_{h_v,\lambda},\overline{W_{3,v}(\cdot,\overline{s_3})},\overline{h_{4,v}(\cdot,\overline{s_4})}).
\end{align*}

\subsubsection{Calculation of $\boldsymbol{e}_v(\lambda,\mu;\mathbf{s})$ at  $v\nmid \mathfrak{q}$} 
Suppose $v\nmid\mathfrak{q}$. By the construction of $\Phi_{j,v}$'s, $\widetilde{\Psi}_v(W_{h_v,\lambda},\overline{W_{3,v}(\cdot,\overline{s_3})},\overline{h_{4,v}(\cdot,\overline{s_4})})\equiv 0$ unless the function 
\begin{align*}
g_v\mapsto \int_{K_v}W_{h_v,\lambda}(g_vk_v)dk_v
\end{align*}
is nonzero, implying that $\mu_v|\cdot|_v^{\lambda}\boxplus \overline{\mu}_v\omega_v|\cdot|_v^{-\lambda}$ is unramified. Therefore, by \eqref{eq6.1},
\begin{align*}
\boldsymbol{e}_v(\lambda,\mu;\mathbf{s})=\sum_{h_v\in \mathfrak{B}(\mu_v,\overline{\mu}_v\omega_v)}\frac{\widetilde{\Psi}_v(\lambda,h_v;\mathbf{s})}{\mathbf{L}_v(\lambda,\mu_v;\mathbf{s})}=\frac{\widetilde{\Psi}_v(\lambda,h_v^{\circ};\mathbf{s})}{\mathbf{L}_v(\lambda,\mu_v;\mathbf{s})},
\end{align*}
where $h_v^{\circ}$ is the normalized spherical vector in $\mu_v\boxplus \overline{\mu}_v\omega_v$. A straightforward unramified calculation, combined with the analysis at $v\mid\mathfrak{n}$ as in Lemma \ref{lem5.1}, gives  
\begin{equation}\label{eq7.15}
\boldsymbol{e}_v(\lambda,\mu;\mathbf{s})=\lambda_{\pi_{\lambda}}(\mathfrak{p}_v^{e_v(\mathfrak{n})})\Vol(\mathcal{O}_v^{\times})^4\cdot q_v^{(s_2+s_4)d_v}=\lambda_{\pi_{\lambda}}(\mathfrak{p}_v^{e_v(\mathfrak{n})})q_v^{(s_2+s_4-2)d_v},
\end{equation}
where $\pi_{\lambda}=\mu|\cdot|^{\lambda}\boxplus \overline{\mu}\omega|\cdot|^{-\lambda}$, and  $d_v=e_v(\mathfrak{D}_{F})$ is the ramification index. In particular, $\boldsymbol{e}_v(\lambda,\mu;\mathbf{s})\equiv 1$ for all but finitely many places. 

\subsubsection{Calculation of $\boldsymbol{e}_v(\lambda,\mu;\mathbf{s})$ at  $v\mid \mathfrak{q}$} 
Suppose $v\mid\mathfrak{q}$. Then $\overline{W_{3,v}(\cdot,\overline{s_3})}\overline{h_{4,v}(\cdot,\overline{s_4})}$ is right $K_{0,v}[e_v(\mathfrak{q})]$-invariant. Hence,  
\begin{align*}
\boldsymbol{e}_v(\lambda,\mu;\mathbf{s})=\sum_{h_v\in \mathfrak{B}(\mu_v,\overline{\mu}_v\omega_v)^{K_{0,v}[e_v(\mathfrak{q})]}}\frac{\widetilde{\Psi}_v(\lambda,h_v;\mathbf{s})}{\mathbf{L}_v(\lambda,\mu_v;\mathbf{s})},
\end{align*}
where $\mathfrak{B}(\mu_v,\overline{\mu}_v\omega_v)^{K_{0,v}[e_v(\mathfrak{q})]}$ is the subset of right-$K_{0,v}[e_v(\mathfrak{q})]$-invariant vectors in $\mathfrak{B}(\mu_v,\overline{\mu}_v\omega_v)$. Since $\dim \mathfrak{B}(\mu_v,\overline{\mu}_v\omega_v)^{K_{0,v}[e_v(\mathfrak{q})]}\leq 1+e_v(\mathfrak{q})$, and each $h_v$ is a linear combination of right translations of $h_v^{\circ}$ by $\diag(1,\varpi_v^i)$ for $0\leq i\leq e_v(\mathfrak{q})$, with bounded coefficients, it follows from a standard calculation that 
\begin{equation}\label{7.16}
\boldsymbol{e}_v(\lambda,\mu;\mathbf{s})\ll q_v^{\varepsilon e_v(\mathfrak{q})},\ \ |\lambda|\leq 1,\ \ \mathbf{s}\in \mathbf{B}_{\varepsilon}^4,
\end{equation}
where the implied constant depends only on $F$ and $\varepsilon$. 

\begin{remark}
While we omit the calculations leading to \eqref{7.16}, we will provide full details for a similar, but more intricate, scenario in \textsection\ref{sec8.1.2}--\textsection\ref{sec8.1.3}.   
\end{remark}

\subsection{Proof of Proposition \ref{prop6.1}}\label{sec7.4}
Let $\mathbf{s}\in \mathbf{B}_{\varepsilon}^4$. By Lemma \ref{lem7.6}, and the non-Archimedean calculations \eqref{eq7.15} and \eqref{7.16}, we deduce
\begin{equation}\label{7.25}
\boldsymbol{e}(\lambda,\mu;\mathbf{s})\ll \frac{|\lambda_{\pi_{\lambda}}(\mathfrak{n})|\cdot \mathbf{C}_{\infty}^{-1+1000\varepsilon}N_F(\mathfrak{q})^{\varepsilon}}{|\mathbf{L}_{\infty}(\lambda,\mu_v;\mathbf{s})|}.
\end{equation} 

Let $\mathbf{L}(\lambda,\mu;\mathbf{s})$ be the product of complete $L$-functions defied as in \textsection\ref{sec7.1.1}.  Moreover, for $\lambda_0\in \{s_2+s_1-1/2,1/2-s_2-s_1,s_2-s_1-1/2,1/2+s_1-s_2,s_4-s_3-1/2,1/2+s_3-s_4,s_4+s_3-1/2,1/2-s_3-s_4\}$ and $\mathbf{s}\in \mathbf{B}_{\varepsilon}^4$, we have by Stirling formula that 
\begin{equation}\label{7.26}
\frac{\underset{\lambda=\lambda_0}{\Res}\ \mathbf{L}(\lambda,\mu;\mathbf{s})}{|\mathbf{L}_{\infty}(\lambda_0,\mu_v;\mathbf{s})|}\ll \mathbf{C}_{\infty}^{\varepsilon}N_F(\mathfrak{q})^{\varepsilon}\max_{|s|\leq 20\varepsilon}|L(s,\overline{\omega})|\ll \mathbf{C}_{\infty}^{\varepsilon}N_F(\mathfrak{q}')^{1/2+\varepsilon}.
\end{equation}

Since each $\Psi_{\mathrm{RS}}^{(i)}(\mathbf{s},\mathfrak{X}_{\mathfrak{n}})$ is given by the form \eqref{c7.1}, Proposition \ref{prop6.1} follows from \eqref{7.25} and \eqref{7.26}, noting that $|\lambda_{\pi_{\lambda_0}}(\mathfrak{n})|\ll N_F(\mathfrak{n})^{1/2+\varepsilon}$.

\section{An Upper Bound of \texorpdfstring{$\mathcal{I}_{\Spec}^{\heartsuit}(\mathbf{0},\mathfrak{X}_{\mathfrak{n}})$}{}}
The primary goal of this section is to establish a sharp bound for 
\begin{align*}
\mathcal{I}_{\mathrm{Spec}}^{\heartsuit}(\mathbf{s},\mathfrak{X}_{\mathfrak{n}})=\mathcal{I}_{\mathrm{Cusp}}^{\heartsuit}(\mathbf{0},\mathfrak{X}_{\mathfrak{n}})+\mathcal{I}_{\mathrm{Eis}}^{\heartsuit}(\mathbf{0},\mathfrak{X}_{\mathfrak{n}})
\end{align*}
as defined in Definition \ref{defn3.7} of \textsection\ref{sec4}, in terms of moments of automorphic $L$-functions of $[\mathrm{PGL}_2]$ of level $\mathfrak{n}$. The main results are as follows:
\begin{itemize}
\item An upper bound for $\mathcal{I}_{\mathrm{Cusp}}^{\heartsuit}(\mathbf{0},\mathfrak{X}_{\mathfrak{n}})$, established in Proposition \ref{prop8.2} of \textsection\ref{sec8.1}.
\item An upper bound for $\mathcal{I}_{\mathrm{Eis}}^{\heartsuit}(\mathbf{0},\mathfrak{X}_{\mathfrak{n}})$, established in Proposition \ref{prop8.9} in \textsection\ref{sec8.2}.
\end{itemize}

\subsection{Majorization of $\mathcal{I}_{\mathrm{Cusp}}^{\heartsuit}(\mathbf{0},\mathfrak{X}_{\mathfrak{n}})$}\label{sec8.1}
Let $\mathbf{s}\in \mathbf{B}_{\varepsilon}^4$. Recall the definition in \textsection\ref{sec4}:
\begin{align*}
\mathcal{I}_{\mathrm{Cusp}}^{\heartsuit}(\mathbf{s},\mathfrak{X}_{\mathfrak{n}}):=\sum_{\substack{\sigma\in \mathcal{A}_0([G],\mathbf{1})}}\widetilde{\Psi}^*(\sigma;\mathbf{s},\mathfrak{X}_{\mathfrak{n}}),
\end{align*}
where 
\begin{align*}
\widetilde{\Psi}^*(\sigma;\mathbf{s},\mathfrak{X}_{\mathfrak{n}}):=\sum_{\substack{\phi\in\mathfrak{B}(\sigma)}}\widetilde{\Psi}(\overline{W_{\phi}},W_1(\cdot,s_1),\overline{h_3(\cdot,\overline{s_3})})\widetilde{\Psi}(W_{\phi}^*,W_2(\cdot,s_2)),\overline{h_4(\cdot,\overline{s_4})}).
\end{align*}

\begin{prop}\label{prop8.2}
Let notation be as in \textsection\ref{sec7}. Let $\mathcal{F}_0(\mathfrak{n},\mathbf{1})$ be the set of unitary cuspidal automorphic representation of $[\mathrm{PGL}_2]$ which has a right-$K_{\infty}K_0[\mathfrak{n}]$-invariant vector. Let $\vartheta\leq 7/64$ be a parameter towards the Ramanujan conjecture (see \textsection\ref{sec2.2.2}). Then 
\begin{multline}\label{c8.2}
\mathcal{I}_{\mathrm{Cusp}}^{\heartsuit}(\mathbf{0},\mathfrak{X}_{\mathfrak{n}})\ll N_F(\mathfrak{q})^{1/2}N_F(\mathfrak{n})^{-1/2+\varepsilon}\big[\mathbf{C}_{\infty}C_{\infty}(\omega)^{-1}\big]^{\vartheta}\mathbf{C}_{\infty}^{-1/2+\varepsilon}\\
\sum_{\sigma\in \mathcal{F}_0(\mathfrak{n},\mathbf{1})}e^{-\frac{\pi}{2}\sum_{v\mid\infty}|\nu_{\sigma_v}|}|\lambda_{\sigma}(\mathfrak{q}\mathfrak{q}'^{-1})|\cdot \frac{|L(1/2,\sigma)|^3|L(1/2,\sigma\times\overline{\omega})|}{|L(1,\sigma,\Ad)|},
\end{multline}  
where $\nu_{\sigma_v}$, $v\mid\infty$, is the spectral parameter of $\sigma_v$. Here the implied constant depends only on $F$ and $\varepsilon$, and the smooth functions $\alpha_v$, $v\mid\infty$ (see \textsection\ref{sec4.4}). 	
\end{prop}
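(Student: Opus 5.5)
Since $\omega_1=\omega_3=\mathbf{1}$, we have $\omega'=\mathbf{1}$, and $\sigma$ runs over cuspidal representations of $[\mathrm{PGL}_2]$. The plan is to factor each summand $\widetilde{\Psi}^*(\sigma;\mathbf{s},\mathfrak{X}_{\mathfrak{n}})$ into an $L$-function part times local factors, and then bound the local factors place by place. By Rankin--Selberg theory, as in the proof of Lemma \ref{lem5.1}, the first period $\widetilde{\Psi}(\overline{W_\phi},W_1(\cdot,s_1),\overline{h_3(\cdot,\overline{s_3})})$ is $\Lambda(1/2+s_3+s_1,\widetilde\sigma)\Lambda(1/2+s_3-s_1,\widetilde\sigma)$ times a product of normalized local integrals. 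The second period $\widetilde{\Psi}(W^*_\phi,W_2(\cdot,s_2),\overline{h_4(\cdot,\overline{s_4})})$ is likewise $\Lambda(1/2+s_4+s_2,\sigma)\Lambda(1/2+s_4-s_2,\sigma\times\overline{\omega})$ times local integrals. Summing over an orthonormal basis $\mathfrak{B}(\sigma)$ and dividing by the Petersson norm produces the factor $L(1,\sigma,\Ad)^{-1}$. After the contour integration over $\mathbf{S}_\varepsilon^4$ and letting $\mathbf{s}\to\mathbf{0}$, this gives $|L(1/2,\sigma)|^3|L(1/2,\sigma\times\overline{\omega})|/|L(1,\sigma,\Ad)|$, up to an $\varepsilon$-loss coming from the contour integral. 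It then remains to show that the sum over $\phi$ of the product of local factors is bounded, place by place, by the factors displayed in \eqref{c8.2}.

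At $v\nmid\mathfrak{n}\mathfrak{q}\infty$ all data are spherical, so only the spherical vector contributes and the local factor is $1$ (up to discriminant factors). At $v\mid\mathfrak{n}$ the sections $h_1,h_2$ are translates by $\mathbf{d}_v$ of $h_3,h_4$. Hence the relevant vectors are right $K_{0,v}[2l_v]$-invariant (one $\mathbf{d}_v$ from each side), which forces $\sigma_v$ to have conductor dividing $\mathfrak{p}_v^{2l_v}$ and explains why $\mathcal{F}_0(\mathfrak{n},\mathbf{1})$ appears. Here I would use an explicit basis of oldforms $\{\sigma_v(\diag(1,\varpi_v^{i}))W^{\circ}\}$ and compute the two local zeta integrals directly. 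Each is a Hecke-type integral against a translate, which should give size $q_v^{-l_v/2}$ after orthonormalization, times $|\alpha_{\mathfrak{n}}|\asymp q_v^{l_v/2}$. Combined with the $|\det\mathbf{d}_{\mathfrak{n}}|^{1/2}$ normalizations, this should produce the $N_F(\mathfrak{n})^{-1/2+\varepsilon}$ factor. At $v\mid\mathfrak{q}$, $\Phi_{1,v},\Phi_{3,v}$ are spherical while $\Phi_{2,v},\Phi_{4,v}$ are of $K_0$-type twisted by $\omega_v$. I would evaluate $W_{2,v}$ and $h_{4,v}$ explicitly, giving a Gauss-sum-type vector of conductor $\mathfrak{q}'$, and unfold the zeta integral over the Kirillov model. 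Only $\sigma_v$ unramified contributes, and the integral picks out the Hecke eigenvalue at $\mathfrak{p}_v^{e_v(\mathfrak{q})-e_v(\mathfrak{q}')}$. The normalization $\Vol(K_{0,v}[m_v])^{-1}$ gives $q_v^{m_v/2}$ overall after the Gauss sum of size $q_v^{e_v(\mathfrak{q}')/2}$ is absorbed; this is the source of $N_F(\mathfrak{q})^{1/2}|\lambda_\sigma(\mathfrak{q}\mathfrak{q}'^{-1})|$.

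The Archimedean places are the main obstacle. There $W_{1,v},W_{3,v}$ are standard $K$-Bessel Whittaker functions, so the first local integral is essentially a Mellin transform of $K_0\cdot W_{\sigma_v}$. This produces $\Gamma$-ratios giving the decay $e^{-\frac{\pi}{2}|\nu_{\sigma_v}|}$ together with polynomial factors. The second integral involves the concentrated section $\Phi_{4,v}$ from \eqref{eq6.28}. I would bound it as in the proof of Lemma \ref{lem7.6}: integrate by parts in $b_v$ to truncate to $|t_v|\gg\mathbf{C}_v^{-\varepsilon}C_v(\omega)^{-1}$, localize $k_v$, and then estimate $W_{\sigma_v}(a(y)k)\ll|y|^{1/2-\vartheta}$ on the resulting range $|y|\gtrsim\mathbf{C}_v^{-1}C_v(\omega_v)$. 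This range is what yields $\mathbf{C}_v^{-1/2+\varepsilon}[\mathbf{C}_vC_v(\omega_v)^{-1}]^{\vartheta}$. Summing over the $K_v$-types in $\mathfrak{B}(\sigma_v)$ is handled by Parseval, as in \eqref{c7.18}. The delicate point is ensuring that the polynomial losses in $|\nu_{\sigma_v}|$ are absorbed by the exponential factor while retaining $e^{-\frac{\pi}{2}|\nu_{\sigma_v}|}$ as stated. I would first try to justify this via Stirling applied to the gamma factors from the $W_{1,v}$ Mellin transform, and if needed sacrifice an $\varepsilon$-power of $\mathbf{C}_\infty$.
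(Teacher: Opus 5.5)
Your overall architecture, the Eulerian factorization of $\widetilde{\Psi}^*(\sigma;\mathbf{0},\mathfrak{X}_{\mathfrak{n}})$ as in \eqref{8.4} followed by place-by-place bounds, is the paper's. Your $\mathfrak{q}$-adic step also matches Lemma~\ref{lem8.6}. The gap is at $v\mid\mathfrak{n}$, and it has two parts.

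First, your invariance claim is wrong. Each period contains exactly one $\mathbf{d}_v$-translate: $W_{1,v}(g)=\alpha_{\mathfrak{n},v}W_{3,v}(g\mathbf{d}_v,0)$ is paired with the spherical $\overline{h_{3,v}}$, and $W_{2,v}(g)=W_{4,v}(g\mathbf{d}_v,0)$ is paired with the spherical $\overline{h_{4,v}}$. Since $\mathbf{d}_vK_v\mathbf{d}_v^{-1}\cap K_v=K_{0,v}[l_v]$, each period, and hence the sum over $\mathfrak{B}(\sigma)$, only sees $K_{0,v}[l_v]$-fixed vectors. That is what restricts the sum to $\mathcal{F}_0(\mathfrak{n},\mathbf{1})$. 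Your ``one $\mathbf{d}_v$ from each side'' gives $K_{0,v}[2l_v]$, i.e.\ conductor dividing $\mathfrak{n}^2$, which is the wrong family.

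Second, and more seriously, the bound $\ll l_vq_v^{-l_v/2}$ for each period is only asserted. For the newvector it does follow from the Hecke computation of Lemma~\ref{lem5.1}. For the oldform components, however, the unswitched integral requires $W_{v,n}(a(y)\gamma)$ at coset representatives $\gamma$ of $K_v/K_{0,v}[l_v]$. The non-orthogonal basis $\{\sigma_v(\diag(1,\varpi_v^{i}))W_v^{\circ}\}$ gives no mechanism for evaluating these. The missing idea is the switch of Lemma~\ref{lem8.2}:
\begin{itemize}
\item Via the Bessel identity \eqref{eq5.7} and $\int j_{\sigma_v}\,d^{\times}y=\varepsilon_v(1/2,\sigma_v,\psi_v)$, the translate moves from $W_{3,v}$ onto the Godement section. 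The $K_v$-integral becomes $\int_{K_v}W_{v,n}(a(y)k)\overline{h_{3,v}(k\mathbf{d}_v,0)}\,dk$.
\item Lemma~\ref{lem8.3} shows that $h_{3,v}(k\mathbf{d}_v,0)$ equals $q_v^{j-l_v/2}\zeta_{F_v}(1)$ on $K_{0,v}[j]\setminus K_{0,v}[j+1]$.
\item For the orthonormal basis of \cite{BM15} one has $\sigma_v(\mathbf{1}_{K_{0,v}[j]})W_{v,n}=\Vol(K_{0,v}[j])\mathbf{1}_{j\ge n+r_{\sigma_v}}W_{v,n}$.
\end{itemize}
Since $\Vol(K_{0,v}[j])\asymp q_v^{-j}$, each $j$ contributes $q_v^{-l_v/2}$. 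This yields \eqref{8.19}, and then $\boldsymbol{e}_v(\sigma_v)\ll l_v^4q_v^{-l_v/2}$ after the single factor $\alpha_{\mathfrak{n},v}\asymp q_v^{l_v/2}$.

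Several smaller corrections follow.

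There is no contour integral on the spectral side. $\mathcal{I}_{\mathrm{Cusp}}^{\heartsuit}(\mathbf{0},\mathfrak{X}_{\mathfrak{n}})$ is simply the value at $\mathbf{s}=\mathbf{0}$ of a function holomorphic there; the $\mathbf{S}_{\varepsilon}^4$-integrals in Theorem~\ref{thmD} concern only the residual terms. Bounding a contour integral in absolute value would produce $\max_{|s|\ll\varepsilon}|L(1/2+s,\sigma)|$ rather than the central values in \eqref{c8.2}.

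At $v\mid\infty$, $\Phi_{1,v}=\Phi_{3,v}=\Phi_v^*$ is Gaussian, so $W_{1,v}\overline{h_{3,v}}$ is right $K_v$-invariant. Only the $L^2$-normalized spherical vector $c_v(\nu)|y|_v^{1/2}K_{\nu}(2\pi|y|_v)$ (real case) contributes, so no Parseval over $K_v$-types is needed. You must, however, track $|c_v(\nu)|^{2}\asymp|\Gamma(\nu)\Gamma(-\nu)|^{-1}$. The factor $e^{-\frac{\pi}{2}|\nu|}$ arises only from the combination $|c_v(\nu)|^2|L_v(1/2,\sigma_v)|^2\sqrt{|\Gamma(\varepsilon+|\Re(\nu)|+\nu)\Gamma(\varepsilon+|\Re(\nu)|-\nu)|}$, where the square root comes from Cauchy--Schwarz in the second period.

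In that second period the relevant range is $|y|_v\lesssim C_v(\omega)\mathbf{C}_v^{-1}$, where $W_{2,v}(a(y)k,0)$ is concentrated by \cite[Lemma 3.7.2]{MV10}, not $|y|_v\gtrsim C_v(\omega)\mathbf{C}_v^{-1}$. The factor $[\mathbf{C}_vC_v(\omega)^{-1}]^{\vartheta}$ is $|y|_v^{-\vartheta}$ at its upper end.

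Finally, polynomial losses in $|\nu_{\sigma_v}|$ cannot be traded for $\mathbf{C}_\infty^{\varepsilon}$, since $\sigma$ ranges over all spectral parameters. No such trade is needed: Stirling gives $(1+|\nu|)^{\varepsilon-1/2}e^{-\frac{\pi}{2}|\nu|}$ at real places and $(1+|\nu|)^{\varepsilon+1/2}e^{-\pi|\nu|}$ at complex places.
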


\subsubsection{Eulerian Structure of $\widetilde{\Psi}^*(\sigma;\mathbf{0},\mathfrak{X}_{\mathfrak{n}})$}\label{sec8.1.1}
By Rankin--Selberg theory, we may write 
\begin{equation}\label{eq8.2}
\widetilde{\Psi}^*(\sigma;\mathbf{0},\mathfrak{X}_{\mathfrak{n}})=\frac{\Lambda(1/2,\sigma)^3\Lambda(1/2,\sigma\times\overline{\omega})}{\Lambda(1,\sigma,\Ad)}\prod_{v\leq\infty}\boldsymbol{e}_v(\sigma_v),
\end{equation}
where 
\begin{equation}\label{c8.3}
\boldsymbol{e}_v(\sigma_v):=\sum_{W_v}\frac{\widetilde{\Psi}_v(\overline{W_{v}},W_{1,v}(\cdot,0),\overline{h_{3,v}(\cdot,0)})\widetilde{\Psi}_v(W_{v}^*,W_{2,v}(\cdot,0)),\overline{h_{4,v}(\cdot,0)})}{L_v(1/2,\sigma_v)^3L(1/2,\sigma_v\times\overline{\omega}_v)L_v(1,\sigma_v,\Ad)^{-1}}.
\end{equation}
Here $W_v\in \mathcal{W}(\sigma_v)$, which is an orthonormal basis of the Whittaker model of $\sigma_v$. 

By the bound for the Ramanujan parameters (see \textsection\ref{sec2.2.2}), each $\boldsymbol{e}_v(\sigma_v)$ converges absolutely. Moreover, from the constructions in \textsection\ref{sec4.4}, we obtain 
\begin{itemize}
\item for $v \nmid \mathfrak{n}$, $W_1(\cdot, s_1)$ and $h_3(\cdot, s_3)$ are spherical, so only the spherical vectors in $W(\sigma_v)$ contribute to $\boldsymbol{e}_v(\sigma_v)$. Furthermore, since $W_2(\cdot, s_2)$ and $h_4(\cdot, s_4)$ are spherical for $v < \infty$ and $v \nmid \mathfrak{n}\mathfrak{q}$, standard calculations of the integrals at these unramified places yield $\boldsymbol{e}_v(\sigma_v) = 1$ for $v <\infty$ and $v \nmid \mathfrak{n}\mathfrak{q}$.

\item for $v\mid\mathfrak{n}$, the function $W_{1,v}(\cdot,0)\overline{h_{3,v}(\cdot,0)})$ is right-$K_{0,v}[e_v(\mathfrak{n})]$-invariant. So  
\begin{equation}\label{8.2}
\boldsymbol{e}_v(\sigma_v)=\sum_{W_v}\frac{\widetilde{\Psi}_v(\overline{W_{v}},W_{1,v}(\cdot,0),\overline{h_{3,v}(\cdot,0)})\widetilde{\Psi}_v(W_{v}^*,W_{2,v}(\cdot,0)),\overline{h_{4,v}(\cdot,0)})}{L_v(1/2,\sigma_v)^3L(1/2,\sigma_v\times\overline{\omega}_v)L_v(1,\sigma_v,\Ad)^{-1}},
\end{equation}
where $W_v$ ranges over $\mathcal{W}(\sigma_v)^{K_{0,v}[e_v(\mathfrak{n})]}$, which is the subset of $\mathcal{W}(\sigma_v)$ consisting of vectors that are right-$K_{0,v}[e_v(\mathfrak{n})]$-invariant. 

Since $\dim \mathcal{W}(\sigma_v)^{K_{0,v}[e_v(\mathfrak{n})]}\ll 1+e_v(\mathfrak{n})$, it follows from \eqref{8.2} that $\boldsymbol{e}_v(\sigma_v)$ converges absolutely. 
\end{itemize}  

As a result of the above discussions, we obtain from \eqref{eq8.2} that 
\begin{equation}\label{8.4}
\widetilde{\Psi}^*(\sigma;\mathbf{0},\mathfrak{X}_{\mathfrak{n}})=\frac{\Lambda(1/2,\sigma)^3\Lambda(1/2,\sigma\times\overline{\omega})}{\Lambda(1,\sigma,\Ad)}\cdot \mathbf{1}_{\sigma\in \mathcal{F}_0(\mathfrak{n},\mathbf{1})}\cdot \prod_{v\mid\mathfrak{n}\mathfrak{q}\infty}\boldsymbol{e}_v(\sigma_v).
\end{equation}

\subsubsection{Bounding some auxiliary period integrals}\label{sec8.1.2}
\begin{lemma}\label{lem8.2}
 Let $\sigma\in \mathcal{F}_0(\mathfrak{n},\mathbf{1})$. Let $v\mid\mathfrak{n}$ and $W_v$ be a vector in the Whittaker model of $\sigma_v$. Let  
\begin{equation}\label{c8.5}
\boldsymbol{\Psi}(W_v,\mathfrak{n}):=\int_{N(F_v)\backslash G(F_v)}\overline{W_{v}\left(
g_v\right)}
W_{3,v}(g_v\mathbf{d}_v,0)\overline{\Phi_{3,v}(\mathbf{e}_2g_v)}|\det g_v|^{\frac{1}{2}}dg_v.
\end{equation}
Then $\boldsymbol{\Psi}(W_v,\mathfrak{n})$ can be transformed to  
\begin{align*}
\varepsilon_v(1/2,\sigma_v,\psi_v)\int_{N(F_v)\backslash\overline{G}(F_v)}\overline{W_{v}\left(g_v\right)
W_{3,v}^*(g_v,0)}h_{3,v}(g_v\mathbf{d}_v,0)dg_v.
\end{align*}
where $\varepsilon_v(1/2,\sigma_v,\psi_v)$ is the $\varepsilon$-factor of $\sigma_v$ relative to the additive character $\psi_v$.
\end{lemma}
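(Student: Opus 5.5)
The plan is to read $\boldsymbol{\Psi}(W_v,\mathfrak{n})$ as a local Jacquet (Rankin--Selberg $\mathrm{GL}_2\times\mathrm{GL}_2$) zeta integral at the centre, and then use the local functional equation. At $v\mid\mathfrak{n}$ we have $v\nmid\mathfrak{q}\infty$, so $\Phi_{3,v}=\mathbf{1}_{\mathcal{O}_v}\otimes\mathbf{1}_{\mathcal{O}_v}$. The function $g\mapsto W_{3,v}(g\mathbf{d}_v,0)$ is a Whittaker vector of $\pi_{3,v}=\mathbf{1}\boxplus\mathbf{1}$. Hence
\[
\boldsymbol{\Psi}(W_v,\mathfrak{n})=Z\bigl(1/2,\overline{W_v},\pi_{3,v}(\mathbf{d}_v)W_{3,v},\overline{\Phi_{3,v}}\bigr),
\]
with the standard $Z(s,W,W',\Phi)=\int_{N\backslash G}W(g)W'(g)\Phi(\mathbf{e}_2g)|\det g|^s\,dg$.

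First I will verify absolute convergence at $s=1/2$. This follows from the bound $\vartheta<1/2$ for $\sigma_v$ (\S\ref{sec2.2.2}) together with the explicit form of the unramified $W_{3,v}$.

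Second, I will apply the local functional equation
\[
Z(1-s,\widetilde W,\widetilde W',\widehat\Phi)=\omega(-1)\,\gamma(s,\sigma_v\times\pi_{3,v},\psi_v)\,Z(s,W,W',\Phi),
\qquad \widetilde W(g)=W(w\,{}^tg^{-1}).
\]
Here $\gamma(s,\sigma_v\times\pi_{3,v},\psi_v)=\gamma(s,\sigma_v,\psi_v)^2$. Since $\sigma_v$ has trivial central character it is self-dual, so at $s=1/2$ the $L$-ratio is $1$ and the $\gamma$-factor reduces to $\varepsilon_v(1/2,\sigma_v,\psi_v)^2$. Also $\widehat{\Phi_{3,v}}=\Phi_{3,v}$ up to the factor $q_v^{-d_v}$ coming from the self-dual measure; since $d_v$ need not vanish, this normalisation must be tracked.

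One of the two $\varepsilon$-factors should then be absorbed by rewriting $\widetilde{\overline{W_v}}$. For $\sigma_v$ with trivial central character, $g\mapsto \overline{W_v}(w\,{}^tg^{-1})$ lies in $\overline{\sigma_v}\simeq\sigma_v$. On the $K_{0,v}[e_v(\mathfrak{n})]$-fixed line this action is the Atkin--Lehner eigenvalue, which is $\varepsilon_v(1/2,\sigma_v,\psi_v)$ up to conjugation; for unitary $\sigma_v$ this value is $\pm1$. I plan to pair this with the swap $W_{3,v}\leftrightarrow W_{3,v}^*$: the element $\diag(-1,1)$ converts the $\overline{\theta}$-Whittaker function into the $\theta$-one. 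Together these leave a single factor $\varepsilon_v(1/2,\sigma_v,\psi_v)$.

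Finally, I will fold the central variable. Writing $N\backslash G=Z\times N\backslash\overline{G}$ and integrating $\Phi_{3,v}(\mathbf{e}_2 z g)|\det zg|^{1/2}$ over $z\in Z(F_v)$ produces exactly the Godement section $h_{3,v}(\cdot,0)$ of \eqref{2.1}. The translation by $\mathbf{d}_v$ then migrates from the Whittaker factor onto $h_{3,v}$. The reason is that under $g\mapsto w\,{}^tg^{-1}$ the matrix $\mathbf{d}_v=\diag(1,\varpi_v^{l_v})$ becomes $\diag(1,\varpi_v^{-l_v})$ up to the centre, and the centre is harmless because the central characters are trivial. After a change of variables this gives $\int_{N\backslash\overline{G}}\overline{W_v(g)W^*_{3,v}(g,0)}\,h_{3,v}(g\mathbf{d}_v,0)\,dg$.

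I expect the main obstacle to be the bookkeeping in the last two steps. One must check that the second $\varepsilon$-factor, the sign $\omega(-1)$, the $\diag(-1,1)$ twist, and the measure constants from $\widehat\Phi$ and $d_v$ combine to a single $\varepsilon_v(1/2,\sigma_v,\psi_v)$ with no leftover power of $q_v$. It must also be verified that $W_v$ need not be the newform; this holds because the functional equation is valid for every $W_v$, so the Atkin--Lehner step should be replaced by the intrinsic identity $\widetilde{\overline{W}}\in\mathcal{W}(\sigma_v,\theta)$ combined with the Bessel-function relation \eqref{eq5.7}. My first attempt will be to redo the argument of Lemma \ref{lem6.1} directly. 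That means expanding $\overline{W_v}(wg)$ through $j_{\sigma_v}$ and evaluating the resulting Mellin integral with \eqref{twist} at the centre, where it yields $\gamma(1/2,\sigma_v,\psi_v)=\varepsilon_v(1/2,\sigma_v,\psi_v)$.
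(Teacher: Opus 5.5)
The functional-equation route you put first does not prove the lemma. At $s_3=0$ the Jacquet factor is $\gamma(1/2,\sigma_v\times(\mathbf{1}\boxplus\mathbf{1}),\psi_v)=\varepsilon_v(1/2,\sigma_v,\psi_v)^2=1$. This is because trivial central character and $\widetilde{\sigma}_v\simeq\sigma_v$ force $\varepsilon_v(1/2,\sigma_v,\psi_v)=\pm1$. There is also no measure constant to track: $d_v=0$ at $v\mid\mathfrak{n}$ since $(\mathfrak{n},\mathfrak{q}\mathfrak{O}_F)=1$, so $\widehat{\Phi_{3,v}}=\Phi_{3,v}$. In this situation the functional equation gives nothing beyond a change of variables, and it does not move $\mathbf{d}_v$ onto the section alone. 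Indeed $\widetilde{\rho(\mathbf{d}_v)W}=\rho(\mathbf{d}_v^{-1})\widetilde{W}$, so after $g\mapsto g\mathbf{d}_v$ the translation sits on $\widehat{\Phi}$ and also on $\widetilde{\overline{W_v}}$. On the latter it appears as $w\mathbf{d}_v^{-1}$, which modulo the centre is the Atkin--Lehner element $A=\bigl(\begin{smallmatrix}0&1\\ \varpi_v^{l_v}&0\end{smallmatrix}\bigr)$.

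Concretely, let $T(W_v)$ be the integral on the right of the lemma. Fold the centre, substitute $g\mapsto g\mathbf{d}_v^{-1}$ and then $g\mapsto gw$. Using right $K_v$-invariance of $W_{3,v}(\cdot,0)$ and $h_{3,v}(\cdot,0)$, together with $\overline{W^*_{3,v}(\cdot,0)}=W_{3,v}(\cdot,0)$ and $h_{3,v}(\cdot,0)\geq 0$, you get directly $\boldsymbol{\Psi}(W_v,\mathfrak{n})=T(\sigma_v(A)W_v)$. The lemma is therefore equivalent to $T\circ\sigma_v(A)=\varepsilon_v(1/2,\sigma_v,\psi_v)\,T$.

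The Atkin--Lehner eigenvalue gives this only for the newvector of exact conductor $\mathfrak{p}_v^{l_v}$. When $r_{\sigma_v}<l_v$ the $K_{0,v}[l_v]$-fixed space is not a line; it has dimension $l_v-r_{\sigma_v}+1$. On it, $\sigma_v(A)$ sends $\sigma_v(\diag(1,\varpi_v^{i}))W_v^{\circ}$ to $\varepsilon_v\,\sigma_v(\diag(1,\varpi_v^{l_v-r_{\sigma_v}-i}))W_v^{\circ}$, so it is not a scalar. These oldforms $W_{v,n_v}$ are exactly the vectors to which Lemma~\ref{lem8.5} applies the statement, so the headline route misses the case that is actually needed.

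The route you mention at the end, redoing Lemma~\ref{lem6.1}, is the paper's proof. It works for every $W_v$ because \eqref{eq5.7} does, so make it the whole argument and drop the functional-equation and Atkin--Lehner steps. It first needs the unfolding that produces $W_v(wg)$:
\begin{itemize}
\item Fold the centre to turn $\overline{\Phi_{3,v}(\mathbf{e}_2g)}|\det g|_v^{1/2}$ into $\overline{h_{3,v}(g,0)}$.
\item Write $W_{3,v}(g\mathbf{d}_v,0)=\int_{N(F_v)}h_{3,v}(wug\mathbf{d}_v,0)\overline{\theta_v(u)}\,du$, unfold against $\overline{W_v}$, and substitute $g\mapsto wg$. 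This gives $\boldsymbol{\Psi}(W_v,\mathfrak{n})=\int_{\overline{G}(F_v)}\overline{W_v(wg)}\,h_{3,v}(g\mathbf{d}_v,0)\,\overline{h_{3,v}(wg,0)}\,dg$.
\item Insert \eqref{eq5.7} and substitute $g\mapsto\diag(y^{-1},1)g$. The two sections contribute $|y|_v^{-1/2}$ and $|y|_v^{1/2}$, which cancel. The $y$-integral therefore decouples as $\int_{F_v^{\times}}j_{\sigma_v}(y)\,d^{\times}y=\gamma_v(1/2,\sigma_v,\psi_v)=\varepsilon_v(1/2,\sigma_v,\psi_v)$ by \eqref{f5.8}.
\item Refold using $\int_{N(F_v)}h_{3,v}(wug,0)\theta_v(u)\,du=W^*_{3,v}(g,0)$ to obtain the claim. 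The complex conjugation is harmless because $\varepsilon_v$ is real.
\end{itemize}
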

\begin{proof}
The integral $\boldsymbol{\Psi}(W_v,\mathfrak{n})$ converges absolutely. By a change of variable,
\begin{equation}\label{eq8.5}
\boldsymbol{\Psi}(W_v,\mathfrak{n})=\int_{\overline{G}(F_v)}\overline{W_{v}\left(wg_v\right)}
h_{3,v}(g_v\mathbf{d}_v,0)\overline{h_{3,v}(wg_v,0)}dg_v.
\end{equation}

Taking advantage of the formula \eqref{eq5.7} in \textsection\ref{sec5.2.1}, we obtain 
\begin{align*}
W_{v}\left(wg_v\right)=\int_{F_v^{\times}}j_{\sigma_v}(y_v)W_{v}\left(\begin{pmatrix}
y_v\\
&1
\end{pmatrix}g_v\right)d^{\times}y_v.
\end{align*}

Substituting this into \eqref{eq8.5} we thus express $\overline{\boldsymbol{\Psi}(W_v,\mathfrak{n})}$ as
\begin{equation}\label{eq8.6}
\int_{F_v^{\times}}j_{\sigma_v}(y_v)d^{\times}y_v\int_{\overline{G}(F_v)}W_{v}\left(g_v\right)
\overline{h_{3,v}(g_v\mathbf{d}_v,0)}h_{3,v}(wg_v,0)dg_v.
\end{equation}

The $y_v$-integral can be computed by \eqref{f5.8}: 
\begin{equation}\label{eq8.7}
\int_{F_v^{\times}}j_{\sigma_v}(y_v)d^{\times}y_v=\gamma_v(1/2,\sigma_v,\psi_v)=\varepsilon_v(1/2,\sigma_v,\psi_v).
\end{equation}

Therefore, Lemma \ref{lem8.2} follows from plugging \eqref{eq8.7} into \eqref{eq8.6}, along with a change of variable. 
\end{proof}

\begin{lemma}\label{lem8.3}
 Let $v\mid\mathfrak{n}$, i.e., $l_v=e_v(\mathfrak{n})\geq 1$. Let $\Re(s_3)>-1/2$. Then, for $k_v\in K_v$, we have 
\begin{multline}\label{4.10}
h_{3,v}(k_v\mathbf{d}_v,s_3)=\Big[q_v^{-l_v/2}\mathbf{1}_{k_v\in K_v-K_{0,v}[1]}+q_v^{l_v/2}\mathbf{1}_{k_v\in K_{0,v}[l_v]}\Big]\cdot \zeta_{F_v}(1+2s_3)\\
+q_v^{-l_v/2}\zeta_{F_v}(1+2s_3)\sum_{j=1}^{l_v-1}q_v^{j}\cdot \big[\mathbf{1}_{k_v\in K_{0,v}[j]}-\mathbf{1}_{K_{0,v}[j+1]}\big].
\end{multline}
\end{lemma}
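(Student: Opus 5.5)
This is a direct local computation from the definition \eqref{2.1} of the Godement section. At $v\mid\mathfrak{n}$ we have $(\mathfrak{n},\mathfrak{q})=1$, so $v\nmid\mathfrak{q}$ and $\Phi_{3,v}=\mathbf{1}_{\mathcal{O}_v}\otimes\mathbf{1}_{\mathcal{O}_v}$. The local characters are trivial because $\chi_3=\omega_3=\mathbf{1}$. Also $|\det(k_v\mathbf{d}_v)|_v=q_v^{-l_v}$. The local section therefore reduces to
\begin{align*}
h_{3,v}(k_v\mathbf{d}_v,s_3)=q_v^{-l_v(s_3+\frac12)}\int_{F_v^{\times}}\mathbf{1}_{\mathcal{O}_v^2}\big((0,t)k_v\mathbf{d}_v\big)|t|_v^{2s_3+1}d^{\times}t.
\end{align*}
Writing $k_v=\begin{pmatrix}a&b\\c&d\end{pmatrix}$, we get $(0,t)k_v\mathbf{d}_v=(tc,\,td\varpi_v^{l_v})$. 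The integrand is the indicator of $e_v(t)\ge \max\{-e_v(c),\,-l_v-e_v(d)\}$. The integral is thus a geometric series in $q_v^{-(2s_3+1)}$ starting at this threshold. It converges for $\Re(s_3)>-1/2$ and equals $\zeta_{F_v}(1+2s_3)\,q_v^{-(2s_3+1)m}$, where $m=\max\{-e_v(c),-l_v-e_v(d)\}$.

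The proof then splits into cases according to $j:=\min\{e_v(c),l_v\}$, which is exactly the filtration $K_v\supset K_{0,v}[1]\supset\cdots\supset K_{0,v}[l_v]$.

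If $k_v\notin K_{0,v}[1]$, then $c\in\mathcal{O}_v^{\times}$, so $m=\max(0,-l_v-e_v(d))=0$.

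If $k_v\in K_{0,v}[j]\setminus K_{0,v}[j+1]$ with $1\le j\le l_v-1$, then $d\in\mathcal{O}_v^{\times}$ because $\det k_v$ is a unit and $c\in\mathfrak{p}_v$. Hence $m=\max(-j,-l_v)=-j$.

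If $k_v\in K_{0,v}[l_v]$, then $m=-l_v$.

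Multiplying by the prefactor $q_v^{-l_v(s_3+1/2)}$ gives $q_v^{-l_v(s_3+1/2)}q_v^{(2s_3+1)j}\zeta_{F_v}(1+2s_3)$ in each case.

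The main obstacle is reconciling the $s_3$-dependence with the stated formula. The stated formula carries no $q_v^{s_3}$ powers, which indicates that the normalization intended there is the one at $s_3=0$, or one absorbed into the $\mathbf{d}_{\mathfrak{n}}$-rescaling used in Definition~\ref{defn4.2}. At $s_3=0$ the factor becomes $q_v^{-l_v/2+j}$, which yields exactly $q_v^{-l_v/2}$ on $K_v\setminus K_{0,v}[1]$, $q_v^{-l_v/2}q_v^{j}$ on the $j$-th shell, and $q_v^{l_v/2}$ on $K_{0,v}[l_v]$. These match the three pieces of \eqref{4.10}.

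I would first double-check the determinant and measure conventions ($d^{\times}t$ with the volume of $\mathcal{O}_v^\times$, and the factor $\zeta_{F_v}(1)$) so that the constant is precisely $\zeta_{F_v}(1+2s_3)$. I would also verify the $s_3$-exponents carefully. If a residual $q_v^{(2j-l_v)s_3}$ survives, I would state the formula with it, or note that it is harmless in $\mathbf{B}_{\varepsilon}^4$, where it is $q_v^{O(\varepsilon l_v)}$. Either way, the structural decomposition over the shells $K_{0,v}[j]\setminus K_{0,v}[j+1]$ is what is used downstream.
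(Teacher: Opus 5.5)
Your argument is correct, and it takes a different route from the paper.

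\textbf{How the two arguments differ.} The paper works group-theoretically. It splits $K_v$ into the cosets $\begin{pmatrix}1&\alpha\\ &1\end{pmatrix}wK_{0,v}[l_v]$ and $\begin{pmatrix}1&\\ \beta&1\end{pmatrix}K_{0,v}[l_v]$ as in \eqref{8.9}. It then uses $K_{0,v}[l_v]\mathbf{d}_v\subset\mathbf{d}_vK_v$, and for $e_v(\beta)=j$ the inclusion $\begin{pmatrix}1&\\ \beta&\varpi_v^{l_v}\end{pmatrix}K_v\subset N(F_v)\diag(\varpi_v^{l_v-j},\varpi_v^{j})K_v$. The value is read off from the left $B(F_v)$-equivariance and right $K_v$-invariance of $h_{3,v}$.

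You bypass the coset decomposition by evaluating the Tate integral directly. The condition $(tc,td\varpi_v^{l_v})\in\mathcal{O}_v^2$ is a threshold on $e_v(t)$ depending only on $\min\{e_v(c),l_v\}$, and a single geometric series handles all shells at once. Your version is shorter and makes the role of the filtration $K_{0,v}[j]$ transparent. The paper's coset analysis has the advantage of applying to functions that are not Godement sections. That is how it is reused for $W_{3,v}(a(y_v)k_v\mathbf{d}_v,s_1)$ in \eqref{ee9.6}--\eqref{ee9.8}.

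\textbf{The $s_3$-dependence.} You can state your conclusion without hedging. The paper's own intermediate values \eqref{8.10}--\eqref{8.12} are exactly your $q_v^{(2j-l_v)(1/2+s_3)}\zeta_{F_v}(1+2s_3)$. (There is a typo in \eqref{8.10}--\eqref{8.11}: $\pm s_3$ should read $\pm l_v s_3$.) The display \eqref{4.10} simply records these values at $s_3=0$.

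So the identity is exact at $s_3=0$, which is how it enters Lemma~\ref{lem8.5}. When the parameter is $O(\varepsilon)$ it holds up to a factor $q_v^{O(\varepsilon l_v)}$, which is all Lemma~\ref{lem9.2} needs. Drop your alternative reading via the $\mathbf{d}_{\mathfrak{n}}$-rescaling: $\Phi_3=\Phi^*$ involves no rescaling, so nothing can absorb the $q_v^{(2j-l_v)s_3}$ factor.

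\textbf{The measure normalization.} The standing hypothesis $(\mathfrak{n},\mathfrak{q}\mathfrak{O}_F)=1$ forces $d_v=0$ at $v\mid\mathfrak{n}$. Hence $\Vol(\mathcal{O}_v^{\times},d^{\times}t_v)=1$, and the constant is exactly $\zeta_{F_v}(1+2s_3)$.
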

\begin{proof}
By Cramer's rule, we have the decomposition 
\begin{equation}\label{8.9}
K_v=\bigsqcup_{\alpha\in \mathcal{O}_v/\mathfrak{p}_v^{l_v}}\begin{pmatrix}
1& \alpha\\
& 1
\end{pmatrix}wK_{0,v}[l_v]\bigsqcup\bigsqcup_{\beta\in \mathfrak{p}_v/\mathfrak{p}_v^{l_v}}\begin{pmatrix}
1\\
\beta & 1
\end{pmatrix}K_{0,v}[l_v].
\end{equation}

We have the following discussions according to \eqref{8.9}. 
\begin{itemize}
\item Suppose $k_v\in K_v-K_{0,v}[1]$. Then $k_v\in \begin{pmatrix}
1& \alpha\\
& 1
\end{pmatrix}wK_{0,v}[l_v]$ for some $\alpha\in \mathcal{O}_v/\mathfrak{p}_v^{l_v}$. Note that $K_{0,v}[l_v]\mathbf{d}_v\subset \mathbf{d}_vK_v$ and the function $h_{3,v}(\cdot,s_3)$ is right-$K_v$-invariant. Then $h_{3,v}(k_v\mathbf{d}_v,s_3)$ is equal to 
\begin{equation}\label{8.10}
h_{3,v}(w\mathbf{d}_v,s_3)=q_v^{-\frac{l_v}{2}-s_3}h_{3,v}(I_2,s_3)=q_v^{-\frac{l_v}{2}-s_3}\zeta_{F_v}(1+2s_3).
\end{equation}

\item Suppose $k_v\in K_{0,v}[l_v]$. Then $k_v\mathbf{d}_v\in \mathbf{d}_vK_v$. Thus
\begin{equation}\label{8.11}
h_{3,v}(k_v\mathbf{d}_v,s_3)=h_{3,v}(\mathbf{d}_v,s_3)=q_v^{\frac{l_v}{2}+s_3}\zeta_{F_v}(1+2s_3).
\end{equation}

\item Suppose $k_v\in K_{0,v}[j]-K_v[j+1]$ for some $1\leq j<l_v$. By \eqref{8.9}, $k_v\in \begin{pmatrix}
1\\
\beta & 1
\end{pmatrix}K_{0,v}[l_v]$ for some $\beta\in \mathcal{O}_v$ with $e_v(\beta)=j$. Utilizing 
\begin{align*}
k_v\mathbf{d}_v\in \begin{pmatrix}
1\\
\beta & \varpi_v^{l_v}
\end{pmatrix}K_v=\begin{pmatrix}
\varpi_v^{l_v}\beta^{-1} &1\\
& \beta
\end{pmatrix}K_v\subset N(F_v)\begin{pmatrix}
\varpi_v^{l_v-j} &\\
& \varpi_v^j
\end{pmatrix}K_v.
\end{align*}

As a consequence, we obtain 
\begin{equation}\label{8.12}
h_{3,v}(k_v\mathbf{d}_v,s_3)=h_{3,v}(\diag(\varpi_v^{l_v-j},\varpi_v^j),s_3)=q_v^{(2j-l_v)(1/2+s_3)}\zeta_{F_v}(1+2s_3).
\end{equation}
\end{itemize}

Therefore, \eqref{4.10} follows from \eqref{8.10}, \eqref{8.11} and \eqref{8.12}. 
\end{proof}

\subsubsection{Growth of $\boldsymbol{e}_v(\sigma_v)$ at $v\mid\mathfrak{n}\mathfrak{q}\infty$}\label{sec8.1.3}
\begin{lemma}\label{lem8.5}
 Let $v\mid\mathfrak{n}$. Let $\sigma\in \mathcal{F}_0(\mathfrak{n},\mathbf{1})$. Then 
\begin{equation}\label{8.21}
\boldsymbol{e}_v(\sigma_v)\ll l_v^4\cdot q_v^{-l_v/2},
\end{equation}
where the implied constant depends only on $F$.
\end{lemma}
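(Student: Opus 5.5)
The plan is to reduce $\boldsymbol{e}_v(\sigma_v)$ to finitely many explicit local Rankin--Selberg integrals against $K_{0,v}[j]$-invariant vectors. Each of these will be bounded by $O(1)$ times a power of $q_v$, and the power of $q_v$ will be read off from the coefficients in Lemma~\ref{lem8.3}.

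Since $v\mid\mathfrak{n}$ and $(\mathfrak{n},\mathfrak{q})=1$, the local data $\Phi_{3,v}=\Phi_{4,v}=\mathbf{1}_{\mathcal{O}_v}\otimes\mathbf{1}_{\mathcal{O}_v}$ are unramified and $\omega_v$ is unramified. By Definition~\ref{defn4.2} we have
\begin{align*}
W_{1,v}(g_v,0)=\alpha_{\mathfrak{n},v}\,q_v^{-l_v/2}\,W_{3,v}(g_v\mathbf{d}_v,0),\qquad W_{2,v}(g_v,0)=q_v^{-l_v/2}\,W_{4,v}(g_v\mathbf{d}_v,0),
\end{align*}
where $|\alpha_{\mathfrak{n},v}|\asymp q_v^{l_v/2}$.

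\emph{First factor.} In \eqref{8.2}, the first factor $\widetilde{\Psi}_v(\overline{W_v},W_{1,v},\overline{h_{3,v}})$ is, after unfolding $h_{3,v}$ into $\Phi_{3,v}$, exactly of the shape $\boldsymbol{\Psi}(W_v,\mathfrak{n})$ in \eqref{c8.5}. So Lemma~\ref{lem8.2} moves the translate $\mathbf{d}_v$ from the Whittaker function onto the section, at the cost of the unimodular factor $\varepsilon_v(1/2,\sigma_v,\psi_v)$.

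\emph{Second factor.} For $\widetilde{\Psi}_v(W_v^*,W_{2,v},\overline{h_{4,v}})$, I would substitute $g_v\mapsto g_v\mathbf{d}_v^{-1}$ and then apply the same Bessel-function argument as in Lemma~\ref{lem8.2}. This puts $\mathbf{d}_v$ onto $h_{4,v}$ (equivalently onto $h_{3,v}$, since the two sections coincide at $v$) and moves the translate $\mathbf{d}_v^{-1}$ onto $W_v^*$.

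\emph{Expansion.} In both factors I then write the Iwasawa decomposition $g_v=n\,a(y)k_v$ and insert the explicit formula \eqref{4.10} for $h_{3,v}(k_v\mathbf{d}_v,0)$. This expresses each factor as a combination of at most $l_v+1$ terms, with coefficients $q_v^{-l_v/2}$, $q_v^{l_v/2}$ and $q_v^{j-l_v/2}$. The $j$-th term is a local integral
\begin{align*}
\int_{F_v^\times}\overline{W_v'(a(y))}\,W_{3,v}^{*}(a(y),0)\,|y|_v^{-1/2}\,d^\times y,
\end{align*}
where $W_v'$ is the average of $W_v$ over $K_{0,v}[j]$, with total weight $\mathrm{Vol}(K_{0,v}[j])\asymp q_v^{-j}$, minus the corresponding average over $K_{0,v}[j+1]$. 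Since $W_{3,v}^*$ is spherical, each such integral is an unramified $\mathrm{GL}_2\times\mathrm{GL}_2$ local integral against a $K_{0,v}[j]$-invariant vector of $\sigma_v$.

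\emph{Bounding each term.} I would take for $\mathcal{W}(\sigma_v)^{K_{0,v}[l_v]}$ the standard basis of oldforms, namely translates $\sigma_v(\diag(\varpi_v^{-i},1))W_v^{\mathrm{new}}$ orthonormalized by a Gram--Schmidt matrix with bounded entries; it has at most $l_v+1$ elements. For these vectors the Kirillov values $W(a(y))$ are explicit: they are supported on $|y|_v\le q_v^{O(l_v)}$ and bounded by $q_v^{\vartheta e_v(y)}$ with $\vartheta\le 7/64$. The $y$-integrals are therefore geometric sums. Dividing by $L_v(1/2,\sigma_v)^3L_v(1/2,\sigma_v\times\overline{\omega}_v)L_v(1,\sigma_v,\mathrm{Ad})^{-1}$, which is $\asymp 1$ by the bound on $\vartheta$, each such sum is $\ll l_v$.

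\emph{Bookkeeping.} Multiplying the coefficient $q_v^{j-l_v/2}$ by the volume $q_v^{-j}$ and the normalizations $\alpha_{\mathfrak{n},v}q_v^{-l_v/2}$ and $q_v^{-l_v/2}$, the expected net size is $q_v^{-l_v/2}$. The count of terms is: at most $l_v+1$ basis vectors, at most $l_v$ values of $j$ in each of the two factors, and an $l_v$ from each $y$-sum. After absorbing one of these factors, this gives the claimed $l_v^4$.

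The step I expect to be the main obstacle is this bookkeeping, together with checking that there is no loss from the large coefficient $q_v^{l_v/2}$ attached to $\mathbf{1}_{K_{0,v}[l_v]}$. I would first try to show that this term is exactly compensated by the volume $\mathrm{Vol}(K_{0,v}[l_v])\asymp q_v^{-l_v}$. Failing that, I would use cancellation between the $K_{0,v}[j]$ and $K_{0,v}[j+1]$ averages: when $j$ exceeds the conductor exponent of $\sigma_v$, the $K_{0,v}[j]$-projection of an oldform coincides with a Hecke translate, which should yield the extra decay.
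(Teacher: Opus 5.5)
Your route is the paper's: bring the first factor to the shape \eqref{c8.5}, use Lemma~\ref{lem8.2} to move $\mathbf{d}_v$ from the Eisenstein Whittaker function onto the section, pass to Iwasawa coordinates, insert \eqref{4.10}, and let $\Vol(K_{0,v}[j])\asymp q_v^{-j}$ absorb the coefficients $q_v^{j}$. However, your opening normalization is wrong, and with it your bookkeeping does not close. At $s=0$ the factor $|\det\mathbf{d}_v|_v^{1/2}=q_v^{-l_v/2}$ built into $\Phi_{1,v}$ and $\Phi_{2,v}$ is cancelled exactly by the factor $|\det g_v|_v^{s+1/2}$ in \eqref{c2.5}. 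One gets $W_{1,v}(g_v,s)=\alpha_{\mathfrak{n},v}q_v^{l_vs}W_{3,v}(g_v\mathbf{d}_v,s)$, hence $W_{1,v}(g_v,0)=\alpha_{\mathfrak{n},v}W_{3,v}(g_v\mathbf{d}_v,0)$, and likewise $W_{2,v}(g_v,0)=W_{4,v}(g_v\mathbf{d}_v,0)$, with no extra $q_v^{-l_v/2}$. The factors you list actually multiply to $q_v^{-3l_v/2}$, not to the $q_v^{-l_v/2}$ you announce. The correct count is one $|\alpha_{\mathfrak{n},v}|\asymp q_v^{l_v/2}$ against a $q_v^{-l_v/2}$ from the section expansion in each of the two factors.

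Two smaller points. First, $h_{4,v}\neq h_{3,v}$ when $\omega_v$ is a nontrivial unramified character; but the analogue of \eqref{4.10} for $h_{4,v}$ only acquires unimodular factors $\omega_v(\varpi_v^{j})$, so nothing changes. Second, the substitution $g_v\mapsto g_v\mathbf{d}_v^{-1}$ in the second factor is unnecessary. The argument of Lemma~\ref{lem8.2} applies directly and leaves $W_v^*$ untranslated, which is what the projection argument below needs.

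The obstacle you anticipate is resolved exactly, with no cancellation between the $K_{0,v}[j]$- and $K_{0,v}[j+1]$-averages. The paper works with the orthonormal basis \eqref{e7.5}, which is your Gram--Schmidt basis of translates taken in order of increasing level. The key fact is \eqref{8.18}: $\sigma_v(\mathbf{1}_{K_{0,v}[j]})W_{v,n_v}=\Vol(K_{0,v}[j])\mathbf{1}_{j\geq n_v+r_{\sigma_v}}W_{v,n_v}$. This holds because for $j\geq n_v+r_{\sigma_v}$ the vector is already $K_{0,v}[j]$-invariant, while for smaller $j$ it is orthogonal to $\mathcal{W}(\sigma_v)^{K_{0,v}[j]}$. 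Hence each term of \eqref{4.10} contributes $\ll q_v^{-l_v/2}|W_{v,n_v}(a(y_v))|$ on its own, including the term with coefficient $q_v^{l_v/2}$ on $K_{0,v}[l_v]$. Summing over the $O(l_v)$ terms gives \eqref{8.19}.

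Some such statement is also what your ``bounding each term'' step tacitly uses. There you bound Kirillov values of basis vectors, but the integrands involve the averaged vectors $W_v'$. If you prefer not to invoke \eqref{8.18}, it suffices to note that $\Vol(K_{0,v}[j])^{-1}\sigma_v(\mathbf{1}_{K_{0,v}[j]})$ is the orthogonal projection onto $\mathcal{W}(\sigma_v)^{K_{0,v}[j]}$. Then the $K_{0,v}[j]$-average of a unit basis vector is $\Vol(K_{0,v}[j])$ times a vector whose coordinates in your orthonormal basis have absolute value at most $1$.
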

\begin{proof}
Let $r_{\sigma_v}$ be the exponent of the arithmetic conductor of $\sigma_v$. Let $W_v^{\circ}$ be the normalized local new vector in the Whittaker model of $\sigma_v$. By \cite[Lemma 9]{BM15}, the space $\mathcal{W}(\sigma_v)^{K_{0,v}[e_v(\mathfrak{n})]}$ has an orthonormal basis $\{W_{v,n_v}:\ 0\leq n_v \leq e_v(\mathfrak{n})-r_{\sigma_v}\}$, where
\begin{equation}\label{e7.5}
W_{v,n_v}=\sum_{i=0}^{n_v}\xi_{\sigma_v}(\mathfrak{p}_v^{i},\mathfrak{p}_v^{n_v})\sigma_v\left(\begin{pmatrix}
1\\
&\varpi_v^{i}
\end{pmatrix}\right)W_v^{\circ}
\end{equation}
for some explicit coefficitns $\xi_{\sigma_v}(\mathfrak{p}_v^{i},\mathfrak{p}_v^{n_v})\ll 1$. Substituting \eqref{e7.5} into \eqref{8.2}, we obtain 
\begin{align*}
\boldsymbol{e}_v(\sigma_v)\ll \sum_{n_v}\big|\Psi_v(\overline{W_{v,n_v}},W_{1,v}(\cdot,0),\overline{h_{3,v}(\cdot,0)})\Psi_v(W_{v,n_v}^*,W_{2,v}(\cdot,0)),\overline{h_{4,v}(\cdot,0)})\big|,
\end{align*}
where we have replaced the meromorphic continuation $\widetilde{\Psi}(\cdots)$ with the ordinary period integral $\Psi(\cdots)$ since the integrals converge absolutely. 

Let $\alpha_{\mathfrak{n},v}:=\overline{\omega}_v(\varpi_v^{l_v})q_v^{l_v/2}(1+q_v^{-1})$. By definitions in \textsection\ref{sec4.4}, we have
\begin{align*}
W_{1,v}(g_v,0)=\alpha_{\mathfrak{n},v}W_{3,v}(g_v\mathbf{d}_v,0),\ \ \mathbf{d}_v=\diag(1,\varpi_v^{l_v}).
\end{align*}
As a consequence, $\Psi_v(\overline{W_{v,n_v}},W_{1,v}(\cdot,0),\overline{h_{3,v}(\cdot,0)})$ boils down to  
\begin{align*}
\alpha_{\mathfrak{n},v}\int_{N(F_v)\backslash G(F_v)}\overline{W_{v,n_v}(g_v)}W_{3,v}(g_v\mathbf{d}_v,0)\overline{\Phi_{3,v}(\mathbf{e}_2g_v)}|\det g_v|^{\frac{1}{2}}dg_v.
\end{align*}

Substituting \eqref{e7.5} into the above integral, along with the fact that 
\begin{align*}
\Psi_v(\overline{W_{v,n_v}},W_{1,v}(\cdot,0),\overline{h_{3,v}(\cdot,0)})=\alpha_{\mathfrak{n},v}\Psi_v(W_{v,n_v}^*,W_{2,v}(\cdot,0)),\overline{h_{4,v}(\cdot,0)}),
\end{align*}
we conclude that 
\begin{equation}\label{e8.16}
\boldsymbol{e}_v(\sigma_v)\ll l_vq_v^{l_v/2}\sum_{0\leq n_v\leq l_v-r_{\sigma_v}}\big|\boldsymbol{\Psi}(W_{v,n_v},\mathfrak{n})\big|^2,
\end{equation}
where $\boldsymbol{\Psi}(W_{v,n_v},\mathfrak{n})$ is given by Lemma \ref{lem8.2}: 
\begin{align*}
\boldsymbol{\Psi}(W_{v,n_v},\mathfrak{n})=\varepsilon_v(1/2,\sigma_v,\psi_v)\int_{N(F_v)\backslash\overline{G}(F_v)}\overline{W_{v,n_v}\left(g_v\right)
W_{3,v}^*(g_v,0)}h_{3,v}(g_v\mathbf{d}_v,0)dg_v.
\end{align*}

Let $a(y_v)=\diag(y_v,1)$, $y_v\in F_v^{\times}$. By Iwasawa decomposition, the function $\big|\boldsymbol{\Psi}(W_{v,n_v},\mathfrak{n})\big|$ is equal to 
\begin{equation}\label{8.16}
\bigg|\int_{F_v^{\times}}W_{3,v}^*(a(y_v),0)|y_v|_v^{-\frac{1}{2}}\int_{K_v}W_{v,n_v}\left(a(y_v)k_v\right)\overline{h_{3,v}(k_v\mathbf{d}_v,0)
}dk_vd^{\times}y_v\bigg|.
\end{equation}

Let $j\geq 0$. By the construction of $W_{v,n_v}$,  
\begin{align*}
\sigma_v(\mathbf{1}_{K_{0,v}[j]})W_{v,n_v}:=\int_{K_{0,v}[j]}W_{v,n_v}\left(a(y_v)k_v\right)dk_v
\end{align*}
is right-$K_{0,v}[j]$-invariant. In particular, it is zero unless $j\geq r_{\sigma_v}$. Suppose $j\geq r_{\sigma_v}$. Then $\sigma_v(\mathbf{1}_{K_{0,v}[j]})W_{v,n_v}$ is a linear combination of $W_{v,i}$'s, $1\leq i\leq j-r_{\sigma_v}$. Hence, there are coefficients $c_i$'s such that 
\begin{equation}\label{8.17}
\sigma_v(\mathbf{1}_{K_{0,v}[j]})W_{v,n_v}=\mathbf{1}_{j\geq r_{\sigma_v}}\cdot \sum_{i=0}^{j-r_{\sigma_v}}c_i\cdot W_{v,i}(a(y_v)).
\end{equation}

It then follows from 
\begin{align*}
\langle \sigma_v(\mathbf{1}_{K_{0,v}[j]})W_{v,n_v}, W_{v,i}\rangle=\langle W_{v,n_v}, \sigma_v(\mathbf{1}_{K_{0,v}[j]})W_{v,i}\rangle=\Vol(K_{0,v}[j])\langle W_{v,n_v}, W_{v,i}\rangle
\end{align*}
that $c_i=0$ on the right-hand side of \eqref{8.17} except $i=n_v.$ Therefore, 
\begin{equation}\label{8.18}
\sigma_v(\mathbf{1}_{K_{0,v}[j]})W_{v,n_v}=\Vol(K_{0,v}[j])\cdot \mathbf{1}_{j\geq n_v+r_{\sigma_v}}\cdot W_{v,n_v}(a(y_v)).
\end{equation}

It follows form \eqref{8.18} and  Lemma \ref{lem8.3} that 
\begin{equation}\label{8.19}
\int_{K_v}W_{v,n_v}\left(a(y_v)k_v\right)\overline{h_{3,v}(k_v\mathbf{d}_v,0)
}dk_v\ll l_vq_v^{-l_v/2}\big|W_{v,n_v}(a(y_v))\big|. 
\end{equation}

Substituting \eqref{8.16} and \eqref{8.19} into \eqref{e8.16} we derive 
\begin{align*}
\boldsymbol{e}_v(\sigma_v)\ll l_v^4q_v^{-\frac{l_v}{2}}\int_{F_v^{\times}}\big|W_{v,n_v}(a(y_v))W_{3,v}^*(a(y_v),0)\big||y_v|_v^{-\frac{1}{2}}d^{\times}y_v\ll l_v^4q_v^{-\frac{l_v}{2}}.
\end{align*}
Hence, \eqref{8.21} follows. 
\end{proof}

\begin{lemma}\label{lem8.6}
 Let $v\mid\mathfrak{q}$. Let $\sigma\in \mathcal{F}_0(\mathfrak{n},\mathbf{1})$. Then 
\begin{equation}\label{8.24}
\boldsymbol{e}_v(\sigma_v)\ll q_v^{e_v(\mathfrak{q})/2}\cdot|\lambda_{\sigma}(\mathfrak{p}_v^{e_v(\mathfrak{q})-e_v(\mathfrak{q}')})|,	
\end{equation}
where the implied constant depends only on $F$. Here $\mathfrak{q}'$ is the non-Archimedean modulus of $\omega$ (see \textsection\ref{sec4.4}).  
\end{lemma}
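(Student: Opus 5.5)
We work at a place $v\mid\mathfrak{q}$, where $v\nmid\mathfrak{n}$ since $(\mathfrak{n},\mathfrak{q})=1$. There $\Phi_{1,v}=\Phi_{3,v}=\Phi_v^*=\mathbf{1}_{\mathcal{O}_v}\otimes\mathbf{1}_{\mathcal{O}_v}$, while $\Phi_{2,v}=\Phi_{4,v}=\Vol(K_{0,v}[m_v])^{-1}\mathbf{1}_{\mathfrak{p}_v^{m_v}}\otimes\mathbf{1}_{\mathcal{O}_v^\times}\omega_v$, with $m_v=e_v(\mathfrak{q})$.

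The plan is to compute the local factor in \eqref{c8.3} directly. First, $W_{1,v}(\cdot,0)$ and $h_{3,v}(\cdot,0)$ are spherical, so only the spherical vector $W_v^\circ$ of $\sigma_v$ contributes, exactly as in the discussion after \eqref{c8.3}. Because $\sigma\in\mathcal{F}_0(\mathfrak{n},\mathbf{1})$, the component $\sigma_v$ is unramified with trivial central character, and $\boldsymbol{e}_v(\sigma_v)$ reduces to the single term
\[
\frac{\Psi_v(\overline{W_v^\circ},W_{1,v}(\cdot,0),\overline{h_{3,v}(\cdot,0)})\,\Psi_v(W_v^{\circ *},W_{2,v}(\cdot,0),\overline{h_{4,v}(\cdot,0)})}{L_v(1/2,\sigma_v)^3L_v(1/2,\sigma_v\times\overline{\omega}_v)L_v(1,\sigma_v,\Ad)^{-1}}.
\]
The first factor is a fully unramified Rankin--Selberg integral. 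It equals $L_v(1/2,\sigma_v)^2/L_v(1,\sigma_v,\Ad)$ up to a bounded volume factor $q_v^{O(d_v)}$, so it cancels the corresponding part of the denominator. It remains to bound the second period, divided by $L_v(1/2,\sigma_v)L_v(1/2,\sigma_v\times\overline\omega_v)$.

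For the second period, we unfold $W_{2,v}$ and $h_{4,v}$ through their Godement--Tate definitions, \eqref{c2.5} and \eqref{2.1}, and use the Iwasawa decomposition $g=n\,a(y)\,z\,k$. The section $h_{4,v}(\cdot,0)$ is supported on $B(F_v)K_{0,v}[m_v]$, equivalently on $\mathbf{e}_2k\in\mathfrak{p}_v^{m_v}\times\mathcal{O}_v^\times$. It transforms by $\omega_v$ of the lower-right entry, and the normalization $\Vol(K_{0,v}[m_v])^{-1}$ makes the $k$-integral collapse to $K_{0,v}[m_v]$ with total mass $O(1)$. What remains is the unramified $W_v^\circ$ integrated in $y$ against $W_{2,v}(a(y),0)$. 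The latter is an explicit Whittaker function of $\mathbf{1}\boxplus\omega_v$ built from $\Phi_{2,v}$. Its partial Fourier transform in the first variable makes it supported on $|y|\le q_v^{\,m_v-c_v}$, where $c_v=e_v(\mathfrak{q}')$ is the conductor exponent of $\omega_v$, and its size there is $\asymp q_v^{m_v/2}|y|^{1/2}$ times a Gauss sum of modulus $q_v^{c_v/2}$, normalized against $\Vol(K_{0,v}[m_v])\asymp q_v^{-m_v}$.

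Inserting $W_v^\circ(a(\varpi^n))=q_v^{-n/2}\lambda_{\sigma_v}(\mathfrak{p}_v^n)$ reduces the $y$-integral to a finite sum over $n$ with $0\le n\le m_v-c_v$. If $\omega_v$ is ramified, the $\omega_v$-twist forces the sum to concentrate at the single term $n=m_v-c_v$. This produces $q_v^{m_v/2}\lambda_{\sigma}(\mathfrak{p}_v^{m_v-c_v})$ times $L$-factors, which are bounded above and below because $|\alpha_{\sigma_v}|\le q_v^{\vartheta}$, so the ratio with $L_v(1/2,\sigma_v)L_v(1/2,\sigma_v\times\overline\omega_v)$ is $O(1)$. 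If $\omega_v$ is unramified, so $c_v=0$, the integral is the standard "oldform" Hecke integral. It gives $L_v(1/2,\sigma_v)L_v(1/2,\sigma_v\otimes\overline\omega_v)$ times a translate factor, which by the Hecke relation \eqref{hecke.} and Lemma~\ref{lemma4.1} reduces to $q_v^{m_v/2}\lambda_\sigma(\mathfrak{p}_v^{m_v})$ plus lower terms. These lower terms are absorbed because the $\lambda$'s of smaller index are dominated via the Hecke relations, up to a constant depending only on $F$.

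\textbf{Main obstacle.} The hardest step will be computing $W_{2,v}(a(y)k,0)$ explicitly for $k\in K_{0,v}[m_v]$ and checking that it is independent of $k$ up to $\omega_v$ of the lower-right entry. This independence is what lets the $k$-integral collapse and the $\omega_v$-character sum localize to $n=m_v-c_v$. I would do it by writing $(t,bt)k$ and changing variables in $b$, exactly as in the treatment of $h_{3,v}$ in Lemma~\ref{lem8.3}, and then evaluate the $b$-integral as a Gauss sum whose modulus is $q_v^{c_v/2}$.
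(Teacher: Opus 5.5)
Your route is the paper's: reduce to the spherical vector $W_v^\circ$ (because $W_{1,v}(\cdot,0)\overline{h_{3,v}(\cdot,0)}$ is $K_v$-invariant), handle the first period by Casselman--Shalika, and bound the second period by computing $W_{2,v}$ explicitly. The paper does not compute this; it cites \cite[Lemma 3.7.2]{MV10}.

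The step you call the main obstacle is immediate. Write $m_v=e_v(\mathfrak q)$ and $c_v=e_v(\mathfrak q')$, so $c_v\le m_v$. Then $\Phi_{2,v}(xk)=\omega_v(k_{22})\Phi_{2,v}(x)$ for $k\in K_{0,v}[m_v]$, hence $W_{2,v}(gk,0)=\omega_v(k_{22})W_{2,v}(g,0)$, and the $k$-integral collapses.

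The real gap is in the $y$-integral. Take $c_v\ge 1$ and, for simplicity, $\psi_v$ unramified. After $b\mapsto b/t$, the $b$-integral in \eqref{c2.5} is the Gauss sum $\int_{\mathcal O_v^\times}\omega_v(b)\overline{\psi}_v(b/t)\,db$. This forces $e_v(t)=c_v$, and after multiplying by $\omega_v^{-1}(t)$ it is independent of $t$. The factor $\mathbf 1_{\mathfrak p_v^{m_v}}(ty)$ then gives $W_{2,v}(a(y),0)=\kappa\, q_v^{m_v-c_v/2}|y|_v^{1/2}\mathbf 1_{e_v(y)\ge m_v-c_v}$ with $|\kappa|\asymp 1$. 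So the support is $|y|_v\le q_v^{-(m_v-c_v)}$; your inequality has the wrong sign. No $\omega_v$-dependence survives in $y$, and the $y$-integral is an infinite tail, not the single term $n=m_v-c_v$. By the Hecke recursion,
\[
\sum_{n\ge m_v-c_v}q_v^{-n/2}\lambda_\sigma(\mathfrak p_v^{n})=q_v^{-\frac{m_v-c_v}{2}}\Big(\lambda_\sigma(\mathfrak p_v^{m_v-c_v})-q_v^{-1/2}\lambda_\sigma(\mathfrak p_v^{m_v-c_v-1})\Big)L_v(1/2,\sigma_v).
\]
Hence $\boldsymbol{e}_v(\sigma_v)$ is a bounded, nonvanishing multiple of $q_v^{m_v/2}\big(\lambda_\sigma(\mathfrak p_v^{m_v-c_v})-q_v^{-1/2}\lambda_\sigma(\mathfrak p_v^{m_v-c_v-1})\big)$.

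The second term is not controlled by the first, and this also breaks your unramified case. No Hecke relation bounds $|\lambda_\sigma(\mathfrak p_v^{k-1})|$ by $|\lambda_\sigma(\mathfrak p_v^{k})|$, because the latter can vanish or be arbitrarily small (e.g.\ $\lambda_\sigma(\mathfrak p_v)=0$ while $\lambda_\sigma(\mathcal O_v)=1$). Concretely, if $m_v-c_v=1$ and $\lambda_\sigma(\mathfrak p_v)=0$, the right side of \eqref{8.24} is $0$ while $|\boldsymbol{e}_v(\sigma_v)|\asymp q_v^{(m_v-1)/2}$. Likewise, for $\omega_v=\mathbf 1$ and $m_v=1$ the same computation gives $\boldsymbol{e}_v(\sigma_v)$ proportional to $q_v^{1/2}\lambda_\sigma(\mathfrak p_v)-3+q_v^{-1}$.

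So your argument cannot deliver \eqref{8.24} as literally stated. The paper's one-line appeal to \cite{MV10} glosses over the same lower-order terms. What the computation does prove is
\[
\boldsymbol{e}_v(\sigma_v)\ll_{\varepsilon} q_v^{e_v(\mathfrak q)/2}\,q_v^{(e_v(\mathfrak q)-e_v(\mathfrak q'))(\vartheta+\varepsilon)},
\]
or the sharper version with the correction terms $q_v^{-j/2}\lambda_\sigma(\mathfrak p_v^{m_v-c_v-j})$, $j\ge 1$, added. This is all that is used later: Lemma~\ref{lem10.1} only uses $|\lambda_\sigma(\mathfrak q\mathfrak q'^{-1})|\ll N_F(\mathfrak q\mathfrak q'^{-1})^{\vartheta+\varepsilon}$.
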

\begin{proof}
By \eqref{8.2}, and the fact that $W_{1,v}(\cdot,0)\overline{h_{3,v}(\cdot,0)}$ is right-$K_v$-invariant, we have
\begin{align*}
\boldsymbol{e}_v(\sigma_v)\ll \big|\widetilde{\Psi}_v(\overline{W_{v}^{\circ}},W_{1,v}(\cdot,0),\overline{h_{3,v}(\cdot,0)})\widetilde{\Psi}_v(W_{v}^{\circ,*},W_{2,v}(\cdot,0)),\overline{h_{4,v}(\cdot,0)})\big|,
\end{align*}
where $\overline{W_{v}^{\circ}}$ is the normalized spherical vector. 

By Casselman-Shalika formula, 
\begin{equation}\label{8.22}
\widetilde{\Psi}_v(\overline{W_{v}^{\circ}},W_{1,v}(\cdot,0),\overline{h_{3,v}(\cdot,0)})\ll |L_v(1/2,\sigma_v)|^2\ll 1.
\end{equation}

Furthermore, by the constructions  \eqref{equa4.5}, \eqref{e4.11}, and \eqref{f2.3}, we deduce as \cite[lemma 3.7.2]{MV10} that 
\begin{align*}
\widetilde{\Psi}_v(W_{v}^{\circ,*},W_{2,v}(\cdot,0)),\overline{h_{4,v}(\cdot,0)})\ll q_v^{e_v(\mathfrak{q})}\cdot q_v^{-\frac{e_v(\mathfrak{q})-e_v(\mathfrak{q}')}{2}}\cdot q_v^{-\frac{e_v(\mathfrak{q}')}{2}}\cdot |\lambda_{\sigma}(\mathfrak{p}_v^{e_v(\mathfrak{q})-e_v(\mathfrak{q}')})|.
\end{align*}

Combining \eqref{8.22} with the above estimate, we obtain \eqref{8.24}.
\end{proof}

\begin{lemma}\label{lem8.7}
 Let $v\mid\infty$, $\sigma\in \mathcal{F}_0(\mathfrak{n},\mathbf{1})$, and 
\begin{align*}
\boldsymbol{e}_v^*(\sigma_v):=L_v(1/2,\sigma_v)^3L(1/2,\sigma_v\times\overline{\omega}_v)L_v(1,\sigma_v,\Ad)^{-1}\boldsymbol{e}_v(\sigma_v),
\end{align*}
where $\boldsymbol{e}_v(\sigma_v)$ is defined by \eqref{c8.3}. Then 
\begin{multline}\label{eq8.24}
|\boldsymbol{e}_v^*(\sigma_v)|\ll \frac{\sqrt{|\Gamma(\varepsilon+|\Re(\nu)|+\nu[F_v:\mathbb{R}])\Gamma(\varepsilon+|\Re(\nu)|-\nu[F_v:\mathbb{R}])|}}{|\Gamma(\nu[F_v:\mathbb{R}])\Gamma(-\nu[F_v:\mathbb{R}])|}\\
\cdot |L_v(1/2,\sigma_v)|^2\cdot\mathbf{C}_v^{-1/2+10\varepsilon}\cdot \big[\mathbf{C}_vC_v(\omega)^{-1}\big]^{|\Re(\nu)|[F_v:\mathbb{R}]^{-1}},
\end{multline}
where the implied constant depends only on $F$ and $\varepsilon$, and the smooth functions $\alpha_v$, $v\mid\infty$ (see \textsection\ref{sec4.4}).
\end{lemma}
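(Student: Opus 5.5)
The plan is to reduce $\boldsymbol{e}_v^*(\sigma_v)$ to a product of two explicit archimedean Rankin--Selberg integrals against the spherical vector, and to bound each separately. At $v\mid\infty$ we have $\Phi_{1,v}=\Phi_{3,v}=\Phi_v^*$, the standard Gaussian of \eqref{f4.6}, because $\mathbf{d}_v=I_2$ there. Hence $W_{1,v}(\cdot,0)\overline{h_{3,v}(\cdot,0)}$ is right-$K_v$-invariant. Exactly as in the proof of Lemma~\ref{lem8.6}, only the $K_v$-fixed vector $W_v^{\circ}$ of an orthonormal basis of $\mathcal{W}(\sigma_v)$ survives in the sum \eqref{c8.3}. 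So
\begin{align*}
\boldsymbol{e}_v^*(\sigma_v)=\Psi_v(\overline{W_v^{\circ}},W_{1,v}(\cdot,0),\overline{h_{3,v}(\cdot,0)})\cdot\Psi_v(W_v^{\circ,*},W_{2,v}(\cdot,0),\overline{h_{4,v}(\cdot,0)}).
\end{align*}
Both integrals converge absolutely by the bound $\vartheta$ towards Ramanujan, so no meromorphic continuation is needed.

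\emph{First factor.} Write $W_v^{\circ}=c_v\,W_v^{\mathrm{std}}$, where $W_v^{\mathrm{std}}(a(y))=|y|_v^{1/2}K_{\nu}(2\pi|y|_v)$ (the complex case is analogous). The $L^2$-normalization of the Kirillov model gives $|c_v|^{-2}=\int|K_\nu(2\pi|y|)|^2\,d^{\times}y$. By the standard Mellin formula this integral is a ratio of Gamma functions, and it yields the factor
\begin{align*}
\frac{\sqrt{|\Gamma(\varepsilon+|\Re\nu|\pm\nu[F_v:\mathbb{R}])|}}{|\Gamma(\pm\nu[F_v:\mathbb{R}])|}
\end{align*}
appearing in \eqref{eq8.24}, after dividing by $L_v(1,\sigma_v,\Ad)$. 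The $\varepsilon$ absorbs the possibly exceptional case $\Re\nu\neq0$. The unnormalized integral against $W_{1,v}h_{3,v}$ is the archimedean unramified Rankin--Selberg computation, which gives exactly $L_v(1/2,\sigma_v)^2$. This accounts for the factor $|L_v(1/2,\sigma_v)|^2$.

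\emph{Second factor.} This carries all the dependence on $\mathbf{C}_v$ and $\omega_v$, and it is the step I expect to be the main obstacle. I would bound it directly, without any functional equation. By Iwasawa decomposition it equals
\begin{align*}
\int_{K_v}\int_{F_v^\times}W_v^{\circ,*}(a(y)k)\,W_{2,v}(a(y)k,0)\,\overline{h_{4,v}(k,0)}\,|y|_v^{-1/2}\,d^\times y\,dk.
\end{align*}
For $W_{2,v}$ I would reuse the truncation argument of Lemma~\ref{lem7.6}. Write out \eqref{f2.3} with \eqref{eq6.28}, and integrate by parts in $b$ against the oscillation of $\omega_v$. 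This restricts $t$ to $|t|_v\gg\mathbf{C}_v^{-\varepsilon}C_v(\omega)^{-1-\varepsilon}$ up to $O(\mathbf{C}_v^{-100})$. The support conditions $|t_1|\ll\mathbf{C}_v^{-1}$ and $|t_2|\asymp1$ then give
\begin{align*}
W_{2,v}(a(y)k,0)\ll\mathbf{C}_v^{-1/2+\varepsilon}|y|_v^{1/2}\,\mathbf{1}_{C_v(\omega)\mathbf{C}_v^{-1-\varepsilon}\ll|y|_v\ll \mathbf{C}_v^{\varepsilon}}+O(\mathbf{C}_v^{-100}),
\end{align*}
and $h_{4,v}(k,0)\ll1$ on $K_v$. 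Next bound $W_v^{\circ}(a(y)k)$ uniformly in $k$ using the Bessel asymptotics near $0$: it is of size $|c_v||y|^{1/2-|\Re\nu|}$ times the same Gamma factors, which I would absorb into the square-root Gamma factor. Integrating $|y|^{-|\Re\nu|}$ down to $|y|\approx C_v(\omega)/\mathbf{C}_v$ produces $[\mathbf{C}_vC_v(\omega)^{-1}]^{|\Re\nu|/[F_v:\mathbb{R}]}$ together with $\mathbf{C}_v^{-1/2+10\varepsilon}$. Exponential decay of $K_\nu$ removes the range $|y|\gg1$.

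Multiplying the two factor bounds gives \eqref{eq8.24}. The delicate points are two. First, the truncation in $t$ must be uniform in the spectral parameter $\nu$. Second, $W_v^{\circ}(a(y)k)$ must be controlled for general $k$; here I would use the $K$-invariance of $W_v^{\circ}$, so that $W_v^{\circ}(a(y)k)=W_v^{\circ}(a(y))$ and the issue disappears.
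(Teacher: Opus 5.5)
Your reduction to the spherical vector $W_v^{\circ}$ and your treatment of the first period agree with the paper. The gap is in the second period $\Psi_v(W_v^{\circ,*},W_{2,v}(\cdot,0),\overline{h_{4,v}(\cdot,0)})$: both local inputs you feed into it are wrong.

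\emph{The section $h_{4,v}$.} The claim $h_{4,v}(k,0)\ll 1$ is false. Explicitly,
\[
h_{4,v}(k,0)=\mathbf{C}_v^{1/2}\omega_v(k_{22})\int_{F_v^{\times}}\alpha_v(\mathbf{C}_v|tk_{21}|_v)\,\alpha_v(|tk_{22}|_v-1)\,|t|_v\,d^{\times}t .
\]
This vanishes unless $|k_{21}|_v\ll\mathbf{C}_v^{-1}$, a subset of $K_v$ of measure $\asymp\mathbf{C}_v^{-1}$, and has size $\asymp\mathbf{C}_v^{1/2}$ there. Hence $\int_{K_v}|h_{4,v}(k,0)|\,dk\asymp\mathbf{C}_v^{-1/2}$. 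This, and not the size of $W_{2,v}$, is the source of $\mathbf{C}_v^{-1/2}$ in \eqref{eq8.24}.

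\emph{The Whittaker function $W_{2,v}$.} For $k$ in that support, your bound on $W_{2,v}(a(y)k,0)$ is false. Since $(t,bt)a(y)k=(ty,bt)k$ and $|k_{21}|_v\ll\mathbf{C}_v^{-1}$, the support of $\Phi_v$ forces $|ty|_v\ll\mathbf{C}_v^{-1}$ and $|bt|_v\asymp 1$. Together with your truncation $|t|_v\gg\mathbf{C}_v^{-\varepsilon}C_v(\omega)^{-1-\varepsilon}$, this gives an \emph{upper} cutoff $|y|_v\ll C_v(\omega)^{1+\varepsilon}\mathbf{C}_v^{-1+\varepsilon}$, not a lower one; you have inverted the inequality. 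The oscillatory integral also has size $C_v(\omega)^{-1/2}$, not $\mathbf{C}_v^{-1}$. The correct estimate, which the paper takes from \cite[Lemma 3.7.2]{MV10}, is
\[
W_{2,v}(a(y)k,0)\ll \mathbf{C}_v^{1/2+\varepsilon}C_v(\omega)^{-1/2}|y|_v^{1/2-\varepsilon}\Big(1+\frac{|y|_v}{C_v(\omega)\mathbf{C}_v^{-1}}\Big)^{-m}.
\]
It is essentially sharp. At $k=I$ and $|y|_v\le c\,C_v(\omega)\mathbf{C}_v^{-1}$, the inner integral reduces to the archimedean Gauss integral $\int\omega_v(u)\overline{\psi_v}(u)\,d^{\times}u$. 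So at $|y|_v=C_v(\omega)\mathbf{C}_v^{-1-2\varepsilon}$ one has $|W_{2,v}(a(y),0)|\gg\mathbf{C}_v^{-\varepsilon}$, whereas your bound asserts $O(\mathbf{C}_v^{-100})$ there. The paper itself notes in the proof of Lemma~\ref{lemma9.2} that $W_{2,v}(a(y))$ peaks at $|y|_v\asymp C_v(\omega)\mathbf{C}_v^{-1}$ with size $\asymp 1$.

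\emph{The final combination.} Your last step is inconsistent even with your own inputs. The integrand $|W_v^{\circ}(a(y))|\,|W_{2,v}(a(y)k,0)|\,|y|_v^{-1/2}$ behaves like $|y|_v^{1/2-|\Re(\nu)|}$, which increases in $|y|_v$. Over your range the integral is governed by the top end $\mathbf{C}_v^{\varepsilon}$, so no factor $[\mathbf{C}_vC_v(\omega)^{-1}]^{|\Re(\nu)|}$ can come out of it.

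\emph{How to repair it.} With the correct inputs your framework does work and then agrees with the paper. Restrict $k$ to the support of $h_{4,v}$. The $y$-integral is dominated by the peak $|y|_v\asymp C_v(\omega)\mathbf{C}_v^{-1}$, where $|W_{2,v}|\asymp 1$ and $|W_v^{\circ}(a(y))|\,|y|_v^{-1/2}\approx |c_v|\,|y|_v^{-|\Re(\nu)|}$ up to Gamma factors. This produces $[\mathbf{C}_vC_v(\omega)^{-1}]^{|\Re(\nu)|}$, and the $k$-integral supplies $\mathbf{C}_v^{-1/2}$. The paper makes the $y$-step rigorous by Cauchy--Schwarz against $K_\nu$.

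\emph{Gamma bookkeeping.} This also needs fixing. The numerator $\sqrt{|\Gamma(\varepsilon+|\Re(\nu)|\pm\nu[F_v:\mathbb{R}])|}$ arises from the Cauchy--Schwarz step in the second period, not from the normalisation of $W_v^{\circ}$. The denominator $|\Gamma(\nu[F_v:\mathbb{R}])\Gamma(-\nu[F_v:\mathbb{R}])|$ is $|c_v|^{2}$, one $|c_v|$ from each period. Finally, $\boldsymbol{e}_v^*(\sigma_v)$ is just the product of the two local periods, so there is no $L_v(1,\sigma_v,\Ad)$ to divide by.
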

\begin{proof}
Recall that $\sigma_v$ is unramified with trivial central character, we may write $\sigma_v=|\cdot|^{\nu}\boxplus|\cdot|^{-\nu}$ for some $\nu\in \mathbb{C}$ satisfying $|\Re(\nu)|\leq 7/64$ (see \textsection\ref{sec2.2.2}). Since $W_{1,v}(\cdot,0)\overline{h_{3,v}(\cdot,0)}$ is right-$K_v$-invariant, then 
\begin{align*}
\boldsymbol{e}_v^*(\sigma_v)\ll \big|\widetilde{\Psi}_v(\overline{W_{v}^{\circ}},W_{1,v}(\cdot,0),\overline{h_{3,v}(\cdot,0)})\widetilde{\Psi}_v(W_{v}^{\circ,*},W_{2,v}(\cdot,0)),\overline{h_{4,v}(\cdot,0)})\big|,
\end{align*}
where $W_{v}^{\circ}$ is the normalized spherical vector relative to the generic character $\theta_v$, and $W_{v}^{\circ,*}$ is the vector relative to $\overline{\theta}_v$ (see \textsection\ref{sec1.3.3}). Specifically, there are some $c_v(\nu)\in F_v^{\times}$ such that  
\begin{align*}
&W_{v}^{\circ}(a(y_v))=c_v(\nu)|y_v|_v^{\frac{1}{2}}K_{\nu}(2\pi |y_v|_v),\quad \ \ \text{if $F_v\simeq \mathbb{R}$},\\
&W_{v}^{\circ}(a(y_v))=c_v(\nu)|y_v|_v^{\frac{1}{2}}K_{2\nu}(4\pi |y_v|_v^{1/2}),\quad \ \ \text{if $F_v\simeq \mathbb{C}$}.
\end{align*}
Here the normalizing factor $c_v(\nu)$ is determined via $\langle W_{v}^{\circ},W_{v}^{\circ}\rangle=1$. Explicitly, by \cite[6.576.4]{GR15}, we have 
\begin{align*}
|c_v(\nu)|^{-2}=&\int_{F_v^{\times}}K_{\nu}(2\pi |y_v|_v)^2d^{\times}y_v=\frac{\Gamma(\nu)\Gamma(-\nu)}{4},\quad \ \ \text{if $F_v\simeq \mathbb{R}$},\\
|c_v(\nu)|^{-2}=&\int_{F_v^{\times}}K_{2\nu}(4\pi |y_v|_v^{1/2})^2d^{\times}y_v
=\frac{\pi\Gamma(2\nu)\Gamma(-2\nu)}{2},\quad \ \ \text{if $F_v\simeq \mathbb{C}$}.
\end{align*}
Here $|\cdot|$ means the modulus of a complex number (so that $|\cdot|_v=|\cdot|^2$ if $F_v\simeq\mathbb{C}$). Consequently, 
\begin{equation}\label{8.25}
|\widetilde{\Psi}_v(\cdots)|=|\Psi_v(\overline{W_{v}^{\circ}},W_{1,v}(\cdot,0),\overline{h_{3,v}(\cdot,0)})|\ll |c_v(\nu)|\cdot |L_v(1/2,\sigma_v)|^2,
\end{equation}
where $\widetilde{\Psi}_v(\cdots):=\widetilde{\Psi}_v(\overline{W_{v}^{\circ}},W_{1,v}(\cdot,0),\overline{h_{3,v}(\cdot,0)})$, and  the implied constant in \eqref{8.25} depends only on $F_v$. Analogously, 
\begin{align*}
\widetilde{\Psi}_v(W_{v}^{\circ,*},W_{2,v}(\cdot,0)),\overline{h_{4,v}(\cdot,0)})=\Psi_v(W_{v}^{\circ,*},W_{2,v}(\cdot,0)),\overline{h_{4,v}(\cdot,0)}).
\end{align*}

Utilizing the above descriptions of $W_{v}^{\circ}$, we obtain 
\begin{multline}\label{8.26}
|\widetilde{\Psi}_v(W_{v}^{\circ,*},W_{2,v}(\cdot,0)),\overline{h_{4,v}(\cdot,0)})|\ll |c_v(\nu)|\int_{K_v}|h_{4,v}(k_v,0)|\\
\int_{F_v^{\times}}|K_{\nu[F_v:\mathbb{R}]}(2^{[F_v:\mathbb{R}]}\pi |y_v|_v^{[F_v:\mathbb{R}]^{-1}})|\cdot |W_{2,v}(a(y_v)k_v,0)|d^{\times}y_vdk_v.
\end{multline}

Let $m\geq 0$. By \cite[Lemma 3.7.2]{MV10}, for $k_v$ with $h_{4,v}(k_v,0)\neq 0$, 
\begin{equation}
W_{2,v}(a(y_v)k_v,0)\ll \mathbf{C}_v^{1/2+\varepsilon}C_v(\omega)^{-1/2}|y_v|_v^{1/2-\varepsilon}\left(1+\frac{|y_v|_v}{C_v(\omega)\mathbf{C}_v^{-1}}\right)^{-m},
\end{equation}
where the implied constant depends only on $m$, $\varepsilon$, $F_v$, and the smooth functions $\alpha_v$, $v\mid\infty$ (see \textsection\ref{sec4.4}).

Therefore, we can bound $|\boldsymbol{e}_v^*(\sigma_v)|$, upon an explicit constant, by  
\begin{equation}\label{8.28}
|\boldsymbol{e}_v^*(\sigma_v)|\ll \frac{|L_v(1/2,\sigma_v)|^2\mathbf{C}_v^{\frac{1}{2}}}{|\Gamma(\nu[F_v:\mathbb{R}])\Gamma(-\nu[F_v:\mathbb{R}])|}\cdot \mathbf{C}_v^{-\frac{1}{2}+\varepsilon}C_v(\omega)^{-\frac{1}{2}}\cdot \mathcal{I}_m,
\end{equation}
where the factor $\mathbf{C}_v^{\frac{1}{2}}$ on the numerator comes from the sup-norm of $h_{4,v}(\cdot,0)$, and 
\begin{align*}
\mathcal{I}_m:=\int_{F_v^{\times}}|K_{\nu[F_v:\mathbb{R}]}(2^{[F_v:\mathbb{R}]}\pi |y_v|_v^{[F_v:\mathbb{R}]^{-1}})|\left(1+\frac{|y_v|_v}{C_v(\omega)\mathbf{C}_v^{-1}}\right)^{-m}|y_v|_v^{\frac{1}{2}-\varepsilon}d^{\times}y_v.
\end{align*}

Recall that $|\Re(\nu)|\leq \vartheta\leq 7/64$ (see \textsection\ref{sec2.2.2}). By Cauchy inequality, we obtain 
\begin{equation}\label{8.29}
\mathcal{I}_m\ll \sqrt{\mathcal{I}_m^{(1)}\cdot \mathcal{I}_m^{(2)}},
\end{equation}
where 
\begin{align*}
\mathcal{I}_m^{(1)}:=&\int_{F_v^{\times}}|K_{\nu[F_v:\mathbb{R}]}(2^{[F_v:\mathbb{R}]}\pi |y_v|_v^{[F_v:\mathbb{R}]^{-1}})|^2\cdot |y_v|_v^{2(\varepsilon+|\Re(\nu)|)[F_v:\mathbb{R}]^{-1}}d^{\times}y_v,\\
\mathcal{I}_m^{(2)}:=&\int_{F_v^{\times}}\left(1+\frac{|y_v|_v}{C_v(\omega)\mathbf{C}_v^{-1}}\right)^{-2m}|y_v|_v^{1-2\varepsilon-2(\varepsilon+|\Re(\nu)|)[F_v:\mathbb{R}]^{-1}}d^{\times}y_v.
\end{align*}

As a consequence of \cite[6.576.4]{GR15}, along with Stirling formula, we derive 
\begin{equation}\label{8.30}
\mathcal{I}_m^{(1)}\ll 	\Gamma(\varepsilon+|\Re(\nu)|+\nu[F_v:\mathbb{R}])\Gamma(\varepsilon+|\Re(\nu)|-\nu[F_v:\mathbb{R}]),
\end{equation}
where the implied constant depends on $\varepsilon$. 

Since $1-2\varepsilon-2(\varepsilon+|\Re(\nu)|)[F_v:\mathbb{R}]^{-1}\geq 1-4\varepsilon-7/32>0$, the integral $\mathcal{I}_m^{(2)}$ converges absolutely. Taking $m=\floor{1000\varepsilon^{-1}}$. By truncating the integral into $|y_v|_v\geq C_v(\omega)\mathbf{C}_v^{-1+\varepsilon}$ and $|y_v|_v< C_v(\omega)\mathbf{C}_v^{-1+\varepsilon}$, we deduce 
\begin{equation}\label{8.31}
\mathcal{I}_m^{(2)}\ll \big[C_v(\omega)\mathbf{C}_v^{-1+\varepsilon}\big]^{1-2\varepsilon-2(\varepsilon+|\Re(\nu)|)[F_v:\mathbb{R}]^{-1}},	
\end{equation}
where the implied constant depends on $\varepsilon$. 

Substituting \eqref{8.29}, \eqref{8.30} and \eqref{8.31} into \eqref{8.28} yields \eqref{eq8.24}.
\end{proof}

\subsubsection{An upper bound for $\widetilde{\Psi}^*(\sigma;\mathbf{0},\mathfrak{X}_{\mathfrak{n}})$}
\begin{lemma}\label{lem8.8}
 Let $\sigma\in \mathcal{F}_0(\mathfrak{n},\mathbf{1})$. Let $\vartheta\leq 7/64$ be a parameter towards the Ramanujan conjecture (see \textsection\ref{sec2.2.2}). For $v\mid\infty$, we let $\nu_{\sigma_v}$ be the spectral parameter of $\sigma_v$, and set $C_v(\sigma):=1+|\nu_{\sigma_v}|$. Let $C_{\infty}(\sigma):=\prod_{v\mid}C_v(\sigma)$ be the Archimedean part of the analytic conductor of $\sigma$. Then 
\begin{multline}\label{8.33.}
|\widetilde{\Psi}^*(\sigma;\mathbf{0},\mathfrak{X}_{\mathfrak{n}})|\ll \frac{|L(1/2,\sigma)|^3|L(1/2,\sigma\times\overline{\omega})|}{|L(1,\sigma,\Ad)|}\cdot \mathbf{1}_{\sigma\in \mathcal{F}_0(\mathfrak{n},\mathbf{1})}\cdot |\lambda_{\sigma}(\mathfrak{q}\mathfrak{q}'^{-1})|\\
N_F(\mathfrak{n})^{-\frac{1}{2}+\varepsilon}N_F(\mathfrak{q})^{1/2}\mathbf{C}_{\infty}^{-1/2+\varepsilon}\cdot \big[\mathbf{C}_{\infty}C_{\infty}(\omega)^{-1}\big]^{\vartheta}\cdot e^{-\frac{\pi}{2}\sum_{v\mid\infty}|\nu_{\sigma_v}|},
\end{multline}  
where the implied constant depends only on $F$ and $\varepsilon$, and the smooth functions $\alpha_v$, $v\mid\infty$ (see \textsection\ref{sec4.4}). 
\end{lemma}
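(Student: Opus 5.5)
The plan is to assemble the local estimates of \textsection\ref{sec8.1.3} inside the Eulerian factorization \eqref{8.4}. By \eqref{8.4}, $\widetilde{\Psi}^*(\sigma;\mathbf{0},\mathfrak{X}_{\mathfrak{n}})$ vanishes unless $\sigma\in\mathcal{F}_0(\mathfrak{n},\mathbf{1})$. In that case it equals
\[
\frac{\Lambda(1/2,\sigma)^3\Lambda(1/2,\sigma\times\overline{\omega})}{\Lambda(1,\sigma,\Ad)}\prod_{v\mid\mathfrak{n}\mathfrak{q}\infty}\boldsymbol{e}_v(\sigma_v).
\]
First I will strip the archimedean $L$-factors off the completed $L$-functions. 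This converts the expression into the finite ratio $L(1/2,\sigma)^3L(1/2,\sigma\times\overline{\omega})/L(1,\sigma,\Ad)$ multiplied by $\prod_{v\mid\mathfrak{n}\mathfrak{q}}\boldsymbol{e}_v(\sigma_v)\prod_{v\mid\infty}\boldsymbol{e}_v^*(\sigma_v)$, where $\boldsymbol{e}_v^*$ is as in Lemma \ref{lem8.7}. The remaining task is to bound the three groups of local factors separately.

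For $v\mid\mathfrak{n}$, Lemma \ref{lem8.5} gives $\boldsymbol{e}_v(\sigma_v)\ll l_v^4q_v^{-l_v/2}$. The product over $v\mid\mathfrak{n}$ is then at most $\prod_v C\,l_v^4q_v^{-l_v/2}\ll N_F(\mathfrak{n})^{-1/2+\varepsilon}$, using the divisor-type bound $\prod_{v\mid\mathfrak{n}}Cl_v^4\ll_\varepsilon N_F(\mathfrak{n})^\varepsilon$. For $v\mid\mathfrak{q}$, Lemma \ref{lem8.6} gives $\boldsymbol{e}_v(\sigma_v)\ll q_v^{e_v(\mathfrak{q})/2}|\lambda_\sigma(\mathfrak{p}_v^{e_v(\mathfrak{q})-e_v(\mathfrak{q}')})|$. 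Since $\sigma$ is unramified at $v\mid\mathfrak{q}$ (because $(\mathfrak{n},\mathfrak{q})=1$), multiplicativity of Hecke eigenvalues at coprime prime powers turns the product into $N_F(\mathfrak{q})^{1/2}|\lambda_\sigma(\mathfrak{q}\mathfrak{q}'^{-1})|$. The constant raised to the number of prime factors of $\mathfrak{q}$ is absorbed into $N_F(\mathfrak{q})^{\varepsilon}$, or into the implied constant after adjusting $\varepsilon$.

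For $v\mid\infty$, I apply Lemma \ref{lem8.7}. The factor $|L_v(1/2,\sigma_v)|^2$ combines with the Gamma-ratio prefactor. I then use Stirling's formula with $\nu=\nu_{\sigma_v}$, $|\Re\nu|\le\vartheta$, and $|\Im\nu|$ large. This gives $|\Gamma(\nu d)\Gamma(-\nu d)|^{-1}\asymp |\nu|e^{\pi d|\Im\nu|}$ with $d=[F_v:\mathbb{R}]$, while the square root of the numerator is $\asymp |\nu|^{\varepsilon+|\Re\nu|-1/2}e^{-\pi d|\Im\nu|/2}$. Also $|L_v(1/2,\sigma_v)|^2\asymp |\nu|^{-\dots}e^{-\pi d|\Im \nu|/2}$ up to polynomial factors (for $\mathbb{R}$, $|\Gamma((1/2+\nu)/2)|^2|\Gamma((1/2-\nu)/2)|^2$ decays like $e^{-\pi|\Im\nu|}$). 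Collecting exponentials, I expect a net decay of $e^{-\pi|\Im\nu|/2}$ up to a polynomial factor in $C_v(\sigma)$. I will absorb that polynomial loss by sacrificing part of the exponential decay, which is harmless since any power of $C_v(\sigma)$ is dominated by a slight loss in the exponent; the statement's $e^{-\frac{\pi}{2}\sum|\nu_{\sigma_v}|}$ should come out with $|\nu|$ in place of $|\Im\nu|$, at the cost of a bounded factor since $|\Re\nu|\le 7/64$. The remaining factor is $\mathbf{C}_v^{-1/2+10\varepsilon}[\mathbf{C}_vC_v(\omega)^{-1}]^{|\Re\nu|/d}$. Bounding $|\Re\nu|/d\le\vartheta$ (valid when $\mathbf{C}_v\ge C_v(\omega)$; otherwise the bracket is $\le1$ and the bound is trivially stronger) and taking the product over $v\mid\infty$ gives $\mathbf{C}_\infty^{-1/2+\varepsilon}[\mathbf{C}_\infty C_\infty(\omega)^{-1}]^{\vartheta}$.

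I expect the archimedean bookkeeping to be the main obstacle. It requires making precise the exponential balance between the normalization $c_v(\nu)$, the Gamma factors hidden in $L_v(1/2,\sigma_v)^2$, and the $\mathcal{I}_m^{(1)}$ bound. It also requires checking that no exponential growth survives, in both the real and complex cases, so that the decay rate is exactly $\pi/2$. If the exponents do not close, I would revisit \eqref{8.30} with the precise asymptotic from \cite[6.576.4]{GR15} rather than the crude Stirling bound. Multiplying the three local estimates then yields \eqref{8.33.}.
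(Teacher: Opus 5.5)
Your proposal is the paper's proof: insert Lemmas \ref{lem8.5}, \ref{lem8.6} and \ref{lem8.7} into \eqref{8.4} to reach \eqref{8.34}, then handle the archimedean Gamma-ratio by \eqref{8.35} and Stirling, as in \eqref{eq8.37}--\eqref{eq8.38}. Two slips need fixing. First, $|L_v(1/2,\sigma_v)|^2$ decays like $e^{-\pi[F_v:\mathbb{R}]|\Im\nu|}$, not half that; with your displayed exponent no decay would survive at all. The correct net factor is $(1+|\nu|)^{\varepsilon+|\Re\nu|-1/2}e^{-\frac{\pi}{2}|\Im\nu|}$ at real places, where the polynomial factor is a gain (this matters, since there is no surplus exponential decay to sacrifice there), and $(1+|\nu|)^{1/2+\varepsilon+|\Re\nu|}e^{-\pi|\Im\nu|}$ at complex places. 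Second, replacing $|\Re\nu|/[F_v:\mathbb{R}]$ by $\vartheta$ in $[\mathbf{C}_vC_v(\omega)^{-1}]$ is valid only when $\mathbf{C}_v\ge C_v(\omega)$ (up to a constant). For a base below $1$ the larger exponent gives the stronger claim, so your ``trivially stronger'' remark is backwards. The restriction is harmless because the paper tacitly uses this step only in the regime $C_v(\omega)\ll C_v(\pi)=\mathbf{C}_v$.
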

\begin{proof}
Substituting Lemmas \ref{lem8.5}, \ref{lem8.6} and \ref{lem8.7} into \eqref{8.4} (see \textsection\ref{sec8.1.1}), we derive 
\begin{multline}\label{8.34}
|\widetilde{\Psi}^*(\sigma;\mathbf{0},\mathfrak{X}_{\mathfrak{n}})|\ll \frac{|L(1/2,\sigma)|^3|L(1/2,\sigma\times\overline{\omega})|}{|L(1,\sigma,\Ad)|}\cdot |\lambda_{\sigma}(\mathfrak{q}\mathfrak{q}'^{-1})|\mathbf{1}_{\sigma\in \mathcal{F}_0(\mathfrak{n},\mathbf{1})}\\
\cdot N_F(\mathfrak{n})^{-\frac{1}{2}+\varepsilon}N_F(\mathfrak{q})^{1/2}\mathbf{C}_{\infty}^{-1/2+\varepsilon}\prod_{v\mid\infty}\mathbf{L}_v\cdot \prod_{v\mid\infty}\big[\mathbf{C}_vC_v(\omega)^{-1}\big]^{|\Re(\nu_{\sigma_v})|[F_v:\mathbb{R}]^{-1}},
\end{multline} 
where $\nu_{\sigma_v}$ is the spectral parameter of $\sigma_v$, $v\mid\infty$, and $\mathbf{L}_v$ is defined by
\begin{align*}
\frac{|L_v(1/2,\sigma_v)|^2\sqrt{|\Gamma(\varepsilon+|\Re(\nu_{\sigma_v})|+\nu_{\sigma_v}[F_v:\mathbb{R}])\Gamma(\varepsilon+|\Re(\nu_{\sigma_v})|-\nu_{\sigma_v}[F_v:\mathbb{R}])|}}{|\Gamma(\nu_{\sigma_v}[F_v:\mathbb{R}])\Gamma(-\nu_{\sigma_v}[F_v:\mathbb{R}])|}.
\end{align*} 

By the definition of the Archimedean $L$-factors, we obtain the bound
\begin{equation}\label{8.35}
|L_v(1/2,\sigma_v)|^2\ll|\Gamma((1/2+\nu_{\sigma_v})[F_v:\mathbb{R}]/2)\Gamma((1/2-\nu_{\sigma_v})[F_v:\mathbb{R}]/2)|^2.
\end{equation}

Consider the following scenarios. 
\begin{itemize}
\item Suppose $F_v\simeq\mathbb{R}$. As a consequence of \eqref{8.35}, we obtain  
\begin{align*}
\mathbf{L}_v\ll \frac{|\Gamma((1/2+\nu_{\sigma_v})/2)\Gamma((1/2-\nu_{\sigma_v})/2)|^2\sqrt{|\Gamma(\varepsilon+\nu_{\sigma_v})\Gamma(\varepsilon-\nu_{\sigma_v})|}}{|\Gamma(\nu_{\sigma_v})\Gamma(-\nu_{\sigma_v})|}.
\end{align*}

Utilizing Stirling's formula, it follows that  
\begin{equation}\label{eq8.37}
\mathbf{L}_v\ll (1+|\nu_{\sigma_{v}}|)^{\varepsilon-\frac{1}{2}}e^{-\frac{\pi}{2}|\nu_{\sigma_v}|}\ll e^{-\frac{\pi}{2}|\nu_{\sigma_v}|}.	
\end{equation}

\item Suppose $F_v\simeq\mathbb{C}$. Taking advantage of \eqref{8.35}, we obtain  
\begin{align*}
\mathbf{L}_v\ll \frac{|\Gamma(1/2+\nu_{\sigma_v})\Gamma(1/2-\nu_{\sigma_v})|^2\sqrt{|\Gamma(\varepsilon+2\nu_{\sigma_v})\Gamma(\varepsilon-2\nu_{\sigma_v})|}}{|\Gamma(2\nu_{\sigma_v})\Gamma(-2\nu_{\sigma_v})|}.
\end{align*}
Together with an application of Stirling's formula, we obtain  
\begin{equation}\label{eq8.38}
\mathbf{L}_v\ll (1+|\nu_{\sigma_{v}}|)^{\varepsilon+\frac{1}{2}}e^{-\pi|\nu_{\sigma_v}|}\ll e^{-\frac{\pi}{2}|\nu_{\sigma_v}|}.	
\end{equation}
\end{itemize}

Consequently, \eqref{8.33.} follows immediately from \eqref{8.34}, \eqref{eq8.37} and \eqref{eq8.38}.
\end{proof}

\subsubsection{Proof of Proposition \ref{prop8.2}}
Proposition \ref{prop8.2} follows readily from Lemma \ref{lem8.8}.

\subsection{Majorization of $\mathcal{I}_{\mathrm{Eis}}^{\heartsuit}(\mathbf{0},\mathfrak{X}_{\mathfrak{n}})$}\label{sec8.2}
Let $\mathbf{s}\in \mathbf{B}_{\varepsilon}^4$. Recall the definition in \textsection\ref{sec4}:
\begin{align*}
\mathcal{I}_{\mathrm{Eis}}^{\heartsuit}(\mathbf{s},\mathfrak{X}_{\mathfrak{n}}):=\sum_{\substack{\mu\in \widehat{F^{\times}\backslash\mathbb{A}_F^{(1)}}}}\frac{1}{2\pi i}\int_{i\mathbb{R}}\widetilde{\Psi}^*(\lambda,\mu;\mathbf{s},\mathfrak{X}_{\mathfrak{n}})d\lambda,
\end{align*}
where $\widetilde{\Psi}^*(\lambda,\mu;\mathbf{s},\mathfrak{X}_{\mathfrak{n}})$ is defined as  
\begin{align*}
\frac{1}{2}\sum_{h\in \mathfrak{B}(\mu,\overline{\mu})}\widetilde{\Psi}(\overline{W_{E(\cdot,h,-\overline{\lambda})}},W_1(\cdot,s_1),\overline{h_3(\cdot,\overline{s_3})})\widetilde{\Psi}(W_{E(\cdot,h,\lambda)}^*,W_2(\cdot,s_2)),\overline{h_4(\cdot,\overline{s_4})}).
\end{align*}

\begin{prop}\label{prop8.9}
 Let $\mathfrak{X}(\mathfrak{n},\mathbf{1})$ be the set of characters $\mu=\otimes_v\mu_v\in \widehat{F^{\times}\backslash\mathbb{A}_F^{(1)}}$ such that the representation $\mu\boxplus \overline{\mu}$ has a right-$K_{\infty}K_0[\mathfrak{n}]$-invariant vector (see Theorem \ref{thmF}). Then 
\begin{multline}\label{8.37}
\mathcal{I}_{\mathrm{Eis}}^{\heartsuit}(\mathbf{0},\mathfrak{X}_{\mathfrak{n}})\ll N_F(\mathfrak{q})^{1/2+\varepsilon}N_F(\mathfrak{n})^{-1/2+\varepsilon}\big[\mathbf{C}_{\infty}C_{\infty}(\omega)^{-1}\big]^{\vartheta}\mathbf{C}_{\infty}^{-1/2+\varepsilon}\sum_{\mu\in\mathfrak{X}(\mathfrak{n},\mathbf{1})}\\
\int_{\mathbb{R}}e^{-\frac{\pi}{2}\sum_{v\mid\infty}|\nu_{\sigma_{\mu_v,it}}|}\cdot \frac{|L(1/2+it,\mu)|^6|L(1/2+it,\mu\overline{\omega})L(1/2+it,\mu\omega)|}{|L(1+2it,\mu^2)|^2}dt,
\end{multline}  
where $\nu_{\sigma_{\mu_v,it}}$, $v\mid\infty$, is the spectral parameter of $\sigma_{\mu_v,it}:=\mu_v|\cdot|^{it}\boxplus \overline{\mu}_v|\cdot|^{-it}$. Here the implied constant depends only on $F$ and $\varepsilon$, the implied constant depends only on $F$ and $\varepsilon$, and the smooth functions $\alpha_v$, $v\mid\infty$ (see \textsection\ref{sec4.4}). 	
\end{prop}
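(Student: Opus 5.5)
The plan is to run the argument of Proposition \ref{prop8.2} with the cuspidal representation $\sigma$ replaced by the unitary Eisenstein representation $\sigma_{\mu,it}=\mu|\cdot|^{it}\boxplus\overline{\mu}|\cdot|^{-it}$, $\lambda=it$. The first step is to record the Eulerian structure of $\widetilde{\Psi}^*(it,\mu;\mathbf{0},\mathfrak{X}_{\mathfrak{n}})$. Since $\omega'=\omega_1\overline{\omega_3}=\mathbf{1}$, Lemma \ref{lem4.1} and Rankin--Selberg theory give that $\widetilde{\Psi}^*(it,\mu;\mathbf{0},\mathfrak{X}_{\mathfrak{n}})$ equals
\begin{align*}
\frac{\Lambda(1/2+it,\mu)^3\Lambda(1/2-it,\overline{\mu})^3\,\Lambda(1/2+it,\mu\overline{\omega})\Lambda(1/2-it,\overline{\mu}\,\overline{\omega})}{\Lambda(1+2it,\mu^2)\Lambda(1-2it,\overline{\mu}^2)}\prod_{v}\boldsymbol{e}_v(\sigma_{\mu_v,it}),
\end{align*}
with $\boldsymbol{e}_v$ defined exactly as in \eqref{c8.3}, now summing over an orthonormal basis of $\Ind(\mu_v|\cdot|^{it}\otimes\overline{\mu}_v|\cdot|^{-it})$. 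Taking absolute values, using unitarity ($|L(1/2-it,\overline{\mu})|=|L(1/2+it,\mu)|$ and $|L(1/2-it,\overline{\mu}\,\overline{\omega})|=|L(1/2+it,\mu\omega)|$), produces the numerator $|L(1/2+it,\mu)|^6|L(1/2+it,\mu\overline{\omega})L(1/2+it,\mu\omega)|$ and the denominator $|L(1+2it,\mu^2)|^2$ appearing in \eqref{8.37}. The same unramified computation as in \S\ref{sec8.1.1} gives $\boldsymbol{e}_v=1$ for $v\nmid\mathfrak{n}\mathfrak{q}\infty$. It also forces $\boldsymbol{e}_v\equiv0$ unless $\mu_v\boxplus\overline{\mu}_v$ has a $K_{0,v}[e_v(\mathfrak{n})]$-fixed vector at $v\mid\mathfrak{n}$ and is spherical elsewhere. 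This restricts the sum to $\mu\in\mathfrak{X}(\mathfrak{n},\mathbf{1})$.

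The second step is to bound the local factors by transcribing Lemmas \ref{lem8.5}, \ref{lem8.6} and \ref{lem8.7}. At $v\mid\mathfrak{n}$ the proof of Lemma \ref{lem8.5} applies verbatim to induced representations. The basis of $K_{0,v}[l_v]$-fixed vectors is obtained from translates of the newvector as in \cite[Lemma 9]{BM15}, and Lemmas \ref{lem8.2}--\ref{lem8.3} are purely local and only use the Bessel function and the $\gamma$-factor; the $\gamma$-factor at $1/2$ of a unitary principal series still has modulus one. This gives $\boldsymbol{e}_v\ll l_v^4q_v^{-l_v/2}$ and hence $N_F(\mathfrak{n})^{-1/2+\varepsilon}$. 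At $v\mid\mathfrak{q}$ the representation is unramified, and the Casselman--Shalika plus \cite[Lemma 3.7.2]{MV10} argument gives $q_v^{e_v(\mathfrak{q})/2}|\lambda_{\sigma_{\mu,it}}(\mathfrak{p}_v^{e_v(\mathfrak{q}\mathfrak{q}'^{-1})})|$. I would simply bound this by $q_v^{e_v(\mathfrak{q})(1/2+\varepsilon)}$ using the divisor bound on Eisenstein Hecke eigenvalues; this is why \eqref{8.37} carries $N_F(\mathfrak{q})^{1/2+\varepsilon}$ and has no $\lambda$-factor. At $v\mid\infty$, Lemma \ref{lem8.7} goes through with $\nu=\nu_{\sigma_{\mu_v,it}}$ purely imaginary (so $|\Re\nu|=0\le\vartheta$). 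The Stirling computation of Lemma \ref{lem8.8} then yields the factor $e^{-\frac{\pi}{2}|\nu_{\sigma_{\mu_v,it}}|}\mathbf{C}_v^{-1/2+\varepsilon}[\mathbf{C}_vC_v(\omega)^{-1}]^{\vartheta}$. The one difference is that for ramified $\mu_v$ at archimedean places the spherical vector is replaced by the lowest $K_v$-type vector; only that single $K_v$-type contributes, because $W_{1,v}\overline{h_{3,v}}$ is $K_v$-invariant.

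Finally, I would substitute these local bounds into the Eulerian product. The archimedean $L$-factors cancel against the $\Lambda$'s as in Lemma \ref{lem8.8}. Then integrate over $\lambda=it$ and sum over $\mu$, with the factor $1/(2\pi i)$ and the $1/2$ in \eqref{eq3.17} absorbed into the constant. The main obstacle is justifying the interchange and absolute convergence near $t=0$ and for $\mu^2=\mathbf{1}$, where $L(1+2it,\mu^2)$ has a pole. There the integrand vanishes rather than blows up, so this is harmless, but the divisor-type bound on $\lambda_{\sigma_{\mu,it}}$ and the archimedean estimates must be checked to be uniform in $t$. The exponential factor supplies the convergence in $t$ and in the archimedean conductor of $\mu$.
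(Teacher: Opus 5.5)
Your proposal is correct and follows the paper's own argument essentially step for step: the Eulerian factorization of $\widetilde{\Psi}^*(it,\mu;\mathbf{0},\mathfrak{X}_{\mathfrak{n}})$, transplanting Lemmas \ref{lem8.5}--\ref{lem8.8} from $\sigma$ to $\sigma_{\mu,it}$, and discarding $|\lambda_{\sigma_{\mu,it}}(\mathfrak{q}\mathfrak{q}'^{-1})|$ at the cost of $N_F(\mathfrak{q})^{\varepsilon}$. One small correction: since $W_{1,v}\overline{h_{3,v}}$ is right-$K_v$-invariant at $v\mid\infty$, a $\mu_v$ for which $\mu_v\boxplus\overline{\mu}_v$ has no $K_v$-fixed vector contributes zero rather than through a lowest $K_v$-type vector, and this is exactly why the sum runs over $\mathfrak{X}(\mathfrak{n},\mathbf{1})$.
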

\begin{proof}
Write $\sigma_{\mu,it}=\mu|\cdot|^{it}\boxplus \overline{\mu}|\cdot|^{-it}$. By Rankin--Selberg theory, we may write 
\begin{align*}
\widetilde{\Psi}^*(it,\mu;\mathbf{0},\mathfrak{X}_{\mathfrak{n}})=\frac{\Lambda(1/2,\sigma_{\mu,it})^3\Lambda(1/2,\sigma_{\mu,it}\times\overline{\omega})}{\Lambda(1+2it,\mu^2)\Lambda(1-2it,\overline{\mu}^2)}\prod_{v\leq\infty}\boldsymbol{e}_v(\sigma_{\mu_v,it}),
\end{align*}
where 
\begin{align*}
\boldsymbol{e}_v(\sigma_{\mu_v,it}):=\frac{1}{2}\sum_{W_v}\frac{\widetilde{\Psi}_v(\overline{W_{v}},W_{1,v}(\cdot,0),\overline{h_{3,v}(\cdot,0)})\widetilde{\Psi}_v(W_{v}^*,W_{2,v}(\cdot,0)),\overline{h_{4,v}(\cdot,0)})}{L_v(1/2,\sigma_{\mu_v,it})^3L(1/2,\sigma_{\mu_v,it}\times\overline{\omega}_v)|L_v(1+2it,\mu_v^2)|^{-2}},
\end{align*}
where $W_v\in \mathcal{W}(\sigma_{\mu_v,it})$ is an orthonormal basis of the Whittaker model of $\sigma_{\mu_v,it}$. 

Similar to the analysis in \textsection\ref{sec8.1.1}, we see that $\boldsymbol{e}_v(\sigma_{\mu_v,it})\equiv 1$ for all $v\nmid\mathfrak{n}\mathfrak{q}\infty$. On the other hand, for $v\mid\mathfrak{n}\mathfrak{q}\infty$, we investigate the corresponding period integral
\begin{align*}
\boldsymbol{e}_v(\sigma_{\mu_v,it})=\frac{1}{2}\sum_{W_v}\frac{\Psi_v(\overline{W_{v}},W_{1,v}(\cdot,0),\overline{h_{3,v}(\cdot,0)})\Psi_v(W_{v}^*,W_{2,v}(\cdot,0)),\overline{h_{4,v}(\cdot,0)})}{L_v(1/2,\sigma_{\mu_v,it})^3L(1/2,\sigma_{\mu_v,it}\times\overline{\omega}_v)|L_v(1+2it,\mu_v^2)|^{-2}}
\end{align*}
analogously (i.e., replacing $\sigma_v=|\cdot|^{\nu}\boxplus |\cdot|^{-\nu}$ with $\sigma_{\mu_v,it}:=\mu_v|\cdot|^{it}\boxplus \overline{\mu}_v|\cdot|^{-it}$) to the manipulations of $\boldsymbol{e}_v(\sigma_v)$ in \textsection\ref{sec8.1.2}--\textsection\ref{sec8.1.3}. 

As a consequence, in a manner parallel to Lemma \ref{lem8.8}, we derive  
\begin{align*}
|\widetilde{\Psi}^*(it,\mu;\mathbf{0},\mathfrak{X}_{\mathfrak{n}})|\ll \frac{L(1/2,\sigma_{\mu,it})^3L(1/2,\sigma_{\mu,it}\times\overline{\omega})}{|L(1+2it,\mu^2)|^2}\cdot \mathbf{1}_{\mu\in \mathfrak{X}(\mathfrak{n},\mathbf{1})}\cdot |\lambda_{\sigma_{\mu,it}}(\mathfrak{q}\mathfrak{q}'^{-1})|\\
N_F(\mathfrak{n})^{-\frac{1}{2}+\varepsilon}N_F(\mathfrak{q})^{1/2}\mathbf{C}_{\infty}^{-1/2+\varepsilon}\cdot \big[\mathbf{C}_{\infty}C_{\infty}(\omega)^{-1}\big]^{\vartheta}\cdot e^{-\frac{\pi}{2}\sum_{v\mid\infty}|\nu_{\sigma_{\mu_v,it}}|}.
\end{align*}

Therefore \eqref{8.37} follows from the fact that $|\lambda_{\sigma_{\mu,it}}(\mathfrak{q})|\ll N_F(\mathfrak{q})^{\varepsilon}$.
\end{proof}

\section{Majorization of the Residues $\widetilde{\Psi}_{\mathrm{Dual}}^{(i)}(\mathbf{0}\mid\mathfrak{X}_{\mathfrak{n}})$ }\label{sec8}
In this section we aim to establish an upper bound for each $\widetilde{\Psi}_{\mathrm{Dual}}^{(i)}(\mathbf{0}\mid\mathfrak{X}_{\mathfrak{n}})$, $1\leq i\leq 8$. The main result is stated as follows: 
\begin{prop}\label{prop9.1}
 Let $1\leq i\leq 8$. Then 
\begin{align*}
\widetilde{\Psi}_{\mathrm{Dual}}^{(i)}(\mathbf{0}\mid\mathfrak{X}_{\mathfrak{n}})\ll \mathbf{C}_{\infty}^{\varepsilon}N_F(\mathfrak{q})^{1+\varepsilon}N_F(\mathfrak{n})^{-1/2+\varepsilon},
\end{align*}
where the implied constant depends only on $F$, $\varepsilon$, and the smooth functions $\alpha_v$, $v\mid\infty$ (see \textsection\ref{sec4.4}). 
\end{prop}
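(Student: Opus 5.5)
We follow the template of the proof of Proposition \ref{prop6.1}, now applied to the dual side. By Definition \ref{defn3.9}, each $\Psi_{\mathrm{Dual}}^{(i)}(\mathbf{s},\mathfrak{X}_{\mathfrak{n}})$ is a residue of $\widetilde{\Psi}^*(\lambda,\mu;\mathbf{s},\mathfrak{X}_{\mathfrak{n}})$ at a point $\lambda_0\in\{\pm(1/2-s_1-s_3),\pm(1/2+s_1-s_3),\pm(1/2-s_2-s_4),\pm(1/2+s_2-s_4)\}$. The residue is taken with $\mu$ forced to be one of $\mathbf{1}$, $\omega$, or $\overline{\omega}$, given the data \eqref{4.5} and $\omega'=\mathbf{1}$.

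\textbf{Step 1: factorization.} Set $\boldsymbol{e}^*(\lambda,\mu;\mathbf{s}):=\widetilde{\Psi}^*(\lambda,\mu;\mathbf{s},\mathfrak{X}_{\mathfrak{n}})/\mathbf{L}^*(\lambda,\mu;\mathbf{s})$, where $\mathbf{L}^*$ is the product of complete $L$-functions listed in Lemma \ref{lem4.1}. By Lemma \ref{lem4.1} this quotient is entire, and it factors as $\prod_v\boldsymbol{e}_v^*$, with $\boldsymbol{e}_v^*\equiv1$ outside $\mathfrak{n}\mathfrak{q}\infty$. Hence, exactly as in \eqref{c7.1},
\[
\Psi^{(i)}_{\mathrm{Dual}}(\mathbf{s},\mathfrak{X}_{\mathfrak{n}})=\pm\,\boldsymbol{e}^*(\lambda_0,\mu;\mathbf{s})\,\underset{\lambda=\lambda_0}{\Res}\,\mathbf{L}^*(\lambda,\mu;\mathbf{s}).
\]
The residue comes from one $\zeta_F$ factor (or $\Lambda(\cdot,\omega^{\pm})$) evaluated near $1$. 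Near $\lambda_0$, the remaining factors are $L$-values of $\mathbf{1}$ and $\omega^{\pm}$ at points within $O(\varepsilon)$ of $0$, $1/2$, or $1$. Their finite parts are bounded by $N_F(\mathfrak{q}')^{1/2+\varepsilon}$ at worst. The archimedean factors are absorbed into the normalization, as in \eqref{7.26}.

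\textbf{Step 2: local bounds.} At $v\mid\mathfrak{n}$ we repeat Lemma \ref{lem8.5}. The relation $h_1=\alpha_{\mathfrak{n}}h_3(\cdot\,\mathbf{d}_{\mathfrak{n}})$, together with Lemma \ref{lem8.3}, gives the saving $q_v^{-l_v/2}$ per place, now for the principal series $\mu_v|\cdot|^{\lambda_0}\boxplus\overline{\mu}_v|\cdot|^{-\lambda_0}$ with $|\Re\lambda_0|$ near $1/2$. We must check that the Hecke-type coefficients $\lambda(\mathfrak{p}^{j})$ of this nearly-trivial representation, which can be as large as $q_v^{j/2}$, do not destroy the saving. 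We intend to exploit the fact that the two period integrals now pair against each other: one is multiplied by $q_v^{l_v/2}$ while the other gains $q_v^{-l_v}$, leaving a net $N_F(\mathfrak{n})^{-1/2+\varepsilon}$.

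At $v\mid\mathfrak{q}$ we adapt Lemma \ref{lem8.6}. The support of $\Phi_v$ on $\mathfrak{p}_v^{m_v}\times\mathcal{O}_v^\times$ and its normalization by $\Vol(K_{0,v}[m_v])^{-1}$ give at most $q_v^{e_v(\mathfrak{q})(1+\varepsilon)}$. Combined with the conductor of $\omega$, the total at $\mathfrak{q}$ is $N_F(\mathfrak{q})^{1+\varepsilon}$ after absorbing the $N_F(\mathfrak{q}')^{1/2}$ from Step 1.

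At $v\mid\infty$ we use Lemmas \ref{lem6.1}--\ref{lem6.3} and the unfolding from Lemma \ref{lem7.6}. Because $\mathbf{s}$ ranges over the contour $\mathbf{S}^4_\varepsilon$ and $\lambda_0$ sits near $\pm1/2$, only $\mathbf{C}_v^{O(\varepsilon)}$ losses arise. The normalizations $\mathbf{C}_v^{1/2}$ in $\Phi_{2}$, $\Phi_4$ cancel against the $\mathbf{C}_v^{-1/2}$ from the Whittaker bounds \eqref{eq7.18}.

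\textbf{Step 3: assembly.} We integrate over $\mathbf{S}^4_\varepsilon$ with the weight $(s_1s_2s_3s_4)^{-1}$. Since the bounds are uniform in $\mathbf{s}$ on the contour, the estimate carries over to $\widetilde{\Psi}^{(i)}_{\mathrm{Dual}}(\mathbf{0}\mid\mathfrak{X}_{\mathfrak{n}})$.

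The main obstacle is the $\mathfrak{n}$-adic estimate at the boundary point $\lambda_0\approx\pm1/2$. There the local Rankin--Selberg integrals sit at the edge of convergence, and the basis vectors of the $K_{0,v}[l_v]$-fixed space have unbounded coefficients. We plan to handle this by computing explicitly with the orthonormal basis \eqref{e7.5} and Lemma \ref{lem8.3}, replacing meromorphic continuations by convergent integrals after a small shift of $\mathbf{s}$ on the contour.
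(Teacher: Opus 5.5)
\paragraph{Verdict: genuine gap.} Your template is the paper's: divide by $\mathbf{L}^*$, take the residue, bound $\boldsymbol{e}_v^*$ at $v\mid\mathfrak{n}\mathfrak{q}\infty$, and integrate over $\mathbf{S}_\varepsilon^4$. Your treatment of $v\mid\mathfrak{n}$ (the factor $q_v^{l_v/2}$ from $\alpha_{\mathfrak{n}}$ against a gain $q_v^{-l_v}$ via \eqref{e7.5} and Lemma \ref{lem8.3}) is essentially Lemma \ref{lem9.2}. The gap is in the accounting for the conductor of $\omega$, which is the real content of the estimate.

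\paragraph{The Step 1 bound fails.} You claim the finite $L$-values in the residue are $\ll N_F(\mathfrak{q}')^{1/2+\varepsilon}$. For $\mu=\mathbf{1}$ the residue contains $L(s,\overline{\omega})$ at a point $s=O(\varepsilon)$ (e.g.\ $s=s_1+s_3+s_4-s_2$ for $i=1$). By the functional equation this has size $C(\omega)^{1/2}=(N_F(\mathfrak{q}')C_\infty(\omega))^{1/2}$: already for $\omega=|\cdot|^{iT}$, where $N_F(\mathfrak{q}')=1$, one has $|L(0,\overline{\omega})|\asymp C_\infty(\omega)^{1/2}|L(1,\omega)|$. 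For $i=5,6$, where $\mu\in\{\omega,\overline{\omega}\}$, the residue contains three such factors $L(O(\varepsilon),\overline{\omega})$.

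\paragraph{Your local bounds cannot compensate.} You allow $q_v^{e_v(\mathfrak{q})(1+\varepsilon)}$ at $v\mid\mathfrak{q}$ and claim only $\mathbf{C}_v^{O(\varepsilon)}$ at $v\mid\infty$. With the correct convexity bound you would therefore end with an extra factor $C(\omega)^{1/2}$, and $C(\omega)^{3/2}$ for $i=5,6$. ``Absorbing'' $N_F(\mathfrak{q}')^{1/2}$ into $N_F(\mathfrak{q})^{1+\varepsilon}$ presupposes a local saving you never derive.

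\paragraph{What the paper supplies.} It provides exactly these compensating savings.
\begin{itemize}
\item At every $v\nmid\mathfrak{n}$ the product $W_{1,v}\overline{h_{3,v}}$ is spherical, so only the spherical $h_v^\circ$ contributes. This forces $\mu_v$ to be unramified, so the terms $i=5,6$ vanish identically if $\omega$ is ramified at some $v\mid\mathfrak{q}$. It also reduces $\boldsymbol{e}_v^*$ to a single period against $W_{2,v}\overline{h_{4,v}}$.
\item At $v\mid\mathfrak{q}$, analysing $W_{2,v}$ as in Lemma \ref{lem8.6} at $|\Re\lambda_0|\approx 1/2$ gives $q_v^{e_v(\mathfrak{q})/2}\cdot q_v^{(e_v(\mathfrak{q})-e_v(\mathfrak{q}'))/2}$ (Lemma \ref{lem9.3}). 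This is a saving of $N_F(\mathfrak{q}')^{-1/2}$ over your bound.
\item At $v\mid\infty$, the route through Lemmas \ref{lem6.1}--\ref{lem7.6} is built for the RS side (all $K$-types, Parseval), and as you run it gives no saving in $C_v(\omega)$. The paper instead:
  \begin{itemize}
  \item takes the Mellin transform of $y\mapsto W_{2,v}(a(y)k,s_2)$;
  \item uses the oscillation of $\omega_v$ in the $b_v$-integral (as in \cite[Lemma 3.1.14]{MV10});
  \item applies Stade's formula to the spherical vector and shifts contours.
  \end{itemize}
  This yields $C_v(\omega)^{-1/2}\bigl[\mathbf{C}_v^{\varepsilon}C_v(\mu|\cdot|^{\lambda})^{-1}\bigr]^m$, up to $\mathbf{C}_v^{O(\varepsilon)}$ (Lemma \ref{lem9.4}).
\end{itemize}

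\paragraph{How these close the argument.} The factor $N_F(\mathfrak{q}')^{-1/2}C_\infty(\omega)^{-1/2}$ exactly cancels the convexity bound $C(\omega)^{1/2+\varepsilon}$ for $L(s,\overline{\omega})$. For $i=5,6$ with $\omega$ unramified at all finite places, the decay $C_\infty(\omega)^{-m}$ with $m$ large removes the extra factors. Without these two savings and the vanishing/decay for $\mu\neq\mathbf{1}$, your argument does not reach the stated bound.
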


These residue integrals exhibit behavior that is markedly different from that of $\widetilde{\Psi}_{\mathrm{RS}}^{(i)}(\mathbf{0}\mid\mathfrak{X}_{\mathfrak{n}})$. In fact, they constitute one of the primary sources of the main terms on the right-hand side of Theorem \ref{thmD}. 

\subsection{Eulerian Structures of $\widetilde{\Psi}_{\mathrm{Dual}}^{(i)}(\mathbf{s},\mathfrak{X}_{\mathfrak{n}})$}\label{sec7.1}
By Definition \ref{defn3.5} in \textsection\ref{sec3.3.}, the functions $\widetilde{\Psi}_{\mathrm{Dual}}^{(i)}(\mathbf{s},\mathfrak{X}_{\mathfrak{n}})$, $1\leq i\leq 8$, are defined as certain residues of $\widetilde{\Psi}^*(\lambda,\mu;\mathbf{s},\mathfrak{X}_{\mathfrak{n}})$, which is itself given by (see Lemma \ref{lem4.1}) 
\begin{align*}
\frac{1}{2}\sum_{h\in \mathfrak{B}(\mu,\overline{\mu})}\widetilde{\Psi}(\overline{W_{E(\cdot,h,-\overline{\lambda})}},W_1(\cdot,s_1),\overline{h_3(\cdot,\overline{s_3})})\widetilde{\Psi}(W_{E(\cdot,h,\lambda)}^*,W_2(\cdot,s_2)),\overline{h_4(\cdot,\overline{s_4})}).
\end{align*}

Let $\boldsymbol{e}^*(\lambda,\mu;\mathbf{s}):=\widetilde{\Psi}^*(\lambda,\mu;\mathbf{s},\mathfrak{X}_{\mathfrak{n}})/\mathbf{L}^*(\lambda,\mu;\mathbf{s})$, where $\mathbf{L}^*(\lambda,\mu;\mathbf{s})$ is the product of the complete $L$-functions 
\begin{multline*}
\Lambda(1+2\lambda,\mu^2)^{-1}\Lambda(1-2\lambda,\overline{\mu}^{2})^{-1}\Lambda(1/2+s_3+s_1-\lambda,\overline{\mu})\Lambda(1/2+s_3+s_1+\lambda,\mu)\\
\Lambda(1/2+s_3-s_1-\lambda,\overline{\mu})\Lambda(1/2+s_3-s_1+\lambda,\mu)
\Lambda(1/2+s_4+s_2+\lambda,\mu)\\
\Lambda(1/2+s_4+s_2-\lambda,\overline{\mu})\Lambda(1/2+s_4-s_2+\lambda,\mu\overline{\omega})\Lambda(1/2+s_4-s_2-\lambda,\overline{\mu}\overline{\omega}).
\end{multline*}

By Lemma \ref{lem4.1} the function $\boldsymbol{e}^*(\lambda,\mu;\mathbf{s})$ is holomorphic. Therefore, 
\begin{multline}\label{c9.1}
\quad \Psi_{\mathrm{Dual}}^{(1)}(\mathbf{s},\mathfrak{X}_{\mathfrak{n}})=-\mathbf{1}_{\mu=\mathbf{1}}\cdot \boldsymbol{e}^*(s_3+s_1-1/2,\mu;\mathbf{s})\underset{\lambda=s_3+s_1-\frac{1}{2}}{\Res}\ \mathbf{L}^*(\lambda,\mu;\mathbf{s}),\\
\quad \quad\quad\quad\Psi_{\mathrm{Dual}}^{(6)}(\mathbf{s},\mathfrak{X}_{\mathfrak{n}})=\mathbf{1}_{\mu=\omega}\cdot \boldsymbol{e}^*(1/2+s_2-s_4,\mu;\mathbf{s})\underset{\lambda=\frac{1}{2}+s_2-s_4}{\Res}\ \mathbf{L}^*(\lambda,\mu;\mathbf{s}).\qquad \qquad 
\end{multline} 
Similarly, we may express other $\Psi_{\mathrm{Dual}}^{(i)}(\mathbf{s},\mathfrak{X}_{\mathfrak{n}})$'s into the above forms.

By definition and the Rankin--Selberg theory, $\boldsymbol{e}^*(\lambda,\mu;\mathbf{s})=\prod_v\boldsymbol{e}_v^*(\lambda,\mu;\mathbf{s})$, where 
\begin{equation}\label{c9.2}
\boldsymbol{e}_v^*(\lambda,\mu;\mathbf{s})=\sum_{h_v\in \mathfrak{B}(\mu_v,\overline{\mu}_v)}\frac{\widetilde{\Psi}_v^*(\lambda,h_v;\mathbf{s})}{\mathbf{L}_v^*(\lambda,\mu_v;\mathbf{s})}.
\end{equation}
Here $\mathbf{L}_v^*(\lambda,\mu_v;\mathbf{s})$ is the $v$-th factor of $\mathbf{L}^*(\lambda,\mu;\mathbf{s})$, and $\widetilde{\Psi}_v^*(\lambda,h_v;\mathbf{s})$ is defined by 
\begin{align*}
\widetilde{\Psi}_v(\overline{W_{h_v,-\overline{\lambda}}},W_{1,v}(\cdot,s_1),\overline{h_{3,v}(\cdot,\overline{s_3})})\widetilde{\Psi}_v(W_{h_v,\lambda},W_{2,v}(\cdot,s_2),\overline{h_{4,v}(\cdot,\overline{s_4})}),
\end{align*}
with $W_{h_v,\lambda}$ being defined as in \eqref{eq7.2}.

By replacing $\sigma_v=|\cdot|^{\nu}\boxplus |\cdot|^{-\nu}$ with $\sigma_{\mu_v,\lambda}:=\mu_v|\cdot|^{\lambda}\boxplus \overline{\mu}_v|\cdot|^{-\lambda}$, and following the arguments in \textsection\ref{sec8.1.2}--\textsection\ref{sec8.1.3}, we conclude by meromorphic continuation that $\boldsymbol{e}_v^*(\lambda,\mu;\mathbf{s})\equiv 1$ unless $v\mid\mathfrak{n}\mathfrak{q}\infty$.  

\subsection{Estimates of \texorpdfstring{$\boldsymbol{e}_v^*(\lambda,\mu;\mathbf{s})$}{} at $v\mid\mathfrak{n}\mathfrak{q}\infty$}
\begin{lemma}\label{lem9.2}
 Let $v\mid\mathfrak{n}$. Let $\mu_v\in \{\mathbf{1}, \omega_v, \overline{\omega}_v\}$ and $\mathbf{s}=(s_1,s_2,s_3,s_4)\in \mathbf{B}_{\varepsilon}^4$, which is defined by \eqref{eq4.12}. Let $\lambda\in \{s_3+s_1-1/2,1/2-s_3-s_1,s_3-s_1-1/2,1/2+s_1-s_3,s_4-s_2-1/2,1/2+s_2-s_4,s_4+s_2-1/2,1/2-s_2-s_4\}$. Then 
\begin{equation}\label{9.5}
\boldsymbol{e}_v^*(\lambda,\mu;\mathbf{s})\ll q_v^{-l_v/2+200\varepsilon}\mathbf{1}_{r_{\mu_v}=0},
\end{equation}
where the implied constant depends only on $\varepsilon$ and $F_v$. 
\end{lemma}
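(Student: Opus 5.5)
The plan follows the proof of Lemma~\ref{lem8.5}, now for the induced representation $\sigma_{\mu_v,\lambda}=\mu_v|\cdot|^{\lambda}\boxplus\overline{\mu}_v|\cdot|^{-\lambda}$ at the residual points $\lambda$ near $\pm1/2$.

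\textbf{Vanishing for ramified $\mu_v$.} At $v\mid\mathfrak{n}$ the place is coprime to $\mathfrak{q}$, so $\omega_v$ is unramified and $\mu_v\in\{\mathbf{1},\omega_v,\overline{\omega}_v\}$ is unramified too. The factor $\mathbf{1}_{r_{\mu_v}=0}$ is then automatic. It is in any case forced, since $W_{1,v}(\cdot,s_1)\overline{h_{3,v}(\cdot,\overline{s_3})}$ is right $K_{0,v}[l_v]$-invariant.

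\textbf{Finite sum over an explicit basis.} In \eqref{c9.2} only $h_v\in\mathfrak{B}(\mu_v,\overline{\mu}_v)^{K_{0,v}[l_v]}$ contribute. This space has dimension $l_v+1$, with the orthonormal basis of \cite[Lemma 9]{BM15}: translates of the spherical vector by $\diag(1,\varpi_v^i)$ with coefficients $\xi\ll1$. This holds uniformly for $\lambda$ in a compact set, provided the basis is normalized through $H(\mu_v,\overline{\mu}_v)$ (the $K_v$-model), which is independent of $\lambda$.

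\textbf{Bounding the first factor.} The first local factor has $h_{3,v}=h_{1,v}$-type data. Using $W_{1,v}(g,s_1)=\alpha_{\mathfrak{n},v}W_{3,v}(g\mathbf{d}_v,s_1)$ and running Lemma~\ref{lem8.2} with $s$-shifts (the $j$-integral now gives the $\gamma$-factor of $\sigma_{\mu_v,\lambda}$ at a shifted point), I reduce it to an integral against $h_{3,v}(k\mathbf{d}_v,s_3)$. Lemma~\ref{lem8.3}, for general $s_3$, and the projector identity \eqref{8.18} then bound the $K_v$-integral by
\[
l_vq_v^{-l_v/2+O(\varepsilon l_v)}\,|W_{v,n_v}(a(y))|.
\]
The prefactor $\alpha_{\mathfrak{n},v}\asymp q_v^{l_v/2}$ cancels one copy of $q_v^{-l_v/2}$. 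For the second factor, where $h_{2,v}=h_{4,v}(\cdot\,\mathbf{d}_v)$ is the translated section, the same decomposition of $K_v\mathbf{d}_v$ produces the other $q_v^{-l_v/2}$. Altogether this gives $l_v^{O(1)}q_v^{-l_v/2}$ times Rankin--Selberg integrals of $|W_{v,n_v}|$ against spherical data.

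\textbf{Main obstacle: convergence and normalization at the residual points.} At $\lambda\approx\pm1/2$ the remaining spherical integrals are only defined by meromorphic continuation, so they are not absolutely convergent, and one must divide by $\mathbf{L}^*_v$. I would handle this by observing that each remaining $y$-integral is a finite sum of terms of the form
\[
\sum_{m\ge0}q_v^{-m(1/2+\text{shift})}\times(\text{Hecke eigenvalues of }\sigma_{\mu_v,\lambda}).
\]
By Casselman--Shalika these are rational functions in $q_v^{-s}$ whose denominators are exactly the local $L$-factors appearing in $\mathbf{L}^*_v$. Dividing by $\mathbf{L}^*_v$ therefore leaves a polynomial in $q_v^{\pm\lambda},q_v^{\pm s_i}$ with $O(l_v)$ terms. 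The shifts in $\mathbf{B}^4_\varepsilon$, combined with $|\lambda\mp1/2|\le 20\varepsilon$, bound each coefficient by $q_v^{O(\varepsilon)l_v}$. This step, in particular checking that the $n_v$-sum of products stays bounded despite eigenvalues of size $q_v^{l_v/2}$ at $\lambda=\pm1/2$, is where I expect difficulty. The control comes from the $\mathbf{d}_v$-translation pairing the growth $q_v^{n_v/2}$ against the decay of $h_{3,v}(k\mathbf{d}_v)$ on $K_{0,v}[j]\setminus K_{0,v}[j+1]$. Absorbing $l_v^{O(1)}$ and all $q_v^{O(\varepsilon)}$ factors into $q_v^{200\varepsilon}$ yields \eqref{9.5}.
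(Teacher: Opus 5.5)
Your proposal follows the paper's proof essentially step for step. Both use a \cite[Lemma 9]{BM15}-type basis of $K_{0,v}[l_v]$-invariants and a Lemma~\ref{lem8.2}-type switch moving $\mathbf{d}_v$ from the Whittaker function onto the section in each factor. Both then use Lemma~\ref{lem8.3} with \eqref{8.18} to extract $q_v^{-l_v/2}$ from each factor against $\alpha_{\mathfrak{n},v}\asymp q_v^{l_v/2}$. The final unramified computation normalized by $\mathbf{L}_v^*$, which you flag as the difficulty, is exactly the step the paper also dispatches as ``a straightforward calculation.''

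Two small corrections. First, $K_{0,v}[l_v]$-invariance alone only gives $2r_{\mu_v}\le l_v$, so $\mathbf{1}_{r_{\mu_v}=0}$ really comes from $v\nmid\mathfrak{q}$. Second, for $\mu_v=\mathbf{1}$ at $\lambda=-1/2$ (attained at $\mathbf{s}=\mathbf{0}$ when $\lambda=s_3+s_1-1/2$), every translate $\pi_{\mu_v,\overline{\mu}_v,\lambda}(\diag(1,\varpi_v^{i}))h^{\circ}$ is the constant function on $K_v$. So the claimed $\lambda$-uniformity of the translate basis with bounded coefficients fails there. The uniform choice is the filtration-adapted orthonormal basis of $H(\mu_v,\overline{\mu}_v)^{K_{0,v}[l_v]}\subset L^2(K_v)$ itself, for which \eqref{8.18} still holds because $K_v$ acts unitarily on $L^2(K_v)$.
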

\begin{proof}
Let $W_{v,n_v}$ be defined as in \eqref{e7.5} (with $W_v^{\circ}$ therein being the local newvector in $\sigma_{\mu_v,\lambda}$), where $0\leq n_v\leq e_v(\mathfrak{n})=l_v$. Similar to 
 
Let $\alpha_{\mathfrak{n},v}:=\overline{\omega}_v(\varpi_v^{l_v})q_v^{l_v/2}(1+q_v^{-1})$. Then $\widetilde{\Psi}_v^*(\lambda,h_v;\mathbf{s})$ boils down to 
\begin{equation}\label{9.4}
\alpha_{\mathfrak{n},v}\sum_{n_v=0}^{l_v}\widetilde{\Psi}_v(\overline{W_{v,n_v}},W_{2,v}(\cdot,s_1),\overline{h_{3,v}(\cdot,\overline{s_3})})\widetilde{\Psi}_v(W_{v,n_v}^*,W_{2,v}(\cdot,s_2),\overline{h_{4,v}(\cdot,\overline{s_4})}).
\end{equation}   

Analogous to Lemma \ref{lem8.2} we may switch the sections in  \eqref{9.4}, obtaining  
\begin{align*}
\widetilde{\Psi}_v^*(\lambda,h_v;\mathbf{s}) \ll \alpha_{\mathfrak{n},v}\sum_{n_v=0}^{l_v}&\ \big|\widetilde{\Psi}_v(\overline{W_{v,n_v}},\overline{W_{3,v}(\cdot,\overline{s_3})},\mathbf{d}_v.h_{3,v}(\cdot ,s_1))\big|
\\
&\quad \cdot \big|\widetilde{\Psi}_v(W_{v,n_v},\overline{W_{3,v}(\cdot,\overline{s_4})},\mathbf{d}_v.h_{3,v}(\cdot,s_2))\big|,
\end{align*} 
where the contribution from the epsilon factors is $O(1)$ (as $\mu_v$ is unramified), and the function $\mathbf{d}_v.h_{3,v}(\cdot,s)$, $\Re(s)>-1/2$, is defined via
\begin{align*}
\mathbf{d}_v.h_{3,v}(g_v,s_2)=h_{3,v}(g_v\mathbf{d}_v,s_2),\ \ g_v\in G(F_v).
\end{align*}

Substituting the decomposition of $\mathbf{d}_v.h_{3,v}(\cdot,s)$ (see Lemma \ref{lem8.3}) into the above summand, we can express $\widetilde{\Psi}_v^*(\lambda,h_v;\mathbf{s})$ as a linear combination of $O(l_v^5)$ functions of the form
\begin{multline*}
\boldsymbol{\Psi}_{i_1,i_2,j_1,j_2}^*:=
c_{j_1,j_2}\widetilde{\Psi}_v(\overline{\sigma_{\mu_v,-\overline{\lambda}}(\diag(1,\varpi_v^{i_1}))W_{v}^{\circ}},\overline{W_{3,v}(\cdot,\overline{s_3})},h_{3,v}(\cdot ,s_1)\mathbf{1}_{k_v\in K_{0,v}[j_1]})\\
\widetilde{\Psi}_v(\sigma_{\mu_v,-\overline{\lambda}}(\diag(1,\varpi_v^{i_2}))W_{v}^{\circ},\overline{W_{3,v}(\cdot,\overline{s_4})},h_{3,v}(\cdot,s_2)\mathbf{1}_{k_v\in K_{0,v}[j_2]})
\end{multline*}
for some $0\leq j_1, j_2, i_1, i_2\leq l_v$, with $c_{j_1,j_2}\ll q_v^{-l_v}$, where the implied constant depends only on $F$. 

By a straightforward calculation, we obtain 
$\boldsymbol{\Psi}_{i_1,i_2,j_1,j_2}^*/\mathbf{L}_v^*(\lambda,\mu_v;\mathbf{s})\ll q_v^{-l_v+100\varepsilon}$, where the implied constant depends only on $F$ and $\varepsilon$. Together with the fact that $\alpha_{\mathfrak{n},v}\ll q_v^{l_v/2}$, we obtain $\boldsymbol{e}_v^*(\lambda,\mu;\mathbf{s})\ll l_v^5q_v^{-l_v/2+10\varepsilon}$ for $\mathbf{s}\in \mathbf{B}_{\varepsilon}^4$, which implies \eqref{9.5}.
\end{proof}

\begin{lemma}\label{lem9.3}
 Let $v\mid\mathfrak{q}$. Let $\mathbf{s}=(s_1,s_2,s_3,s_4)\in \mathbf{B}_{\varepsilon}^4$. Let $\lambda\in \{s_3+s_1-1/2,1/2-s_3-s_1,s_3-s_1-1/2,1/2+s_1-s_3,s_4-s_2-1/2,1/2+s_2-s_4,s_4+s_2-1/2,1/2-s_2-s_4\}$. Then 
\begin{equation}\label{9.6}
\boldsymbol{e}_v^*(\lambda,\mu;\mathbf{s})\ll q_v^{e_v(\mathfrak{q})/2+100\varepsilon}\cdot q_v^{(e_v(\mathfrak{q})-e_v(\mathfrak{q}'))|\Re(\lambda)|}\mathbf{1}_{r_{\mu_v}=0},\ \ \ \mathbf{s}\in \mathbf{B}_{\varepsilon}^4.
\end{equation}
where the implied constant depends only on $\varepsilon$ and $F_v$. 
\end{lemma}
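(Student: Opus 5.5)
The plan is to mirror the proofs of Lemma~\ref{lem8.6} and Lemma~\ref{lem9.2}: first reduce the sum over $h_v$ in \eqref{c9.2} to a single spherical vector, then evaluate the two local periods separately, using analytic continuation in $q_v^{-\lambda}$, $q_v^{-s_j}$ so that $\lambda$ may sit near $\pm 1/2$.

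\textbf{Reduction to a spherical vector.} Since $v\mid\mathfrak{q}$ and $(\mathfrak{n},\mathfrak{q})=1$, we have $\mathbf{d}_v=I_2$. Hence $\Phi_{1,v}=\Phi_{3,v}=\Phi_v^*=\mathbf{1}_{\mathcal{O}_v}\otimes\mathbf{1}_{\mathcal{O}_v}$, up to the harmless constant $\alpha_{\mathfrak{n}}$. Therefore $W_{1,v}(\cdot,s_1)\overline{h_{3,v}(\cdot,\overline{s_3})}$ is right-$K_v$-invariant. The first factor
\[
\widetilde{\Psi}_v(\overline{W_{h_v,-\overline{\lambda}}},W_{1,v}(\cdot,s_1),\overline{h_{3,v}(\cdot,\overline{s_3})})
\]
therefore vanishes unless $h_v$ has a nonzero $K_v$-average. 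This forces $\sigma_{\mu_v,\lambda}=\mu_v|\cdot|^{\lambda}\boxplus\overline{\mu}_v|\cdot|^{-\lambda}$ to be unramified, which gives the factor $\mathbf{1}_{r_{\mu_v}=0}$. With the orthonormal basis $\mathfrak{B}(\mu_v,\overline{\mu}_v)$ chosen compatibly with $K_v$-types, only the normalized spherical vector $h_v^{\circ}$ survives. So $\boldsymbol{e}_v^*(\lambda,\mu;\mathbf{s})$ equals a single product of two local periods, divided by $\mathbf{L}_v^*(\lambda,\mu_v;\mathbf{s})$.

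\textbf{The spherical factor.} I would compute this factor exactly by the Casselman--Shalika formula together with the unramified Rankin--Selberg computation, as in \eqref{8.22}. For $\Re$ of all arguments large it equals
\[
L_v(1/2+s_3+s_1\pm\lambda,\cdot)\,L_v(1/2+s_3-s_1\pm\lambda,\cdot)\,L_v(1+2\lambda,\mu_v^2)^{-1}
\]
times a volume constant. This identity of rational functions in $q_v^{-\lambda},q_v^{-s_j}$ persists after continuation. Exactly these factors appear in $\mathbf{L}_v^*$, so after division the contribution is $O(1)$.

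\textbf{The ramified factor and the remaining normalization.} For $\widetilde{\Psi}_v(W_{h_v^{\circ},\lambda}^*,W_{2,v}(\cdot,s_2),\overline{h_{4,v}(\cdot,\overline{s_4})})$ I would follow the argument of \cite[Lemma 3.7.2]{MV10} already used in Lemma~\ref{lem8.6}. The key input is that $\Phi_{2,v}=\Phi_{4,v}=\Vol(K_{0,v}[m_v])^{-1}\mathbf{1}_{\mathfrak{p}_v^{m_v}}\otimes\omega_v\mathbf{1}_{\mathcal{O}_v^{\times}}$. In the Iwasawa coordinates this localizes the $K_v$-integral to $K_{0,v}[m_v]$, at the cost of the factor $q_v^{e_v(\mathfrak{q})}$. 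The resulting $y$-integral is a Hecke-type sum, and it picks out the Kirillov value of the spherical vector at $\varpi_v^{e_v(\mathfrak{q})-e_v(\mathfrak{q}')}$, with further factors $q_v^{-(e_v(\mathfrak{q})-e_v(\mathfrak{q}'))/2}q_v^{-e_v(\mathfrak{q}')/2}$. This yields the bound
\[
q_v^{e_v(\mathfrak{q})/2}\,\big|\lambda_{\sigma_{\mu_v,\lambda}}(\mathfrak{p}_v^{e_v(\mathfrak{q})-e_v(\mathfrak{q}')})\big|.
\]
For an unramified induced representation with parameter $\lambda$, this Hecke eigenvalue is at most $(e_v(\mathfrak{q})+1)\,q_v^{(e_v(\mathfrak{q})-e_v(\mathfrak{q}'))|\Re(\lambda)|}$. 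Next come the remaining local factors of $\mathbf{L}_v^*$, namely
\[
L_v(1/2+s_4\pm s_2\pm\lambda,\cdot)^{-1} \quad\text{and}\quad L_v(1\pm 2\lambda,\cdot)^{\pm 1},
\]
for $\mathbf{s}\in\mathbf{B}_{\varepsilon}^4$ and $\lambda$ in the listed set. These are $(1-\alpha q_v^{-z})^{\pm1}$ with $|\alpha|=1$ and $\Re z\geq -O(\varepsilon)$, or with $z$ bounded away from $0$. Hence they are $\ll q_v^{100\varepsilon}$, absorbing also the factor $e_v(\mathfrak{q})+1$.

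\textbf{Expected obstacle.} The main difficulty is justifying the \cite{MV10}-type evaluation when $\lambda\approx\pm1/2$, where the local integral need not converge absolutely. I would handle it by carrying out the computation for $\Re(s_4)$ large, where everything converges. There the answer is an explicit finite combination of geometric series in $q_v^{-\lambda}$, $q_v^{-s_j}$, producing the same $L$-factors times a polynomial. The bound is then read off from that closed form after meromorphic continuation, with care that no unexpected pole of the polynomial part survives the division by $\mathbf{L}_v^*$.
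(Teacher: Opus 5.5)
Your proposal is correct and follows essentially the paper's route: you reduce to the spherical vector $h_v^{\circ}$ (which forces $r_{\mu_v}=0$), evaluate the first period exactly so that it cancels against the corresponding factors of $\mathbf{L}_v^*(\lambda,\mu_v;\mathbf{s})$, bound the $(W_{2,v},h_{4,v})$-period by the \cite{MV10}-type computation of Lemma~\ref{lem8.6}, and control the leftover reciprocal $L$-factors by $q_v^{O(\varepsilon)}$. One small point: the factor $e_v(\mathfrak{q})+1$ cannot be absorbed into $q_v^{100\varepsilon}$ uniformly in $e_v(\mathfrak{q})$, but it is not needed, since $|\Re(\lambda)|\geq 1/2-O(\varepsilon)$ gives $|\lambda_{\sigma_{\mu_v,\lambda}}(\mathfrak{p}_v^{n})|\le q_v^{n|\Re(\lambda)|}\sum_{i\ge0}q_v^{-2i|\Re(\lambda)|}\ll q_v^{n|\Re(\lambda)|}$ directly.
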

\begin{proof}
Making use of \cite[lemma 3.7.2]{MV10} we obtain (as in Lemma \ref{lem8.6}) that 
\begin{equation}\label{f9.6}
\frac{\widetilde{\Psi}_v(W_{h_v,\lambda},W_{2,v}(\cdot,s_2),\overline{h_{4,v}(\cdot,\overline{s_4})})}{\mathbf{L}_v}\ll q_v^{e_v(\mathfrak{q})/2+50\varepsilon}\cdot q_v^{(e_v(\mathfrak{q})-e_v(\mathfrak{q}'))|\Re(\lambda)|}\mathbf{1}_{r_{\mu_v}=0},
\end{equation}
where
\begin{align*}
\mathbf{L}_v:=L_v(1+2\lambda,\mu_v^2)^{-1}L_v(1/2+s_4,(\mu_v|\cdot|^{\lambda}\boxplus \overline{\mu}_v|\cdot|^{\lambda})\otimes (|\cdot|^{s_2}\boxplus |\cdot|^{-s_2})),
\end{align*}
and the implied constant depends only on $\varepsilon$ and $F_v$. The factor $q_v^{(e_v(\mathfrak{q})-e_v(\mathfrak{q}'))|\Re(\lambda)|}$ in \eqref{f9.6} is a counterpart to $|\lambda_{\sigma}(\mathfrak{p}_v^{e_v(\mathfrak{q})-e_v(\mathfrak{q}')})|$ in Lemma \ref{lem8.6}.

Since $W_{1,v}(\cdot,s_1)\overline{h_{3,v}(\cdot,\overline{s_3})}$ is spherical, only $h_v=h_v^{\circ}$, which is the normalized spherical vector, occurs in the sum $h_v\in \mathfrak{B}(\mu_v,\overline{\mu}_v)$. Hence, the function $\widetilde{\Psi}_v(\overline{W_{h_v,-\overline{\lambda}}},W_{1,v}(\cdot,s_1),\overline{h_{3,v}(\cdot,\overline{s_3})})$ is precisely the product of the relevant $L$-factors, and that the reciprocals of the remaining $L$-factors are uniformly bounded by $q_v^{50\varepsilon}$ for $\mathbf{s}\in \mathbf{B}_{\varepsilon}^4$. In conjunction with \eqref{f9.6}, we derive \eqref{9.6}.
\end{proof}

\begin{lemma}\label{lem9.4}
 Let $v\mid\infty$. Let $\mathbf{s}=(s_1,s_2,s_3,s_4)\in \mathbf{B}_{\varepsilon}^4$. Let $\mu\in\{\mathbf{1},\omega,\overline{\omega}\}$ and $\lambda\in \{s_3+s_1-1/2,1/2-s_3-s_1,s_3-s_1-1/2,1/2+s_1-s_3,s_4-s_2-1/2,1/2+s_2-s_4,s_4+s_2-1/2,1/2-s_2-s_4\}$. Let $m\geq 0$. Then 
\begin{equation}\label{eq9.11}
\mathbf{L}_v\boldsymbol{e}_v^*(\lambda,\mu;\mathbf{s})\ll \mathbf{C}_v^{1000\varepsilon}C_v(\omega)^{-1/2+500\varepsilon}\big[\mathbf{C}_v^{\varepsilon}C_v(\mu|\cdot|^{\lambda})^{-1}\big]^m,\ \  \mathbf{s}\in \mathbf{B}_{\varepsilon}^4,
\end{equation}
where $\mathbf{L}_v:=L_v(1+2\lambda,\mu_v^2)^{-1}L_v(1/2+s_4,(\mu_v|\cdot|^{\lambda}\boxplus \overline{\mu}_v|\cdot|^{\lambda})\otimes (|\cdot|^{s_2}\boxplus |\cdot|^{-s_2}))$, and the implied constant depends only on $\varepsilon$, $m$, $F_v$, and the smooth functions $\alpha_v$, $v\mid\infty$ (see \textsection\ref{sec4.4}). 
\end{lemma}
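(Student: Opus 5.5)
We will bound $\boldsymbol{e}_v^*(\lambda,\mu;\mathbf{s})$ by bounding separately the two local Rankin--Selberg factors in $\widetilde{\Psi}_v^*(\lambda,h_v;\mathbf{s})$, in the same way as Lemmas \ref{lem8.6}, \ref{lem8.7} and \ref{lem9.3}.

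\emph{The spherical factor.} At $v\mid\infty$ the product $W_{1,v}(\cdot,s_1)\overline{h_{3,v}(\cdot,\overline{s_3})}$ is right-$K_v$-invariant. Hence only the $K_v$-type of the spherical vector $h_v^{\circ}$ of $\mu_v\boxplus\overline{\mu}_v$ survives in the sum \eqref{c9.2}. This forces $\mu_v$ to be unramified at $v$; otherwise $\boldsymbol{e}_v^*$ vanishes and there is nothing to prove. Because $\Phi_{1,v}=\Phi_{3,v}$ is the Gaussian, the first factor $\widetilde{\Psi}_v(\overline{W_{h_v^\circ,-\overline{\lambda}}},W_{1,v}(\cdot,s_1),\overline{h_{3,v}(\cdot,\overline{s_3})})$ is exactly the archimedean $L$-factor product $\prod_\pm L_v(1/2+s_3\pm s_1\mp\lambda,\cdot)$, up to an explicit exponential constant. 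After dividing by the corresponding part of $\mathbf{L}_v^*$, it contributes $O(1)$. The only exception is $\lambda$ near $\pm1/2$, where a pole of $L_v(1+2\lambda,\mu_v^2)^{-1}$ cancels a pole in the numerator. We handle this by working with the meromorphic continuation and using Stirling for $\mathbf{s}\in\mathbf{B}_\varepsilon^4$, at a loss of at most $\mathbf{C}_v^{O(\varepsilon)}$.

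\emph{The factor carrying $\Phi_{2,v},\Phi_{4,v}$.} It remains to bound $\widetilde{\Psi}_v(W_{h_v^\circ,\lambda}^*,W_{2,v}(\cdot,s_2),\overline{h_{4,v}(\cdot,\overline{s_4})})$ divided by $\mathbf{L}_v$. We use the Iwasawa decomposition, $g=a(y)k$, together with the explicit spherical Whittaker function $W_{h_v^\circ,\lambda}(a(y))=c\,|y|_v^{1/2}K_{\lambda}(2\pi|y|_v)\mu_v(y)$ (with the complex-place analogue). For $W_{2,v}$ we use the bound of \cite[Lemma 3.7.2]{MV10} already quoted in the proof of Lemma \ref{lem8.7}: on the support of $h_{4,v}$,
\[
W_{2,v}(a(y)k,s_2)\ll \mathbf{C}_v^{1/2+\varepsilon}C_v(\omega)^{-1/2}|y|_v^{1/2-\varepsilon}\bigl(1+|y|_v\mathbf{C}_vC_v(\omega)^{-1}\bigr)^{-m'},
\]
and $\|h_{4,v}\|_\infty\ll\mathbf{C}_v^{1/2}$ with the $k$-support of measure $\ll\mathbf{C}_v^{-1}$. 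Inserting these into the $y$-integral, with the shift $|y|^{\lambda\pm\cdots}$ that appears when $|\Re\lambda|\approx1/2$, gives a bound $\mathbf{C}_v^{O(\varepsilon)}C_v(\omega)^{-1/2+O(\varepsilon)}$. Near $\Re\lambda=\pm1/2$ the integral is not absolutely convergent, so there we first apply the local functional equation (switching sections as in Lemma \ref{lem6.3}, parts (A) and (B)) and only then estimate. In that step the $\gamma$-factor ratios are controlled by Stirling exactly as in \eqref{6.17}.

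\emph{Decay in $C_v(\mu|\cdot|^\lambda)$.} The factor $[\mathbf{C}_v^{\varepsilon}C_v(\mu|\cdot|^{\lambda})^{-1}]^m$ comes from repeated integration by parts. We let the Casimir (equivalently, the $y\partial_y$ and $K$-type differential operators) act on the spherical vector, whose eigenvalue is $\asymp C_v(\mu|\cdot|^\lambda)$, and move it onto $W_{1,v}\overline{h_{3,v}}$ and $W_{2,v}\overline{h_{4,v}}$. The Gaussian-built $W_{1,v},h_{3,v}$ are insensitive to this. Derivatives of $W_{2,v},h_{4,v}$ cost at most $\mathbf{C}_v^{\varepsilon}$ each on the effective support $|y|_v\ll C_v(\omega)\mathbf{C}_v^{-1+\varepsilon}$, because $\alpha_v$ is fixed and the scalings are by $\mathbf{C}_v$ in a single direction. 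This bookkeeping is where we expect the main obstacle: we have to verify that each derivative really costs only $\mathbf{C}_v^{\varepsilon}$ and not $\mathbf{C}_v$. If that fails, the fallback is exponential decay from Stirling in the imaginary part of $\lambda$ (real parts are pinned near $\pm1/2$), which already suffices because $\mu\in\{\mathbf 1,\omega,\overline\omega\}$ has bounded archimedean type relative to the $\omega$ factor. Multiplying the three contributions gives \eqref{eq9.11}.
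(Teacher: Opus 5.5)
\textbf{Verdict: there is a genuine gap, in the decay factor $[\mathbf{C}_v^{\varepsilon}C_v(\mu|\cdot|^{\lambda})^{-1}]^m$.}

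\textbf{What is fine.} Your reduction to the single period $\widetilde{\Psi}_v(W^{\circ,*}_{h_v,\lambda},W_{2,v}(\cdot,s_2),\overline{h_{4,v}(\cdot,\overline{s_4})})$ agrees with the paper. Handling its continuation by switching sections is a plausible alternative to the paper's contour shift, with two corrections:
\begin{itemize}
\item every admissible $\lambda$ lies within $20\varepsilon$ of $\pm1/2$, so the continuation is always needed, not just in an exceptional case;
\item you need a single-factor analogue of Lemmas \ref{lem6.1}--\ref{lem6.2}, not Lemma \ref{lem6.3}, which concerns the product of the two $\mathcal{J}$-side periods.
\end{itemize}

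\textbf{Why the Casimir argument fails.} Moving the Casimir through the invariant trilinear form produces derivatives of $\overline{h_{4,v}}$ and of $k_v\mapsto W_{2,v}(a(y_v)k_v,s_2)$ in $K_v$-directions. In the variable $y_v$ alone, $h_{4,v}$ is just $|y_v|_v^{1/2+s_4}$, so any nontrivial derivative of it is a $K_v$-derivative. Both functions are built from $\alpha_v(\mathbf{C}_v|\cdot|_v)$ evaluated at expressions linear in $k_{21}$, and $h_{4,v}|_{K_v}$ is supported on $|k_{21}|_v\lesssim\mathbf{C}_v^{-1}$. So each such derivative costs a factor $\mathbf{C}_v$, not $\mathbf{C}_v^{\varepsilon}$. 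Your own justification (a scaling by $\mathbf{C}_v$ in one direction) describes exactly a loss of $\mathbf{C}_v$ in that direction.

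The gain from the eigenvalue is then eaten up unless $C_v(\mu|\cdot|^{\lambda})$ exceeds a fixed positive power of $\mathbf{C}_v$. This leaves a range $\mathbf{C}_v^{\varepsilon}\le C_v(\omega)\le\mathbf{C}_v^{c}$ uncovered, and that range matters. In the application $C_v(\omega)\ll\mathbf{C}_v=C_v(\pi)$, and this decay is what disposes of $\mu\in\{\omega,\overline{\omega}\}$ in the proof of Proposition \ref{prop9.1}. Putting derivatives on $W_{1,v}\overline{h_{3,v}}$ does not help either: that is a separate period, already reduced to $L$-factors.

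\textbf{Why the fallback fails.} For $\mathbf{s}\in\mathbf{B}_{\varepsilon}^4$ one has $|\Im(\lambda)|\le 20\varepsilon$, so there is nothing to gain in $\Im(\lambda)$. The large parameter is the archimedean spectral parameter of $\mu_v$. For $\mu=\omega$ one has $C_v(\mu|\cdot|^{\lambda})\asymp C_v(\omega)$, which is not bounded.

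\textbf{What the paper does instead.} The missing idea is to keep all the $\mu$-dependence in the radial variable and never differentiate in $k_v$. Since $h_v^{\circ}$ is $K_v$-invariant, the paper writes the period as
$\int_{K_v}\overline{h_{4,v}(k_v,\overline{s_4})}\int_{F_v^{\times}}W^{\circ,*}_{h_v,\lambda}(a(y_v))W_{2,v}(a(y_v)k_v,s_2)|y_v|_v^{-1/2+s_4}d^{\times}y_v\,dk_v$
with $k_v$ frozen. The steps are:
\begin{itemize}
\item Take the Mellin transform $\mathcal{M}W_{2,v}(s;k_v)$ in $y_v$ only, with bounds \eqref{equ9.10} and \eqref{equ9.13}, using one integration by parts in $y_v'$ for the continuation.
\item Apply Stade's formula, which turns the $y_v$-integral into a Mellin--Barnes integral against $L_v(s_4-s+\lambda,\mu_v)L_v(s_4-s-\lambda,\overline{\mu}_v)/L_v(1+2\lambda,\mu_v^2)$.
\item Shift the contour past the pole at $s_4-s-\lambda=\gamma_v$; this also gives the continuation to $\lambda\approx 1/2$.
\item Apply Stirling to the $\mu_v$-gamma factors there.
\end{itemize}
The $K_v$-variable enters only through the $L^1(K_v)$-norm of $h_{4,v}$.

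\textbf{If you prefer integration by parts.} Use only the radial (Bessel) operator in $y_v$ acting on $W^{\circ}$, with $k_v$ fixed in the support of $h_{4,v}$. There, in the relevant range, $|k_{12}|_v=|k_{21}|_v\lesssim\mathbf{C}_v^{-1}$ keeps the $y_v$-derivatives of $W_{2,v}(a(y_v)k_v,s_2)$ cheap. You must still combine this with the continuation near $\Re(\lambda)=\pm1/2$, because the boundary terms at $y_v\to 0$ are exactly the non-convergent ones.
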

\begin{proof}
By definition, $W_{1,v}(\cdot,s_1)\overline{h_{3,v}(\cdot,\overline{s_3})}$ is spherical, only $h_v=h_v^{\circ}$, which is the normalized spherical vector, occurs in the sum $h_v\in \mathfrak{B}(\mu_v,\overline{\mu}_v)$. Thus,  
\begin{align*}
\mathbf{L}_v\boldsymbol{e}_v^*(\lambda,\mu;\mathbf{s})=\widetilde{\Psi}_v(W_{h_v,\lambda}^{\circ,*},W_{2,v}(\cdot,s_2),\overline{h_{4,v}(\cdot,\overline{s_4})}).
\end{align*}

Write $k_v=\begin{pmatrix}
k_{11}& k_{12}\\
k_{21}& k_{22}
\end{pmatrix}\in K_v$ with $|k_{21}|_v\leq 10^{-1} \mathbf{C}_v^{-1}$. By definition (see \eqref{c2.5} in \textsection\ref{sec2.1.4.}), $W_{2,v}(a(y_v')k_v,s_2)$ is expressed as the integral 
\begin{multline}\label{c9.7}
\textbf{C}_v^{1/2}|y_v'|_v^{s_2+\frac{1}{2}}\int_{F_v^{\times}}\int_{F_v}\alpha_v(\textbf{C}_v|k_{11}y_v't_v+k_{21}b_v|_v)\alpha_v(|k_{12}y_v't_v+k_{22}b_v|-1)\\\overline{\psi}_v(b_vt_v^{-1})\omega_v(k_{12}y_v't_v+k_{22}b_v)db_v\omega_{v}^{-1}(t_v)|t_v|_v^{2s_2}d^{\times}t_v.
\end{multline}

Define the Mellin transform of $W_{2,v}(\cdot,s_2)$ by  
\begin{equation}\label{fc9.8}
\mathcal{M}W_{2,v}(s;k_v):=\int_{F_v^{\times}}W_{2,v}(a(y_v')k_v,s_2)|y_v'|_v^{s}d^{\times}y_v'.
\end{equation}
By \cite[Lemma 3.7.2]{MV10} the above integral converges absolutely in $\Re(s)>-1/2-\Re(s_2)$, and thus defines a holomorphic function therein.  

Substituting the expression \eqref{c9.7} into \eqref{fc9.8} we obtain 
\begin{equation}\label{equa9.9}
\mathcal{M}W_{2,v}(s;k_v)=\textbf{C}_v^{\frac{1}{2}}\int |y_v'|_v^{s+s_2+\frac{1}{2}}\int G(\cdots)\overline{\omega}_v(t_v)|t_v|_v^{-s+s_2-\frac{1}{2}}d^{\times}t_vd^{\times}y_v',
\end{equation}
where $t_v,y_v'\in F_v^{\times}$, $\Re(s)>-1/2-\Re(s_2)$, and $G(\cdots)=G(y_v',t_v;k_v)$ is defined by 
\begin{align*}
\int_{F_v}\alpha_v(\textbf{C}_v|k_{11}y_v'+k_{21}b_v|_v)\alpha_v(|k_{12}y_v'+k_{22}b_v|-1)
\overline{\psi}_v(b_vt_v^{-1})\omega_v(k_{12}y_v'+k_{22}b_v)db_v.
\end{align*}

For $|k_{21}|_v\leq \mathbf{C}_v^{-1}$, we have $\alpha_v(\textbf{C}_v|k_{11}y_v'+k_{21}b_v|_v)\alpha_v(|k_{12}y_v'+k_{22}b_v|-1)\equiv 0$ unless $|y_v'|_v\leq 10\textbf{C}_v^{-1}$ (where we have used the support of $\alpha_v$, see \textsection\ref{sec4.4}). 

In the region $\Re(s)>-1/2-\Re(s_2)$, we integrate by parts to deduce 
\begin{multline}\label{c9.9}
\mathcal{M}W_{2,v}(s;k_v)=\frac{\textbf{C}_v^{1/2}}{s+s_2+1/2}\int_{F_v^{\times}} |y_v'|_v^{s+s_2+\frac{3}{2}}\int_{F_v^{\times}}\overline{\omega}_v(t_v) \\
|t_v|_v^{-s+s_2-\frac{1}{2}}\frac{\partial}{\partial y_v'}G(y_v',t_v;k_v)d^{\times}t_vd^{\times}y_v', \qquad 
\end{multline}
which yields a meromorphic continuation to $\Re(s)> -3/2-\Re(s_2)$.

Let $m_1, m_2\geq 1$. Following the proof of \cite[Lemma 3.1.14]{MV10} we have
\begin{multline}\label{cf9.10}
G(y_v',t_v;k_v)\ll C_v(\omega)^{-1/2+\varepsilon}\cdot \mathbf{1}_{
|y_v'|_v\leq 10\textbf{C}_v^{-1}}\\
\cdot \min\big\{C_v(\omega)^{m_1}|t_v|_v^{m_1},C_v(\omega)^{-m_2}(1+|t_v|_v^{-1})^{m_2}\big\},\qquad \qquad 
\end{multline}
where the implied constant depends on $\varepsilon$, $F_v$, $m_1$ and $m_2$.  

Substituting the upper bound \eqref{cf9.10} into \eqref{equa9.9} and \eqref{c9.9}, we derive  
\begin{equation}\label{equ9.10}
\mathcal{M}W_{2,v}(s;k_v)\ll \textbf{C}_v^{-\Re(s)-\Re(s_2)}\cdot C_v(\omega)^{\Re(s)-\Re(s_2)+\varepsilon}
\end{equation}
if $\Re(s)\geq -1/2-\Re(s_2)+\varepsilon$; and 
\begin{equation}\label{equ9.13}
\mathcal{M}W_{2,v}(s;k_v)\ll \textbf{C}_v^{-\Re(s)-\Re(s_2)-1}\cdot C_v(\omega)^{\Re(s)-\Re(s_2)+\varepsilon}
\end{equation}
if $\Re(s)\geq -3/2-\Re(s_2)+\varepsilon$.

Suppose $\Re(s_4)\geq 10+|\Re(\lambda)|$, and $\Re(\lambda)>-1/2$. Then 
\begin{align*}
\widetilde{\Psi}_v(W_{h_v,\lambda}^{\circ,*},W_{2,v}(\cdot,s_2),\overline{h_{4,v}(\cdot,\overline{s_4})})=\Psi_v(W_{h_v,\lambda}^{\circ,*},W_{2,v}(\cdot,s_2),\overline{h_{4,v}(\cdot,\overline{s_4})}),
\end{align*}
which is given by the integral 
\begin{equation}\label{c9.10}
\int_{K_v}\overline{h_{4,v}(k_v,\overline{s_4})}\int_{F_v^{\times}}W_{h_v,\lambda}^{\circ,*}(a(y_v))W_{2,v}(a(y_v)k_v,s_2)|y_v|_v^{-\frac{1}{2}+s_4}d^{\times}y_v.
\end{equation}

Let $\beta=-1/2-\Re(s_2)+20^{-1}\varepsilon$, $\Re(s_4)\geq 10+|\Re(\lambda)|$, and $\Re(\lambda)>-1/2$. Substituting  the Mellin inversion \eqref{c9.9} into \eqref{c9.10}, we can express the function $\Psi_v(W_{h_v,\lambda}^{\circ,*},W_{2,v}(\cdot,s_2),\overline{h_{4,v}(\cdot,\overline{s_4})})$ as 
\begin{align*}
\frac{1}{2\pi i}\int_{K_v}\overline{h_{4,v}(k_v,\overline{s_4})}\int_{(\beta)}\mathcal{M}W_{2,v}(s;k_v)\int_{F_v^{\times}}W_{h_v,\lambda}^{\circ,*}(a(y_v))|y_v|_v^{-s-\frac{1}{2}+s_4}d^{\times}y_vdsdk_v. 
\end{align*}

Since $\Re(s_4)\geq 10+|\Re(\lambda)|$, and $\Re(\lambda)>-1/2$, by Stade's formula,
\begin{align*}
\int_{F_v^{\times}}W_{h_v,\lambda}^{\circ,*}(y_v)|y_v|_v^{-1/2+s_4-s}d^{\times}y_v=\frac{L_v(s_4-s+\lambda,\mu_v)L_v(s_4-s-\lambda,\overline{\mu}_v)}{L_v(1+2\lambda,\mu_v)}. 
\end{align*}

Therefore, for $\Re(s_4)>\beta+|\Re(\lambda)|$, we obtain 
\begin{multline*}
\Psi_v(W_{h_v,\lambda}^{\circ,*},W_{2,v}(\cdot,s_2),\overline{h_{4,v}(\cdot,\overline{s_4})})=\frac{L_v(1+2\lambda,\mu_v)^{-1}}{2\pi i}\int_{K_v}\overline{h_{4,v}(k_v,\overline{s_4})}\\
\int_{(\beta)}\mathcal{M}W_{2,v}(s;k_v)L_v(s_4-s+\lambda,\mu_v)L_v(s_4-s-\lambda,\overline{\mu}_v)dsdk_v.
\end{multline*}

Let $\gamma_v\in \mathbb{C}$ be the spectral parameter of $\mu_v$. By symmetry, we may assume $|\lambda-1/2|\leq 20\varepsilon$. Let $\beta'=\beta+10^{-1}\varepsilon$. 

For $(s_2,\lambda)\in \mathbb{C}^2$ with $100^{-1}\varepsilon<\Re(s_4)-|\Re(\lambda)|-\beta<50^{-1}\varepsilon$, we  may shift contour to derive the meromorphic continuation $\widetilde{\Psi}_v(W_{h_v,\lambda}^{\circ,*},W_{2,v}(\cdot,s_2),\overline{h_{4,v}(\cdot,\overline{s_4})})$:
\begin{multline*}
\frac{1}{2\pi i}\int_{K_v}\frac{\overline{h_{4,v}(k_v,\overline{s_4})}}{L_v(1+2\lambda,\mu_v)}\int_{(\beta')}\mathcal{M}W_{2,v}(s;k_v)L_v(s_4-s+\lambda,\mu_v)L_v(s_4-s-\lambda,\overline{\mu}_v)dsdk_v\\
-\frac{L_v(2\lambda,\mu_v^2)}{L_v(1+2\lambda,\mu_v)}\underset{s'=\gamma_v}{\Res}\ L_v(s',\overline{\mu}_v)\int_{K_v}\overline{h_{4,v}(k_v,\overline{s_4})}\mathcal{M}W_{2,v}(s_4-\lambda-\gamma_v;k_v)dk_v.
\end{multline*}

By combining the estimates \eqref{equ9.10} and \eqref{equ9.13}, and applying 
Stirling's formula to control the gamma factors attached to $\mu_v$, $\overline{\mu}_v$ and 
$\mu_v^{2}$, the above expression is bounded by
\begin{align*}
\frac{\big[\mathbf{C}_v^{\varepsilon}C_v(\mu|\cdot|^{\lambda})^{-1}\big]^m\mathbf{C}_v^{1/2+500\varepsilon}}{C_v(\omega)^{1/2-500\varepsilon}}\int_{K_v}\big|\overline{h_{4,v}(k_v,\overline{s_4})}\big|dk_v\ll \frac{\mathbf{C}_v^{1000\varepsilon}\big[\mathbf{C}_v^{\varepsilon}C_v(\mu|\cdot|^{\lambda})^{-1}\big]^m}{C_v(\omega)^{1/2-500\varepsilon}}.
\end{align*} 

Therefore, \eqref{eq9.11} follows. 
\end{proof}

\subsection{Proof of Proposition \ref{prop9.1}}
Let $m\geq 10$. Combining Lemmas \ref{lem9.2}, \ref{lem9.3} and \ref{lem9.4} into \eqref{c9.2} we obtain 
\begin{equation}\label{9.16}
\boldsymbol{e}^*(\lambda,\mu;\mathbf{s})\ll \frac{N_F(\mathfrak{n})^{-1/2+\varepsilon}N_F(\mathfrak{q})^{1+\varepsilon}N_F(\mathfrak{q}')^{-1/2+\varepsilon}\mathbf{C}_{\infty}^{(m+10)\varepsilon}}{\mathbf{L}_{\infty}C_{\infty}(\omega)^{1/2}C_{\infty}(\mu|\cdot|^{\lambda})^{m}}\cdot\mathbf{1}_{C_{\fin}(\mu)=1},
\end{equation}
where $\mathbf{L}_{\infty}(\lambda,\mathbf{s},\mu):=\prod_{v\mid\infty}L_v(1+2\lambda,\mu_v^2)^{-1}L_v(1/2+s_4,(\mu_v|\cdot|^{\lambda}\boxplus \overline{\mu}_v|\cdot|^{\lambda})\otimes (|\cdot|^{s_2}\boxplus |\cdot|^{-s_2}))$. 

Let $\mathbf{s}=(s_1,s_2,s_3,s_4)\in \mathbf{B}_{\varepsilon}^4$. Let $\mu\in\{\mathbf{1},\omega,\overline{\omega}\}$ and $\lambda_0\in \{s_3+s_1-1/2,1/2-s_3-s_1,s_3-s_1-1/2,1/2+s_1-s_3,s_4-s_2-1/2,1/2+s_2-s_4,s_4+s_2-1/2,1/2-s_2-s_4\}$. By Stirling's formula, 
\begin{equation}\label{9.17}
|\mathbf{L}_{\infty}(\lambda_0,\mathbf{s},\mu)|^{-1}\underset{\lambda=\lambda_0}{\Res}\ \mathbf{L}(\lambda,\mu;\mathbf{s})\ll \mathbf{C}_{\infty}^{\varepsilon}\max_{|s|\leq 20\varepsilon}|L(s,\overline{\omega})|.
\end{equation}

Substituting \eqref{9.16} and \eqref{9.17} into \eqref{c9.1}, and using the 
convexity bound for $L(s,\overline{\omega})$ (together with the choice of $m$ 
sufficiently large in the cases $\mu\in\{\omega,\overline{\omega}\}$), we obtain 
\begin{align*}
\Psi_{\mathrm{Dual}}^{(i)}(\mathbf{0},\mathfrak{X}_{\mathfrak{n}})\ll \frac{N_F(\mathfrak{q})^{1+\varepsilon}\mathbf{C}_{\infty}^{\varepsilon}\max_{|s|\leq 20\varepsilon}|L(s,\overline{\omega})|}{N_F(\mathfrak{n})^{1/2-\varepsilon}N_F(\mathfrak{q}')^{1/2-\varepsilon}C_{\infty}(\omega)^{1/2-\varepsilon}}\ll \frac{N_F(\mathfrak{q})^{1+3\varepsilon}\mathbf{C}_{\infty}^{2\varepsilon}}{N_F(\mathfrak{n})^{1/2-\varepsilon}}.
\end{align*}

Therefore, Proposition \ref{prop9.1} follows.

\section{An Upper Bound for $\widetilde{\Psi}_{\mathrm{Geo}}^{(i)}(\mathbf{0}\mid\mathfrak{X}_{\mathfrak{n}})$}\label{sec9}
In this section, we will utilize the techniques established in \textsection\ref{sec6}--\textsection\ref{sec8} to give sharp upper bounds for $\widetilde{\Psi}_{\mathrm{Geo}}^{(i)}(\mathbf{0}\mid\mathfrak{X}_{\mathfrak{n}})$, $1\leq i\leq 8$. 

\subsection{Bounds of $\widetilde{\Psi}_{\mathrm{Geo}}^{(1)}(\mathbf{0}\mid\mathfrak{X}_{\mathfrak{n}})$ and $\widetilde{\Psi}_{\mathrm{Geo}}^{(2)}(\mathbf{0}\mid\mathfrak{X}_{\mathfrak{n}})$}
Recall the definition (see Definition \ref{defn2.5} in \textsection\ref{sec2.2}):
\begin{align*}
&\widetilde{\Psi}_{\mathrm{Geo}}^{(1)}(\mathbf{s},\mathfrak{X}_{\mathfrak{n}}):=\widetilde{\Psi}(W_1(\cdot,s_1),\overline{W_3(\cdot,\overline{s_3})},h_2(\cdot,s_2)\overline{h_4(\cdot,\overline{s_4})}),\\
&\widetilde{\Psi}_{\mathrm{Geo}}^{(2)}(\mathbf{s},\mathfrak{X}_{\mathfrak{n}}):=\widetilde{\Psi}(W_1(\cdot,s_1),\overline{W_3(\cdot,\overline{s_3})},h_2^{\Diamond}(\cdot,s_2)\overline{h_4(\cdot,\overline{s_4})}).
\end{align*}

\begin{lemma}\label{lem9.1}
 Let $i\in\{1,2\}$. Then 
\begin{equation}\label{e9.1}
\widetilde{\Psi}_{\mathrm{Geo}}^{(i)}(\mathbf{0}\mid\mathfrak{X}_{\mathfrak{n}})\ll N_F(\mathfrak{n})^{-1/2+\varepsilon}N_F(\mathfrak{q})^{1+\varepsilon}C(\omega)^{\varepsilon},
\end{equation}
where the implied constant depends only on $F$, $\varepsilon$, and the smooth functions $\alpha_v$, $v\mid\infty$ (see \textsection\ref{sec4.4}). 
\end{lemma}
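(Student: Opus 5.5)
We factor $\widetilde{\Psi}_{\mathrm{Geo}}^{(i)}(\mathbf{s},\mathfrak{X}_{\mathfrak{n}})$ as a product of complete $L$-functions times local correction factors, bound each correction place by place, and then average over $\mathbf{S}_{\varepsilon}^4$ against $(s_1s_2s_3s_4)^{-1}$. This is the same scheme as in \textsection\ref{sec6} and \textsection\ref{sec8}.

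\emph{Step 1: Eulerian factorisation.} For $i=1$, the section $h_2(\cdot,s_2)\overline{h_4(\cdot,\overline{s_4})}$ lies in $|\cdot|^{1/2+s_2+s_4}\boxplus\omega\overline{\omega}|\cdot|^{-1/2-s_2-s_4}$, which is $|\cdot|^{1/2+s_2+s_4}\boxplus|\cdot|^{-1/2-s_2-s_4}$ since $\omega_2=\omega_4=\omega$. Its pairing with $W_1\overline{W_3}$ is therefore a Rankin--Selberg integral for $E_1\times E_3$, namely (up to the normalising $\zeta$ factors of $h_2,h_4$)
\[
\prod_{\pm,\pm}\Lambda(1+s_2+s_4\pm s_1\pm s_3)\big/\Lambda(2+2s_2+2s_4),
\]
which is a product of four Dedekind zeta values near $1$. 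For $i=2$ we replace $s_2$ by $-s_2$ and $\omega$ by $\omega\overline{\omega}$. We write $\widetilde{\Psi}_{\mathrm{Geo}}^{(i)}=\mathbf{L}_{\mathrm{Geo}}(\mathbf{s})\prod_v\boldsymbol{e}^{\mathrm{Geo}}_v(\mathbf{s})$. The factor $\mathbf{L}_{\mathrm{Geo}}$ has poles of total order up to four at $\mathbf{s}=\mathbf{0}$, but on $\mathbf{S}_{\varepsilon}^4$ it is $O(1)$, and the contour integral against $(s_1s_2s_3s_4)^{-1}$ stays bounded because of the staggered radii in \eqref{eq4.12}. It therefore suffices to bound the local factors uniformly for $\mathbf{s}\in\mathbf{S}_{\varepsilon}^4$.

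\emph{Step 2: local factors.} At $v\nmid\mathfrak{n}\mathfrak{q}\infty$ everything is unramified, and $\boldsymbol{e}_v^{\mathrm{Geo}}=1$ apart from harmless $q_v^{O(d_v)}$ factors.

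At $v\mid\mathfrak{n}$ we use $W_{1,v}(g)=\alpha_{\mathfrak{n},v}W_{3,v}(g\mathbf{d}_v)$ and $h_{2,v}(g)=h_{4,v}(g\mathbf{d}_v)$. The only other input is $|\alpha_{\mathfrak{n},v}|\asymp q_v^{l_v/2}$. After the Iwasawa decomposition, the $K_v$-integral of $W_{3,v}(ak\mathbf{d}_v)h_{4,v}(k\mathbf{d}_v)$ against the spherical $\overline{W_{3,v}h_{4,v}}$ is controlled exactly as in Lemma~\ref{lem8.3} and \eqref{8.19}. The main piece, $k\in K_{0,v}[l_v]$, has measure $\asymp q_v^{-l_v}$, so we expect $\boldsymbol{e}^{\mathrm{Geo}}_v\ll l_v^{O(1)}q_v^{l_v/2}q_v^{-l_v}=l_v^{O(1)}q_v^{-l_v/2}$, in parallel with Lemma~\ref{lem8.5}. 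Taking the product over $v\mid\mathfrak{n}$ gives $N_F(\mathfrak{n})^{-1/2+\varepsilon}$.

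At $v\mid\mathfrak{q}$ the section $\Phi_v$ in \eqref{equa4.5} contains the normalisation $\Vol(K_{0,v}[m_v])^{-1}\asymp q_v^{m_v}$, while $W_{1,v},W_{3,v}$ are spherical. A direct computation of the support, in the style of \cite[Lemma 3.7.2]{MV10}, should give $\ll q_v^{m_v(1+\varepsilon)}$, and hence $N_F(\mathfrak{q})^{1+\varepsilon}$ in total.

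\emph{Step 3: archimedean places.} This is the main obstacle. $W_{1,v},W_{3,v}$ are spherical Bessel functions $|y|^{1/2}K_{s}(2\pi|y|)$. The section $h_{2,v}\overline{h_{4,v}}$ is built from $\Phi_v$ in \eqref{eq6.28}, concentrated on $|t_1|\le\mathbf{C}_v^{-1}$, $|t_2|\approx1$, with amplitude $\mathbf{C}_v^{1/2}$ each. We need the resulting $K_v$-average to be $\ll \mathbf{C}_v^{\varepsilon}C_v(\omega)^{\varepsilon}$. Because $h_{2,v}\overline{h_{4,v}}$ involves $\omega_v\overline{\omega_v}=1$, we expect the factors $\mathbf{C}_v^{1/2}\cdot\mathbf{C}_v^{1/2}$ to be cancelled by the volume $\mathbf{C}_v^{-1}$ of the region $|k_{21}|\ll\mathbf{C}_v^{-1}$, just as in \eqref{eq7.18}. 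One then bounds $|h_{2,v}h_{4,v}(k)|\ll\mathbf{C}_v^{1+O(\varepsilon)}\mathbf{1}_{|k_{21}|\ll\mathbf{C}_v^{-1}}$ and applies Cauchy--Schwarz to the $y$-integral of the Bessel functions, valid since $\Re(s_i)$ is small.

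For $i=2$ the intertwined section $h_2^{\Diamond}$ is the main risk: its archimedean part carries the Fourier transform of $\Phi_v$, concentrated at $|t_1|\approx\mathbf{C}_v$. Here I would first try the functional-equation switch of Lemma~\ref{lem6.3}, which trades it for $\widehat{W}_{2,v}$ and a gamma ratio bounded by Stirling. This should yield the same $\mathbf{C}_v^{O(\varepsilon)}$ bound, with the $C(\omega)^{\varepsilon}$ loss coming from the gamma factors of $\omega_v$.

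Multiplying Steps 1--3 and integrating over $\mathbf{S}_{\varepsilon}^4$ gives \eqref{e9.1}; the $\mathbf{C}_\infty^{\varepsilon}$ loss is absorbed into the implied constant in the application.
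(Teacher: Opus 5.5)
For $i=1$ your plan is essentially the paper's argument. The ingredients match:
\begin{itemize}
\item the same Euler factorisation;
\item the bound $q_v^{-l_v(1/2-\varepsilon)}$ at $v\mid\mathfrak{n}$, via the $K_{0,v}[j]$-decomposition of Lemma~\ref{lem8.3};
\item the bound $q_v^{e_v(\mathfrak{q})}$ at $v\mid\mathfrak{q}$, from the support of $h_{4,v}$ on $K_{0,v}[e_v(\mathfrak{q})]$;
\item the bound $O(1)$ at $v\mid\infty$.
\end{itemize}
Three corrections. First, the normalising factors of $h_2,h_4$ are the Hecke values $\Lambda(1+2s_2,\overline{\omega})\Lambda(1+2s_4,\omega)$. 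Their finite parts, not archimedean gamma factors, are the source of $C(\omega)^{\varepsilon}$. Second, at $v\mid\mathfrak{n}$ it is not the small piece $K_{0,v}[l_v]$ alone that matters. On $K_{0,v}[l_v]$ the factor $h_{4,v}(\mathbf{d}_v)\asymp q_v^{l_v/2}$ cancels the $q_v^{-l_v/2}$ from the shifted $y$-integral. On $K_v-K_{0,v}[1]$, which has volume $\asymp 1$, one has $h_{4,v}(w\mathbf{d}_v)\asymp q_v^{-l_v/2}$. So every piece contributes about $q_v^{-l_v/2}$ and all of them must be tracked, as in \eqref{ee9.6}--\eqref{ee9.8}. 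Third, at $v\mid\infty$ you need neither Cauchy--Schwarz nor a $\mathbf{C}_v^{\varepsilon}$ loss. Since $W_{1,v},W_{3,v}$ are $K_v$-invariant, $\Psi_v$ factors exactly as a Bessel integral in $y$ times $\int_{K_v}h_{2,v}\overline{h_{4,v}}\,dk$. The latter is $O(1)$ by your support-and-volume count. Your closing remark that a $\mathbf{C}_\infty^{\varepsilon}$ can be absorbed into the implied constant is not valid for the lemma as stated, whose constant depends only on $F,\varepsilon,\alpha_v$; for $i=1$ the loss is simply unnecessary.

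The genuine gap is $i=2$. Lemma~\ref{lem6.3} does not apply there. It rewrites products of periods of the Whittaker function $W_{h_v,\lambda}$ of an auxiliary Eisenstein series, using the Bessel swap of Lemma~\ref{lem6.1} and the local Rankin--Selberg functional equation. $\widetilde{\Psi}^{(2)}_{\mathrm{Geo}}$ contains no such Whittaker function and no $W_2$ at all, so nothing is traded for $\widehat{W}_{2,v}$. A local functional equation applied to the product section $h_2^{\Diamond}\overline{h_4}$ only returns its intertwined image, which is not expressible through $h_2$. In addition, your $v\mid\mathfrak{q}$ estimate was made for $h_{2,v}$, not $h_{2,v}^{\Diamond}$. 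And gamma factors of $\omega_v$ near $0$ or $1$ have size $C_v(\omega)^{\pm1/2}$, so they cannot produce a mere $C(\omega)^{\varepsilon}$.

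What is needed (the paper compresses it into ``bounded intertwining multipliers'') is to treat $h_2^{\Diamond}$ as another explicit section. For $\Re(s_2)>0$ one has $h^{\Diamond}_{2,v}(g)=|\det g|_v^{s_2+1/2}\iint\Phi_{2,v}((t,y)g)\overline{\omega}_v(t)|t|_v^{2s_2}\,d^{\times}t\,dy$, which lies in $\omega|\cdot|^{-s_2}\boxplus|\cdot|^{s_2}$. By \textsection\ref{sect2.1.4}, $h_2^{\Diamond}\overline{h_4}$ then lies in $\omega|\cdot|^{1/2-s_2+s_4}\boxplus\overline{\omega}|\cdot|^{-1/2+s_2-s_4}$; unlike $i=1$, the $\omega$'s do not cancel. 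Then redo Step~2 with this section:
\begin{itemize}
\item At $v\mid\mathfrak{q}$, the spherical $W_{1,v},W_{3,v}$ make the local period a $y$-integral times $\int_{K_v}h^{\Diamond}_{2,v}\overline{h_{4,v}}\,dk$. Its integrand transforms under $B(\mathcal{O}_v)$ by $\omega_v(a)\overline{\omega}_v(d)$, so it vanishes identically when $\omega_v$ is ramified, and is computed directly on $K_{0,v}[e_v(\mathfrak{q})]$ otherwise.
\item At unramified places, and at $v\mid\mathfrak{n}$ (where intertwining commutes with translation by $\mathbf{d}_v$), the spherical section carries $L_v(2s_2,\overline{\omega}_v)$ instead of $L_v(1+2s_2,\overline{\omega}_v)$. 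The resulting Hecke $L$-value at $2s_2\approx0$ must be moved to $1-2s_2$ by its functional equation.
\item At $v\mid\infty$, the conductor factor that functional equation costs must be recovered from the archimedean period. That period has to be bounded directly from the formula above on $\{|k_{21}|\ll\mathbf{C}_v^{-1}\}$, with the $y$-integral now carrying $\omega_v(y)$.
\end{itemize}
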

\begin{proof}
Suppose $i=1$ first. Notice that $h_2(\cdot,s_2)\overline{h_4(\cdot,\overline{s_4})}$ is a section in $|\cdot|^{1/2+s_2+s_4}\boxplus |\cdot|^{-1/2-s_2-s_4}$. Hence 
\begin{align*}
\widetilde{\Psi}_{\mathrm{Geo}}^{(1)}(\mathbf{s},\mathfrak{X}_{\mathfrak{n}})\propto \frac{\Lambda(1+s_2+s_4,(|\cdot|^{s_1}\boxplus |\cdot|^{-s_1})\times(|\cdot|^{s_3}\boxplus |\cdot|^{-s_3}))}{\Lambda(2+2s_2+2s_4,\mathbf{1})\Lambda(1+2s_2,\overline{\omega})^{-1}\Lambda(1+2s_4,\omega)^{-1}}.
\end{align*}
As a consequence, we have 
\begin{align*}
\widetilde{\Psi}_{\mathrm{Geo}}^{(1)}(\mathbf{s},\mathfrak{X}_{\mathfrak{n}})=\frac{\Lambda(1+s_2+s_4,(|\cdot|^{s_1}\boxplus |\cdot|^{-s_1})\times(|\cdot|^{s_3}\boxplus |\cdot|^{-s_3}))}{\Lambda(2+2s_2+2s_4,\mathbf{1})\Lambda(1+2s_2,\overline{\omega})^{-1}\Lambda(1+2s_4,\omega)^{-1}}\prod_{v\leq\infty}\mathbf{e}_{1,v}(\mathbf{s},\mathfrak{X}_{\mathfrak{n}}),
\end{align*}
where $\mathbf{e}_{1,v}(\mathbf{s},\mathfrak{X}_{\mathfrak{n}})$, for $\mathbf{s}\in \mathbf{B}_{\varepsilon}^4$, is defined by 
\begin{align*}
\frac{L_v(2+2s_2+2s_4,\mathbf{1})\Psi_v(W_{1,v}(\cdot,s_1),\overline{W_{3,v}(\cdot,\overline{s_3})},h_{2,v}(\cdot,s_2)\overline{h_{4,v}(\cdot,\overline{s_4})})}{L_v(1+s_2+s_4,(|\cdot|_v^{s_1}\boxplus |\cdot|_v^{-s_1})\times(|\cdot|_v^{s_3}\boxplus |\cdot|_v^{-s_3}))L_v(1+2s_2,\overline{\omega})L_v(1+2s_4,\omega)}.
\end{align*}

By a straightforward calculation, $\mathbf{e}_{1,v}(\mathbf{s},\mathfrak{X}_{\mathfrak{n}})\equiv 1$ for all $v\nmid \mathfrak{n}\mathfrak{q}\mathfrak{O}_F\infty$. Now we proceed to handle $\mathbf{e}_{1,v}(\mathbf{s},\mathfrak{X}_{\mathfrak{n}})$ at $v\mid \mathfrak{n}\mathfrak{q}\mathfrak{O}_F\infty$ as follows. 
\begin{itemize}
\item Suppose $v\mid \mathfrak{O}_F$. Similarly to \eqref{eq7.15} we have 
\begin{equation}\label{e9.2}
\mathbf{e}_{1,v}(\mathbf{s},\mathfrak{X}_{\mathfrak{n}})\ll q_v^{O(d_v)},\ \ \mathbf{s}\in \mathbf{B}_{\varepsilon}^4,
\end{equation}
where $d_v=e_v(\mathfrak{D}_{F})$ is the ramification index, and the implied constants in \eqref{e9.2} are absolute. 

\item Suppose $v\mid\mathfrak{q}$. Let $\mathbf{s}\in \mathbf{B}_{\varepsilon}^4$. By the construction of $W_{j,v}(\cdot,s_j)$ and $h_{j,v}(\cdot,s_j)$ (see \textsection\ref{sec4.4}), 
\begin{multline}\label{ee9.3}
\Psi_v(W_{1,v}(\cdot,s_1),\overline{W_{3,v}(\cdot,\overline{s_3})},h_{2,v}(\cdot,s_2)\overline{h_{4,v}(\cdot,\overline{s_4})})\ll \Big|\int_{F_v^{\times}}W_{1,v}(a(y_v),s_1)\\
\overline{W_{3,v}(a(y_v),\overline{s_3})}|y_v|_v^{1+s_2+s_4}d^{\times}y_v\int_{K_v}h_{2,v}(k_v,s_2)\overline{h_{4,v}(k_v,\overline{s_4})}dk_v\Big|,
\end{multline}
which is $\ll q_v^{e_v(\mathfrak{q})}$. Here we have used the fact that 
\begin{equation}\label{fc9.4}
\overline{h_{4,v}(k_v,\overline{s_4})}=\overline{h_{4,v}(I_2,\overline{s_4})}\mathbf{1}_{k_v\in K_{0,v}[e_v(\mathfrak{q})]}.	
\end{equation}
Therefore, we derive that
\begin{equation}\label{e9.3}
\mathbf{e}_{1,v}(\mathbf{s},\mathfrak{X}_{\mathfrak{n}})\ll q_v^{e_v(\mathfrak{q})},\ \ \mathbf{s}\in \mathbf{B}_{\varepsilon}^4.
\end{equation}

\item Suppose $v\mid\infty$. A similar majorization to \eqref{ee9.3} holds in this case, which leads to 
\begin{equation}\label{e9.5}
\Psi_v(W_{1,v}(\cdot,s_1),\overline{W_{3,v}(\cdot,\overline{s_3})},h_{2,v}(\cdot,s_2)\overline{h_{4,v}(\cdot,\overline{s_4})})\ll 1,\ \ \mathbf{s}\in \mathbf{B}_{\varepsilon}^4.	
\end{equation} 

\item Suppose $v\mid\mathfrak{n}$. By the construction of $W_{j,v}(\cdot,s_j)$ and $h_{j,v}(\cdot,s_j)$, 
\begin{multline}\label{equa9.6}
\Psi_v(W_{1,v}(\cdot,s_1),\overline{W_{3,v}(\cdot,\overline{s_3})},h_{2,v}(\cdot,s_2)\overline{h_{4,v}(\cdot,\overline{s_4})})\ll l_vq_v^{\frac{l_v}{2}}\Big|\int_{F_v^{\times}}\overline{W_{3,v}(a(y_v),\overline{s_3})}\\
|y_v|_v^{1+s_2+s_4}d^{\times}y_v\int_{K_v}W_{3,v}(a(y_v)k_v\mathbf{d}_v,s_1)h_{4,v}(k_v\mathbf{d}_v,s_2)dk_v\overline{h_{4,v}(I_2,\overline{s_4})}\Big|.
\end{multline}

Consider the following scenarios:
\begin{itemize}
\item Suppose $k_v\in K_v-K_{0,v}[1]$. Then $k_v\in \begin{pmatrix}
1& \alpha\\
& 1
\end{pmatrix}wK_{0,v}[l_v]$ for some $\alpha\in \mathcal{O}_v/\mathfrak{p}_v^{l_v}$. Similar to \eqref{8.10}, we have 
\begin{equation}\label{ee9.6}
W_{3,v}(a(y_v)k_v\mathbf{d}_v,s_1)h_{4,v}(k_v\mathbf{d}_v,s_2)\ll \big|W_{3,v}(a(y_v\varpi_v^{l_v}),s_1)h_{4,v}(a(\varpi_v^{l_v}),s_2)\big|.
\end{equation}

\item Suppose $k_v\in K_{0,v}[l_v]$. Similar to \eqref{8.11}, we have 
\begin{equation}\label{ee9.7}
W_{3,v}(a(y_v)k_v\mathbf{d}_v,s_1)h_{4,v}(k_v\mathbf{d}_v,s_2)\ll \big|W_{3,v}(a(y_v)\mathbf{d}_v,s_1)h_{4,v}(\mathbf{d}_v,s_2)\big|.
\end{equation}

\item Suppose $k_v\in K_{0,v}[j]-K_v[j+1]$ for some $1\leq j<l_v$. Similar to \eqref{8.12}, we have 
\begin{equation}\label{ee9.8}
W_{3,v}(a(y_v)k_v\mathbf{d}_v,s_1)h_{4,v}(k_v\mathbf{d}_v,s_2)\ll \big|W_{3,v}(a(y_v)\mathbf{d}_v^{(j)},s_1)h_{4,v}(\mathbf{d}_v^{(j)},s_2)\big|,
\end{equation}
where $\mathbf{d}_v^{(j)}:=\diag(\varpi_v^{l_v-j},\varpi_v^j)$.
\end{itemize}

Substituting \eqref{ee9.6}, \eqref{ee9.7} and \eqref{ee9.8} into \eqref{equa9.6}, we derive that 
\begin{equation}\label{e9.6}
\mathbf{e}_{1,v}(\mathbf{s},\mathfrak{X}_{\mathfrak{n}})\ll q_v^{-l_v(1/2-\varepsilon)},\ \ \mathbf{s}\in \mathbf{B}_{\varepsilon}^4.	
\end{equation} 
\end{itemize}

Therefore, the bound \eqref{e9.1} for $\widetilde{\Psi}_{\mathrm{Geo}}^{(1)}(\mathbf{0}\mid\mathfrak{X}_{\mathfrak{n}})$ follows from \eqref{e9.2}, \eqref{e9.3}, \eqref{e9.5}, \eqref{e9.6}, and the fact that 
\begin{align*}
\max_{\mathbf{s}\in \mathbf{B}_{\varepsilon}^4}\Big|\frac{\Lambda(1+s_2+s_4,(|\cdot|^{s_1}\boxplus |\cdot|^{-s_1})\times(|\cdot|^{s_3}\boxplus |\cdot|^{-s_3}))}{L(2+2s_2+2s_4,\mathbf{1})L(1+2s_2,\overline{\omega})^{-1}L(1+2s_4,\omega)^{-1}}\Big|\ll O(N_F(\mathfrak{q})^{\varepsilon}C(\omega)^{\varepsilon}).
\end{align*}

The remaining case $i=2$ holds similarly, noting that $h_2^{\Diamond}(\cdot,s_2)\overline{h_4(\cdot,\overline{s_4})}$ is a section in $|\cdot|^{1/2-s_2+s_4}\boxplus |\cdot|^{-1/2+s_2-s_4}$ with bounded intertwining multipliers (see \cite[Proposition 2.2.2]{Sch02}). 
\end{proof}

\subsection{Bounds of $\widetilde{\Psi}_{\mathrm{Geo}}^{(3)}(\mathbf{0}\mid\mathfrak{X}_{\mathfrak{n}})$ and $\widetilde{\Psi}_{\mathrm{Geo}}^{(4)}(\mathbf{0}\mid\mathfrak{X}_{\mathfrak{n}})$}
Recall the definition (see Definition \ref{defn2.5} in \textsection\ref{sec2.2}):
\begin{align*}
&\widetilde{\Psi}_{\mathrm{Geo}}^{(3)}(\mathbf{s},\mathfrak{X}_{\mathfrak{n}}):=-\widetilde{\Psi}(W_1^*(\cdot,s_1),W_2(\cdot,s_2),\overline{h_3(\cdot,\overline{s_3})}\overline{h_4(\cdot,\overline{s_4})}),\\
&\widetilde{\Psi}_{\mathrm{Geo}}^{(4)}(\mathbf{s},\mathfrak{X}_{\mathfrak{n}}):=-\widetilde{\Psi}(W_1^*(\cdot,s_1),W_2(\cdot,s_2),\overline{h_3^{\Diamond}(\cdot,\overline{s_3})}\overline{h_4(\cdot,\overline{s_4})}).
\end{align*}

\begin{lemma}\label{lemma9.2}
 Let $i\in\{3,4\}$. Then 
\begin{equation}\label{e9.11}
\widetilde{\Psi}_{\mathrm{Geo}}^{(i)}(\mathbf{0}\mid\mathfrak{X}_{\mathfrak{n}})\ll N_F(\mathfrak{n})^{1/2+\varepsilon}C(\omega)^{1/2+\varepsilon}\mathbf{C}_{\infty}^{-1+\varepsilon}N_F(\mathfrak{q})^{\varepsilon},
\end{equation}
where the implied constant depends only on $F$, $\varepsilon$, and the smooth functions $\alpha_v$, $v\mid\infty$ (see \textsection\ref{sec4.4}). 
\end{lemma}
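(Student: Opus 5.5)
We follow the template of Lemma \ref{lem9.1}, treating $i=3$ first. The section $\overline{h_3(\cdot,\overline{s_3})}\,\overline{h_4(\cdot,\overline{s_4})}$ lies in $|\cdot|^{1/2+s_3+s_4}\boxplus \overline{\omega}|\cdot|^{-1/2-s_3-s_4}$. The two Whittaker functions $W_1^*(\cdot,s_1)$ and $W_2(\cdot,s_2)$ belong to $|\cdot|^{s_1}\boxplus|\cdot|^{-s_1}$ and $|\cdot|^{s_2}\boxplus\omega|\cdot|^{-s_2}$. Rankin--Selberg theory then gives
\begin{align*}
\widetilde{\Psi}_{\mathrm{Geo}}^{(3)}(\mathbf{s},\mathfrak{X}_{\mathfrak{n}})=\frac{\Lambda(1+s_3+s_4,(|\cdot|^{s_1}\boxplus |\cdot|^{-s_1})\times(|\cdot|^{s_2}\boxplus \omega|\cdot|^{-s_2}))}{\Lambda(2+2s_3+2s_4,\overline{\omega})\Lambda(1+2s_3,\mathbf{1})^{-1}\Lambda(1+2s_4,\omega)^{-1}}\prod_{v}\mathbf{e}_{3,v}(\mathbf{s},\mathfrak{X}_{\mathfrak{n}}).
\end{align*}
Here $\mathbf{e}_{3,v}\equiv 1$ for $v\nmid\mathfrak{n}\mathfrak{q}\mathfrak{O}_F\infty$, and at $v\mid\mathfrak{O}_F$ one has $\mathbf{e}_{3,v}\ll q_v^{O(d_v)}$, as in \eqref{e9.2}.

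The global factor carries the main size. It contains two copies of $\Lambda(1+s_3+s_4\pm s_1-s_2,\omega)$ and $\Lambda(1+s_3+s_4\pm s_1+s_2,\mathbf{1})$. The $\zeta_F$ factors are cancelled, up to poles that the contour integral over $\mathbf{S}_\varepsilon^4$ absorbs, against $\Lambda(1+2s_3,\mathbf{1})$ in the numerator. The quotient $L(1+\cdot,\omega)^2L(1+2s_4,\omega)/L(2+\cdot,\overline{\omega})$ near $s=0$ is $\ll C(\omega)^{\varepsilon}$ by the convexity bound for $L(s,\omega)$ just right of $\Re s=1$. So the finite part contributes only $C(\omega)^\varepsilon N_F(\mathfrak{q})^\varepsilon$. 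The factor $C(\omega)^{1/2}$ in \eqref{e9.11} must therefore come from the local factors. It enters either through the functional equation, when the completed Hecke $L$-function has argument near $0$ after the residue calculus at $\mathbf{s}\in\mathbf{B}_\varepsilon^4$, or through $\varepsilon$-factors at $v\mid\mathfrak{q}'\infty$.

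Next come the local bounds. At $v\mid\mathfrak{n}$, $W_{1,v}^*(g,s_1)$ is $\alpha_{\mathfrak{n},v}W_{3,v}^*(g\mathbf{d}_v,s_1)$ and $W_{2,v}(g)=W_{4,v}(g\mathbf{d}_v)$, while $h_3,h_4$ are spherical there. After a change of variables $g\mapsto g\mathbf{d}_v^{-1}$ the integral becomes $\alpha_{\mathfrak{n},v}$ times a pairing of spherical Whittaker functions against the translate $\mathbf{d}_v^{-1}.(h_3h_4)$. Decomposing $K_v$ as in Lemma \ref{lem8.3} and using \eqref{8.10}--\eqref{8.12}, the dominant piece is the coset $K_v-K_{0,v}[1]$ with weight $q_v^{-l_v/2}$, or $K_{0,v}[l_v]$ of volume $q_v^{-l_v}$ with weight $q_v^{l_v/2}$. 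This gives $\mathbf{e}_{3,v}\ll l_v^{O(1)}q_v^{l_v/2}\cdot q_v^{-l_v/2}\cdot q_v^{l_v(1/2+\varepsilon)}$, with the extra $q_v^{l_v/2}$ coming from the Hecke eigenvalue type growth in $\lambda_{\pi}(\mathfrak{p}^{l_v})$, bounded trivially as in Proposition \ref{prop6.1}. Altogether this yields $N_F(\mathfrak{n})^{1/2+\varepsilon}$. At $v\mid\mathfrak{q}$, the factor $\Phi_{4,v}$ carries $\Vol(K_{0,v}[m_v])^{-1}$, which is cancelled by the support restriction \eqref{fc9.4}. The remaining Whittaker function $W_{2,v}$ is estimated by \cite[Lemma 3.7.2]{MV10}, as in Lemma \ref{lem8.6}, giving $q_v^{e_v(\mathfrak{q}')/2+\varepsilon}$. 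This explains the finite part of $C(\omega)^{1/2}$.

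The archimedean places will be the main obstacle. At $v\mid\infty$ we integrate over $a(y_v)k_v$. We use the truncation and support analysis of $W_{2,v}$ from the proof of Lemma \ref{lem7.6}, namely the bound \eqref{eq7.18} with $\Phi_2=\Phi$, and the sup-norm $\mathbf{C}_v^{-1/2}$ of the $h_{4,v}$ from $\Phi_v$ in \eqref{eq6.28}. The $k_v$-support of $h_{4,v}$ has measure $\asymp\mathbf{C}_v^{-1}$ (from $\alpha_v(\mathbf{C}_v|t_1|)$), while $W_{1,v}^*$ is a fixed $K_0$-Bessel function. Combining these with $W_{2,v}\ll\mathbf{C}_v^{1/2+\varepsilon}C_v(\omega)^{-1/2}\cdots$ should give $\mathbf{C}_v^{-1+\varepsilon}C_v(\omega)^{1/2+\varepsilon}$ once the $\omega$-twisted $\gamma$-factor is pulled out by switching sections, in the manner of Lemma \ref{lem6.1}. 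The delicate point is to make the divergence at $\mathbf{s}=\mathbf{0}$ harmless: the local integral is only conditionally convergent near the $K_0$ logarithmic singularity. We plan to handle this by averaging over $\mathbf{S}_\varepsilon^4$ as in Theorem \ref{thmD}, so that only bounds uniform on $\mathbf{B}_\varepsilon^4$ are needed. The case $i=4$ is identical, with $h_3^{\Diamond}$ replacing $h_3$; the intertwining multiplier is bounded by \cite[Proposition 2.2.2]{Sch02}.
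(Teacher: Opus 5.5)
Your plan follows the paper's proof. That is: the Rankin--Selberg factorization of $\widetilde{\Psi}^{(3)}_{\mathrm{Geo}}$, a harmless global factor on $\mathbf{S}_\varepsilon^4$, and local bounds at $v\mid\mathfrak{O}_F$, at $v\mid\mathfrak{q}$ (via \eqref{fc9.4} and the $W_{2,v}$-analysis of Lemma \ref{lem8.6}), at $v\mid\mathfrak{n}$ and at $v\mid\infty$. The local sizes you aim for agree with \eqref{9.14}, \eqref{9.15} and \eqref{e9.16}.

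The problem is the step at $v\mid\mathfrak{n}$, which produces the dominant factor $N_F(\mathfrak{n})^{1/2}$. As written it is wrong, and you reach the right exponent only because two errors cancel.
\begin{itemize}
\item The weights $q_v^{\mp l_v/2}$ of \eqref{8.10}--\eqref{8.12} describe translates of the single section $h_{3,v}$. For the product $\overline{h_{3,v}h_{4,v}}$, which lies in the non-tempered $\sigma_v=|\cdot|_v^{1/2+s_3+s_4}\boxplus\overline{\omega}_v|\cdot|_v^{-1/2-s_3-s_4}$, they become $q_v^{\mp l_v}$.
\item With the weights you quote, the $K_v$-integral would be $\asymp q_v^{-l_v/2}$. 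Together with $\alpha_{\mathfrak{n},v}\asymp q_v^{l_v/2}$ this gives $\mathbf{e}_{3,v}=O(l_v^{O(1)})$, which is false.
\item You then restore the correct size with an extra $q_v^{l_v/2}$ credited to $\lambda_\pi(\mathfrak{p}^{l_v})$. No cuspidal $\pi$ occurs in $\widetilde{\Psi}^{(3)}_{\mathrm{Geo}}$, so that factor has no source.
\end{itemize}
The actual mechanism, as in the paper, is this. After $g\mapsto g\mathbf{d}_v^{-1}$, the functions $W_{3,v}^*$, $W_{4,v}$, $h_{3,v}$ and $h_{4,v}$ are all spherical. So the $K_v$-average of $\overline{h_{3,v}h_{4,v}}(k\mathbf{d}_v^{-1})$ is a spherical-function value, of absolute value $q_v^{-l_v/2}(1+q_v^{-1})^{-1}|\lambda_{\sigma_v}(\mathfrak{p}_v^{l_v})|$. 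Multiplying by $\alpha_{\mathfrak{n},v}$ gives $\mathbf{e}_{3,v}\ll|\lambda_{\sigma_v}(\mathfrak{p}_v^{l_v})|\ll q_v^{l_v(1/2+20\varepsilon)}$. The growth comes from the exponent $1/2$ in $\sigma_v$, and a coset count with the squared weights gives the same answer.

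At $v\mid\infty$ your inputs are also inconsistent.
\begin{itemize}
\item The restriction $h_{4,v}|_{K_v}$ has sup-norm $\asymp\mathbf{C}_v^{1/2}$, not $\mathbf{C}_v^{-1/2}$, on a set of measure $\asymp\mathbf{C}_v^{-1}$. Combined literally, the numbers you state give $\mathbf{C}_v^{-2}$ rather than $\mathbf{C}_v^{-1}$.
\item You do not need to switch sections, and there is no conditional convergence to worry about. On $\mathbf{B}_\varepsilon^4$ the integrand $W^*_{1,v}(a(y))W_{2,v}(a(y)k)|y|_v^{s_3+s_4}$ is $\ll|y|_v^{1-O(\varepsilon)}$ near $0$ and decays at $\infty$, so the local integral converges absolutely. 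The contour average over $\mathbf{S}_\varepsilon^4$ is there only for the global $\zeta_F$-poles.
\end{itemize}
The bound \eqref{e9.16} then follows directly from three inputs:
\begin{itemize}
\item $W_{2,v}(a(y)k)\ll\mathbf{C}_v^{1/2+\varepsilon}C_v(\omega)^{-1/2}|y|_v^{1/2-O(\varepsilon)}(1+|y|_v\mathbf{C}_vC_v(\omega)^{-1})^{-m}$, from \cite[Lemma 3.7.2]{MV10};
\item $W^*_{1,v}(a(y))\ll|y|_v^{1/2-O(\varepsilon)}$;
\item $\int_{K_v}|h_{4,v}|\asymp\mathbf{C}_v^{-1/2}$.
\end{itemize}
Together these give $\mathbf{C}_v^{-1/2}\cdot\mathbf{C}_v^{1/2}C_v(\omega)^{-1/2}\cdot C_v(\omega)\mathbf{C}_v^{-1}$, up to $\mathbf{C}_v^{O(\varepsilon)}$.
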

\begin{proof}
Suppose $i=3$ first. Notice that $\overline{h_3(\cdot,\overline{s_3})}\overline{h_4(\cdot,\overline{s_4})}$ is a section in $|\cdot|^{1/2+s_3+s_4}\boxplus \overline{\omega}|\cdot|^{-1/2-s_3-s_4}$. Hence 
\begin{align*}
\widetilde{\Psi}_{\mathrm{Geo}}^{(3)}(\mathbf{s},\mathfrak{X}_{\mathfrak{n}})\propto \frac{\Lambda(1+s_3+s_4,(|\cdot|^{s_1}\boxplus |\cdot|^{-s_1})\times(|\cdot|^{s_2}\boxplus \omega|\cdot|^{-s_2}))}{\Lambda(2+2s_3+2s_4,\omega)\Lambda(1+2s_3,\mathbf{1})^{-1}\Lambda(1+2s_4,\omega)^{-1}}.
\end{align*}
As a consequence, we can express $\widetilde{\Psi}_{\mathrm{Geo}}^{(3)}(\mathbf{s},\mathfrak{X}_{\mathfrak{n}})$ as  
\begin{equation}\label{c9.13}
-\frac{\Lambda(1+s_3+s_4,(|\cdot|^{s_1}\boxplus |\cdot|^{-s_1})\times(|\cdot|^{s_2}\boxplus \omega|\cdot|^{-s_2}))}{\Lambda(2+2s_3+2s_4,\omega)\Lambda(1+2s_3,\mathbf{1})^{-1}\Lambda(1+2s_4,\omega)^{-1}}\prod_{v\leq\infty}\mathbf{e}_{3,v}(\mathbf{s},\mathfrak{X}_{\mathfrak{n}}),
\end{equation}
where $\mathbf{e}_{3,v}(\mathbf{s},\mathfrak{X}_{\mathfrak{n}})$, for $\mathbf{s}\in \mathbf{B}_{\varepsilon}^4$, is defined by 
\begin{align*}
\frac{L_v(2+2s_3+2s_4,\mathbf{1})\Psi_v(W_{1,v}^*(\cdot,s_1),W_{2,v}(\cdot,s_2),\overline{h_{3,v}(\cdot,\overline{s_3})}\overline{h_{4,v}(\cdot,\overline{s_4})})}{L_v(1+s_3+s_4,(|\cdot|_v^{s_1}\boxplus |\cdot|_v^{-s_1})\times(|\cdot|_v^{s_2}\boxplus \omega_v|\cdot|_v^{-s_2}))L_v(1+2s_3,\mathbf{1})L_v(1+2s_4,\omega)}.
\end{align*}

By a straightforward calculation, $\mathbf{e}_{3,v}(\mathbf{s},\mathfrak{X}_{\mathfrak{n}})\equiv 1$ for all $v\nmid \mathfrak{n}\mathfrak{q}\mathfrak{O}_F\infty$. Now we proceed to handle $\mathbf{e}_{3,v}(\mathbf{s},\mathfrak{X}_{\mathfrak{n}})$ at $v\mid \mathfrak{n}\mathfrak{q}\mathfrak{O}_F\infty$ as follows. 
\begin{itemize}
\item Suppose $v\mid \mathfrak{O}_F$. Similarly to \eqref{eq7.15} we have 
\begin{equation}\label{e9.2}
\mathbf{e}_{3,v}(\mathbf{s},\mathfrak{X}_{\mathfrak{n}})\ll q_v^{O(d_v)},\ \ \mathbf{s}\in \mathbf{B}_{\varepsilon}^4.
\end{equation}

\item Suppose $v\mid\mathfrak{q}$. The integral $\Psi_v(W_{1,v}^*(\cdot,s_1),W_{2,v}(\cdot,s_2),\overline{h_{3,v}(\cdot,\overline{s_3})}\overline{h_{4,v}(\cdot,\overline{s_4})})$ unfolds to 
\begin{align*}
\overline{h_{3,v}(I_2,\overline{s_3})}\int_{K_v}\int_{F_v^{\times}}W_{1,v}^*(a(y_v),s_1)W_{2,v}(a(y_v)k_v,s_2)|y_v|_v^{s_3+s_4}d^{\times}y_v\overline{h_{4,v}(k_v,\overline{s_4})})dk_v.
\end{align*}

By \eqref{fc9.4} and the analysis of $W_{2,v}(\cdot,s_2)$ in the proof of Lemma \ref{lem8.6}, 
\begin{equation}\label{9.14.}
\Psi_v(W_{1,v}^*(\cdot,s_1),W_{2,v}(\cdot,s_2),\overline{h_{3,v}(\cdot,\overline{s_3})}\overline{h_{4,v}(\cdot,\overline{s_4})})\ll q_v^{e_v(\mathfrak{q}')(1/2+20\varepsilon)}.
\end{equation}

As a result, we obtain 
\begin{equation}\label{9.14}
\mathbf{e}_{3,v}(\mathbf{s},\mathfrak{X}_{\mathfrak{n}})\ll q_v^{e_v(\mathfrak{q}')(1/2+20\varepsilon)},\ \ \mathbf{s}\in \mathbf{B}_{\varepsilon}^4.
\end{equation}

\item Suppose $v\mid\mathfrak{n}$. By a change of variable, for $\mathbf{s}\in \mathbf{B}_{\varepsilon}^4$, we can express the integral $\Psi_v(W_{1,v}^*(\cdot,s_1),W_{2,v}(\cdot,s_2),\overline{h_{3,v}(\cdot,\overline{s_3})}\overline{h_{4,v}(\cdot,\overline{s_4})})$ as 
\begin{align*}
\int_{F_v^{\times}}W_{1,v}^*W_{2,v}(\cdots)|y_v|_v^{s_3+s_4}d^{\times}y_v\int_{K_v}\overline{h_{3,v}(k_v\mathbf{d}_v^{-1},\overline{s_3})}\overline{h_{4,v}(k_v\mathbf{d}_v^{-1},\overline{s_4})})dk_v,
\end{align*}
where $W_{1,v}^*W_{2,v}(\cdots):=W_{1,v}^*(a(y_v),s_1)W_{2,v}(a(y_v),s_2)$. Since the section $\overline{h_{3,v}(\cdot,\overline{s_3})}\overline{h_{4,v}(\cdot,\overline{s_4})})$ is spherical, then 
\begin{align*}
\Big|\int_{K_v}\overline{h_{3,v}(k_v\mathbf{d}_v^{-1},\overline{s_3})}\overline{h_{4,v}(k_v\mathbf{d}_v^{-1},\overline{s_4})})dk_v\Big|=q_v^{-l_v/2}(1+q_v^{-1})^{-1}\big|\lambda_{\sigma_v}(\mathfrak{p}_v^{l_v})\big|,
\end{align*}  
where $\sigma_v:=|\cdot|_v^{1/2+s_3+s_4}\boxplus \overline{\omega}|\cdot|_v^{-1/2-s_3-s_4}$ and $\lambda_{\sigma_v}(\mathfrak{p}_v^{l_v})$ is the Hecke eigenvalue. Therefore,  
\begin{equation}\label{9.15}
\mathbf{e}_{3,v}(\mathbf{s},\mathfrak{X}_{\mathfrak{n}})\ll \big|\lambda_{\sigma_v}(\mathfrak{p}_v^{l_v})\big|\ll q_v^{(1/2+20\varepsilon)l_v}.
\end{equation}

\item Suppose $v\mid\infty$. The Whittaker function $W_{2,v}(a(y_v),s_2)$ is peaked near $|y_v|_v\asymp C_v(\omega)\mathbf{C}_v^{-1}$ and it takes a value of size $\asymp 1$ there (see \cite[Lemma 3.7.2]{MV10}). As a result, for $\mathbf{s}\in \mathbf{B}_{\varepsilon}^4$,  
\begin{equation}\label{e9.16}
\Psi_v(W_{1,v}^*(\cdot,s_1),W_{2,v}(\cdot,s_2),\overline{h_{3,v}(\cdot,\overline{s_3})}\overline{h_{4,v}(\cdot,\overline{s_4})})\ll C_v(\omega)^{1/2+100\varepsilon}\mathbf{C}_v^{-1+100\varepsilon}.	
\end{equation} 
\end{itemize}

Therefore, the bound \eqref{e9.11} for $\widetilde{\Psi}_{\mathrm{Geo}}^{(3)}(\mathbf{0}\mid\mathfrak{X}_{\mathfrak{n}})$  follows from \eqref{c9.13}, \eqref{e9.2}, \eqref{9.14}, \eqref{9.15}, and  \eqref{e9.16}. The remaining case $i=4$ holds similarly.
\end{proof}

\subsection{Bounds of $\widetilde{\Psi}_{\mathrm{Geo}}^{(5)}(\mathbf{0}\mid\mathfrak{X}_{\mathfrak{n}})$ and $\widetilde{\Psi}_{\mathrm{Geo}}^{(6)}(\mathbf{0}\mid\mathfrak{X}_{\mathfrak{n}})$}
Recall the definition:
\begin{align*}
&\widetilde{\Psi}_{\mathrm{Geo}}^{(5)}(\mathbf{s},\mathfrak{X}_{\mathfrak{n}}):=\widetilde{\Psi}(W_{h_1,\overline{h_3}}^*(\cdot,s_1,\overline{s_3}),W_2(\cdot,s_2),\overline{h_4(\cdot,\overline{s_4})}),\\
&\widetilde{\Psi}_{\mathrm{Geo}}^{(6)}(\mathbf{s},\mathfrak{X}_{\mathfrak{n}}):=\widetilde{\Psi}(W_{h_1^{\Diamond},\overline{h_3}}^*(\cdot,s_1,\overline{s_3}),W_2(\cdot,s_2),\overline{h_4(\cdot,\overline{s_4})}).
\end{align*}

\begin{lemma}\label{lemma9.3}
 Let $i\in\{5,6\}$. Then 
\begin{equation}\label{e9.17}
\widetilde{\Psi}_{\mathrm{Geo}}^{(i)}(\mathbf{0}\mid\mathfrak{X}_{\mathfrak{n}})\ll C_{\infty}(\omega)^{-1/2}\mathbf{C}_{\infty}^{\varepsilon}N_F(\mathfrak{q})^{1+\varepsilon}N_F(\mathfrak{n})^{-1/2+\varepsilon},
\end{equation}
where the implied constant depends only on $F$, $\varepsilon$, and the smooth functions $\alpha_v$, $v\mid\infty$ (see \textsection\ref{sec4.4}). 
\end{lemma}
\begin{proof}
Recall the definition 
\begin{align*}
W_{h_1,\overline{h_3}}^*(g,s_1,\overline{s_3})=\int_{N(\mathbb{A}_F)}h_1(wug,s_1)\overline{h_3(wug,\overline{s_3})}\theta(u)du.
\end{align*}

Notice that $h_1(\cdot,s_1)\overline{h_3(\cdot,\overline{s_3})}$ is a section in $|\cdot|^{1/2+s_1+s_3}\boxplus |\cdot|^{-1/2-s_1-s_3}$. Moreover, at $v\nmid \mathfrak{n}$ the function 
\begin{align*}
\frac{h_{1,v}(\cdot,s_1)\overline{h_{3,v}(\cdot,\overline{s_3})}}{h_{1,v}(I_2,s_1)\overline{h_{3,v}(I_2,\overline{s_3})}}=\frac{h_{1,v}(\cdot,s_1)\overline{h_{3,v}(\cdot,\overline{s_3})}}{L_v(1+2s_1,\mathbf{1})L_v(1+2s_3,\mathbf{1})}
\end{align*}
is the normalized spherical section. Therefore, 
\begin{multline}\label{c9.20}
\widetilde{\Psi}_{\mathrm{Geo}}^{(5)}(\mathbf{s},\mathfrak{X}_{\mathfrak{n}})=\Lambda(1+2s_1,\mathbf{1})\Lambda(1+2s_3,\mathbf{1})\Lambda(1/2+s_4,(|\cdot|^{1/2+s_1+s_3}\boxplus \\
|\cdot|^{-1/2-s_1-s_3})
\times (|\cdot|^{s_2}\boxplus \omega|\cdot|^{-s_2}))\prod_{v\leq\infty}\mathbf{e}_{5,v}(\mathbf{s},\mathfrak{X}_{\mathfrak{n}}),
\end{multline}
where $\mathbf{e}_{5,v}(\mathbf{s},\mathfrak{X}_{\mathfrak{n}})$ is defined by 
\begin{align*}
\frac{L_v(1+2s_1,\mathbf{1})^{-1}L_v(1+2s_3,\mathbf{1})^{-1}\widetilde{\Psi}_v(W_{h_1,\overline{h_3},v}(\cdot,s_1,\overline{s_3}),W_{2,v}(\cdot,s_2),\overline{h_{4,v}(\cdot,\overline{s_4})})}{L_v(1/2+s_4,(|\cdot|_v^{1/2+s_1+s_3}\boxplus 
|\cdot|_v^{-1/2-s_1-s_3})
\times (|\cdot|_v^{s_2}\boxplus \omega_v|\cdot|_v^{-s_2}))}.
\end{align*}

We have $\mathbf{e}_{5,v}(\mathbf{s},\mathfrak{X}_{\mathfrak{n}})\equiv 1$ for all $v\nmid \mathfrak{n}\mathfrak{q}\mathfrak{O}_F\infty$. Now we proceed to handle $\mathbf{e}_{5,v}(\mathbf{s},\mathfrak{X}_{\mathfrak{n}})$ at $v\mid \mathfrak{n}\mathfrak{q}\mathfrak{O}_F\infty$ as follows.
\begin{itemize}
\item Suppose $v\mid\mathfrak{O}_F$. Then 
\begin{equation}\label{9.18}
\mathbf{e}_{5,v}(\mathbf{s},\mathfrak{X}_{\mathfrak{n}})\ll q_v^{O(d_v)},\ \ \mathbf{s}\in \mathbf{B}_{\varepsilon}^4.
\end{equation}

\item Suppose $v\mid\mathfrak{q}$. Let $\Re(s_4)\gg 1$. Then $\widetilde{\Psi}(W_{h_1,\overline{h_3}}^*(\cdot,s_1,\overline{s_3}),W_2(\cdot,s_2),\overline{h_4(\cdot,\overline{s_4})})$ is equal to the integral:
\begin{equation}\label{9.19}
L_v(1+2s_1,\mathbf{1})L_v(1+2s_3,\mathbf{1})\int_{N(F_v)\backslash\overline{G}(F_v)}W_v^{\circ}(g_v)W_{2,v}(g_v,s_2)\overline{h_{4,v}(g_v,\overline{s_4})}dg_v,
\end{equation}
where $W_v^{\circ}(\cdot)$ is the normalized spherical vector in the Whittaker model of $|\cdot|_v^{1/2+s_1+s_3}\boxplus |\cdot|_v^{-1/2-s_1-s_3}$. The integral \eqref{9.19} converges absolutely when $\Re(s_4)\gg 1$. Utilizing the construction of $\Phi_{2,v}=\Phi_v$ (see \eqref{equa4.5}) we thus expand \eqref{9.19} as 
\begin{multline*}
L_v(1+2s_4,\omega_v^2)L_v(1+2s_1,\mathbf{1})L_v(1+2s_3,\mathbf{1})\sum_{j\geq 0}W_v^{\circ}(a(\varpi_v^j))q_v^{j(1/2-s_4)}q_v^{-j(s_2+1/2)}\\
\int_{F_v^{\times}}\int_{F_v}\mathbf{1}_{\mathfrak{p}_v^{e_v(\mathfrak{q})}}(\varpi_v^jt_v^{-1})\mathbf{1}_{\mathcal{O}_v^{\times}}(b_v)\omega_v(b_v)\overline{\psi}_v(b_vt_v)db_v\overline{\omega}_v(t_v)|t_v|_v^{-2s_2}d^{\times}t_v.
\end{multline*}

Using the Casselman-Shalika formula for the spherical Whittaker function $W_v^{\circ}(a(\varpi_v^j))$ and evaluating the $b_v$-integral as a Gauss sum, we compute the above sums as a holomorphic multiple of the expected $L$-functions. As a result, we deduce  
\begin{equation}\label{9.20}
\mathbf{e}_{5,v}(\mathbf{s}, \mathfrak{X}_{\mathfrak{n}}) \ll q_v^{(e_v(\mathfrak{q}) - e_v(\mathfrak{q}')/2)(1 + 100\varepsilon)}, \quad \mathbf{s} \in \mathbf{B}_{\varepsilon}^4.
\end{equation}

\item Suppose $v\mid\mathfrak{n}$. Similar to Lemma \ref{lem6.1} in \textsection\ref{sec6.2.2}, we have
\begin{multline}\label{9.21}
\widetilde{\Psi}(W_{h_1,\overline{h_3},v}^*(\cdot,s_1,\overline{s_3}),W_{2,v}(\cdot,s_2),\overline{h_{4,v}(\cdot,\overline{s_4})})=\omega_v(-1)\\
\gamma_v(1+s_1+s_3-s_4,\mathbf{1}\boxplus \overline{\omega}_v,\psi_v)\widetilde{\Psi}(W_{2,v}(\cdot,s_2),\overline{W_{4,v}(\cdot,\overline{s_4})},h_{1,v}(\cdot,s_1)\overline{h_{3,v}(\cdot,\overline{s_3})})
\end{multline}
as an equality of meromorphic functions. Notice that $\omega_v$ is unramified (since $(\mathfrak{n},\mathfrak{q})=1$). Hence, $\gamma_v(1+s_1+s_3-s_4,\mathbf{1}\boxplus \overline{\omega}_v,\psi_v)\ll 1$, $\mathbf{s} \in \mathbf{B}_{\varepsilon}^4$.

From the construction of $W_{j,v}(\cdot,s_j)$ and $h_{j,v}(\cdot,s_j)$ (see \textsection\ref{sec4.4}) at $v\mid\mathfrak{n}$, the meromorphic function $\widetilde{\Psi}(W_{2,v}(\cdot,s_2),\overline{W_{4,v}(\cdot,\overline{s_4})},h_{1,v}(\cdot,s_1)\overline{h_{3,v}(\cdot,\overline{s_3})})$ is equal to $\widetilde{\Psi}(W_{1,v}(\cdot,s_2),\overline{W_{3,v}(\cdot,\overline{s_4})},h_{2,v}(\cdot,s_1)\overline{h_{4,v}(\cdot,\overline{s_3})})$. Therefore, it follows from  \eqref{e9.6} (see the proof of Lemma \ref{e9.1}) and \eqref{9.21} that 
\begin{equation}\label{9.22}
\mathbf{e}_{5,v}(\mathbf{s}, \mathfrak{X}_{\mathfrak{n}}) \ll q_v^{-l_v(1/2-\varepsilon)},\ \ \mathbf{s}\in \mathbf{B}_{\varepsilon}^4.
\end{equation}

\item Suppose $v\mid\infty$. It follows from the proof of Lemma \ref{lem9.4} (with $\mu_v=\mathbf{1}$ and $\lambda=1/2+s_1+s_3$) that, for $\mathbf{s}\in \mathbf{B}_{\varepsilon}^4$, 
\begin{equation}\label{9.23}
\widetilde{\Psi}(W_{h_1,\overline{h_3},v}^*(\cdot,s_1,\overline{s_3}),W_{2,v}(\cdot,s_2),\overline{h_{4,v}(\cdot,\overline{s_4})})\ll \mathbf{C}_v^{1000\varepsilon}C_v(\omega)^{-1/2+500\varepsilon}.
\end{equation}
\end{itemize}

Thereofore, the estimate \eqref{e9.17} for $i=5$ follows from \eqref{c9.20}, \eqref{9.18}, \eqref{9.20}, \eqref{9.22}, \eqref{9.23}, and $\max_{|s|\leq 20\varepsilon}|L(s,\overline{\omega})|\ll N_F(\mathfrak{q}')^{1/2+100\varepsilon}$. The remaining case $i=6$ holds similarly.
\end{proof}

\subsection{Bounds of $\widetilde{\Psi}_{\mathrm{Geo}}^{(7)}(\mathbf{0}\mid\mathfrak{X}_{\mathfrak{n}})$ and $\widetilde{\Psi}_{\mathrm{Geo}}^{(8)}(\mathbf{0}\mid\mathfrak{X}_{\mathfrak{n}})$}

\begin{align*}
&\widetilde{\Psi}_{\mathrm{Geo}}^{(7)}(\mathbf{s},\mathfrak{X}_{\mathfrak{n}}):=-\widetilde{\Psi}(W_{h_1,h_2}(\cdot,s_1,s_2),\overline{W_3(\cdot,\overline{s_3})},\overline{h_4(\cdot,\overline{s_4})}),\\
&\widetilde{\Psi}_{\mathrm{Geo}}^{(8)}(\mathbf{s},\mathfrak{X}_{\mathfrak{n}}):=-\widetilde{\Psi}(W_{h_1^{\Diamond},h_2}(\cdot,s_1,s_2),\overline{W_3(\cdot,\overline{s_3})},\overline{h_4(\cdot,\overline{s_4})}).
\end{align*}

\begin{lemma}\label{lemma9.4}
 Let $i\in\{5,6\}$. Then 
\begin{equation}\label{e9.18}
\widetilde{\Psi}_{\mathrm{Geo}}^{(i)}(\mathbf{0}\mid\mathfrak{X}_{\mathfrak{n}})\ll N_F(\mathfrak{n})^{1/2+\varepsilon}C_{\infty}(\omega)^{-1/2+\varepsilon}\mathbf{C}_{\infty}^{-1+\varepsilon}N_F(\mathfrak{q}')^{1/2}N_F(\mathfrak{q})^{\varepsilon},
\end{equation}
where the implied constant depends only on $F$, $\varepsilon$, and the smooth functions $\alpha_v$, $v\mid\infty$ (see \textsection\ref{sec4.4}). 
\end{lemma}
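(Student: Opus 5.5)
The plan is to run the Eulerian argument of Lemma~\ref{lemma9.3} once more for $\widetilde{\Psi}_{\mathrm{Geo}}^{(5)}$ and $\widetilde{\Psi}_{\mathrm{Geo}}^{(6)}$. The local analysis at $v\mid\mathfrak{n}$, $v\mid\mathfrak{q}$ and $v\mid\infty$ will be organised differently, to produce the shape $N_F(\mathfrak{n})^{1/2}C_\infty(\omega)^{-1/2}\mathbf{C}_\infty^{-1}N_F(\mathfrak{q}')^{1/2}$ asserted in \eqref{e9.18}.

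\textbf{Global factorisation.} Start from the factorisation \eqref{c9.20}. There $\widetilde{\Psi}_{\mathrm{Geo}}^{(5)}(\mathbf{s},\mathfrak{X}_{\mathfrak{n}})$ equals
\[
\Lambda(1+2s_1,\mathbf{1})\Lambda(1+2s_3,\mathbf{1})\,\Lambda\big(1/2+s_4,(|\cdot|^{1/2+s_1+s_3}\boxplus|\cdot|^{-1/2-s_1-s_3})\times(|\cdot|^{s_2}\boxplus\omega|\cdot|^{-s_2})\big)
\]
times $\prod_v\mathbf{e}_{5,v}$. The local factors satisfy $\mathbf{e}_{5,v}\equiv1$ for $v\nmid\mathfrak{n}\mathfrak{q}\mathfrak{O}_F\infty$, and $\mathbf{e}_{5,v}\ll q_v^{O(d_v)}$ for $v\mid\mathfrak{O}_F$. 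The global $L$-value factors as $\zeta_F$-values times $L(1+s_1+s_3+s_4\pm s_2,\cdot)$ times $L(s_4-s_1-s_3\pm s_2,\omega)$. On $\mathbf{S}^4_\varepsilon$ the $\zeta_F$ poles are cut off by the contour and the $\omega$-twisted values near $0$ are bounded by convexity. The non-Archimedean part of convexity gives $N_F(\mathfrak{q}')^{1/2+\varepsilon}$; the Archimedean part, $C_\infty(\omega)^{1/2+\varepsilon}$, is kept aside, since it is compensated below. The case $i=6$ is identical: $h_1^{\Diamond}\overline{h_3}$ differs from $h_1\overline{h_3}$ only by bounded intertwining multipliers (\cite[Proposition 2.2.2]{Sch02}).

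\textbf{Non-Archimedean places.} At $v\mid\mathfrak{n}$ do not switch sections as in \eqref{9.21}. Instead keep $h_{1,v}\overline{h_{3,v}}=\alpha_{\mathfrak{n},v}h_{3,v}(\cdot\,\mathbf{d}_v)\overline{h_{3,v}}$, and average over $K_v\mathbf{d}_vK_v$ as in Lemma~\ref{lem5.1} and \eqref{9.15}. This gives $\mathbf{e}_{5,v}\ll|\lambda_{\sigma_v}(\mathfrak{p}_v^{l_v})|\ll q_v^{l_v(1/2+20\varepsilon)}$ with $\sigma_v=|\cdot|^{1/2+s_1+s_3}\boxplus|\cdot|^{-1/2-s_1-s_3}$; this is the source of the $N_F(\mathfrak{n})^{1/2+\varepsilon}$. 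At $v\mid\mathfrak{q}$, evaluate the Gauss-sum expansion following \eqref{9.19}. The pieces are: $|\tau(\omega_v)|=q_v^{e_v(\mathfrak{q}')/2}$ from the $b_v$-integral, $\Vol(K_{0,v}[e_v(\mathfrak{q})])^{-1}$ from $\Phi_v$, and the normalisation of $\overline{h_{4,v}}$ supported on $K_{0,v}[e_v(\mathfrak{q})]$. I expect these to cancel down to $\mathbf{e}_{5,v}\ll q_v^{100\varepsilon e_v(\mathfrak{q})}$, which yields $N_F(\mathfrak{q})^{\varepsilon}$ after combining with the $N_F(\mathfrak{q}')^{1/2}$ from convexity.

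\textbf{Archimedean places (main obstacle).} The essential difference from \eqref{9.23} is that one needs an extra saving of $\mathbf{C}_v^{-1}C_v(\omega)^{\varepsilon}$ at each $v\mid\infty$. First, apply the switching identity of Lemma~\ref{lem6.1} at $v\mid\infty$. This moves the Bessel transform onto $W_{2,v}$ and produces $\gamma_v(1+s_1+s_3-s_4,\mathbf{1}\boxplus\overline{\omega}_v,\psi_v)$, whose size by Stirling is $C_v(\omega)^{-1/2+O(\varepsilon)}$. Then estimate
\[
\widetilde{\Psi}_v\big(W_{2,v},\overline{W_{4,v}},h_{1,v}\overline{h_{3,v}}\big)
\]
using \eqref{eq7.18}--\eqref{eq7.19}. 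Each of $W_{2,v},W_{4,v}$ has size $\mathbf{C}_v^{-1/2}$ off a set of $k_v$ of measure $\ll\mathbf{C}_v^{-1}$, and the $y_v$-integral is truncated at $|y_v|_v\ll\mathbf{C}_v$ by the support of $\alpha_v$. The product should therefore be $\ll\mathbf{C}_v^{-1+O(\varepsilon)}$, as in Lemma~\ref{lem7.6}. The delicate point is checking that the $k_v$-integral of $\min\{|k_{21}|^{-1},|k_{22}|^{-1}\mathbf{C}_v\}\min\{|k_{11}|^{-1}\mathbf{C}_v,|k_{12}|^{-1}\}$ loses only $\mathbf{C}_v^{\varepsilon}$. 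I would first try this by splitting $K_v$ according to $|k_{21}|\lessgtr\mathbf{C}_v^{-1}$.

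\textbf{Assembly.} Combining all local bounds with the global $L$-value estimate, and integrating against $(s_1s_2s_3s_4)^{-1}$ over $\mathbf{S}_\varepsilon^4$ (the integrand being holomorphic there), yields \eqref{e9.18} for $i=5$, and likewise for $i=6$.
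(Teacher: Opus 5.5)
\textbf{Your proposal proves the wrong statement.} The index in the statement is a misprint. The lemma sits in the subsection on $\widetilde{\Psi}_{\mathrm{Geo}}^{(7)}$ and $\widetilde{\Psi}_{\mathrm{Geo}}^{(8)}$. The paper's proof treats $i=7$ through the factorisation \eqref{e9.28} and says $i=8$ is similar, while $i\in\{5,6\}$ was already settled in Lemma~\ref{lemma9.3} with a bound of a different shape. The factor $N_F(\mathfrak{n})^{1/2}$ in \eqref{e9.18} is itself a symptom: for $i=5$ the places $v\mid\mathfrak{n}$ give the saving \eqref{9.22}.

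Taking the index literally, you try to prove \eqref{e9.18} for $\widetilde{\Psi}_{\mathrm{Geo}}^{(5)}$, and the two savings you need there are not available.
\begin{itemize}
\item \emph{At $v\mid\mathfrak{q}$.} Nothing cancels the factor $\Vol(K_{0,v}[e_v(\mathfrak{q})])^{-1}$ built into $\Phi_{2,v}$. The Gauss sum from the $b_v$-integral saves only $q_v^{e_v(\mathfrak{q}')/2}$. The $K_v$-integral against $\overline{h_{4,v}}$ (of size $q_v^{e_v(\mathfrak{q})}$, supported on $K_{0,v}[e_v(\mathfrak{q})]$) contributes only $O(1)$. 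So $\mathbf{e}_{5,v}$ is of size $q_v^{e_v(\mathfrak{q})-e_v(\mathfrak{q}')/2}$, which is \eqref{9.20}, not $q_v^{\varepsilon e_v(\mathfrak{q})}$. Your ``I expect these to cancel'' has no basis.
\item \emph{At $v\mid\infty$.} After the switch of Lemma~\ref{lem6.1} you have $\Phi_{2,v}=\Phi_{4,v}$, and $h_{1,v}\overline{h_{3,v}}$ is spherical, a constant times $|y|_v^{1+s_1+s_3}$ on $a(y)$. So near $\mathbf{s}=\mathbf{0}$ the integral $\Psi_v(W_{2,v},\overline{W_{4,v}},h_{1,v}\overline{h_{3,v}})$ is essentially $\int_{K_v}\int_{F_v^{\times}}|W_{2,v}(a(y)k)|^2\,d^{\times}y\,dk$. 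The inner integral does not depend on $k$, by invariance of the Kirillov norm. The whole expression is therefore the norm of $W_{2,v}$, which is of size about $1$, since $W_{2,v}(a(y))\asymp 1$ near $|y|_v\asymp C_v(\omega)\mathbf{C}_v^{-1}$ (the fact used for \eqref{e9.16}). Localising in $k_v$ via \eqref{eq7.18}--\eqref{eq7.19} cannot improve a $K_v$-invariant quantity. With the $\gamma$-factor you get only $C_v(\omega)^{-1/2}$, i.e.\ \eqref{9.23}, with no $\mathbf{C}_v^{-1}$.
\item \emph{At $v\mid\mathfrak{n}$.} Your Hecke-eigenvalue step has no basis for $i=5$: $h_{1,v}\overline{h_{3,v}}=\alpha_{\mathfrak{n},v}h_{3,v}(\cdot\,\mathbf{d}_v)\overline{h_{3,v}}$ is not a translate of a spherical section.
\end{itemize}

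\textbf{What the argument for $i=7$ actually uses.} There both $h_1=\alpha_{\mathfrak{n}}h_3(\cdot\,\mathbf{d}_{\mathfrak{n}})$ and $h_2=h_4(\cdot\,\mathbf{d}_{\mathfrak{n}})$ are translated. Hence $W_{h_1,h_2,v}$ is $\alpha_{\mathfrak{n},v}$ times the $\mathbf{d}_v$-translate of the spherical $W_{h_3,h_4,v}$, while $\overline{W_{3,v}h_{4,v}}$ is right-$K_v$-invariant. The $K_v$-average gives $\lambda_{\sigma_v}(\mathfrak{p}_v^{l_v})\ll q_v^{l_v/2}$, which is the source of $N_F(\mathfrak{n})^{1/2}$ (see \eqref{9.27}).

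At $v\mid\mathfrak{q}\infty$ one switches by Lemma~\ref{lem6.1} to $\gamma_v(1+s_1+s_2-s_4,\overline{\omega}_v\boxplus\overline{\omega}_v)\,\Psi_v(\overline{W_{3,v}},\overline{W_{4,v}^*},h_{1,v}h_{2,v})$. Now the spherical $W_{3,v}$, not a second copy of the $\mathbf{C}_v$-dependent Whittaker function, is paired with $W_{4,v}^*$. Moreover $h_{1,v}h_{2,v}$ is concentrated on $|k_{21}|_v\lesssim\mathbf{C}_v^{-1}$, with $L^1(K_v)$-norm about $\mathbf{C}_v^{-1/2}$. The $y$-integral, cut off at $|y|_v\lesssim C_v(\omega)\mathbf{C}_v^{-1}$, gives about $\mathbf{C}_v^{-1/2}C_v(\omega)^{1/2}$. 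This is how \cite[Lemma 3.7.2]{MV10} yields $C_v(\omega)^{1/2}\mathbf{C}_v^{-1}$, and the $\gamma$-factor contributes $C_v(\omega)^{-1}$.

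At $v\mid\mathfrak{q}$, \eqref{9.14.} gives $q_v^{e_v(\mathfrak{q}')/2}$ against a $\gamma$-factor of size $q_v^{-e_v(\mathfrak{q}')}$. Finally, the two $\omega$-twisted $L$-values near $s=0$ in \eqref{e9.28} cost $N_F(\mathfrak{q}')^{1+\varepsilon}$, leaving the net $N_F(\mathfrak{q}')^{1/2}$ in \eqref{e9.18}. None of these mechanisms exists for $i=5,6$, so your argument cannot deliver \eqref{e9.18} there.
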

\begin{proof}  
By the construction of of $W_{j,v}(\cdot,s_j)$ and $h_{j,v}(\cdot,s_j)$, we have
\begin{multline}\label{e9.28}
\widetilde{\Psi}_{\mathrm{Geo}}^{(7)}(\mathbf{s},\mathfrak{X}_{\mathfrak{n}})=-\Lambda(1+2s_1,\mathbf{1})\Lambda(1+2s_2,\overline{\omega})\Lambda(1/2+s_4,(|\cdot|^{1/2+s_1+s_2}\boxplus \\
\omega|\cdot|^{-1/2-s_1-s_2})
\times (|\cdot|^{s_3}\boxplus |\cdot|^{-s_3}))\prod_{v\leq\infty}\mathbf{e}_{7,v}(\mathbf{s},\mathfrak{X}_{\mathfrak{n}}),
\end{multline}
where $\mathbf{e}_{7,v}(\mathbf{s},\mathfrak{X}_{\mathfrak{n}})$ is defined by 
\begin{align*}
\frac{L_v(1+2s_1,\mathbf{1})^{-1}L_v(1+2s_3,\overline{\omega}_v)^{-1}\widetilde{\Psi}_v(W_{h_1,h_2,v}(\cdot,s_1,s_2),\overline{W_{3,v}(\cdot,\overline{s_3})},\overline{h_{4,v}(\cdot,\overline{s_4})})}{L_v(1/2+s_4,(|\cdot|_v^{1/2+s_1+s_2}\boxplus 
\omega_v|\cdot|_v^{-1/2-s_1-s_2})
\times (|\cdot|_v^{s_2}\boxplus |\cdot|_v^{-s_2}))}.
\end{align*}

We have $\mathbf{e}_{7,v}(\mathbf{s},\mathfrak{X}_{\mathfrak{n}})\equiv 1$ at $v\nmid \mathfrak{n}\mathfrak{q}\mathfrak{O}_F\infty$. Consider the remaining places:
\begin{itemize}
\item Suppose $v\mid\mathfrak{O}_F$. Then 
\begin{equation}\label{9.25}
\mathbf{e}_{5,v}(\mathbf{s},\mathfrak{X}_{\mathfrak{n}})\ll q_v^{O(d_v)},\ \ \mathbf{s}\in \mathbf{B}_{\varepsilon}^4.
\end{equation} 

\item Suppose $v\mid\mathfrak{n}$. Notice that $\overline{W_{3,v}(\cdot,\overline{s_3})}\overline{h_{4,v}(\cdot,\overline{s_4})}$ is right-$K_v$-invariant. Let $\Re(s_4)\gg 1$. For $\mathbf{s}\in \mathbf{B}_{\varepsilon}^4$, $\widetilde{\Psi}_v(W_{h_1,h_2,v}(\cdot,s_1,s_2),\overline{W_{3,v}(\cdot,\overline{s_3})},\overline{h_{4,v}(\cdot,\overline{s_4})})$ can be represented by the integral 
\begin{equation}\label{9.26}
\int_{F_v^{\times}}\int_{K_v}W_{h_1,h_2,v}(a(y_v)k_v,s_1,s_2)dk_v\overline{W_{3,v}(a(y_v),\overline{s_3})}\overline{h_{4,v}(a(y_v),\overline{s_4})}d^{\times}y_v.
\end{equation}

Since $W_{h_1,h_2,v}(\cdot,s_1,s_2)$ is a spherical vector in the Whittaker model of $\sigma_v:=|\cdot|_v^{1/2+s_1+s_2}\boxplus 
\omega|\cdot|_v^{-1/2-s_1-s_2}$, then \eqref{9.26} simplifies to 
\begin{align*}
\lambda_{\sigma_v}(\mathfrak{p}_v^{l_v})\overline{h_{4,v}(I_2,\overline{s_4})}\int_{F_v^{\times}}W_{h_3,h_4,v}(a(y_v),s_1,s_2)dk_v\overline{W_{3,v}(a(y_v),\overline{s_3})}|y_v|_v^{-\frac{1}{2}+s_4}d^{\times}y_v,
\end{align*}
which is equal to $\lambda_{\sigma_v}(\mathfrak{p}_v^{l_v})L_v(1+2s_1,\mathbf{1})L_v(1+2s_3,\overline{\omega}_v)L_v(1/2+s_4,(|\cdot|_v^{1/2+s_1+s_2}\boxplus 
\omega_v|\cdot|_v^{-1/2-s_1-s_2})
\times (|\cdot|_v^{s_2}\boxplus |\cdot|_v^{-s_2}))$. Thus, 
\begin{equation}\label{9.27}
\mathbf{e}_{7,v}(\mathbf{s},\mathfrak{X}_{\mathfrak{n}})=\lambda_{\sigma_v}(\mathfrak{p}_v^{l_v})\ll q_v^{l_v/2},\ \ \mathbf{s}\in \mathbf{B}_{\varepsilon}^4.
\end{equation}

\item Suppose $v\mid\mathfrak{q}\infty$. By Lemma \ref{lem6.1}, 
$\widetilde{\Psi}_v(W_{h_1,h_2,v}(\cdot,s_1,s_2),\overline{W_{3,v}(\cdot,\overline{s_3})},\overline{h_{4,v}(\cdot,\overline{s_4})})$ coincides with the meromorphic function 
\begin{align*}
\omega_v(-1)\gamma_v(1+s_1+s_2-s_4,\overline{\omega}_v\boxplus\overline{\omega}_v)\widetilde{\Psi}_v(\overline{W_{3,v}(\cdot,\overline{s_3})},\overline{W_{4,v}^*(\cdot,\overline{s_4})},h_1(\cdot,s_1)h_2(\cdot,s_2)).
\end{align*}

Notice that $\widetilde{\Psi}_v(\overline{W_{3,v}(\cdot,\overline{s_3})},\overline{W_{4,v}^*(\cdot,\overline{s_4})},h_1(\cdot,s_1)h_2(\cdot,s_2))$ is the same as integral $\Psi_v(\overline{W_{3,v}(\cdot,\overline{s_3})},\overline{W_{4,v}^*(\cdot,\overline{s_4})},h_1(\cdot,s_1)h_2(\cdot,s_2))$, which converges absolutely for $\mathbf{s}\in \mathbf{B}_{\varepsilon}^4$. Consider the following scenarios. 
\begin{itemize}
\item Suppose $v\mid\mathfrak{q}$. Notice that $\overline{W_{3,v}(\cdot,\overline{s_3})}=\overline{W_{1,v}(\cdot,\overline{s_3})}$, $\overline{W_{4,v}^*(\cdot,\overline{s_4})}=\overline{W_{2,v}^*(\cdot,\overline{s_4})}$, and $h_1(\cdot,s_1)h_2(\cdot,s_2))=h_3(\cdot,s_1)h_4(\cdot,s_2))$. It follows from \eqref{9.14.} that 
\begin{align*}
\Psi_v(\overline{W_{3,v}(\cdot,\overline{s_3})},\overline{W_{4,v}^*(\cdot,\overline{s_4})},h_1(\cdot,s_1)h_2(\cdot,s_2))\ll q_v^{e_v(\mathfrak{q}')(1/2+20\varepsilon)}.
\end{align*}
Consequently, for $\mathbf{s}\in \mathbf{B}_{\varepsilon}^4$, $\mathbf{e}_{7,v}(\mathbf{s},\mathfrak{X}_{\mathfrak{n}})$ is 
\begin{equation}\label{9.29}
\ll q_v^{e_v(\mathfrak{q}')(1/2+20\varepsilon)}|\gamma_v(1+s_1+s_2-s_4,\overline{\omega}_v\boxplus\overline{\omega}_v)|\ll q_v^{e_v(\mathfrak{q}')(-1/2+20\varepsilon)}.
\end{equation} 

\item Suppose $v\mid\infty$. It follows from \cite[Lemma 3.7.2]{MV10} that 
\begin{align*}
\Psi_v(\overline{W_{3,v}(\cdot,\overline{s_3})},\overline{W_{4,v}^*(\cdot,\overline{s_4})},h_1(\cdot,s_1)h_2(\cdot,s_2))\ll C_v(\omega)^{1/2+50\varepsilon}\mathbf{C}_v^{-1+50\varepsilon}.
\end{align*}
Along with the bounds for the gamma factor, we derive, for $\mathbf{s}\in \mathbf{B}_{\varepsilon}^4$,
\begin{equation}\label{9.32}
\widetilde{\Psi}_v(W_{h_1,h_2,v}(\cdot,s_1,s_2),\overline{W_{3,v}(\cdot,\overline{s_3})},\overline{h_{4,v}(\cdot,\overline{s_4})})\ll C_v(\omega)^{-1/2+100\varepsilon}\mathbf{C}_v^{-1+50\varepsilon}.
\end{equation}
\end{itemize}
\end{itemize}

Combining \eqref{9.25}, \eqref{9.27}, \eqref{9.29} and \eqref{9.32} into \eqref{e9.28} yields 
\begin{align*}
\widetilde{\Psi}_{\mathrm{Geo}}^{(7)}(\mathbf{s},\mathfrak{X}_{\mathfrak{n}})\ll N_F(\mathfrak{n})^{1/2+\varepsilon}C_{\infty}(\omega)^{-1/2+\varepsilon}\mathbf{C}_{\infty}^{-1+\varepsilon}N_F(\mathfrak{q}')^{-1/2}N_F(\mathfrak{q})^{\varepsilon}\max_{|s|\leq 20\varepsilon}|L(s,\overline{\omega})|^2,
\end{align*}
which, together with $\max_{|s|\leq 20\varepsilon}|L(s,\overline{\omega})|\ll N_F(\mathfrak{q}')^{1/2+100\varepsilon}$, implies \eqref{e9.18}. The remaining case $i=8$ holds similarly.
\end{proof}

\section{Proof of the Main Theorems}\label{sec10}
\subsection{Proof of Theorems \ref{thmE} and \ref{thmF}}
Taking $\mathfrak{n}=\mathcal{O}_F$ in Theorem \ref{thmD}, we obtain Theorems \ref{thmE} and \ref{thmF} immediately. 
\begin{proof}[Proof of Theorem \ref{thmE}]
Take $\mathfrak{n}=\mathcal{O}_F$ in Theorem \ref{thmD}. By Corollary \ref{cor5.2} and Lemma \ref{lemma5.3}, we have
\begin{equation}\label{10.2}
\mathcal{J}_{\mathrm{Cusp}}^{\heartsuit}(\mathbf{0},\mathfrak{X}_{\mathfrak{n}})\gg \textbf{C}_{\infty}^{\ -1-\varepsilon}N_F(\mathfrak{q})^{-\varepsilon}\sum_{\substack{\pi\in \mathcal{F}_0(\mathfrak{q},\omega)\\
C_v(\pi)\leq \mathbf{C}_v,\ v\mid\infty}}|L(1/2,\pi)|^4.
\end{equation}

On the other hand, combining Propositions \ref{prop6.1}, \ref{prop8.2}, \ref{prop8.9}, \ref{prop9.1}, and Lemmas \ref{lem9.1}, \ref{lemma9.2}, \ref{lemma9.3} and \ref{lemma9.4} into Theorem \ref{thmD}, we obtain 
\begin{multline}\label{10.3}
\mathcal{J}_{\mathrm{Cusp}}^{\heartsuit}(\mathbf{0},\mathfrak{X}_{\mathfrak{n}}) \ll N_F(\mathfrak{q})^{1+\varepsilon}C(\omega)^{\varepsilon}+N_F(\mathfrak{q})^{1/2}\big[\mathbf{C}_{\infty}C_{\infty}(\omega)^{-1}\big]^{\vartheta}\mathbf{C}_{\infty}^{-1/2+\varepsilon}\\
\sum_{\sigma\in \mathcal{F}_0(\mathcal{O}_F,\mathbf{1})}e^{-\frac{\pi}{2}\sum_{v\mid\infty}|\nu_{\sigma_v}|}|\lambda_{\sigma}(\mathfrak{q}\mathfrak{q}'^{-1})|\cdot \frac{|L(1/2,\sigma)|^3|L(1/2,\sigma\times\overline{\omega})|}{|L(1,\sigma,\Ad)|}\\
+N_F(\mathfrak{q})^{1/2+\varepsilon}\big[\mathbf{C}_{\infty}C_{\infty}(\omega)^{-1}\big]^{\vartheta}\mathbf{C}_{\infty}^{-1/2+\varepsilon}\sum_{\mu\in\mathfrak{X}(\mathfrak{n},\mathbf{1})}\\
\int_{\mathbb{R}}e^{-\frac{\pi}{2}\sum_{v\mid\infty}|\nu_{\sigma_{\mu_v,it}}|}\cdot \frac{|L(1/2+it,\mu)|^6|L(1/2+it,\mu\overline{\omega})L(1/2+it,\mu\omega)|}{|L(1+2it,\mu^2)|^2}dt.
\end{multline}

Utilizing the convex bound for $L(1/2,\sigma\times\overline{\omega})$ and $L(1/2+it,\mu\overline{\omega})L(1/2+it,\mu\omega)$, we derive from \eqref{10.3} that 
\begin{equation}\label{10.4}
\mathcal{J}_{\mathrm{Cusp}}^{\heartsuit}(\mathbf{0},\mathfrak{X}_{\mathfrak{n}}) \ll N_F(\mathfrak{q})^{1+\varepsilon}C(\omega)^{\varepsilon}+N_F(\mathfrak{q})^{1/2+\vartheta+\varepsilon}C(\omega)^{1/2-\vartheta+\varepsilon}\mathbf{C}_{\infty}^{-1/2+\vartheta+\varepsilon}.
\end{equation} 

Therefore, \eqref{10.1} follows from \eqref{10.2} and \eqref{10.4}. 
\end{proof}

Replacing Lemma \ref{lemma5.3} by Lemma \ref{lemma5.4} in the above proof leads to Theorem \ref{thmF}.  

\subsection{Hybrid Moment Estimates}\label{sec10.2}
Define 
\begin{align*}
&\mathcal{I}_{1}:=\sum_{\sigma\in \mathcal{F}_0(\mathfrak{n},\mathbf{1})}e^{-\frac{\pi}{2}\sum_{v\mid\infty}|\nu_{\sigma_v}|}|\lambda_{\sigma}(\mathfrak{q}\mathfrak{q}'^{-1})|\cdot \frac{|L(1/2,\sigma)|^3|L(1/2,\sigma\times\overline{\omega})|}{|L(1,\sigma,\Ad)|},\\
&\mathcal{I}_{2}:=\sum_{\mu\in\mathfrak{X}(\mathfrak{n},\mathbf{1})}
\int_{\mathbb{R}}e^{-\frac{\pi}{2}\sum_{v\mid\infty}|\nu_{\sigma_{\mu_v,it}}|}\frac{|L(1/2+it,\mu)|^6|L(1/2+it,\mu\overline{\omega})L(1/2+it,\mu\omega)|}{|L(1+2it,\mu^2)|^2}dt.
\end{align*}

\begin{lemma}\label{lem10.1}
We have
\begin{align*}
\mathcal{I}_{1}\ll N_F(\mathfrak{q}\mathfrak{q}'^{-1})^{\vartheta}N_F(\mathfrak{n})^{1+\varepsilon}
\min\big\{
C(\omega)^{\frac{1}{2}+\varepsilon}, N_F(\mathfrak{n})^{\frac{1}{4}} C(\omega)^{\frac{3}{8}+\varepsilon}+N_F(\mathfrak{n})^{\frac{1}{2}} C(\omega)^{\frac{1}{4}+\varepsilon}
\big\}.
\end{align*}
\end{lemma}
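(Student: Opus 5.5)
We bound $\mathcal{I}_1$ by first removing the Hecke eigenvalue and the weights, and then applying H\"older's inequality against known moment bounds.

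The first step is to bound $|\lambda_{\sigma}(\mathfrak{q}\mathfrak{q}'^{-1})|\ll N_F(\mathfrak{q}\mathfrak{q}'^{-1})^{\vartheta+\varepsilon}$ using the Ramanujan exponent of \textsection\ref{sec2.2.2}. We also use $L(1,\sigma,\Ad)^{-1}\ll C(\sigma)^{\varepsilon}$. The archimedean weight $e^{-\frac{\pi}{2}\sum_v|\nu_{\sigma_v}|}$ effectively truncates to $C_{\infty}(\sigma)\ll N_F(\mathfrak{n})^{\varepsilon}C(\omega)^{\varepsilon}$, with an exponentially small tail. This reduces $\mathcal{I}_1$ to
\[
N_F(\mathfrak{q}\mathfrak{q}'^{-1})^{\vartheta}\sum_{\sigma\in\mathcal{F}_0(\mathfrak{n},\mathbf{1}),\ C_\infty(\sigma)\ll N_F(\mathfrak{n}\mathfrak{q}')^{\varepsilon}}|L(1/2,\sigma)|^3|L(1/2,\sigma\times\overline{\omega})|.
\]
The number of such $\sigma$ is $\ll N_F(\mathfrak{n})^{1+\varepsilon}$.

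For the first term in the minimum, we apply H\"older's inequality with exponents $(4/3,4)$. By Theorem \ref{thmE} with $\mathfrak{q}=\mathfrak{n}$, trivial character and $\mathbf{C}_\infty$ bounded by a small power, $\sum_\sigma|L(1/2,\sigma)|^4\ll N_F(\mathfrak{n})^{1+\varepsilon}$. It then remains to show $\sum_\sigma|L(1/2,\sigma\times\overline{\omega})|^4\ll N_F(\mathfrak{n})^{1+\varepsilon}C(\omega)^{2+\varepsilon}$. For this we do not use Theorem \ref{thmE} directly (it concerns level $\mathfrak{q}$ with twisting built in); instead we use the convexity bound $|L(1/2,\sigma\times\overline{\omega})|\ll (N_F(\mathfrak{n})^{1/2}C(\omega))^{1/2+\varepsilon}$ after noting that $\sigma\otimes\overline{\omega}$ has conductor about $N_F(\mathfrak{n})C(\omega)^2$. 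A cleaner route is Cauchy--Schwarz in the form
\[
\sum_\sigma|L(1/2,\sigma)|^3|L(1/2,\sigma\times\overline{\omega})|\le\Big(\sum_\sigma|L(1/2,\sigma)|^4\Big)^{3/4}\Big(\sum_\sigma|L(1/2,\sigma\times\overline{\omega})|^4\Big)^{1/4}.
\]
The second-moment large sieve (spectral large sieve over $\mathcal{F}_0(\mathfrak{n},\mathbf{1})$) then gives $\sum_\sigma |L(1/2,\sigma\times\overline\omega)|^2\ll (N_F(\mathfrak{n})+ N_F(\mathfrak{n})^{1/2}C(\omega))^{1+\varepsilon}$. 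Combining this with the pointwise convexity bound for the remaining two powers yields the bound $N_F(\mathfrak{n})^{1+\varepsilon}C(\omega)^{1/2+\varepsilon}$.

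For the second term, we replace pointwise convexity for $L(1/2,\sigma\times\overline{\omega})$ by the hybrid bound \cite[Theorem 1.6]{Yan23c}. In the relevant range this gives $L(1/2,\sigma\times\overline{\omega})\ll N_F(\mathfrak{n})^{1/4}C(\omega)^{3/8+\varepsilon}+N_F(\mathfrak{n})^{1/2}C(\omega)^{1/4+\varepsilon}$, a Burgess-type saving in $\omega$ with the level of $\sigma$ being lost. Pulling this bound out and applying H\"older with the fourth moment (Theorem \ref{thmE}) to $\sum_\sigma|L(1/2,\sigma)|^3\ll N_F(\mathfrak{n})^{1+\varepsilon}$ gives the second term in the minimum.

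The main obstacle is bookkeeping the conductor of $\sigma\otimes\overline{\omega}$ against the exponents of the cited hybrid bound so that exactly $N_F(\mathfrak{n})^{1/4}C(\omega)^{3/8}$ and $N_F(\mathfrak{n})^{1/2}C(\omega)^{1/4}$ emerge. This is where the precise form of \cite[Theorem 1.6]{Yan23c} must be matched, including coprimality of $\mathfrak{n}$ and $\mathfrak{q}'$ and the archimedean truncation.
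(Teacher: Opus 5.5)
Your treatment of the second entry of the minimum is correct and is equivalent to the paper's. You pull out the hybrid bound of \cite{Yan23c} pointwise, then bound $\sum_\sigma|L(1/2,\sigma)|^3\ll N_F(\mathfrak{n})^{1+\varepsilon}$ via Theorem~\ref{thmE} and H\"older.

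The gap is in the first entry. Write $N=N_F(\mathfrak{n})$ and $C=C(\omega)$, and suppress $\varepsilon$-powers and the $\vartheta$-factor. Pointwise convexity $|L(1/2,\sigma\times\overline{\omega})|\ll (NC^2)^{1/4}$, summed over $\ll N$ forms, gives a twisted fourth moment of size $N^2C^2$. That is a full factor $N$ too large, and yields only $\mathcal{I}_1\ll N^{5/4}C^{1/2}$. Your ``cleaner route'' does not repair this. It gives $\sum_\sigma|L(1/2,\sigma\times\overline{\omega})|^4\le\max_\sigma|L|^2\cdot\sum_\sigma|L|^2\ll N^{1/2}C\,(N+N^{1/2}C)=N^{3/2}C+NC^2$. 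The $(4/3,4)$ H\"older step then gives $\mathcal{I}_1\ll N^{9/8}C^{1/4}+NC^{1/2}$, and the extra term is $\le NC^{1/2}$ only when $N\le C^2$. The first entry of the minimum is the smaller one exactly when $C\le N^2$. The regime $N>C^2$ lies inside it, for instance $C(\omega)\asymp 1$, the fixed central character case. There your argument does not give the claimed bound. The loss comes from spending pointwise convexity on two of the four powers.

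The missing idea is that Theorem~\ref{thmE} does apply to the twisted family. You dismissed it, but it holds for an arbitrary integral ideal and an arbitrary central character.
\begin{itemize}
\item Since $(\mathfrak{n},\mathfrak{q}')=1$, each $\sigma$ is unramified at the primes dividing $\mathfrak{q}'$. So $\sigma\otimes\overline{\omega}$ has central character $\overline{\omega}^2$ and level dividing $\mathfrak{n}\mathfrak{q}'^2$.
\item Its archimedean conductors are $\ll C_v(\omega)^2$, up to the small powers coming from your archimedean truncation.
\item The map $\sigma\mapsto\sigma\otimes\overline{\omega}$ is injective and the summands $|L(1/2,\cdot)|^4$ are nonnegative. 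Hence the sub-sum is bounded by the full fourth moment over $\mathcal{F}_0(\mathfrak{n}\mathfrak{q}'^2,\overline{\omega}^2)$.
\end{itemize}
This gives $\sum_\sigma|L(1/2,\sigma\times\overline{\omega})|^4\ll C_\infty(\omega)^{2+\varepsilon}N_F(\mathfrak{n}\mathfrak{q}'^2)^{1+\varepsilon}\ll N^{1+\varepsilon}C^{2+\varepsilon}$. That is exactly the input your H\"older step needs, and it is how the paper proves the first entry.
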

\begin{proof}
By H\"{o}lder's inequality, along with the bound $L(1,\sigma,\Ad)\gg C(\sigma)^{-\varepsilon}$, 
\begin{multline}\label{eq10.4}
\mathcal{I}_{1}\ll N_F(\mathfrak{q}\mathfrak{q}'^{-1})^{\vartheta}\bigg[\sum_{\sigma\in \mathcal{F}_0(\mathfrak{n},\mathbf{1})}e^{-\frac{\pi}{2}\sum_{v\mid\infty}|\nu_{\sigma_v}|}C(\sigma)^{2\varepsilon}|L(1/2,\sigma)|^4\bigg]^{\frac{3}{4}}\\
\bigg[\sum_{\sigma\in \mathcal{F}_0(\mathfrak{n},\mathbf{1})}e^{-\frac{\pi}{2}\sum_{v\mid\infty}|\nu_{\sigma_v}|}|L(1/2,\sigma\times\overline{\omega})|^4\bigg]^{\frac{1}{4}}.
\end{multline}

Taking advantage of Theorem \ref{thmE}, 
\begin{equation}\label{10.5}
\sum_{\sigma\in \mathcal{F}_0(\mathfrak{n},\mathbf{1})}e^{-\frac{\pi}{2}\sum_{v\mid\infty}|\nu_{\sigma_v}|}C(\sigma)^{2\varepsilon}|L(1/2,\sigma)|^4\ll N_F(\mathfrak{n})^{1+\varepsilon}.
\end{equation}

We may regard $\sigma\otimes\overline{\omega}$ as a representation in $\mathcal{F}_0(\mathfrak{n}\mathfrak{q}'^2,\overline{\omega}^2)$. By Theorem \ref{thmE},
\begin{equation}\label{10.6}
\sum_{\sigma\in \mathcal{F}_0(\mathfrak{n},\mathbf{1})}e^{-\frac{\pi}{2}\sum_{v\mid\infty}|\nu_{\sigma_v}|}|L(1/2,\sigma\times\overline{\omega})|^4\ll C_{\infty}(\omega)^{2+\varepsilon}N_F(\mathfrak{n}\mathfrak{q}'^2)^{1+\varepsilon}.	
\end{equation}

On the other hand, utilizing  \cite[Theorem 1.6 or Corollary 1.9]{Yan23c}, we have 
\begin{equation}\label{10.7}
\sum_{\sigma}\frac{|L(1/2,\sigma\times\overline{\omega})|^4}{e^{\frac{\pi}{2}\sum_{v\mid\infty}|\nu_{\sigma_v}|}}\ll N_F(\mathfrak{n})^{1+\varepsilon} \big[N_F(\mathfrak{n})^{\frac{1}{4}} C(\omega)^{\frac{3}{8}+\varepsilon}+N_F(\mathfrak{n})^{\frac{1}{2}} C(\omega)^{\frac{1}{4}+\varepsilon}\big]^4,
\end{equation}
where $\sigma\in \mathcal{F}_0(\mathfrak{n},\mathbf{1})$. 

Therefore, Lemma \ref{lem10.1} follows from substituting \eqref{10.5}, \eqref{10.6} and \eqref{10.7} into \eqref{eq10.4}.
\end{proof}

\begin{lemma}\label{lem10.2}
We have
\begin{align*}
\mathcal{I}_{2}\ll N_F(\mathfrak{q}\mathfrak{q}'^{-1})^{\vartheta}N_F(\mathfrak{n})^{1+\varepsilon}
\min\big\{
C(\omega)^{\frac{1}{2}+\varepsilon}, N_F(\mathfrak{n})^{\frac{3}{16}} C(\omega)^{\frac{3}{8}+\varepsilon}
\big\}.
\end{align*}	
\end{lemma}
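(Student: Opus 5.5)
We follow the proof of Lemma \ref{lem10.1}, with the cuspidal sum replaced by the Eisenstein (continuous) spectrum of level $\mathfrak{n}$ and trivial central character. As in \eqref{eq10.4}, we first use $|L(1+2it,\mu^2)|^{-1}\ll (C(\mu)(1+|t|))^{\varepsilon}$, which follows from the zero-free region in \textsection\ref{sec3.3} and Siegel-type bounds. We then bound the Hecke eigenvalue $|\lambda_{\sigma_{\mu,it}}(\mathfrak{q}\mathfrak{q}'^{-1})|$ by $N_F(\mathfrak{q}\mathfrak{q}'^{-1})^{\varepsilon}$; this is even better than the factor $N_F(\mathfrak{q}\mathfrak{q}'^{-1})^{\vartheta}$ in the statement, since Eisenstein parameters are tempered. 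Finally, we apply H\"older's inequality with exponents $(4/3,4)$. The integrand is $|L(1/2+it,\mu)|^{6}|L\cdots|$, which we write as $\big(|L(1/2+it,\mu)|^{8}\big)^{3/4}$ times $\big(|L(1/2+it,\mu\overline{\omega})L(1/2+it,\mu\omega)|^{4}\big)^{1/4}$.

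For the first factor we use Theorem \ref{thmF} with $\omega=\mathbf{1}$ and level $\mathfrak{n}$, summing dyadically over $\mathbf{C}_v$; the weight $e^{-\frac{\pi}{2}\sum|\nu|}$ makes this converge. This gives $\sum_{\mu}\int (\cdots)|L(1/2+it,\mu)|^8/|L(1+2it,\mu^2)|^2\ll N_F(\mathfrak{n})^{1+\varepsilon}$. For the second factor, the first option in the minimum comes from treating $\mu\omega\boxplus\overline{\mu}\omega$, equivalently the twist $\sigma_{\mu,it}\otimes\overline{\omega}$, as an Eisenstein representation of level dividing $\mathfrak{n}\mathfrak{q}'^2$ with central character $\overline{\omega}^2$, exactly as in \eqref{10.6}. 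Theorem \ref{thmF} then gives the bound $C_\infty(\omega)^{2+\varepsilon}N_F(\mathfrak{n}\mathfrak{q}'^2)^{1+\varepsilon}$. Taking the fourth root, $(C(\omega)^{2})^{1/4}=C(\omega)^{1/2}$ up to $\varepsilon$-powers, and $N_F(\mathfrak{n})^{3/4+1/4}=N_F(\mathfrak{n})$. This yields the first term $N_F(\mathfrak{n})^{1+\varepsilon}C(\omega)^{1/2+\varepsilon}$.

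For the second option we do not need a fourth-moment input at all. The relevant $\mu$ are unramified outside $\mathfrak{n}$ with conductor dividing roughly $\mathfrak{n}^{1/2}$. Each factor $L(1/2+it,\mu\omega)$ is then a $\mathrm{GL}_1$ $L$-function, so we apply a pointwise hybrid Burgess bound, for instance the $\mathrm{GL}_1$ case of \cite[Theorem 1.6]{Yan23c} or the classical Burgess bound over number fields:
\[
L(1/2+it,\mu\omega)\ll C(\mu\omega|\cdot|^{it})^{3/16+\varepsilon}.
\]
Since $C(\mu\omega)\le C(\mu)C(\omega)\ll N_F(\mathfrak{n})^{1/2}C(\omega)(1+|t|)^{O(1)}$, the product of the two $L$-values is $\ll N_F(\mathfrak{n})^{3/16}C(\omega)^{3/8+\varepsilon}$. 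The conductor of $\mu$ is at most $N_F(\mathfrak{n})^{1/2}$ because $\mu\boxplus\overline{\mu}$ must have level $\mathfrak{n}$. The polynomial growth in $t$ and in the archimedean part is absorbed by the exponential weight. What remains is the sixth moment $\sum_\mu\int|L(1/2+it,\mu)|^6/|L(1+2it,\mu^2)|^2$. By H\"older's inequality with the eighth moment from Theorem \ref{thmF}, together with the trivial count of the family (of size $\asymp N_F(\mathfrak{n})^{1/2+\varepsilon}$, well below $N_F(\mathfrak{n})$), this is $\ll N_F(\mathfrak{n})^{1+\varepsilon}$. Multiplying gives the second term $N_F(\mathfrak{n})^{1+3/16+\varepsilon}C(\omega)^{3/8+\varepsilon}$.

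The main obstacle is the bookkeeping of conductors in the second option. One must check that only $\mu$ with $C_{\fin}(\mu)^2\mid\mathfrak{n}$ contribute, and that $\mu\omega$ and $\mu\overline{\omega}$ genuinely have conductor at most $C(\mu)C(\omega)$, uniformly at the archimedean places after the exponential damping. One must also check that the Burgess exponent $3/16$ per factor is available uniformly over $F$. If the uniform number-field Burgess bound is not citable directly, we would first try to extract it from \cite[Corollary 1.9]{Yan23c} specialized to Eisenstein $\sigma$, as in \eqref{10.7}.
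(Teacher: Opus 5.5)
Your proposal is correct and is essentially the paper's argument. It uses H\"older with exponents $(4/3,4)$, Theorem~\ref{thmF} both for the eighth moment of $L(1/2+it,\mu)$ and for the twisted family of level $\mathfrak{n}\mathfrak{q}'^2$ (first term), and a pointwise Burgess-type bound for the $\mathrm{GL}_1$ twists $L(1/2+it,\mu\omega)$, $L(1/2+it,\mu\overline{\omega})$ with $C_{\fin}(\mu)\le N_F(\mathfrak{n})^{1/2}$ (second term; the paper cites \cite[Corollary 1.2]{Yan23c}), and pulling that pointwise bound out before H\"older is the same inequality the paper uses; note only that your Hecke-eigenvalue step is vacuous, since $\mathcal{I}_2$ carries no factor $\lambda_{\sigma}(\mathfrak{q}\mathfrak{q}'^{-1})$.
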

\begin{proof}
By following the same arguments as in the proof of Lemma \ref{lem10.1}, replacing Theorem \ref{thmE} with Theorem \ref{thmF}, and substituting \cite[Corollary 1.9]{Yan23c} with \cite[Corollary 1.2]{Yan23c}, Lemma \ref{lem10.2} follows.
\end{proof}

\subsection{The $\lambda_{\pi}(\mathfrak{n})$-weighted $4$-th Moment}
\begin{thm}\label{thm10.3}
 Let $\mathfrak{n}$ be a general integral ideal as in \textsection\ref{sec4.4}. Let $\mathcal{J}_{\mathrm{\Spec}}^{\heartsuit}(\mathbf{0},\mathfrak{X}_{\mathfrak{n}})$ be the left hand side of the formula in Theorem \ref{thmD}. Then 
\begin{multline}\label{10.8}
\mathcal{J}_{\mathrm{\Spec}}^{\heartsuit}(\mathbf{0},\mathfrak{X}_{\mathfrak{n}})\ll N_F(\mathfrak{n})^{-1/2+\varepsilon}N_F(\mathfrak{q})^{1+\varepsilon}\mathbf{C}_{\infty}^{\varepsilon}+N_F(\mathfrak{q})^{1/2+\vartheta}\mathbf{C}_{\infty}^{-1/2+\vartheta+\varepsilon}\\
C(\omega)^{-\vartheta}\min\big\{
N_F(\mathfrak{n})^{\frac{1}{2}+\varepsilon}C(\omega)^{\frac{1}{2}+\varepsilon}, N_F(\mathfrak{n})^{\frac{3}{4}+\varepsilon} C(\omega)^{\frac{3}{8}+\varepsilon}+N_F(\mathfrak{n})^{1+\varepsilon}C(\omega)^{\frac{1}{4}+\varepsilon}
\big\}.
\end{multline}
\end{thm}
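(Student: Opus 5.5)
The plan is to read off \eqref{10.8} from the identity in Theorem~\ref{thmD} by bounding every term on its right-hand side with the estimates already proved. Concretely, I write
\[
\mathcal{J}_{\mathrm{\Spec}}^{\heartsuit}(\mathbf{0},\mathfrak{X}_{\mathfrak{n}})=\mathcal{I}_{\mathrm{Cusp}}^{\heartsuit}(\mathbf{0},\mathfrak{X}_{\mathfrak{n}})+\mathcal{I}_{\mathrm{Eis}}^{\heartsuit}(\mathbf{0},\mathfrak{X}_{\mathfrak{n}})+\sum_{i=1}^{8}\Big[\widetilde{\Psi}_{\mathrm{Geo}}^{(i)}+\widetilde{\Psi}_{\mathrm{RS}}^{(i)}+\widetilde{\Psi}_{\mathrm{Dual}}^{(i)}\Big](\mathbf{0}\mid\mathfrak{X}_{\mathfrak{n}}).
\]
Each contour-integral term $\widetilde{\Psi}_{*}^{(i)}(\mathbf{0}\mid\mathfrak{X}_{\mathfrak{n}})$ is bounded by the supremum of $|\widetilde{\Psi}_{*}^{(i)}(\mathbf{s},\mathfrak{X}_{\mathfrak{n}})|$ over $\mathbf{s}\in\mathbf{S}_{\varepsilon}^4\subset\mathbf{B}_{\varepsilon}^4$, up to a factor $O_\varepsilon(1)$. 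The bounds in Propositions~\ref{prop6.1}, \ref{prop9.1} and Lemmas~\ref{lem9.1}--\ref{lemma9.4} are uniform on $\mathbf{B}_{\varepsilon}^4$, so they apply directly.

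\textbf{Step 1 (main term).} Proposition~\ref{prop9.1} together with Lemma~\ref{lem9.1} (i=1,2) and Lemma~\ref{lemma9.3} (i=5,6) give at most $N_F(\mathfrak{n})^{-1/2+\varepsilon}N_F(\mathfrak{q})^{1+\varepsilon}\mathbf{C}_\infty^{\varepsilon}$, with harmless factors $C(\omega)^{\varepsilon}$ absorbed since $C(\omega)\ll \mathbf{C}_\infty N_F(\mathfrak{q})$ in the relevant range. This produces the first term of \eqref{10.8}.

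\textbf{Step 2 (secondary terms).} Proposition~\ref{prop6.1}, Lemma~\ref{lemma9.2} (i=3,4) and Lemma~\ref{lemma9.4} (i=7,8) each carry the factor $\mathbf{C}_\infty^{-1+\varepsilon}$ times $N_F(\mathfrak{n})^{1/2+\varepsilon}$ times at most $C(\omega)^{1/2+\varepsilon}N_F(\mathfrak{q})^{\varepsilon}$. I check that each is dominated by the second term of \eqref{10.8}. In the second term, the factor $\mathbf{C}_\infty^{-1/2+\vartheta}C(\omega)^{-\vartheta}N_F(\mathfrak{q})^{1/2+\vartheta}\,N_F(\mathfrak{n})^{1/2}C(\omega)^{1/2}$ exceeds $\mathbf{C}_\infty^{-1}N_F(\mathfrak{n})^{1/2}C(\omega)^{1/2}$ whenever $C_\infty(\omega)\ll\mathbf{C}_\infty$ and $N_F(\mathfrak{q}')\le N_F(\mathfrak{q})$. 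The first condition is implicit in the choice of data, since the Archimedean test function requires $\mathbf{C}_v\gg C_v(\omega)$. For the second branch of the minimum I use $N_F(\mathfrak{n})^{3/4}C(\omega)^{3/8}+N_F(\mathfrak{n})C(\omega)^{1/4}\ge N_F(\mathfrak{n})^{1/2}C(\omega)^{1/4}$, and compare with $N_F(\mathfrak{n})^{1/2}C(\omega)^{1/2}\mathbf{C}_\infty^{-1}$ using again $C(\omega)\lesssim \mathbf{C}_\infty N_F(\mathfrak{q})$. This comparison step is the fiddliest bookkeeping. If a clean domination fails in some range, I would instead state that these terms are subsumed by the first term, using $N_F(\mathfrak{n})^{1/2}C(\omega)^{1/2}\mathbf{C}_\infty^{-1}\le N_F(\mathfrak{n})^{1/2}N_F(\mathfrak{q})^{1/2}\mathbf{C}_\infty^{-1/2}$.

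\textbf{Step 3 (dual spectral side).} Propositions~\ref{prop8.2} and \ref{prop8.9} bound $\mathcal{I}^{\heartsuit}_{\mathrm{Cusp}}+\mathcal{I}^{\heartsuit}_{\mathrm{Eis}}$ by
\[
N_F(\mathfrak{q})^{1/2+\varepsilon}N_F(\mathfrak{n})^{-1/2+\varepsilon}[\mathbf{C}_\infty C_\infty(\omega)^{-1}]^{\vartheta}\mathbf{C}_\infty^{-1/2+\varepsilon}(\mathcal{I}_1+\mathcal{I}_2).
\]
I then insert Lemmas~\ref{lem10.1} and \ref{lem10.2}, whose minima are dominated by $N_F(\mathfrak{q}\mathfrak{q}'^{-1})^{\vartheta}N_F(\mathfrak{n})^{1+\varepsilon}\min\{C(\omega)^{1/2+\varepsilon},\,N_F(\mathfrak{n})^{1/4}C(\omega)^{3/8+\varepsilon}+N_F(\mathfrak{n})^{1/2}C(\omega)^{1/4+\varepsilon}\}$; note $N_F(\mathfrak{n})^{3/16}\le N_F(\mathfrak{n})^{1/4}$. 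Multiplying out gives $N_F(\mathfrak{q})^{1/2+\vartheta}\mathbf{C}_\infty^{-1/2+\vartheta+\varepsilon}$ times the bracketed minimum with the factor $N_F(\mathfrak{n})^{1/2}$ absorbed. The remaining point is the $C(\omega)^{-\vartheta}$ factor: it comes from combining $C_\infty(\omega)^{-\vartheta}$ with $N_F(\mathfrak{q}')^{-\vartheta}$ from $N_F(\mathfrak{q}\mathfrak{q}'^{-1})^{\vartheta}$, since $C(\omega)=C_\infty(\omega)N_F(\mathfrak{q}')$. This yields exactly the second term of \eqref{10.8}.
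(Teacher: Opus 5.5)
Your proposal is correct and is the paper's argument: bound every term of Theorem~\ref{thmD} using Propositions~\ref{prop6.1}, \ref{prop8.2}, \ref{prop8.9}, \ref{prop9.1} and Lemmas~\ref{lem9.1}--\ref{lemma9.4}, then insert Lemmas~\ref{lem10.1} and \ref{lem10.2}, with $C(\omega)^{-\vartheta}$ coming from $C_\infty(\omega)^{-\vartheta}N_F(\mathfrak{q}')^{-\vartheta}$. Your Step~2 spells out what the paper leaves implicit: the term $N_F(\mathfrak{n})^{1/2}C(\omega)^{1/2}\mathbf{C}_\infty^{-1}$ is absorbed into the second term of \eqref{10.8} only under $C(\omega)\ll \mathbf{C}_\infty N_F(\mathfrak{q})$, which holds in the application $\mathbf{C}_v=C_v(\pi)$; your fallback of absorbing it into the first term would fail for large $N_F(\mathfrak{n})$, but the main argument does not need it.
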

\begin{proof}
Combining Propositions \ref{prop6.1}, \ref{prop8.2}, \ref{prop8.9}, \ref{prop9.1}, and Lemmas \ref{lem9.1}, \ref{lemma9.2}, \ref{lemma9.3} and \ref{lemma9.4} into Theorem \ref{thmD}, we obtain 
\begin{multline*}
\mathcal{J}_{\mathrm{\Spec}}^{\heartsuit}(\mathbf{0},\mathfrak{X}_{\mathfrak{n}})\ll N_F(\mathfrak{n})^{1/2+\varepsilon}C(\omega)^{1/2+\varepsilon}\mathbf{C}_{\infty}^{-1+\varepsilon}N_F(\mathfrak{q})^{\varepsilon}
+N_F(\mathfrak{n})^{-1/2+\varepsilon}N_F(\mathfrak{q})^{1+\varepsilon}\mathbf{C}_{\infty}^{\varepsilon}\\
+N_F(\mathfrak{q})^{1/2}N_F(\mathfrak{n})^{-1/2+\varepsilon}\big[\mathbf{C}_{\infty}C_{\infty}(\omega)^{-1}\big]^{\vartheta}\mathbf{C}_{\infty}^{-1/2+\varepsilon}\cdot (\mathcal{I}_1+\mathcal{I}_{2}),
\end{multline*}
where $\mathcal{I}_1$ and $\mathcal{I}_{2}$ are defined as in \textsection\ref{sec10.2}. 

Therefore, \eqref{10.8} follows from Lemmas \ref{lem10.1} and \ref{lem10.2}. 
\end{proof}

\subsection{Amplification}\label{sec10.4}
Let notation be as in \textsection\ref{sec7}. Let $\pi$ be a unitary cuspidal automorphic representation of central character $\omega$ and arithmetic conductor $\mathfrak{q}$. Let $\mathbf{C}_v=C_v(\pi)$, $v\mid\infty$. Then $C(\pi)=\mathbf{C}_{\infty}N_F(\mathfrak{q})$.

\begin{thm}\label{thm10.4}
Let notation be before. Let $L>100D_F$, where $D_F$ is the absolute discriminant of $F$. Then 
\begin{multline}\label{10.14}
|L(1/2,\pi)|^4\ll C(\pi)^{1+\varepsilon}L^{-1+\varepsilon}+C(\pi)^{1/2+\vartheta+\varepsilon}C(\omega)^{-\vartheta+\varepsilon}L^{\varepsilon}\\
\min\big\{
L^{2}C(\omega)^{\frac{1}{2}}, L^{3} C(\omega)^{\frac{3}{8}}+L^{4}C(\omega)^{\frac{1}{4}}
\big\}.
\end{multline}
\end{thm}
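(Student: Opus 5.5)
We run the standard amplification argument on Theorem \ref{thm10.3}. Fix a set $\mathcal{P}_L$ of prime ideals $\mathfrak{p}\nmid\mathfrak{q}\mathfrak{D}_F$ with $L\le N_F(\mathfrak{p})\le 2L$; there are $\gg L^{1-\varepsilon}$ of them because $L>100D_F$. The amplifier is
\[
A(\sigma):=\Big|\sum_{\mathfrak{p}\in\mathcal{P}_L}\overline{x_{\mathfrak{p}}}\lambda^*_{\sigma}(\mathfrak{p})\Big|^2+\Big|\sum_{\mathfrak{p}\in\mathcal{P}_L}\overline{y_{\mathfrak{p}}}\lambda^*_{\sigma}(\mathfrak{p}^2)\Big|^2,
\]
with $x_{\mathfrak{p}}=\overline{\lambda^*_{\pi}(\mathfrak{p})}/|\lambda^*_{\pi}(\mathfrak{p})|$ and $y_{\mathfrak{p}}=\overline{\lambda^*_{\pi}(\mathfrak{p}^2)}/|\lambda^*_{\pi}(\mathfrak{p}^2)|$, taking the convention $0/0=0$. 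The phases are corrected by the central character values as in Lemma \ref{lemma4.1}, so that each square expands into a combination of $\lambda^*_\sigma(\mathfrak{n})$. By the second relation of Lemma \ref{lemma4.1}, $|\lambda^*_\pi(\mathfrak{p})|$ and $|\lambda^*_\pi(\mathfrak{p}^2)|$ cannot both be small, which gives $A(\pi)\gg L^{2-\varepsilon}$. Expanding the squares with Lemma \ref{lemma4.1} writes $A(\sigma)=\sum_{\mathfrak{n}}c_{\mathfrak{n}}\overline{\lambda^*_\sigma(\mathfrak{n})}$. The sum runs over $\mathfrak{n}\in\{\mathcal{O}_F,\mathfrak{p},\mathfrak{p}^2,\mathfrak{p}^4,\mathfrak{p}_1\mathfrak{p}_2,\mathfrak{p}_1^2\mathfrak{p}_2^2\}$ with $c_{\mathcal{O}_F}\ll L^{1+\varepsilon}$ and $|c_{\mathfrak{n}}|\ll 1$ otherwise. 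In particular the off-diagonal levels are squarefree squares $\mathfrak{p}_1^2\mathfrak{p}_2^2$, matching the phenomenon in the introduction.

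Next, take the linear combination $\sum_{\mathfrak{n}}c_{\mathfrak{n}}\mathcal{J}^{\heartsuit}_{\mathrm{Spec}}(\mathbf{0},\mathfrak{X}_{\mathfrak{n}})$ with $\mathbf{C}_v=C_v(\pi)^{1+20\varepsilon}$. By Corollary \ref{cor5.2} this equals
\[
\sum_{\pi'}A(\pi')\sum_{\phi}|\widetilde{\Psi}(W_{\phi},\ldots)|^2
\]
plus the analogous Eisenstein integral, and every term is nonnegative. Dropping all terms except $\pi'=\pi$ and applying Lemma \ref{lemma5.3}, with the condition $C_v(\pi)\le\mathbf{C}_v^{1-10\varepsilon}$ guaranteed by the choice of $\mathbf{C}_v$, gives
\[
L^{2-\varepsilon}\,\mathbf{C}_\infty^{-1-\varepsilon}N_F(\mathfrak{q})^{-\varepsilon}|L(1/2,\pi)|^4\ \le\ \sum_{\mathfrak{n}}|c_{\mathfrak{n}}|\,|\mathcal{J}^{\heartsuit}_{\mathrm{Spec}}(\mathbf{0},\mathfrak{X}_{\mathfrak{n}})|.
\]

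Each term on the right is then bounded by Theorem \ref{thm10.3}. The first term of \eqref{10.8} contributes
\[
\sum_{\mathfrak{n}}|c_{\mathfrak{n}}|N_F(\mathfrak{n})^{-1/2}N_F(\mathfrak{q})\ll \big(L^{1+\varepsilon}+L^2\cdot L^{-2}\cdot\ldots\big)N_F(\mathfrak{q}),
\]
which is dominated by $L^{1+\varepsilon}N_F(\mathfrak{q})$ from $\mathfrak{n}=\mathcal{O}_F$; the off-diagonal terms are no larger, since $L^2\cdot N(\mathfrak{n})^{-1/2}\le L^2\cdot L^{-2}$ for $\mathfrak{n}=\mathfrak{p}_1^2\mathfrak{p}_2^2$. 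Multiplying by $\mathbf{C}_\infty L^{-2}$ gives $C(\pi)^{1+\varepsilon}L^{-1+\varepsilon}$.

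The second term is dominated by the $O(L^2)$ ideals $\mathfrak{n}=\mathfrak{p}_1^2\mathfrak{p}_2^2$ of norm $\asymp L^4$. After dividing by $L^2$, these give
\[
N_F(\mathfrak{q})^{1/2+\vartheta}\mathbf{C}_\infty^{1/2+\vartheta}C(\omega)^{-\vartheta}\min\{L^{2}C(\omega)^{1/2},\,L^{3}C(\omega)^{3/8}+L^4C(\omega)^{1/4}\}.
\]
Here $\min\{\cdot\}$ is evaluated at $N(\mathfrak{n})=L^4$, and the diagonal term and the smaller norms $\mathfrak{p}$, $\mathfrak{p}^2$, $\mathfrak{p}_1\mathfrak{p}_2$ are dominated. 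Using $\mathbf{C}_\infty N_F(\mathfrak{q})=C(\pi)^{1+O(\varepsilon)}$ gives \eqref{10.14}.

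The main obstacle is the lower bound for the amplifier and the nonnegativity. In the Eisenstein part we need $A(\pi_{\mu,\overline\mu\omega,\lambda})\ge0$ for $\lambda\in i\mathbb{R}$, which does hold since $A$ is a sum of squares. We also need the Hecke relations of Lemma \ref{lemma4.1}, with the central character twists, to produce exactly the coefficients $c_{\mathfrak{n}}$ with $c_{\mathcal{O}_F}\ll L$, and the conditions $(\mathfrak{n},\mathfrak{q}\mathfrak{D}_F)=1$ must hold for every $\mathfrak{n}$ that appears.
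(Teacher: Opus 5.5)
Your proposal is correct and is essentially the paper's proof. You amplify over primes of norm in $(L,2L]$ coprime to $\mathfrak{q}\mathfrak{D}_F$, use Corollary~\ref{cor5.2}, positivity and Lemma~\ref{lemma5.3} to extract $L^{2-\varepsilon}|L(1/2,\pi)|^4$, and bound each of the $O(L^2)$ terms $\mathcal{J}^{\heartsuit}_{\mathrm{Spec}}(\mathbf{0},\mathfrak{X}_{\mathfrak{n}})$, with $N_F(\mathfrak{n})\ll L^4$, by Theorem~\ref{thm10.3}. The differences are minor: the paper uses the single square $|\sum_{\mathfrak{p}}\alpha_{\mathfrak{p}}\lambda^*_\pi(\mathfrak{p}^{\ell_{\mathfrak{p}}})|^2$ with $\ell_{\mathfrak{p}}\in\{1,2\}$ chosen via Lemma~\ref{lemma4.1} instead of your two-square amplifier, and your first square also produces non-square off-diagonal levels $\mathfrak{p}_1\mathfrak{p}_2$, contrary to your ``squarefree squares'' remark, though this is harmless here since Theorem~\ref{thm10.3} holds for every $\mathfrak{n}$; also, your choice $\mathbf{C}_v=C_v(\pi)^{1+20\varepsilon}$ is more careful than the paper's $\mathbf{C}_v=C_v(\pi)$ in actually meeting the hypothesis $C_v(\pi)\le\mathbf{C}_v^{1-10\varepsilon}$ of Lemma~\ref{lemma5.3}.
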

\begin{proof}
For a prime ideal $\mathfrak{p}\nmid\mathfrak{q}$, we let $\ell_{\mathfrak{p}}$ be the smallest positive integer such that $|\lambda_{\pi}^*(\mathfrak{p}_{v}^{\ell_{\mathfrak{p}}})|\geq 10^{-1}$. Due to the Hecke relation $|\lambda_{\pi}^*(\mathfrak{p})|^2=\omega_v^{-1}(\varpi_v)\lambda_{\pi}^*(\mathfrak{p}^2)+q_v^{-1}+1$ (see \eqref{c4.5} in Lemma \ref{lemma4.1}), we have $\ell_{\mathfrak{p}}\in\{1,2\}$. 

Let $\mathfrak{L}$ be the set of prime ideals $\mathfrak{p}$ such that $\mathfrak{p}\nmid\mathfrak{q}\mathfrak{O}_F$ and $L<N_F(\mathfrak{p})\leq 2L$. By prime number theorem, we have $\#\mathfrak{L}\asymp L/\log L$, where the implied constant depends on $F$. For each $\mathfrak{p}\in \mathfrak{L}$, we define $\alpha_{\mathfrak{p}}:=\lambda_{\pi}^*(\mathfrak{p}^{\ell_{\mathfrak{p}}})|\lambda_{\pi}^*(\mathfrak{p}^{\ell_{\mathfrak{p}}})|^{-1}$. So $|\alpha_{\mathfrak{p}}|=1$ and $\alpha_{\mathfrak{p}}\cdot\overline{\lambda_{\pi}^*(\mathfrak{p}^{\ell_{\mathfrak{p}}})}=|\lambda_{\pi}^*(\mathfrak{p}^{\ell_{\mathfrak{p}}})|\geq 10^{-1}$. Let $\boldsymbol{\alpha}_{\pi}=(\alpha_{\mathfrak{p}})_{\mathfrak{p}\in \mathfrak{L}}$.  

According to the Hecke relations \eqref{c4.5}, there are constants $\kappa_{\mathfrak{p},j}\ll 1$ and $\kappa_{\mathfrak{p}_1,\mathfrak{p}_2}\ll 1$, where the implied constant is absolute, such that 
\begin{align*}
\Big|\sum_{\mathfrak{p}\in \mathfrak{L}}\alpha_{\mathfrak{p}}\lambda_{\pi}^*(\mathfrak{p}^{\ell_{\mathfrak{p}}})\Big|^2=\sum_{\substack{\mathfrak{p}\in \mathfrak{L}\\ 0\leq j\leq \ell_{\mathfrak{p}}}}\kappa_{\mathfrak{p},j}\lambda_{\pi}^*(\mathfrak{p}^{2\ell_{\mathfrak{p}}-2j})
+\sum_{\substack{\mathfrak{p}_1,\mathfrak{p}_2\in \mathfrak{L}\\ \mathfrak{p}_1\neq\mathfrak{p}_2}}\kappa_{\mathfrak{p}_1,\mathfrak{p}_2}\alpha_{\mathfrak{p}_1}\overline{\alpha_{\mathfrak{p}_2}}\lambda_{\pi}^*(\mathfrak{p}^{\ell_{\mathfrak{p}_1}}\mathfrak{p}^{\ell_{\mathfrak{p}_2}}).
\end{align*}

We thus define
\begin{multline}\label{10.10}
\mathcal{J}_{\mathrm{\Spec}}^{\heartsuit}(\boldsymbol{\alpha}_{\pi},\mathfrak{L}):=\sum_{\mathfrak{p}\in \mathfrak{L}}\sum_{j=0}^{\ell_{\mathfrak{p}}}\kappa_{\mathfrak{p},j}\mathcal{J}_{\mathrm{\Spec}}^{\heartsuit}(\mathbf{0},\mathfrak{X}_{\mathfrak{p}^{2\ell_{\mathfrak{p}}-2j}})\\
+\sum_{\substack{\mathfrak{p}_1,\mathfrak{p}_2\in \mathfrak{L}\\ \mathfrak{p}_1\neq\mathfrak{p}_2}}\kappa_{\mathfrak{p}_1,\mathfrak{p}_2}\alpha_{\mathfrak{p}_1}\overline{\alpha_{\mathfrak{p}_2}}\mathcal{J}_{\mathrm{\Spec}}^{\heartsuit}(\mathbf{0},\mathfrak{X}_{\mathfrak{p}^{\ell_{\mathfrak{p}_1}}\mathfrak{p}^{\ell_{\mathfrak{p}_2}}}).
\end{multline}

According to the construction of $\mathfrak{L}$, we have $N_F(\mathfrak{p}^{\ell_{\mathfrak{p}}})\ll L^2$. Substituting the bound in Theorem \ref{thm10.3} into \eqref{10.10}, we derive 
\begin{multline}\label{10.11}
\mathbf{C}_{\infty}\cdot \mathcal{J}_{\mathrm{\Spec}}^{\heartsuit}(\boldsymbol{\alpha}_{\pi},\mathfrak{L})\ll C(\pi)^{1+\varepsilon}L+C(\pi)^{1/2+\vartheta+\varepsilon}C(\omega)^{-\vartheta}L^2\\
\min\big\{
L^{2+\varepsilon}C(\omega)^{\frac{1}{2}+\varepsilon}, L^{3+\varepsilon} C(\omega)^{\frac{3}{8}+\varepsilon}+L^{4+\varepsilon}C(\omega)^{\frac{1}{4}+\varepsilon}
\big\}.
\end{multline}

On the other hand, by Corollary \ref{cor5.2} and Lemmas \ref{lemma5.3}, along with the expansion of $\big|\sum_{\mathfrak{p}\in \mathfrak{L}}\alpha_{\mathfrak{p}}\lambda_{\pi}^*(\mathfrak{p}^{\ell_{\mathfrak{p}}})\big|^2$ (see the formula right above \eqref{10.10}), we obtain 
\begin{equation}\label{10.12}
\mathbf{C}_{\infty}\cdot \mathcal{J}_{\mathrm{\Spec}}^{\heartsuit}(\boldsymbol{\alpha}_{\pi},\mathfrak{L})\gg \textbf{C}_{\infty}^{-\varepsilon}N_F(\mathfrak{q})^{-\varepsilon}|L(1/2,\pi)|^4\cdot \Big|\sum_{\mathfrak{p}\in \mathfrak{L}}\alpha_{\mathfrak{p}}\lambda_{\pi}^*(\mathfrak{p}^{\ell_{\mathfrak{p}}})\Big|^2.
\end{equation}

From the construction of $\alpha_{\mathfrak{p}}$ and $\mathfrak{L}$, we have $\alpha_{\mathfrak{p}}\lambda_{\pi}^*(\mathfrak{p}^{\ell_{\mathfrak{p}}})=|\lambda_{\pi}^*(\mathfrak{p}^{\ell_{\mathfrak{p}}})|\geq 10^{-1}$. Hence, it follows from \eqref{10.12} and $\#\mathfrak{L}\asymp L/\log L$ that
\begin{equation}\label{10.13}
\mathbf{C}_{\infty}\cdot \mathcal{J}_{\mathrm{\Spec}}^{\heartsuit}(\boldsymbol{\alpha}_{\pi},\mathfrak{L})\gg \textbf{C}_{\infty}^{-\varepsilon}N_F(\mathfrak{q})^{-\varepsilon}|L(1/2,\pi)|^4\cdot L^{2-\varepsilon}.
\end{equation}

Combining \eqref{10.13} with \eqref{10.11} leads to \eqref{10.14}. 
\end{proof}

\subsection{A Uniform Subconvexity} 
\begin{thmx}\label{thmB.}
 Let $F$ be a number field. Let $\pi$ be a unitary cuspidal automorphic representation of $\mathrm{GL}_2/F$ with central character $\omega$. Then 
\begin{multline*}
L(1/2,\pi)\ll \min\Big\{C(\pi)^{\frac{7}{32}+\frac{\vartheta}{16}+\varepsilon}C(\omega)^{\frac{3}{128}-\frac{\vartheta}{16}+\varepsilon}\mathbf{1}_{C(\pi)^{1/2-\vartheta}<C(\omega)^{7/8-\vartheta}}\\
+C(\pi)^{\frac{9}{40}+\frac{\vartheta}{20}+\varepsilon}C(\omega)^{\frac{1}{80}-\frac{\vartheta}{20}+\varepsilon}\mathbf{1}_{C(\pi)^{1/2-\vartheta}\geq C(\omega)^{7/8-\vartheta}},\ 
C(\pi)^{\frac{5}{24}+\frac{\vartheta}{12}+\varepsilon}C(\omega)^{\frac{1}{24}-\frac{\vartheta}{12}+\varepsilon}\Big\},
\end{multline*}	
where the implied constant depends only on $F$ and $\varepsilon$. 
\end{thmx}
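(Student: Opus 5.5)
The plan is to deduce Theorem \ref{thmB.} from the amplified bound of Theorem \ref{thm10.4} by optimizing the amplifier length $L$ separately for each of the two terms inside the $\min$. Throughout, write $Q:=C(\pi)$ and $R:=C(\omega)\le Q$. The constraint $L>100D_F$ is harmless; we only need $L$ to be a positive power of $Q$.

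\emph{First branch} (using $L^2C(\omega)^{1/2}$ in the $\min$). Here \eqref{10.14} reads
\[
|L(1/2,\pi)|^4\ll Q^{1+\varepsilon}L^{-1}+Q^{1/2+\vartheta+\varepsilon}R^{1/2-\vartheta}L^{2}.
\]
Balancing the two terms gives $L^3=Q^{1/2-\vartheta}R^{-1/2+\vartheta}$. The resulting bound is $|L|^4\ll Q^{5/6+\vartheta/3}R^{1/6-\vartheta/3}$, that is, $L(1/2,\pi)\ll Q^{5/24+\vartheta/12}R^{1/24-\vartheta/12}$, which is the second entry of the minimum. One must check that this $L$ is $\gg 1$; this holds because $R\le Q$, and in the degenerate case $R\asymp Q$ the bound is trivially at least convexity anyway.

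\emph{Second branch} (using $L^3R^{3/8}+L^4R^{1/4}$). Now there are three terms,
\[
Q L^{-1},\qquad A L^{3},\qquad B L^{4},
\]
where $A=Q^{1/2+\vartheta}R^{3/8-\vartheta}$ and $B=Q^{1/2+\vartheta}R^{1/4-\vartheta}$. Balancing $QL^{-1}$ against $AL^3$ gives $L^4=Q^{1/2-\vartheta}R^{-3/8+\vartheta}$, and hence $|L|^4\ll Q^{7/8+\vartheta/4}R^{3/32-\vartheta/4}$, i.e.\ the exponents $7/32+\vartheta/16$ and $3/128-\vartheta/16$. Balancing $QL^{-1}$ against $BL^4$ gives $L^5=Q^{1/2-\vartheta}R^{-1/4+\vartheta}$, and hence $|L|^4\ll Q^{9/10+\vartheta/5}R^{1/20-\vartheta/5}$, i.e.\ the exponents $9/40+\vartheta/20$ and $1/80-\vartheta/20$. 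Which balance is correct depends on which of $AL^3$ and $BL^4$ dominates at the chosen $L$, that is, on whether $L<R^{1/8}$. Substituting the first choice of $L$, the condition $L^4<R^{1/2}$ becomes $Q^{1/2-\vartheta}<R^{7/8-\vartheta}$, which is exactly the indicator in the statement. In that regime one takes $L$ from the first balance; the $BL^4$ term is then dominated, so the first bound holds. In the complementary regime one takes $L$ from the second balance and checks symmetrically that the $AL^3$ term is dominated, i.e.\ $L\ge R^{1/8}$. The $\varepsilon$-losses $L^\varepsilon,Q^\varepsilon$ are absorbed since $L\le Q^{O(1)}$.

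The main obstacle is the bookkeeping in the second branch: verifying that each optimizer lies in the regime where the neglected term is smaller, and that the two regimes match exactly at the threshold $Q^{1/2-\vartheta}=R^{7/8-\vartheta}$ without a gap. If a cleaner route is wanted, I would simply bound $\min\{\cdot\}\le$ each branch and take the pointwise minimum of the resulting three bounds; this immediately gives the stated $\min$ structure, with the indicators recording which balance is valid. Finally, taking fourth roots yields the claim.
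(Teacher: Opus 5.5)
Your proposal is correct and is essentially the paper's proof. The paper also bounds the $\min$ in \eqref{10.14} by each branch separately and balances $C(\pi)L^{-1}$ against the $L^3C(\omega)^{3/8}$, $L^4C(\omega)^{1/4}$ and $L^2C(\omega)^{1/2}$ terms. Like you, it decides which term dominates via the threshold $L \lessgtr C(\omega)^{1/8}$, which translates into $C(\pi)^{1/2-\vartheta}\lessgtr C(\omega)^{7/8-\vartheta}$. Your additional check that the chosen $L$ exceeds $100D_F$ is a small point of care the paper omits. That check is automatic from $C(\omega)\le C(\pi)$, and the degenerate case $C(\omega)\asymp C(\pi)$ falls back to convexity.
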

\begin{proof}
Consider the parameter $L$ in Theorem \ref{thm10.4} as follows.
\begin{itemize}
\item Suppose $L\ll C(\omega)^{1/8}$. Then \eqref{10.14} implies
\begin{equation}\label{e10.15}
|L(1/2,\pi)|^4\ll C(\pi)^{1+\varepsilon}L^{-1+\varepsilon}+C(\pi)^{1/2+\vartheta+\varepsilon}C(\omega)^{-\vartheta+\varepsilon}L^{3+\varepsilon}C(\omega)^{\frac{3}{8}}.
\end{equation}

Let $L$ be such that $C(\pi)^{1+\varepsilon}L^{-1+\varepsilon}=C(\pi)^{1/2+\vartheta+\varepsilon}C(\omega)^{-\vartheta}L^{3+\varepsilon}C(\omega)^{\frac{3}{8}}$, i.e., $L^4=C(\pi)^{1/2-\vartheta}C(\omega)^{-3/8+\vartheta}$. In the range $C(\pi)^{1/2-\vartheta}<C(\omega)^{7/8-\vartheta}$ we have $L^4\ll C(\omega)^{1/2}$, i.e., $L\ll C(\omega)^{1/8}$. 

Therefore, if $C(\pi)^{1/2-\vartheta}<C(\omega)^{7/8-\vartheta}$, we plug $L^4=C(\pi)^{1/2-\vartheta}C(\omega)^{-3/8+\vartheta}$ into \eqref{e10.15}, obtaining 
\begin{equation}\label{10.15}
L(1/2,\pi)\ll C(\pi)^{\frac{7}{32}+\frac{\vartheta}{16}+\varepsilon}C(\omega)^{\frac{3}{128}-\frac{\vartheta}{16}+\varepsilon}.
\end{equation}

\item Suppose $L\gg  C(\omega)^{1/8}$. Then \eqref{10.14} gives
\begin{equation}\label{e10.17}
|L(1/2,\pi)|^4\ll C(\pi)^{1+\varepsilon}L^{-1+\varepsilon}+C(\pi)^{1/2+\vartheta+\varepsilon}C(\omega)^{-\vartheta+\varepsilon}L^{4}C(\omega)^{\frac{1}{4}}.
\end{equation}

Let $L$ be such that $C(\pi)^{1+\varepsilon}L^{-1+\varepsilon}=C(\pi)^{1/2+\vartheta+\varepsilon}C(\omega)^{-\vartheta}L^{4+\varepsilon}C(\omega)^{\frac{1}{4}}$, i.e., $L^5=C(\pi)^{1/2-\vartheta}C(\omega)^{\vartheta-1/4}$. In the range $C(\pi)^{1/2-\vartheta}\geq C(\omega)^{7/8-\vartheta}$ we have $L^5\geq C(\omega)^{5/8}$, i.e., $L\geq C(\omega)^{1/8}$. 

Therefore, if $C(\pi)^{1/2-\vartheta}\geq C(\omega)^{7/8-\vartheta}$, we plug $L^5=C(\pi)^{1/2-\vartheta}C(\omega)^{\vartheta-1/4}$ into \eqref{e10.17}, obtaining 
\begin{equation}\label{10.16}
L(1/2,\pi)\ll C(\pi)^{\frac{9}{40}+\frac{\vartheta}{20}+\varepsilon}C(\omega)^{\frac{1}{80}-\frac{\vartheta}{20}+\varepsilon}.
\end{equation}

\item Utilizing the other part of the $\min\{\cdots\}$ in \eqref{10.14} we derive 
\begin{equation}\label{10.19}
|L(1/2,\pi)|^4\ll C(\pi)^{1+\varepsilon}L^{-1+\varepsilon}+C(\pi)^{1/2+\vartheta+\varepsilon}C(\omega)^{-\vartheta+\varepsilon}L^{2+\varepsilon}C(\omega)^{\frac{1}{2}}.
\end{equation}

Let $L$ be such that $C(\pi)^{1+\varepsilon}L^{-1+\varepsilon}=C(\pi)^{1/2+\vartheta+\varepsilon}C(\omega)^{-\vartheta}L^{2+\varepsilon}C(\omega)^{\frac{1}{2}}$, i.e., $L^3=C(\pi)^{1/2-\vartheta}C(\omega)^{\vartheta-1/2}$. Therefore, taking $L^3=C(\pi)^{1/2-\vartheta}C(\omega)^{\vartheta-1/2}$ into \eqref{10.19} yields 
\begin{equation}\label{10.20}
L(1/2,\pi)\ll C(\pi)^{\frac{5}{24}+\frac{\vartheta}{12}+\varepsilon}C(\omega)^{\frac{1}{24}-\frac{\vartheta}{12}+\varepsilon}.
\end{equation}
\end{itemize}

Therefore, Theorem \ref{thmB.} follows from \eqref{10.15}, \eqref{10.16}, and \eqref{10.20}. 
\end{proof}

As a direct consequence we have the following Corollary. 
\begin{cor}\label{cor10.5}
 Let $F$ be a number field. Let $\pi$ be a unitary cuspidal automorphic representation of $\mathrm{GL}_2/F$. Then 
\begin{equation}\label{e10.20}
L(1/2,\pi)\ll C(\pi)^{\frac{1}{4}-\frac{1}{128}+\varepsilon},	
\end{equation}
where the implied constant depends only on $F$ and $\varepsilon$.
\end{cor}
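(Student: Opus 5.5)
Corollary \ref{cor10.5} follows from Theorem \ref{thmB.} by eliminating $C(\omega)$ using $1\le C(\omega)\le C(\pi)$ (which holds since the conductor of $\omega$ divides that of $\pi$ up to the archimedean comparison). I will write $C(\omega)=C(\pi)^{\eta}$ with $\eta\in[0,1]$. Since each exponent of $C(\omega)$ in Theorem \ref{thmB.} is linear in $\eta$, I will bound the minimum of the resulting exponents of $C(\pi)$ by $1/4-1/128$ uniformly in $\eta$, using $\vartheta\le 7/64$.

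\emph{Region $C(\pi)^{1/2-\vartheta}<C(\omega)^{7/8-\vartheta}$, i.e.\ $\eta>\eta_0:=(1/2-\vartheta)/(7/8-\vartheta)$.} Here the first bound has exponent
\[
7/32+\vartheta/16+\eta(3/128-\vartheta/16),
\]
and the third has exponent
\[
5/24+\vartheta/12+\eta(1/24-\vartheta/12).
\]
Both are increasing in $\eta$ because $\vartheta<3/8$. Their difference at $\eta=1$ is $0$, with both exponents equal to $\tfrac14-\tfrac{1}{128}$ there. So at $\eta=1$ the bound is exactly $C(\pi)^{1/4-1/128+\varepsilon}$, and for $\eta<1$ the first exponent is smaller still. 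In particular the whole region is fine.

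\emph{Region $\eta\le\eta_0$.} The second bound has exponent
\[
9/40+\vartheta/20+\eta(1/80-\vartheta/20).
\]
If $\vartheta\le 1/4$ this is increasing in $\eta$, so it is maximized at $\eta_0$. At $\vartheta=0$ one has $\eta_0=4/7$, giving exponent $9/40+1/140$, which is below $1/4-1/128$. For general $\vartheta\le 7/64$ I will check that the value at $\eta_0$ stays below $1/4-1/128$. Failing that, I will use the third bound there, whose exponent at $\eta_0$ is also small, since $\eta_0<1$.

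The main obstacle is this last verification with the worst $\vartheta=7/64$. Since the bound is a minimum, it suffices that $\min$ of the two linear functions at $\eta_0$ is at most $1/4-1/128$. Both sides are explicit rational functions of $\vartheta$, so this reduces to an elementary inequality. If it failed for some $\vartheta$, the remedy would be to use the first bound's monotonicity across the threshold, but I expect the direct check to go through.

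Finally, derivatives $L^{(j)}$ are not needed for \eqref{e10.20}, so the corollary follows.
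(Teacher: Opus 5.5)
Your argument is correct and is the paper's route: Corollary \ref{cor10.5} is stated as a direct consequence of Theorem \ref{thmB.} together with $C(\omega)\le C(\pi)$. The worst case $C(\omega)=C(\pi)$ is governed by the first bound, whose exponent there is $7/32+3/128=1/4-1/128$.

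Two small corrections. First, the third exponent at $\eta=1$ is $5/24+1/24=1/4$, not $1/4-1/128$; this is harmless, since you only need the first bound in that region. Second, the check you deferred at $\eta_0$ is immediate: the first and second exponents coincide exactly at $\eta=\eta_0=(1/2-\vartheta)/(7/8-\vartheta)$, so the second exponent there is at most the first exponent at $\eta=1$.
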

\begin{remark}
The exponent $1/128$ agrees with Blomer-Khan \cite[Theorem 1]{BK19a}.
\end{remark}

\subsection{Hybrid Subconvexity via the Relative Trace Formula}\label{sec10.6}
In this subsection we aim to further improve the bound \eqref{e10.20}, which depends essentially on the hybrid subconvexity \cite[Corollary 1.9]{Yan23c}: 
\begin{multline}\label{10.21}
L(1/2,\sigma\times\overline{\omega})\ll  C(\sigma)^{\frac{1}{2}+\varepsilon}C_{\fin}(\overline{\omega})^{\frac{1}{4}+\varepsilon}C_{\infty}(\sigma\times\overline{\omega})^{\frac{1}{8}+\varepsilon}\\
+[C_{\fin}(\sigma),C_{\fin}(\omega_{\sigma})C_{\fin}(\overline{\omega})]^{\frac{1}{4}+\varepsilon}C_{\fin}(\overline{\omega})^{\frac{1}{8}+\varepsilon}C_{\infty}(\sigma)^{\frac{3}{8}+\varepsilon}C_{\infty}(\sigma\times\overline{\omega})^{\frac{3}{16}+\varepsilon},
\end{multline}
where $\omega_{\sigma}$ is the central character of $\sigma$. 

Although \eqref{10.21} holds for arbitrary $\sigma$ and $\omega$, for our purposes it suffices to consider the special case where $\omega_{\sigma}=\mathbf{1}$ and $\sigma$ has arithmetic conductor of the form $\mathfrak{p}_1^2\mathfrak{p}_2^2$ with $\mathfrak{p}_1, \mathfrak{p}_2\in \mathfrak{L}$; see \textsection\ref{sec10.4} for the definition of $\mathfrak{L}$, and $C_{\infty}(\sigma)\ll C(\pi)^{10^{-2}\varepsilon}$.

In parallel with \cite[Theorem 7.6]{Yan23c}, we have the following. 
\begin{thm}\label{thm10.7}
Suppose $\sigma=\otimes_v\sigma_v$ is a unitary generic automorphic representation of $\mathrm{PGL}_2/F$. Let $\omega$ be a unitary Hecke character of $F^{\times}\backslash\mathbb{A}_F^{\times}$. Assuming the following conditions:
\begin{itemize}
\item at $v\mid 2\mathfrak{O}_F$, $\sigma_v$ is unramified;
\item at each $v<\infty$ and $v\nmid 2\mathfrak{O}_F$, $\sigma_v$ is either unramified or of conductor $\mathfrak{p}_v^2$; 
\item $(C_{\fin}(\sigma),C_{\fin}(\overline{\omega}))=1$. 
\end{itemize}
Let $\varepsilon>0$ and $L\gg C_{\infty}(\sigma)^{\frac{1}{2}+\varepsilon}C_{\fin}(\sigma)^{1+\varepsilon}C_{\fin}(\overline{\omega})^{\varepsilon}$. Then 
\begin{multline}\label{10.22}
|L(1/2,\sigma\times\overline{\omega})|^2\ll  L^{-1+\varepsilon}C_{\fin}(\sigma)^{\frac{1}{2}+\varepsilon}C_{\fin}(\overline{\omega})^{1+\varepsilon}C_{\infty}(\sigma\times\overline{\omega})^{\frac{1}{2}+\varepsilon}\\
+L^{1+\varepsilon}C_{\fin}(\sigma)^{\frac{1}{4}+\varepsilon}C_{\fin}(\overline{\omega})^{\frac{1}{2}+\varepsilon}C_{\infty}(\sigma\times\overline{\omega})^{\frac{1}{4}+\varepsilon}.
\end{multline}
\end{thm}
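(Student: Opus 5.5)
We would follow the amplified relative trace formula of \cite[\S 7]{Yan23c}, the one behind \cite[Theorem 7.6]{Yan23c}, changing only the local test function at the places where $\sigma$ has conductor $\mathfrak{p}_v^2$. At such $v$ (odd residue characteristic, trivial central character) $\sigma_v$ is either a twist $\chi_v\mathrm{St}$ or $\chi_v\boxplus\chi_v^{-1}$ by a ramified character of conductor $\mathfrak{p}_v$, or a depth-zero supercuspidal representation. In each case $\sigma_v$ has a vector fixed by the first principal congruence subgroup $K_v(1)$, and the corresponding $K_v$-type is cut out by a finite-group representation of $\mathrm{GL}_2(\mathbb{F}_{q_v})$.

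Step 1 is the construction of the local test function. At each $v\mid C_{\fin}(\sigma)$ we replace the Gross--Prasad test vector by $f_v=\dim(\tau_v)\,\overline{\mathrm{tr}\,\tau_v}$, supported on $Z_vK_v$ and factoring through $K_v/K_v(1)$. Here $\tau_v$ is the cuspidal (respectively principal series or Steinberg) representation of $\mathrm{GL}_2(\mathbb{F}_{q_v})$ attached to $\sigma_v$. The spectral side then only sees representations whose depth-zero type matches $\sigma_v$, so the family has size about $q_v$ at $v$, rather than $q_v^2$ for the full level-$\mathfrak{p}_v^2$ family used in \cite{Yan23c}. At the other places, and for the amplifier of length $L$ supported on primes coprime to $C_{\fin}(\sigma)C_{\fin}(\overline{\omega})$, we keep the data of \cite{Yan23c} unchanged.

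Step 2 is the spectral lower bound. By positivity, $|L(1/2,\sigma\times\overline{\omega})|^2$ times the squared amplifier is bounded by the spectral side. We must show that the local torus/Whittaker period of $f_v$ against $\sigma_v$ is $\gg q_v^{-1/2-\varepsilon}$ (normalized), instead of the $q_v^{-1}$ given by Gross--Prasad at conductor $\mathfrak{p}_v^2$. This is a finite-field computation: we would compute it through the character of $\tau_v$ restricted to the torus, using the fact that the Bessel/Whittaker functional of a generic representation of $\mathrm{GL}_2(\mathbb{F}_q)$ has norm $\asymp 1$. This gain of $C_{\fin}(\sigma)^{1/2}$ is the source of the improved exponents $C_{\fin}(\sigma)^{1/2}$ and $C_{\fin}(\sigma)^{1/4}$ in \eqref{10.22}.

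Step 3 is the geometric side. The identity and small-cell (singular) orbital integrals carry the volume factor of $f_v$. Since $f_v(1)=\dim(\tau_v)^2\asymp q_v^2$ while $\mathrm{supp}f_v$ has volume $\asymp 1$, these contribute the first term $L^{-1}C_{\fin}(\sigma)^{1/2}C_{\fin}(\overline{\omega})C_\infty^{1/2}$ after dividing by the amplifier $L^2$ and the period normalization. The regular orbital integrals are where we expect the main obstacle. Locally at $v\mid C_{\fin}(\sigma)$ they become sums over $\mathbb{F}_{q_v}$ of $\mathrm{tr}\,\tau_v$ evaluated along a one-parameter family of matrices, twisted by additive characters and by the character $\omega$. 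We need square-root cancellation in these sums, giving $q_v^{1/2}$ instead of the trivial $q_v$. We would write the sums as trace functions of Kloosterman/Deligne--Lusztig type sheaves and invoke Katz's theorem on their geometric monodromy (irreducibility, rank $2$, no Artin--Schreier component). We would then use quasi-orthogonality of trace functions to bound the sum over the amplifier pairs $\ell_1\ell_2$ and the global rational parameter. Summing the regular terms as in \cite[\S 7]{Yan23c} with this saving yields $L^{1}C_{\fin}(\sigma)^{1/4}C_{\fin}(\overline{\omega})^{1/2}C_\infty^{1/4}$. The hypothesis $L\gg C_\infty(\sigma)^{1/2+\varepsilon}C_{\fin}(\sigma)^{1+\varepsilon}C_{\fin}(\overline{\omega})^{\varepsilon}$ is what we would use to ensure that the support condition excludes degenerate orbits, apart from those already counted as singular. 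Adding the two terms gives \eqref{10.22}.
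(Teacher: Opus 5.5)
Your overall architecture matches the paper's. You run the amplified relative trace formula \eqref{eq10.23} of \cite{Yan23c} and leave all data away from $S_\sigma$ unchanged. At a depth-zero supercuspidal place your $\dim(\tau_v)\overline{\mathrm{tr}\,\tau_v}$ is exactly the paper's $f_u=(q_u-1)\chi_{\tilde\rho_u}$. Your spectral lower bound is of the type \eqref{10.29}, and you bound the regular orbital integrals by Katz plus quasi-orthogonality, as in \eqref{11.31}--\eqref{10.31}. There is, however, one step that does not work as you describe it, and one place where you genuinely depart from the paper.

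The problem is the treatment of the singular terms in Step 3. There is no identity term in \eqref{eq10.23}. The singular contributions are $J_{\mathrm{sm},u}(f_u,s)$ and $J_{\mathrm{du},u}(f_u,s)$: integrals of $f_u$ over $B$ (resp.\ the opposite Borel) against $\psi_u(x_ub_u)|x_u|_u^{1\pm s}$. They are not governed by $f_u(1)$ and $\mathrm{vol}(\mathrm{supp}\,f_u)$. Work in the normalization implicit in your Step 2: a Gross--Prasad period $q_v^{-1}$, corresponding to the unnormalized $\mathbf 1_{K_{0,v}[2]}$, which has value $1$ at the identity and singular terms $O(1)$. Your $f_v$ has $f_v(1)\asymp q_v^2$. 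If the singular terms scaled like $f_v(1)\cdot\mathrm{vol}(\mathrm{supp}\,f_v)$, they would be $q_v^2$ times larger than in the Gross--Prasad case. That would more than cancel the factor $q_v$ you gain spectrally, and the first term of \eqref{10.22} would not come out. What is needed is an explicit evaluation of $f_u$ on the Borel subgroups from the character table \eqref{e10.23}. The cuspidal character vanishes on non-central split elements and equals $q_u-1$, resp.\ $-1$, on central, resp.\ non-trivial unipotent, classes. Hence $f_u\left(\begin{smallmatrix}y&b\\&1\end{smallmatrix}\right)=(q_u-1)\mathbf 1_{1+\mathfrak p_u}(y)\bigl[(q_u-1)\mathbf 1_{\mathfrak p_u}(b)-\mathbf 1_{\mathcal O_u^\times}(b)\bigr]$, and likewise on the opposite Borel since $f_u(g)=f_u(wgw)$. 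This is how the paper obtains \eqref{10.25}: singular terms that do not grow with $f_u(1)$, while the spectral weight improves. You must supply this computation, and its analogue for whatever test function you use at Steinberg and principal-series places.

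Where you genuinely differ from the paper is at the non-supercuspidal places. For $\sigma_u\in\{\mathrm{St}\otimes\xi_u,\ \xi_u\boxplus\overline{\xi}_u\}$ the paper does not use a $\mathrm{GL}_2(\mathbb F_{q_u})$-character. It uses the unipotent translation of \cite[\S 2.2.6]{Yan23c}, for which \eqref{equ10.25} gives $|J_u|\gg 1$, singular terms $\ll q_u^{1+10\varepsilon}$, and elementary bounds for $\mathcal E_u(t)$, including the range $e_u(t)\neq 0$. Katz is therefore needed only in the supercuspidal case. Your uniform use of type projectors is a reasonable alternative that treats all three cases alike, but it requires new local work:
\begin{itemize}
\item the regular sums now see the split-torus values of the character (multiplicative-character sums along a conic, or Legendre-symbol sums, where Weil suffices);
\item the degenerate $t$ with $e_u(t)\neq 0$ or $e_u(1-t)\neq 0$ must be handled separately;
\item when $\xi_u$ is quadratic, $(\xi_u\boxplus\xi_u)^{K_u(1)}$ is reducible and the Hecke functional vanishes on its $\xi_u\circ\det$-isotypic part, so you must choose the twisted Steinberg type.
\end{itemize}
Finally, quasi-orthogonality enters the paper only locally. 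It gives the pointwise bound $\mathcal E_u(t)\ll q_u^{-1/2}$ from the one-variable sum \eqref{11.31}, which correlates $\alpha\mapsto\eta(\alpha+\sqrt{\varepsilon})$ with a Legendre symbol rather than involving a Kloosterman sheaf. No cancellation over amplifier pairs or over the global parameter $t$ is used; those sums are handled exactly as in \cite{Yan23c}.
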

\begin{proof}
For $v\in \Sigma_F$, let $f_v\in C_c^{\infty}(\overline{G}(F_v))$. Let $-1<\Re(s)<1$, we define 
\begin{align*}
&J_{\mathrm{sm},v}(f_v,s):=\int_{F_v^{\times}}\int_{F_v^{\times}}\int_{F_v}f_v\left(\begin{pmatrix}
	y_v&b_v\\
	&1 
\end{pmatrix}\right)\psi_v(x_vb_v)\omega_v(y_v)db_vd^{\times}y_v|x_v|_v^{1+s}d^{\times}x_v,\\
&J_{\mathrm{du},v}(f_v,s):=\int_{F_v^{\times}}\int_{F_v^{\times}}\int_{F_v}f_v\left(\begin{pmatrix}
	y_v&\\
	y_vb_v&1
\end{pmatrix}\right)\psi_v(x_vb_v)\omega_v(y_v)db_vd^{\times}y_v|x_v|_v^{1-s}d^{\times}x_v.
\end{align*}

Let $f=\otimes_vf_v$. Define the global integrals
\begin{align*}
&J^{\mathrm{Reg}}_{\mathrm{Geo},\mathrm{Small}}(f,s,\chi):= \prod_vJ_{\mathrm{sm},v}(f_v,s),\\
&J^{\mathrm{Reg}}_{\mathrm{Geo},\mathrm{Dual}}(f,s,\overline{\omega}):=\prod_vJ_{\mathrm{du},v}(f_v,s).
\end{align*} 

Then $J^{\mathrm{Reg}}_{\mathrm{Geo},\mathrm{Small}}(f,s,\chi)$ converges absolutely in $\Re(s)>0$ and $J^{\mathrm{Reg}}_{\mathrm{Geo},\mathrm{Dual}}(f,s,\chi)$ converges absolutely in $\Re(s)<0$. They are Tate's integrals representing the complete Dedekind zeta functions $\Lambda(1+s)$ and $\Lambda(1-s)$, respectively. Hence, $J^{\mathrm{Reg}}_{\mathrm{Geo},\mathrm{Small}}(f,s,\chi)$ and $J^{\mathrm{Reg}}_{\mathrm{Geo},\mathrm{Dual}}(f,s,\chi)$ admit a meromorphic continuation to $s\in \mathbb{C}$. 

For $t\in F-\{0,1\}$, we define 
\begin{align*}
J^{\mathrm{Reg},\RNum{2}}_{\mathrm{Geo},\mathrm{Big}}(f,\textbf{0},\overline{\omega}):=\sum_{t \in F - \{0,1\}}\prod_{v \in \Sigma_F} \mathcal{E}_v(t),
\end{align*}
where
\begin{align*}
\mathcal{E}_v(t) := \int_{F_v^{\times}} \int_{F_v^{\times}} 
f_v\left(\begin{pmatrix} y_v & x_v^{-1}t \\ x_v y_v & 1 \end{pmatrix} \right) 
\omega_v(y_v) \, d^{\times}y_v \, d^{\times}x_v.
\end{align*} 
Then $J^{\mathrm{Reg},\RNum{2}}_{\mathrm{Geo},\mathrm{Big}}(f,\textbf{0},\overline{\omega})$ converges absolutely for all $s\in \mathbb{C}$; see \cite[Theorem 6.1]{Yan23c}.

Recall the regularized relative trace formula in  \cite[Theorem 2.6]{Yan23c}: 
\begin{multline}\label{eq10.23}
J_{\mathrm{Spec}}^{\mathrm{Reg},\heartsuit}(f,\mathbf{0},\overline{\omega})
=\frac{1}{2\pi i}\int_{|s|=\varepsilon}\frac{J^{\mathrm{Reg}}_{\mathrm{Geo},\mathrm{Small}}(f,s,\overline{\omega})}{s}ds\\
+\frac{1}{2\pi i}\int_{|s|=\varepsilon}\frac{J^{\mathrm{Reg}}_{\mathrm{Geo},\mathrm{Dual}}(f,s,\overline{\omega})}{s}ds+J^{\mathrm{Reg},\RNum{2}}_{\mathrm{Geo},\mathrm{Big}}(f,\textbf{0},\overline{\omega}),
\end{multline}
where $J^{\mathrm{Reg}}_{\mathrm{Geo},\mathrm{Small}}(f,s,\overline{\omega})$ and $J^{\mathrm{Reg}}_{\mathrm{Geo},\mathrm{Dual}}(f,s,\overline{\omega})$ are understood via their meromorphic continuation, as established by Tate's thesis. 

Let $S_{\sigma}$ be the set of finite places where $\sigma_v$ is ramified.  Let $\mathcal{L}$ be a set of prime ideals defined as in \cite[\textsection 7.1.1]{Yan23c}. Note that this is different from the ad hoc set in the proof of Theorem \ref{thm10.4}. Define 
\begin{align*}
f=\sum_{\mathfrak{m}_1\in\mathcal{L}}\sum_{\mathfrak{m}_2\in\mathcal{L}}\overline{\lambda_{\pi}(\mathfrak{m}_1)\lambda_{\pi}(\mathfrak{m}_2)}\sum_{\mathfrak{a}\mid\gcd(\mathfrak{m}_1,\mathfrak{m}_2)}\bigotimes_{u\in S_{\sigma}}f_u\bigotimes\bigotimes_{v\not\in S_{\sigma}}f_{\mathfrak{m}_1\mathfrak{m}_2\mathfrak{a}^{-2},v},
\end{align*}
where for $v\not\in S_{\sigma}$, $f_{\mathfrak{m}_1\mathfrak{m}_2\mathfrak{a}^{-2},v}$ is the local test function constructed in \cite[\textsection 2.2]{Yan23c}. As a consequence, we have  
\begin{equation}\label{eq10.24}
J_{\mathrm{Spec}}^{\mathrm{Reg},\heartsuit}(f,\mathbf{0},\overline{\omega})\gg \frac{|L(1/2,\sigma\times\overline{\omega})|^2L^{2+\varepsilon}}{C_{\fin}(\sigma)^{1+\varepsilon}C(\sigma)^{\varepsilon}C(\omega)^{\varepsilon}}\prod_{u\in S_{\sigma}}|J_u|^2,
\end{equation}
where each $J_u$ denotes a certain local period integral.

Let $u<\infty$ be a place where  $\sigma_u$ is ramified. By assumption, $\sigma_u$ is supercuspidal of depth $0$, a twisted Steinberg representation $\mathrm{St}\otimes\xi_u$, or a ramified principal series $\xi_u\boxplus \overline{\xi}_u$, where $\xi_u$ is a unitary character of $F_u^{\times}$ with conductor exponent $1$. 

\begin{itemize}
\item In the cases $\sigma_u\in \{\mathrm{St}\otimes\xi_u,\xi_u\boxplus \overline{\xi}_u=(\xi_u^2\boxplus \mathbf{1})\otimes\overline{\xi}_u\}$, we construct the local test function $f_u$ using a unipotent translation, following the method of \cite[\textsection 2.2.6]{Yan23c}. Then the manipulations in \cite{Yan23c} will lead to 
\begin{equation}\label{equ10.25}
\begin{cases}
|J_u|\gg 1,\\
J_{\mathrm{sm},u}(f_u,s)\ll q_u^{1+10\varepsilon},\ \ |s|=\varepsilon,\\
J_{\mathrm{du},u}(f_u,s)\ll q_u^{1+10\varepsilon},\ \ |s|=\varepsilon,\\
\mathcal{E}_u(t)\ll (1-e_v(t))^2 q_v^{\frac{1}{2}-\frac{e_v(t)}{4}}\mathbf{1}_{e_v(t)<0}+ (e_v(t)+1)q_v^{\frac{1+e_v(t)}{2}}\mathbf{1}_{\substack{e_v(t)\geq 0\\
e_v(t-1)=0}}.
\end{cases}
\end{equation}

\item Suppose $\sigma_u$ is supercuspidal of depth $0$. Then $\sigma_u$ is induced from a representation $\rho_u$ of $Z(F_u)K_u$ that is inflated from a cuspidal representation $\tilde{\rho}_u$ of $G(\mathcal{O}_u/\mathfrak{p}_u)$ of dimension $q_u-1$. The representation $\tilde{\rho}_u$ is parameterized by Galois conjugacy classes of regular characters $\eta:$ $\mathbb{F}_{q_u^2}\rightarrow \{z\in \mathbb{C}:\ |z|=1\}$, where for $n\in \mathbb{Z}_{\geq 1}$, $\mathbb{F}_{q_u^n}$ is the finite field with $q_u^n$ elements.

Denote by $\mathbf{F}=F_u$. Let $\mathbf{E}$ the unique unratified quadratic extension of $\mathbf{F}$. Let $k_{\mathbf{E}}\simeq \mathbb{F}_{q_u^2}$ and $k_{\mathbf{F}}=\mathcal{O}_u/\mathfrak{p}_u\simeq \mathbb{F}_{q_u}$ be the residue fields of $\mathbf{E}$ and $\mathbf{F}$, respectively. We can regard $\eta$ as a character on $k_{\mathbf{E}}^{\times}/k_{\mathbf{F}}^{\times}$. The character $\chi_{\tilde{\rho}_u}$ of $\tilde{\rho}_u$ is given by \cite[(6.4.1)]{BH06}: 
\begin{equation}\label{e10.23}
\chi_{\tilde{\rho}_u}(g_u)=\begin{cases}
q_u-1,\ & \text{if $g_u\sim Z(k_{\mathbf{F}})$,}\\
-1,\ & \text{if $g_u\sim Z(k_{\mathbf{F}})N(k_{\mathbf{F}})-Z(k_{\mathbf{F}})$,}\\
-\eta(\gamma_u)-\eta(\gamma_u^{\iota}),\ & \text{if $g_u\sim k_{\mathbf{E}}-k_{\mathbf{F}}$,}\\
0,\ & \text{otherwise}. 
\end{cases}
\end{equation}
Here, $\gamma_u$ and $\gamma_u^{\iota}$ are the eigenvalues of $g_u$, and for a set $S$, $g_u\sim S$ means $g_u$ is conjugate with some element in $S$. 

We can extend $\chi_{\tilde{\rho}_u}$ to a function on $K_u$ by congruence modulo $\mathfrak{p}_u$, with extension to $Z(F_u)K_u$ by triviality on $Z(F_u)$. Now  we construct the local test function as 
\begin{align*}
f_u(g_u)=(q_u-1)\int_{F_u^{\times}}\mathbf{1}_{K_u}(z_ug_u)\eta(z_ug_u)d^{\times}z_u,
\end{align*}
which is the matrix coefficient of the minimal vector in $\sigma_u$, whose corresponding Kirillov vector is 
\begin{equation}\label{eq10.25}
W_u\left(\begin{pmatrix}
y_u\\
& 1
\end{pmatrix}\right)=\mathbf{1}_{\varpi_u^{-1}\alpha_u(1+\mathfrak{p}_u)}(y_u),\ \text{for some $\alpha_u\in k_{\mathbf{F}}^{\times}$}. 
\end{equation}

Therefore, it follows from  \eqref{eq10.25} that 
\begin{equation}\label{10.29}
|J_u|^2\gg \frac{1}{\langle W_u,W_u\rangle_u}\bigg|\int_{F_u^{\times}}W_u\left(\begin{pmatrix}
y_u\\
& 1
\end{pmatrix}\right)d^{\times}y_u\bigg|^2\gg q_v^{-1}. 
\end{equation}

Notice that $f_u(g_u)=f_u(wg_uw)$, where $w=\begin{pmatrix}
	& 1\\
1	& 
\end{pmatrix}\in K_v$. Hence, it follows from  \eqref{e10.23} that 
\begin{multline*}
f_u\left(\begin{pmatrix}
	y_u&\\
b_u&1
\end{pmatrix}\right)=f_u\left(\begin{pmatrix}
	y_u&b_u\\
&1
\end{pmatrix}\right)=-(q_u-1)\mathbf{1}_{1+\mathfrak{p}_u}(y_u)\mathbf{1}_{\mathcal{O}_u-\mathfrak{p}_u}(b_u)\\
+(q_u-1)^2\mathbf{1}_{1+\mathfrak{p}_u}(y_u)\mathbf{1}_{\mathfrak{p}_u}(b_u).
\end{multline*}

As a consequence, we derive that 
\begin{equation}\label{10.25}
\begin{cases}
J_{\mathrm{sm},u}(f_u,s)\ll q_u^{10\varepsilon},\ \ |s|=\varepsilon,\\
J_{\mathrm{du},u}(f_u,s)\ll q_u^{10\varepsilon},\ \ |s|=\varepsilon.
\end{cases}
\end{equation}

By the support of $f_u$, we have $\mathcal{E}_u(t)\equiv 0$ unless $e_u(t)\geq e_u(1-t)$. Suppose $e_u(1-t)=0$. 

Write $\alpha=z_u(y_u+1)$ and $\beta^2\varepsilon=z_u^2(y_u-1)^2+4z_u^2y_ut$. Then the eigenvalue of $\begin{pmatrix} y_u & x_u^{-1}t \\ x_u y_u & 1 \end{pmatrix}$ are $\alpha\pm \beta\sqrt{\varepsilon}$. Therefore, 
\begin{align*}
\mathcal{E}_u(t)\ll-q_u^{-2}\sum_{\substack{z_u, y_u\in k_{\mathbf{F}}^{\times}\\ 
\alpha=z_u(y_u+1),\ 
\beta^2\varepsilon=(\alpha-2z_u)^2+4(z_u\alpha-z_u^2)t}}\big[\eta(\alpha+\beta\sqrt{\varepsilon})+\eta(\alpha-\beta\sqrt{\varepsilon})\big].
\end{align*}

Since $\beta^2\varepsilon=(\alpha-2z_u)^2+4(z_u\alpha-z_u^2)t$ amounts to $(z_u-\alpha/2)^2=\frac{\beta^2\varepsilon-\alpha^2t}{4(1-t)}$. Hence, we obtain 
\begin{equation}\label{10.30}
\mathcal{E}_u(t)\ll q_u^{-2}\sum_{\alpha, \beta\in k_{\mathbf{F}}^{\times}}\eta(\alpha+\beta\sqrt{\varepsilon})\eta'(\beta^2\varepsilon-\alpha^2t)\eta'(1-t),
\end{equation}
where $\eta'$ is the quadratic character relative to $\mathbf{E}/\mathbf{F}$. Making the change of variable $\alpha\mapsto \beta\alpha$ into \eqref{10.30} leads to 
\begin{equation}\label{11.31}
\mathcal{E}_u(t)\ll q_u^{-1}\sum_{\alpha\in k_{\mathbf{F}}^{\times}}\eta(\alpha+\sqrt{\varepsilon})\eta'(\varepsilon-\alpha^2t).
\end{equation}

By the arguments in \cite{Kat89} and the quasi-orthogonality of trace functions (e.g., see \cite[Theorem 4.1]{FKM14}), we derive from \eqref{11.31} that 
\begin{equation}\label{10.31}
\mathcal{E}_u(t)\ll q_u^{-1/2}. 
\end{equation}
\end{itemize}

Substituting \eqref{equ10.25}, \eqref{10.29}, \eqref{10.25} and \eqref{10.31} into \eqref{eq10.23}, together with the manipulations in \cite[\textsection 6--\textsection 7]{Yan23c}, we obtain 
\begin{multline}\label{10.27}
\frac{|L(1/2,\sigma\times\overline{\omega})|^2L^{2+\varepsilon}}{C(\sigma)^{\varepsilon}C(\omega)^{\varepsilon}}\ll L^{1+\varepsilon}C_{\fin}(\sigma)^{\frac{1}{2}+\varepsilon}C_{\fin}(\overline{\omega})^{1+\varepsilon}C_{\infty}(\sigma\times\overline{\omega})^{\frac{1}{2}+\varepsilon}\\
+L^{2+\varepsilon}C_{\fin}(\sigma)^{\frac{1}{4}+\varepsilon}C_{\fin}(\overline{\omega})^{\frac{1}{2}+\varepsilon}C_{\infty}(\sigma\times\overline{\omega})^{\frac{1}{4}+\varepsilon}.
\end{multline}

Therefore, \eqref{10.22} follows from \eqref{10.27}.
\end{proof}

We also have the following special scenario. 
\begin{thm}\label{thm10.8}
Suppose $\sigma=\otimes_v\sigma_v$ is a unitary generic automorphic representation of $\mathrm{PGL}_2/F$. Let $\omega$ be a unitary Hecke character of $F^{\times}\backslash\mathbb{A}_F^{\times}$. Assuming the following conditions:
\begin{itemize}
\item the set $S_{\sigma}:=\{v<\infty:\ \text{$\sigma_v$ is ramified}\ \}$ is binary, namely, $S_{\sigma}=\{u_1,u_2\}$;
\item $C_{\fin}(\sigma)=q_{u_1}^2q_{u_2}^2$ and $(C_{\fin}(\sigma),C_{\fin}(\overline{\omega}))=1$. 
\end{itemize}
Let $\varepsilon>0$ and $L\gg C_{\infty}(\sigma)^{\frac{1}{2}+\varepsilon}q_{u_1}^{2+\varepsilon}q_{u_2}^{2+\varepsilon}C_{\fin}(\overline{\omega})^{\varepsilon}$. Then 
\begin{multline}\label{f10.22}
|L(1/2,\sigma\times\overline{\omega})|^2\ll  L^{-1+\varepsilon}q_{u_1}^{2+\varepsilon}q_{u_2}^{1+\varepsilon}C_{\fin}(\overline{\omega})^{1+\varepsilon}C_{\infty}(\sigma\times\overline{\omega})^{\frac{1}{2}+\varepsilon}\\
+L^{1+\varepsilon}q_{u_1}^{\varepsilon}q_{u_2}^{\frac{1}{2}+\varepsilon}C_{\fin}(\overline{\omega})^{\frac{1}{2}+\varepsilon}C_{\infty}(\sigma\times\overline{\omega})^{\frac{1}{4}+\varepsilon}.
\end{multline}
\end{thm}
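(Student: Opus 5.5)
The plan is to run the argument of Theorem \ref{thm10.7} verbatim, changing only the local test functions at $u_1$ and $u_2$: at $u_1$ we use the matrix coefficient of the minimal vector (the depth-zero supercuspidal construction, or its analogue for a conductor-$\mathfrak{p}^2$ principal series or twisted Steinberg representation), and at $u_2$ we use the Gross--Prasad (new-vector type) test function of \cite{Yan23c}. The amplifier $f=\sum_{\mathfrak{m}_1,\mathfrak{m}_2\in\mathcal{L}}\cdots$ is kept unchanged, and we again apply the regularized relative trace formula \eqref{eq10.23}. The spectral side is bounded below as in \eqref{eq10.24}. At $u_1$ the normalization \eqref{10.29} gives $|J_{u_1}|^2\gg q_{u_1}^{-1}$. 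At $u_2$ the Gross--Prasad vector gives $|J_{u_2}|^2\gg 1$, with the standard $C_{\fin}$-normalization of \cite{Yan23c} contributing $q_{u_2}^{-2}$. The spectral side is therefore $\gg |L(1/2,\sigma\times\overline{\omega})|^2L^{2}q_{u_1}^{-1}q_{u_2}^{-2}$, up to $\varepsilon$-losses.

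On the geometric side we take the local factors in turn. At $u_1$ the small and dual orbital integrals are $O(q_{u_1}^{10\varepsilon})$ by \eqref{10.25}, and the regular orbital integrals satisfy $\mathcal{E}_{u_1}(t)\ll q_{u_1}^{-1/2}$ by \eqref{10.31}, which uses Katz's bound and quasi-orthogonality; the support condition $e_{u_1}(t)\ge e_{u_1}(1-t)$ handles the remaining $t$. At $u_2$ we import the Gross--Prasad bounds of \cite[\S 6--\S 7]{Yan23c}: singular integrals $\ll q_{u_2}^{2+\varepsilon}$ with the level-$\mathfrak{p}^2$ normalization, and regular ones $\ll q_{u_2}^{1/2+\varepsilon}$ times the usual $t$-dependent factors, exactly as in the first case of \eqref{equ10.25} but with conductor exponent $2$. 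Inserting these into the counting of $t\in F-\{0,1\}$ from \cite[\S 6--\S 7]{Yan23c} should give geometric side
\begin{align*}
\ll L^{1+\varepsilon}q_{u_1}^{1+\varepsilon}q_{u_2}^{\varepsilon}C_{\fin}(\overline{\omega})^{1+\varepsilon}C_\infty^{1/2+\varepsilon}+L^{2+\varepsilon}q_{u_1}^{-1+\varepsilon}q_{u_2}^{-3/2+\varepsilon}C_{\fin}(\overline{\omega})^{1/2+\varepsilon}C_\infty^{1/4+\varepsilon},
\end{align*}
where $C_\infty=C_{\infty}(\sigma\times\overline{\omega})$. Multiplying through by $q_{u_1}q_{u_2}^2L^{-2}$ then yields \eqref{f10.22}. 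The hypothesis $L\gg C_\infty(\sigma)^{1/2+\varepsilon}q_{u_1}^{2+\varepsilon}q_{u_2}^{2+\varepsilon}C_{\fin}(\overline\omega)^\varepsilon$ is what is needed for the amplifier primes to avoid $S_\sigma$ and for the truncation of the $t$-sum in \cite{Yan23c} to be valid.

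The main obstacle is the mixed bookkeeping of the $q_{u_1},q_{u_2}$ exponents in the regular orbital term. The depth-zero test function at $u_1$ removes a full factor $q_{u_1}$ from the diagonal but only $q_{u_1}^{1/2}$ from the off-diagonal, while the Gross--Prasad function at $u_2$ behaves like the level aspect of \cite{Yan23c}. I would first check carefully that the product $\mathcal{E}_{u_1}(t)\mathcal{E}_{u_2}(t)$, summed over $t$ in the lattice determined by the amplifier support, produces $q_{u_1}^{\varepsilon}q_{u_2}^{1/2}$ after normalization. If this cannot be done cleanly, I would fall back on bounding the sum over $t$ trivially in the $u_1$-adic direction, using only $|\mathcal{E}_{u_1}(t)|\ll q_{u_1}^{-1/2}$ together with support.
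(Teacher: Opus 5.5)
Your overall strategy is the right one: rerun the amplified relative trace formula of Theorem \ref{thm10.7} with one Gross--Prasad place and one special place. However, you have attached the two test functions the wrong way round relative to the exponents in \eqref{f10.22}, and your bookkeeping cannot close as written.

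\textbf{What the exponents encode.} The bound is asymmetric. The place $u_1$ contributes $q_{u_1}^{2}$ to the first term and only $q_{u_1}^{\varepsilon}$ to the second; this is the signature of the Gross--Prasad (newform-isolating) vector with its $C_{\fin}$-normalization. The place $u_2$ contributes $q_{u_2}$ and $q_{u_2}^{1/2}$. These are exactly the per-place factors $C_{\fin}(\sigma)^{1/2}$ and $C_{\fin}(\sigma)^{1/4}$ of Theorem \ref{thm10.7} at a place of conductor $\mathfrak{p}^2$. The paper accordingly puts the Gross--Prasad vector at $u_1$ and the unipotent translation or minimal-vector matrix coefficient at $u_2$.

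\textbf{Why your assignment fails.} With the minimal vector at $u_1$, your own local inputs are $|J_{u_1}|^2\gg q_{u_1}^{-1}$, small and dual integrals $O(q_{u_1}^{10\varepsilon})$, and $\mathcal{E}_{u_1}(t)\ll q_{u_1}^{-1/2}$. These force a contribution $q_{u_1}$ to the diagonal term and $q_{u_1}^{1/2}$ to the off-diagonal term, just as in Theorem \ref{thm10.7}. So the check you plan, that the regular term at $u_1$ normalizes to $q_{u_1}^{\varepsilon}$, must fail. The square-root bound \eqref{10.31} is essentially optimal, so there is nothing more to extract from that place.

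The bounds you quote at $u_2$ are also mixed up: they are taken from the unipotent-translation case \eqref{equ10.25}, not from the Gross--Prasad vector. Consequently your displayed geometric bound is not derived from the local inputs. Dividing it by your spectral lower bound $|L(1/2,\sigma\times\overline{\omega})|^2L^{2}q_{u_1}^{-1}q_{u_2}^{-2}$ gives $L^{-1}q_{u_1}^{2}q_{u_2}^{2}$ in the first term, not $L^{-1}q_{u_1}^{2}q_{u_2}$. The step ``multiplying through yields \eqref{f10.22}'' is therefore false.

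\textbf{The repair.} The hypotheses are symmetric in $u_1,u_2$, so swap the assignments (Gross--Prasad at $u_1$, special function at $u_2$), or equivalently prove the mirrored inequality and relabel. Because both sides of the relative trace formula factor over places, you can then do the bookkeeping place by place:
\begin{itemize}
\item at $u_1$, import the level-aspect local bounds of \cite{Yan23c}, with net factors $q_{u_1}^{2}$ in the diagonal and $q_{u_1}^{\varepsilon}$ in the off-diagonal;
\item at $u_2$, import \eqref{equ10.25}, or \eqref{10.29}, \eqref{10.25} and \eqref{10.31}, with net factors $q_{u_2}$ and $q_{u_2}^{1/2}$.
\end{itemize}
The global $t$-summation of \cite[\S 6--\S 7]{Yan23c} then goes through unchanged and produces \eqref{f10.22}.
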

\begin{proof}
Recall that in the proof of \eqref{10.21} we insert the Gross-Prasad vector (which isolates the local newform)  at $u\in S_{\sigma}$ into the relative trace formula, while in the proof of \eqref{10.22} we instead use the unipotent translation or the matrix coefficient of the minimal vector at $u\in S_{\sigma}$. In the present setting, we may take the Gross-Prasad vector at $u_1$ and the unipotent translation (or the matrix coefficient of the minimal vector) at $u_2$   in the relative trace formula, following the arguments of \cite{Yan23c} and the proof of Theorem \ref{thm10.7}.
\end{proof}

\subsection{Proofs of Theorems \ref{thmB} and \ref{thmC}, Corollary \ref{cor1.1}, and Theorem \ref{cor1.7}}\label{sec10.7}

\subsubsection{Proof of  Theorem \ref{thmB}}
Suppose, at each $v\mid\infty$, $C_v(\sigma)\ll C_v(\pi)^{10^{-2}\varepsilon}$. Then $C_{\infty}(\sigma\times\overline{\omega})\ll C_{\infty}(\omega)^{2}C(\pi)^{\varepsilon}$. Optimizing the parameter $L$ in \eqref{10.22} yields 
\begin{multline}\label{10.23}
L(1/2,\sigma\times\overline{\omega})\ll C_{\fin}(\sigma)^{\frac{3}{16}+\varepsilon}C(\omega)^{\frac{3}{8}+\varepsilon}C(\pi)^{\varepsilon}\mathbf{1}_{C(\sigma)\leq C(\overline{\omega})^{2/7}}\\
+C_{\fin}(\sigma)^{\frac{5}{8}+\varepsilon}C(\omega)^{\frac{1}{4}+\varepsilon}C(\pi)^{\varepsilon}\mathbf{1}_{C(\sigma)\geq C(\overline{\omega})^{2/7}}.
\end{multline}

Suppose $\sigma$ satisfies the assumption in Theorem \ref{thm10.8}. Optimizing the parameter $L$ in \eqref{f10.22} yields
\begin{multline}\label{f10.23}
L(1/2,\sigma\times\overline{\omega})\ll q_{u_1}^{\frac{1}{2}+\varepsilon}q_{u_2}^{\frac{3}{8}+\varepsilon}C(\omega)^{\frac{3}{8}+\varepsilon}C(\pi)^{\varepsilon}\mathbf{1}_{q_{u_1}^4q_{u_2}^{7}\leq C(\omega)}\\
+q_{u_1}^{1+\varepsilon}q_{u_2}^{\frac{5}{4}+\varepsilon}C(\omega)^{\frac{1}{4}+\varepsilon}C(\pi)^{\varepsilon}\mathbf{1}_{q_{u_1}^4q_{u_2}^{7}\geq C(\omega)}.
\end{multline}

Substituting the bounds \eqref{10.8} and \eqref{10.23} into \eqref{10.10}, we derive 
\begin{multline}\label{10.35}
\mathbf{C}_{\infty}\cdot \mathcal{J}_{\mathrm{\Spec}}^{\heartsuit}(\boldsymbol{\alpha}_{\pi},\mathfrak{L})\ll C(\pi)^{1+\varepsilon}L+C(\pi)^{1/2+\vartheta+\varepsilon}C(\omega)^{-\vartheta}L\\
\min\big\{
L^{2+\varepsilon}C(\omega)^{\frac{1}{2}+\varepsilon}, L^{3+\varepsilon} C(\omega)^{\frac{3}{8}+\varepsilon}+L^{4+\varepsilon}C(\omega)^{\frac{1}{4}+\varepsilon}
\big\}+C(\pi)^{1/2+\vartheta+\varepsilon}\\
C(\omega)^{-\vartheta}L^2\Big[L^{\frac{11}{4}+\varepsilon} C(\omega)^{\frac{3}{8}+\varepsilon}\mathbf{1}_{L\leq C(\omega)^{1/14}}+L^{\frac{9}{2}+\varepsilon}C(\omega)^{\frac{1}{4}+\varepsilon}\mathbf{1}_{L\geq C(\omega)^{1/14-\varepsilon/100}}
\Big].
\end{multline}
Here, \eqref{10.8} is used to bound the contribution of the first term on the right-hand side of \eqref{10.10}, while \eqref{10.23} is applied to bound the second. 

\begin{itemize}
\item Suppose $C(\pi)\leq C(\omega)^{1+\frac{4}{7(2-\vartheta)}-\frac{\varepsilon}{2}}$. Take $L=C(\pi)^{\frac{2}{15}-\frac{4\vartheta}{15}}C(\omega)^{-\frac{1}{10}+\frac{4\vartheta}{15}}$. Then $L<C(\omega)^{1/14-\varepsilon/50}$. Notice that the inequality 
\begin{align*}
L\cdot\min\big\{
L^{2+\varepsilon}C(\omega)^{\frac{1}{2}+\varepsilon}, L^{3+\varepsilon} C(\omega)^{\frac{3}{8}+\varepsilon}+L^{4+\varepsilon}C(\omega)^{\frac{1}{4}+\varepsilon}
\big\}\leq L^2\cdot L^{\frac{11}{4}+\varepsilon} C(\omega)^{\frac{3}{8}+\varepsilon}
\end{align*}
holds when $L\leq C(\omega)^{1/2}$. Therefore, when $L\leq C(\omega)^{1/14-\varepsilon/50}<C(\omega)^{1/2}$, we deduce from \eqref{10.35}, together with the fact that $C(\omega)\leq C(\pi)$, that 
\begin{equation}\label{10.36}
|L(1/2,\pi)|^4\ll C(\pi)^{\varepsilon}\cdot \bigg[\frac{C(\pi)}{L}+C(\pi)^{\frac{1}{2}+\vartheta}C(\omega)^{\frac{3}{8}-\vartheta}L^{\frac{11}{4}} \mathbf{1}_{L\leq C(\omega)^{1/14}}\bigg].
\end{equation}
As noted in \textsection\ref{sec1.5.4}, the exponent $\varepsilon$ here may differ from those appearing in the preceding inequalities.

Therefore, it follows from \eqref{10.36} that 
\begin{equation}\label{10.37}
|L(1/2,\pi)|^4\ll C(\pi)^{\frac{13}{15}+\frac{4\vartheta}{15}+\varepsilon}C(\omega)^{\frac{1}{10}-\frac{4\vartheta}{15}}\mathbf{1}_{C(\pi)\leq C(\omega)^{1+\frac{4}{7(2-\vartheta)}-\frac{\varepsilon}{2}}}.
\end{equation}

As a consequence of \eqref{10.37}, we obtain 
\begin{equation}\label{10.38}
L(1/2,\pi)\ll C(\pi)^{\frac{13}{60}+\frac{\vartheta}{15}+\varepsilon}C(\omega)^{\frac{1}{40}-\frac{\vartheta}{15}}\mathbf{1}_{C(\pi)\leq C(\omega)^{1+\frac{4}{7(2-\vartheta)}-\frac{\varepsilon}{2}}}.
\end{equation}

\item Suppose $C(\pi)> C(\omega)^{\frac{9-14\vartheta}{7(1-2\vartheta)}-\frac{\varepsilon}{2}}$. Take $L=C(\pi)^{\frac{1-2\vartheta}{11}}C(\omega)^{-\frac{1/2-2\vartheta}{11}}$. Then $L>C(\omega)^{1/14-\varepsilon/100}$ and 
\begin{align*}
L\cdot\min\big\{
L^{2+\varepsilon}C(\omega)^{\frac{1}{2}+\varepsilon}, L^{3+\varepsilon} C(\omega)^{\frac{3}{8}+\varepsilon}+L^{4+\varepsilon}C(\omega)^{\frac{1}{4}+\varepsilon}
\big\}\leq L^2\cdot L^{\frac{9}{2}+\varepsilon} C(\omega)^{\frac{1}{4}+\varepsilon}.
\end{align*}

Therefore, it follows from \eqref{10.35} that 
\begin{equation}\label{f10.41}
L(1/2,\pi)\ll C(\pi)^{\frac{5}{22}+\frac{\vartheta}{22}+\varepsilon}C(\omega)^{\frac{1}{88}-\frac{\vartheta}{22}}\mathbf{1}_{C(\pi)> C(\omega)^{\frac{9-14\vartheta}{7(1-2\vartheta)}-\frac{\varepsilon}{2}}}.
\end{equation}

\item Suppose $C(\pi)\leq C(\omega)^{\frac{2(8 - 11\vartheta)}{11(1 - 2\vartheta)}-\frac{\varepsilon}{2}}$. Substituting the bounds \eqref{10.8} and \eqref{f10.23} into \eqref{10.10}, we derive, in parallel with \eqref{10.35}, that 
\begin{multline}\label{f10.35}
\mathbf{C}_{\infty}\cdot \mathcal{J}_{\mathrm{\Spec}}^{\heartsuit}(\boldsymbol{\alpha}_{\pi},\mathfrak{L})\ll C(\pi)^{1+\varepsilon}L+C(\pi)^{1/2+\vartheta+\varepsilon}C(\omega)^{-\vartheta}L\\
\min\big\{
L^{2+\varepsilon}C(\omega)^{\frac{1}{2}+\varepsilon}, L^{3+\varepsilon} C(\omega)^{\frac{3}{8}+\varepsilon}+L^{4+\varepsilon}C(\omega)^{\frac{1}{4}+\varepsilon}
\big\}+C(\pi)^{1/2+\vartheta+\varepsilon}\\
C(\omega)^{-\vartheta}L^2\Big[L^{\frac{23}{8}+\varepsilon} C(\omega)^{\frac{3}{8}+\varepsilon}\mathbf{1}_{L\leq C(\omega)^{1/11}}+L^{\frac{17}{4}+\varepsilon}C(\omega)^{\frac{1}{4}+\varepsilon}\mathbf{1}_{L\geq C(\omega)^{1/11-\varepsilon/100}}
\Big].
\end{multline}
Here, \eqref{10.8} is used to bound the contribution of the first term on the right-hand side of \eqref{10.10}, while \eqref{f10.23} is applied to bound the second, where $\sigma$ has arithmetic conductor $q_{u_1}^2q_{u_2}^2$ with $q_{u_1}\asymp q_{u_2}\asymp L$. 

Take $L=C(\pi)^{\frac{4(1-2\vartheta)}{31}}C(\omega)^{-\frac{3}{31}+\frac{8\vartheta}{31}}$. Since  $C(\pi)\leq C(\omega)^{\frac{2(8 - 11\vartheta)}{11(1 - 2\vartheta)}-\frac{\varepsilon}{2}}$, then 
\begin{equation}\label{f10.42}
L\leq C(\omega)^{\frac{1}{11}-\frac{2(1-2\vartheta)\varepsilon}{31}}\leq C(\omega)^{\frac{1}{11}-\frac{\varepsilon}{50}}.
\end{equation}

Notice that the inequality 
\begin{align*}
L\cdot\min\big\{
L^{2+\varepsilon}C(\omega)^{\frac{1}{2}+\varepsilon}, L^{3+\varepsilon} C(\omega)^{\frac{3}{8}+\varepsilon}+L^{4+\varepsilon}C(\omega)^{\frac{1}{4}+\varepsilon}
\big\}\leq L^2\cdot L^{\frac{23}{8}+\varepsilon} C(\omega)^{\frac{3}{8}+\varepsilon}
\end{align*}
holds when $L\leq C(\omega)$, which follows from \eqref{f10.42}. Therefore,  we deduce from \eqref{f10.35}, together with the fact that $C(\omega)\leq C(\pi)$, that 
\begin{equation}\label{f10.36}
|L(1/2,\pi)|^4\ll C(\pi)^{\varepsilon}\cdot \bigg[\frac{C(\pi)}{L}+C(\pi)^{\frac{1}{2}+\vartheta}C(\omega)^{\frac{3}{8}-\vartheta}L^{\frac{23}{8}} \mathbf{1}_{L\leq C(\omega)^{1/11}}\bigg].
\end{equation}

By our assumption and \eqref{f10.42}, the right-hand side of \eqref{f10.36} attains its minimum when $L=C(\pi)^{\frac{4(1-2\vartheta)}{31}}C(\omega)^{-\frac{3}{31}+\frac{8\vartheta}{31}}\leq C(\omega)^{\frac{1}{11}-\frac{\varepsilon}{50}}$. Therefore, 
\begin{equation}\label{10.44}
L(1/2,\pi)\ll C(\pi)^{\frac{1}{4}-\frac{1-2\vartheta}{31}+\varepsilon}C(\omega)^{\frac{3}{124}-\frac{2\vartheta}{31}}\mathbf{1}_{C(\pi)\leq C(\omega)^{\frac{2(8 - 11\vartheta)}{11(1 - 2\vartheta)}-\frac{\varepsilon}{2}}}.
\end{equation}
\end{itemize}

By a straightforward calculation, we obtain 
\begin{align*}
1+\frac{4}{7(2-\vartheta)}-\frac{\varepsilon}{2}\leq \frac{9-14\vartheta}{7(1-2\vartheta)}-\frac{\varepsilon}{2}
\end{align*}
with the equality holds when $\vartheta=0$, and 
\begin{align*}
\frac{2(8 - 11\vartheta)}{11(1 - 2\vartheta)}-\frac{\varepsilon}{2}-\frac{9-14\vartheta}{7(1-2\vartheta)}+\frac{\varepsilon}{2}=\frac{13}{77(1 - 2\vartheta)}>0.
\end{align*}
As a consequence, it follows from \eqref{10.38}, \eqref{f10.41} and \eqref{10.44} that 
\begin{multline}\label{10.46}
L(1/2,\pi)\ll C(\pi)^{\frac{13}{60}+\frac{\vartheta}{15}+\varepsilon}C(\omega)^{\frac{1}{40}-\frac{\vartheta}{15}}\mathbf{1}_{C(\pi)\leq C(\omega)^{1+\frac{4}{7(2-\vartheta)}-\frac{\varepsilon}{2}}}\\
+C(\pi)^{\frac{1}{4}-\frac{1-2\vartheta}{31}+\varepsilon}C(\omega)^{\frac{3}{124}-\frac{2\vartheta}{31}}\mathbf{1}_{C(\omega)^{1+\frac{4}{7(2-\vartheta)}-\frac{\varepsilon}{2}}<C(\pi)\leq C(\omega)^{\frac{9-14\vartheta}{7(1-2\vartheta)}-\frac{\varepsilon}{2}}}\\
+C(\pi)^{\frac{5}{22}+\frac{\vartheta}{22}+\varepsilon}C(\omega)^{\frac{1}{88}-\frac{\vartheta}{22}}\mathbf{1}_{C(\pi)> C(\omega)^{\frac{9-14\vartheta}{7(1-2\vartheta)}-\frac{\varepsilon}{2}}}.
\end{multline}

Therefore, the estimate \eqref{1.1} follows from Theorem \ref{thmB.} (via the second term in the $\min\{\cdots\}$) together with \eqref{10.46}. This proves Theorem \ref{thmB}. 
\begin{remark}\label{rmk10.9}
Let $\varepsilon_1 = 10^{-5}\varepsilon$.
By taking $\mathbf{s} = (\lambda,\overline{\lambda})$ in Theorem \ref{thmA}
and repeating the arguments used in the case $\mathbf{s}=(0,0)$
treated in \textsection\ref{sec5}--\textsection\ref{sec10},
one obtains analogous estimates provided that $|\lambda|=\varepsilon_1$.
This relies on the fact that Theorems \ref{thm10.7}, \ref{thm10.8},
and \cite[Theorem 1.6]{Yan23c} remain valid for the twisted representation
$\pi\otimes|\cdot|^{\lambda}$, with the parameter $\varepsilon$ therein
replaced by $100\varepsilon$.
This follows by taking $\mathbf{s}=(\lambda,\overline{\lambda})$ in the relative
trace formula of \cite{Yan23c}.
As a consequence, Theorem \ref{thmB} also holds for
$\pi\otimes|\cdot|^{\lambda}$ with $|\lambda|=\varepsilon_1$,
with the parameter $\varepsilon$ replaced by $10^{4}\varepsilon$.
\end{remark}

\subsubsection{Proof of Theorem \ref{cor1.7}}
In this case, we use a different argument to bound the integrals $\mathcal{I}_{1}$ and $\mathcal{I}_{2}$, defined in \textsection\ref{sec10.2}. When $\omega$ is quadratic, instead of invoking \cite[Theorem 1.6]{Yan23c}, we use the Weyl bound of Conrey--Iwaniec \cite{CI00} for $F=\mathbb{Q}$, extended to number fields by Nelson \cite[Theorem 1.1]{Nel19}:
\begin{equation}\label{10.47}
A(\mathfrak{n}):=\sum_{\sigma\in \mathcal{F}_0(\mathfrak{n},\mathbf{1})}e^{-\frac{\pi}{100}\sum_{v\mid\infty}|\nu_{\sigma_v}|}|L(1/2,\sigma\otimes\omega)|^3\ll   N_F(\mathfrak{n})^{1+\varepsilon}C(\omega)^{1+\varepsilon}.
\end{equation}

By Corollary \ref{cor1.6} and the estimate \eqref{10.5}, we obtain
\begin{equation}\label{e10.48}
B(\mathfrak{n}):=\sum_{\sigma\in \mathcal{F}_0(\mathfrak{n},\mathbf{1})}e^{-\frac{\pi}{100}\sum_{v\mid\infty}|\nu_{\sigma_v}|}|L(1/2,\sigma)|^{\frac{9}{2}}\ll N_F(\mathfrak{n})^{\frac{9}{8}-\frac{1-2\vartheta}{48}+\varepsilon}.
\end{equation}

Combining \eqref{10.47} and \eqref{e10.48} with H\"{o}lder's inequality, we obtain
\begin{equation}\label{10.48}
\mathcal{I}_{1}\ll N_F(\mathfrak{q}\mathfrak{q}'^{-1})^{\vartheta}B(\mathfrak{n})^{\frac{2}{3}}\cdot A(\mathfrak{n})^{\frac{1}{3}}
\ll N_F(\mathfrak{q}\mathfrak{q}'^{-1})^{\vartheta}N_F(\mathfrak{n})^{\frac{13}{12}-\frac{1-2\vartheta}{72}+\varepsilon}C(\omega)^{\frac{1}{3}+\varepsilon}.
\end{equation}

Similarly, by \eqref{10.5} and Theorem \ref{thmE}, we obtain
\begin{equation}\label{e10.49}
\mathcal{I}_{2}\ll N_F(\mathfrak{q}\mathfrak{q}'^{-1})^{\vartheta}N_F(\mathfrak{n})^{\frac{13}{12}-\frac{1-2\vartheta}{72}+\varepsilon}C(\omega)^{\frac{1}{3}+\varepsilon}.
\end{equation}

Recall the estimate from the proof of Theorem \ref{thm10.3}:
\begin{multline*}
\mathcal{J}_{\mathrm{\Spec}}^{\heartsuit}(\mathbf{0},\mathfrak{X}_{\mathfrak{n}})\ll N_F(\mathfrak{n})^{\frac{1}{2}+\varepsilon}C(\omega)^{\frac{1}{2}+\varepsilon}\mathbf{C}_{\infty}^{-1+\varepsilon}N_F(\mathfrak{q})^{\varepsilon}
+N_F(\mathfrak{n})^{-\frac{1}{2}+\varepsilon}N_F(\mathfrak{q})^{1+\varepsilon}\mathbf{C}_{\infty}^{\varepsilon}\\
+N_F(\mathfrak{q})^{\frac{1}{2}}N_F(\mathfrak{n})^{-\frac{1}{2}+\varepsilon}\big[\mathbf{C}_{\infty}C_{\infty}(\omega)^{-1}\big]^{\vartheta}\mathbf{C}_{\infty}^{-\frac{1}{2}+\varepsilon}\cdot (\mathcal{I}_1+\mathcal{I}_{2}).
\end{multline*}
Substituting \eqref{10.48} and \eqref{e10.49} into this estimate yields
\begin{multline*}
\mathcal{J}_{\mathrm{\Spec}}^{\heartsuit}(\mathbf{0},\mathfrak{X}_{\mathfrak{n}})\ll N_F(\mathfrak{n})^{\frac{1}{2}+\varepsilon}C(\omega)^{\frac{1}{2}+\varepsilon}\mathbf{C}_{\infty}^{-1+\varepsilon}N_F(\mathfrak{q})^{\varepsilon}
+N_F(\mathfrak{n})^{-\frac{1}{2}+\varepsilon}N_F(\mathfrak{q})^{1+\varepsilon}\mathbf{C}_{\infty}^{\varepsilon}\\
+N_F(\mathfrak{q})^{\frac{1}{2}}N_F(\mathfrak{n})^{\frac{7}{12}-\frac{1-2\vartheta}{72}+\varepsilon}\big[\mathbf{C}_{\infty}C_{\infty}(\omega)^{-1}\big]^{\vartheta}\mathbf{C}_{\infty}^{-\frac{1}{2}+\varepsilon}\cdot N_F(\mathfrak{q}\mathfrak{q}'^{-1})^{\vartheta}C(\omega)^{\frac{1}{3}+\varepsilon}.
\end{multline*}

Substituting this bound into \eqref{10.10}, we get
\begin{equation}
\label{e10.50}
\mathbf{C}_{\infty}\cdot \mathcal{J}_{\mathrm{\Spec}}^{\heartsuit}(\boldsymbol{\alpha}_{\pi},\mathfrak{L})\ll C(\pi)^{1+\varepsilon}L
+C(\pi)^{\frac{1}{2}+\vartheta+\varepsilon}C(\omega)^{-\vartheta}C(\omega)^{\frac{1}{3}+\varepsilon}L^{\frac{13}{3}-\frac{1-2\vartheta}{18}}.
\end{equation}

Combining \eqref{10.13} with \eqref{e10.50} gives
\begin{equation}\label{e10.51}
|L(1/2,\pi)|^4\ll C(\pi)^{1+\varepsilon}L^{-1+\varepsilon}
+C(\pi)^{\frac{1}{2}+\vartheta+\varepsilon}C(\omega)^{-\vartheta}C(\omega)^{\frac{1}{3}+\varepsilon}L^{\frac{7}{3}-\frac{1-2\vartheta}{18}+\varepsilon}. 
\end{equation}

Optimizing by choosing
\begin{align*}
C(\pi)
=C(\pi)^{\frac{1}{2}+\vartheta}C(\omega)^{-\vartheta}C(\omega)^{\frac{1}{3}}L^{\frac{10}{3}-\frac{1-2\vartheta}{18}}
\end{align*}
in \eqref{e10.51} yields the desired bound \eqref{1.8}. This proves Theorem \ref{cor1.7}.

\begin{remark}
If the fifth moment estimate
\begin{equation}\label{e10.53}
\sum_{\sigma\in \mathcal{F}*0(\mathfrak{n},\mathbf{1})}e^{-\frac{\pi}{100}\sum*{v\mid\infty}|\nu_{\sigma_v}|}L(1/2,\sigma)^{5}\ll N_F(\mathfrak{n})^{1+\varepsilon},
\end{equation}
were available, then the estimate \eqref{e10.51} could be further improved to
\begin{align*}
|L(1/2,\pi)|^4\ll C(\pi)^{1+\varepsilon}L^{-1+\varepsilon}
+C(\pi)^{\frac{1}{2}+\vartheta+\varepsilon}C(\omega)^{-\vartheta}C(\omega)^{\frac{1}{3}+\varepsilon}L^{2+\varepsilon},
\end{align*}
which would yield the subconvexity bound
\begin{align*}
L(1/2,\pi)\ll C(\pi)^{\frac{1}{4}-\frac{1}{24}+\frac{\vartheta}{12}+\varepsilon}C(\omega)^{-\frac{\vartheta}{12}+\frac{1}{36}}.
\end{align*}

At present, however, the estimate \eqref{e10.53} has not been established, even under the Ramanujan conjecture and over the base field $\mathbb{Q}$. Relevant results in this direction include \cite{BK19}, \cite{KY23} and \cite{Yan25}.
\end{remark}

\subsubsection{Proof of Corollary \ref{cor1.1}}
Since $C(\omega)\leq C(\pi)$, then 
\begin{equation}\label{eq10.47}
C(\pi)^{\frac{13}{60}+\frac{\vartheta}{15}+\varepsilon}C(\omega)^{\frac{1}{40}-\frac{\vartheta}{15}}\leq C(\pi)^{\frac{1}{4}-\frac{1}{120}+\varepsilon}.
\end{equation}

By a straightforward calculation, 
\begin{equation}\label{fc10.47}
C(\pi)^{\frac{1}{4}-\frac{1-2\vartheta}{31}+\varepsilon}C(\omega)^{\frac{3}{124}-\frac{2\vartheta}{31}}\mathbf{1}_{C(\omega)^{1+\frac{4}{7(2-\vartheta)}-\frac{\varepsilon}{2}}<C(\pi)}\ll C(\pi)^{\frac{1}{4} - \frac{30-39\vartheta}{124(18-7\vartheta)}+10\varepsilon},
\end{equation} 
and 
\begin{equation}\label{fc10.48}
C(\pi)^{\frac{5}{22}+\frac{\vartheta}{22}+\varepsilon}C(\omega)^{\frac{1}{88}-\frac{\vartheta}{22}}\mathbf{1}_{C(\pi)> C(\omega)^{\frac{9-14\vartheta}{7(1-2\vartheta)}-\frac{\varepsilon}{2}}}\ll C(\pi)^{\frac{1}{4}-\frac{5-13\vartheta}{8(9-14\vartheta)}+10\varepsilon}.
\end{equation}

Utilizing the bound $0\leq \vartheta<7/64$, we derive 
\begin{align*}
\min\bigg\{\frac{30-39\vartheta}{124(18-7\vartheta)}, \frac{5-13\vartheta}{8(9-14\vartheta)}\bigg\}\geq \frac{1}{120}+\frac{15217}{4103160}>\frac{1}{120}.
\end{align*}

Substituting \eqref{eq10.47}, \eqref{fc10.47} and \eqref{fc10.48} into Theorem \ref{thmB} yields 
\begin{equation}\label{eq10.49}
L(1/2,\pi)\ll C(\pi)^{\frac{1}{4}-\frac{1}{120}+\varepsilon},
\end{equation}
where the implied constant depends only on $F$ and $\varepsilon$. 

By Remark \ref{rmk10.9}, the estimate \eqref{eq10.49} remains valid for the
twisted representation $\pi\otimes|\cdot|^{\lambda}$.
In particular, one obtains
\begin{equation}\label{eq10.50}
L\bigl(1/2+\lambda,\pi\bigr)
\ll
C(\pi)^{\frac14-\frac{1}{120}+\varepsilon},
\qquad |\lambda|=\varepsilon_1.
\end{equation}
It then follows from \eqref{eq10.50} and Cauchy's integral formula that
Corollary \ref{cor1.1} holds.

\subsubsection{Proof of Theorem \ref{thmC}}
Let $\varepsilon_1 = 10^{-5}\varepsilon$.
By taking $\mathbf{s} = (\lambda,\overline{\lambda})$ in \cite[Theorem 4.4]{HY26}
and repeating the arguments of \emph{loc.\ cit.},
together with the application of Theorems \ref{thm10.7}, \ref{thm10.8},
and \cite[Theorem 1.6]{Yan23c} to control the twisted $L$-functions
on the dual side as in the proof of Theorem \ref{thmB},
one obtains: 
\begin{align*}
L(1/2+\lambda,\pi\times\pi')
\ll
C(\pi')^{3+\varepsilon}
C(\pi)^{\frac12-\frac{1}{60}+\varepsilon},
\qquad |\lambda|=\varepsilon_1.
\end{align*}
Invoking Cauchy's integral formula, we thereby deduce
Theorem~\ref{thmC}.

\subsection{Proof of Theorem \ref{thmeC}}
Let $X>1$ and $V$ be a smooth non-negative function compactly supported on $[1/2,9/10]$. Let $\eta$ be a character of the Picard group $\mathrm{Pic}(\mathcal{O})$. By Mellin inversion, we have 
\begin{equation}\label{10.49}
\sum_{\mathfrak{a}\subseteq \mathcal{O}}\eta(\mathfrak{a})V(X^{-1}N_{K/F}(\mathfrak{a}))=\frac{1}{2\pi i}\int_{(10)}L(s,\eta)\widehat{V}(s)X^sds,
\end{equation}
where $\widehat{V}$ is the Mellin transform of $V$. By shifting the contour from $\Re(s)=10$ to $\Re(s)=1/2$ in \eqref{10.49}, we obtain 
\begin{multline}\label{10.50}
\sum_{\mathfrak{a}\subseteq \mathcal{O}}\eta(\mathfrak{a})V(X^{-1}N_{K/F}(\mathfrak{a}))=\mathbf{1}_{\eta=\mathbf{1}}X\widehat{V}(1)\underset{s=1}{\Res}\ L(s,\mathbf{1})\\
+\frac{1}{2\pi i}\int_{(1/2)}L(s,\eta)\widehat{V}(s)X^sds.
\end{multline}

Take $X=N(\mathcal{O};\chi)$ and $\eta=\chi\neq \mathbf{1}$. Then $V(X^{-1}N_{K/F}(\mathfrak{a}))=0$ unless $N_{K/F}(\mathfrak{a})\leq 9N(\mathcal{O};\chi)/10<N(\mathcal{O};\chi)$, which implies that $\chi(\mathfrak{a})=1$. Hence, it follows from \eqref{10.50} that  
\begin{equation}\label{10.51}
\sum_{\mathfrak{a}\subseteq \mathcal{O}}V(X^{-1}N_{K/F}(\mathfrak{a}))=
\frac{1}{2\pi i}\int_{(1/2)}L(s,\chi)\widehat{V}(s)X^sds.
\end{equation}

Comparing \eqref{10.51} with \eqref{10.50} in the case $\eta=\mathbf{1}$ gives 
\begin{equation}\label{10.52}
N(\mathcal{O};\chi)\cdot \widehat{V}(1)\underset{s=1}{\Res}\ L(s,\mathbf{1})=\frac{1}{2\pi i}\int_{(1/2)}\big[L(s,\chi)-L(s,\eta)\big]\widehat{V}(s)X^sds.
\end{equation}

We can bound the right-hand side of \eqref{10.52} by Corollary \ref{cor1.3}: 
\begin{itemize}
\item Making use of  \eqref{f1.11} and the rapid decay of $\widehat{V}$, we have 
\begin{equation}\label{10.53}
\frac{1}{2\pi i}\int_{(1/2)}\big[L(s,\chi)-L(s,\eta)\big]\widehat{V}(s)X^s\mathbf{1}_{\Im(s)\geq D^{\varepsilon}}ds\ll N(\mathcal{O};\chi)^{\frac{1}{2}}D^{-100}.	
\end{equation}
\item In the remaining range \eqref{f1.11} yields 
\begin{equation}\label{10.54}
\frac{1}{2\pi i}\int_{(1/2)}\big[L(s,\chi)-L(s,\eta)\big]\widehat{V}(s)X^s\mathbf{1}_{\Im(s)< D^{\varepsilon}}ds\ll N(\mathcal{O};\chi)^{\frac{1}{2}}D^{\frac{1}{4}-\frac{1}{120}+2\varepsilon}.
\end{equation}
\end{itemize}

Substituting \eqref{10.53} and \eqref{10.54} into \eqref{10.52}, together with Siegel's theorem, we obtain the bound $N(\mathcal{O};\chi)\ll D^{\frac{1}{2}-\frac{1}{60}+\varepsilon}$, where the constant implied depends only on $F$ and $\varepsilon$ but
ineffective.

The other inequality $N(\mathcal{O},H)\ll [\mathrm{Pic}(\mathcal{O}): H]^2D^{\frac{1}{2}-\frac{1}{120}+\varepsilon}$ follows from a similar argument (e.g., see \cite[\textsection 5.2]{Mic07}). This completes the proof of Theorem \ref{thmeC}.

\bibliographystyle{alpha}

\bibliography{LY}

\end{document}